\numberwithin{equation}{section}
\numberwithin{figure}{section}
\newtheorem{propo}{Proposition}[section]
\newtheorem{theor}[propo]{Theorem}
\newtheorem{lemma}[propo]{Lemma}
\theoremstyle{definition}
\theoremstyle{remark}
\newcommand{\aug}{\operatorname{aug}}
\newcommand{\bignu}{\hbox{\LARGE $\nu$}\!}
\newcommand{\calC}{\mathscr{C}}
\newcommand{\calD}{\mathscr{D}}
\newcommand{\calK}{\mathscr{K}}
\newcommand{\calL}{\mathscr{L}}
\newcommand{\calM}{\mathscr{M}}
\newcommand{\calN}{\mathscr{N}}
\newcommand{\calP}{\mathscr{P}}
\newcommand{\calR}{\mathscr{R}}
\newcommand{\card}{\operatorname{card}}
\newcommand{\conc}{\mathsf{c}}
\newcommand{\Com}{\operatorname{Com}}
\newcommand{\Der}{\operatorname{Der}}
\newcommand{\diag}{\operatorname{diag}}
\newcommand{\ev}{\operatorname{ev}}
\newcommand{\End}{\operatorname{End}}
\newcommand{\FF}{\mathbb{F}}
\newcommand{\Fun}{\operatorname{Fun}}
\newcommand{\g}{\mathfrak{g}}
\newcommand{\GL}{\operatorname{GL}}
\newcommand{\grAlg}{\mathscr{GA}}
\newcommand{\grCom}{\mathscr{CGA}}
\newcommand{\Hom}{\operatorname{Hom}}
\newcommand{\id}{\operatorname{id}}
\newcommand{\Int}{\operatorname{Int}}
\newcommand{\Ker}{\operatorname{Ker}}
\newcommand{\kk}{\mathbb{K}}
\newcommand{\Lie}{\mathscr{L}\!ie}
\newcommand{\Loop}{\mathbb{L}}
\newcommand{\LoopH}{\mathbb{H}}
\newcommand{\Mat}{\operatorname{Mat}}
\newcommand{\mediumdot}{{\displaystyle \mathop{ \ \ }^{\hbox{$\centerdot$}}}}
\newcommand{\Ob}{\operatorname{Ob}}
\newcommand{\perm}{\mathsf{p}}
\newcommand{\Perm}{\mathsf{P}}
\newcommand{\pr}{\operatorname{pr}}
\newcommand{\red}{\operatorname{red}}
\newcommand{\RR}{\mathbb{R}}
\newcommand{\StringH}{\mathscr{H}}
\newcommand{\tr}{\operatorname{tr}}
\newcommand{\ZZ}{\mathbb{Z}}
\newcommand{\varepsilonout}{\varepsilon_{\operatorname{out}}}
\newcommand{\varepsilonin}{\varepsilon_{\operatorname{in}}}
\newcommand{\stl}{{}^*\!}
\newcommand{\str}{^*}
\newcommand{\lb}{\left\langle}
\newcommand{\rb}{\right\rangle}
\newcommand{\losange}{\!\mathrel{{\triangleleft}{\triangleright}}\!}
\newcommand{\up}{\vspace{-0.5cm}}
\newcommand{\bracket}[2]{\left\{#1,#2\right\}}
\newcommand{\double}[2]{\left\{\!\!\left\{#1,#2\right\}\!\!\right\}}
\newcommand{\triple}[3]{\left\{\!\!\left\{#1,#2,#3\right\}\!\!\right\}}
\begin{document}

 \title[{Brackets in the {Pontryagin} algebras of manifolds}]{Brackets in the {Pontryagin} algebras of manifolds}

\subjclass[2010]{17B63, 55N33, 55P50, 57R19}

\author[Gw\'ena\"el Massuyeau]{Gw\'ena\"el Massuyeau}
\address{Gw\'ena\"el Massuyeau \newline
 \indent IRMA,    Universit\'e de Strasbourg \& CNRS \newline
\indent 67084 Strasbourg, France \newline
 \indent   \emph{and}   \newline
 \indent     {IMB}, Universit\'e  Bourgogne Franche-Comt\'e \& CNRS  \newline
  \indent   21000 Dijon, France   \newline
\indent   $\href{mailto:gwenael.massuyeau@u-bourgogne.fr}{\texttt{gwenael.massuyeau@u-bourgogne.fr}}$ 
}

\author[Vladimir Turaev]{Vladimir Turaev}
\address{ \newline
\indent Vladimir Turaev \newline
\indent   Department of Mathematics \newline
\indent  Indiana University \newline
\indent Bloomington IN47405, USA\newline
\indent $\href{mailto:vturaev@yahoo.com}{\texttt{vturaev@yahoo.com}}$
}

\maketitle

\begin{abstract}
Given a smooth oriented  manifold $M$   with  non-empty boundary,  we study   the {Pontryagin} algebra $A=H_\ast(\Omega )$ where $ \Omega $ is the space of loops  in~$M$  based at a   distinguished   point of $ \partial M$.
Using the ideas of string topology of Chas--Sullivan, we  define   a linear map $\double{-}{-}: A \otimes A \to A\otimes A$ which is
a double   bracket   in the sense of Van den Bergh satisfying a version of the Jacobi identity.
For $\dim(M)\geq 3$, the double bracket  $\double{-}{-}$ induces   Gerstenhaber brackets in the  representation algebras associated with $A$.
This  extends our    previous work on  the case  $\dim(M)=2$ where     $A= H_0(\Omega )$ is  the group algebra of the fundamental group   $\pi_1(M)$
and   the double bracket   $\double{-}{-}$   induces the  standard  Poisson brackets   on   the moduli spaces of representations   of $\pi_1(M)$.
\end{abstract}

\setcounter{tocdepth}{1}
\tableofcontents

\section* {Introduction}

   A remarkable   feature  of  an  oriented surface~$\Sigma$ discovered by  
   Goldman \cite{Go1, Go2} 
   is a  natural   Lie bracket in the   vector  
   space generated by the free homotopy classes of    loops in~$\Sigma$.  
   If~$\Sigma$ is  connected and closed, then Goldman's Lie bracket  arises from a  
  symplectic structure     on  the   moduli space  of representations    of the fundamental group $\pi=\pi_1(\Sigma)$ in a      Lie  group~$G$.  
 This    space   $\Hom(\pi, G)/G$   consists of the conjugacy classes of homomorphisms  $\pi\to G$.  The resulting  
symplectic structure       incorporates the classical K\"ahler forms    on the 
Teichm\"uller space  ($G=  \operatorname{PSL} (2,\RR)$), on the Jacobi variety ($G=  \operatorname{U}  (1)$),  and on the Narasimhan--Seshadri
moduli spaces of semistable vector bundles  ($G=  \operatorname{U}  (N)$ with $N\geq 1$).  
  Goldman's construction  also yields the Atiyah--Bott symplectic structure determined by   a compact Lie group 
and    a non-degenerate ${\rm ad}  $-invariant symmetric bilinear form on   its   Lie algebra. 
If~$\Sigma$ is  connected and $\partial \Sigma \neq   \varnothing  $, then  similar methods yield a weaker structure, namely,    
a Poisson bracket  in the algebra of conjugation-invariant smooth functions   on     $\Hom(\pi, G)$,  see \cite{FoR,  GHJW}.
This bracket extends to  a   quasi-Poisson bracket   in the algebra of all smooth functions on $\Hom(\pi, G)$, see \cite{AKsM}.
Analogous results hold for the general linear group  $G= \operatorname{GL}_N  $   over   any commutative ring 
provided     $\Hom(\pi, \operatorname{GL}_N)$ is  treated   as an  affine algebraic set and   smooth functions are traded for   regular functions,   see  \cite{MT}. 
 
Goldman's Lie bracket for      surfaces was generalized by Chas and Sullivan \cite{CS}, \cite{CS+}
     to manifolds of arbitrary dimensions.   Chas and Sullivan  call this area of study  the \lq\lq string topology". 
The present   memoir exhibits new phenomena in  string topology. We consider   the Pontryagin  algebras of manifolds with    boundary 
 and   construct  a bracket    in the associated  representation algebras.
 For surfaces, our bracket  is the quasi-Poisson bracket  on   $\Hom(\pi,  \operatorname{GL}_N  )$ mentioned above.
 In dimension  $\geq 3$,  the representation algebras are  graded,  and    our bracket  is a  Gerstenhaber bracket, 
 i.e.,   it satisfies  the axioms of a Poisson bracket   with  appropriate signs.
 In the rest of   the  Introduction   we focus on manifolds of dimension  $\geq 3$.

 We recall   the concept of a representation algebra following   \cite{Pr, LbW,Cb}.
Fix an integer $N\geq 1$ and a  field $\mathbb{F}$ which will be the ground field of the  algebras.
   Given  an algebra $A$ and a commutative algebra $B$,  consider  the set  $S=S(A,N,B)$ of all algebra homomorphisms from $A$
 to the   algebra $\Mat_N(B) $  of ${(N\times N)}$-matrices over $B$.
 Each   $a\in A$ and each pair of indices $i,j\in \{1, \dots , N\}$ determine a   mapping $a_{ij} \colon S  \to B$  which evaluates    a
  homomorphism    $A\to \Mat_N(B)$ at $a$ and takes the $(i,j)$-th entry of the resulting matrix.
 These  mappings    are   the \lq\lq coordinates"  on $S $,
generating an algebra of \lq\lq polynomial"    $B$-valued functions  on $S $.
 These coordinates  satisfy  various  polynomial relations   some of which are universal, i.e., hold for all $B$.
  By definition,   the  $N$-th \index{representation algebra} \emph{representation algebra} $A_N$ of $A$
 is generated by the symbols $\{a_{ij}\, \vert \, a\in A, 1\leq  i,j\leq N \}$ subject to those universal relations.
 One of the universal relations says that the generators commute, so that    $A_N$ is a commutative algebra.
 For   every commutative algebra $B$,  the algebra $A_N$ projects onto the algebra of polynomial $B$-valued  functions on $S(A,N,B)$  described above. 
 We view  $A_N$ as a  universal form  of    these polynomial algebras. If $A$ is graded, then so is  $A_N$. 

Our construction of brackets in the representation algebras $\{A_N\}_{N\geq 1}$  is based on the technique of 
Van den Bergh \cite{VdB}. He showed how to    construct such brackets     from a linear map $\double{-}{-}: A\otimes A \to A\otimes A$
satisfying certain conditions.
Van den Bergh  calls such    maps  \index{double Poisson bracket} \emph{double Poisson brackets}.  We 
  use the term \index{bibracket} \emph{bibracket} for the version of   double brackets used   here.  
   Also,  we work in the graded setting and rather consider  \index{bibracket!Gerstenhaber} \emph{Gerstenhaber bibrackets} 
   satisfying  a graded version of the Jacobi identity. 
 We show that a  Gerstenhaber bibracket  $\double{-}{-}$  in a graded algebra~$A$
 induces a Gerstenhaber bracket $\bracket{-}{-}$ in    $ A_N$ for all $N\geq 1$. 
In terms of the generators,    the   bracket $\bracket{-}{-}$  is defined as follows:
for  any $a,b\in A$, $i,j,u,v \in \{1,\dots,N\}$, and any  finite expansion
$\double{a}{b}=\sum_\alpha x_\alpha \otimes y_\alpha \in A\otimes A $,  we set 
$$
\bracket{a_{ij}}{b_{uv}} =\sum_{\alpha} \,\,  (x_\alpha)_{uj}   (y_\alpha)_{iv} .
$$
 The bracket  $\bracket{-}{-}$   is invariant under the natural actions of the group $\GL_N(\FF)$ and the Lie algebra $\Mat_N(\FF)$ on $A_N$.

Consider now a smooth oriented  manifold~$M$   of dimension $   \geq 3$ with   base point   $\star \in \partial M \neq   \varnothing  $. 
Let  $ \Omega=\Omega_\star$  be the space of loops   in~$M$  based at~$\star $.
The  graded vector space $A = H_\ast (\Omega  ; \mathbb{F})$ carries an  associative   multiplication  induced by  concatenation of loops. 
This  turns~$A$ into a graded algebra, the \index{Pontryagin algebra}  \emph{Pontryagin algebra} of~$M$.
 We define  a   so-called   \index{intersection bibracket} \emph{intersection bibracket}  in~$A$  as follows.
Pick an embedded path  $\varsigma:I=[0,1]\hookrightarrow  \partial M $ connecting  the  point  $ \star$  to another point~$\star' $.
Consider  any   singular cycles   $\kappa:K\to \Omega=\Omega_\star $ and    $\lambda: L\to \Omega'=\Omega_{\star'}$.
Let~$D$ be the set of all  tuples   $(k\in K, s\in  I, l\in L , t\in I)$   such $\kappa (k) (s)= \lambda (l) (t)$.
Each tuple  $(k, s, l , t ) \in D$    determines two loops in $M$ based at $\star$.
The first loop   goes along~$\varsigma$ from~$\star  $ to~$\star'$,
then along the path $\lambda (l)$ from $\star'= \lambda (l) (0)$ to $\lambda (l) (t)= \kappa (k) (s)$ and  then along
  the path  $\kappa (k)$ back to  $\kappa (k) (1)=\star$.  The second loop   goes along  the path $\kappa (k)$ from $\star= \kappa (k) (0)$
to $ \kappa (k) (s)=\lambda (l) (t)$, then along   $\lambda (l)$  to $\lambda (l) (1)= \star'$  and finally   along
 $\varsigma^{-1}$ back to  $ \star$.  Under appropriate transversality assumptions on $\kappa$ and $\lambda$,
 the resulting map    $D\to \Omega  \times \Omega $ is  a   singular cycle of dimension $$\dim (K)+\dim (L)+2-\dim(M).$$
 Passing to homology classes  and using the isomorphism $A    = H_\ast (\Omega; \mathbb{F})   \simeq   H_\ast (\Omega'; \mathbb{F})$
 determined by $\varsigma$, we obtain  the intersection bibracket  in~$A$.
  Our main result is the following theorem.    \\

\noindent
\textbf{Theorem.}
{\it The intersection bibracket  in the {Pontryagin} algebra  is   a  well-defined     Gerstenhaber bibracket. It  is   natural
with respect to diffeomorphisms of manifolds preserving the orientation  and the base point.}\\

  The intersection  bibracket    generalizes to higher dimensions the   bibracket  of a surface defined in~\cite{MT}.  
  By the   general   theory, the intersection   bibracket in  the {Pontryagin} algebra~$A$   induces  a 
  Gerstenhaber bracket   in   $A_N$ for all $N\geq 1$.
If the manifold~$M$ is simply connected     and~$\FF$ is a field   of characteristic zero, then the   Milnor--Moore   theorem identifies~$A$
with the universal enveloping algebra of the graded Lie algebra $\pi_\ast (M) =   \oplus_{p\geq 2} \,  \pi_p(M)$
(with the degree shifted by 1 and the Whitehead bracket  in the role of the Lie bracket).
In this case,  the   algebras $A_N$ can be viewed  as  the representation algebras of   $\pi_\ast (M)$.

Despite the simplicity of the underlying idea, a precise definition of the intersection    bibracket requires considerable efforts.
First of all, we    introduce  a version of  singular   homology   using  manifolds with corners instead of  simplices. 
Homology theories based on manifolds with corners were implicit already in \cite{CS}
and were since   considered by  several authors, see, for example,  \cite{CD} and \cite{Ci}.
  These theories are insufficient for our aims and  we  develop   our own approach.
For any topological space~$X$, we define  \index{polychain} \emph{polychains}  in~$X$  as oriented manifolds 
with corners endowed with  additional structure including an identification   of some faces,
 a map  to~$X$ compatible with this identification, 
 and $\mathbb F$-valued weights assigned to the connected  components (these weights play the role of   the  coefficients of singular simplices in singular chains).
We define a   \index{polychain!reduction of} \emph{reduction}   of polychains which  eliminates redundant      connected components (like, for example, components of weight zero).
Each  polychain   in~$X$ has  a well-defined reduced boundary.
If it is void, then  the polychain  is a  \index{polycycle} \emph{polycycle}.
The polycycles in~$X$ considered up to  disjoint unions with reduced boundaries
form a graded vector space $\widetilde{H}_\ast (X)$, the \index{face homology} \emph{face homology} of~$X$.
 The key theorem    enabling  our construction of  bibrackets says that 
the usual singular homology $ {H}_\ast (X)= {H}_\ast (X; \mathbb F)$ embeds in    $\widetilde{H}_\ast (X)$ as a direct summand.

 Given a manifold $M$ and a point $\star \in \partial M$ as above,
we   define smooth   polychains   in the loop space  $\Omega=\Omega_\star$ of~$M$ and   show that  any pair 
  of face homology classes of~$\Omega$    can be represented by transversal   smooth polycycles.
This allows us to   carry out   the intersection construction  outlined  above and to  obtain a linear map
$$
 \widetilde \Upsilon:   \widetilde{H}_{\ast} ( \Omega   ) \otimes \widetilde{H}_{\ast} (   \Omega) \to \widetilde{H}_{\ast} ( \Omega  \times \Omega ).
$$
This map   induces    a linear map in singular homology   $   \Upsilon:   A\otimes A \to H_{\ast} ( \Omega  \times \Omega )$ where $A=H_{\ast} ( \Omega   )$.
  The  K\" unneth theorem allows us to rewrite $   \Upsilon$ as  a    map 
$$  \double{-}{-}:   A\otimes A \longrightarrow A\otimes A$$
which turns out to be  a  Gerstenhaber   bibracket.    The assumption that the ground ring     is a field is   used   only in the K\" unneth   theorem;
 most of the exposition is therefore given over  an arbitrary commutative ring.
  Moreover, our constructions   can be generalized by replacing loops   based at~$\star $    
  with  paths  in~$M$ having both endpoints in  $\partial M$. This  leads us to    a  notion of a  \index{path homology category} \emph{path homology category}   of $M$ 
  and an extension  of the   intersection bibracket      to this category.

Given  a  smooth   oriented   manifold $W$
with   $\partial W= \varnothing $, we    can   remove  a  small open ball  from~$W$ and   obtain thus a manifold with boundary.
The  
intersection bibracket in its  {Pontryagin} algebra  
 and  the induced Gerstenhaber brackets  are invariants of~$W$.   Under further   assumptions on~$W$, we  
 obtain   an   $H_0$-Poisson  structure  \cite{Cb} on the {Pontryagin} algebra    of~$W$ itself.

 This work suggests a number of  questions.  
So far, we  do not have a general method  allowing   to compute the face homology, 
 and we do not know whether the face homology carries more information  than the singular homology. 
 Other questions concern the intersection bibracket. 
 Is it  sensitive   to the smooth structure of the manifold?
 Can it be  generalized to   PL-manifolds or to  topological manifolds?
Is it homotopy invariant and can it be defined    in homotopy-theoretic terms (cf.\ \cite{CJ})? 
  Note that the technique of face homology allows one to   define all the Chas--Sullivan operations \cite{CS}.
 It would be    useful to formally identify the resulting geometric operations with those in \cite{CJ}.
  Also, it would be interesting to provide algebraic models for the  intersection bibracket.
For instance, we do not know  how our geometric constructions 
are related to the  cobar constructions of \cite{BCER}
applied to the Poincar\'e duality model of \cite{LS},  see  \cite[Section 5.5]{BCER}. \\

  {\it Organization of the memoir.}  Chapters~\ref{Algebras, brackets,  and bibrackets}  and~\ref{Bibrackets in unital algebras and in categories} are purely algebraic: in Chapter~\ref{Algebras, brackets,  and bibrackets}  we define representation algebras   and   discuss brackets and bibrackets; in
  Chapter~\ref{Bibrackets in unital algebras and in categories}
 we discuss bibrackets in unital algebras and categories,   and we also consider  Hamiltonian reduction in this context. 
Chapter~\ref{Face homology} introduces the  face homology. In Chapter~\ref{Operations on polychains} we study   transversality of polychains and define  intersection operations in the homology  of path spaces.  In Chapters~\ref{Intersection bibrackets} and~\ref{More on brackets   and bibrackets} we construct 
the intersection  bibracket and discuss its   properties. \\ 

{\it Acknowledgements.}
  Part of this work  was  done while G. Massuyeau  visited  Bloomington,   Indiana  in spring 2013; he
  would like to thank Indiana University for hospitality and support. 
The work of V.\ Turaev on this memoir was partially supported by the NSF grants DMS-1202335 and  DMS-1664358​. 
The authors would like to  thank F.~Eshmatov for an explanation  of the paper  \cite{BCER}. \\

 {\it Conventions.} 
  Throughout   the memoir, the letter  $\kk$   denotes a    commutative ring which serves as the ground ring of all  modules and algebras.
  Thus, by   a module (respectively, an algebra, a linear map) we   mean a $\kk$-module (respectively, a $\kk$-algebra,  a $\kk$-linear map).
  By  the singular homology of a topological  space  we mean singular  homology with coefficients in $\kk$. 

Given a  smooth   oriented manifold $M$ and a smooth orientable    submanifold $N \subset M$,
an orientation of the normal bundle of $N$ in $M$ determines an orientation of $N$, and vice versa,
via  the following rule: a positive    frame   in the normal bundle   of $N$   followed
by   a positive  frame in the tangent bundle of    $N$ is a positive  frame in the tangent bundle of   $M$.
If $\partial {M}\neq   \varnothing   $, then the orientation of $M$  induces an orientation of   $\partial {M}$
using the ``outward vector first" rule. \\

 \chapter {Algebras, brackets,  and bibrackets} \label{Algebras, brackets,  and bibrackets}

\section{Algebras and brackets}\label{sect-prelim}

  We start by recalling some standard terminology.  

\subsection{Graded modules and  graded  algebras}\label{gral}

By    a  \index{graded module} {\it graded  module} we mean a $\ZZ$-graded module $A=\oplus_{p\in \ZZ} \, A^p$  (over $\kk$). 
An element $a$ of $A$ is \index{graded module!element of!homogeneous} {\it homogeneous} if   $a\in A^p$ for some $p$; we
write then $\vert a\vert =p$ and call $\vert a\vert$ the \index{graded module!element of!degree of} {\it degree} of~$a$.
By definition, the degree of $0\in A$ is an arbitrary integer. For any $d\in \ZZ$,
the  {\it $d$-degree} $\vert a\vert_d$ of a homogeneous element $a\in A$ is $\vert a\vert_d=  \vert a\vert+d$.

A \index{graded algebra} \index{graded algebra} {\it graded  algebra} is    a   graded module $A $ endowed with  an  associative
bilinear multiplication such that $ A^p A^q\subset A^{p+q}$ for all   $p,q\in \ZZ$.
Note that if the product of $k\geq 1$    homogeneous elements  $a_1, \dots, a_k $ of $ A$ is non-zero, then the degree of this product  is equal to   $
\vert a_1 \vert + \cdots + \vert a_k \vert$.  If $
  a_ 1 \cdots a_k=0$, then we set    $
\vert a_ 1 \cdots a_k\vert= \vert a_1 \vert + \cdots + \vert a_k \vert$. Similarly, for   $d\in \ZZ$,  we write
$ \vert a_ 1 \cdots a_k\vert_d$ for   $\vert a_1 \vert + \cdots + \vert a_k \vert +d $.

We do not require a graded algebra $A$ to have a  unit element.  If $ab=(-1)^{\vert a \vert \vert b\vert} ba $
for   some homogeneous
$a,b\in A$, then one says that $a$ and $b$ {\it commute}.
For  a graded  algebra $A$, we denote by $[A,A]$ the graded submodule of $A$ spanned by
the vectors $ab-(-1)^{\vert a \vert \vert b\vert} ba$ where $a,b$ run over all homogeneous
elements of $ A$.  The graded
algebra $A$  is \index{graded algebra!commutative} {\it commutative} if   $[A,A]=0$. Factoring any graded algebra
 $A$ by the 2-sided ideal generated by $[A,A]$ we obtain a
commutative graded algebra  $\Com(A)$.

Given graded algebras $A$ and $B$,   a \index{graded algebra!homomorphism of}  {\it  graded  algebra homomorphism}
$A\to B$ is   a  degree-preserving  algebra homomorphism from $A$ to $ B$.

We will consider any $\ZZ_{\geq 0}$-graded module  $A=\oplus_{p\geq 0} \, A^p$ as a
$\ZZ$-graded module by setting   $A^p=0$ for all   $p<0$.

\subsection{Representation algebras}\label{AMT0}

Each   graded algebra $A$ determines an infinite sequence of graded algebras $\tilde A_1, \tilde A_2,\dots$ as follows, cf$.$  \cite{LbW,Cb,VdB}.
The graded algebra ${\tilde A}_N$ with $N\geq 1$ is defined by the  generators   $a_{ij}$,
where $a$ runs over  all elements of $A$ and $i,j$ run over   $ \{1,2, \ldots, N\}$, and the following relations:
for all   $a,b\in A$, $k\in \kk$, and $i,j\in \{1,2, \ldots, N\}$,
\begin{equation}\label{eq:addid}
(ka)_{ij}=k a_{ij}, \quad  (a + b)_{ij}=a_{ij}+ b_{ij}, \quad (ab)_{ij}=    a_{il} b_{lj}.
\end{equation}
In the latter  formula and in the sequel we   always sum up
over repeating indices and drop the summation sign.
 A typical element of ${\tilde A}_N$ is represented by a non-commutative polynomial in the generators with zero free term.
 The grading in  $\tilde A_N$ is defined   by   $\vert a_{ij}\vert =p$ for   all   $a\in A^p$.

The construction of $\tilde A_N$ is functorial: a  graded    algebra homomorphism $f:{A\to A'}$ induces a
 graded  algebra homomorphism $\tilde f_N:\tilde A_N\to \tilde A'_N$ by
$ \tilde f_N(a_{ij})  = (f(a))_{ij}$ for all $a\in A$, $i,j \in \{1, \ldots, N\}$.
For $N=1$ we have $\tilde A_1=A$ and $\tilde f_1=f$.

The importance of   ${\tilde A}_N$ is due to the  following  fact.
For any graded algebra~$B$, let $\Mat_N(B)$ be the graded algebra of   $(N\times N)$-matrices   with entries in $B$.
(A~matrix  has  a grading $p\in \ZZ$ whenever all  its  entries belong to $B^p$.)
Then there is a canonical bijection
\begin{equation}\label{eq:adjunction}
\Hom_{ \grAlg } \big( \tilde A_N , B\big)
\stackrel{\simeq}{\longrightarrow} \Hom_{\grAlg} (A, \Mat_N(B))
\end{equation}
which is natural in $A$ and $B$. Here   ${\grAlg}$    stands for  the category of graded algebras  and graded algebra homomorphisms.
The bijection \eqref{eq:adjunction}  carries
a   graded algebra homomorphism $r:\tilde A_N\to B$    to the
 map   $ A \to \Mat_N(B) $ sending any $a \in A$ to the   $(N\times N)$-matrix   $(r(a_{ij}))_{i,j}$.
The inverse bijection carries     a graded algebra homomorphism  $s:A\to \Mat_N(B)$ to the
 graded algebra homomorphism   $\tilde A_N\to B$ sending a generator $a_{ij}$ to
the $(i,j)$-th term of the matrix $s(a)$ for all $a\in A$.
 Consequently,    the  endofunctor $ A\mapsto \tilde A_N$ of $\grAlg$
is left adjoint to the endofunctor $  B\mapsto \Mat_N(B)$ of $\grAlg$.

The commutative graded algebra
$A_N=\Com(\tilde A_N) $ is obtained from $\tilde A_N$ by adding the relations
$a_{ij} b_{kl}=(-1)^{\vert a \vert \vert b\vert} b_{kl} a_{ij}$ for any homogeneous $a,b\in A$ and any $i,j,k,l\in \{1, \ldots, N\}$.
We call $A_N$ the \index{representation algebra} \emph{$N$-th representation algebra} of $A$.
The construction of $  A_N$ is functorial: a morphism $f:A\to A'$ in $\grAlg$ induces a morphism
$\tilde f_N:\tilde A_N\to \tilde A'_N$ in $\grAlg$, which in its turn
induces a morphism $  f_N:  A_N\to   A'_N$ in the category of commutative graded algebras   $\grCom$.
For any commutative graded algebra $B$,
 \begin{equation*}\label{commeqa}
 \Hom_{\grCom} ({  A}_N, B) \simeq  \Hom_{\grAlg} (\tilde A_N, B) \simeq \Hom_{\grAlg} (A, \Mat_N(B)).
\end{equation*}
  Consequently,    the functor ${\grAlg}\to {\grCom}, A\mapsto A_N$ is left adjoint to the functor ${{\grCom} \to {\grAlg}}, B\mapsto \Mat_N(B)$.

\subsection{Brackets}\label{gral++++}

Let  $A$ be a graded module and $d \in \ZZ$. By a \index{bracket} {\it bracket} in $A$ we   mean a  linear map $\bracket{-}{-} :A\otimes A\to A$.
A bracket  $\bracket{-}{-}$ in $A$ \index{bracket!degree of} {\it has degree $d$} if $\bracket{A^p}{A^q}\subset A^{p+q+d}$
 for all $p,q\in \ZZ$.
A bracket  $\bracket{-}{-}$ in $A$  is \index{bracket!$d$-antisymmetric} {\it $d$-antisymmetric}  if for all homogeneous $a,b \in A$,
\begin{equation}\label{antis} \bracket{a}{b}=- (-1)^{ \vert a \vert_d \vert
b\vert_d} \bracket{b}{a}.
\end{equation}
A bracket  $\bracket{-}{-}$ in $A$  satisfies the \index{$d$-graded Jacobi identity} {\it  $d$-graded Jacobi identity} if
\begin{equation}\label{jaco}  (-1)^{ \vert a \vert_d \vert c \vert_d} \bracket {a}  {\bracket{b}{c}}
+ (-1)^{ \vert b \vert_d\vert a \vert_d} \bracket  {b} {\bracket{c}{a}}
+(-1)^{  \vert c \vert_d \vert b \vert_d  }   \bracket   {c} {\bracket{a}{b}}    =0
\end{equation}
for all homogeneous $a,b,c \in A$.  A degree $d$ bracket $\bracket{-}{-}$ in $ A$ satisfying   \eqref{antis} and \eqref{jaco}
is called a \index{$d$-graded Lie bracket}  {\it $d$-graded Lie bracket}, and the pair $(A, \bracket{-}{-})$ is called then a
\index{$d$-graded Lie algebra} {\it $d$-graded Lie algebra}.

For example, any  graded algebra $A$ gives rise to a   $0$-graded   Lie algebra of derivations in $A$. Recall that
a \index{derivation} \emph{derivation    in  $A$  of   degree}
$k\in \ZZ$ is a linear map $\delta :A \to A$ such that $\delta(A^p)\subset
A^{p+k}$   for any $p\in \ZZ$ and $\delta(ab)=\delta(a)b+(-1)^{k\vert a\vert}a\delta(b)$
for any homogeneous   $a \in A$ and any $b\in A$.
Derivations  of $A$ of degree $k$   form a module $ \Der^k(A) $. The graded module $\Der(A)=\oplus_{ k\in \ZZ } \Der^k(A)$
  carries a   $0$-graded   Lie bracket  defined by
$[\delta_1,\delta_2]= \delta_1 \delta_2-(-1)^{k_1k_2} \delta_2 \delta_1$
for any derivations $\delta_1$ and $\delta_2$ of $A$ of degrees $k_1$ and $k_2$ respectively.

A bracket $\bracket{-}{-} $ in a graded algebra $A$ satisfies the \index{$d$-graded Leibniz rules} {\it $d$-graded Leibniz rules} if for all
homogeneous $a,b,c\in A$,
\begin{eqnarray}
\label{poisson1} \bracket{a}{bc} &= &\bracket{a}{b}c + (-1)^{\vert a\vert_d \vert b\vert} b \bracket{a}{c},\\
\label{poisson2} \bracket{ab}{c} &= & a  \bracket{b}{c}   + (-1)^{\vert b\vert \vert c\vert_d}     \bracket {a}{c} b.
\end{eqnarray}
A \index{bracket!Gerstenhaber} {\it  Gerstenhaber  bracket of degree $d\in \ZZ$} in a  graded algebra~$A$
is a $d$-graded Lie bracket $\{ -, -\}$ in~$A$ which satisfies the $d$-graded Leibniz rules. The pair $(A, \{ -, -\})$ is called then
a  \index{graded algebra!Gerstenhaber} {\it  Gerstenhaber  algebra of degree $d$}.
For example, any  graded algebra~$A$ is a  Gerstenhaber  algebra of degree $0$
with respect to the   bracket (called the  \index{commutator} {\it commutator}) defined by $\bracket{a}{b} = ab - (-1)^{\vert a\vert \vert b\vert} ba$
for   homogeneous $a,b\in A$ and extended to all $a,b\in A$ by linearity.

\section{Bibrackets}\label{bibrackets}

    The rest of this chapter presents an extension of   Van den Bergh's  \cite{VdB} theory of double brackets
in algebras to   graded algebras.  
 Such an extension is outlined in \cite[Section 2.7]{VdB}  in the case of degree $ -1$.  
Fix throughout this section a graded algebra $A$ and an integer $d$.

\subsection{ Conventions}\label{conventions}

Any  $x \in A^{\otimes 2}=   A\otimes A  $ expands 
as a   sum $ x=\sum_\alpha x'_\alpha \otimes
x''_\alpha$ where $ x'_\alpha,
x''_\alpha$ are homogeneous elements of $A$ and the
index $\alpha$ runs over
a finite set. To simplify notation, we will drop  the summation sign and the index
and write simply $x =x'\otimes x''$.     Similarly, an
element $x$ of $A^{\otimes 3}=  A\otimes A \otimes A $  will be written as
$x'\otimes x''\otimes x'''$ with homogeneous $x', x'', x'''\in A$.

Unless explicitly stated otherwise, we endow $A^{\otimes 2}$ with
the ``outer'' $A$-bimodule structure  defined by  $ax b= ax'\otimes  x'' b $
for  any   $a,b \in A$ and $x\in A^{\otimes 2}$. We shall  also use the
``inner'' $A$-bimodule structure on $A^{\otimes 2}$   defined by
\begin{equation}\label{inner}
a*x*b= (-1)^{\vert a\vert \vert b\vert +\vert a\vert \vert x'\vert +\vert b\vert \vert x''\vert} \, x' b \otimes a x'' \quad
\end{equation}
for  homogeneous $a,b \in A$ and any  $x\in A^{\otimes 2}$.

Given a permutation $(i_1,\dots,i_n)$ of $(1,\dots,n)$ with $n\geq 1$,
we denote by $\Perm_{i_1\cdots i_n}$ the \index{graded permutation} \emph{graded permutation} $A^{\otimes n}\to A^{\otimes n}$
carrying  any     $a_1\otimes \cdots \otimes a_n $  with homogeneous $a_1,\dots,a_n\in A$
to $(-1)^{t} a_{i_1} \otimes a_{i_2} \otimes \cdots \otimes a_{i_n}$
 where    $t\in \ZZ$ is the sum of the products $\vert a_{i_k} \vert \vert a_{i_l} \vert$
over all pairs of indices  $ k < l $  such that $i_k>i_l$.
For any   $d\in \ZZ$, we similarly define  the  \index{$d$-graded permutation} {\it $d$-graded permutation}
$\Perm_{i_1\cdots i_n,d} :A^{\otimes n}\to A^{\otimes n}$  using the  $d$-degree 
 $\vert\!-\!\vert_d =\vert\!-\!\vert + d$   instead of    $\vert\!-\!\vert$.

\subsection{Bibrackets in~$A$}\label{bibr}

A   \index{bibracket} \emph{bibracket} in   $A$ is a  linear map $$\double{-}{-}:A \otimes A   \longrightarrow   A \otimes A.$$
A  bibracket $\double{-}{-}$ in $A$ has \index{bibracket!of degree $d$} {\it degree $d$} if for any integers $p,q$,
$$
\double{A^p}{A^q}  \subset \bigoplus_{{i+j=p+q+d}}\, A^i \otimes A^j .
$$
A  \index{bibracket} \emph{$d$-graded bibracket} in   $A$ is a bibracket $\double{-}{-}$ in $A$ of degree $d$
satisfying the following \index{$d$-graded Leibniz rules}  {\it $d$-graded Leibniz rules}:   for all homogeneous $a,b,c\in A$,
\begin{eqnarray}
\label{bibi} \double{a}{bc} &=& \double{a}{b}c + (-1)^{\vert a\vert_d \vert b\vert} b \double{a}{c},\\
\label{bibi+} \double{ab}{c} &=&  a* \double{b}{c}   + (-1)^{\vert b\vert \vert c\vert_d}   \double{a}{c}*b.
\end{eqnarray}

The following key  lemma shows that a $d$-graded bibracket in $A$ induces brackets of degree~$d$  in all  representation algebras $\{A_N\}_{N }$.

\begin{lemma}\label{fdbtopb}
Given  a $d$-graded bibracket $\double{-}{-}$ in  $A$
and an integer $N\geq 1$, there is a unique  bracket $\{ -, -\}$ in $A_N $ satisfying
  the $d$-graded Leibniz rules  \eqref{poisson1}, \eqref{poisson2}  and such that
\begin{equation}\label{db_to_qpb}
\bracket{a_{ij}}{b_{uv}}=  \double{a}{b }_{uj}'  \double{a }{b }_{iv}''
\end{equation}
for all  $a,b\in A$ and $i,j,u,v\in \{1,\ldots, N\}$. The bracket $\{ -, -\}$ has degree~$d$.
\end{lemma}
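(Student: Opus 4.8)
The plan is to construct $\{-,-\}$ on $A_N$ in two stages: first define it on generators by \eqref{db_to_qpb} and extend by the Leibniz rules, then check consistency. For uniqueness, note that $A_N$ is generated as an algebra by the elements $a_{ij}$ (with zero free term), so once $\{a_{ij}, b_{uv}\}$ is prescribed, the $d$-graded Leibniz rules \eqref{poisson1}--\eqref{poisson2} determine $\{x, y\}$ for all $x, y \in A_N$ by induction on the word-lengths of $x$ and $y$; hence at most one such bracket exists. For existence, I would proceed as follows. First, I would check that the right-hand side of \eqref{db_to_qpb} is well-defined, i.e.\ independent of the chosen finite expansion $\double{a}{b} = \sum_\alpha x_\alpha \otimes y_\alpha$; this is immediate since $(u,j)$- and $(i,v)$-entry extraction $A \otimes A \to A_N \otimes A_N \to A_N$ is linear. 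Then I would verify that the prescribed values on generators are compatible with the defining relations \eqref{eq:addid} of $\tilde A_N$ (additivity and $k$-linearity in $a$ and in $b$ are clear from bilinearity of $\double{-}{-}$; the multiplicativity relations $(ab)_{ij} = a_{il}b_{lj}$ must be matched against the Leibniz rules \eqref{bibi}, \eqref{bibi+} for the bibracket) and with the commutativity relations defining $A_N = \Com(\tilde A_N)$.

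The cleanest route for existence is to define, for each $y \in A_N$, a map $\{-, y\}: A_N \to A_N$ as a $d$-graded derivation, and dually a map $\{x, -\}$, then patch. Concretely: fix homogeneous $b \in A$ and indices $u,v$; I claim there is a unique degree-$d$ derivation $\partial_{b_{uv}}$ of the commutative graded algebra $A_N$ sending each generator $a_{ij}$ to $\double{a}{b}'_{uj}\,\double{a}{b}''_{iv}$. To see this is well-defined on relations, one uses \eqref{bibi+}: expanding $\double{ab}{c} = a * \double{b}{c} + (-1)^{|b||c|_d}\double{a}{c}*b$ and taking matrix entries via the formula \eqref{inner} for the inner bimodule structure reproduces exactly the Leibniz-rule expansion of a derivation applied to $(ab)_{ij} = a_{il}b_{lj}$ — this is the computation that forces the placement of indices $uj$ and $iv$ in \eqref{db_to_qpb} and the signs in \eqref{inner}. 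Having the derivations $\partial_{b_{uv}}$, I then note the assignment $b_{uv} \mapsto \partial_{b_{uv}}$ itself must extend to a map $A_N \to \Der^{|b|+d}(A_N)$ satisfying a Leibniz rule in the second variable, which is precisely relation \eqref{bibi} for $\double{-}{-}$; this gives the full bracket $\{x, y\} := \partial_y(x)$, at which point $d$-antisymmetry need not be claimed (the lemma only asserts the Leibniz rules and degree).

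The main obstacle is the bookkeeping in the second step: showing the map-on-generators respects the relation $(ab)_{ij} = a_{il}b_{lj}$. One must carefully expand both sides — $\{(ab)_{ij}, c_{uv}\}$ computed directly via \eqref{bibi+} and \eqref{inner}, versus $\{a_{il}b_{lj}, c_{uv}\}$ computed via the Leibniz rule \eqref{poisson2} in $A_N$ — and match them index-by-index, tracking the Koszul signs carried by $|-|_d$. The verification that the signs in the inner bimodule structure \eqref{inner} are exactly what is needed for this matching is the crux; everything else (degree count: $\double{A^p}{A^q} \subset \bigoplus_{i+j=p+q+d} A^i \otimes A^j$ gives $\deg\{a_{ij}, b_{uv}\} = p + q + d$, hence the bracket has degree $d$; uniqueness; well-definedness of expansions) is routine once this core identity is in place.
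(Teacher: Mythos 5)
Your overall strategy is the same as the paper's: define the bracket on the generators $a_{ij}$ by \eqref{db_to_qpb}, extend by the $d$-graded Leibniz rules, and check compatibility with the defining relations of $A_N$, the multiplicativity relations $(ab)_{ij}=a_{il}b_{lj}$ being the non-trivial case. You have correctly located the crux in matching the expansions coming from \eqref{bibi+} and \eqref{inner} against the Leibniz rule \eqref{poisson2} in $A_N$.

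That said, the derivation-based framing as you have written it contains two slips worth flagging. The degree of $\partial_{b_{uv}}=\bracket{-}{b_{uv}}$ should be $\vert b\vert+d=\vert b\vert_d$, not $d$: since $\double{A^p}{A^q}\subset\bigoplus_{i+j=p+q+d}A^i\otimes A^j$, the element $\double{a}{b}'_{uj}\double{a}{b}''_{iv}$ sits in degree $\vert a\vert+\vert b\vert+d$. More importantly, under the paper's sign conventions $\bracket{-}{c}$ is \emph{not} a graded derivation in the standard (left) sense: rule \eqref{poisson2} reads $\bracket{ab}{c}=a\bracket{b}{c}+(-1)^{\vert b\vert\,\vert c\vert_d}\bracket{a}{c}b$, and the Koszul sign attaches to $\vert b\vert$ (the degree of the right-hand factor), not to $\vert a\vert$ as for a left-derivation; one can read it as a ``right'' derivation of degree $\vert c\vert_d$, but building $\partial_{b_{uv}}$ as a left-derivation would misplace exactly the signs that the lemma is designed to get right. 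The map $\bracket{a}{-}$ via \eqref{poisson1} is the genuine left-derivation, of degree $\vert a\vert_d$. The paper sidesteps these pitfalls by working with the bilinear extension directly and computing both $\bracket{a_{ij}}{(bc)_{uv}}$ (via \eqref{bibi}) and $\bracket{(ab)_{ij}}{c_{uv}}$ (via \eqref{bibi+} and \eqref{inner}) explicitly. Since you defer the entire index/sign computation as ``the crux,'' and the convention you start from is the wrong one, I would recommend abandoning the derivation reformulation and doing the direct check as in the paper.
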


\begin{proof}  We extend \eqref{db_to_qpb} to a bilinear form
$\{ -, -\}:A_N\times A_N \to A_N$ satisfying \eqref{poisson1} and
\eqref{poisson2}. To see that this form is well-defined, we need to
verify the compatibility with the defining relations of $A_N$. That
the right-hand side of \eqref{db_to_qpb} is  linear  in  $a$ and $b$
follows from the   linearity   of $\double{-}{- }$.
We now verify the compatibility with
the third relation  in \eqref{eq:addid}.
Pick any homogeneous $a,b, c\in A$ and set
$x= \double{ a}{b} $ and $y= \double{ a}{c} $. Then
$$\double {a} {bc}=x c + (-1)^{\vert a\vert_d \vert
b\vert}  by  = x'\otimes x''c + (-1)^{\vert a\vert_d
\vert b\vert} b y'\otimes y'' .$$ Therefore, for  any  $i,j,u,v\in \{1,2,\dots,N\}$,
\begin{eqnarray*}
\bracket{a_{ij}}{(bc)_{uv}}&=& {\double{a} {bc}}'_{uj} {\double{a}
{bc}}''_{iv}\\
&=& x'_{uj} (x''c)_{iv} + (-1)^{\vert a\vert_d \vert
b\vert} (b y')_{uj} y''_{iv} \\
&=& x'_{uj} x''_{il} c_{lv}+ (-1)^{\vert a\vert_d \vert b\vert} b_{ul} y'_{lj}
y''_{iv}
\\
&=&\bracket{a_{ij}}{b_{ul}} c_{lv}+ (-1)^{\vert a\vert_d
\vert b\vert} b_{ul} \bracket{a_{ij}}{ c_{lv}}   \ = \   \bracket{a_{ij}}{b_{ul}c_{lv}}.
\end{eqnarray*}
To check that
$\bracket{(ab)_{ij}}{c_{uv}}= \bracket{a_{il} b_{lj}}{ c_{uv}}$, set
$z=\double{a}{c} $ and $t=\double{b}{c} $. Then
$$\double {ab} {c}
=a* t   + (-1)^{\vert b\vert \vert c\vert_d} z*b= (-1)^{\vert t'\vert
\vert a\vert} t'\otimes a t''+ (-1)^{\vert b\vert \vert c z'' \vert_d
} z'b \otimes z''.$$ Therefore \begin{eqnarray*}
\bracket{(ab)_{ij}}{c_{uv}} &=& {\double{ab} {c}}'_{uj} {\double{ab}
{c}}''_{iv}\\
&=& (-1)^{\vert t'\vert \vert a\vert} t'_{uj} (a t'' )_{iv} +
(-1)^{\vert b\vert \vert c z'' \vert_d } (z'b)_{uj} z''_{iv} \\
&=& (-1)^{\vert t'\vert \vert a\vert} t'_{uj} a_{il} t''_{lv} +
(-1)^{\vert b\vert \vert c z'' \vert_d } z'_{ul} b_{lj} z''_{iv}
\\
&=& a_{il} t'_{uj}  t''_{lv} + (-1)^{\vert b\vert \vert c
\vert_d }  z'_{ul} z''_{iv}  b_{lj} \\  &=& a_{il} \bracket{
b_{lj}}{c_{u v}} +  (-1)^{\vert b\vert \vert c\vert_d}
\bracket{a_{il}}{c_{u v}}  b_{lj}   \ = \   \bracket{a_{il} b_{lj}}{c_{u v}}.
\end{eqnarray*}
The last claim of the lemma follows from the definitions.
\end{proof}

\subsection{Antisymmetric bibrackets}\label{smbibr}

Consider the   linear involutions  $\Perm_{21}$ and $\Perm_{21,d}$ of $A^{\otimes 2}$
determined by the permutation $(21)$  as in Section~\ref{conventions}:
for   homogeneous $a,b\in A$, we have 
$$
\Perm_{21}(a\otimes b) = (-1)^{\vert a \vert \vert b \vert} b \otimes a\,\, \,\, 
 {\rm {and}} \,\, \,\, \Perm_{21,d}(a\otimes b) = (-1)^{\vert a \vert_d \vert b \vert_d} b \otimes a.
 $$ 
Given  $f\in \End( A^{\otimes 2})$, the \index{$d$-transpose} {\it $d$-transpose}  of $f$ is $f_d = \Perm_{21} f \Perm_{21,d} \in \End (A^{\otimes 2})$.

\begin{lemma}\label{bibrtranspose}
   A bibracket  $\double{-}{-}$   satisfies 
\eqref{bibi}
if and only if its $d$-transpose $\double{-}{-}_d$
satisfies 
 \eqref{bibi+}.
\end{lemma}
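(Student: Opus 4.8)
The plan is to unwind the definitions and check that the $d$-transpose operation exchanges the roles of the two arguments in the Leibniz rules, while converting the outer bimodule structure into the inner one (and vice versa). First I would record the elementary facts about $\Perm_{21}$, $\Perm_{21,d}$, and the $d$-transpose $f_d = \Perm_{21} f \Perm_{21,d}$: both permutations are involutions, so $f \mapsto f_d$ is an involution on $\End(A^{\otimes 2})$, and it suffices to prove one direction of the ``if and only if''. Thus I would assume $\double{-}{-}$ satisfies \eqref{bibi} and show $\double{-}{-}_d$ satisfies \eqref{bibi+}; applying this to $\double{-}{-}_d$ in place of $\double{-}{-}$ (and using $(f_d)_d = f$) then gives the converse.

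Next I would compute $\double{ab}{c}_d$ directly. By definition $\double{a}{b}_d = \Perm_{21}\,\double{\Perm_{21,d}(a \otimes b)}{}$, i.e. for homogeneous $a,b$ one has $\double{a}{b}_d = (-1)^{\vert a\vert_d \vert b\vert_d}\,\Perm_{21}\double{b}{a}$. So $\double{ab}{c}_d = (-1)^{\vert ab\vert_d \vert c\vert_d}\,\Perm_{21}\double{c}{ab}$, and here I would expand $\double{c}{ab}$ using the first Leibniz rule \eqref{bibi} (which holds for $\double{-}{-}$ by hypothesis), namely $\double{c}{ab} = \double{c}{a}b + (-1)^{\vert c\vert_d\vert a\vert} a\double{c}{b}$. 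Then I would apply $\Perm_{21}$ to each summand, remembering that $\Perm_{21}$ acts on $A^{\otimes 2}$ and that right multiplication by $b$ on the outer structure, $\double{c}{a}b = \double{c}{a}'\otimes \double{c}{a}''b$, becomes, after $\Perm_{21}$, something of the form $(\text{sign})\,\double{c}{a}''b \otimes \double{c}{a}'$; comparing with the definition \eqref{inner} of the inner bimodule structure $a * x * b$ should identify the two terms as $a * \double{b}{c}_d$ and $(-1)^{\vert b\vert\vert c\vert_d}\double{a}{c}_d * b$ respectively.

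The main obstacle I expect is bookkeeping the Koszul signs: one must carefully track the degree of each tensor factor through the permutation $\Perm_{21}$, through the ``outer'' multiplications in \eqref{bibi}, and through the definition \eqref{inner} of the inner structure, and then verify that the accumulated sign in the first resulting term is exactly $(-1)^{\vert a\vert\vert b\vert + \vert a\vert\vert\double{b}{c}_d{}'\vert + \dots}$ as prescribed by \eqref{inner} applied to $a * \double{b}{c}_d$, and similarly for the second term with the extra factor $(-1)^{\vert b\vert\vert c\vert_d}$. A clean way to manage this is to fix homogeneous representatives $\double{b}{c} = \double{b}{c}' \otimes \double{b}{c}''$ and note the degree constraint $\vert\double{b}{c}'\vert + \vert\double{b}{c}''\vert = \vert b\vert + \vert c\vert + d$ coming from the degree-$d$ condition on the bibracket, which lets one rewrite $\vert b\vert_d$ in terms of the factor degrees. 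I would also double-check the degree condition is preserved under $d$-transpose, so that $\double{-}{-}_d$ is again a degree-$d$ bibracket. Once the signs in both terms match, the identity \eqref{bibi+} for $\double{-}{-}_d$ follows, completing the proof.
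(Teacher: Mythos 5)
Your forward computation is exactly the paper's: write $\double{ab}{c}_d = (-1)^{\vert ab\vert_d\vert c\vert_d}\Perm_{21}\double{c}{ab}$, expand $\double{c}{ab}$ with \eqref{bibi}, push $\Perm_{21}$ through each summand, and recognize the two resulting tensors as $a*\double{b}{c}_d$ and $(-1)^{\vert b\vert\vert c\vert_d}\double{a}{c}_d*b$ via the definition \eqref{inner}. That part is fine and is what the paper does.

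The problem is your reduction to one direction. Write $L_1(P)$ for ``$P$ satisfies \eqref{bibi}'' and $L_2(P)$ for ``$P$ satisfies \eqref{bibi+}''. You prove $L_1(P)\Rightarrow L_2(P_d)$ for every bibracket $P$, and then claim that applying this to $P_d$ in place of $P$, together with $(P_d)_d=P$, gives the converse. But substituting $P_d$ yields $L_1(P_d)\Rightarrow L_2(P)$, which is a statement of the same ``type'' (first Leibniz rule implies second), not the converse $L_2(P_d)\Rightarrow L_1(P)$ that the lemma asserts. The involution $f\mapsto f_d$ swaps which bibracket is being discussed, but it does not swap the two Leibniz rules, so it cannot turn an implication ``first $\Rightarrow$ second'' into ``second $\Rightarrow$ first''. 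The converse genuinely requires a separate computation, namely one starting from \eqref{bibi+} for $\double{-}{-}_d$ and deducing \eqref{bibi}, or equivalently proving $L_2(Q)\Rightarrow L_1(Q_d)$ and applying it to $Q=P_d$. The paper acknowledges this by stating ``the converse is shown by a similar computation''; your proposal erroneously claims that step is free, and you would need to supply it.

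A minor additional note: your remark about checking that the degree condition is preserved under $d$-transpose is not needed for this lemma, whose hypothesis and conclusion are only about the Leibniz rules; the degree condition is imposed separately in the definition of a $d$-graded bibracket and enters only later.
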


\begin{proof}
Assume that  a bibracket $\double{-}{-}$  in $A$  satifies \eqref{bibi}.
Pick   homogeneous $ a,b, c\in A$ and set
$x=\double{c}{a}  $, $y=\double{c}{b} $. Then
\begin{eqnarray*} \double{ab}{c}_d &=& (-1)^{\vert
ab\vert_d \vert c\vert_d}  \Perm_{21}( \double{c}{ab})\\
&=&  (-1)^{\vert
ab\vert_d \vert c\vert_d}   \Perm_{21}(\double{c}{a} b  +(-1)^{\vert
c \vert_d \vert a\vert}  a \double{c}{b})\\
&=& (-1)^{\vert
ab\vert_d \vert c\vert_d}  \Perm_{21} (  x' \otimes x''  b  +(-1)^{\vert
c \vert_d \vert a\vert}
ay' \otimes y'' )\\
&=&    (-1)^{\vert ab\vert_d \vert  c\vert_d+ \vert  x'  \vert \vert  x''  b \vert } x''  b\otimes
x'
+   (-1)^{\vert
b \vert_d \vert c\vert_d +\vert ay'  \vert \vert y'' \vert }
y'' \otimes ay' \\
&=&    (-1)^{\vert ab\vert_d \vert  c\vert_d  } \Perm_{21} (\double{c}{a})\ast  b
+   (-1)^{\vert
b \vert_d \vert c\vert_d   }
a\ast \Perm_{21} (\double{c}{b}) \\
&=& (-1)^{\vert
b\vert \vert c\vert_d} \double{a}{c}_d \ast b +a\ast  \double{b}{c}_d .
\end{eqnarray*}
  So, $\double{-}{-}_d$ satifies \eqref{bibi+}.
The converse is shown by a similar computation.
\end{proof}

A  bibracket  $\double{-}{-}$ in $A$   
is \index{bibracket!$d$-antisymmetric} {\it $d$-antisymmetric} if  $\double{-}{-}_d=- \double{-}{-}$.
By Lemma~\ref{bibrtranspose}, a $d$-antisymmetric  bibracket    satisfies
\eqref{bibi} if and only if it satisfies \eqref{bibi+}.
Note  for the record, that given  a  $d$-antisymmetric  bibracket  $\double{-}{-}$ in $A$,
we have for any homogeneous   $a,b\in A$,
\begin{equation}\label{flip}
\double{b}{a} = - (-1)^{\vert a \vert_d \vert b \vert_d+ \left\vert \double{a}{b}'\right\vert  \left\vert \double{a}{b}''\right\vert }
\double{a}{b}'' \otimes \double{a}{b}'\, .
\end{equation}

\begin{lemma}\label{fdbtopbsymm} If in Lemma~\ref{fdbtopb} the bibracket
$\double{-}{-}$ is $d$-antisymmetric,   then the induced   bracket  $\{-, -\}$ in $A_N$ is    $d$-antisymmetric,  i.e., satisfies \eqref{antis}.
\end{lemma}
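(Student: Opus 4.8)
The plan is to establish the $d$-antisymmetry relation \eqref{antis} for $\{-,-\}$ first on the algebra generators $a_{ij}$ of $A_N$, and then to propagate it to all of $A_N$ via the uniqueness clause of Lemma~\ref{fdbtopb}.

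For the generators, fix homogeneous $a,b\in A$ and indices $i,j,u,v$, and recall that $\vert a_{ij}\vert_d=\vert a\vert_d$. I would start from \eqref{db_to_qpb} applied to the pair $(b_{uv},a_{ij})$, obtaining $\bracket{b_{uv}}{a_{ij}}=\double{b}{a}'_{iv}\,\double{b}{a}''_{uj}$, and then substitute the expansion of $\double{b}{a}$ supplied by the $d$-antisymmetry of $\double{-}{-}$, namely \eqref{flip}. This turns $\bracket{b_{uv}}{a_{ij}}$ into
$$
-(-1)^{\vert a\vert_d\vert b\vert_d+\vert\double{a}{b}'\vert\,\vert\double{a}{b}''\vert}\,\double{a}{b}''_{iv}\,\double{a}{b}'_{uj}.
$$
Since $A_N$ is a commutative graded algebra, the two generator factors can be transposed at the cost of the Koszul sign $(-1)^{\vert\double{a}{b}'\vert\,\vert\double{a}{b}''\vert}$, which cancels precisely the sign inherited from \eqref{flip}. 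What remains is $-(-1)^{\vert a\vert_d\vert b\vert_d}\,\double{a}{b}'_{uj}\,\double{a}{b}''_{iv}=-(-1)^{\vert a\vert_d\vert b\vert_d}\bracket{a_{ij}}{b_{uv}}$ by \eqref{db_to_qpb}, which is \eqref{antis} for the generators.

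To propagate, I would introduce the ``opposite'' bilinear map $\bracket{x}{y}^{\mathrm{op}}:=-(-1)^{\vert x\vert_d\vert y\vert_d}\bracket{y}{x}$ on homogeneous $x,y\in A_N$, extended bilinearly. A direct sign manipulation, formally parallel to the proof of Lemma~\ref{bibrtranspose}, shows that $\bracket{-}{-}^{\mathrm{op}}$ again satisfies both $d$-graded Leibniz rules \eqref{poisson1} and \eqref{poisson2} (the exponents produced by feeding $\bracket{-}{-}$ into one Leibniz rule recombine modulo $2$ into those demanded of $\bracket{-}{-}^{\mathrm{op}}$ by the other). By the computation on generators, $\bracket{-}{-}^{\mathrm{op}}$ agrees with $\bracket{-}{-}$ on all $a_{ij}$, hence also satisfies \eqref{db_to_qpb}; so the uniqueness clause of Lemma~\ref{fdbtopb} forces $\bracket{-}{-}^{\mathrm{op}}=\bracket{-}{-}$, which is exactly \eqref{antis}. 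Alternatively, one can check that the ``antisymmetry defect'' $(x,y)\mapsto\bracket{x}{y}+(-1)^{\vert x\vert_d\vert y\vert_d}\bracket{y}{x}$ is a biderivation for the Leibniz rules and therefore vanishes on $A_N$ as soon as it vanishes on generators.

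I expect no genuine conceptual obstacle: the argument is entirely Koszul-sign bookkeeping. The one delicate point is the cancellation, in the computation on generators, of the factor $(-1)^{\vert\double{a}{b}'\vert\,\vert\double{a}{b}''\vert}$ coming from \eqref{flip} against the transposition sign in the commutative algebra $A_N$; once that cancellation is isolated, the remaining steps are routine.
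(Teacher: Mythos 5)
Your proposal is correct and follows essentially the same route as the paper: both verify \eqref{antis} on the generators $a_{ij}$ by substituting \eqref{flip} into \eqref{db_to_qpb} and cancelling the Koszul transposition sign in the commutative algebra $A_N$. The paper's proof stops at the generator computation and leaves the propagation to all of $A_N$ implicit, whereas you make it explicit by introducing the opposite bracket and invoking the uniqueness clause of Lemma~\ref{fdbtopb}; this is a valid (and slightly more careful) presentation of the same argument.
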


\begin{proof}   Pick any homogeneous  $a,b\in A$
and  set $ x=\double{ a}{b}$. Then
\begin{eqnarray*}
\bracket{b_{uv}}{a_{ij}} &\stackrel{\eqref{flip}}{=}& - (-1)^{\vert a\vert_d \vert b\vert_d+\vert x'\vert \vert x''\vert} \, x''_{iv} x'_{uj}\\
&=& - (-1)^{\vert a\vert_d \vert b\vert_d } \, x'_{uj} x''_{iv}   \ = \   - (-1)^{\vert a\vert_d \vert b\vert_d } \bracket {a_{ij}}{b_{uv}}.
\end{eqnarray*}

\vspace{-0.5cm}
\end{proof}

\subsection{The Jacobi identity}\label{bibrJac}

The  bracket  in $A_N$ constructed in Lemma~\ref{fdbtopb} may not satisfy the  $d$-graded  Jacobi identity \eqref{jaco}.
To compute   the deviation from this identity, we observe that
any   bibracket $\double{-}{-}$ in $A$   induces a linear endomorphism $\triple{-}{-}{-}$ of $A^{\otimes 3}$,  called the
\index{tribracket!induced} {\it induced    tribracket},  by
\begin{equation}\label{tribracket}
\triple{-}{-}{-} =
\sum_{i=0}^2  \Perm_{312}^i ( \double{-}{-} \otimes \id_A) (\id_A \otimes \double{-}{-}) \Perm_{312,d}^{-i}
\end{equation}
where    $\Perm_{312}, \Perm_{312,d}\in \End(A^{\otimes 3})$ are as defined in Section \ref{conventions}.

\begin{lemma}\label{qP--}   Let $N \geq1$.   If $\double{-}{-}$ is a  $d$-antisymmetric $d$-graded bibracket in~$A$,
then  the associated bracket  $\{-, -\}$ in $A_N$ satisfies
\begin{eqnarray*}
&&\bracket {a_{pq}} {\bracket {b_{rs}} {c_{uv}}}+ (-1)^{\vert a
\vert_d \vert  bc\vert }
\bracket {b_{rs}}  {\bracket  {c_{uv}} {a_{pq}}}+ (-1)^{\vert ab
\vert  \vert  c\vert_d} \bracket  {c_{uv}}
{\bracket  {a_{pq}}  {b_{rs}}} \\
&= & {\triple {a}  {b}{c}}_{uq}'    {\triple {a}  {b}{c}}_{ps}''
{\triple {a}  {b}{c}}_{rv}''' - (-1)^{\vert   b \vert_d
\vert   c \vert_d } {\triple {a}  {c}{b}}_{rq}' {\triple {a}  {c}{b}}_{pv}'' {\triple {a}  {c}{b}}_{us}'''
\end{eqnarray*}
for any homogeneous $a,b,c\in A$,
any $p,q,r,s,u,v\in \{1,\dots, N\}$.
\end{lemma}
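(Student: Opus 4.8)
The plan is to compute the left-hand side directly from the defining formula \eqref{db_to_qpb} and the $d$-graded Leibniz rules \eqref{poisson1}, \eqref{poisson2}, keeping careful track of Koszul signs, and to recognize the resulting expression as the right-hand side via the definition \eqref{tribracket} of the induced tribracket. First I would expand the innermost bracket $\bracket{b_{rs}}{c_{uv}}$ using \eqref{db_to_qpb}: writing $\double{b}{c} = \double{b}{c}' \otimes \double{b}{c}''$, this is a sum of products of two generators of $A_N$. Then I would apply $\bracket{a_{pq}}{-}$ to such a product using the Leibniz rule \eqref{poisson1}, which splits it into two terms; in each term $\bracket{a_{pq}}{(\cdot)_{??}}$ is again evaluated by \eqref{db_to_qpb} in terms of $\double{a}{\double{b}{c}'}$ and $\double{a}{\double{b}{c}''}$. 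After contracting the matrix indices (summation over repeated indices), the two terms should assemble into $\bigl((\double{-}{-}\otimes\id_A)(\id_A\otimes\double{-}{-})\bigr)(a\otimes b\otimes c)$ with appropriate index placement on the three tensor legs, plus a sign depending on $d$-degrees that must be matched against the Koszul sign hidden in $\Perm_{312,d}$.

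Next I would do the same for the two cyclic terms on the left-hand side. The key point is that the prefactors $(-1)^{\vert a\vert_d\vert bc\vert}$ and $(-1)^{\vert ab\vert\vert c\vert_d}$ together with the $d$-antisymmetry of $\double{-}{-}$ (used in the form \eqref{flip}, and via Lemma~\ref{bibrtranspose} to pass freely between \eqref{bibi} and \eqref{bibi+}) convert the second and third cyclic terms into $\Perm_{312}$- and $\Perm_{312}^2$-conjugates of the first, matching exactly the summands $i=1,2$ in \eqref{tribracket}. So the six terms produced on the left (two from each of the three cyclic terms) should organize into two groups of three: one group reconstituting $\triple{a}{b}{c}$ with index pattern $(uq,ps,rv)$ on the three legs, the other reconstituting $-(-1)^{\vert b\vert_d\vert c\vert_d}\triple{a}{c}{b}$ with index pattern $(rq,pv,us)$. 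The swap $b\leftrightarrow c$ in the second group, and the sign $(-1)^{\vert b\vert_d\vert c\vert_d}$, arise precisely because one of the two Leibniz-split terms at each stage carries the "other" order; here I would again invoke $d$-antisymmetry \eqref{flip} to normalize everything to the order $\double{b}{c}$ rather than $\double{c}{b}$, at the cost of that sign and a transposition of tensor legs that accounts for the index pattern $(rq,pv,us)$ versus $(uq,ps,rv)$.

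The main obstacle I expect is purely bookkeeping: verifying that every Koszul sign on the left — those from the Leibniz rules \eqref{poisson1}--\eqref{poisson2}, from \eqref{db_to_qpb} (which itself has no sign but feeds homogeneous pieces of varying degree into subsequent rules), from the cyclic prefactors, and from the applications of \eqref{flip} — collapses to exactly the two signs on the right, which are themselves encoded in $\Perm_{312,d}^{\pm i}$ inside \eqref{tribracket} and in the explicit factor $-(-1)^{\vert b\vert_d\vert c\vert_d}$. A clean way to manage this is to first record, as a preliminary computation, the expansion of $\bracket{a_{ij}}{(\beta\gamma)_{uv}}$ for $\beta,\gamma\in A$ homogeneous purely in terms of $\double{a}{\beta}$, $\double{a}{\gamma}$ and matrix indices — essentially re-deriving the compatibility check from the proof of Lemma~\ref{fdbtopb} but keeping the output in a convenient closed form — and then feed $\beta\otimes\gamma = \double{b}{c}$ into it. I would also treat $\triple{-}{-}{-}$ symbolically via \eqref{tribracket}, expanding each $\Perm_{312}^i(\cdots)\Perm_{312,d}^{-i}$ on $a\otimes b\otimes c$ once and for all, so that the final comparison is a term-by-term identification rather than a fresh sign computation. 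Since $N$ plays no role beyond providing indices to contract, the whole argument is independent of $N$, and no further input beyond the already-established Lemmas~\ref{fdbtopb}, \ref{bibrtranspose} and the $d$-antisymmetry identity \eqref{flip} is needed.
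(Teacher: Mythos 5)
Your proposal is correct and takes essentially the same route as the paper's proof: expand $\triple{a}{b}{c}$ and $\triple{a}{c}{b}$ once via \eqref{tribracket}, expand each $\bracket{\cdot}{\bracket{\cdot}{\cdot}}$ on the left using \eqref{db_to_qpb}, \eqref{poisson1}, commutativity of $A_N$, and the antisymmetry identity \eqref{flip}, and match the resulting $3+3$ terms against the $2+2+2$ terms from the cyclic sum. The only cosmetic difference is that you propose to isolate the expansion of $\bracket{a_{ij}}{(\beta\gamma)_{uv}}$ as a preliminary lemma and then substitute $\beta\otimes\gamma=\double{b}{c}$, whereas the paper does this inline; the sign bookkeeping and the use of \eqref{flip} to trade $\double{b}{c}$ for $\double{c}{b}$ (picking up the factor $-(-1)^{\vert b\vert_d\vert c\vert_d}$ and the transposition of tensor legs) are exactly as you describe.
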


\begin{proof}
  It follows from the definitions that
\begin{eqnarray*}
\triple{a}{b}{c}&=& \double{a}{\double{b}{c}'}\otimes \double{b}{c}''
+  (-1)^{\vert a \vert_d \vert bc\vert} \Perm_{312} \left (\double{b}{\double{c}{a}'} \otimes \double{c}{a}''\right) \\
&&+ (-1)^{ \vert ab\vert \vert c \vert_d} \Perm_{312}^2 \left (\double{c}{\double{a}{b}'} \otimes \double{a}{b}''\right)\\
&=& \double{a}{\double{b}{c}'}'\otimes \double{a}{\double{b}{c}'}''\otimes \double{b}{c}''\\
&&+ (-1)^{\vert a \vert_d \vert bc\vert}
\Perm_{312} \left (\double{b}{\double{c}{a}'}' \otimes \double{b}{\double{c}{a}'}''   \otimes \double{c}{a}''\right)\\
&& + (-1)^{ \vert ab\vert \vert c \vert_d} \Perm_{312}^2 \left (\double{c}{\double{a}{b}'}'
\otimes \double{c}{\double{a}{b}'}''  \otimes \double{a}{b}''\right).
\end{eqnarray*}
Using the commutativity of $A_N$, we deduce that
\begin{eqnarray}
\label{abc} && {\triple {a}  {b}{c}}_{uq}'    {\triple {a}  {b}{c}}_{ps}'' {\triple {a}  {b}{c}}_{rv}'''\\
\notag &=&\double{a}{\double{b}{c}'}'_{uq} \double{a}{\double{b}{c}'}''_{ps} \double{b}{c}''_{rv}\\
\notag &&+ (-1)^{\vert a \vert_d \vert bc\vert}
\double{b}{\double{c}{a}'}'_{ps}  \double{b}{\double{c}{a}'}''_{rv}  \double{c}{a}''_{uq}\\
\notag && + (-1)^{ \vert ab\vert \vert c \vert_d}  \double{c}{\double{a}{b}'}'_{rv}
\double{c}{\double{a}{b}'}''_{uq}   \double{a}{b}''_{ps}.
\end{eqnarray}
Applying the transpositions $b\leftrightarrow c$, $r \leftrightarrow  u$,  and $s \leftrightarrow v$,
we obtain
\begin{eqnarray}
\label{acb} && {\triple {a}  {c}{b}}_{rq}'    {\triple {a}  {c}{b}}_{pv}'' {\triple {a}  {c}{b}}_{us}'''\\
\notag &=&\double{a}{\double{c}{b}'}'_{rq} \double{a}{\double{c}{b}'}''_{pv} \double{c}{b}''_{us}\\
\notag &&+ (-1)^{\vert a \vert_d \vert cb\vert}
\double{c}{\double{b}{a}'}'_{pv}  \double{c}{\double{b}{a}'}''_{us}  \double{b}{a}''_{rq}\\
\notag && + (-1)^{ \vert ac\vert \vert b \vert_d}  \double{b}{\double{a}{c}'}'_{us}
\double{b}{\double{a}{c}'}''_{rq}   \double{a}{c}''_{pv}.
\end{eqnarray}
Equalities \eqref{abc} and \eqref{acb} allow us to expand the right-hand side of the formula claimed in the lemma.
We next expand the left-hand side of this formula.
Set $x=\double {b} {c}\in A^{\otimes 2}$ and observe that
\begin{eqnarray*}
\bracket {a_{pq}} {\bracket {b_{rs}} {c_{uv}}}
&=&\bracket {a_{pq}}{x'_{us} x''_{rv}}\\
&=& \bracket {a_{pq}} {x'_{us} } {x}''_{rv} + (-1)^{\vert a\vert_d  \vert x'\vert}  x'_{us} \bracket {a_{pq}} {{x}''_{rv}}\\
&=& \double {a}  {x'}'_{uq}  \double{a}{ x'}''_{ps} {x}''_{rv}
+ (-1)^{\vert a\vert_d  \vert x'\vert}  x'_{us} \double {a}  {x''}'_{rq}  \double{a}{ x''}''_{pv}.
\end{eqnarray*}
We   rewrite  the second summand as follows.  Since $\double{-}{-}$   has degree $d$,
$$\vert \double {a}  {x''}'_{rq}
\double{a}{ x''}''_{pv} \vert = \vert \double {a}  {x''}'
\double{a}{ x''}''  \vert=\vert a \vert +\vert x''\vert +d =\vert a \vert_d +\vert x''\vert  .$$
The commutativity of $A_N$  implies that
$$(-1)^{\vert a\vert_d  \vert x'\vert}  x'_{us} \double {a}  {x''}'_{rq}
\double{a}{ x''}''_{pv}= (-1)^{   \vert x'\vert   \vert x''\vert  } \double {a}  {x''}'_{rq}
\double{a}{ x''}''_{pv} x'_{us}.$$
 The  $d$-antisymmetry of  $\double{-}{-}$ allows us to compute $ x=\double{b}{c}$  from $y=  \double{c}{b}$:
by \eqref{flip}, we have
$x'\otimes x'' =  -(-1)^{\vert b\vert_d \vert c\vert_d+\vert y'\vert \vert y''\vert} y''\otimes y'$. Hence,
$$(-1)^{   \vert x'\vert   \vert x''\vert  } \double {a}  {x''}'_{rq}
\double{a}{ x''}''_{pv} x'_{us}=  -(-1)^{\vert b\vert_d \vert c\vert_d}
\double {a}  {y'}'_{rq}
\double{a}{ y'}''_{pv} y''_{us}. $$
As a result, we obtain  that
\begin{eqnarray}
\bracket {a_{pq}} {\bracket {b_{rs}} {c_{uv}}}
\notag &=& \double {a}  {\double{b}{c}'}'_{uq}  \double{a}{ \double{b}{c}'}''_{ps} {\double {b}
{c}}''_{rv}\\
\label{abc_simple} && - (-1)^{\vert b\vert_d \vert c\vert_d}   \double {a}  {\double {c} {b}'}'_{rq}
\double{a}{ \double {c} {b}'}''_{pv} \double {c} {b}''_{us}.\end{eqnarray}
Cyclically permuting $a,b,c$ and   the indices, we obtain
\begin{eqnarray}
\notag \bracket {b_{rs}}  {\bracket  {c_{uv}} {a_{pq}}}
&=&\double {b}  {\double {c} {a}'}'_{ps}  \double{b}{ \double {c} {a}'}''_{rv} {\double {c}{a}}''_{uq}\\
\label{bca_simple} && - (-1)^{\vert c\vert_d \vert a\vert_d}   \double {b}  {\double {a} {c}'}'_{us}
\double{b}{ \double {a} {c}'}''_{rq} \double {a} {c}''_{pv}
\end{eqnarray}
  and
\begin{eqnarray}
\notag \bracket  {c_{uv}}  {\bracket  {a_{pq}}  {b_{rs}}}&=&
\double {c}  {\double {a} {b}'}'_{rv}  \double{c}{ \double {a} {b}'}''_{uq} {\double {a}{b}}''_{ps}\\
\label{cab_simple}&& - (-1)^{\vert a\vert_d \vert b\vert_d}   \double {c}
{\double {b} {a}'}'_{pv}
\double{c}{ \double {b} {a}'}''_{us} \double {b} {a}''_{rq}.
\end{eqnarray}
  The required formula directly   follows from the equalities \eqref{abc}--\eqref{cab_simple}.
\end{proof}

\subsection{Gerstenhaber bibrackets}\label{stdb-}

A  \index{bibracket!Gerstenhaber} {\it  Gerstenhaber   bibracket of degree~$d$} in~$A$    is a  $d$-antisymmetric  $d$-graded  bibracket $\double{-}{-}$   in~$A$
  such that the induced tribracket  \eqref{tribracket} is equal to zero.
The pair $(A,\double{-}{-})$ is called then a \index{double Gerstenhaber algebra} \emph{double Gerstenhaber algebra of degree~$d$}.
  This  structure  was first introduced by Van den Bergh \cite[Section 2.7]{VdB} for $d=-1$; 
see also \cite{BCER} in the setting of differential graded algebras.

\begin{lemma}\label{importantGers}
For any Gerstenhaber bibracket of degree $d$  in  $A$
and  $N\geq 1$,   the bracket $\bracket{-}{-}$ in $A_N$
given by Lemma~\ref{fdbtopb}   is a      Gerstenhaber  bracket of degree~$d$.
\end{lemma}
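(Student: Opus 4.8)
The plan is to assemble the conclusion from the four preceding lemmas, which have already done the real work. By Lemma~\ref{fdbtopb}, a Gerstenhaber bibracket of degree $d$ in $A$ (being in particular a $d$-graded bibracket) induces a bracket $\bracket{-}{-}$ in $A_N$ of degree $d$ satisfying the $d$-graded Leibniz rules \eqref{poisson1} and \eqref{poisson2}. So it remains to check that $\bracket{-}{-}$ is a $d$-graded Lie bracket in $A_N$, i.e.\ that it is $d$-antisymmetric and satisfies the $d$-graded Jacobi identity \eqref{jaco}. The $d$-antisymmetry is exactly Lemma~\ref{fdbtopbsymm}, using that a Gerstenhaber bibracket is by definition $d$-antisymmetric. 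For the Jacobi identity I would invoke Lemma~\ref{qP--}: since a Gerstenhaber bibracket has vanishing induced tribracket $\triple{-}{-}{-}=0$, the right-hand side of the displayed formula in Lemma~\ref{qP--} is identically zero (both terms involve entries of $\triple{a}{b}{c}$ or $\triple{a}{c}{b}$, which are zero), hence the left-hand side vanishes for all homogeneous $a,b,c\in A$ and all indices.

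The one genuine point to address is that Lemma~\ref{qP--} gives the Jacobi identity only on the distinguished generators $a_{pq}, b_{rs}, c_{uv}$ of $A_N$, whereas \eqref{jaco} must hold for all elements of $A_N$. This is where I would spend a sentence or two: the $d$-graded Jacobiator, viewed as a trilinear map $A_N^{\otimes 3}\to A_N$, is a derivation in each of its three arguments with respect to the multiplication of the commutative graded algebra $A_N$ — this is the standard consequence of the Leibniz rules \eqref{poisson1}, \eqref{poisson2} together with $d$-antisymmetry (it is the graded analogue of the classical fact that the Jacobiator of a bracket satisfying Leibniz is a derivation in each slot). Since $A_N$ is generated as an algebra by the elements $a_{ij}$ and has trivial free term, a trilinear map that is a derivation in each argument and vanishes on triples of generators vanishes identically. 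Therefore \eqref{jaco} holds on all of $A_N$. Combining: $\bracket{-}{-}$ is a $d$-graded Lie bracket satisfying the $d$-graded Leibniz rules, i.e.\ a Gerstenhaber bracket of degree $d$, so $(A_N,\bracket{-}{-})$ is a Gerstenhaber algebra of degree $d$.

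The main obstacle is precisely the reduction from generators to all elements: one must make sure the "derivation in each slot" claim for the Jacobiator is stated with the correct Koszul signs, so that the inductive extension over products of generators is actually valid. I expect this to be a short but sign-delicate verification, or alternatively it can be cited as the graded version of the corresponding step in \cite{VdB} for $d=-1$; either way, once it is in place the proof is a one-line bookkeeping of the previous lemmas.
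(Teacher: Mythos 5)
Your proof is correct and follows essentially the same approach as the paper: cite Lemmas~\ref{fdbtopb}, \ref{fdbtopbsymm}, and~\ref{qP--}, observing that the vanishing of the tribracket makes the right-hand side of the identity in Lemma~\ref{qP--} zero. The paper's proof is terser in that it stops once it has the Jacobi identity on the generators $a_{pq}$ and leaves the extension to all of $A_N$ implicit, whereas you explicitly supply the justification (the $d$-graded Jacobiator of a $d$-antisymmetric bracket satisfying the $d$-graded Leibniz rules is a derivation in each argument, so vanishing on generators implies vanishing everywhere), which is the one genuinely sign-delicate step and worth spelling out.
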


\begin{proof}
This follows from Lemmas~\ref{fdbtopb}, \ref{fdbtopbsymm}, and \ref{qP--}.
The  equality $$ \bracket {a_{pq}} {\bracket {b_{rs}} {c_{uv}}}+ (-1)^{\vert a
\vert_d \vert  bc\vert }
\bracket {b_{rs}}  {\bracket  {c_{uv}} {a_{pq}}}+ (-1)^{\vert ab
\vert  \vert  c\vert_d} \bracket  {c_{uv}}
{\bracket  {a_{pq}}  {b_{rs}}}=0 $$
provided by Lemma~\ref{qP--} implies   the  $d$-graded Jacobi identity    \eqref{jaco} in which  $a,b,c$ are replaced
with $a_{pq}, b_{rs},  c_{uv}$, respectively.
\end{proof}

\section{Equivariance}\label{sectionActions}

We show that the bracket  constructed   in Lemma~\ref{fdbtopb} is  equivariant 
under the natural actions of the general linear group and the Lie algebra of matrices on the representation algebra.
We begin with   terminology.

\subsection{Lie pairs}

By a \index{Lie pair} \emph{Lie pair} we mean  a pair $(G, \g)$   where $G$ is a group   and
 $\g$ is a  (non-graded) Lie algebra  endowed with a (left) action of $G$ on $\g$ by Lie algebra
 automorphisms. The  action  is denoted by  $ w\mapsto  {}^g\! w$ for $w\in \g$ and $g\in G$.  
 
Given a   Lie pair  $(G,\g)$, by a \index{$(G,\g)$-algebra} \emph{$(G,\g)$-algebra} we mean a
 graded  algebra $A$ endowed with  an  action of $G $   and  an action of
$\g $   such that  $ {}^g \! w\, a=g\, w (g^{-1}  a)  $ for all $g\in G $, $w\in \g $, $a\in A$.
Here   an action of $G$ on $A$ is a group homomorphism from $G$ to the group of graded algebra automorphisms of  $A$,
and an  action    of   $\g$  on $A$ is a   Lie algebra  homomorphism from $\g $
 to   the Lie algebra   of derivations of $A$ of degree  zero, cf$.$ Section~\ref{gral++++}.

\subsection{ Action on the representation algebras }\label{twoactionsAN+++} 

Fix  an integer   $N\geq 1$.
Let   $G_N=\GL_N(\kk)$  be the $N$-th general linear group over $\kk$  and let
$\g_N=\Mat_N(\kk)$ be the    Lie algebra   of $(N\times N)$-matrices with Lie bracket $[u,v]=uv-vu$.
The   pair $(G_N, \g_N)$ is a Lie pair where  $G_N$ acts on $\g_N$  by ${}^g\! w=gwg^{-1}$
for  any $g\in G_N$, $w\in \g_N$.  
 The representation   algebra ${\tilde A}_N$ associated with   a   graded algebra $A$   in Section \ref{AMT0}
 is a $(G_N,\g_N)$-algebra. Here  $G_N $
acts on ${\tilde A}_N$ as follows: for  a matrix $g=(g_{k,l})_{k,l=1}^N\in G_N$ and a generator  $a_{ij}\in {\tilde A}_N$,   set
\begin{equation}\label{firstform}
g a_{ij}= (g^{-1})_{i,k}\, g_{l,j}\, a_{kl}.
\end{equation}
In   this formula,   the numerical coefficients appear to the left of the   generator  $a_{kl}$. It is
easier to remember \eqref{firstform}  in the equivalent form
$g a_{ij}= (g^{-1})_{i,k}\, a_{kl} \, g_{l,j}$,
and   we will   use the latter form. Direct computations show
that these formulas are compatible with the   relations in
${\tilde A}_N$ and define an action  of $G_N$ on ${\tilde A}_N$.
 We verify the compatibility with the relation $(ab)_{ij}= a_{il} b_{lj}$:
\begin{eqnarray*}
g (ab)_{ij} \! & = & \! (g^{-1})_{i,k}\, (ab)_{kl} \, g_{l,j} \ = \
(g^{-1})_{i,k}\, a_{kp} b_{pl} \, g_{l,j} \\
\! & = & \! (g^{-1})_{i,k}\,  a_{kp} \delta_{pq} b_{ql} \, g_{l,j}
\ = \ (g^{-1})_{i,k}\, a_{kp} g_{p,r} (g^{-1})_{r,q} b_{ql} \, g_{l,j} \ = \ (g a_{ir}) (gb_{rj}).
\end{eqnarray*}
 The Lie algebra  $\g_N $  acts on ${\tilde A}_N$ as follows: for  a matrix   $w=(w_{k,l})_{k,l=1}^N\in \g_N$ and
a generator $a_{ij}\in \tilde A_N$, set
\begin{equation}\label{action}
wa_{ij}= a_{ik} w_{k,j}- w_{i,k} a_{kj}.
\end{equation}
This formula is compatible with the   relations in ${\tilde A}_N$
and defines an action of $\g_N$  on~${\tilde A}_N$.
 We verify the compatibility with the   relation $(ab)_{ij}= a_{il} b_{lj}$:
\begin{eqnarray*}
w(a_{il} b_{lj}) &= &w(a_{il}) b_{lj} +  a_{il} w(b_{lj})\\
&= &a_{ik} w_{k,l} b_{lj} - w_{i,k}a_{kl}b_{lj} +  a_{il} b_{lk} w_{k,j}- a_{il} w_{l,k} b_{kj}\\
&=&a_{il} b_{lk} w_{k,j}- w_{i,k} a_{kl} b_{lj}=(ab)_{ik} w_{k,j}- w_{i,k} (ab)_{kj} \ = \ w(ab)_{ij}.
\end{eqnarray*}
It is easy to check that these   actions turn ${\tilde A}_N$ into a $(G_N,\g_N)$-algebra.
Moreover,  these actions    descend to the   commutative   graded algebra   $A_N=\Com(\tilde A_N)$  and
turn it into  a  $(G_N, \g_N)$-algebra.  

 The next  lemma shows that the bracket  in $A_N$ provided by Lemma~\ref{fdbtopb} is  equivariant under the actions of $G_N$ and $ \g_N$.   

\begin{lemma}\label{fdbtopb+action}
  Let  $\double{-}{-}$ be  a $d$-graded bibracket  in a graded algebra $A$.
 For any $N\geq 1$,  the   bracket $\{ -, -\}$ in $ A_N $ defined in Lemma~\ref{fdbtopb}    satisfies
\begin{equation}\label{ssdM++}   g
\bracket{a} {b}  = \bracket{ga }{gb} \quad {\rm {and}} \quad
w \bracket{a} {b}  = \bracket{wa}{b}  +  \bracket{a}{wb}
\end{equation}
for all $g\in G_N $,
$w\in \g_N $  and $a,b\in  A_N$.
\end{lemma}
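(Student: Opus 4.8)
The plan is to verify the two identities in \eqref{ssdM++} by a direct computation on generators of $A_N$, using the formula \eqref{db_to_qpb} for $\bracket{-}{-}$ together with the definitions \eqref{firstform} and \eqref{action} of the $G_N$- and $\g_N$-actions on $A_N$. Since the bracket $\bracket{-}{-}$ satisfies the $d$-graded Leibniz rules \eqref{poisson1}, \eqref{poisson2}, and since the $G_N$-action is by algebra automorphisms while the $\g_N$-action is by derivations, both sides of each identity are compatible with products; hence it is enough to check \eqref{ssdM++} when $a=a_{ij}$ and $b=b_{uv}$ for homogeneous $a,b\in A$ and arbitrary indices. This reduction is the first step I would carry out carefully, spelling out the Leibniz-type bookkeeping once (it is entirely parallel to the proof that the defining relations of $A_N$ are preserved by the $G_N$- and $\g_N$-actions, done right above Lemma~\ref{fdbtopb+action}).

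\textbf{The $\g_N$-identity.} Writing $\double{a}{b}=x'\otimes x''$ with $x',x''$ homogeneous, we have $\bracket{a_{ij}}{b_{uv}}=x'_{uj}x''_{iv}$. Applying a derivation $w\in\g_N$ and using \eqref{action},
\begin{eqnarray*}
w\bracket{a_{ij}}{b_{uv}} &=& w(x'_{uj})\,x''_{iv} + x'_{uj}\,w(x''_{iv})\\
&=& \big(x'_{uk}w_{k,j}-w_{u,k}x'_{kj}\big)x''_{iv} + x'_{uj}\big(x''_{ik}w_{k,v}-w_{i,k}x''_{kv}\big).
\end{eqnarray*}
On the other hand, $\bracket{wa_{ij}}{b_{uv}}=\bracket{a_{ik}w_{k,j}-w_{i,k}a_{kj}}{b_{uv}}$, and here one must be careful: the $w_{i,k}$ are scalars in $\kk$, so by linearity of $\bracket{-}{-}$ these come out, giving $\bracket{a_{ik}}{b_{uv}}w_{k,j}-w_{i,k}\bracket{a_{kj}}{b_{uv}}=x'_{uk}x''_{iv}w_{k,j}-w_{i,k}x'_{uj}x''_{kv}$; similarly $\bracket{a_{ij}}{wb_{uv}}=x'_{uj}x''_{ik}w_{k,v}-w_{u,k}x'_{kj}x''_{iv}$. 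Summing and using commutativity of $A_N$ to move scalars freely, the four terms match term by term, proving the second identity in \eqref{ssdM++}. I would present this computation in an \texttt{eqnarray*} or \texttt{align*} display, taking care that the index $k$ in the first formula of \eqref{action} ranges over the summation convention.

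\textbf{The $G_N$-identity.} This is the analogous but slightly longer computation. Using \eqref{firstform} in the form $ga_{ij}=(g^{-1})_{i,p}\,a_{pq}\,g_{q,j}$ (scalars commute past everything), one finds, again with $\double{a}{b}=x'\otimes x''$,
\begin{eqnarray*}
\bracket{ga_{ij}}{gb_{uv}} &=& (g^{-1})_{i,p}\,g_{q,j}\,(g^{-1})_{u,r}\,g_{s,v}\,\bracket{a_{pq}}{b_{rs}}\\
&=& (g^{-1})_{i,p}\,g_{q,j}\,(g^{-1})_{u,r}\,g_{s,v}\,x'_{rq}\,x''_{ps}.
\end{eqnarray*}
On the other side, $g\bracket{a_{ij}}{b_{uv}}=g(x'_{uj}x''_{iv})=(gx'_{uj})(gx''_{iv})$ since $g$ acts as an algebra automorphism, and expanding each factor by \eqref{firstform} and collecting the $\sum_p(g^{-1})_{?,p}g_{p,?}=\delta$ cancellations identifies the two expressions. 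The only genuinely delicate point here — and the one I expect to be the main obstacle — is tracking the index placements in the matrix products so that the contractions produce exactly the Kronecker deltas claimed; a sign-free but notation-heavy verification. It helps to note that $\bracket{a_{ij}}{b_{uv}}$ depends on $(a,b)$ and on the pairing of indices $(i,v)$ and $(u,j)$ in a way that mirrors how $EZ$-style matrix coefficients transform, so the conjugation invariance is structurally expected; the work is purely to confirm it on the nose. Finally I would remark that, if $A$ is unital, the identities descend to $A_N^+$ and to the trace subalgebras $A_N^t$, $A_N^{+,t}$, since the trace map is $G_N$-invariant and $\g_N$-invariant (as $\tr(wa_{\cdot\cdot})=0$), though this extension is not needed for the statement as given.
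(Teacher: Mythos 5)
Your argument is correct and follows essentially the same route as the paper: reduce to generators $a_{ij}$, $b_{uv}$ via the Leibniz rules and the derivation/automorphism properties of the $\g_N$- and $G_N$-actions, then compute directly from \eqref{db_to_qpb}, \eqref{firstform} and \eqref{action}. Your explicit computation for the $\g_N$-identity is exactly the paper's, read backwards. One small inaccuracy in the sketch of the $G_N$-case: after writing $g\bracket{a_{ij}}{b_{uv}}=(gx'_{uj})(gx''_{iv})$ and expanding each factor, the two sides agree \emph{not} via $\sum_k(g^{-1})_{?,k}g_{k,?}=\delta$ cancellations (there are none — $g$ and $g^{-1}$ never sit next to one another over a shared index), but simply by renaming dummy summation indices and moving the scalar entries of $g,g^{-1}$ past each other; equivalently, the paper regroups the four scalar factors to recognize $(g^{-1})_{u,s}\double{a}{b}'_{sl}g_{l,j}=g\big(\double{a}{b}'_{uj}\big)$ and its twin. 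This does not affect the validity of your plan — if you carry out the expansion as you describe, you will find the identification, just by a different mechanism than the one you anticipated.
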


\begin{proof} Pick  $g=(g_{k,l})_{k,l} \in G_N$. It is easy to see that if
the identity     $ g \bracket{x} {y}  = \bracket{gx}{gy}  $
holds for  all the generators   of $ A_N$,
then it holds for any $x,y \in  A_N$.   Given  $a,b\in A$ and $i,j,u,v\in \{1,\dots,N\}$,
\begin{eqnarray*}
\bracket{ g a_{ij}}{g b_{uv} }
&=& \bracket{(g^{-1})_{i,k}  a_{kl} g_{l,j}}{(g^{-1})_{u,s} b_{st}  g_{t,v}}\\
&= & (g^{-1})_{i,k}  g_{l,j} (g^{-1})_{u,s} g_{t,v} \bracket{ a_{kl} }{ b_{st} }\\
&=& (g^{-1})_{i,k}  g_{l,j} (g^{-1})_{u,s} g_{t,v}  \double{a}{b}'_{sl} \double{a}{b}''_{kt}\\
&=& (g^{-1})_{u,s}  \double{a}{b}'_{sl} g_{l,j}  \,   (g^{-1})_{i,k} \double{a}{b}''_{kt}  g_{t,v} \\
&=& (g \double{a}{b}'_{uj}) (g \double{a}{b}''_{iv})
\ = \  g (\double{a}{b}'_{uj}\double{a}{b}''_{iv}) \ = \ g \bracket{   a_{ij}}{  b_{uv} }.
\end{eqnarray*}
Similarly, given $w=(w_{k,l})_{k,l}\in \g_N $, it is enough to check the
identity $ w \bracket{x} {y}  = \bracket{wx}{y}  +  \bracket{x}{wy}$
for   the generators  of $A_N$.    For   $a,b\in A$ and $i,j,u,v\in \{1,\dots,N\}$,
\begin{eqnarray*}
w \bracket{   a_{ij}}{  b_{uv} }
&=& w (\double{a}{b }_{uj}'  \double{a }{b }_{iv}'')\\
&=& w (\double{a}{b }_{uj}' ) \double{a }{b }_{iv}''
+ \double{a}{b }_{uj}' w( \double{a }{b }_{iv}'') \\
&=& \double{a}{b }_{uk}'  w_{k,j}  \double{a }{b }_{iv}''
- w_{u,k}  \double{a}{b }_{kj}'  \double{a }{b }_{iv}'' \\
&& +  \double{a}{b }_{uj}'  \double{a }{b }_{ik}''  w_{k,v}
-  \double{a}{b }_{uj}'  w_{i,k}  \double{a }{b }_{kv}'' \\
&=&    w_{k,j} \bracket{  a_{ik} }{  b_{uv} }
- w_{u,k}  \bracket{  a_{ij} }{  b_{kv} }   +     w_{k,v} \bracket{  a_{ij} }{  b_{uk} }
-   w_{i,k} \bracket{  a_{kj} }{  b_{uv} }\\
&=&  \bracket{  a_{ik} w_{k,j} - w_{i,k} a_{kj}}{  b_{uv} } +
\bracket{   a_{ij}}{ b_{uk} w_{k,v} -w_{u,k} b_{kv} }\\
&=&  \bracket{  w a_{ij}}{  b_{uv} } +  \bracket{   a_{ij}}{ w b_{uv} }.
\end{eqnarray*}

\vspace{-0.5cm}
\end{proof}

\section{The associated pairing   and the trace}\label{sectionActions-ee}

We   study the pairing $A \otimes A \to A$  induced by a bibracket in a graded algebra~$A$   and, in particular,  discuss its  behavior  under   the trace maps.

\subsection{The pairing  $\langle - , - \rangle$}\label{stdb}

  A  bibracket $\double{-}{-}$   in  a graded algebra~$A$   induces   
  an \index{associated pairing}  \emph{associated pairing}    $\langle -,-  \rangle: A \otimes A \to A$    by 
$$\langle a,b \rangle= {\double{a }{b}}'  {\double{a }{b}}''\in A  \quad {\text {for}} \quad a,b \in A .$$

\begin{lemma}\label{comparis-}
Let    $\double{-}{-}$  be a  $d$-antisymmetric $d$-graded  bibracket  in~$A$. 
Then  the associated pairing   $\langle -,-  \rangle$ has the following properties:
\begin{itemize}
\item[(i)] $\langle -,-\rangle$ has degree $d$ and satisfies the $d$-graded Leibniz rule  \eqref{poisson1},
\item[(ii)] $\langle a,b \rangle \equiv -(-1)^{\vert a\vert_d \vert b\vert_d } \langle b,a \rangle \
({\rm {mod}} \, [A,A])$ for all homogeneous $a,b\in A$,
\item[(iii)] $\langle [A,A], A \rangle=0$ and $\langle A, [A,A]\rangle \subset [A,A]$,
\item[(iv)] for any homogeneous $a,b,c\in A$,
\begin{eqnarray*}
& & \langle \langle a, b\rangle, c\rangle - \langle a , \langle b , c \rangle \rangle
+ (-1)^{\vert a\vert_d \vert b\vert_d} \langle b , \langle a , c \rangle \rangle \\
 &=&   m\big((-1)^{\vert a \vert_d \vert b \vert_d}\triple {b} {a} {c} - \triple {a} {b} {c}\big)
\end{eqnarray*}
where  $m\in \Hom(A^{\otimes 3}, A)$  carries  $x\otimes y\otimes z$ to $xyz $ for all $x,y,z\in A$.
\end{itemize}
\end{lemma}

\begin{proof}
  Claim (i) is straightforward. To check  (ii),      set
$ z= \double{a }{b} $. Then
 $\double{b }{a} \stackrel{}{=}
-(-1)^{\vert a\vert_d \vert b\vert_d+\vert z'\vert \vert z''\vert} z''\otimes z'$
by \eqref{flip}
and, modulo $[A,A]$,
$$\langle b,a \rangle= -(-1)^{\vert a\vert_d \vert
b\vert_d +\vert z'\vert \vert z''\vert} z''  z'  \equiv  -(-1)^{\vert a\vert_d
\vert b\vert_d } z'   z'' = -(-1)^{\vert a\vert_d \vert b\vert_d } \langle a,b
\rangle.$$

  To check  (iii), pick any homogeneous     $a,b,c\in A$ and set $x=\double{a }{c} $,
$y=\double{b }{c} $. We have
\begin{eqnarray*}
\double{ab}{c} &=& a* \double{b}{c} + (-1)^{\vert b\vert \vert c \vert_d} \double{a}{c}*b\\
&= &  (-1)^{\vert a \vert \vert y'\vert} y'\otimes ay'' +
(-1)^{  \vert b \vert \vert c   x''\vert_d }x'b\otimes x''
\end{eqnarray*}
so that
$$
\langle a b, c \rangle \ = \ {\double{ab }{c}}'  {\double{ab}{c}}''   =
(-1)^{\vert a\vert \vert y' \vert} y'  a y'' + (-1)^{\vert b\vert \vert c x'' \vert_d} x'  b x''.
$$
Transposing $a$ and $b$, we also obtain
$$ \langle ba, c \rangle = (-1)^{\vert b\vert \vert x' \vert} x'  b
x'' + (-1)^{\vert a\vert \vert c y'' \vert_d} y'  a
 y''  .$$
Since     $\double{- }{-}$ has degree $d$, we have $\vert c x'' \vert_d\equiv \vert ax' \vert \, ({\rm mod}\, 2)$ and
$\vert c y'' \vert_d\equiv \vert by' \vert \, ({\rm mod}\, 2)$. Therefore
$ \langle a b, c \rangle= (-1)^{\vert a\vert \vert b \vert}  \langle ba, c
\rangle$. Hence $\langle [A,A], A \rangle=0$.
This equality together with (ii) imply the inclusion  $\langle A, [A,A]\rangle \subset [A,A]$.

  We now prove (iv).
Set $x=\double{b}{c}$, $y=\double{a}{c}$, $\tilde y= \double{c}{a}$, $z=\double{a}{b}$,
and $\tilde z =\double{b}{a}$.
Then
\begin{eqnarray*}
\double{z'z''}{c}
&= &z' * \double{z''}{c} + (-1)^{\vert z''\vert \vert c \vert_d} \double{z'}{c}*z''\\
&=&(-1)^{\vert z'\vert\, \left\vert\double{z''}{c}' \right\vert } \double{z''}{c}' \otimes z'\double{z''}{c}''\\
&& + (-1)^{\vert z''\vert \, \left\vert c   \double{z'}{c}''  \right\vert_d } \double{z'}{c}'z'' \otimes \double{z'}{c}''.
\end{eqnarray*}
We deduce that
\begin{eqnarray}
\label{(ab)c} \langle \langle a, b\rangle, c\rangle \ = \  \langle z'z'', c\rangle &=&
(-1)^{\vert z'\vert\, \left\vert\double{z''}{c}' \right\vert } \double{z''}{c}'  z'\double{z''}{c}''\\
\notag && + (-1)^{\vert z''\vert \, \left\vert c   \double{z'}{c}''   \right\vert_d } \double{z'}{c}'z''  \double{z'}{c}'' .
\end{eqnarray}
 By  (i),   we  have
\begin{eqnarray}
\label{a(bc)}
\langle a , \langle b,c\rangle \rangle
&=& \langle a , x'x'' \rangle= \langle a ,x'\rangle x'' + (-1)^{\vert a\vert_d \vert x'\vert} x' \langle a ,x''\rangle
\end{eqnarray}
and
\begin{eqnarray}
\label{b(ac)}
\langle b , \langle a, c \rangle\rangle &=& \langle b , y'y'' \rangle=
\langle b ,y'\rangle y'' + (-1)^{\vert b\vert_d \vert y'\vert} y' \langle b ,y''\rangle.
\end{eqnarray}
By the definition of the tribracket \eqref{tribracket},
\begin{eqnarray*}
\triple{a}{b}{c}&=& \double{a}{x'}\otimes x''
+  (-1)^{\vert a \vert_d \vert bc\vert} \Perm_{312} \left (\double{b}{\tilde{y}'} \otimes \tilde{y}''\right) \\
&&+ (-1)^{ \vert ab\vert \vert c \vert_d} \Perm_{312}^2 \left (\double{c}{z'} \otimes z''\right)\\
&=& \double{a}{x'}\otimes x''
+ (-1)^{\vert a \vert_d \vert bc\vert}
\Perm_{312} \left (\double{b}{\tilde{y}'}' \otimes \double{b}{\tilde{y}'}''   \otimes \tilde{y}''\right)\\
&& + (-1)^{ \vert ab\vert \vert c \vert_d} \Perm_{312}^2 \left (\double{c}{z'}'
\otimes \double{c}{z'}''  \otimes z''\right)\\
&=& \double{a}{x'}'\otimes \double{a}{x'}''\otimes x'' \\
&& + (-1)^{\vert a \vert_d \vert bc\vert +  \vert b  \tilde{y}'\vert_d   \vert \tilde{y}'' \vert}
\tilde{y}'' \otimes \double{b}{\tilde{y}'}' \otimes \double{b}{\tilde{y}'}'' \\
&& + (-1)^{ \vert ab\vert \vert c \vert_d+ \left\vert \double{c}{z'}' \right\vert \, \left\vert \double{c}{z'}'' z''\right\vert}
\double{c}{z'}''  \otimes z'' \otimes \double{c}{z'}' \\
&\stackrel{\eqref{flip}}{=}& \double{a}{x'}'\otimes \double{a}{x'}''\otimes x'' \\
&& - (-1)^{\vert a \vert_d \vert bc\vert + \vert b\vert_d \vert y' \vert+ \vert a\vert_d \vert c \vert_d}
y' \otimes \double{b}{y''}' \otimes \double{b}{y''}'' \\
&& - (-1)^{ \vert ab\vert \vert c \vert_d+ \left\vert \double{z'}{c}'' \right\vert \, \vert z''\vert+ \vert c\vert_d \vert z'\vert_d}
\double{z'}{c}'  \otimes z'' \otimes \double{z'}{c}'' .
\end{eqnarray*}
Therefore
\begin{eqnarray}
\notag m\triple{a}{b}{c}&=&
\langle a,x'\rangle x''  - (-1)^{\vert a \vert_d \vert b\vert_d + \vert b\vert_d \vert y' \vert} y'  \langle b,y''\rangle \\
\label{mabc} && - (-1)^{\left\vert c  \double{z'}{c}'' \right\vert_d   \, \vert z''\vert} \double{z'}{c}'   z''  \double{z'}{c}''.
\end{eqnarray}
  Transposing  $a \leftrightarrow b$, we obtain
\begin{eqnarray}
\notag m\triple{b}{a}{c}&=&
\langle b,y'\rangle y''  - (-1)^{\vert a \vert_d \vert b\vert_d + \vert a\vert_d \vert x' \vert} x'  \langle a,x''\rangle \\
\notag && - (-1)^{\left\vert c  \double{\tilde{z}'}{c}''   \right\vert_d \, \vert \tilde{z}''\vert} \double{\tilde{z}'}{c}'   \tilde{z}''  \double{\tilde{z}'}{c}'' \\
\label{mbac} &\stackrel{\eqref{flip}}{=}&
\langle b,y'\rangle y''  - (-1)^{\vert a \vert_d \vert b\vert_d + \vert a\vert_d \vert x' \vert}
x'  \langle a,x''\rangle \\
\notag && +  (-1)^{\left\vert \double{z''}{c}' \right\vert \, \vert z'\vert  + \vert a \vert_d \vert b \vert_d}
\double{z''}{c}'   z'  \double{z''}{c}''.
\end{eqnarray}
  Then (iv)   follows from \eqref{(ab)c}--\eqref{mbac}.
\end{proof}

\subsection{The   trace}\label{hamham1--}

For a  graded algebra~$A$, consider   the  module   $\check A =A/[A,A]$ with the grading induced by that of~$A$. 
 Lemma~\ref{comparis-} implies that  the pairing  $\langle -, - \rangle \colon A \otimes A \to A$  
 associated with  $\double{-}{-}$  induces   a  pairing  $\check A \otimes \check A \to \check  A$.
 The latter pairing is  also denoted by $\langle -, - \rangle$.  It has degree~$d$ and  is 
  $d$-antisymmetric.  If  the induced tribracket of   $\double{-}{-}$ is zero, then $\langle -, - \rangle$ is a $d$-graded Lie bracket.  
 
Note that for  any      $N\geq 1$, the formula $\tr(a)= \sum_{i=1}^N a_{ii}$  defines a  linear map $\tr :A\to A_N$.  
Clearly,  $\tr([A,A])=0$ so that $\tr$ induces a   linear map    $\check A  \to A_N$.
This map is also denoted by~$\tr$ and  is     called the \index{trace} \emph{trace}.
 The graded subalgebra    of $A_N$  generated by    $\tr(\check A) \subset A_N$ is denoted   $A_N^t$ and    is called the \index{trace algebra} \emph{$N$-th trace  algebra} of~$A$.   We have     $A_N^t \subset A_N^{G_N}$ where    $A_N^{G_N}$
  is   the subalgebra of $A_N$ consisting of the elements invariant under the action of $G_N=\GL_N(\kk)$. 
When $A$ is finitely generated as an algebra and  $\kk$ is a field of characteristic zero,  $A_N^t=A_N^{G_N}$, see \cite{LbP}.

\begin{lemma}\label{comparis}
Under the conditions of Lemma~\ref{comparis-},    the map    $\tr:\check A\to A_N$  
carries  the pairing   $\langle -, - \rangle$ in~$\check  A$   into the
bracket $\bracket{-}{-}$ in $A_N$ induced by   $\double{-}{-}$. 
As a consequence,     $\bracket{A_N^t}{A_N^t}\subset A_N^t$ for all $N\geq 1$.  
\end{lemma}

\begin{proof} Pick   any   $a,b\in A$ and let $\check a, \check b$ be their projections to $\check A$.  We have 
\begin{eqnarray*}
\bracket{\tr(\check a)}{\tr(\check b)} &=&  \Big\{ \,\sum_{i}   a_{ii},\sum_{j}   b_{jj} \, \Big\} = \sum_{i,j}  \bracket{a_{ii}}{b_{jj}} \\
&\stackrel{\eqref{db_to_qpb}}{=}&    \sum_{i,j} \double{a}{b}'_{ji} \double{a}{b}''_{ij} =    \sum_{j} \left(\double{a}{b}' \double{a}{b}''\right)_{jj}\\
&=&  \tr \left(\double{a}{b}' \double{a}{b}''\right)
\  {=} \ \tr\big(\langle   a,  b \rangle\big)= \tr\big(\langle \check a, \check b \rangle\big).
\end{eqnarray*}

\vspace{-0.5cm}
\end{proof}

Note that  for $N=1$,  the trace      $\tr:A\to A_1 =\Com(A)$ is  the    canonical    projection and $A^t_1=A_1$.

\chapter {Bibrackets in unital algebras and in categories}\label{Bibrackets in unital algebras and in categories}

\section{Bibrackets in  unital algebras}\label{remremrem000}

  We define  a version  of  representation algebras  in the unital setting.

\subsection{Unital algebras}\label{remremrem}

A graded algebra      $A$ is \index{graded algebra!unital} \emph{unital} if it has a  two-sided unit $1_A\in A^0$.
Unital graded algebras and graded algebra  homomorphisms   carrying~$1$ to~$1$
 form a category  ${\grAlg\!}^+$.  Given  a unital graded algebra   $ A$,  we define a sequence of unital graded algebras $\tilde A_1^+, \tilde A_2^+, \dots$
  For $N\geq 1$,  $\tilde A_N^+$  is obtained   from the algebra $\tilde A_N$ defined in Section \ref{AMT0} as follows.
 First, we adjoin a unit   to $\tilde A_N$, that is consider the unital graded algebra $ \kk e\oplus  \tilde A_N$ with   two-sided   unit   $e$.
 By definition, $\tilde A_N^+$  is  the quotient of  $\kk e\oplus  \tilde A_N$  by   the   relations  $(1_A)_{ij}=\delta_{ij}e$ where $\delta_{ij}$ is
 the Kronecker delta and $i,j$ run over $1,\dots, N$.  For any    $B \in \Ob ({\grAlg\!}^+)$,   the bijection \eqref{eq:adjunction} induces a   natural   bijection
 \begin{equation}\label{eq:adjunctionunital}
\Hom_{ \grAlg\!^+\!\! } \big( \tilde A_N^+ , B\big) \simeq  \Hom_{\grAlg\!^+\!\! } \big(A, \Mat_N(B)\big).
\end{equation}
 Similarly, let  ${\grCom\!}^+$   be  the category   of commutative unital graded algebras  and graded algebra  homomorphisms   carrying $1$ to $1$.
  Set $A_N^+=\Com (\tilde A_N^+)\in \Ob ({\grCom\!}^+)$. Then for  any    $B\in \Ob ({\grCom\!}^+)$, we have a natural bijection
\begin{equation}\label{commeqaunital}
\Hom_{ {\grCom\!}^+ \!\! } \big(   A_N^+ , B\big) \simeq \Hom_{\grAlg\!^{+} \!\! } \big(A, \Mat_N(B)\big) .
\end{equation}
  We call $A_N^+$ the \index{representation algebra!unital} \emph{$N$-th unital representation algebra} of $A$.
From the viewpoint of algebraic geometry,  $A^+_N$ is   the   ``coordinate algebra''   of the   ``affine scheme''
whose set of $B$-points is the set of algebra homomorphisms $A\to \Mat_N(B)$  for any $B\in \Ob ({\grCom\!}^+)$.
 Here an ``affine scheme''  is a representable functor  from ${\grCom\!}^+$ to the category of sets.
The same graded algebra $A_N^+$   can be obtained from $A_N$ by adjoining a two-sided unit $e$
and quotienting by the relations $(1_A)_{ij}=\delta_{ij}e$ where   $i,j$ run over $1,\dots, N$.  For $N=1$, we have $\tilde A_1^+=A$ and $A_1^+=\Com (A)$.

\begin{lemma}\label{importantGerspp} Let $\double{-}{-}$ be a $d$-graded  bibracket  in a unital  graded algebra~$A$
and let     $\bracket{-}{-}$ be the induced bracket in $A_N$,  see   Lemma~\ref{fdbtopb}.
Then there is a unique bracket  $\bracket{-}{-}^+$   in  $A_N^+$  such that the projection $   A_N\to A_N^+$ is bracket-preserving.
If $\double{-}{-}$ is a  Gerstenhaber bibracket of degree $d$, then $\bracket{-}{-}^+$ is a  Gerstenhaber bracket of degree~$d$ in $A^+_N$. 
\end{lemma}

\begin{proof} Denote the projection $   A_N\to A_N^+$ by $p$. Clearly, $p$ is onto which implies the
  uniqueness of $\bracket{-}{-}^+$. To prove the existence, we   extend   $\bracket{-}{-}$ to a bracket $\bracket{-}{-}'$
  in the algebra  $  A'_N   =\kk e\oplus A_N$   by   $\bracket{e}{{ A'_N}}'=\bracket{{ A'_N}}{e}'=0$.
  The latter bracket  is easily checked to satisfy the $d$-graded Leibniz rules  \eqref{poisson1}, \eqref{poisson2}. Therefore,   it suffices  to verify that
  $ (1_A)_{ij}-\delta_{ij}e $ annhilates $\bracket{-}{-}'$ both on the left and on the right for all $i,j$.
  The   Leibniz rule \eqref{bibi+}   for  $\double{-}{-}$   implies   that  $\double{1_A}{A} =0$. Therefore for any $b\in A$ and $u,v\in \{1,\dots,N\}$,
$$\bracket{(1_A)_{ij}-\delta_{ij}e}{b_{uv}}'=\bracket{(1_A)_{ij} }{b_{uv}} - \delta_{ij} \bracket{ e}{b_{uv}}'=0.$$
Since   $ A'_N$ is generated by the set   $\{b_{uv}\, \vert\, b,u,v\}$,     the $d$-graded Leibniz rules  \eqref{poisson1}, \eqref{poisson2} imply  that
$\bracket{(1_A)_{ij}-\delta_{ij}e}{ A'_N}'=0$. Similarly, $\bracket{ A'_N}{(1_A)_{ij}-\delta_{ij}e}'=0$.
The last claim of the lemma follows from Lemma~\ref{importantGers}.
\end{proof}

The constructions and results given for $ \tilde A_N$  and $A_N$ in Sections \ref{twoactionsAN+++}  and \ref{hamham1--}
 easily extend to  $ \tilde A_N^+$   and $A_N^+$.

\subsection{The case of universal enveloping algebras}\label{AMT--1}

A rich source of unital   algebras is the theory of Lie algebras since  their universal enveloping algebras are  unital. In the graded setting 
one starts with  a $0$-graded Lie algebra  $L=(L,\{-,-\})$  as   in Section~\ref{gral++++}.
The \index{universal enveloping algebra} {\it universal enveloping algebra} $U(L)$ of~$L$  is the quotient of the
graded tensor algebra  $\oplus_{n\geq 0} \, L^{\otimes n}$ by the 2-sided ideal generated by the
vectors $$a\otimes b- (-1)^{\vert a \vert \vert b\vert} b\otimes a-\{a,b\}$$
 where $a,b$ run over all  homogeneous elements of $L$. The graded tensor algebra   is unital and so is $U(L)$.
 
  For any unital graded algebra~$ V $, the composition with the natural linear map $L\to U(L)$ determines a bijection
\begin{equation} \label{repalgLieone}
\Hom_{ \grAlg\!^+ } ( U(L) ,  V ) \simeq  \Hom_{ \Lie } (L,  V )
 \end{equation}
 where $\Lie$ is the category of $0$-graded Lie algebras
and, on the right hand-side,   $V $ is viewed as a graded Lie algebra with the commutator bracket.
 Section~\ref{remremrem}   yields  for  each  $N\geq 1$, a
commutative unital graded algebra  $ L_N=  (U(L))_N^+$.
 By \eqref{commeqaunital} and~\eqref{repalgLieone}, for any   $B\in \Ob({\grCom\!}^+)$,   we have a natural bijection
\begin{equation}\label{repalgLie}
\Hom_{ {\grCom\!}^+} (    L_N    , B) \simeq  \Hom_{  \Lie  } (L, \Mat_N(B)).
\end{equation}
  Note that $L_N$    is generated by the commuting symbols $a_{ij}$ where
$a$ runs over homogeneous elements of $ L$ and $i,j $ run over $  1, \ldots, N $, subject to the first two of the relations \eqref{eq:addid} and
the relation  $ \{a,b\}_{ij}=a_{il} b_{lj}- (-1)^{\vert a \vert \vert b\vert}b_{il} a_{lj}$ for all homogeneous  $a,b\in L$ and all $i,j $.
 Lemma~\ref{importantGerspp}  shows how to obtain  a bracket   in    $L_N$  from a bibracket in $U(L)$.

\section{Bibrackets in categories}\label{extenscatmain}

We   define representation algebras and bibrackets for  graded categories.
We   follow   Van den Bergh \cite[Section 7]{VdB} who did it for  non-graded  categories with finite sets of objects.

\subsection{Graded categories and   associated algebras}\label{extenscat}

 A  \index{graded category} \emph{graded category} is a small category~$\calC$
 such that for any objects $X,Y$ of $\calC$,
the set $\Hom_\calC (X,Y)$ is a graded module, the identity morphisms of all objects are homogeneous of degree zero,  and
the composition of morphisms is bilinear and
 degree-additive. The  latter condition means that for any homogeneous $f \in \Hom_\calC (X,Y)$
and $g\in \Hom_\calC(Y,Z)$, the morphism $g\circ f :X \to Z $ is homogeneous of degree $ \vert f\vert + \vert g\vert$.

With a graded category $\calC$ we associate a graded  algebra
$$
A= A(\calC)=  \bigoplus_{X,Y\in \Ob(\calC)}  \Hom_\calC (X,Y)
$$
where  $\oplus$ is the direct sum   of graded modules. The product   $fg\in A $ of   $f \in \Hom_\calC (X,Y)$ and
  $g \in \Hom_\calC (U,Z)$ is equal to $g\circ f  $ if $Y=U$
and to zero otherwise. For   $X \in \Ob(\calC)$, the   identity morphism of $X$   represents an element of $A $ denoted~$e_X$.
Clearly, $e_X e_X=e_X$ and  $e_X e_Y=0$ for $X\neq Y$. If   the set $\Ob(\calC)$ is  finite, then
  $1_A= \sum_{X \in \Ob(\calC)} e_X$ is a two-sided unit of $A$;   if the set $\Ob(\calC)$ is  infinite, then   $A $ is not unital.

For each integer $N\geq 1$, we introduce a unital graded algebra ${\tilde {\calC}}_N^+ $.
Consider  the unital graded algebra $ \kk e\oplus  \tilde A_N$ obtained by adjoining the   two-sided   unit~$e$
to the graded algebra $\tilde A_N$   associated with $A= A(\calC)$ in Section~\ref{AMT0}. Let ${\tilde {\calC}}_N^+ $
be the quotient of $ \kk e\oplus  \tilde A_N$ by the 2-sided ideal generated by the set $\{(   e_X  )_{ij}-\delta_{ij} e\}_{X,i,j}$
where  $X$ runs over all objects of $\calC$ and $i,j\in \{1,   \dots  , N\}$. The algebra ${\tilde {\calC}}_N^+ $ has the following  universal  property.
For each unital graded algebra $B$,   we  consider the  algebra $\Mat_N(B)$ of  $(N\times N)$-matrices over $B$ as a   category with a single object. 
This category is graded:   a   matrix   is homogeneous of degree~$p$  if  all its   entries belong to  $B^p\subset B$. 
There is  a natural bijection
$$
 \Hom_{{\grAlg\!}^+\!\! } \big({\tilde {\calC}}_N^+ , B\big)
 \stackrel{\simeq}{\longrightarrow} {\Fun}  ({\calC}, \Mat_N(B))
$$
where ${\grAlg\!}^+$ is the category of unital graded algebras and   ${\Fun}  ({\calC}, \Mat_N(B))$ is
the set of degree-preserving linear functors   $\calC\to \Mat_N(B) $. Note that such    functors    can be interpreted as   $N$-dimensional   $B$-representations of~$\calC$.

The  commutative unital graded algebra ${\calC}_N^+=\Com ({\tilde {\calC}}_N^+ )$ plays a similar role
in the category   ${\grCom\!}^+$  of commutative unital graded algebras:  for any  $B\in \Ob(\grCom^+)$, there  is a natural bijection
\begin{equation} \label{universality}
   \Hom_{{\grCom\!}^+\!\! } \big({  {\calC}}_N^+ , B\big)
 \stackrel{\simeq}{\longrightarrow} {\Fun}  ({\calC}, \Mat_N(B)) .
\end{equation}

\subsection{Double Gerstenhaber categories}\label{doublecat}

  Let $d$ be an integer.
A  \index{bibracket} \emph{$d$-graded bibracket} in a graded category $\calC$
is a $d$-graded bibracket $\double{-}{-}$ in the   graded algebra $ A=A(\calC)$
such that $\double{A}{e_X}=\double{e_X}{A}=0$ for all $X \in \Ob(\calC)$.
If such a bibracket   in $A $ is a  Gerstenhaber bibracket   of degree $d$,
then the pair $(\calC, \double{-}{-})$ is called  a  \index{double Gerstenhaber category} \emph{double Gerstenhaber category of degree $d$}.

\begin{lemma}\label{bracketcategories}
Let  $\double{-}{-}$ be a  $d$-graded  bibracket in a graded category  $\calC$.  Then
\begin{itemize}
\item for  any    $X,Y,   U, V     \in \Ob(\calC)$,
$$\double{\Hom_\calC (X,Y)}{\Hom_\calC (U, V  )} \subset \Hom_\calC (U,Y)\otimes \Hom_\calC (X,  V  );$$
\item for any integer $N\geq 1$,  the bracket  in   $A_N$   determined by   Lemma~\ref{fdbtopb} induces a bracket $\bracket{-}{-}$ in  ${  {\calC}}_N^+$
satisfying  the  Leibniz rules  \eqref{poisson1}, \eqref{poisson2};
\item if   $(\calC, \double{-}{-})$ is a double Gerstenhaber category of degree $d$, then the pair
  $({  {\calC}}_N^+, \bracket{-}{-})$   is a    unital    Gerstenhaber  algebra of degree~$d$ for all~$N\geq 1$.
\end{itemize}
\end{lemma}

\begin{proof} Using the identity $\double{A}{e_X}=\double{e_X}{A}=0$ and the Leibniz rules  for~$\double{-}{-}$,  
we obtain that for any $f\in \Hom_\calC (X,Y)$, $g\in \Hom_\calC (U,V)$,
\begin{eqnarray*}
\double{f}{g} &=&  \double{e_X f e_Y}{e_U g e_V} \\
&=&    e_U\,  \double{e_X f e_Y}{ g } \, e_V \\
&=&      e_U\, (e_X \ast \double{f }{ g } \ast e_Y ) \, e_V\\
&=&      e_U\, \left(e_X \ast (\double{f }{ g }'\otimes  \double{f }{ g }'') \ast e_Y \right) \, e_V\\
&=&   e_U \double{f }{ g }'e_Y \otimes e_X  \double{f }{ g }''e_V \ \in \Hom_\calC (U,Y)\otimes \Hom_\calC (X,V).
\end{eqnarray*}
Other claims of the lemma   follow from the definitions and Lemma~\ref{importantGers}.
\end{proof}

We conclude that a  double Gerstenhaber category $(\calC, \double{-}{-})$ of degree $d$
gives rise to a system of  unital  Gerstenhaber  algebras   $\{\calC_N^+\}_{N\geq 1}$   of degree $d$.
Moreover, for any full subcategory $ {\calC}' $ of ${\calC}$, the algebra $ A'  =A( {\calC}' )$
may be viewed as a subalgebra of $A=A({\calC})$ in the obvious way.
The first claim of Lemma~\ref{bracketcategories} implies that the    bibracket  $\double{-}{-}$  in $A$ restricts to a    bibracket in~$ A' $.
In this way, $ \calC' $ becomes a double Gerstenhaber category  of degree $d$.
In particular, any object $X$ of ${\calC}$ determines a full subcategory   ${\calC}_X$   of ${\calC}$ consisting of $X$ and all its endomorphisms.
  Then  the restriction of  $\double{-}{-}$  to  the unital graded algebra $A_X=A({\calC}_X) =  \End_{{\calC}} (X)$  
  is a Gerstenhaber bibracket of degree $d$, and   we have  $(\calC_X)^+_N=(A_X)^+_N$.
  
   \subsection{Remark} 
   
 In analogy with  non-unital  algebras,   one can consider  \lq\lq categories without identity morphisms".
However, such generalized categories do not appear in our geometric context   and we do not study them.

\section{Bibrackets in Hopf categories}

We   define    Hopf categories and we introduce a class of bibrackets in Hopf categories 
  called reducible bibrackets.

\subsection{Hopf categories}\label{prop11}   

Consider a   graded category~$\mathscr{C}$ and   the associated graded algebra $A=A(\mathscr{C})$, see  Section~\ref{extenscat}.
 For   $X  \in \Ob(\mathscr{C})$,   we  let $e_X\in A^0\subset A$ be the element   represented by the  identity morphism  of $X$.
  We view   $A \otimes  A$ as an algebra with multiplication  defined  by
  $$(a_1 \otimes a_2)(b_1 \otimes b_2)= (-1)^{\vert a_2 \vert\, \vert b_1 \vert} a_1b_1 \otimes a_2b_2$$
  for any homogeneous $a_1, a_2, b_1, b_2 \in A$.  
 A \index{Hopf category!comultiplication of} \emph{comultiplication}  in~$\mathscr{C}$
  is a   degree-preserving    algebra   homomorphism   $\Delta:A \to A\otimes A$    such that
    $$(\Delta \otimes \id_A) \Delta= (\id_A \otimes \Delta) \Delta, $$
 and   $\Delta (e_X)=e_X \otimes e_X$ for all $X\in \Ob (\mathscr{C})$.
As a consequence,    ${{\Delta}}$ must carry  $H=\Hom_{\mathscr{C}} (X,Y)  \subset A$ to $H\otimes H$   for any
 objects $X, Y$ of~$\mathscr{C}$.  The image of any ${a}\in H$ under ${{\Delta}}$ expands
(non-uniquely) as   a   sum $  \sum_i a^{(1)}_i \otimes
a^{(2)}_i$ where   $i$ runs over a finite set and $ a^{(1)}_i,
 a^{(2)}_i $ are homogeneous elements of~$H$. We  use
 Sweedler's notation, i.e.,
drop  the index $i$ and the summation sign and write simply
${{\Delta}}({a})=  a^{(1)} \otimes a^{(2)}$.
  The condition that   $\Delta $ is  degree-preserving means that $\vert a^{(1)} \vert +\vert a^{(2)} \vert =\vert a \vert$ for any homogeneous $a\in  A$.
  That $\Delta$ is an algebra homomorphism means the identity 
 $$
 \Delta(ab) = (-1)^{\vert  a^{(2)} \vert\, \vert b^{(1)} \vert} a^{(1)}b^{(1)} \otimes a^{(2)}b^{(2)}
 $$
 for any homogeneous   $a,b\in A$.

  An \index{augmentation} \emph{augmentation} of $\mathscr{C}$ is a      linear map    $\varepsilon :A \to \kk$  
  carrying  the  identity morphisms of all objects to~$1$, carrying $A^p$ to~$0$ 
    for all $p \neq 0$,     and satisfying   $\varepsilon(fg) = \varepsilon(f)\,  \varepsilon(g)$
     for any morphisms  $f, g$ in ${\mathscr{C}}$  with    $\hbox{target}(f)=\hbox{source}(g)$. 
 A \index{Hopf category!counit of} \emph{counit} for   a comultiplication  $\Delta:A\to A\otimes A$   is  an augmentation   $\varepsilon :A \to \kk$   of~$\mathscr{C}$
   such that
  $$   (\id_A \otimes \varepsilon) \Delta =\id_A = (\varepsilon  \otimes \id_A) \Delta \colon A \to A .$$
  Clearly,  if~$\varepsilon$ is a counit of~$\Delta $, then  $\Delta$ is a split injection with left inverses $\id_A \otimes \varepsilon$ and $\varepsilon  \otimes \id_A$.
  Also,  $\varepsilon$ induces  linear maps  $\varepsilonin, \varepsilonout \colon  A \to A$ such that  
 $$
 \varepsilonin(a) =\varepsilon(a) e_X \quad {\rm {and}} \quad   \varepsilonout(a) =\varepsilon(a) e_Y,
 $$
for all  $X, Y\in \Ob (\mathscr{C})$ and  $a\in \Hom_{\mathscr{C}} (X,Y) $.
 An  \index{Hopf category!antipode of}  \emph{antipode} in~$\mathscr{C}$ is a  degree-preserving  linear map   $s :A \to A$
 carrying  $\Hom_{\mathscr{C}}(X,Y)$ to $\Hom_{\mathscr{C}}(Y,X)$  for any $X,Y \in \Ob(\mathscr{C})$  and satisfying
 $$
 a^{(1)} s(a^{(2)})=    \varepsilonin(a),  \qquad s(a^{(1)} ) \, a^{(2)} =   \varepsilonout(a),
 $$
  for all  $a \in A$. It  \  follows immediately   that $s(e_X)= e_X$ for any $X\in \Ob(\mathscr{C})$.

   A graded category~$ \mathscr{C}$ endowed with a comultiplication  $\Delta$, a counit~$\varepsilon$,
   and an antipode  $s$  is called a \index{Hopf category} \emph{Hopf category}.
  When  $\mathscr{C}$ has a single object, we recover the usual notion of  a  \index{graded algebra!Hopf}   \emph{graded Hopf algebra}.
    A Hopf category  $ (\mathscr{C}, \Delta, \varepsilon, s)$  is  \index{Hopf category!cocommutative} \emph{cocommutative}
 if    $\Delta=\Perm_{21} \Delta$ and  is  \index{Hopf category!involutive}  \emph{involutive} if $s$ is an involution.

Basic  properties of Hopf algebras  (see, for instance, \cite[Theorem III.3.4]{Ka}) generalize to Hopf categories. We state the properties  used in the sequel.

\begin{lemma} \label{properties_antipode}
The antipode $s$ of a Hopf category $\mathscr{C}$ is an antiendomorphism of the underlying algebra of $A= A(\mathscr{C})$ in the sense that, for any homogeneous $a,b\in A$,
$$
s(ab)= (-1)^{\vert a \vert \vert b \vert} s(b) s(a).
$$
Also, $s$ is an antiendomorphism of the underlying coalgebra of $A$ in the sense that, for any $a\in A$,
$$
\varepsilon(s(a)) =\varepsilon(a) \quad \hbox{and} \quad (s(a))^{(1)} \otimes (s(a))^{(2)} = (-1)^{\vert a^{(1)} \vert \vert a^{(2)} \vert } s(a^{(2)}) \otimes s(a^{(1)}).
$$
Finally, the cocommutativity of $\mathscr{C}$  implies its involutivity, and the latter is equivalent to any of the following  two properties:
\begin{itemize}
\item[(i)] for all $a\in A$, $(-1)^{\vert a^{(1)} \vert \vert a^{(2)} \vert } s(a^{(2)}) a^{(1)} = \varepsilonout(a)$;
\item[(ii)] for all $a\in A$, $(-1)^{\vert a^{(1)} \vert \vert a^{(2)} \vert } a^{(2)} s(a^{(1)}) = \varepsilonin(a)$.
\end{itemize}
\end{lemma}

\begin{proof}
In the proof we will use the following notation. Recall that the algebra~$A$ is linearly generated by morphisms in $\mathscr{C}$.
  Given two expressions  linearly depending on one or several elements $a,b,  \ldots$ of $A$, we relate these expressions by the symbol~$\stackrel{\cdots}{=} $
  if they are   equal  for all $a,b,  \ldots$ and this equality follows from the axioms of a Hopf category  whenever  $a,b, \ldots$ are morphisms in~$\mathscr{C}$.

Let $C$ be the module  of degree-preserving linear maps  $A\otimes A \to A$. Note that the comultiplication in $A$ induces a degree-preserving coassociative comultiplication in $A \otimes A$ carrying $a\otimes b $  with $a,b \in A$ to
$$(-1)^{\vert b^{(1)}\vert \vert a^{(2)} \vert } \big(a^{(1)} \otimes b^{(1)}\big)\, \otimes \, \big(a^{(2)}\otimes b^{(2)}\big)\, .
$$
  This comultiplication induces  the \index{convolution product} \emph{convolution product}  $\ast$ in~$C$ by
$$
(f \ast g) (a \otimes b)
= (-1)^{\vert b^{(1)}\vert \vert a^{(2)} \vert } f\big(a^{(1)} \otimes b^{(1)}\big)\, g\big(a^{(2)}\otimes b^{(2)}\big)
$$
for any $f,g \in C$ and any $a,b\in A$.  We define elements   $l,r  $  of $ C$  by
 $l(a\otimes b) = s(ab)$ and $ r(a \otimes b) = (-1)^{\vert a \vert \vert b \vert} s(b) s(a)$ for any homogeneous $a,b\in A$.
 To prove the first claim of the lemma we must show that $l=r$. To this end we define     $m,u,v \in C$   by
$$ m(a \otimes b)=ab, \quad
u(a \otimes b)= \varepsilonout(ab), \quad
v(a \otimes b)=  \varepsilonin(ab)
$$ for any $a,b\in A$.
 Observe that
 \begin{eqnarray*}
(u \ast r) (a \otimes b) &=&  (-1)^{\vert b^{(1)}\vert \vert a^{(2)} \vert  + \vert b^{(2)}\vert \vert a^{(2)} \vert   }  \varepsilonout(a^{(1)}b^{(1)})\, \big(s(b^{(2)}) s(a^{(2)})\big) \\
& \stackrel{\cdots}{=}  &  (-1)^{\vert b\vert \vert a^{(2)} \vert    }  \varepsilon(a^{(1)}b^{(1)})\, s(b^{(2)}) s(a^{(2)}) \\
& \stackrel{\cdots}{=}  &  (-1)^{\vert b\vert \vert a^{(2)} \vert    }  \varepsilon(a^{(1)}) \varepsilon(b^{(1)})\, s(b^{(2)}) s(a^{(2)}) \\
&=&  (-1)^{\vert b\vert \vert a \vert    }   s\big( \varepsilon(b^{(1)}) b^{(2)}\big) s\big(\varepsilon(a^{(1)}) a^{(2)}\big) \ = \ r(a\otimes b)
\end{eqnarray*}
and
\begin{eqnarray*}
(l \ast m) (a \otimes b) &=& (-1)^{\vert b^{(1)}\vert \vert a^{(2)} \vert } s\big(a^{(1)} b^{(1)}\big)\, \big(a^{(2)} b^{(2)}\big) \\
 &=& s\big((a b)^{(1)}\big)\, \big((a b)^{(2)}\big)  \ = \  \varepsilonout( ab) \ = \ u (a\otimes b).
\end{eqnarray*}
Furthermore,
\begin{eqnarray*}
(m \ast r) (a\otimes b) &=&   (-1)^{\vert b^{(1)}\vert \vert a^{(2)} \vert  + \vert b^{(2)}\vert \vert a^{(2)} \vert} \big(a^{(1)}b^{(1)}\big)\, \big( s(b^{(2)}) s(a^{(2)})\big) \\
 &=&   (-1)^{\vert b \vert \vert a^{(2)} \vert  } a^{(1)} \big(b^{(1)}  s(b^{(2)})\big) s(a^{(2)}) \\
 &=&   (-1)^{\vert b \vert \vert a^{(2)} \vert  } a^{(1)} \varepsilonin (b) s(a^{(2)}) \\
 & \stackrel{\cdots}{=} & a^{(1)}  \varepsilonin (b) s(a^{(2)}) \\
 & = & a^{(1)}  \varepsilonin (b) s(a^{(2)}) \varepsilon(a^{(3)}) \\
 &  \stackrel{\cdots}{=} & a^{(1)}  s(a^{(2)}) \varepsilon(a^{(3)}b) \\
 &=& \varepsilonin (a^{(1)})  \varepsilon(a^{(2)}b) \ \stackrel{\cdots}{=}  \  \varepsilonin(ab) \ = \ v(a\otimes b).
\end{eqnarray*}
and
\begin{eqnarray*}
(l \ast v) ( a \otimes b) & = &  (-1)^{\vert b^{(1)}\vert \vert a^{(2)} \vert } s(a^{(1)}b^{(1)})\, \varepsilonin(a^{(2)}b^{(2)}) \\
&  \stackrel{\cdots}{=} &  (-1)^{\vert b^{(1)}\vert \vert a^{(2)} \vert } s(a^{(1)}b^{(1)})\, \varepsilon(a^{(2)}b^{(2)}) \\
&  \stackrel{\cdots}{=} &  (-1)^{\vert b^{(1)}\vert \vert a^{(2)} \vert } s(a^{(1)}b^{(1)})\, \varepsilon(a^{(2)}) \varepsilon(b^{(2)}) \\
& = &s\big(a^{(1)}\varepsilon(a^{(2)})\, b^{(1)}\varepsilon(b^{(2)})\big) \ = \ l(a\otimes b).
\end{eqnarray*}
Since $\ast$ is an associative operation, we deduce that
 $$
l = l \ast v = l \ast m \ast r = u \ast r = r.
$$

We now verify that $s$ is an antiendomorphism of the unital coalgebra $(A,\Delta,\varepsilon)$.
For this, we  consider the module $D$  of degree-preserving linear maps $A \to A\otimes A$, and we equip it with the  \index{convolution product} \emph{convolution product}  defined by
$$
(f \ast g) (a) = f(a^{(1)})\, g(a^{(2)})
$$
for any $f,g \in  D$ and any $a\in A$. Let  $l,r,u,v \in   D $ be  defined by
$$
l = \Delta s , \quad r =  (s \otimes s) \Perm_{21} \Delta , \quad u = \Delta \varepsilonout, \quad v = \Delta \varepsilonin.
$$
We must prove that $l=r$.
Observe that, for any $a\in A$,
\begin{eqnarray*}
( u \ast r)(a)  &= &  \Delta\varepsilonout(a^{(1)})\, \big((s \otimes s) \Perm_{21} \Delta(a^{(2)})\big) \\
&\stackrel{\cdots}{=} &  (-1)^{\vert a^{(3)} \vert \vert a^{(4)} \vert} \big( \varepsilonout(a^{(1)}) \otimes  \varepsilonout(a^{(2)})\big) \, \big(s(a^{(4)}) \otimes s(a^{(3)})\big)  \\
&=&  (-1)^{ \vert a^{(3)} \vert \vert a^{(4)} \vert}   \varepsilonout(a^{(1)}) s(a^{(4)}) \otimes  \varepsilonout(a^{(2)})  s(a^{(3)}) \\
&\stackrel{\cdots}{=} & (-1)^{ \vert a^{(2)}a^{(3)} \vert \vert a^{(4)} \vert}   \varepsilon(a^{(1)}) s(a^{(4)}) \otimes  \varepsilon(a^{(2)})  s(a^{(3)})  \\
&=&  (-1)^{ \vert a^{(2)} \vert \vert a^{(3)} \vert}   \varepsilon(a^{(1)}) s(a^{(3)}) \otimes  s(a^{(2)})   \\
&=&  (-1)^{\vert a^{(1)} a^{(2)}\vert \vert a^{(3)}\vert} s(a^{(3)} )  \otimes s\big(\varepsilon(a^{(1)}) a^{(2)}\big)  \\
&=&  (-1)^{\vert a^{(1)} \vert \vert a^{(2)} \vert} s(a^{(2)})  \otimes s(a^{(1)})  \ = \ r(a).
\end{eqnarray*}
and
$$
 ( l \ast \Delta) (a)  =  \Delta(s(a^{(1)}))\, \Delta(a^{(2)})
 =  \Delta \big( s(a^{(1)})\, a^{(2)} \big)   =  u(a).
$$
Furthermore,
\begin{eqnarray*}
(\Delta \ast r )(a) &= &  \Delta(a^{(1)})\, \big((s \otimes s) \Perm_{21} \Delta(a^{(2)})\big)  \\
&=& (-1)^{\vert a^{(3)} \vert \vert a^{(4)} \vert} \big(a^{(1)} \otimes a^{(2)}\big) \, \big(s(a^{(4)}) \otimes s(a^{(3)})\big)   \\
&=& (-1)^{\vert a^{(2)} a^{(3)} \vert \vert a^{(4)} \vert}  a^{(1)} s(a^{(4)}) \otimes  a^{(2)  } s(a^{(3)}) \\
&=& (-1)^{\vert a^{(2)}  \vert \vert a^{(3)} \vert}  a^{(1)} s(a^{(3)}) \otimes  \varepsilonin(a^{(2)}) \\
&\stackrel{\cdots}{=} &  a^{(1)} s(a^{(3)}) \otimes  \varepsilonin(a^{(2)})  \  \stackrel{\cdots}{=} \  v(a)
\end{eqnarray*}
and
\begin{eqnarray*}
(l \ast v)(a)  &=& \Delta s(a^{(1)}) \,    \Delta \varepsilonin(a^{(2)})\\
&=& \Delta \big(s(a^{(1)})\varepsilonin(a^{(2)})   \big)  \\
&\stackrel{\cdots}{=} &  \Delta \big(s(a^{(1)})\varepsilon (a^{(2)})   \big)  \ = \  \Delta \big(s\big(a^{(1)}\varepsilon(a^{(2)}\big))  \big) \ = l(a)
\end{eqnarray*}
Using the associativity of  $\ast$, we deduce that  $$l=l \ast v=l \ast \Delta \ast r =u \ast r=  r.$$
Also,~$s$  preverves the counit: for any $a\in A$, we have
\begin{eqnarray*}
\varepsilon(a)  \ \stackrel{\cdots}{=} \   \varepsilon(\varepsilonin(a)) & = &  \varepsilon\big(a^{(1)} s(a^{(2)})\big) \\
& \stackrel{\cdots}{=}&  \varepsilon(a^{(1)}) \varepsilon\big( s(a^{(2)})\big)
\ = \  \varepsilon\big( s\big(\varepsilon(a^{(1)})  a^{(2)}\big)\big)  \ = \ \varepsilon(s(a)).
\end{eqnarray*}

We now prove the part of the lemma concerning the involutivity. If $s^2=\id_A$, then \ the condition (i) is satisfied:
$$
(-1)^{\vert a^{(1)} \vert \vert a^{(2)} \vert } s(a^{(2)}) a^{(1)} = s \big(s(a^{(1)}) a^{(2)}\big) = s \big(\varepsilonout(a)\big) = \varepsilonout(a).
$$
Assume now  that the condition   (i) is met and consider the convolution product $\ast$
in the   module,  $E$,   of degree-preserving linear maps $A \to A$.
For any $a \in A$,
\begin{eqnarray*}
 (s\ast s^2)(a) = s(a^{(1)})\,  s\big(s(a^{(2)})\big) & = &  (-1)^{\vert a^{(1)} \vert \vert a^{(2)} \vert } s\big(s(a ^{(2)})  a^{(1)}  \big)  \\
&  {=}&  s( \varepsilonout(a) ) = \varepsilonout(a).
\end{eqnarray*}
  Thus, $s\ast s^2 =\varepsilonout$.  It follows from the axioms of a Hopf category that
$\id_A \ast s = \varepsilonin$ and $  \varepsilonin \ast f = f =f \ast \varepsilonout $
for each $f \in  E $  carrying the set  $\Hom_{\mathscr{C}}(X,Y)  $ into itself   for all  $X,Y \in \Ob(\mathscr{C})$.
Applying this to $f=s^2$ and to $f=\id_A$ and  using the   associativity of $\ast$, we  obtain  $$s^2=\varepsilonin \ast s^2 = \id_A \ast s \ast s^2= \id_A \ast\, \varepsilonout = \id_A.$$
This shows the equivalence between the involutivity and (i); the equivalence with (ii) is proved similarly.
Finally, if   $\mathscr{C}$  is cocommutative, then the identity $s \ast  \id_A   =\varepsilonout$ implies (i), so that $A$ is involutive.
\end{proof}

\subsection{Bibrackets re-examined}\label{prop2}

 Bibrackets in    a   Hopf category~$ ( \mathscr{C}, \Delta, \varepsilon, s)$ have   a useful  reformulation which we now describe.  Consider the associated graded algebra $A=A(\mathscr{C})$
  and   a $d$-graded bibracket $\double{-}{-} : A\otimes A  \to A\otimes A$  with $d\in \ZZ$.
We  define  a linear  map $ \Lambda= \Lambda(\double{-}{-})   :A \otimes A \to A\otimes A$   by
 \begin{equation} \label{Lambda}
\Lambda(a, b)  =   a^{(1)} s\Big( \double{a^{(2)}}{b^{(1)}}'  \Big) \otimes  \double{a^{(2)}}{b^{(1)}}''  s(b^{(2)})
\end{equation}
for  any  $a,b \in A$.  Note that, if    $a\in \Hom_{\mathscr{C}} (X,Y)$ and $b\in   \Hom_{\mathscr{C}} (U,V)$ with $X,Y,U,V \in \Ob (\mathscr{C})$,
then  $ \Lambda(a, b) \in \Hom_{\mathscr{C}} (X,U) \otimes  \Hom_{\mathscr{C}} (X,U).$

\begin{lemma} \label{F}
For any   $a  \in A$ and  any homogeneous $b,c\in A$, we have
\begin{eqnarray*}
\Lambda(a,bc)  &=&    \Lambda\big(a,b^{(1)}\big)\, \varepsilon\big(b^{(2)}c\big)   +    (-1)^{   \vert b  \vert\, \vert c\vert} \, \Lambda(a,c)  \, (s  \otimes  s)\!\big(\Delta (b) \big),\\
\Lambda(ab, c)  &=&  \Lambda(a^{(1)},c)\, \varepsilon (a^{(2)}b)  +   \Delta(a)   \, \Lambda(b,c).
\end{eqnarray*}
\end{lemma}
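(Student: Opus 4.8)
\textbf{Proof plan for Lemma \ref{F}.}
The plan is to verify the two identities by direct substitution of the defining formula \eqref{Lambda} for $\Lambda$, using only the $d$-graded Leibniz rules \eqref{bibi}, \eqref{bibi+} for $\double{-}{-}$, the fact that $\Delta$ is an algebra homomorphism and is coassociative, the counit axioms $(\id_A \otimes \varepsilon)\Delta = \id_A = (\varepsilon \otimes \id_A)\Delta$, and the properties of the antipode $s$ collected in Lemma~\ref{properties_antipode}. Throughout I would work with Sweedler's notation and keep careful track of the Koszul signs generated by moving homogeneous factors past one another; since $\Delta$ is degree-preserving and $\double{-}{-}$ has degree $d$, the degrees of the pieces $\double{a^{(2)}}{b^{(1)}}'$ and $\double{a^{(2)}}{b^{(1)}}''$ add up to $|a^{(2)}|+|b^{(1)}|+d$, and this is the bookkeeping device that controls every sign.

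For the first identity I would start from $\Lambda(a, bc)$, apply $\Delta(bc) = (-1)^{|b^{(2)}||c^{(1)}|} b^{(1)}c^{(1)} \otimes b^{(2)}c^{(2)}$ to rewrite $(bc)^{(1)}$ and $(bc)^{(2)}$, then expand $\double{a^{(2)}}{b^{(1)}c^{(1)}}$ via the first Leibniz rule \eqref{bibi}. This splits $\Lambda(a,bc)$ into two sums. In the first sum, the factor $\double{a^{(2)}}{b^{(1)}}$ appears, and the remaining letters $c^{(1)}$ and $b^{(2)}c^{(2)}$ get absorbed by the antipode and the multiplication; using $s(b^{(2)}c^{(2)}) = (-1)^{|b^{(2)}||c^{(2)}|} s(c^{(2)})s(b^{(2)})$ (antiendomorphism property from Lemma~\ref{properties_antipode}) and then contracting $c^{(1)} s(c^{(2)})$ into $\varepsilonin$, equivalently using $\varepsilon(c^{(1)})$-type contractions, one recognizes $\Lambda(a, b^{(1)})\,\varepsilon(b^{(2)}c)$ after reindexing the coproduct on $b$ by coassociativity. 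In the second sum, $\double{a^{(2)}}{c^{(1)}}$ appears, the letter $b^{(1)}$ sits to the left and $b^{(2)}$ to the right; collecting $a^{(1)}$ with the $b$'s and recognizing $\Lambda(a,c)$ times $(s \otimes s)\Delta(b)$ (after using $b^{(1)} \otimes b^{(2)} \mapsto s(b^{(1)}) \otimes s(b^{(2)})$ placement) yields the second term, with the sign $(-1)^{|b||c|}$ emerging from commuting $b^{(1)}$ past the degree-$(\sim|c|)$ block $\double{a^{(2)}}{c^{(1)}}'$ and similarly on the second tensor factor. The second identity is handled the same way, starting from $\Lambda(ab,c)$, writing $\Delta(ab)$, and expanding $\double{a^{(2)}b^{(2)}}{c^{(1)}}$ via the second Leibniz rule \eqref{bibi+}, which introduces the inner bimodule action $*$; one then unwinds \eqref{inner} and contracts the spare $a$- or $b$-factors against the antipode using $\varepsilon$, producing $\Lambda(a^{(1)},c)\,\varepsilon(a^{(2)}b)$ and $\Delta(a)\,\Lambda(b,c)$ respectively.

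The main obstacle I anticipate is the sign accounting, not the structural manipulation: the inner bimodule action $*$ in \eqref{bibi+} carries the sign $(-1)^{|a||b| + |a||x'| + |b||x''|}$, and combined with the Sweedler-coproduct signs and the antipode antiendomorphism signs $(-1)^{|a^{(1)}||a^{(2)}|}$, there are several places where a miscount would flip a term. I would manage this by assigning formal degree symbols to each Sweedler component at the outset, expanding everything symbolically, and checking that after all contractions the residual exponents collapse to exactly $|b||c|$ in the first identity and to $0$ in the relevant spots of the second. A secondary subtlety is making sure the counit contractions are applied in the legitimate direction — e.g. $c^{(1)}\varepsilon(c^{(2)}) = c$ versus $\varepsilon(c^{(1)})c^{(2)} = c$ — which is exactly where coassociativity of $\Delta$ must be invoked to reshuffle $b^{(1)}, b^{(2)}, b^{(3)}$ (or $a^{(1)}, a^{(2)}, a^{(3)}$) before the contraction is meaningful. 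Once these two sources of sign are pinned down, both identities follow by a finite symbolic computation with no further input.
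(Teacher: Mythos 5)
Your plan follows the same route the paper takes: substitute the defining formula for $\Lambda$, expand $\Delta(bc)$ (resp.\ $\Delta(ab)$) by multiplicativity, apply the appropriate Leibniz rule \eqref{bibi} or \eqref{bibi+} to the inner bibracket, use the antiendomorphism property of $s$ to reorganize, and contract via the counit. The only thing the paper does that you leave implicit is a preliminary reduction: it observes both sides are linear in each argument and therefore one may assume $b,c$ (resp.\ $a,b$) are single morphisms in $\mathscr{C}$; when the relevant sources and targets do not match, both sides vanish, and when they do match one gets $\varepsilon(b^{(2)}c)=\varepsilon(b^{(2)})\varepsilon(c)$ and $c^{(1)}s(c^{(2)})=\varepsilonin(c)=\varepsilon(c)e_W$ cleanly, so the first summand in your decomposition becomes simply $\Lambda(a,b)\,\varepsilon(c)$ (not requiring further coassociativity reindexing). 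Adding this explicit reduction would tighten your argument, but the computation you describe is the one the paper carries out.
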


 \begin{proof}
Since both sides of the first identity are  linear in $b$ and $c$,  it suffices to consider  the case where
$b\in \Hom_{\mathscr{C}}(U,V)$ and $c \in \Hom_{\mathscr{C}}(W,Z)$ for some objects $U,V,W,Z$ of $\mathscr{C}$.
If $V\neq W$, then both sides of the identity are equal to zero. If  $V=W$,
then   $\Lambda\big(a,b^{(1)}\big)\, \varepsilon\big(b^{(2)}c\big) = \Lambda(a,b) \, \varepsilon(c)$ and
 \begin{eqnarray*}
 \Lambda(a,bc)  &=&   a^{(1)}   s\Big( \double{a^{(2)}}{(bc)^{(1)}}' \Big) \otimes  \double{a^{(2)}}{(bc)^{(1)}}''  s((bc)^{(2)})    \\
 &=& (-1)^{  \vert b^{(2)} \vert\,  \vert c\vert}  a^{(1)} s\Big( \double{a^{(2)}}{b^{(1)} c^{(1)} }' \Big) \otimes  \double{a^{(2)}}{b^{(1)} c^{(1)}}''  s(c^{(2)})\,  s(b^{(2)})\\
 &=& (-1)^{  \vert b^{(2)} \vert\,  \vert c\vert} a^{(1)} s\Big( \double{a^{(2)}}{b^{(1)}  }' \Big) \otimes  \double{a^{(2)}}{b^{(1)}  }''  c^{(1)} s(c^{(2)}) \, s(b^{(2)}) \\
 && +    \eta_1 a^{(1)} s\Big( \double{a^{(2)}}{ c^{(1)} }' \Big) \, s(b^{(1)}) \otimes  \double{a^{(2)}}{  c^{(1)}}''  s(c^{(2)}) \, s(b^{(2)}) \\
 &=&   \varepsilon(c) \, \Lambda(a,b)    +    \eta_2 \, \Lambda(a,c)  \, \big(s(b^{(1)}) \otimes  s(b^{(2)}) \big)
\end{eqnarray*}
where the signs $\eta_1,\eta_2 = \pm 1$ are  computed  by
\begin{eqnarray*}
\eta_1  &=&  (-1)^{ \vert b^{(2)} \vert\,  \vert c\vert+ \vert b^{(1)} \vert\, \left\vert    \double{a^{(2)}}{c^{(1)}}'   \right\vert  +
 \vert b^{(1)} \vert\, \vert    {a^{(2)}} \vert_d  }  ,\\
 \eta_2 &=& \eta_1 \cdot   (-1)^{ \vert b^{(1)} \vert\, \left\vert    \double{a^{(2)}}{c^{(1)}}'' c^{(2)}  \right\vert   } \ = \ (-1)^{   \vert b  \vert\,  \vert c\vert}.
\end{eqnarray*}

The second identity is proved similarly with the key case being  the one where $a,b$ are morphisms in $\mathscr{C}$ and the target  object of $a$ coincides with the source object of~$b$. Then
  $\Lambda\big(a^{(1)},c\big)\, \varepsilon\big(a^{(2)}b\big) = \Lambda(a,c)\,  \varepsilon(b)$ and
\begin{eqnarray*}
 \Lambda(ab, c)  &=&   (ab)^{(1)}   s\Big( \double{(ab)^{(2)}}{c^{(1)}}' \Big)   \otimes  \double{(ab)^{(2)}}{c^{(1)}}''  s(c^{(2)})  \\
 &=& (-1)^{  \vert a^{(2)} \vert\,  \vert b^{(1)}\vert}  \, a^{(1)} b^{(1)}   s\Big( \double{a^{(2)}b^{(2)}}{c^{(1)}}' \Big) \otimes  \double{a^{(2)}b^{(2)}}{c^{(1)}}''  s(c^{(2)})\\
 &=&\theta_1  a^{(1)} b^{(1)}   s\Big( \double{b^{(2)}}{c^{(1)}}' \Big) \otimes a^{(2)} \double{ b^{(2)}}{c^{(1)}}''  s(c^{(2)}) \\
 && +     \theta_2  \, a^{(1)} b^{(1)}   s\Big( \double{a^{(2)}}{c^{(1)}}' b^{(2)}\Big) \otimes  \double{a^{(2)} }{c^{(1)}}''  s(c^{(2)})\\
 &=& \big(  a^{(1)} \otimes a^{(2)}\big)\, \Lambda(b,c)  \\
 && +     \theta_3  \, a^{(1)} b^{(1)}  s( b^{(2)}) \,  s\Big( \double{a^{(2)}}{c^{(1)}}'\Big) \otimes  \double{a^{(2)} }{c^{(1)}}''  s(c^{(2)})\\
 &=&    \Delta(a)\,   \Lambda(b,c)     +   (-1)^{\vert a^{(2)}\vert\, \vert b \vert} \varepsilon (b)\, \Lambda(a,c)  \ = \ \Delta(a)\, \Lambda(b,c) + \varepsilon(b) \Lambda(a,c)
\end{eqnarray*}
where the signs $\theta_1, \theta_2,\theta_3  = \pm 1$ are  computed  by
\begin{eqnarray*}
\theta_1  &=&  (-1)^{ \vert a^{(2)} \vert \cdot \vert b^{(1)} \vert +  \vert a^{(2)} \vert \cdot \left\vert    \double{b^{(2)}}{c^{(1)}}' \right\vert } , \\
 \theta_2 &=&   (-1)^{\vert a^{(2)} \vert \cdot \vert b^{(1)} \vert +  \vert b^{(2)} \vert \cdot  \vert   c^{(1)} \vert_d+ \vert b^{(2)} \vert \cdot \left\vert    \double{a^{(2)}}{c^{(1)}}''   \right\vert   }, \\
 \theta_3 &=& \theta_2 \cdot  (-1)^{  \vert b^{(2)} \vert \cdot \left\vert    \double{a^{(2)}}{c^{(1)}}'   \right\vert   } \ = \  (-1)^{\vert a^{(2)}\vert\, \vert b \vert} .
\end{eqnarray*}

\up
\end{proof}

The bibracket $\double{-}{-}$ may be recovered from the map $ \Lambda$      at least in the case where the antipode~$s$ in~$\mathscr{C}$ is invertible. Indeed, for any $a,b \in A$,
    \begin{eqnarray}  \label{F_to_double}
 &&   s(a^{(1)}) \, \Lambda\big(a^{(2)},b^{(1)}\big)\, b^{(2)} \\
\notag   &=&  s(a^{(1)})  \,  a^{(2)} s\Big( \double{a^{(3)}}{b^{(1)}}'  \Big) \otimes  \double{a^{(3)}}{b^{(1)}}''  s(b^{(2)}) \, b^{(3)}  \\
\notag  &=&  \varepsilonout   (a^{(1)})  \,    s\Big( \double{a^{(2)}}{b^{(1)}}'  \Big) \otimes  \double{a^{(2)}}{b^{(1)}}''   \varepsilonout   (b^{(2)})   \\
\notag  &=&    \varepsilon (a^{(1)})  \,    s\Big( \double{a^{(2)}}{b^{(1)}}'  \Big) \otimes  \double{a^{(2)}}{b^{(1)}}''    \varepsilon (b^{(2)})    \\
 \notag &=&     (s\otimes \id_A)(\double{a}{b}).
\end{eqnarray}
  If follows  that, if  the antipode $s$ is invertible, then
\begin{eqnarray} \notag
\double{a}{b} &=&  (s^{-1}\otimes \id_A) \big (s(a^{(1)}) \, \Lambda\big(a^{(2)},b^{(1)}\big)\, b^{(2)} \big ) \\
 \notag  &=&   (-1)^{ \vert a^{(1)}\vert \vert a^{(2)} b^{(1)} \vert_d  }\,   (s^{-1}\otimes \id_A) \big(  \Lambda(a^{(2)},b^{(1)})\big) \,  \big(a^{(1)} \otimes  b^{(2)}\big).
\end{eqnarray}
 
\subsection{Reducible bibrackets}\label{prop3}  

  Let   $\double{-}{-} $ be a bibracket in    a   Hopf category $ \mathscr{C}= ( \mathscr{C}, \Delta, \varepsilon, s)$. 
It induces, in the notation of the previous subsection, a bilinear pairing   
$$
\lambda= \lambda(\double{-}{-}): A  \times  A \to A
$$
by   $\lambda =(\varepsilon \otimes \id_A) \Lambda$.
Explicitly, for any $a,b\in A$ we have 
\begin{equation} \label{double_to_T-pairing}
\lambda(a, b) =  \varepsilon \Big(  \double{a }{b^{(1)}}' \Big)    \double{a }{b^{(1)}}''  s(b^{(2)}).
\end{equation}
It follows from Lemma~\ref{F} that, for any $a \in A$  and any homogeneous $b,c \in A$,
\begin{eqnarray*}
\lambda(a,bc)  &=&    \lambda\big(a,b^{(1)}\big)\, \varepsilon\big(b^{(2)}c\big)   +    (-1)^{   \vert b  \vert\, \vert c\vert} \, \lambda(a,c)  \, s(b),\\
\lambda(ab, c)  &=&  \lambda(a^{(1)},c)\, \varepsilon (a^{(2)}b)  +      a\, \lambda(b,c).
\end{eqnarray*}

We  call  a  bibracket $\double{-}{-} $    in~$\mathscr{C}$   \index{bibracket!reducible} \emph{reducible} if  $\Lambda  (A\otimes A) \subset \Delta (A)$.  Then
$$
\lambda=(\varepsilon \otimes \id_A) \Lambda=(\id_A \otimes \varepsilon) \Lambda \colon A \times A   \longrightarrow   A 
 \quad {\rm {and}} \quad  \Lambda= \Delta \circ \lambda. 
$$
As a consequence, a reducible bibracket  in a Hopf category with invertible antipode  is fully   determined  by the associated     pairing~$\lambda$.

\begin{lemma} \label{Fcoc}  Suppose  that  the Hopf category $  \mathscr{C}  $ is   cocommutative.
\begin{itemize}
\item[(i)] If   $\double{-}{-}$ is reducible, then, for any   $a,b \in A$,
$$
\double{s(a)}{s(b)} = (s \otimes s)\Perm_{21} \double{a}{b};
$$
\item[(ii)]  If $\double{-}{-}$ is $d$-antisymmetric, then   $(s\otimes s)\Lambda = - \Perm_{21}  \Lambda \Perm_{21,d}$;
\item[(iii)] If $\double{-}{-}$ is reducible and $d$-antisymmetric,  then
$
 s \lambda  = -\lambda \Perm_{21,d}.
$
\end{itemize}
\end{lemma}

 \begin{proof} In the proof   we will often  use   that $s^2=\id_A$.
We begin with (i).
It easily follows from Lemma \ref{F} that $\Lambda(a,e_X)=0=\Lambda(e_X,a)$ for any $a\in A$ and $X \in \Ob(\mathscr{C})$.
Hence, for any $x,y \in A$,
\begin{eqnarray*}
0 & =&  \Lambda (x,\varepsilonout(y)) \ = \ \Lambda \big(x, s(y^{(1)})y^{(2)}\big)  \\
&=&  \Lambda\big(x,s(y^{(1)})\big)\, \varepsilon(y^{(2)}) + (-1)^{\vert y^{(1)}\vert \vert y^{(2)}\vert} \Lambda\big(x,y^{(2))}\big)\, (s\otimes s) \Delta\big(s(y^{(1)})\big)   \\
&=&  \Lambda\big(x,s(y)\big) + (-1)^{\vert y^{(1)}y^{(2)}\vert \vert y^{(3)}\vert + \vert y^{(1)}\vert \vert y^{(2)}\vert } \Lambda\big(x,y^{(3)}\big)\, \big( y^{(2)} \otimes y^{(1)}\big).
\end{eqnarray*}
 Therefore
\begin{equation} \label{F(x,s(y))}
 \Lambda\big(x,s(y)\big) = - (-1)^{\vert y^{(1)}y^{(2)}\vert \vert y^{(3)}\vert + \vert y^{(1)}\vert \vert y^{(2)}\vert } \Lambda\big(x,y^{(3)}\big)\, \big( y^{(2)} \otimes y^{(1)}\big).
\end{equation}
Similarly, for any $x,y \in A$,
\begin{eqnarray*}
0 &= & \Lambda(\varepsilonout(x),y) \ = \ (-1)^{\vert x^{(1)} \vert \vert x^{(2)} \vert }  \Lambda\big(s(x^{(2)}) x^{(1)},y\big)  \\
&=& (-1)^{\vert x^{(1)} \vert \vert x^{(2)} \vert }  \Lambda\big(s(x^{(2)}) ,y\big)\, \varepsilon(x^{(1)}) +
(-1)^{\vert x^{(1)} \vert \vert x^{(2)} \vert }  \Delta\big(s(x^{(2)})\big)\, \Lambda\big(x^{(1)},y\big) \\
&=&  \Lambda\big(s(x),y \big) + (-1)^{\vert x^{(1)}\vert \vert x^{(2)} x^{(3)}\vert + \vert x^{(2)}\vert \vert x^{(3)}\vert} \big(s(x^{(3)}) \otimes s(x^{(2)})\big) \Lambda\big(x^{(1)},y\big),
\end{eqnarray*}
and therefore  
\begin{equation} \label{F(s(x),y)}
 \Lambda\big(s(x),y \big)  =  -(-1)^{\vert x^{(1)}\vert \vert x^{(2)} x^{(3)}\vert + \vert x^{(2)}\vert \vert x^{(3)}\vert} \big(s(x^{(3)}) \otimes s(x^{(2)})\big) \Lambda\big(x^{(1)},y\big).
 \end{equation}
We have
\begin{eqnarray*}
(s \otimes {\id_A })  \double{s(a)}{s(b)}
&\stackrel{ \eqref{F_to_double}}{=}& s\big((s(a))^{(1)}\big)\, \Lambda\big((s(a))^{(2)},(s(b))^{(1))}\big)\, (s(b))^{(2)} \\
&=& (-1)^{\vert a^{(1)}\vert\, \vert a^{(2)}\vert + \vert b^{(1)}\vert\, \vert b^{(2)}\vert\ }  a^{(2)}\, \Lambda\big(s(a^{(1)}),s(b^{(2)})\big)\, s(b^{(1)})\\
&\stackrel{ \eqref{F(x,s(y))}}{=}&  \theta_1\, a^{(2)}\Big( \Lambda\big(s(a^{(1)}),b^{(4)}\big)\, * b^{(3)}\Big)\, b^{(2)} s(b^{(1)})\\
& = &  \theta_2\, a^{(2)}\Big( \Lambda\big(s(a^{(1)}), b^{(3)}\big)\, * b^{(2)}\Big)\, \varepsilonin(b^{(1)}) \\
& = &  \theta_3\, a^{(2)}\Big( \Lambda\big(s(a^{(1)}), b^{(3)}\big)\, * b^{(2)}\Big)\, \varepsilon(b^{(1)}) \\
& = &  \theta_4\, a^{(2)}\Big( \Lambda\big(s(a^{(1)}), b^{(2)}\big)\, * b^{(1)}\Big) \\
& \stackrel{ \eqref{F(s(x),y)}}{=} & \theta_5\, a^{(4)} s(a^{(3)}) \Big( s(a^{(2)}) * \Lambda\big(a^{(1)}, b^{(2)}\big)\, * b^{(1)}\Big)\\
&=&  \theta_6\,  \varepsilonin(a^{(3)})  \Big( s(a^{(2)}) * \Lambda\big(a^{(1)}, b^{(2)}\big)\, * b^{(1)}\Big) \\
&=&  \theta_7\,  \varepsilon(a^{(3)})  \Big( s(a^{(2)}) * \Lambda\big(a^{(1)}, b^{(2)}\big)\, * b^{(1)}\Big) \\
&=&  \theta_8\,  \Big( s(a^{(2)}) * \Lambda\big(a^{(1)}, b^{(2)}\big)\, * b^{(1)}\Big) \\
\end{eqnarray*}
where the signs $\theta_1,\theta_2, \dots$ are computed by
\begin{eqnarray*}
\theta_1 &=&  - (-1)^{\vert a^{(1)}\vert\, \vert a^{(2)}\vert + \vert b^{(1)}\vert\, \vert b^{(2)}b^{(3)} b^{(4)} \vert+\vert b^{(2)}b^{(3)}\vert \vert b^{(4)}\vert + \vert b^{(2)}\vert \vert b^{(3)}\vert }\\
&=&  - (-1)^{\vert a^{(1)}\vert\, \vert a^{(2)}\vert + \vert b^{(1)}\vert\, \vert b^{(2)}\vert +  \vert b^{(1)} b^{(2)} \vert\, \vert b^{(3)} b^{(4)} \vert+\vert  b^{(3)}\vert \vert b^{(4)}\vert }, \\
\theta_2 &=&  - (-1)^{\vert a^{(1)}\vert\, \vert a^{(2)}\vert  +  \vert b^{(1)}  \vert\, \vert b^{(2)} b^{(3)} \vert+\vert  b^{(2)}\vert \vert b^{(3)}\vert },\\
\theta_3 &=&  - (-1)^{\vert a^{(1)}\vert\, \vert a^{(2)}\vert  +\vert   b^{(2)} \vert \vert b^{(3)}\vert } \ = \    - (-1)^{\vert a^{(1)}\vert\, \vert a^{(2)}\vert  +\vert b^{(1)}   b^{(2)} \vert \vert b^{(3)}\vert }, \\
\theta_4 &=&  - (-1)^{\vert a^{(1)}\vert\, \vert a^{(2)}\vert  +\vert b^{(1)}  \vert \vert b^{(2)}\vert },\\
\theta_5 &=&  (-1)^{\vert a^{(1)}a^{(2)} a^{(3)} \vert\, \vert a^{(4)}\vert    + \vert a^{(1)}\vert \vert a^{(2)} a^{(3)}\vert + \vert a^{(2)}\vert \vert a^{(3)}\vert   +\vert b^{(1)}  \vert \vert b^{(2)}\vert } \\
&=& (-1)^{\vert a^{(3)} \vert\, \vert a^{(4)}\vert + \vert a^{(1)}a^{(2)}  \vert\, \vert a^{(3)} a^{(4)} \vert  + \vert a^{(1)}\vert \vert a^{(2)} \vert   +\vert b^{(1)}  \vert \vert b^{(2)}\vert   },\\
\theta_6 &=& (-1)^{ \vert a^{(1)}a^{(2)}  \vert\, \vert a^{(3)}  \vert  + \vert a^{(1)}\vert \vert a^{(2)} \vert   +\vert b^{(1)}  \vert \vert b^{(2)}\vert  },  \\
\theta_7 &=&   (-1)^{   \vert a^{(1)}\vert \vert a^{(2)} \vert    +\vert b^{(1)}  \vert \vert b^{(2)}\vert   } \  = \  (-1)^{  \vert a^{(1)}\vert \vert a^{(2)}a^{(3)} \vert + \vert b^{(1)}  \vert \vert b^{(2)}\vert  }, \\
\theta_8 &=& (-1)^{  \vert a^{(1)}\vert \vert a^{(2)} \vert + \vert b^{(1)}  \vert \vert b^{(2)}\vert }.
\end{eqnarray*}
Therefore, using the cocommutativity of  $\mathscr{C}$,   we obtain
\begin{eqnarray*}
 (s \otimes {\id_A })  \double{s(a)}{s(b)}  &=&   s(a^{(1)}) * \Lambda\big(a^{(2)}, b^{(1)}\big)\, * b^{(2)}  .  
\end{eqnarray*}
 Besides, 
\begin{eqnarray*}
(s \otimes {\id_A }) (s \otimes s)\Perm_{21} \double{a}{b} &=& \Perm_{21}   (s \otimes {\id_A })  \double{a}{b} \\
&\stackrel{ \eqref{F_to_double}}{=}& \Perm_{21}\big(  s(a^{(1)}) \, \Lambda\big(a^{(2)},b^{(1)}\big)\, b^{(2)}\big) \\
& =&  s(a^{(1)}) *  \big( \Perm_{21} \Lambda\big(a^{(2)},b^{(1)}\big)  \big)* b^{(2)}  \\
 &=& s(a^{(1)}) * \Lambda\big(a^{(2)},b^{(1)}\big) * b^{(2)}
\end{eqnarray*}
where the last equality uses the formula  $\Perm_{21} \Lambda= \Lambda$ which  follows from the reducibility of $\double{-}{-}$.
We conclude that $ \double{s(a)}{s(b)} =  (s \otimes s)\Perm_{21} \double{a}{b}$.

We now prove (ii). If  the bibracket $\double{-}{-}$ is $d$-antisymmetric, then   for any homogeneous $a,b\in A$,
\begin{eqnarray*}
  \Lambda \Perm_{21,d} (a \otimes b) & = &  (-1)^{\vert a \vert_d \vert b \vert_d}\, \Lambda(b,a)   \\
&=&  (-1)^{\vert a \vert_d \vert b \vert_d }\, b^{(1)}  s\Big( \double{b^{(2)}}{a^{(1)}}'  \Big) \otimes  \double{b^{(2)}}{a^{(1)}}''  s(a^{(2)}) \\
&=& \theta_1 b^{(1)} s\Big( \double{a^{(1)}}{b^{(2)}}''  \Big) \otimes  \double{a^{(1)}}{b^{(2)}}'  s(a^{(2)}) \\
&=& \theta_2  \Perm_{21}\Big(  \double{a^{(1)}}{b^{(2)}}'  s(a^{(2)})  \otimes   b^{(1)} s\Big( \double{a^{(1)}}{b^{(2)}}''  \Big) \Big) \\
&=& \theta_3  \Perm_{21}(s\otimes s) \Big( a^{(2)} s\Big(  \double{a^{(1)}}{b^{(2)}}' \Big) \otimes  \double{a^{(1)}}{b^{(2)}}''  s(b^{(1)}) \Big)\\
&=& - \Perm_{21} (s\otimes s) \Lambda(a,b)
\end{eqnarray*}
where the last equality is a  consequence   of the cocommutativity of~$  \mathscr{C}$  and
\begin{eqnarray*}
\theta_1 &=& - (-1)^{d\vert b^{(1)}\vert + d \vert a^{(2)}\vert + \vert a^{(1)}\vert \vert b^{(1)} \vert+ \vert a^{(2)}\vert \vert b^{(2)}\vert + \vert a^{(2)}\vert \vert b^{(1)} \vert
+ \left\vert\double{a^{(1)}}{b^{(2)}}'  \double{a^{(1)}}{b^{(2)}}'' \right\vert }, \\
\theta_2 &=& - (-1)^{d\vert b^{(1)}\vert + d \vert a^{(2)}\vert + \vert a^{(1)}\vert \vert b^{(1)} \vert+ \vert a^{(2)}\vert \vert b^{(2)}\vert
+ \vert a^{(2)} \vert \left\vert\double{a^{(1)}}{b^{(2)}}'' \right\vert + \left\vert\double{a^{(1)}}{b^{(2)}}'  \right\vert \vert b^{(1)} \vert  },  \\
\theta_3 &=& - (-1)^{ \vert a^{(2)}  \vert \vert a^{(1)}\vert  +    \vert b^{(2)} \vert \vert b^{(1)}\vert  }.
\end{eqnarray*}

 Finally, we deduce  (iii) from (ii):
 \begin{eqnarray*}
  s\lambda \ = \ s (\varepsilon \otimes \id_A) \Lambda &= &  (\varepsilon \otimes \id_A) (s\otimes s) \Lambda \\
&=&  -(\varepsilon \otimes \id_A)  \Perm_{21}  \Lambda \Perm_{21,d} \\
&=& - (\id_A \otimes \varepsilon)\Lambda \Perm_{21,d} \ = \ -\lambda \Perm_{21,d}.
\end{eqnarray*}

\up
\end{proof}

 \subsection{Remark} 
 
  Reducible bibrackets are   interesting from the  algebraic viewpoint  because
they induce brackets in more general representation algebras   associated with algebraic groups. 
This class of algebras includes the representation algebras considered here  and  associated with the general linear groups. For more on this, see~\cite{MT+}.  
The bibrackets   arising below  in the geometric context are  reducible.

\section{Hamiltonian reduction of bibrackets}\label{sectionActions-}

We  formulate   Hamiltonian reduction for Gerstenhaber bibrackets  based on  a notion of an  $H_0$-Poisson structure.
  In   the non-graded case,  the content of  this section is   due to   Crawley-Boevey~\cite{Cb} and Van den Bergh~\cite{VdB}.

 \subsection{$H_0$-Poisson structures}\label{hamham1}

 An   \index{$H_0$-Poisson structure} \emph{$H_0$-Poisson structure of degree~$d\in \ZZ$} 
 on a graded algebra $A$ is a $d$-graded Lie bracket   $\langle  -,- \rangle $   in the graded module $\check A=A/[A,A]$ such that,
 for all homogeneous   $x \in \check A$, the map $\langle  x, - \rangle : \check A \to \check A$   lifts to a derivation $A \to A$ of degree $\vert x \vert_d=\vert x \vert +d$.
 If~$A$ is a commutative graded algebra, then an $H_0$-Poisson structure of degree~$d $ in~$A$ is nothing but a   Gerstenhaber bracket of degree $d$ in $A$.

\begin{lemma} \label{VdB_to_Cb}
  Given a Gerstenhaber bibracket of degree $d$ in a graded algebra~$A$, the induced
   bracket $  \langle - , -\rangle$ in $\check  A$   is an  $H_0$-Poisson structure    of degree~$d$ on~$A$.
\end{lemma}

\begin{proof}   That    $\langle -, - \rangle$  is
 a  $d$-graded Lie bracket in $\check A$ follows from Lemma~\ref{comparis-}. 
 The same lemma shows that  the formula $x\mapsto \langle x , -\rangle$ defines a linear map $\check A \to \Der(A)  $ which
preserves the Lie   bracket  and   carries   $\check A^p$ to  $\Der^{p+d}(A)$ for all  $p\in \ZZ$.  This implies the claim of the lemma.
\end{proof}

\begin{theor}\label{H0_to_G}
Let  $\langle  -,- \rangle $ be an $H_0$-Poisson structure of degree~$d $  on a graded algebra $A$ and let $N\geq 1$.
Then  there is a unique Gerstenhaber bracket $\bracket{-}{-}$ of degree $d$ in the trace algebra $A_N^t \subset A_N$  such  that
$$\bracket{\tr(\check a)}{\tr(\check b)}  = \tr\, \langle \check a, \check b \rangle$$
for any $\check a, \check b \in \check A$.
\end{theor}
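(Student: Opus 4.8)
The plan is to reduce Theorem~\ref{H0_to_G} to the non-graded case treated by Crawley-Boevey~\cite{Cb} by carefully checking that all the sign-bookkeeping goes through, and to construct the bracket explicitly rather than by an abstract argument. First I would address the uniqueness: since $A_N^t$ is generated as a graded algebra by the image of $\tr: \check A \to A_N$, any degree $d$ Gerstenhaber bracket on $A_N^t$ is determined by its values on generators by the $d$-graded Leibniz rules \eqref{poisson1}, \eqref{poisson2}, so the displayed formula pins it down uniquely. The substance of the theorem is therefore the \emph{existence}, and here the key point is well-definedness: one must produce a derivation-valued representation of $\check A$ that is compatible with the algebra structure of $A_N$.

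The main steps I would carry out are as follows. First, for each homogeneous $\check a \in \check A$, the hypothesis gives a derivation $\delta_{\check a} \in \Der^{\vert \check a\vert_d}(A)$ lifting $\langle \check a, -\rangle: \check A \to \check A$; this is not unique, but its induced map $\check A \to \check A$ is. Second, following the recipe of Section~\ref{AMT0}, a derivation $\delta$ of $A$ of degree $k$ induces a derivation $\delta_N$ of $A_N$ of degree $k$ by $\delta_N(a_{ij}) = (\delta(a))_{ij}$ --- one checks this respects the relations \eqref{eq:addid}, using that $\delta$ is a derivation and that the $(ab)_{ij} = a_{il}b_{lj}$ relation matches the Leibniz rule with the correct Koszul sign $(-1)^{k\vert a\vert}$. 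Third, define $\bracket{\tr(\check a)}{-}: A_N \to A_N$ to be $(\delta_{\check a})_N$ composed appropriately; the ambiguity in the lift of $\delta_{\check a}$ disappears after restricting to the subalgebra $A_N^t$ because on traces $(\delta_{\check a})_N \tr(\check b) = \tr(\langle \check a, \check b\rangle)$ is independent of the choice, by the same computation as in Lemma~\ref{comparis}. Fourth, extend $\bracket{-}{-}$ to a bilinear map $A_N^t \times A_N^t \to A_N^t$ by imposing the Leibniz rules, and verify consistency exactly as in the proof of Lemma~\ref{fdbtopb}. Fifth, check $d$-antisymmetry and the $d$-graded Jacobi identity \eqref{jaco} on the generators $\tr(\check a)$: antisymmetry follows from the $d$-antisymmetry of $\langle -, -\rangle$ on $\check A$ together with $\tr\langle \check a, \check b\rangle = (-1)^{?}\tr\langle \check b, \check a\rangle$ after noting $\tr$ kills the sign discrepancy, and the Jacobi identity on generators follows from the $d$-graded Jacobi identity for $\langle -, -\rangle$ in $\check A$ plus the fact that $\tr$ intertwines the two; one then propagates Jacobi from generators to all of $A_N^t$ using the Leibniz rules (a standard argument, e.g. as implicitly used in Lemma~\ref{qP--}).

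I expect the main obstacle to be the sign verification in the second and fifth steps --- specifically, showing that the Koszul signs arising from promoting a graded derivation of $A$ to one of $A_N$, and from transporting the $d$-graded Jacobi identity through the non-injective map $\tr$, all conspire correctly so that the resulting bracket on $A_N^t$ is genuinely $d$-graded. In the non-graded setting of \cite{Cb} there are no signs to track, so the content of the present generalization is precisely this bookkeeping; the conceptual structure of the proof is unchanged. A secondary subtlety is that $\langle \check a, -\rangle$ need not lift canonically, so all statements must be phrased in terms of the induced endomorphism of $\check A$ (equivalently, of $A_N^t$) rather than of $A$ or $A_N$, and one must be slightly careful that $\bracket{-}{-}$ really does land in $A_N^t$ and not merely in $A_N$; this follows from $\bracket{A_N^t}{A_N^t} \subseteq A_N^t$, which is the graded analogue of the inclusion noted after Lemma~\ref{comparis}.
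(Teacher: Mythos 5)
Your proposal identifies the right building blocks --- the induced derivations $\delta_N$ of $A_N$ and the trace computation from Lemma~\ref{comparis} --- but there is a genuine gap in the well-definedness step, which is the crux of the existence proof.

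The issue is your fourth step. You propose to ``extend $\bracket{-}{-}$ to a bilinear map $A_N^t \times A_N^t \to A_N^t$ by imposing the Leibniz rules, and verify consistency exactly as in the proof of Lemma~\ref{fdbtopb}.'' But Lemma~\ref{fdbtopb} starts from an explicit formula and checks it against the finite list of defining relations \eqref{eq:addid} of $A_N$; here there is no such explicit formula, and $A_N^t$ is \emph{not} presented by any accessible set of relations. The algebra $A_N^t$ is a quotient of the free commutative graded algebra $S = S(\check A)$ under a surjection $T: S \to A_N^t$ whose kernel encodes all algebraic relations among traces, including the possible failure of $\tr: \check A \to A_N^t$ to be injective. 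Imposing Leibniz rules amounts to putting a Gerstenhaber bracket on $S$, and what must actually be shown is that $T\langle -, -\rangle_S$ annihilates $\Ker(T)$. You have neither stated this goal nor given an argument for it.

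The paper closes this gap precisely: by $d$-antisymmetry it suffices to show $T\langle r, \Ker T\rangle_S = 0$; by the Leibniz rule and the fact that $T$ is an algebra homomorphism this reduces to $r \in \check A$; and for such $r$ one has $T\langle r, -\rangle_S = \delta_N T$ as maps $S \to A_N$, since both are derivations agreeing on the generating module $\check A \subset S$ --- your own second and third steps establish exactly this agreement on generators. Then $T\langle r, \Ker T\rangle_S = \delta_N T(\Ker T) = 0$. Once this descent is established, the antisymmetry and Jacobi identity on $A_N^t$ come for free as images under $T$ of the corresponding identities in $S$, so your fifth step (worrying about transporting Jacobi through $\tr$ and about an undetermined sign $(-1)^{?}$) is unnecessary and in fact symptomatic of organizing the argument the wrong way round. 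Once the bracket lives on the free algebra $S$, there is essentially no additional Koszul-sign bookkeeping beyond the non-graded case --- which is the opposite of what you predicted as the main obstacle.
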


\begin{proof}
The  proof  follows the same lines as  in the non-graded case, see \cite[Theorem~4.5]{Cb}.
The uniqueness of $\bracket{-}{-}$ is obvious because the image of the trace map $\tr:\check A\to A_N$ generates $A^t_N$.
To prove the existence, consider   the commutative graded algebra $S=S(\check A)$ freely generated
by the graded module $\check A$ (the \index{graded algebra!symmetric} \emph{symmetric algebra} of $\check A$).
The $d$-graded Lie bracket $\langle-,-\rangle$ in $\check A$ uniquely extends  to a Gerstenhaber bracket $\langle-,-\rangle_S$ of degree $d$ in $S $.
The   map $\tr: \check A\to A_N$    uniquely extends to a graded algebra homomorphism $T  : S  \to A_N^t$, which is surjective.
Therefore, it suffices to prove the existence of a  map $\bracket{-}{-}: A_N^t \times A_N^t \to A_N^t$ such that the following diagram commutes:
\begin{equation*} \label{Cb}
\xymatrix{
S  \times S  \ar[rr]^-{\langle-,-\rangle_S} \ar[d]_-{T   \times T  } &&  S  \ar[d]^-{T  } \\
A_N^t \times A_N^t \ar[rr]^-{\bracket{-}{-}}&&  A_N^t\, .
}
\end{equation*}
 In other words, we need to show that the pairing $  T   \, \langle -,-\rangle_S:   S \times S  \to A_N^t$  annihilates   $\Ker (T  )$.
Since the bracket $\langle-,-\rangle_S$  is $d$-antisymmetric,  it suffices to show that $ T   \, \langle r , \Ker (T  ) \rangle_S =0$ for any $r\in S  $.
Since the bracket $\langle-,-\rangle_S$  satisfies the $d$-graded Leibniz rule  in the first variable and $T  $ is an algebra homomorphism,
 it suffices to consider the case  $r\in \check A$. By the definition of an $H_0$-Poisson structure, the map $\langle r, -\rangle : \check A\to \check A$ lifts to
a derivation $\delta:A \to A$.  There is a unique derivation $\delta_N: A_N \to A_N$
such that  $\delta_N(a_{ij})= (\delta(a))_{ij}$ for any $a\in A$ and $i,j \in \{1,\dots,N\}$.  Then,  for any $a\in A$,
$$
\delta_N(\tr(a)) = \delta_N\Big(\sum_i a_{ii}\Big) = \sum_i (\delta(a))_{ii} = \tr \delta(a) = \tr\, \langle r , \check a\rangle_S .
$$
It follows     that the maps  $\delta_N  T   :S  \to A_N $ and $ T   \, \langle r , -   \rangle_S: S\to A^t_N\subset A_N$ are equal on $\check A\subset S$.
Since $\check A$ generates the algebra $S$ and both these maps are derivations,   they must be equal. As a consequence, $ T   \, \langle r , \Ker (T  ) \rangle_S =0$.
\end{proof}

 Combining  Lemma \ref{VdB_to_Cb} and Theorem \ref{H0_to_G}, we obtain that  any Gerstenhaber bibracket of degree $d$ in $A$
induces a Gerstenhaber bracket  of degree $d$ in $A_N^t$.
Clearly this bracket is the restriction of the Gerstenhaber bracket in $A_N$ provided by Lemma \ref{importantGers}.

\subsection{Moment maps} \label{moment_maps}

Let $A$ be a unital graded algebra equipped with a Gerstenhaber bibracket $\double{-}{-}$  of degree $d$.
A \index{moment map} \emph{moment map} for   $\double{-}{-}$   is an element $\mu \in A^{-d}$
such that $\double{\mu}{a} = a \otimes 1_A - 1_A \otimes a$ for all $a\in A$ or, equivalently, $\double{a}{\mu}= a \otimes 1_A - 1_A \otimes a$ for all $a\in A$.
If $d\neq 0$, then there is at most one moment map. If $d=0$,   then for any moment map $\mu\in A^0$ and any  $k\in \kk$, the sum $\mu+k 1_A$ is a moment map.

\begin{lemma} \label{quotient}
Let  $\mu \in A^{-d}$ be a moment map. The  bracket $\langle -,-\rangle$ in $ A  $ associated with $\double{-}{-}$ induces  an $H_0$-Poisson structure of degree~$d$ on    ${B}= A/A \mu A$.
\end{lemma}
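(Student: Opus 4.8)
The plan is to verify that both the associated bracket $\langle-,-\rangle$ on $\check A$ and the ``lift to a derivation'' property descend to the quotient $B = A/A\mu A$. First I would recall from Section~\ref{stdb} (Lemma~\ref{comparis-}) that $\langle-,-\rangle$ is already a $d$-antisymmetric bracket of degree $d$ on $\check A$; since $\double{-}{-}$ is a \emph{Gerstenhaber} bibracket, its induced tribracket vanishes, so by Lemma~\ref{comparis-}(iv) the bracket $\langle-,-\rangle$ on $\check A$ satisfies the $d$-graded Jacobi identity, i.e.\ $(\check A,\langle-,-\rangle)$ is a $d$-graded Lie algebra. By Lemma~\ref{VdB_to_Cb}, $\langle-,-\rangle$ is moreover an $H_0$-Poisson structure of degree $d$ on $A$: for each homogeneous $\check a\in\check A$ the map $\langle\check a,-\rangle$ lifts to a derivation $\delta_{\check a}$ of $A$ of degree $\vert\check a\vert_d$, which one may take to be $b\mapsto \langle a,b\rangle = \double{a}{b}'\double{a}{b}''$ (well-defined modulo $[A,A]$ on the target, but a genuine derivation into $A$ by Lemma~\ref{comparis-}(i) and (iii)).

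The core computation is to show that the two-sided ideal $I = A\mu A$ behaves well. The key claim is that $\langle\mu, A\rangle \subseteq [A,A]$, equivalently $\langle A,\mu\rangle\subseteq[A,A]$: indeed, the moment map axiom gives $\double{a}{\mu} = a\otimes 1_A - 1_A\otimes a$, hence $\langle a,\mu\rangle = \double{a}{\mu}'\double{a}{\mu}'' = a\cdot 1_A - 1_A\cdot a = 0$ exactly, so in fact $\langle A,\mu\rangle = 0$. Combined with the $d$-antisymmetry modulo $[A,A]$ of Lemma~\ref{comparis-}(ii), this gives $\langle\mu,a\rangle \equiv 0 \pmod{[A,A]}$ for all homogeneous $a$. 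Now for any homogeneous $x\in A$ and the associated derivation $\delta = \delta_{\check x}$ of $A$, I would compute $\delta(c\mu d)$ for $c,d\in A$ using the Leibniz rule: $\delta(c\mu d) = \delta(c)\mu d \pm c\,\delta(\mu)\,d \pm c\mu\,\delta(d)$. The first and last terms lie in $I = A\mu A$ trivially; for the middle term it suffices that $\delta(\mu)\in I$. But $\delta(\mu)$ represents $\langle x,\mu\rangle$, which equals $0$ as noted above — so $\delta(\mu)=0$ (or, if one prefers to work only modulo $[A,A]$, one must check $[A,A]\cdot$ contributions land in $I$; working with the explicit derivation $b\mapsto\langle x,b\rangle$ avoids this since that map is a genuine derivation into $A$, not merely into $\check A$). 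Hence $\delta(I)\subseteq I$, so $\delta$ induces a derivation $\bar\delta$ of $B$. Since $[A,A]$ maps into $[B,B]$ under $A\to B$, and $\delta([A,A])\subseteq[A,A]$, everything is compatible with passing to $\check A\to\check B$.

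With these preliminaries, I would define $\langle-,-\rangle_B$ on $\check B = B/[B,B]$ by transport along the surjection $\pi\colon\check A\to\check B$: set $\langle\pi(\check a),\pi(\check b)\rangle_B = \pi\langle\check a,\check b\rangle$. Well-definedness requires that $\langle-,-\rangle$ send $\ker\pi$ into $\ker\pi$ in each variable; $\ker\pi$ is the image of $[A,A] + I = [A,A] + A\mu A$ in $\check A$. For the $[A,A]$ part this is Lemma~\ref{comparis-}(iii); for the $A\mu A$ part: $\langle c\mu d, b\rangle$ is computed via the Leibniz rule \eqref{poisson1}--type expansion in the first slot (Lemma~\ref{comparis-}(iii) gives $\langle[A,A],A\rangle=0$, and one needs the analog for the associated bracket expanded on products), giving terms each containing $\mu$ as a factor or a $\langle\mu,b\rangle\in[A,A]$ contribution — so the result lies in $[A,A]+A\mu A$, i.e.\ in $\ker\pi$. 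The second-variable case uses $d$-antisymmetry modulo $[A,A]$. Finally one checks that $\langle-,-\rangle_B$ is a $d$-graded Lie bracket on $\check B$ (inherited from $\check A$ by surjectivity of $\pi$) and that $\langle\pi(\check x),-\rangle_B$ lifts to the derivation $\bar\delta_{\check x}$ of $B$ constructed above — this is exactly the $H_0$-Poisson axiom. The main obstacle is the bookkeeping in showing $\langle A\mu A, A\rangle\subseteq [A,A]+A\mu A$: one must carefully use the Leibniz rule for the associated bracket on products (which follows from Lemma~\ref{comparis-}(i),(iii) but requires tracking that the ``inner'' bimodule structure in \eqref{bibi+} still produces factors of $\mu$ after multiplying out), together with the identity $\langle\mu,-\rangle\equiv 0$; once this is in hand, the rest is formal.
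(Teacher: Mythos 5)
Your proposal is correct and follows essentially the same route as the paper: both proofs hinge on the observation that $\langle A,\mu\rangle=0$ exactly, propagate this to $\langle A, A\mu A\rangle\subset A\mu A$ using the derivation property of $\langle a,-\rangle$, then handle the first variable via the $d$-antisymmetry modulo $[A,A]$ (Lemma~\ref{comparis-}(ii)) to get $\langle A\mu A,A\rangle\subset A\mu A+[A,A]$, and finally descend the Lie bracket and the derivation lifts to $\check B$. One small remark: you attach the Leibniz expansion to the \emph{first} slot of $\langle c\mu d, b\rangle$, but Lemma~\ref{comparis-}(i) gives Leibniz only in the \emph{second} slot; the clean route (and the one the paper takes) is to use the derivation property for $\langle b, c\mu d\rangle$ and then flip with the antisymmetry mod $[A,A]$ — you in fact have all the pieces for this, so it is an expository slip rather than a gap.
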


\begin{proof}
Let   $p:A \to {B}$ and $h: {B} \to \check{ {B}}= {B}/[{B},{B}]$  be   the canonical projections.
  Clearly,~$p$ carries $[A,A]$ to $[{B},{B}]$ and induces a linear map $\check p: \check A \to \check{ {B}}$.
Lemma~\ref{comparis-}(iii) shows that the bracket $ \langle -,-\rangle$ in~$A$ induces  a pairing $ \langle -,-\rangle: \check A \otimes  A \to  A$.
We claim that there are   linear maps $u,v $   such that the following diagram commutes:
\begin{equation}\label{AAAAAA----}
\xymatrix{
\check{A} \otimes A \ar[rr]^-{ \id \otimes p} \ar[d]_-{\langle -,-\rangle} && \check{A} \otimes {B} \ar[d]_-{u}  \ar[rr]^-{\check p \otimes h    }   && \check{{B}} \otimes \check {B} \ar[d]^-{v} \\
A \ar[rr]^-{ p}   &&   {B} \ar[rr]^-{ h   } &&  \check {B}\, .
}
\end{equation}
Such maps $u,v$ are necessarily unique because    $p $, $\check p$,   $h$ are onto. As a consequence, the following diagram commutes:
\begin{equation*}
\xymatrix{
\check{A} \otimes \check A \ar[d]_-{\langle -,-\rangle} \ar[rr]^-{\check p \otimes \check p} && \check{{B}} \otimes \check{{B}} \ar[d]^-{v}  \\
\check A  \ar[rr]^-{  \check p} && \check{{B}}\, .
}
\end{equation*}
Therefore $v$ is a $d$-graded Lie bracket in $\check B$. Since   $\langle x,-\rangle:A \to A$ is a derivation for all $x\in  \check A$ and  $\check p$ is onto,
the Lie bracket $v$     is an $H_0$-Poisson structure  on ${B}$.

It remains to verify the  claim  above. The definitions of the moment map $\mu$ and the bracket $\langle-,-\rangle$ in $ A$ imply that
$\langle A, \mu\rangle =0$. Hence,  $\langle   A, A\mu A \rangle \subset A\mu A=\Ker p$. This inclusion implies the existence of~$u$.
By Lemma~\ref{comparis-}(ii), $$\langle     A\mu A, A \rangle \subset A\mu A+[A,A]=\Ker h p.$$   This implies the existence of~$v$.
\end{proof}

  By Theorem \ref{H0_to_G},     we obtain   that under the assumptions of  Lemma \ref{quotient},
  the bibracket   in~$A$ induces    Gerstenhaber brackets of degree~$d$
on the trace algebras of~$B=A/A\mu A$.
As an exercise, the reader may extend Lemma~\ref{quotient}   to the  setting of graded categories   discussed in Section~\ref{extenscatmain}.

\chapter {Face homology}\label{Face homology}

 \section{Manifolds with faces and  partitions} \label{Manifolds with faces and partitions} 

 We    recall   manifolds with faces and   discuss  partitions on such manifolds.

\subsection{Manifolds with faces}\label{Manifolds with faces}

 We start with a bigger class of \index{manifold with corners} {\it manifolds with corners}, see \cite{Ce}, \cite{Do}, \cite{Ja},   \cite{MrOd},  and  \cite{Jo}.
An   {\it  $n$-dimensional manifold with corners}  with $n\geq 0$,
or, shorter, an   \index{manifold with corners} \emph{$n$-manifold with corners},    is a paracompact
Hausdorff topological space locally  differentiably ($C^\infty$)  modelled on   open subsets of    $[0,\infty)^n $.
  For a    definition in terms of local coordinate systems and for further details, see \cite{Jo}.
The underlying topological space of   an  $n$-manifold  with corners   $K$
is an $n$-dimensional topological manifold with boundary.
The topological boundary of $K$ is denoted by $\partial K$
(the symbol  $\partial K$  has a   different meaning in \cite{Jo}).
The  \index{dimension function} {\it dimension function} $d_K:K\to \ZZ$ carries a point of $K$  represented by a tuple $(x_1,\dots,x_n)$ in a local coordinate system to the number of non-zero terms in this tuple (this number does not depend on the choice of the local coordinate system).  For  $r\geq 0$, the set $$K_r=\{x\in K : d_K(x) \leq r\}$$ is a closed subset of $K$.
It is clear that
$$ K_0\subset K_1\subset   \cdots \subset K_{n-1}=\partial K \subset K_n=K.$$
Also, $K_0=d_X^{-1}(0)$  is a discrete set,  and $K_r\setminus K_{r-1}$ is a smooth $r$-dimensional manifold for all $r\geq 1$.

The set $ P(K) =\partial K\setminus K_{n-2}$  is an open subset of $\partial K$ and  any $x\in \partial K$ belongs to the closure of at most $n-d_K(x)$ connected components of  $P(K)$.
We call $K$ a \index{manifold with faces} {\it manifold with faces}  if
 $K$ is   compact and every $x\in \partial K$
 belongs to the closure of precisely $n-d_K(x)$ different components of   $P(K)$.
This condition implies that the closure   in $K$
 of any component of   $P(K)$  is   an $(n-1)$-dimensional  manifold
 with faces whose dimension function is the restriction of $d_K$.
We call  the closure of a  component of  $P(K)$  a  \index{face!principal} {\it principal face} of $K$.
We can now define recursively on  $n=\dim K$  the notion of a face of~$K$.
By definition, a  \index{manifold with faces!face of} \emph{face}
of~$K$ is  a connected component of~$K$, or a principal face of~$K$,
or a face of a principal face of $K$.
Clearly,~$K$ has only a  finite number of faces, and each face of $K$ is a connected manifold with faces.
The union of faces of  $K$ of dimension $\leq  r$ is equal to $K_{ r}$ for all $r\geq 0$.
The faces of $K$ contained in $\partial K$ are said to be \index{face!proper} {\it proper}.

Every point $x$ of $K$ lies in the interior of a unique face $F_x$ of $K$.
  If $d_K(x)\geq 1$, then $F_x$  is the closure of the component of  $K_r\setminus K_{r-1}$ containing $x$
for $r=d_K(X)$. If $d_K(x)=0$,  then $F_x=\{x\}$.
  Note that $F_x$ is the smallest face of $K$ containing $x$:
any face of $K$ containing $x$ contains $F_x$ as a face.

For example,  any compact smooth manifold   $M$ is a manifold with faces,
and  its faces are the components of $M$ and of $\partial M$.
 For any $n\geq 0$,
 an $n$-dimensional simplex is a  manifold with faces and its
  faces   are the usual combinatorial  faces. Finite     disjoint unions and finite   products of   manifolds with faces are manifolds with faces in the obvious way. 
  The  empty set is considered as an $n$-manifold with faces for any $n\geq 0$.

Following \cite{MrOd}, we call a map $f $ from   an $n$-manifold with faces $K$ to an $m$-manifold with faces   $L$     \index{manifold with faces!smooth map of}  \emph{smooth} if,
restricting $f$  to any local coordinate systems in these manifolds,
we obtain a map that extends to a $C^\infty$-map from an open subset of $\RR^n$ to $\RR^m$.
  (Such a map   $f$ is  said to be  ``weakly smooth'' in \cite{Jo}.)
A smooth map $f:K\to L$ is continuous and its restriction to any face $F$ of $K$ is a smooth map   $F\to L$.
A map $f:K\to L$   is a  \index{manifold with faces!diffeomorphism of} \emph{diffeomorphism} if it is a bijection and both~$f$ and~$f^{-1}$ are smooth.  Diffeomorphisms of manifolds with faces
preserve the dimension function and carry faces  onto faces.

We can define   smooth ($C^\infty$) triangulations of a manifold with faces
repeating word for word the standard definition of  a smooth triangulation   of an ordinary manifold  \cite[Section 8.3]{Mu}  and   requiring      all   faces to be subcomplexes. (The latter condition is probably   satisfied automatically but we prefer to spell it out.)
 The standard methods of the   theory of smooth triangulations \cite[Section 10.6]{Mu} apply in this setting and show that all manifolds with faces have smooth triangulations.

 A manifold with faces  $K$  is  \index{manifold with faces!oriented} {\it oriented} if its    underlying   topological manifold    is oriented.
The oriented manifold with faces obtained from $K$ by inverting the orientation
is denoted by $-K$.

\subsection{Partitions}\label{Partitions}
By a \index{manifold with faces!partition of} {\it  partition} $\varphi$   on a manifold with faces $K$ we mean
a partition of the set of   faces of $K$ into disjoint subsets, called \index{face!type of} {\it types},
and a family of diffeomorphisms $\{\varphi_{F,G} :F\to G\}_{(F,G)}$
numerated by ordered pairs  $(F,G)$  of      faces of $K$ of the same type  such that
\begin{itemize}
\item[(a)]    $\varphi_{F,F}=\id_F$ for any     face $F$ of $K$ and $\varphi_{G, H}\, \varphi_{F,G}=\varphi_{F, H}$
 for any   faces $F,G,H$ of $K$ of the same type;
\item[(b)] if $F,G$ are     faces of $K$ of the same type, then  $\varphi_{F,G}:F\to G$
carries  any face $F'$ of $F$ onto a face $G'$ of $G$ so that $F', G'$
have the same type as faces of $K$ and $\varphi_{F', G'}=\varphi_{F,G}\vert_{F'}:  F'\to G'$.
\end{itemize}
The  diffeomorphisms $\{ \varphi_{F,G} \}_{(F,G)}$ will be called  \index{partition!identification map of} \emph{identification maps}.
For example, every manifold with faces $K$  has a  \index{partition!trivial} \emph{trivial partition}  such that two faces have the same type if and only if  they coincide.

Given a partition $\varphi$ on $K$, we write $x\sim_\varphi y$ for points $x,y\in K$
if   there are     faces   $F, G$ of $K$  of the same type such that $x\in F$, $y\in G$, and $\varphi_{F,G}(x)=y $.
Clearly, $x\sim_\varphi y$ if and only if  the   faces  $F_x, F_y$   have the same type and $\varphi_{F_x,F_y}(x)=y $.
 Then  $\sim_{\varphi}$ is an equivalence relation on $K$.
 The quotient topological space $K_{\varphi}=K/\! \sim_{\varphi}$ may not be a   manifold. For any set $L \subset K$,
we  denote by $L_{\varphi}$ the image of $L$ under the   projection $K\to K_{\varphi}$.

A  smooth triangulation  $T$ of  $K$   \emph{fits a partition  $ \varphi $} on~$K$ if
the identification  map    $ \varphi_{F,G} :F\to G$ is a   simplicial isomorphism for any faces $F,G$ of the same type.

\begin{lemma}\label{partition}
For any partition $\varphi$ on $K$, there exists a smooth triangulation~$T$ of~$K$ which fits $\varphi$
and   projects to a triangulation, $T_\varphi$, of  the quotient space $K_\varphi$.
\end{lemma}

\begin{proof}
We   construct by induction  on  $r\geq 0$
a smooth triangulation $T^r$ of $K_r$   satisfying the following condition:  all the identification maps between faces of~$K$ of dimension $\leq r$ are simplicial isomorphisms.
The case $r=0$ is obvious: we just take $T^0=K_0$.
 Given $T^{r-1}$, we construct $T^{r}$   as follows:  pick one $r$-dimensional face of~$K$ in each type and  extend $T^{r-1}$ to the union of $K_{r-1}$ with
these faces using the theory of smooth triangulations \cite[Section 10.6]{Mu}.
The resulting triangulation of this union uniquely  extends to a  triangulation $T^r$ of $K_r$ satisfying the  condition above.
Set $n=\dim K$. Clearly,   $T=T^n$ is a smooth triangulation of~$K$  that fits~$\varphi$.

Let $T'$ and $T''$ be the first and second barycentric subdivisions of $T$, respectively. Both $T'$ and $T''$ fit $\varphi$.
We claim that (i)  the projection $\pi:K \to K_\varphi$ is injective on   each  simplex   of $T'$
and (ii) the images under $\pi$  of any two  simplices  of $T''$
(which by (i) are simplices) meet along a common face.
Thus, the triangulation  $T''$  of $K$ projects to a triangulation of $K_\varphi$ and   satisfies   the conditions of the lemma.

To prove (i), consider a  simplex $\tau$ of $T'$.  Since all simplices of $T'$ are faces of $n$-simplices, it suffices to consider the case where $\dim(\tau)=n$.
  Note that the restriction of  $\pi:K\to K_\varphi$ to the interior of any face of~$K$ is injective. Moreover,   for any faces $F \subset G$ of~$K$,  the restriction of $\pi$ to $\Int(F) \cup \Int(G)$ is injective.
Therefore, to prove the injectivity of $\pi\vert_\tau$,   it is enough to find   a   sequence of faces $F_0\subset F_1\subset \cdots  $ of $K$, possibly  with repetitions,  such that    $\tau \subset \cup_i \Int(F_i)$.
Let  $\sigma_0 \subset \sigma_1\subset \cdots    \subset \sigma_n  $  be the simplices of $T$ whose barycenters   are the vertices of $\tau$ where $\dim(\sigma_i)=i$ for all~$i$.
Let $F_i$ be the smallest face of $K$ containing $\sigma_i$.
 The inclusions $\sigma_{i-1} \subset \partial \sigma_i \subset F_i$ imply
  that $F_{i-1} \subset F_i$ for all $i$.
 Note that   $\Int(\sigma_i)\subset \Int(F_i)$  since $\partial F_i$ is a subcomplex of $T$.
Thus,  $\tau \subset \cup_i \Int(\sigma_i)  \subset \cup_i \Int(F_i)$.

 To prove (ii), observe first that for any simplex $\Delta $    of $T'$, the  set $ \pi^{-1} (\pi(\Delta ))$ is   a subcomplex of $T'$.
 Indeed,    this set is equal to $ \cup_{F,G} \,\varphi_{F,G}(\Delta \cap F)$  where $F,G$ run  over all faces of $K$ of the same type.
Since $T'$ fits   $\varphi$ and  both $   \Delta $ and $ F $ are subcomplexes of $  T'$,  so are the sets $\Delta \cap F$, $\varphi_{F,G}(\Delta \cap F)$,  and $\pi^{-1} (\pi(\Delta ))$.

Consider   any   simplices $\tau_1,\tau_2$ of the triangulation $T''$.  Let $\Delta_1$ and $\Delta_2$ be   simplices of $T'$ containing $\tau_1$ and $\tau_2$ respectively.
Set $R=\pi(\Delta_1 ) \cap \pi(\Delta_2 )$. Clearly,~$R$ is the image of the set $\Delta_1 \cap  \pi^{-1} (\pi(\Delta_2 ))$ under $\pi$.
By the above, the latter set   is   a subcomplex of $\Delta_1$. Therefore,  $R$
  is a subcomplex of the simplex $\pi(\Delta_1 )$.
We   claim that $R \cap \pi(\tau_1)$ is a face of the simplex $\pi(\tau_1)$.
This claim would imply that $R \cap \pi(\tau_1)=\pi(\tau_0)$ for a simplex $\tau_0$ of $T''$. Since $R \subset  \pi(\Delta_2 )$,
we can assume (replacing  if necessary  $\tau_0$ by some $\varphi_{F,G}(\tau_0)$)
that  $\tau_0\subset \Delta'_2$ where $\Delta'_2$ is the barycentric subdivision of $\Delta_2$. Then
$$
\pi(\tau_1)\cap \pi(\tau_2) =  R \cap \pi(\tau_1)  \cap \pi(\tau_2) = \pi(\tau_0) \cap \pi(\tau_2)
$$
is an intersection of two simplices of $\pi(\Delta_2')$. Hence, it is a simplex of $\pi(\Delta_2')$ and  a face of $\pi(\tau_2)$.
By symmetry between $\tau_1$ and $\tau_2$, the intersection  $\pi(\tau_1)\cap \pi(\tau_2)$ is also a face of $\pi(\tau_1)$.

To prove the   claim above, we need only to show that  any subcomplex~$R$ of   an arbitrary    simplex $\Delta$
meets any simplex $\tau$ of the first barycentric subdivision $\Delta'$ along a face of $\tau$.
Clearly, $R\cap \tau$ is an intersection of two subcomplexes of $\Delta'$ and therefore  a subcomplex of $\tau$.
Set $k=\dim \tau$ and let $\sigma_0 \subset \cdots \subset \sigma_k$ be the faces of~$\Delta$ whose barycenters $v_0,\dots,v_k$ are the vertices of $\tau$.
 Let $i$ be the largest integer such that $v_i \in R$.
Since $R$ contains an interior point of $\sigma_i$ and $R$ is a subcomplex of $\Delta$, we   have $\sigma_i\subset R$.
Then $R$ contains the face $\langle v_0,\dots,v_i \rangle$ of $\tau$.
Since $R\cap \tau$ is a subcomplex of $\tau$   not containing    $v_{i+1},\dots, v_k$,
we   have $R \cap \tau=\langle v_0,\dots,v_i \rangle $.
\end{proof}

 \section{Polychains, polycycles,  and face homology} \label{Polychains and homology}

 We  introduce the face homology of  a topological space~$X$.

\subsection{Polychains} \label{polychains} 

Given a partition $\varphi$ on a manifold with faces $K$, we say that a continuous map $ \kappa: K\to X$
 is  \index{map!compatible with a partition} {\it compatible}
with   $ \varphi $ if $\kappa \circ \varphi_{F,G} =  \kappa \vert_F: F\to X$
for any     faces $F, G$ of $K$ of the same type.
Every such $\kappa$ is obtained by composing the projection $K\to K_\varphi$
with a continuous map $K_\varphi\to X$.

An  \index{polychain} \emph{$n$-dimensional polychain} or, shorter, an \emph{$n$-polychain}
in $X$ with $n\geq 0$  is a quadruplet $\calK= (K,\varphi , u, \kappa)$ where
$K$ is an oriented   $n$-manifold   with faces,
$\varphi $ is a partition on~$K$,   $ u$ is a  map $  \pi_0(K) \to \kk$ called the  \index{partition!weight of} \emph{weight},
and $ \kappa: K \to  X$ is a continuous map compatible with $\varphi $.
 By convention,  for every $n\geq 0$, there is
an  \index{polychain!empty} \emph{empty $n$-polychain}  $\varnothing$
whose underlying $n$-manifold is the empty set.

A  \index{polychain!diffeomorphism of} \emph{diffeomorphism} of   $n$-polychains $ \calK=(K,\varphi , u ,\kappa)$ and
$ \calK'= (K',\varphi',u',\kappa')$ in~$X$ is    a diffeomorphism $f:K\to K'$ such that
\begin{itemize}
\item[(1)] $\kappa = \kappa' \circ f$;
\item[(2)]    faces $F,G$ of $K$ have the same type if and only the faces  $f(F),f(G)$ of $K'$ have the same type
and then $f\vert_{G} \circ \varphi_{F,G}  = \varphi'_{f(F),f(G)} \circ f\vert_F: F \to f(G)$;
\item[(3)] $u' (f(C))= \deg \big(f\vert_{C}: C\to f(C)\big) \, u (C )$
for any   connected  component $C$ of $K$ where $\deg$   denotes   the degree of a diffeomorphism.
\end{itemize}
We say that     $n$-polychains $ \calK $ and $ \calK' $ in $X$ are  \index{polychains!diffeomorphic} \emph{diffeomorphic}    and  we write $\calK\cong \calK'$
if there exists a diffeomorphism of $ \calK $ onto $ \calK' $.
It is clear that $\cong $ is an equivalence relation.
By definition, the diffeomorphism class of a polychain $\calK=(K,\varphi , u, \kappa)$
is preserved if one simultaneously inverts
the orientation of a   component   of $K$ and multiplies   the corresponding  weight   by $-1$.
  Therefore   the  \index{polychain!opposite} \emph{opposite} polychain $-\calK=(-K,\varphi,u, \kappa)$ is diffeomorphic to $(K,\varphi , -u, \kappa)$.

Examples of  polychains are provided by    \index{singular manifold}  \emph{singular manifolds} in $X$,
that is pairs (an oriented smooth compact   manifold $M$, a continuous map $\kappa:M\to X$).
Such a pair determines a polychain $  (M,\varphi , u, \kappa)$ where $ M$ is viewed as a manifold with faces as in Section~\ref{Manifolds with faces},
$\varphi$ is the trivial partition, and  $u=1\in \kk$ is the constant function on $\pi_0(M)$.
As   explained below, polychains in $X$   may be also extracted from   singular chains in $X$.
Thus,  we can    view  polychains   as common generalisations  of   singular manifolds and singular chains  in which the role of  source spaces  is played by manifolds with faces.

\subsection{Reduced polychains}\label{reducedpolychains--}

  A polychain  $\calK=(K,\varphi , u ,\kappa)$  in~$X$  is \index{polychain!reduced} {\it reduced} if
any   distinct  connected   components of $K$   have different types  with respect to $\varphi$
  and $u(C)\neq 0$ for any    connected   component $C$ of $K$.
We define two  transformations of an arbitrary polychain $ \calK=(K,\varphi , u ,\kappa)$ in $X$
 whose composition turns $\calK$ into a reduced polychain.

 To define the first transformation, pick a representative
in each type of connected components of $K$,   and let $K_+\subset K$ be the   union of these representatives.
Clearly, $K_+$ is a  manifold with faces which we endow  with orientation induced from that of~$K$.
Restricting $\varphi $ and $\kappa$ to $K_+$ we obtain a partition $\varphi_+$ on  $K_+$ and a map $\kappa_+:K_+\to X$ compatible with $\varphi_+$.
We define a weight $u_+$ on $K_+$ by
$$
  u_+(C)= \sum_{C'} \deg(\varphi_{C,C'}) \, u(C')
$$
where $C$ is a component of $K$ lying in $K_+$ and   $C'$   runs over all components of~$K$ of the same type as $C$. It is clear that
$(K_+,\varphi_+ , u_+ ,\kappa_+)$ is a polychain in $X$ whose distinct components  have different types.
This polychain, denoted $\red_+ (\calK)$, is   determined by $ \calK$ uniquely up to diffeomorphism.

The second  transformation of a polychain $ \calK=(K,\varphi , u ,\kappa)$  removes
  from $K$ all connected components   with zero weight  and restricts $\varphi , u ,\kappa$ to the remaining manifold with faces.
  The resulting polychain is denoted $\red_0 (\calK)$.

  The two-step  operation $\red=\red_0\red_+$  transforms  an arbitrary polychain into a reduced polychain defined uniquely up to diffeomorphism.
It is clear that a polychain $\calK$ is reduced if and only if $\red (\calK) \cong \calK$.

 \subsection{Operations} \label{reducedpolychains}

The  \index{polychain!boundary of} \emph{boundary} of an $n$-polychain $\calK=(K,  $ $ \varphi,  u, \kappa)$ in~$X$
is the $(n-1)$-polychain $\partial \calK=(K^\partial , \varphi^\partial, u^\partial,\kappa^\partial)$ in~$X$  defined as follows.
\begin{itemize}
\item The manifold with faces $K^\partial $ is    the    disjoint union of all principal faces  of~$K$ endowed with orientation induced from that of $K$
  (see the Introduction for our orientation conventions).

\item Let $\iota: K^\partial\to K$ be the natural map identifying each component of $K^\partial$ with its copy
 in~$K$. Two   faces $F,G$ of $K^\partial$  have the same type if
the faces $\iota (F), \iota(G)$ of $K$  have the same type and
$$\varphi^\partial_{F,G}= (\iota\vert_G)^{-1} \varphi_{\iota (F), \iota(G)} \iota:F\to G.$$

\item  For any  connected   component  $P$ of $K^\partial$, we  set    $u^\partial(P)=u  (K^P)$
where $K^P$ is the connected component of $K$ containing the principal face $\iota(P)$.

\item     We set   $\kappa^\partial=\kappa   \iota: K^\partial\to X$.
\end{itemize}
The boundary of a polychain  is well defined up to diffeomorphism,
and   diffeomorphic polychains have diffeomorphic boundaries.
The  \index{polychain!reduced boundary of} \emph{reduced boundary} $\partial^r \calK$  of a polychain $\calK$ is defined by  $\partial^r \calK=\red (\partial \calK)$.

\begin{lemma}
For any polychain $\calK$ in $X$, $\partial^r\! \red(\calK) = \partial^r \calK$ and $\partial^r \partial^r \calK =\varnothing$.
\end{lemma}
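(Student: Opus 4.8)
The plan is to derive both statements from the behaviour of $\red_0$, $\red_+$ and $\partial$ on connected components, the square‑zero statement then following from the first one together with the familiar ``$\partial^2=0$ with signs'' for manifolds with faces. Since $\red=\red_0\red_+$, I would first dispose of $\red_0$. Because a principal face $P$ of $K$ carries the weight $u(K^P)$ of the component $K^P\subseteq K$ containing it, passing from $\calK$ to $\red_0(\calK)$ merely deletes from $\partial\calK$ its zero‑weight components; as deleting zero‑weight components affects neither the merged weights computed by $\red_+$ nor, after the final $\red_0$, the surviving types, one gets $\red\,\partial\,\red_0(\calK)\cong\red\,\partial\,\calK$. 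Applying this with $\red_+(\calK)$ in place of $\calK$, the first statement $\partial^r\red(\calK)=\partial^r\calK$ is thereby reduced to the claim $\red\,\partial\,\red_+(\calK)\cong\red\,\partial\,\calK$.

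To prove this claim, write $K_+\subseteq K$ for the union of the chosen representatives, one component per $\varphi$-type, so that the principal faces of $K_+$ are exactly the principal faces of $K$ lying in $K_+$. In $\partial\calK$ a principal face $P$ of $K$ has $\varphi^\partial$-type the $\varphi$-type of $P$ and weight $u(K^P)$; in $\partial\,\red_+(\calK)$ a principal face $Q\subseteq K_+$ has the same notion of type and the merged weight $u_+(C_Q)=\sum_{C'}\deg(\varphi_{C_Q,C'})\,u(C')$, where $C_Q$ is the component containing $Q$ and $C'$ runs over the components of $K$ of the type of $C_Q$. Partition axiom (b) shows every $\varphi$-type of principal faces of $K$ has a representative in $K_+$ (push $P$ into $K_+$ by $\varphi_{K^P,C_0}$, with $C_0$ the representative of the type of $K^P$), so $\partial\calK$ and $\partial\,\red_+(\calK)$ carry the same types. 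Fixing a type $T$ and a representative $P_0\in K_+$, I would set up the bijection $(Q,C')\mapsto\varphi_{C_Q,C'}(Q)$ from pairs $(Q,C')$ — with $Q\subseteq K_+$ a principal face in $T$ and $C'$ a component of the type of $C_Q$ — onto the principal faces $P$ of $K$ in $T$, with inverse $P\mapsto(\varphi_{K^P,C_0}(P),K^P)$ (injectivity uses that $K_+$ has a single component per type). Using axiom (b) in the form $\varphi_{C_Q,C'}|_Q=\varphi_{Q,P}$, multiplicativity of degrees, and the fact that a diffeomorphism of oriented manifolds with faces and its restriction to a principal face have the same degree for the ``outward vector first'' orientation (which $\iota$ respects), one gets $\deg(\varphi^\partial_{P_0,Q})\deg(\varphi_{C_Q,C'})=\deg(\varphi^\partial_{P_0,P})$. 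Summing over the bijection identifies the merged weight of $T$ in $\red_+\,\partial\calK$ with that in $\red_+\,\partial\,\red_+(\calK)$; applying $\red_0$ and matching the underlying manifolds and maps (which agree up to diffeomorphism by construction) yields $\red\,\partial\,\red_+(\calK)\cong\red\,\partial\,\calK$, hence the first statement.

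For the square-zero statement, applying the first statement to $\partial\calK$ in place of $\calK$ gives $\partial^r\partial^r\calK=\partial^r\,\red(\partial\calK)=\partial^r\,\partial\calK=\red(\partial\partial\calK)$, so it remains to show $\red(\partial\partial\calK)=\varnothing$. The manifold $\partial\partial K$ is the disjoint union, over the $(n-2)$-dimensional faces $E$ of $K$ — each of which is a principal face of exactly two principal faces $P^E_1,P^E_2$ of $K$ — of two copies $E^{(1)}\subseteq P^E_1$, $E^{(2)}\subseteq P^E_2$. A computation in the local model $[0,\infty)^2\times\RR^{n-2}$ shows that $E^{(1)}$ and $E^{(2)}$ receive \emph{opposite} orientations, and each carries the weight $u(C)$ of the component $C\subseteq K$ containing $E$. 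Unwinding the definition of the partition on $\partial\partial K$ through the two maps $\iota$ and using $\varphi_{E,E}=\id_E$ shows that $E^{(1)}$ and $E^{(2)}$ have the same type and that the identification map between them is the identity on the underlying set $E$, hence of degree $-1$ for those opposite orientations. Therefore, in each type, the contributions of $E^{(1)}$ and $E^{(2)}$ cancel in the $\red_+$ merged-weight formula, so $\red_+(\partial\partial\calK)$ has all weights zero and $\red(\partial\partial\calK)=\varnothing$.

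The main obstacle is the orientation‑and‑degree bookkeeping of the second paragraph: establishing $\deg(\varphi^\partial_{P_0,Q})\deg(\varphi_{C_Q,C'})=\deg(\varphi^\partial_{P_0,P})$ requires carefully combining the restriction‑to‑faces property of identification maps (axiom (b)) with the boundary orientation convention, and one must also check that the bijection $(Q,C')\leftrightarrow P$ is insensitive to the auxiliary choices hidden in $\red_+$, up to the standard ambiguity of reversing an orientation and flipping the sign of the corresponding weight. The sign computation in the last paragraph (the opposite orientations of $E^{(1)}$ and $E^{(2)}$) is the other point to handle with care, but it is precisely the classical chain‑level computation behind $\partial^2=0$.
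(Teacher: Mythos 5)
Your proof is correct and follows the same route as the paper, which disposes of the first identity by declaring it clear and establishes the second via exactly the chain $\partial^r\partial^r\calK=\partial^r\red(\partial\calK)=\partial^r\partial\calK=\red_0\red_+(\partial\partial\calK)=\varnothing$; you simply unpack the details (the $\red_+$ bookkeeping and the classical $\partial^2=0$ sign cancellation) that the paper leaves implicit. One small imprecision in your last paragraph: when computing the merged weight of a type in $\red_+(\partial\partial\calK)$ one must sum over \emph{all} codimension-two faces $E'$ of the $\varphi$-type of $E$, not just $E$ itself; but since each such $E'$ contributes a cancelling pair $E'^{(1)},E'^{(2)}$ (with opposite signs by the same local orientation computation, and equal weights $u(K^{E'})$), the conclusion stands.
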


\begin{proof}
The first identity is clear. The second identity  follows from the first: 
 \begin{eqnarray*}
\partial^r \partial^r \calK = \partial^r\! \red(\partial \calK) = \partial^r \partial \calK = \red_0 \red_+( \partial \partial \calK) =\varnothing.
 \end{eqnarray*}

\up 
\end{proof}

The  \index{polychains!disjoint union of} \emph{disjoint union} of two $n$-polychains $\calK_1, \calK_2$  in~$X$ is defined in the obvious way
and is denoted   $\calK_1 \sqcup \calK_2$. Clearly,
$$\red (\calK_1 \sqcup \calK_2)= \red (\calK_1) \sqcup \red (\calK_2)
\quad {\text {and}}\quad \partial (\calK_1 \sqcup \calK_2)= \partial \calK_1 \sqcup \partial \calK_2 $$
  so that $  \partial^r (\calK_1 \sqcup \calK_2)= \partial^r (\calK_1) \sqcup \partial^r (\calK_2) $.

For $k\in \kk$ and   a polychain $\calK=(K,\varphi,u, \kappa)$ in $X$, set  $k\calK=(K,\varphi, k u, \kappa)$. Clearly,
$$
\red(k\calK)=  \red\left(k\red(\calK)\right)
\quad \hbox{and} \quad
\partial(k\calK) = k \partial \calK
$$
 so that $\partial^r (k\calK)=\red (k \partial^r \calK)$.
Note that the polychain $(-1) \calK$ is diffeomorphic to  the polychain  $-\calK$   opposite to $\calK$.

\subsection{Face homology}\label{facehomology}

 The diffeomorphism classes of  $n$-polychains in~$X$ may be added and multiplied by elements of $\kk$, 
 but  do not form a module because the  distributivity  relation $(k+l) \calK\cong k\calK   \sqcup   \l\calK$ fails.
Also, it is natural to throw in  relations identifying $\calK$ with $ \red(\calK) $ for all $\calK$. Quotienting the set  of diffeomorphism classes of  $n$-polychains in $X$
 by  the   relations of these two types, we obtain  the  \index{polychains!module of} \emph{$\kk$-module of $n$-polychains} in $X$.
 These modules together  with  the boundary maps induced by $\partial$ form the  \index{face chain complex} \emph{face chain complex}  of~$X$ whose  homology is  the \index{face homology}  \emph{face homology} of~$X$.  However, we prefer the following  more direct definition of  face homology.

We say that     $n$-polychains $\calK_1$ and $\calK_2$ in $X$ are \index{polychains!homologous}  \emph{homologous}, and write $\calK_1\simeq \calK_2$,
if there exist  $(n+1)$-polychains $\calL_1,\calL_2$ in $X$
such that  $$\red (\calK_1)\sqcup \partial^r \calL_1   \cong \red (\calK_2)\sqcup \partial^r \calL_2 .$$
Clearly, the homology relation $\simeq $ is an equivalence relation
(weaker than   the diffeomorphism relation   $\cong$).
  The homology class of an $n$-polychain $\calK$ in $X$ is denoted by $\lb \calK \rb$.  Note   that  $\lb \calK \rb=\lb \red(\calK)  \rb$.
 If   $\calK_1$ and $\calK_2$ are homologous, then $\partial^r \calK_1 \cong \partial^r \calK_2$.

 A polychain $\calK=(K,\varphi,u,\kappa)$ is   a \index{polycycle} \emph{polycycle} if $\partial^r \calK= \varnothing$.
 A polychain homologous to a polycycle is itself a polycycle. In particular, if $\calK$ is a polycycle, then so is $\red(\calK)$ and vice versa. Let
$$
\widetilde{H}_n(X)=\{\hbox{$n$-polycycles in $X$}\} /\simeq
$$
be the   set of homology classes of $n$-polycycles in $X$. Note that the  disjoint union of polycycles is a polycycle,
and multiplication of polycycles by elements of $\kk$  yield polycycles.

\begin{lemma}\label{additivityandrescaling}
Disjoint union  of polycycles together with multiplication of polycycles by elements of $\kk$
 turn   $\widetilde{H}_n(X)$  into a  module  (over $\kk$).
\end{lemma}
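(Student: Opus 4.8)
The plan is to verify the module axioms for $\widetilde{H}_n(X)$ directly, using the equivalence relation $\simeq$ and the behaviour of $\red$, $\partial^r$, and $\sqcup$ established in Section~\ref{reducedpolychains}. The operations are: addition $\lb \calK_1 \rb + \lb \calK_2 \rb := \lb \calK_1 \sqcup \calK_2 \rb$ and scalar multiplication $k \cdot \lb \calK \rb := \lb k \calK \rb$. First I would check these are well defined on homology classes: if $\calK_1 \simeq \calK_1'$ and $\calK_2 \simeq \calK_2'$, then choosing bounding polychains $\calL_1, \calL_2, \calL_1', \calL_2'$ witnessing each relation and taking disjoint unions, one gets $\red(\calK_1 \sqcup \calK_2) \sqcup \partial^r(\calL_1 \sqcup \calL_2) \cong \red(\calK_1' \sqcup \calK_2') \sqcup \partial^r(\calL_1' \sqcup \calL_2')$, using the compatibility of $\red$, $\partial^r$, and $\sqcup$ with disjoint unions noted just before Section~\ref{facehomology}; well-definedness of scalar multiplication is similar, using $\red(k\calK) = \red(k\,\red(\calK))$ and $\partial^r(k\calK) = \red(k\,\partial^r\calK)$.

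Next I would establish the additive group structure. Commutativity and associativity of $+$ follow from the evident diffeomorphisms $\calK_1 \sqcup \calK_2 \cong \calK_2 \sqcup \calK_1$ and $(\calK_1 \sqcup \calK_2) \sqcup \calK_3 \cong \calK_1 \sqcup (\calK_2 \sqcup \calK_3)$. The zero element is the class of the empty polycycle $\varnothing$, since $\calK \sqcup \varnothing \cong \calK$. For inverses, I would take $-\lb \calK \rb := \lb (-1)\calK \rb = \lb -\calK \rb$ and show $\calK \sqcup (-\calK) \simeq \varnothing$: indeed $\calK \times I$ (the cylinder, with the product partition and the pulled-back weight and map) is an $(n+1)$-polychain whose reduced boundary is, up to diffeomorphism and cancellation of the zero-weight faces coming from $\partial\calK \times I$ and the identified side faces, precisely $\red(\calK \sqcup (-\calK))$; hence $\lb \calK \rb + \lb -\calK \rb = \lb \partial^r(\calK \times I) \rb = \lb \varnothing \rb$. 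Here one uses that a polycycle has void reduced boundary, so the only surviving pieces of $\partial^r(\calK \times I)$ are the two ends $\calK \times \{0\}$ and $-(\calK \times \{1\})$, with the orientation sign coming from the outward-normal-first convention.

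For the $\kk$-module axioms, I would check $1 \cdot \lb \calK \rb = \lb \calK \rb$ (immediate), $k \cdot (l \cdot \lb\calK\rb) = (kl)\cdot\lb\calK\rb$ (from $\red(k(l\calK))$ versus $\red((kl)\calK)$, both reducing to multiplying weights by $kl$), and $(k \cdot \lb\calK\rb) + (l \cdot \lb\calK\rb) = (k+l)\cdot \lb\calK\rb$, i.e.\ $k\calK \sqcup l\calK \simeq (k+l)\calK$. This last one is the distributivity relation that fails at the level of diffeomorphism classes, so it must be produced by a homology. I would again use the cylinder $\calK\times I$, but now with a modified weight: declare the two boundary ends to be glued (or rather keep them separate) while assigning weights so that $\partial^r(\calK\times I)$ computes as $k\calK \sqcup l\calK \sqcup (-(k+l)\calK)$ up to cancellation — concretely, take three parallel copies $\calK \times \{0\}, \calK\times\{1\}$ inside two cylinders stacked so that the shared middle face carries weight forcing $k + l - (k+l) = 0$ on the interior faces. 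Finally, $(k+l)\cdot(\lb\calK_1\rb + \lb\calK_2\rb)$ distributing over both arguments then follows by combining the two distributivity computations.

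The main obstacle I expect is the distributivity relation $k\calK \sqcup l\calK \simeq (k+l)\calK$, together with the inverse relation $\calK \sqcup (-\calK)\simeq\varnothing$: both require exhibiting an explicit bounding $(n+1)$-polychain and then carefully tracking how $\red_+$ (summing weights over faces of the same type) and $\red_0$ (discarding zero-weight components) act on its boundary, including getting the orientation signs right via the outward-vector-first convention on $\partial(\calK\times I)$. The bookkeeping with the partition on the cylinder — which side faces get identified, and what types the resulting principal faces have — is where the argument is least automatic; everything else (well-definedness, associativity, commutativity, the unit and the two multiplicativity axioms) is routine manipulation of disjoint unions and the already-recorded identities for $\red$ and $\partial$.
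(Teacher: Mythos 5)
Your overall strategy matches the paper's: both treat $\widetilde{H}_n(X)$ as an abelian monoid with $\lb\varnothing\rb$ as zero, construct inverses via the cylinder $\overline{\calK}=\calK\times I$, and then attack the scalar axioms, with the distributivity $k\calK\sqcup l\calK\simeq(k+l)\calK$ singled out as the crux. Your treatment of well-definedness, the group structure, and the multiplicativity axioms is correct and essentially the paper's argument.

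The gap is in the distributivity step. Your description — "two cylinders stacked so that the shared middle face carries weight forcing $k+l-(k+l)=0$ on the interior faces," with three parallel copies of $\calK$ — does not describe a construction that works. If the two cylinders are glued end to end (a literal stack $K\times[0,2]$), the reduced boundary is still just the two outermost ends, carrying weights $ku$ and $lu$; there is no place for a copy of $(k+l)\calK$ to appear, and the middle face does not in general have zero weight. Even as a sketch, $\partial^r$ of a single cylinder of a polycycle can only ever produce two end pieces, not three, so a genuinely new construction is needed. What the paper actually does is take the \emph{disjoint} union $\overline{\calK_1}\sqcup\overline{\calK_2}$ of the cylinders on $\calK_1=(K,\varphi,ku,\kappa)$ and $\calK_2=(K,\varphi,lu,\kappa)$ (same $K,\varphi,\kappa$, different weights), and then modifies the partition by declaring $(F\times\{0\})_1$ and $(F\times\{0\})_2$ to have the same type with identity identification map, for each face $F$ of $K$. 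Under $\red_+$ the two $\{0\}$-ends then merge into a single component whose weight is $ku+lu=(k+l)u$ — it does \emph{not} cancel to zero, but rather survives as $\red(-\calK)$ with $\calK=(K,\varphi,(k+l)u,\kappa)$ — while the two $\{1\}$-ends give $\red(\calK_1)\sqcup\red(\calK_2)$. That yields $\partial^r\calL\cong\red(\calK_1)\sqcup\red(\calK_2)\sqcup\red(-\calK)$, which is what you were aiming for; but the mechanism is $\red_+$ summing weights over the identified faces, not a zero-weight face being discarded by $\red_0$.

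So: correct high-level plan, correct diagnosis of where the difficulty lies, but the construction you propose for the key step would not work as described and needs to be replaced by the merged-disjoint-cylinders construction above.
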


\begin{proof}
Clearly, the disjoint union of polychains is compatible with   $\simeq$ and induces   a binary operation in
 $\widetilde{H}_n(X)$. This operation  is associative and commutative
with $\varnothing$ representing the zero element. Thus,  $
\widetilde{H}_n(X)$ is an abelian monoid.

To prove that $\widetilde{H}_n(X)$  is a group, we use the   cylinder construction on polychains.
Consider an  $n$-polychain $\calK=(K,\varphi,u, \kappa)$ in $X$.
We define the  \index{polychain!cylinder} {\it cylinder   polychain} $\overline{\calK}= (\overline K, \overline \varphi, \overline u, \overline \kappa)$    as follows.
Set $\overline K=K \times I$ where    $I=[0,1]$ is viewed as a manifold with  faces   $I$, $\{0\}$, $\{1\}$ and  endow $\overline K$ with the product orientation.
Two   faces $F \times J$, $G \times J'$ of $K \times I$
are of the same type if   $F,G$ are   faces of $K$ of the same type and $J=J'$ is any face of $I$; then
  $\overline \varphi_{F \times J , G\times J}= \varphi_{F,G}\times \id_J $.
By definition,  $\overline u(C \times I)= u (C)$ for any connected  component $C$ of $K$, and $\overline \kappa: \overline K\to X$ is
the composition of the   cartesian   projection $\overline K\to K$ with $\kappa:K\to X$.
It follows from the definitions  that  $$\red (\overline \calK) \cong \overline {\red(\calK)}
\quad {\text {and}}\quad  \partial \overline \calK   \cong  \calK \sqcup (-\calK) \sqcup \overline {\partial   \calK}.$$
Therefore $$ \partial^r \overline \calK  =\red(  \partial \overline \calK )  \cong \red(\calK) \sqcup \red (-\calK)  \sqcup \red (\overline {\partial   \calK}) \cong \red(\calK) \sqcup \red (-\calK)  \sqcup   \overline { \partial^r   (\calK)} .$$ If $\calK$ is a polycycle, this gives  $ \partial^r \overline \calK  \cong \red(\calK) \sqcup \red (-\calK)$. Therefore $\calK \sqcup (-\calK) \simeq  \varnothing$.
We conclude that $\widetilde{H}_n(X)$  is an abelian group.

Given two homologous $n$-polycycles $\calK_1$ and $\calK_2$ in $X$, pick
  $(n+1)$-polychains $\calL_1,\calL_2$ in $X$
such that  $\red (\calK_1)\sqcup \partial^r \calL_1   \cong \red (\calK_2)\sqcup \partial^r \calL_2$.
Then, for any  $k\in \kk$,   $$\red\left(k \big(\red (\calK_1)\sqcup \partial^r \calL_1\big) \right) \cong
\red\left(k \big(\red (\calK_2)\sqcup \partial^r \calL_2\big) \right).$$
For each $i\in\{1,2\}$,
$$
\red\left(k \big(\red (\calK_i)\sqcup \partial^r \calL_i\big) \right)
= \red\left(k\red (\calK_i)\right) \sqcup \red \left(k\partial^r \calL_i\right)
= \red (k\calK_i) \sqcup \partial^r(k\calL_i).
$$
We deduce that $k\calK_1\simeq k\calK_2$.
Thus, the multiplication by $k\in \kk$  induces a   well defined map $  \widetilde{H}_n(X) \to \widetilde{H}_n(X)$.

The axioms of a $\kk$-module   are  straightforward  except   the linearity   in $k$. The latter is a consequence of
 the following   fact:
if $\calK_1=(K,\varphi , u_1,\kappa)$ and $\calK_2=(K,\varphi , u_2,\kappa)$
are $n$-polycycles in $X$ (with the same $K, \varphi, \kappa$),
then the $n$-polychain $\calK=(K,\varphi , u_1+u_2, \kappa)$
is a polycycle  homologous to  $\calK_1 \sqcup \calK_2$.
To see this,  consider the cylinder  polychain
$\overline {\calK_1 \sqcup   \calK_2} \cong \overline \calK_1 \sqcup \overline \calK_2$  (as defined   above)
and modify its partition   by additionally declaring  that,
for any face $F$ of $K$,
the  faces $(F \times \{0\})_1$ and $(F \times \{0\})_2$ of $(K \times I)_1 \sqcup (K \times I)_2$ have the same type and the corresponding identification map is the identity map.
This gives   an $(n+1)$-polychain   $\calL$  such that
$$
\red_+ (\partial   \calL ) \cong  \red_+(\calK_1)  \sqcup  \red_+(\calK_2) \sqcup \red_+(-\calK)
\sqcup  (\rm {a\,\, polychain \,\, with\,\,  zero\,\,  weight}).
$$
  Therefore
$$\partial^r   \calL  \cong  \red(\calK_1) \sqcup \red(\calK_2)\sqcup \red (-\calK) .$$
  Hence,  $\calK$ is a polycycle homologous to $ \calK_1 \sqcup \calK_2$.
\end{proof}

We call   $\widetilde{H}_n(X)$ the \index{face homology}  \emph{$n$-th face homology of $X$} (with coefficients in $\kk$).
The   face homology      extends to   a functor from the category of topological spaces to the category of modules:
a  continuous   map $f:X\to Y$ induces a linear map $ f_\ast: \widetilde{H}_n(X)\to   \widetilde{H}_n(Y)$
carrying the homology class of a polycycle  $\calK=(K, \varphi, u, \kappa)$ in $X$
to the homology class of the polycycle $  f_*(\calK) =(K, \varphi, u, f\kappa)$ in $Y$.

\subsection{Deformations}\label{homotopy_singular} 

   A  \index{polychain!deformation of} {\it deformation} of a  polychain $\calK=(K,\varphi,u,\kappa)$  in~$X$ 
is a family of polychains $\{ \calK^t  = (K,\varphi,u,\kappa^t) \}_{ t\in I}$  with the same $K, \varphi, u$  such that
$\{\kappa^t:K \to X\}_{t\in I}$ is a  (continuous) homotopy  of $\kappa^0=\kappa$.
 By the definition of a polychain,   the map $\kappa^t$ is   compatible with $\varphi$ for all $t\in I$.

\begin{lemma}\label{homotimplieshomol}
If  $\{ \calK^t \}_{ t\in I} $ is a deformation of a polycycle $\calK $,
then $\calK^1 $ is a polycycle homologous to $\calK=\calK^0$.
\end{lemma}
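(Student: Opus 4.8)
The plan is to build an explicit $(n+1)$-polychain witnessing the homology $\calK^1 \simeq \calK^0$, using the same kind of cylinder construction that appeared in the proof of Lemma~\ref{additivityandrescaling}. Given the deformation $\{\calK^t=(K,\varphi,u,\kappa^t)\}_{t\in I}$, I would first observe that the homotopy $\{\kappa^t\}_{t\in I}$ assembles into a single continuous map $H: K\times I \to X$, $H(x,t)=\kappa^t(x)$. This $H$ is automatically compatible with the partition $\overline\varphi$ on $\overline K=K\times I$ defined exactly as in the cylinder construction of Lemma~\ref{additivityandrescaling}: two faces $F\times J$ and $G\times J'$ have the same type iff $F,G$ have the same type in $K$ and $J=J'$, with identification map $\varphi_{F,G}\times\id_J$. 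Indeed, for faces $F,G$ of the same type, $H\circ(\varphi_{F,G}\times\id_J)(x,t)=\kappa^t(\varphi_{F,G}(x))=\kappa^t(x)$ because each $\kappa^t$ is compatible with $\varphi$; so the compatibility of $H$ with $\overline\varphi$ is where the hypothesis ``$\kappa^t$ compatible with $\varphi$ for all $t$'' is used. Setting $\overline u(C\times I)=u(C)$, this produces an $(n+1)$-polychain $\calL=(\overline K,\overline\varphi,\overline u, H)$.

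Next I would compute $\partial^r\calL$. As already recorded in the proof of Lemma~\ref{additivityandrescaling}, for the cylinder polychain one has $\partial\overline\calK \cong \calK^0 \sqcup (-\calK^1) \sqcup \overline{\partial\calK}$ up to diffeomorphism, the three pieces coming from the bottom face $K\times\{0\}$, the top face $K\times\{1\}$ (with reversed induced orientation, hence the sign), and the ``vertical'' faces $(\text{principal faces of }K)\times I$. The orientation bookkeeping is identical to the one in Lemma~\ref{additivityandrescaling}; the only change is that the map to $X$ on the bottom face is $\kappa^0=\kappa$ and on the top face is $\kappa^1$, so the bottom piece is $\calK^0=\calK$ and the top piece is $-\calK^1$ rather than two copies of the same polychain. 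Since $\calK$ is a polycycle, $\partial^r\calK=\varnothing$, hence $\overline{\partial\calK}$ reduces to a void polychain (its underlying manifold has zero weight or empty components after reduction, by the same argument as in Lemma~\ref{additivityandrescaling}), and therefore
\[
\partial^r\calL = \red(\partial\calL) \cong \red(\calK)\sqcup\red(-\calK^1).
\]

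From this the conclusion is immediate. The displayed identity gives $\red(\calK)\sqcup\partial^r\calL \cong \red(\calK)\sqcup\red(\calK)\sqcup\red(-\calK^1)$; comparing with the trivial relation $\red(\calK^1)\sqcup\partial^r\calL' \cong$ (itself) for a suitable auxiliary polychain, or more directly by taking $\calL_1=\calL$, $\calL_2=\varnothing$ in a slightly rearranged form, one reads off $\calK^1\simeq\calK$. Concretely: $\red(\calK^1)\sqcup\partial^r\calL \cong \red(\calK^1)\sqcup\red(\calK)\sqcup\red(-\calK^1) \cong \red(\calK)\sqcup\big(\red(\calK^1)\sqcup\red(-\calK^1)\big)$, and by the cylinder argument of Lemma~\ref{additivityandrescaling} applied to the polycycle $\calK^1$ we have $\red(\calK^1)\sqcup\red(-\calK^1)=\partial^r\overline{\calK^1}$ up to diffeomorphism, so $\red(\calK^1)\sqcup\partial^r\calL \cong \red(\calK)\sqcup\partial^r(\overline{\calK^1})$, which is precisely the definition of $\calK^1\simeq\calK$. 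Also $\partial^r\calK^1\cong\partial^r\calK=\varnothing$ since $\partial^r\kappa$ only depends on the underlying manifold-with-faces data $(K,\varphi,u)$, which is the same for all $\calK^t$; thus $\calK^1$ is a polycycle.

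The main obstacle I anticipate is purely bookkeeping: verifying carefully that $H$ is compatible with $\overline\varphi$ (this is the one place the hypothesis is genuinely needed, and it must be checked on all faces, including proper faces of proper faces, using condition (b) in the definition of a partition), and getting the orientation signs on the top and bottom faces of $K\times I$ right so that the top contributes $-\calK^1$. Both of these are local computations that mirror what is already done for cylinders in the proof of Lemma~\ref{additivityandrescaling}, so no genuinely new idea is required; the proof is essentially an invocation of that construction with a non-constant map to $X$.
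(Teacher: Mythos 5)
Your proof is correct and is essentially the same as the paper's: both build the cylinder polychain $(\overline K,\overline\varphi,\overline u,\cdot)$ with the map to $X$ given by the homotopy $\widehat\kappa(x,t)=\kappa^t(x)$ and then read the homology off from $\partial^r$ of this polychain; the paper merely states the conclusion ``this implies $\calK^0\simeq\calK^1$'' without spelling out the rearrangement via an auxiliary cylinder $\overline{\calK^1}$ that you included, and the only other difference is a harmless choice of which end of $K\times I$ carries the positive induced orientation.
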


\begin{proof}
 Equality $\partial^r\calK^1=\varnothing$ is a direct consequence of the assumption $\partial^r\calK=\varnothing$.
Consider the  cylinder   polychain  $\overline{\calK}= (\overline K, \overline \varphi, \overline u, \overline \kappa)$
associated with $\calK=(K,\varphi,u,\kappa)$ in the proof of Lemma~\ref{additivityandrescaling}.
Let       $\widehat \kappa:   \overline K=K\times I \to   X$   be the map
 determined by the   homotopy  $\{\kappa^t\}_{t\in I}$   of $\kappa$.
 Then    $\calR= (\overline K, \overline \varphi, \overline u, \widehat \kappa)$  is a polychain such that
$$
\partial^r   \calR    \cong \red(\calK^1) \sqcup \red (-\calK^0).
$$
  This implies that $\calK^0\simeq \calK^1$.
\end{proof}

\begin{lemma}\label{homotimplieshomol+} Let $X,Y$ be topological spaces.
If maps $f, g:X\to Y$ are homotopic, then $f_\ast=g_\ast: \widetilde H_\ast(X) \to \widetilde H_\ast(Y)$.
\end{lemma}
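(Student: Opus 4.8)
The plan is to reduce Lemma~\ref{homotimplieshomol+} to Lemma~\ref{homotimplieshomol} by turning a homotopy between $f$ and $g$ into a deformation of an arbitrary polycycle. Concretely, fix $n\geq 0$ and a polycycle $\calK=(K,\varphi,u,\kappa)$ in $X$ representing an arbitrary class in $\widetilde H_n(X)$. Let $H:X\times I\to Y$ be a homotopy with $H(\cdot,0)=f$ and $H(\cdot,1)=g$, and set $\kappa^t = H(\cdot,t)\circ\kappa : K\to Y$ for $t\in I$. First I would observe that each $\kappa^t$ is continuous and, crucially, compatible with the partition $\varphi$: since $\kappa\circ\varphi_{F,G}=\kappa|_F$ for faces $F,G$ of the same type, post-composing with $H(\cdot,t)$ preserves this identity. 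Hence $\{\calK^t=(K,\varphi,u,\kappa^t)\}_{t\in I}$ is a deformation of the polychain $(K,\varphi,u,f\kappa)=f_*(\calK)$ in the sense of Section~\ref{homotopy_singular}.

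Next I would invoke Lemma~\ref{homotimplieshomol}. Since $f_*(\calK)$ is a polycycle in $Y$ (its reduced boundary is $f_*(\partial^r\calK)=f_*(\varnothing)=\varnothing$, because $\calK$ is a polycycle and pushforward is functorial on the face chain level), the lemma applies and gives that $\calK^1$ is a polycycle homologous to $\calK^0=f_*(\calK)$. But by construction $\calK^0=(K,\varphi,u,f\kappa)=f_*(\calK)$ and $\calK^1=(K,\varphi,u,g\kappa)=g_*(\calK)$. Therefore $f_*(\calK)\simeq g_*(\calK)$ in $\widetilde H_n(Y)$, i.e. $\lb f_*(\calK)\rb=\lb g_*(\calK)\rb$.

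Finally I would conclude. Every class in $\widetilde H_n(X)$ is represented by some polycycle $\calK$, and for such $\calK$ we have shown $f_*\lb\calK\rb=\lb f_*\calK\rb=\lb g_*\calK\rb=g_*\lb\calK\rb$. Since this holds for all representatives and all $n$, we get $f_*=g_*:\widetilde H_\ast(X)\to\widetilde H_\ast(Y)$, as claimed. The only mild subtlety — and the step I would be most careful about — is checking that $f_*$ and $g_*$ are well defined as stated in Section~\ref{facehomology}, so that the computation $f_*\lb\calK\rb=\lb f_*\calK\rb$ is legitimate; but this is exactly the functoriality already recorded at the end of Section~\ref{facehomology}, so there is no real obstacle here. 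The argument is essentially the standard ``homotopy induces chain homotopy via the cylinder'' trick, packaged through the deformation lemma rather than repeated from scratch.
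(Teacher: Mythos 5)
Your proposal is correct and takes essentially the same route as the paper: use the given homotopy to produce a deformation $\{(K,\varphi,u,f^t\kappa)\}_t$ of the polycycle $f_*(\calK)$, then invoke Lemma~\ref{homotimplieshomol} to conclude $f_*(\calK)\simeq g_*(\calK)$. The additional checks you flag (compatibility of each $\kappa^t$ with the partition, and that $f_*(\calK)$ is a polycycle) are correct and are left implicit in the paper's shorter write-up.
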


\begin{proof} Pick a homotopy $\{f^t\}_{t\in I}$ between $f^0=f$ and $f^1=g$. For any polycycle  $\calK=(K,\varphi,u,\kappa)$ in $X$,
we have a deformation   $\{(K,\varphi,u, f^t \kappa ) \}_{ t\in I}$   relating the polycycles
$  f_*(\calK)   =(K,\varphi,u, f  \kappa )$ and $  g_*(\calK)  =(K,\varphi,u,  g \kappa )$.
Lemma~\ref{homotimplieshomol}  implies that these polycycles are homologous. Hence,  $f_\ast=g_\ast$.
\end{proof}

Lemma~\ref{homotimplieshomol+} implies that a   homotopy equivalence between topological spaces induces  an isomorphism of their face homology.

\subsection{Cross product} \label{cross_product}

The   cartesian   product $K\times L$ of two   manifolds with faces~$K$ and~$L$ can be  viewed as a manifold with faces in the obvious way.
The faces of $K\times L$ are the products $F\times G$ where~$F$ runs over   faces of~$K$ and~$G$ runs over   faces of~$L$.
When~$K$ and~$L$ are oriented, we always provide $K\times L$ with the product orientation.
  This    construction leads to a cross  product   in face homology as follows.

Let $X$ and $Y$ be topological spaces.
The \index{polychains!cross product of} \emph{cross product} of a $p$-polychain $\calK=(K,\varphi,u,\kappa)$  in $X$
and a $q$-polychain $\calL=(L,\psi,v,\lambda)$ in $Y$
is the $(p+q)$-polychain
$$
\calK \times \calL =(K \times L, \varphi \times \psi, u\times v, \kappa \times \lambda)
$$
 in $X\times Y$. Here $\varphi \times \psi$ is   the following partition on $K\times L$:
for  faces $F,F'$ of $K$ and $G,G'$ of $L$,
the faces $F \times G$ and $F'\times G'$ of $K\times L$ have the same type
if, and only if,  $F$ has the same type as $F'$ and $G$ has the same type as $G'$, and
then  $$(\varphi\times \psi)_{F\times G,F'\times G'}= \varphi_{F,F'} \times \psi_{G,G'}.$$
The weight $u\times v$ carries   $C\times D$ to $u(C) \, v(D)$
for any connected components~$C$  of~$K$ and~$D$ of~$L$. We also define the \index{polychains!reduced cross product of}\emph{reduced cross product} of $\calK$ and $\calL$ by
$$
\calK \times^r \calL = \red\left(\calK \times \calL\right).
$$
  Note that
\begin{equation}\label{rc=cr}
\calK \times^r \calL \cong   \calK \times^r \red(\calL)
\cong \red(\calK) \times^r \calL \cong \red(\calK) \times^r \red(\calL).
\end{equation}

\begin{lemma}\label{properties_cross}
(i) For any $p$-polychains $\calK_1, \calK_2$   in $X$ and $q$-polychain~$\calL$ in~$Y$, 
$$
(\calK_1 \sqcup \calK_2) \times^r \calL \cong (\calK_1 \times^r \calL) \sqcup (\calK_2 \times^r \calL).
$$
(ii) For any $p$-polychain $\calK$  in $X$
and  for any $q$-polychain  $\calL$ in $Y$,
$$
\partial^r(\calK \times^r \calL)
\cong \left(\partial^r \calK \times^r \calL\right) \sqcup (-1)^p \left( \calK \times^r   \partial^r \calL\right).
$$
\end{lemma}

\begin{proof}
Clearly,
$
(\calK_1 \sqcup \calK_2) \times \calL = (\calK_1 \times \calL) \sqcup (\calK_2 \times \calL)
$
so that
$$
(\calK_1 \sqcup \calK_2) \times^r \calL
= \red \left((\calK_1 \sqcup \calK_2) \times \calL\right)
\cong \red (\calK_1 \times \calL) \sqcup \red (\calK_2 \times \calL)
$$
which proves (i).  We now prove (ii).  Let $K$ and $L$ be the oriented manifolds with faces underlying $\calK$ and $\calL$ respectively.
A principal face of $K\times L$  has either the form  $P\times D$,
where $P$ is a principal face of $K$ and $D$ is a component of $L$,
or    the form   $C \times Q$, where $C$ is a component of $K$ and $Q$ is a principal face of $L$.
The orientation of $P\times D \subset \partial(K \times L)$
inherited from $K \times L$ coincides with the product orientation of $P\times D$ where   $P\subset \partial K$  inherits orientation  from $K$.
The orientation of $C \times Q \subset \partial(K \times L)$
inherited from $K \times L$ differs from the product orientation of $C\times Q$,
 where $Q\subset \partial L$  inherits orientation  from $L$,  by the sign $(-1)^p$.  So
$$
\partial(\calK \times \calL)
\cong \left(\partial \calK \times \calL\right) \sqcup (-1)^p \left( \calK \times   \partial \calL\right).
$$
Therefore
\begin{eqnarray*}
\partial^r(\calK \times^r\calL) \ =  \ \partial^r\red(\calK \times \calL) &=&  \partial^r(\calK \times \calL)\\
&=& \red \partial (K \times L)\\
&\cong& \left(\partial \calK \times^r \calL\right) \sqcup (-1)^p \left( \calK \times^r   \partial \calL\right).
\end{eqnarray*}
We conclude thanks to \eqref{rc=cr}.
\end{proof}

\begin{lemma}
The cross product of polychains  induces  a bilinear map
\begin{equation} \label{homological_cross} \times:\widetilde H_*(X) \times \widetilde H_*(Y)   \longrightarrow   \widetilde H_*(X \times Y).\end{equation}
\end{lemma}

\begin{proof}
Let $\calK$ be a  polycycle in $X$
and   $\calL$ be a  polycycle in $Y$.
  Lemma \ref{properties_cross}.(ii) implies that $\calK \times^r \calL$ is   a   polycycle.
 This polycycle is the reduction of $\calK \times \calL$, and therefore  $\calK \times \calL$ also is a polycycle.
  We claim that assigning to    $(\calK, \calL)$   the homology class
  $\lb \calK \times^r \calL \rb= \lb \calK \times  \calL \rb$   one obtains a well defined   pairing \eqref{homological_cross}.
Let us prove the independence of the choice of $\calK$ in its homology class (the second variable is treated similarly).
Consider     two homologous polycycles $\calK_1$ and $\calK_2$    in $X$, and
let  $\calP_1,\calP_2$ be  polychains in $X$
such that  $$\red (\calK_1)\sqcup \partial^r \calP_1   \cong \red (\calK_2)\sqcup \partial^r \calP_2 .$$
 Lemma \ref{properties_cross}.(i) implies  that
$$
(\red (\calK_1)\times^r \calL ) \sqcup (\partial^r \calP_1 \times^r \calL )  \cong
(\red (\calK_2) \times^r \calL ) \sqcup (\partial^r \calP_2 \times^r \calL).
$$
For  $i\in\{1,2\}$,   formula   \eqref{rc=cr} gives
$\red (\calK_i)\times^r \calL \cong  \red( \calK_i \times^r \calL)$.
Since $\partial^r \calL=\varnothing$,     Lemma \ref{properties_cross}.(ii) gives
$\partial^r \calP_i \times^r \calL=\partial^r( \calP_i \times^r  \calL )$.
Therefore $\calK_1 \times^r \calL \simeq \calK_2 \times^r \calL$.

The linearity of   \eqref{homological_cross}
  in the first variable follows from   Lemma \ref{properties_cross}.(i) and the equality $(k\calK)\times \calL = k(\calK\times \calL)$
for all $k\in \kk$. The linearity in the second variable is proved similarly. \end{proof}

\subsection{Remarks}\label{singsing}

1. A  polychain  derived from a  singular manifold $  \kappa:  M\to X$ (see Section~\ref{polychains}) is a polycycle if and only if $\partial M =  \varnothing  $.
The  oriented   bordism classes of $n$-dimensional singular manifolds $  \kappa:   M\to X$
with $\partial M =  \varnothing   $ form an abelian group $\Omega_n(X)$,   called the   \index{oriented  bordism group}  \emph{$n$-dimensional   oriented  bordism group} of $X$.
Treating  singular manifolds as polychains, we obtain an additive map  $\Omega_n(X)  \to   \widetilde H_n(X)$.
 By   Remark~\ref{rerererer}.2 below, this map is not surjective for some $n,X$, and $\kk=\ZZ$. Thus,
 some   face homology classes over $ \ZZ$ are not representable by singular manifolds.

2. For a  topological pair $(X,Y)$ and an integer $n\geq 0$, we   define the $n$-th \index{face homology!relative} \emph{relative face homology} $\widetilde{H}_n(X,Y)$ as follows.
Given  $n$-polychains $\calK_1,\calK_2$ in $X$, we write $\calK_1 \simeq_Y \calK_2$,
if there exist  $(n+1)$-polychains $\calL_1,\calL_2$ in $X$ and $n$-polychains $\calN_1,\calN_2$ in $Y$ such that
$$ \red (\calK_1)\sqcup \partial^r \calL_1  \sqcup \iota_\ast (\calN_1)  \cong \red (\calK_2)\sqcup \partial^r \calL_2 \sqcup  \iota_\ast ( \calN_2) $$
where   $\iota: Y\hookrightarrow X$ is the inclusion map.
An $n$-polychain $\calK$ in $X$  is a \index{polycycle!relative} \emph{polycycle relative to $Y$} if $\partial^r \calK$ is the image   of an $(n-1)$-polychain  in $Y$ under  $\iota $. Set
$$
\widetilde{H}_n(X,Y)=\{\hbox{$n$-polycycles in $X$ relative to $Y$}\} /\simeq_Y.
$$
The properties of the face homology of  topological spaces  stated above directly extend to the face homology of topological pairs.

\section{Face homology versus singular homology}\label{sect-rell}

 In this section, we construct   two natural transformations   $[-]:\widetilde {H}_\ast\to H_\ast$ and   $\lb - \rb: H_\ast\to \widetilde {H}_\ast$    relating
  face homology   to singular homology.

\subsection{Preliminaries}\label{singular_homology}
For an  integer   $n\geq 0$, the symbol  $\Delta^n$  denotes the standard $n$-simplex that is
the convex hull of the standard basis   $(e_0,\dots, e_n)$    of~$\RR^{n+1}$.
We   endow $\Delta^n$ with orientation induced by the
order of its vertices,   i.e$.$ the orientation represented by the basis
  $(\overrightarrow{e_0e_1},\overrightarrow{e_1e_2}, \dots,\overrightarrow{e_{n-1}e_n})$
in the tangent space of $\Delta^n$ at any point.
Each subset $A=\{i_0, i_1,\dots,i_r\}$ of $\{0,\dots,n\}$
with $i_0<i_1<\dots < i_r$ and  $0\leq r\leq n$
determines an   affine   map $e_A: \Delta^r \to \Delta^n$ carrying the vertices $e_0, e_1, \dots, e_r$ of $\Delta^r$
to the vertices $e_{i_0}, e_{i_1}, \dots, e_{i_r}$ of $\Delta^n$, respectively;
 the image of   the map $e_A$  is   the \index{face!combinatorial} \emph{combinatorial face} of $\Delta^n$ corresponding to $A$.

A  \index{singular simplex} {\it singular $n$-simplex} in a topological space $X$ is  a continuous map $\Delta^n \to X$.
 A \index{singular chain} {\it singular $n$-chain} in $X$ is a  finite formal linear combination
 of singular $n$-simplices with coefficients in $\kk$.
The \index{singular simplex!boundary of} \emph{boundary} of a singular $n$-simplex $\sigma:\Delta^n \to X$ is the singular $(n-1)$-chain
\begin{equation}\label{boundary}
\partial \sigma = \sum_{a=0}^n (-1)^a \cdot \sigma   e_{\hat a}
\quad \hbox{where} \ \hat a=\{0, 1, \dots, n\} \setminus \{a\}.
\end{equation}   The boundary of singular  simplices extends to singular chains by linearity.
The  modules of singular chains together with the boundary homomorphisms
form the \index{singular chain complex} \emph{singular chain complex} $C_\ast (X)$ of $X$.
Its homology is the \index{singular homology} \emph{singular homology} $H_\ast(X)$ of $X$  (with coefficients in $\kk$).

\subsection{ The   transformation     $[-]$}  \label{fundcla}
 Consider an  $n$-dimensional   oriented manifold with faces $K$.
Each weight $u : \pi_0(K) \to  \kk$ determines a homology class
$$
[K,u]=\sum_{C} u(C) \big[C\big] \ \in  \bigoplus_C  H_n (C,\partial C) =H_n (K,\partial K)
$$
where $C$ runs over all connected components of $K$
and $[C]\in H_n (C,\partial C)$
is the   fundamental class of $C$.
 We say that  a  partition $\varphi$ on $K$ is  \index{partition!compatible with weight} {\it compatible} with   $u$
if for any   principal face  $P$ of $K$,
\begin{equation}\label{compatibility}
  \sum_Q \deg(\varphi_{P,Q} ) \, u(K^Q) =0
\end{equation}
where $Q$ runs over all (principal) faces of $K$ of the same type as $P$ and  $K^Q$
is   the connected component of $K$ containing $Q$.

 \begin{lemma}\label{partit} 
  Let $\varphi$ be a  partition  on $K$ compatible with a
weight $u : \pi_0(K) \to  \kk$.  Then there  is  a unique homology class   $[K_{\varphi},u]\in   H_n(K_{\varphi})$
whose image in $   H_n(K_{\varphi},    (\partial K)_{\varphi})$ is equal to the image of
$[K,u]$ under the map
$ H_n (K,\partial K)\to H_n(K_{\varphi},  (\partial K)_{\varphi})$  induced by the projection $ K\to K_{\varphi} $.
\end{lemma}

\begin{proof}
Consider the commutative diagram
$$
\xymatrix{
H_n(\partial K)  \ar[r]  \ar[d]^-{\pi_*} & H_n(K) \ar[r]  \ar[d]^-{\pi_*}  & H_n(K,\partial K) \ar[r]^-{\partial_*} \ar[d]^-{\pi_*} & H_{n-1}(\partial K) \ar[d]^-{\pi_*}\\
H_n\left((\partial K)_\varphi\right)  \ar[r]  & H_n\left(K_\varphi\right) \ar[r]
&  H_n\left(K_\varphi,(\partial K)_\varphi \right) \ar[r]^-{\partial_*} & H_{n-1}\left((\partial K)_\varphi\right),
}
$$
where  the vertical maps are   induced by the projection $\pi:K\to K_\varphi$
and   each row is a part of the long exact sequence of a topological pair.
We have $H_{n}( (\partial K)_{\varphi})=0$ since $(\partial K)_{\varphi}$ is an $(n-1)$-dimensional polyhedron.
Hence,  it is enough to show that
$$
\pi_* \partial_*([K,u])=0 \in H_{n-1}\left((\partial K)_\varphi\right).
$$
 Consider   the  commutative square
$$
\xymatrix{
 \partial_*([K,u])\in \!\!\! \!\!\!\!\!\! \!\!\! \!\!\! \!\!\!
 &H_{n-1}(\partial K) \ar[r]^{j} \ar[d]^-{\pi_*} & H_{n-1}\left(\partial K, K_{n-2}\right)  \ar[d]^-{\pi_*} \\
& H_{n-1}\left((\partial K)_\varphi\right) \ar[r]^-{j_\varphi} & H_{n-1}\left((\partial K)_\varphi, (K_{n-2})_\varphi\right)
}
$$
where $j$ and $j_\varphi$ are the inclusion homomorphisms.
Since $(K_{n-2})_\varphi$ is an $(n-2)$-dimensional polyhedron, $\Ker {j_\varphi}=0$   and it suffices to prove that ${j_\varphi} \pi_* \partial_*([K,u])=0 $ or,  equivalently, that $  \pi_* j \partial_*([K,u])=0 $.
We have
$$
\partial_*([K,u]) = \sum_C u(C) \partial_*([C]) = \sum_C u(C) [\partial C]
$$
where the sum runs over the connected components $C$ of $K$.
Then
$$
j \partial_*([K,u])=\sum_{C} u(C) \sum_{P \subset C} [P] = \sum_{P} u(K^P)\, [P]
$$
where $P$ runs over  all principal faces of $K$ and   $K^P$ is the connected component of~$K$ containing $P$.
Pick  a face $P_i \in i$   in each type $i$ of principal faces of~$K$.
Then
\begin{eqnarray*} \pi_* j \partial_*([K,u]) &=& \sum_{P} u(K^P)\, \pi_*([P])\\
 &=&\sum_{i} \sum_{Q\in i}  u({K^Q})\, \pi_*([Q] )\\
&=& \sum_{i} \sum_{Q \in i }  u({K^Q})\, \pi_* \big(\deg(\varphi_{P_i,Q})\cdot  (\varphi_{P_i,Q})_{*}([P_i]) \big)\\
&=& \sum_{i} \Big(\sum_{Q \in i}  u({K^Q}) \deg(\varphi_{P_i,Q})\Big) \, \pi_* \left([P_i] \right)=0
\end{eqnarray*}
where at the last step we use the compatibility condition \eqref{compatibility}.
\end{proof}

It follows from the definitions that a polychain $ (K,\varphi,u, \kappa)$ in a topological space $X$ is a polycycle
if and only if $u$ and $\varphi$ are compatible in the sense of \eqref{compatibility}.
Therefore, given an  $n$-polycycle $\calK=(K,\varphi,u, \kappa)$ in $X$,  Lemma \ref{partit} gives   the homology class $[K_{\varphi},u]\in H_n(K_\varphi)$.
Since the map $  \kappa:K\to X $ is compatible with~$\varphi$, it induces a continuous map $K_{\varphi}\to X$   denoted by $\kappa_{\varphi}$.
  We define
$$
[\calK] = (\kappa_{\varphi})_\ast ([K_{\varphi},u])\in H_n (X).
$$

The homology class $[\calK]$ can be represented by   explicit singular cycles which are best described  in terms of locally ordered triangulations.  A \index{triangulation!local order on} \emph{local order} on a triangulation $T$ of a topological space is a  binary relation   on the set of vertices of $T$
 which restricts to a total order on the set of vertices of any simplex of $T$.
For example, any total order  on the set of vertices of $T$ is a local order on $T$.
A triangulation endowed with a local order is \index{triangulation!locally ordered} \emph{locally ordered}. We say that
a locally ordered smooth triangulation  $T$ of  $K$  \index{triangulation!fitting the partition} \emph{fits the partition  $ \varphi $}   if,
for any faces $F,G$ of $K$ of the same type,
the identification  map  $ \varphi_{F,G} :F\to G$ is a simplicial isomorphism   preserving the local order on the vertices.
To construct such a locally ordered triangulation  one can take a triangulation  $T$ of  $K$  provided by Lemma \ref{partition}
and lift an arbitrary total order $\leq$ on the set of vertices of $T_\varphi$ to $T$. More precisely,
 denote by $\pi:K \to K_\varphi$ the canonical projection and, for any   vertices $a,b\in T$,  declare that $a\leq b$ if  $\pi(a) \leq \pi(b)$.
Since any simplex of $T$ projects isomorphically onto a simplex of $T_\varphi$,
this gives a local order on $T$ which, obviously, fits $\varphi$.

  Pick a   
  locally ordered smooth triangulation $T$ of $K$ which fits   $\varphi$.
Each $r$-simplex $\Delta$ of $T$  with $r\geq 0$ determines
  a  singular simplex  in $K$ denoted by  $\sigma_\Delta$  and obtained as the composition of the affine isomorphism $\Delta^r \to \Delta$
  preserving the order of the vertices with the inclusion $\Delta \hookrightarrow K$.
We define  the \index{singular chain!fundamental} \emph{fundamental  $n$-chain}
\begin{equation} \label{sigma_T_u}
\sigma = \sigma(T,u) =\sum_\Delta \varepsilon_\Delta   u(K^\Delta)\,    \sigma_\Delta  \ \in C_n(K)
\end{equation}
where $\Delta$ runs over all $n$-simplices of~$T$,
  $K^\Delta$ is the connected component of~$K$ containing~$\Delta$,
$\varepsilon_\Delta=+1$ if the orientation of $\Delta$ induced by that of~$K$ is compatible
 with the order of the vertices of $\Delta$ and $\varepsilon_\Delta=-1$ otherwise.
 Clearly, the image of~$\sigma$ in $C_n(K,\partial K)$ is a relative $n$-cycle representing $[K,u]$.
 Projecting $\sigma$ to $K_\varphi$, we obtain a singular   $n$-chain   $\sigma_{\varphi}\in C_n(K_\varphi) $.  
The compatibility of $\varphi$ and $u$ implies that  $\sigma_{\varphi} $ is an $n$-cycle.
Therefore   
$[\sigma_\varphi]\in H_n(K_\varphi)$     satisfies the requirements of   Lemma~\ref{partit}
 so that   $[K_\varphi,u]=[\sigma_\varphi]$.  It follows that  $[\calK]$ is represented by the singular $n$-cycle
$$
(\kappa_\varphi)_*(\sigma_\varphi) = \kappa_*(\sigma)= \sum_\Delta \varepsilon_\Delta u(K^\Delta)\, \kappa \sigma_\Delta \ \in C_n(X).
$$

\begin{lemma}\label{from_sc_to_pcdebut}
The formula $   \lb \calK \rb   \mapsto [\calK]$   defines a  linear map $[-]:\widetilde{H}_n(X) \to {H}_n(X)$.
  Moreover, $[-]$ is a natural transformation from $\widetilde H_n$ to $H_n$.
\end{lemma}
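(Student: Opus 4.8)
The plan is to work throughout with the explicit singular cycle $\kappa_*\bigl(\sigma(T,u)\bigr)\in C_n(X)$ representing $[\calK]$, for some choice of locally ordered smooth triangulation $T$ of $K$ fitting $\varphi$; recall that this cycle, up to homology, does not depend on $T$. I will verify that $[\calK]$ (i) depends only on the diffeomorphism class of the polycycle $\calK$; (ii) is additive under disjoint union; (iii) is multiplicative under rescaling by $k\in\kk$; (iv) is unchanged under reduction, i.e.\ $[\red(\calK)]=[\calK]$; (v) vanishes on reduced boundaries, i.e.\ $[\partial^r\calL]=0$ for every $(n+1)$-polychain $\calL$; and (vi) is natural. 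Granting (i)--(v): if $\calK_1\simeq\calK_2$, pick $(n+1)$-polychains $\calL_1,\calL_2$ with $\red(\calK_1)\sqcup\partial^r\calL_1\cong\red(\calK_2)\sqcup\partial^r\calL_2$; applying $[-]$ and using (i), (ii), (iv), (v) gives $[\calK_1]=[\calK_2]$, so $\langle\calK\rangle\mapsto[\calK]$ is well defined on $\widetilde H_n(X)$. Linearity follows from (ii), (iii), and the description of the module structure on $\widetilde H_n(X)$ in Lemma~\ref{additivityandrescaling}, and (vi) gives the naturality statement.

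Items (i), (ii), (iii), (vi) are formal. For (i), a diffeomorphism $f\colon\calK\to\calK'$ transports $T$ to a locally ordered triangulation $f(T)$ of $K'$ fitting $\varphi'$, and condition (3) in the definition of a diffeomorphism of polychains (relating weights by degrees), combined with the orientation behaviour of $f$ on components, shows $f_*\sigma(T,u)=\sigma(f(T),u')$; since $\kappa=\kappa'\circ f$, the two singular cycles in $X$ coincide. For (ii), $T_1\sqcup T_2$ fits $\varphi_1\sqcup\varphi_2$ and $\sigma(T_1\sqcup T_2,u_1\sqcup u_2)$ pushes to $\kappa_{1*}\sigma(T_1,u_1)+\kappa_{2*}\sigma(T_2,u_2)$. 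For (iii), $\sigma(T,ku)=k\,\sigma(T,u)$. For (vi), given continuous $g\colon X\to Y$ the polycycle $g_*(\calK)$ has the same triangulated source, and $g_*\bigl(\kappa_*\sigma(T,u)\bigr)=(g\kappa)_*\sigma(T,u)$ represents both $g_*[\calK]$ and $[g_*(\calK)]$.

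The substance is in (iv) and (v). For (iv) I treat $\red=\red_0\red_+$ in two steps: deleting a zero-weight component removes only zero terms from $\sigma(T,u)$, so $[\red_0(\calK)]=[\calK]$; and for $\red_+$, restricting $T$ to $K_+$ gives a triangulation fitting $\varphi_+$, while for each type of component with representative $C_0\subset K_+$ every $n$-simplex $\Delta$ of another component $C'$ of that type is carried by the order-preserving simplicial identification map onto an $n$-simplex $\Delta_0$ of $C_0$ with $\kappa\sigma_\Delta=\kappa\sigma_{\Delta_0}$ and $\varepsilon_\Delta=\deg(\varphi_{C_0,C'})\,\varepsilon_{\Delta_0}$; regrouping $\sigma(T,u)$ accordingly and using $u_+(C_0)=\sum_{C'}\deg(\varphi_{C_0,C'})u(C')$ yields $\kappa_*\sigma(T,u)=(\kappa|_{K_+})_*\sigma(T|_{K_+},u_+)$ on the nose, so $[\red_+(\calK)]=[\calK]$. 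For (v), by (iv) it suffices to prove $[\partial\calL]=0$ for every $(n+1)$-polychain $\calL=(L,\psi,v,\lambda)$ (note $\partial\calL$ is a polycycle, since $\partial^r\partial^r\calL=\varnothing$); choosing a locally ordered smooth triangulation $T$ of $L$ fitting $\psi$, one checks that $T|_{\partial L}$ fits $\psi^\partial$ and — using the ``outward vector first'' convention — that the interior $n$-simplices in $\partial\sigma(T,v)$ cancel in pairs, leaving $\partial\sigma(T,v)=\iota_*\sigma(T|_{\partial L},v^\partial)$ in $C_n(L)$, where $\iota\colon K^\partial\to L$ is the natural map; hence $[\partial\calL]$ is represented by $\lambda_*\iota_*\sigma(T|_{\partial L},v^\partial)=\partial\bigl(\lambda_*\sigma(T,v)\bigr)$, a boundary in $C_n(X)$, and so is zero in $H_n(X)$. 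I expect the only genuine obstacle to be this last identification of $\partial\sigma(T,v)$ with the fundamental chain of $\partial\calL$, with the signs, orientations, and boundary weights matching the paper's conventions; everything else is bookkeeping.
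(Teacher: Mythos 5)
Your proposal is correct and follows essentially the same route as the paper's proof: you reduce to $[\partial\calL]=0$, pick a locally ordered smooth triangulation $T$ of $L$ fitting $\psi$, and rely on the standard fact that the boundary of the fundamental chain of $L$ equals (the pushforward of) the fundamental chain of $L^\partial$. The only cosmetic difference is that the paper channels this identity through the quotient spaces $L_\psi$ and $(L^\partial)_{\psi^\partial}$ (which is natural given that $[\calK]$ is defined via $[K_\varphi,u]$), whereas you work directly with chains in $L$ via $\iota\colon L^\partial\to L$ and the explicit representative $\kappa_*\sigma(T,u)$; both versions are valid and hinge on the same orientation bookkeeping, which you correctly flag as the one substantive point.
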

\begin{proof} It follows   from the definitions that $[\calK]\in H_n(X)$  depends only
on the diffeomorphism class of $\calK$, that $[\calK]=[\red (\calK)]$, and that $[k\calK]=k[\calK]$ for any $k\in \kk$.
Moreover, $[\calK_1 \sqcup \calK_2]=[\calK_1]+ [\calK_2]$
for any $n$-polycycles $\calK_1,\calK_2$ in $X$.
Therefore,   to prove the first  claim  of the lemma,
it is enough to show that  $[\partial \calL]=0$ for any $(n+1)$-polychain $\calL=(L, \psi, v,  \lambda)$ in $X$.
  For this, pick a    locally ordered smooth triangulation $T$ of $L$ that fits  $\psi$ and   consider the  singular chain
$$
  \sigma =\sigma(T,v)  = \sum_\Delta \varepsilon_\Delta v (L^\Delta)\,    \sigma_\Delta   \ \in C_{n+1}(L) .
$$
Here $\Delta$ runs over all $(n+1)$-dimensional simplices  of   $T$,
$\varepsilon_\Delta$ is the sign determined by the orientation of $L$ and the order of the vertices of $\Delta$,
and $L^\Delta $ is the     component of~$L$ containing~$\Delta$.
Projecting   $\sigma$   to $L_\psi$ we obtain a singular chain   $\sigma_{\psi}$   in $ L_{\psi}$.
Next we consider the   $n$-polycycle  $\partial \calL=( L^\partial , \psi^\partial, v^\partial,\lambda^\partial)$.
The triangulation $T$ of $L$ induces a triangulation $T^\partial$ of $L^\partial$. The local  order on the set of  vertices
of $T$ restricts to a  local  order on the set of vertices of   $T^\partial$.
Consider the fundamental   $n$-chain $\tau   =\sigma (T^\partial, v^\partial)$   in $L^\partial$ as defined before the statement of the lemma.
Projecting   $\tau$   to the quotient space   $(L^\partial)_{\psi^\partial}$
we obtain a singular $n$-cycle   $\tau_{\psi^\partial}$
representing  $$\big[(L^\partial)_{\psi^\partial}, u^\partial\big]  \in H_n\big((L^\partial)_{\psi^\partial}\big).$$
The natural map $\iota:  L^\partial \to L$ induces a map $\iota_\psi:  (L^\partial)_{\psi^\partial}   \to L_\psi$
carrying   $\tau_{\psi^\partial}$  to  $\partial \sigma_{\psi}$.
 By   definition,  $\lambda^\partial=\lambda \iota: L^\partial\to X$. Therefore $(\lambda^\partial)_{\psi^\partial}=
\lambda_\psi \iota_\psi :  (L^\partial)_{\psi^\partial}  \to  X$
where   $\lambda_\psi:L_\psi\to X$ is the map  induced by $\lambda: L\to X$.
Hence,
$$
[\partial \calL]
=  \big((\lambda^\partial)_{\psi^\partial}  \big)_*  ([ \tau_{\psi^\partial}])
=(\lambda_\psi \iota_\psi)_* ([ \tau_{\psi^\partial} ])
=(\lambda_\psi  )_*( [\partial \sigma_{\psi}] ) =  0.
$$

  To prove the second  claim   of the lemma,  consider a continuous map $f:X\to Y$.
For any $n$-polycycle $\calK=(K,\varphi,u,\kappa)$ in $X$, we have
$$
f_*\left([\calK]\right) =  f_*(\kappa_{\varphi})_\ast ([K_{\varphi},u]) = (f \kappa_{\varphi})_\ast ([K_{\varphi},u]) =\big( (f \kappa)_{\varphi}\big)_\ast ([K_{\varphi},u])
= [f_*(\calK)]
$$
since $f_*(\calK)=(K,\varphi,u,f\kappa)$ by definition.
\end{proof}

\subsection{The    transformation $\lb - \rb$}\label{firsttransf}

Let $X$ be a topological space and   let $n\geq 0$ be an integer.
We  associate with each singular $n$-chain $\sigma$ in $X$ an    $n$-polychain   $\calP(\sigma)$ in $X$.
Pick an expansion $\sigma=\sum_i k_i \sigma_i  $,
where $i$ runs over a finite   set of indices,
$k_i\in \kk   $,  and $\{\sigma_i \}_i$ are   singular $n$-simplices in $X$.
 Let $K$ be    the    manifold with faces obtained as a  disjoint union of copies  $(\Delta^n)_i$
of   $\Delta^n$ numerated by  all  $i $.
We define a partition $ \varphi $ on      $K$ as follows:
a      face $F$ of $(\Delta^n)_i$ corresponding to a  set
$A   \subset   \{0,\dots,n\}$
 and a      face   $F'$   of $(\Delta^n)_{i'}$
corresponding to a  set $A'   \subset   \{0,\dots,n\}$ are   declared to be of the same type
if, and only if,  $A$ and $A'$ have the same   cardinality  
$r\leq n+1$,   and $\sigma_i   e_A = \sigma_{i'}   e_{A'}:\Delta^{r-1} \to X$ (where $e_A,e_{A'}$ are the maps defined   in Section~\ref{singular_homology}).
  Then we set $\varphi_{F,F'}= e_{A'}   e_A^{-1}: F \to F'$.
Clearly, the map   $\kappa=  \coprod_i  \sigma_i:K \to X$  is compatible with    $\varphi$.
We   define a weight $u:\pi_0(K) \to \kk$ by $u((\Delta^n)_i) = k_i$ for all~$i$.
The tuple  $ (K,\varphi,u,\kappa)$ is an   $n$-polychain in $X$   depending on the choice of the expansion $
\sigma=\sum_i k_i \sigma_i  $. However, the polychain $\calP(\sigma)= \red (K,\varphi,u,\kappa)$
does not depend on this choice. Indeed, any two expansions of $\sigma$ may be related by the following operations:
replacement of   $k  \sigma_{\bullet} +l \sigma_{\bullet}$ by $(k+l) \sigma_{\bullet}$ for any $k,l \in \kk$ and any singular $n$-simplex $\sigma_{\bullet}$ in $X$;
addition  of a term  $0 \sigma_{\bullet}$ for an arbitrary singular $n$-simplex $\sigma_{\bullet}$  in $X$; the inverse operations.
It is easy to see that $\calP(\sigma)$ is preserved under these transformations. By definition, if $\sigma=0$, then $\calP(\sigma)=\varnothing$.
The face   homology class $\lb \calP(\sigma) \rb$   of the polychain $\calP(\sigma)$ will be     denoted by $\lb \sigma \rb$.

 \begin{lemma}\label{mapP} If   $\sigma$ is a cycle, then   $  \calP(\sigma)$ is a polycycle.  The formula
 $[\sigma] \mapsto \lb \sigma \rb$, applied to singular $n$-cycles in $X$, defines a linear map $\lb-\rb:H_n(X) \to  \widetilde{H}_n(X)$.
  Moreover, $\lb -\rb$ is a natural transformation from $H_n$ to $\widetilde H_n$.
\end{lemma}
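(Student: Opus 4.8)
The plan is to derive the whole statement from the single identity $\partial^r \calP(\sigma) \cong \calP(\partial \sigma)$, valid for every singular $n$-chain $\sigma$ in $X$ (not just cycles). To prove this identity, fix an expansion $\sigma = \sum_i k_i \sigma_i$, so that $\calP(\sigma)$ is the reduction of the polychain $(K,\varphi,u,\kappa)$ with $K = \coprod_i (\Delta^n)_i$, $u((\Delta^n)_i)=k_i$, $\kappa=\coprod_i\sigma_i$, and $\varphi$ the tautological partition of $\{\sigma_i\}$. Since $\partial^r\calP(\sigma)=\partial^r(K,\varphi,u,\kappa)=\red\bigl(\partial(K,\varphi,u,\kappa)\bigr)$, it suffices to identify $\partial(K,\varphi,u,\kappa)$, up to diffeomorphism of polychains, with the polychain attached to the expansion $\partial\sigma=\sum_{i,a}(-1)^a k_i\,(\sigma_i e_{\hat a})$. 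The principal faces of $K$ are the copies in $(\Delta^n)_i$ of the combinatorial faces $e_{\hat a}(\Delta^{n-1})$; recoordinatizing such a face by the affine isomorphism $e_{\hat a}$ carries the boundary orientation inherited from $(\Delta^n)_i$ to $(-1)^a$ times the vertex-order orientation of $\Delta^{n-1}$ (the standard sign rule for faces of a simplex), and hence converts the weight $k_i$ into $(-1)^a k_i$; moreover, unwinding the definitions of $\varphi^\partial$ and of the tautological partition shows that the faces $e_{\hat a}(\Delta^{n-1})\subset(\Delta^n)_i$ and $e_{\hat{a'}}(\Delta^{n-1})\subset(\Delta^n)_{i'}$, and likewise all their subfaces, are declared to be of the same type precisely when the relevant restrictions of $\sigma_i$ and $\sigma_{i'}$ coincide. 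This is exactly the tautological partition of $\partial\sigma$, so $\partial^r\calP(\sigma)\cong\calP(\partial\sigma)$. In particular, when $\partial\sigma=0$ we get $\partial^r\calP(\sigma)=\varnothing$, which is the first assertion of the lemma.

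Next I would prove linearity and well-definedness. Given $n$-cycles $\sigma,\sigma'$, expand them over a common finite family of singular simplices, $\sigma=\sum_i k_i\sigma_i$ and $\sigma'=\sum_i k_i'\sigma_i$ (allowing zero coefficients). The polychains attached to $\sigma$, $\sigma'$ and $\sigma+\sigma'$ then have the same $K,\varphi,\kappa$ and differ only in their weights $u,u',u+u'$; by the previous step each of them is a polycycle, so the additivity-in-the-weight property established inside the proof of Lemma~\ref{additivityandrescaling} gives, after passing to reductions and face-homology classes, $\lb\sigma+\sigma'\rb=\lb\sigma\rb+\lb\sigma'\rb$. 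Compatibility with multiplication by $k\in\kk$ follows from $\calP(k\sigma)=\red\bigl(k\,\calP(\sigma)\bigr)$ and the module structure of $\widetilde H_n(X)$. If now $\sigma,\sigma'$ are homologous cycles, write $\sigma-\sigma'=\partial\rho$; then $\lb\sigma\rb-\lb\sigma'\rb=\lb\sigma-\sigma'\rb=\lb\calP(\partial\rho)\rb=\lb\partial^r\calP(\rho)\rb=0$, the last equality because a reduced boundary is homologous to $\varnothing$. Hence $[\sigma]\mapsto\lb\sigma\rb$ is a well-defined linear map $H_n(X)\to\widetilde H_n(X)$.

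It remains to prove naturality, and this is where I expect the real work to lie. Let $f\colon X\to Y$ be continuous and $\sigma=\sum_i k_i\sigma_i$ an $n$-cycle. Since the reduction of a polychain depends only on its underlying manifold with faces, partition and weight, one has $f_\ast\calP(\sigma)=\red(K,\varphi,u,f\kappa)$; on the other hand, computing $\calP(f_\ast\sigma)$ from the induced expansion $f_\ast\sigma=\sum_i k_i\,(f\sigma_i)$ gives $\calP(f_\ast\sigma)=\red(K,\varphi'',u,f\kappa)$, where $\varphi''$ is the tautological partition of the family $\{f\sigma_i\}$. Thus $\varphi''$ is a coarsening of $\varphi$ (with the same identification maps wherever $\varphi$ is defined), and $(K,\varphi,u,f\kappa)$ and $(K,\varphi'',u,f\kappa)$ are both polycycles with the same manifold with faces, weight, and map. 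So naturality reduces to the assertion that \emph{coarsening the partition of a polycycle, while keeping it a polycycle, does not change its face-homology class}; this is the main obstacle.

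To establish that assertion I would imitate the cylinder construction in the proof of Lemma~\ref{additivityandrescaling}. Starting from $\overline{(K,\varphi,u,\mu)}=(K\times I,\overline\varphi,\overline u,\mu\circ\pr)$, additionally declare, for each pair $F,G$ of faces of $K$ of the same $\varphi''$-type, that $F\times\{1\}$ and $G\times\{1\}$ have the same type, with identification map $\varphi''_{F,G}\times\id$. Because $\varphi''$ is compatible with passage to subfaces this yields a genuine partition $\Psi$ on $K\times I$, and $\mu\circ\pr$ is $\Psi$-compatible since $\mu$ is compatible with both $\varphi$ and $\varphi''$; let $\calL$ be the resulting polychain. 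Its boundary splits, up to diffeomorphism, into the $\{1\}$-lid (which carries the partition $\varphi''$ and is $(K,\varphi'',u,\mu)$), the $\{0\}$-lid (carrying $\varphi$, hence $-(K,\varphi,u,\mu)$), and the ``sides'' $\coprod_P P\times I$ indexed by the principal faces $P$ of $K$. The sides reduce to $\varnothing$: after grouping these components by the $\varphi$-type of $P$ and summing their weights $u(K^{P'})$ with the signs $\deg(\varphi_{P,P'})$, where $P'$ ranges over the principal faces of $K$ of the same $\varphi$-type as $P$, one obtains for each type exactly the left-hand side of the compatibility relation~\eqref{compatibility} for $(K,\varphi,u)$, which vanishes because this polychain is a polycycle. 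Hence $\partial^r\calL\cong\red(K,\varphi'',u,\mu)\sqcup\bigl(-\red(K,\varphi,u,\mu)\bigr)$, so $\red(K,\varphi,u,\mu)\sqcup\bigl(-\red(K,\varphi'',u,\mu)\bigr)$ is a reduced boundary and the two polycycles are homologous. Granting the coarsening assertion, this gives $f_\ast\lb\sigma\rb=\lb f_\ast\sigma\rb$, so $\lb-\rb$ is a natural transformation. Besides checking that $\Psi$ is a legitimate partition, the delicate points are the orientation bookkeeping in the splitting of $\partial\calL$ (which parallels the cylinder lemma) and the recognition of the side contribution as the left-hand side of~\eqref{compatibility}.
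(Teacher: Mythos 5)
Your proposal is correct and follows essentially the same route as the paper: establish the key identity $\partial^r\calP(\sigma)\cong\calP(\partial\sigma)$ by comparing orientations, weights and the tautological partition under the recoordinatization $e_{\hat a}$, then handle linearity and naturality via the cylinder construction with a suitably modified lid partition. Your shortcut for linearity — passing to a common expansion so the three polychains differ only in their weights, and then invoking the additivity-in-weight fact from inside the proof of Lemma~\ref{additivityandrescaling} — is legitimate and marginally cleaner than the paper's argument, which works with separate expansions of $\sigma$ and $\tau$ and rebuilds a cylinder with cross-identifications at the $\{0\}$-lid; your explicit ``coarsening the partition of a polycycle does not change its face-homology class'' is precisely the assertion the paper proves implicitly in its naturality step.
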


\begin{proof} We check first that for any singular $n$-chain $\sigma$ in $X$,   \begin{equation}\label{pureboundaries} \partial^r \calP(\sigma)  \cong  \calP(\partial \sigma) .\end{equation} Pick an expansion   $\sigma=\sum_i k_i \sigma_i  $ such that the simplices $\{\sigma_i\}_i$ are pairwise distinct and  $k_i\neq 0$ for all $i$. Then the associated polychain $ (K,\varphi,u,\kappa)$   is reduced and   $\calP(\sigma)=  (K,\varphi,u,\kappa)$.   A connected component $P$ of $\partial \calP(\sigma)=(K^\partial, \varphi^\partial, u^\partial, \kappa^\partial)$ is nothing but a
  principal face   of   $(\Delta^n)_i\subset K$ for some $i=i(P)$
corresponding to the complement    of a singleton $ a_P\in \{0,\dots,n\}$. By the definition of $u^\partial$, we have
$u^\partial (P)=k_{i(P)}$. We compute $\red_+ (\partial \calP (\sigma))$ as described  in Section \ref{polychains}. Pick a representative
$P$ for each type of connected components of $K^\partial$,   and let $K^\partial_+ \subset K^\partial$ be the    union of these representatives.
 Restricting $\varphi^\partial $ and $\kappa^\partial$ to $K^\partial_+$ we obtain a partition $ \varphi^\partial_+$ on  $K^\partial_+$ and a compatible map $\kappa^\partial_+:  K^\partial_+\to X$.
 The  weight $  u^\partial_+ $ on $K^\partial_+$
 is  evaluated  on each component $P$ of $K^\partial_+$  by
$$
 u^\partial_+ (P)= \sum_Q  \deg(\varphi_{P,Q}) \, u^\partial (Q)
 = \sum_Q (-1)^{a_P+a_Q} k_{i(Q)} = (-1)^{a_P} \sum_Q (-1)^{a_Q} k_{i(Q)}
$$
where $Q$ runs over all components of  $K^\partial$   of the same type as $P$.
Note that $\sum_Q (-1)^{a_Q} k_{i(Q)}$
is the total coefficient of the singular simplex   $\sigma_{i(P)}  e_{\widehat {a_P}}:\Delta^{n-1}\to X$  in $\partial \sigma$. Also,
$(-1)^{a_P}$   is the degree of   $e_{\widehat {a_P}}:\Delta^{n-1} \to P$
(recall that  $\Delta^{n-1}$ is oriented as in Section \ref{singular_homology}
while $P \subset \partial (\Delta^n)_i$ inherits orientation  from $(\Delta^n)_i$   where $i=i(P)$).
We conclude that the polychain  $\red_+(\partial \calP(\sigma))$ consists of $\calP (\partial \sigma)$ and eventually several connected  components of  weight zero. Hence
$$\partial^r \calP(\sigma) =\red(\partial \calP(\sigma)) = \red_0  \red_+(\partial  \calP(\sigma)) \cong \calP (\partial \sigma).$$
This proves \eqref{pureboundaries}.  The first assertion of the lemma follows.

  Next   we claim that
\begin{equation}\label{additivity}
\calP(\sigma+\tau) \simeq \calP( \sigma) \sqcup \calP(\tau)
\end{equation}
for any    singular $n$-cycles  $\sigma,\tau$ in $X$.
To see this, pick   expansions $
\sigma=\sum_{i } k_i \sigma_i$, $\tau=\sum_{j } l_j \tau_j
 $ and  let $\calK=(K,\varphi,u,\kappa)$, $\calL=(L,\psi,v,\lambda)$ be the associated polychains, respectively.
 Consider the cylinder  polychain $\overline {\calK  \sqcup   \calL} \cong \overline \calK  \sqcup \overline \calL$
  (as defined in the proof of Lemma \ref{additivityandrescaling})
and modify its partition   by additionally declaring that
for  any face~$F$ of   $(\Delta^n)_i\subset K$ corresponding to  $A   \subset  \{0,\dots,n\}$
and for any face $G$  of $(\Delta^n)_j \subset L$ corresponding to $B \subset \{0,\dots,n\}$
such that $\sigma_i  e_A = \tau_j   e_B$, the faces $ F \times \{0\} $ and $ G \times \{0\} $ of  $\overline {\calK  \sqcup   \calL}$  are of the same type, and the corresponding identification map  is
  $e_B  e_A^{-1}\times \id_{\{0\}}:F\times \{0\}\to G\times \{0\} $.
  The resulting  $(n+1)$-polychain, $\mathscr M$, in~$X$   satisfies
  $$
  \red_+\partial    \mathscr M  \cong  \red_+(\calK)  \sqcup  \red_+(\calL)
  \sqcup \red_+(-\calR) \sqcup  (\rm {a\,\, polychain \,\, with\,\,  zero\,\,  weight})
  $$
where $\calR$ is the polychain associated with the expansion
$ \sum_{i } k_i \sigma_i + \sum_{j  } l_j \tau_j$ of $\sigma+\tau$.
Hence, $  \partial^r  \mathscr M   \cong  \calP( \sigma)  \sqcup \calP(\tau) \sqcup (-\calP(\sigma+\tau))  $ and our claim follows.

If $\kk$ has no zero-divisors, then $\calP(k\sigma) \cong  k \calP(\sigma) $
for any    singular $n$-chain $\sigma$ in $X$ and any non-zero $k\in \kk$. For   an  arbitrary $\kk$ and  all $k\in \kk$,   we have
\begin{equation}\label{pureboundaries+}  \calP(k\sigma) \cong \red (k \calP(\sigma))  \simeq k \calP(\sigma) . \end{equation}
  Equalities \eqref{pureboundaries} -- \eqref{pureboundaries+}
imply that the formula $[\sigma] \mapsto \lb \calP (\sigma) \rb$
defines   a linear map $\lb-\rb: H_n(X)\to \widetilde{H}_n(X)$.

  To prove the last claim of the lemma,   consider a continuous map $f:X\to Y$.
Let $\sigma$ be a singular $n$-cycle  in $X$,  and let $\calK=(K,\varphi,u,\kappa)$ be the $n$-polycycle associated to an expansion $\sum_i k_i \sigma_i$ of $\sigma$.
The $n$-polycycle associated to the expansion $\sum_i k_i (f \sigma_i)$ of $f_*(\sigma)$  has   the form $\calK'=(K,\varphi', u, f\kappa)$
and   differs from $f_*(\calK)= (K,\varphi,u,f\kappa)$ only in   the partition. Modifying appropriately  the partition of the cylinder polychain $\overline{f_*(\calK)}$,
we obtain an $(n+1)$-polychain $\calM$ in $X$ such that
$$
\red_+ \partial \calM = \red_+ f_*(\calK) \sqcup \red_+(-\calK') \sqcup  (\rm {a\,\, polychain \,\, with\,\,  zero\,\,  weight}).
$$
We deduce that $\partial^r  \calM = \red f_*(\calK) \sqcup  \red(-\calK')$ and
$$
f_*(\lb \calP(\sigma) \rb)= f_*(\lb \calK \rb) = \lb f_*(\calK) \rb = \lb \calK' \rb = \lb  \calP(f_*(\sigma)) \rb.
$$

\up
\end{proof}

  The next theorem  implies  that $H_n(X)$ is canonically isomorphic to
  a direct summand  of $\widetilde{H}_n(X)$.

\begin{theor}\label{mainonpolychains}
We have
$ [-]\circ \lb-\rb = \id:H_n(X)\to H_n(X)$.
\end{theor}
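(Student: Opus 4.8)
The plan is to compute $[\calP(\sigma)]$ directly from the construction of $\calP(\sigma)$, using the tautological triangulation. Fix a singular $n$-cycle $\sigma$ in $X$ together with an expansion $\sigma=\sum_i k_i\sigma_i$ into singular $n$-simplices, and let $\calK=(K,\varphi,u,\kappa)$ be the associated $n$-polychain, so that $K=\coprod_i(\Delta^n)_i$ with each copy of $\Delta^n$ carrying its standard orientation, $\kappa=\coprod_i\sigma_i$, $u((\Delta^n)_i)=k_i$, and $\varphi$ is the partition described in Section~\ref{firsttransf}. Since $\sigma$ is a cycle, $\calP(\sigma)=\red(\calK)$ is a polycycle by Lemma~\ref{mapP}; hence $\partial^r\calK=\partial^r\red(\calK)=\partial^r\calP(\sigma)=\varnothing$, so $\calK$ itself is a polycycle, and $\lb\calK\rb=\lb\red(\calK)\rb=\lb\calP(\sigma)\rb$. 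By Lemma~\ref{from_sc_to_pcdebut}, the transformation $[-]$ sends this class to $[\calK]$, so it suffices to prove $[\calK]=[\sigma]$ in $H_n(X)$.

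To do this I would equip $K$ with the triangulation $T$ in which each component $(\Delta^n)_i$ is a single $n$-simplex (with its combinatorial faces as the sub-simplices), together with the local order on the vertices of $T$ induced by the standard order $0<1<\dots<n$ on the vertices of each $(\Delta^n)_i$. This $T$ is a locally ordered smooth triangulation of $K$. The key point to check is that it \emph{fits} $\varphi$ in the sense of Section~\ref{fundcla}: each identification map $\varphi_{F,F'}=e_{A'}e_A^{-1}$ is an affine isomorphism between combinatorial faces, hence a simplicial isomorphism for $T$, and (because $e_A$ and $e_{A'}$ preserve the vertex orders) it carries the $\ell$-th smallest vertex of $F$ to the $\ell$-th smallest vertex of $F'$, so it preserves the local order.

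Next I would evaluate the fundamental $n$-chain $\sigma(T,u)$ of Section~\ref{fundcla}. Its $n$-simplices are precisely the $(\Delta^n)_i$; for each one, $K^{(\Delta^n)_i}=(\Delta^n)_i$ has weight $k_i$, the sign $\varepsilon_{(\Delta^n)_i}$ equals $+1$ because by definition the standard orientation of $\Delta^n$ is the one induced by its vertex order, and the singular simplex $\sigma_{(\Delta^n)_i}$ is the canonical affine identification $\Delta^n\cong(\Delta^n)_i$ followed by the inclusion into $K$. Hence $\sigma(T,u)=\sum_i k_i\,\sigma_{(\Delta^n)_i}$ and $\kappa_*\sigma(T,u)=\sum_i k_i\sigma_i=\sigma$. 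By the explicit description at the end of Section~\ref{fundcla}, the class $[\calK]\in H_n(X)$ is represented by the cycle $\kappa_*\sigma(T,u)=\sigma$, so $[\calK]=[\sigma]$ and therefore $[-]\circ\lb-\rb=\id$.

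I do not expect a serious obstacle here: the argument amounts to choosing the tautological triangulation of a disjoint union of standard simplices and keeping orientations straight. The one point that genuinely needs care is the verification in the second paragraph that this triangulation, with the naive vertex order, satisfies the ``fits the partition'' condition (simpliciality together with preservation of the local order) required to apply the fundamental-chain formula; the rest is routine bookkeeping.
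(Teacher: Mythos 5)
Your proof is correct and follows essentially the same route as the paper's: equip the disjoint union of standard simplices with the tautological locally ordered triangulation, check that it fits the partition $\varphi$, and read off that the corresponding fundamental $n$-chain pushes forward under $\kappa$ to $\sum_i k_i\sigma_i$, hence represents $[\sigma]$. The only difference is that you handle the reduction explicitly by working with the unreduced polychain $\calK$ and using $\lb\calK\rb=\lb\red(\calK)\rb$ and $[\calK]=[\red(\calK)]$, whereas the paper works directly with the reduced polycycle $\calP(\sigma)$ (implicitly taking the canonical expansion with pairwise distinct simplices and nonzero coefficients).
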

\begin{proof}
Let   $\sigma =\sum_i k_i \sigma_i$  be a singular $n$-cycle in $X$ and let
$\calP(\sigma)=(K, \varphi, u, \kappa)$  be the corresponding   reduced   $n$-polycycle. Then
$$
 \left[\lb [\sigma] \rb \right] = [\calP(\sigma)]= (\kappa_{\varphi})_*\left([K_{\varphi},u]\right) =
\Big[\sum_i k_i \sigma_i\Big]= [\sigma] \in H_n(X).
$$
  Here the third equality  is obtained by considering the  tautological locally ordered smooth triangulation $T$ of  $K$
and the corresponding fundamental $n$-chain $\sigma(T,u)$
(see  the paragraph preceding Lemma \ref{from_sc_to_pcdebut}).
\end{proof}

\subsection{Cross product re-examined} \label{cross_product_re-examined}
The following lemma shows that the transformation
$[-]: \widetilde H_*\to H_\ast$ carries the cross product $\times$ in face homology
to the standard cross product $\times$ in singular homology.

\begin{lemma}\label{twocrossproducts}
For any  topological spaces $X$, $Y$, the following diagram   commutes:
\begin{equation}\label{cross_to_cross}
\xymatrix{
\widetilde H_*(X) \times \widetilde H_*(Y)  \ar[r]^-{\times}  \ar[d]_{[-]\times [-]}
& \widetilde H_*(X \times Y) \ar[d]^{[-]}\\
 H_*(X) \times  H_*(Y)  \ar[r]^-{\times} &  H_*(X \times Y).
}
\end{equation}
\end{lemma}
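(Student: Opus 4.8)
The plan is to reduce the identity to the level of chains, where it becomes the classical comparison between the simplicial cross product on locally ordered simplicial complexes and the Eilenberg--Zilber (shuffle) cross product on singular chains. Since the natural transformation $[-]$ is invariant under reduction and $\calK\times^r\calL=\red(\calK\times\calL)$, commutativity of \eqref{cross_to_cross} amounts to the equality $[\calK\times\calL]=[\calK]\times[\calL]$ in $H_{p+q}(X\times Y)$ for every $p$-polycycle $\calK=(K,\varphi,u,\kappa)$ in $X$ and every $q$-polycycle $\calL=(L,\psi,v,\lambda)$ in $Y$. First I would record two preliminary facts: the equivalence relation $\sim_{\varphi\times\psi}$ on $K\times L$ is the product of $\sim_\varphi$ and $\sim_\psi$, so that the obvious continuous bijection $(K\times L)_{\varphi\times\psi}\to K_\varphi\times L_\psi$ is a homeomorphism (a compact space onto a Hausdorff one), and under this identification the map induced by $\kappa\times\lambda$ on the quotient is $\kappa_\varphi\times\lambda_\psi$.

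Next, choose locally ordered smooth triangulations $T$ of $K$ fitting $\varphi$ and $S$ of $L$ fitting $\psi$, as in Section~\ref{fundcla}. On the vertex set of the product cell complex $T\times S$ put the product local order, declaring $(a,b)\leq(a',b')$ when $a,a'$ lie in a common simplex of $T$ with $a\leq a'$ and $b,b'$ lie in a common simplex of $S$ with $b\leq b'$. The associated ``staircase'' triangulation $\Theta$ of $K\times L$, whose simplices inside a product $\Delta\times\Delta'$ of a $p$-simplex $\Delta$ of $T$ and a $q$-simplex $\Delta'$ of $S$ are the chains of the vertex grid of $\Delta\times\Delta'$ (the maximal ones being the monotone lattice paths), is then a locally ordered smooth triangulation of $K\times L$ fitting $\varphi\times\psi$. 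Since the class $[\,\cdot\,]$ does not depend on the chosen triangulation (Lemma~\ref{partit} and the discussion preceding Lemma~\ref{from_sc_to_pcdebut}), the class $[\calK\times\calL]$ is represented by the singular cycle $(\kappa\times\lambda)_\ast\,\sigma(\Theta,u\times v)$.

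The core of the proof is the chain-level identity
$$(\kappa\times\lambda)_\ast\,\sigma(\Theta,u\times v)=\bigl(\kappa_\ast\sigma(T,u)\bigr)\times\bigl(\lambda_\ast\sigma(S,v)\bigr),$$
where on the right $\times$ is the usual cycle-level singular cross product (the shuffle/Eilenberg--Zilber product), whose class is $[\calK]\times[\calL]$. Both sides are bilinear once expanded over the top simplices of $T$ and of $S$; the weight of a component $C\times D$ of $K\times L$ is $u(C)v(D)$, and the cross product is natural with respect to continuous maps, so it suffices to check, for one $p$-simplex $\Delta$ of $T$ and one $q$-simplex $\Delta'$ of $S$, that the fundamental chain of the staircase triangulation of $\Delta\times\Delta'$ equals $\sigma_\Delta\times\sigma_{\Delta'}$ in $C_{p+q}(\Delta\times\Delta')$. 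This is the classical description of the standard triangulation of $\Delta^p\times\Delta^q$: its top simplices are indexed by the $(p,q)$-shuffles, and the sign $\varepsilon$ comparing the product orientation of $\Delta^p\times\Delta^q$ with the vertex order of such a simplex equals the sign of the corresponding shuffle. Granting this, summing over $\Delta$ and $\Delta'$ and using naturality of the cross product gives the displayed identity, and passing to homology yields $[\calK\times\calL]=[\calK]\times[\calL]$.

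The one genuinely delicate point, and the step I expect to require care, is the sign comparison in the last paragraph: matching the orientation sign $\varepsilon$ of each staircase simplex of $\Delta^p\times\Delta^q$ with the shuffle sign, under the orientation conventions of this paper for products and for the standard simplex $\Delta^n$. Everything else — the identification $(K\times L)_{\varphi\times\psi}\cong K_\varphi\times L_\psi$, the construction of $\Theta$ and the verification that it is locally ordered and fits $\varphi\times\psi$, the bilinearity, and the naturality of $\times$ — is routine and can be dispatched quickly.
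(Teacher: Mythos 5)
Your proposal is correct and follows essentially the same route as the paper: reduce to $[\calK\times\calL]=[\calK]\times[\calL]$ for polycycles, build the product (shuffle/staircase) triangulation of $K\times L$ with the product local order, and compare the resulting fundamental chain with the Eilenberg--Zilber image of the fundamental chains of $K$ and $L$. The sign comparison you flag is settled in one line in the paper: for the top simplex of $K\times L$ indexed by a triple $(i,j,S)$, the orientation sign factors as $\varepsilon_{i,j,S}=\varepsilon_i\varepsilon_j\varepsilon_S$ because $\varepsilon_i\varepsilon_j$ is the degree of the affine diffeomorphism $\sigma_i\times\tau_j:\Delta^p\times\Delta^q\to i\times j$ with respect to product orientations, after which the two chains match termwise.
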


We   first  recall  the definition of  the map $ \times : H_*(X) \times H_*(Y) \to H_*(X\times Y)$ and then prove Lemma~\ref{twocrossproducts}.
Fix integers $p,q\geq  0$.
Any $p$-element subset $S$ of $\{1,\dots,p+q\}$
determines    non-decreasing    maps $$\alpha=\alpha_S: \{0,\dots,p+q\} \to \{0,\dots,p\} \quad {\rm {and}} \quad \beta=\beta_S: \{0,\dots,p+q\} \to \{0,\dots,q\}$$
  such that  $ \alpha(0)=\beta(0) = 0 $   and for any $i=1,\dots, p+q$,
$$
(\alpha(i),\beta(i))
= \left\{\begin{array}{ll}
\big(\alpha(i-1)+1,\beta(i-1)\big) & \hbox{ if } i\in S\\
\big(\alpha(i-1),\beta(i-1)+1\big) & \hbox{ if } i\not\in S.
\end{array}\right.
$$
Let  $\omega_S \subset \Delta^p \times \Delta^q$ be the   convex hull
of the set   $\big\{(e_{\alpha(0)},e_{\beta(0)}), \ldots , (e_{\alpha(p+q)},e_{\beta(p+q)})\big\}$.

\begin{lemma}\label{EZ}
The set $\omega_S$ is an embedded $(p+q)$-simplex  in $\Delta^p \times \Delta^q$  with vertices $ \{(e_{\alpha(i)},e_{\beta(i)})\}_{i=0}^{p+q}$.  
The  simplices $\{\omega_S\}_S$ and their faces form   a triangulation of $\Delta^p \times \Delta^q$.
\end{lemma}

\begin{proof} This lemma is   well known but we give a proof for completeness.
For $n\geq 0$, denote by $\mathbb{A}^n$ the affine space  formed by the points of $\RR^{n+1}$ with sum of coordinates~$1$.
The basis   $(e_0,\dots,e_n)$ of $\RR^{n+1}$ is  an affine basis of $\mathbb{A}^n$   and ${\Delta^n \subset \mathbb{A}^n}$.
Consider the basis
  $(v_1,\dots,v_p)=(\overrightarrow{e_0e_1}, \overrightarrow{e_1e_2}, \ldots , \overrightarrow{e_{p-1}e_p})$
  of  the vector space underlying $\mathbb{A}^p$
  and the basis $(v_{p+1},\dots,v_{p+q})=(\overrightarrow{e_0e_1}, \overrightarrow{e_1e_2}, \ldots,  \overrightarrow{e_{q-1}e_q})$
  of  the vector space underlying $\mathbb{A}^q$.
Then $(v_1,\dots,v_{p+q})$ is a basis of the vector space underlying   the product affine space   $\mathbb{A}^p \times \mathbb{A}^q$.

Recall that a \index{shuffle} \emph{$(p,q)$-shuffle} is a permutation $s$ of $\{1,\dots,p+q\}$
such that $$s(1)<s(2)< \cdots <s(p)\quad {\rm{and}} \quad s(p+1)<\cdots < s(p+q).$$
  Any    $p$-element subset $S$ of $\{1,\dots,p+q\}$ determines a unique $(p,q)$-shuffle $s$ such that $S=s(\{1,\dots,p\})$.
The first claim of the lemma follows from the fact that the  vector   basis $(v_{s^{-1}(1)},\dots,v_{s^{-1}(p+q)})$
underlies  the   set of vertices of $\omega_S \subset  \mathbb{A}^p \times \mathbb{A}^q$:
\begin{equation}\label{sequence}
\xymatrix @!0 @R=1cm @C=3.0cm  {
 (e_{\alpha(0)},e_{\beta(0)}) \ar@{|->}[r]^-{v_{s^{-1}(1)}}
& (e_{\alpha(1)},e_{\beta(1)}) \ar@{|->}[r]^-{v_{s^{-1}(2)}} &
\ \cdots \ \ar@{|->}[r]^-{v_{s^{-1}(p+q)}} & (e_{\alpha(p+q)},e_{\beta(p+q)}).
}
\end{equation}

To prove the second claim, observe  that given $n+1$ affinely independent points $f_0, \dots, f_n$  in an $n$-dimensional affine space,
an arbitrary point $f_0  +\sum_{i=1}^n t_i\, \overrightarrow{f_{i-1}f_i}$ of this space
(with $t_1,\dots,t_n\in \RR$) belongs to  the affine simplex spanned by  $f_0, \dots, f_n$  if and only if $1 \geq t_1 \geq \cdots \geq t_n \geq 0$.
Therefore   any point $z \in \Delta^p \times \Delta^q$ expands uniquely as
$$ z=  (e_0,e_0) + z_1 v_1 + \cdots + z_{p+q}   v_{p+q}$$
with $$1\geq z_1 \geq \cdots \geq z_p \geq 0 \quad {\rm {and}} \quad 1\geq z_{p+1} \geq \cdots \geq z_{p+q} \geq 0.$$
  By the same observation and \eqref{sequence},
the inclusion $z\in \omega_S$ for a  $p$-element subset $S$ of $ \{1,\dots,p+q\}$ holds if and only if
$$z_{s^{-1}(1)} \geq z_{s^{-1}(2)}  \geq \cdots \geq  z_{s^{-1}(p+q)}$$ where $s$ is the $(p,q)$-shuffle  determined by~$S$.
Therefore $\Delta^p \times \Delta^q$ is the union
of the  simplices $\{\omega_S\}_S$, and any two of these simplices  meet along a common face.
\end{proof}

For each $p$-element   subset  $S \subset   \{1,\dots,p+q\}$, we turn $\omega_S$ into a singular  simplex   in $\Delta^p \times \Delta^q$
 by sending the ordered vertices  $e_0 < e_1 <\cdots < e_{p+q}$ of $\Delta^{p+q}$ to
\begin{equation}\label{order_omega_S}
(e_{\alpha(0)},e_{\beta(0)}) < (e_{\alpha(1)},e_{\beta(1)}) < \cdots < (e_{\alpha(p+q)},e_{\beta(p+q)})
\end{equation}
respectively.   Summing up over all such $S$ we obtain a singular chain
\begin{equation}\label{signforS}
 \omega_{p,q} = \sum_S  \varepsilon_S\,   \omega_S  \ \in C_{p+q}(\Delta^p\times  \Delta^q)
\end{equation}
where $\varepsilon_S$ is the sign comparing the orientation in $\omega_S$ determined by the order of its vertices \eqref{order_omega_S}
with the product orientation in $\Delta^p\times  \Delta^q$.
The   \index{Eilenberg--Zilber chain map} \emph{Eilenberg--Zilber chain map}
\begin{equation}\label{AWEZ}
{EZ}: C_\ast (X) \otimes C_\ast(Y)    \longrightarrow   C_\ast (X\times Y)
\end{equation}
  is defined  by
$
EZ(\sigma \otimes \tau)=(\sigma\times  \tau)_\ast (\omega_{p,q})
$ for any singular simplices $\sigma: \Delta^p \to X$ and $\tau:\Delta^q  \to   Y $.
Here    $$(\sigma\times  \tau)_\ast \colon  C_\ast(\Delta^p\times  \Delta^q)   \longrightarrow   C_\ast(X \times Y)$$   is the chain map
 induced by
 $\sigma \times \tau : \Delta^p\times  \Delta^q \to X \times Y$.

 The   \index{singular homology!cross product in} \emph{cross product} of   singular    homology classes $x  \in H_p(X)$ and $y  \in H_q(Y)$ is defined by taking any cycles
$\sigma\in C_p(X)$ and $\tau\in C_q(Y)$ representing $x$ and $y$ respectively, and letting $x \times y \in H_{p+q} (X\times Y)$ be
the homology class of $EZ(\sigma \otimes \tau)$.

  \begin{proof}[Proof of Lemma~\ref{twocrossproducts}.]
Let $\calK=(K,\varphi,u,\kappa)$ be a  $p$-polycycle in $X$
and let  $\calL=(L,\psi,v,\lambda)$  be a $q$-polycycle in $Y$. We must prove that
\begin{equation}\label{[KxL]_bis}
[\calK \times \calL] = [\calK] \times [\calL] \in H_{p+q}(X \times Y).
\end{equation}
 Fix   a locally ordered smooth triangulation $T$ of $K$ which fits $\varphi$ and consider the   fundamental $p$-chain
$ \sum_i \, \varepsilon_i \, u(K^i) \,  \sigma_i \in C_p(K)
$
where   $i$ runs over $p$-simplices   of $T$,
$\sigma_i:\Delta^p \to K$ is the  smooth singular simplex determined   by  $i$,
$\varepsilon_i$ is the sign comparing  the orientation induced by the order of the vertices of $i$ to the orientation of $K$,
  and   $K^i$ is the connected component of $K$  containing~$i$.
Then
$$
[\calK]
= \Big[\sum_i \, \varepsilon_i \, u(K^i) \,  \left(\kappa \circ \sigma_i\right) \Big] \in H_p(X)
$$
 where the square brackets on the right-hand side stand for the homology class of a singular  cycle.
 Similarly,  fixing  a locally ordered  smooth triangulation $W$ of $L$ which fits $\psi$, we obtain
$$
[\calL]
= \Big[\sum_j \, \varepsilon_j\, v(L^j) \, \left(\lambda \circ \tau_j\right) \Big] \in H_q(Y)
$$
where $j$ runs over $q$-simplices   of $W$, $\tau_j:\Delta^q \to L$ is the  smooth singular simplex determined by $j$,
 $\varepsilon_j$ is the sign comparing  the orientation induced by the order of the vertices of $j$ to the orientation of $L$,
  and    $L^j$ is the   component of $L$  containing~$j$.
By the definition of the cross product in singular homology,
\begin{eqnarray}
\notag [\calK]  \times [\calL] &=&  \Big[\sum_{i,j} \, \varepsilon_i\, \varepsilon_j\, u(K^i) \,  v(L^j) \,
 EZ(\kappa\sigma_i  \otimes   \lambda\tau_j) \Big]  \\
\label{[K]x[L]_bis}& = &
\Big[ \sum_{i,j,S} \,  \varepsilon_i\,  \varepsilon_j\,  \varepsilon_S \,  u(K^i) \,  v(L^j)
\,\,  (\kappa \sigma_i \times \lambda\tau_j)  \omega_S\Big]
\end{eqnarray}
where $S$ runs over $p$-element subsets of $\{1,\dots, p+q\}$.

For any  simplices $i,j$  as above,  we push forward via $\sigma_i \times \tau_j$
the triangulation  of $\Delta^p\times \Delta^q$
 provided by Lemma~\ref{EZ} to a triangulation of $i\times j \subset K \times L$.
This gives a smooth triangulation $Z$ of $K \times L$.
The  set of vertices $Z^0$ of $Z$ is the cartesian product of the sets of vertices $T^0$ and $W^0$ of $T$ and $W$, respectively;
we endow~$Z^0$ with the product of the binary relations on $T^0$ and $W^0$
determined  by the local orders on $T$ and $W$. This defines a local order on $Z$ which fits the partition $\varphi \times \psi$.
The simplices of $Z$ can be identified with the  triples $(i,j,S)$ as above,
and the corresponding singular simplices in $K\times L$ are  the maps $( \sigma_i \times \tau_j)  \omega_S:\Delta^{p+q}\to K\times L$.
(Here we use   the fact   that the order \eqref{order_omega_S} of the vertices of   $\omega_S$
  is given by the product binary relation on the set of vertices of $\Delta^p\times \Delta^q$
determined by    the natural orders on the   sets   of vertices of $\Delta^p$ and $\Delta^q$.)
Let $\varepsilon_{i,j,S}$ be the sign comparing  the orientation of the simplex $(i,j,S)$
induced by the order of the vertices   to the product orientation of $K\times L$.
 Let $(K\times L)^{i,j,S}$ be the component of $K\times L$ containing the simplex $(i,j,S)$.
Then
\begin{equation}\label{[KxL]}
[\calK \times \calL] =
\Big[\sum_{i,j,S} \varepsilon_{i,j,S}\cdot (u \times v)\left((K\times L)^{i,j,S}\right) \cdot
\big((\kappa \times \lambda)\circ ( \sigma_i \times \tau_j)  \omega_S\big) \Big].
\end{equation}
Clearly,
$$
(u \times v)\left((K\times L)^{i,j,S}\right) = u(K^i) \,  v(L^j).
$$
Note that
$
 \varepsilon_{i,j,S} = \varepsilon_i \varepsilon_j \varepsilon_S
$
since $\varepsilon_i \varepsilon_j$ is the degree of the diffeomorphism
$\sigma_i \times \tau_j: \Delta^p \times \Delta^q \to i\times j$
with respect to the product orientation in  $\Delta^p \times \Delta^q$ and the product orientation in $K \times L$ restricted to
$i\times j$.
Comparing \eqref{[K]x[L]_bis} to \eqref{[KxL]}, we obtain  \eqref{[KxL]_bis}.
\end{proof}

\subsection{Remarks}\label{rerererer} 1.
Though we shall not need it in the sequel, note that the sign $\varepsilon_S$ in \eqref{signforS} can be computed explicitly:
$\varepsilon_S= (-1)^{n_S}$ where  $n_S$ is the number of pairs   $i<j$ with
 $i  \in \{1, \dots , p+q\}\setminus S$ and  $j\in S$. Indeed, in the notation introduced in the proof of Lemma~\ref{EZ},
 the orientation of   $\omega_S$ determined by the sequence \eqref{order_omega_S} is represented by the $(p+q)$-vector
$$
v_{s^{-1}(1)}\wedge \cdots \wedge  v_{s^{-1}(p+q)} = (-1)^{m} v_{1}\wedge \cdots  \wedge v_{p+q}
$$
where $s$ is the $(p,q)$-shuffle  associated with $S$ and $m$ is the number of inversions in~$s$.
Therefore  $\varepsilon_S=(-1)^{m}$ and it remains to observe that $m=n_S$.

The definition of $n_S$ may be also reformulated in terms of the maps $\alpha=\alpha_S$ and $\beta=\beta_S$.
Namely,  $n_S$ is the number of pairs   $i<j$ such that
$\beta(i)=\beta(i-1)+1$ and $\alpha(j)=\alpha(j-1)+1$. This implies the following formula for $n_S$ used, for example, in \cite[Section 4(b)]{FHT}:
$$
n_S = \sum_{1 \leq i<j \leq p+q} \big(\beta(i)-\beta(i-1)\big)\,
\big(\alpha(j)-\alpha(j-1)\big).
$$

2. By a celebrated result of  
Thom, there are topological spaces $X$ and integers   $n>0$
such that some $n$-dimensional singular  homology classes of $X$ with coefficients in $\kk=\ZZ$ are not realizable  by  closed  singular manifolds.
For such $X$ and $n$,  the natural map from the $n$-dimensional   oriented  bordism group $\Omega_n(X)$  to $  H_n (X)$,
carrying  a closed singular manifold $\kappa:M \to X$ to $\kappa_*([M])$, is not surjective. This map splits as a
  composition of  the map  $\Omega_n(X) \to \widetilde H_n(X)$ described in Remark~\ref{singsing}
  with  the surjective map  $[-]:\widetilde {H}_n (X)\to H_n (X)$.
  Therefore, for such~$X$ and~$n$,  the map  $\Omega_n(X) \to \widetilde H_n(X)$   is not   surjective.

    3.  The face homology   seems to be   difficult to compute. As a consequence, the authors do not know
whether the  transformation   $[-]:\widetilde {H}_\ast\to H_\ast$  is injective, and,  equivalently,
whether  $\lb - \rb: H_\ast\to \widetilde {H}_\ast$  is surjective.  
 In fact,    the authors  even   do not know whether the face homology   of    a point is trivial in positive degrees.

4. The constructions and results of this section easily extend to the  face homology of topological pairs   (cf$.$  Remark \ref{singsing}).

 \section{Smooth polychains}\label{smooth111}

We  reformulate   face homology of the path spaces of manifolds  in terms of smooth polychains.
We start by studying    polychains in    manifolds. 

\subsection{Polychains in   manifolds}\label{smoothmaps}

  Recall from Section \ref{Manifolds with faces} that
a map $\kappa $ from an   $n$-dimensional    manifold with faces~$K$
to  a smooth    $m$-dimensional manifold~$M$  (possibly, with boundary) is said to be smooth if
restricting $\kappa$  to any local coordinate systems in $K$ and $M$ we obtain a map that extends to a $C^\infty$-map from an open subset of $\RR^n$ to $\RR^m$.
If ${{N}}\subset K$ is a union of (some) faces  of $K$, then  we call a map    $   {{N}} \to  M$    \index{map!smooth}  \emph{smooth} whenever its restrictions to all   faces of $K$ contained in~${{N}}$ are smooth. Such a map  $ {{N}} \to  M$ is  necessarily continuous. This terminology applies in particular to ${{N}}= \partial K$.

\begin{lemma}\label{smo1} Let $K$ be a   manifold with faces.
Any smooth map $ \partial K \to  M$    extends to a smooth map from a neighborhood of $\partial K$ in $K$ to $  M$.
\end{lemma}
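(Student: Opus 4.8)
The plan is to reduce the statement to the classical tubular-neighbourhood construction, and to bring in the collar structure of $\partial K$ in $K$ only to cope with the boundary of $M$. First I would fix a Whitney embedding of $M$ as a closed submanifold of $\RR^{N}$ for some large $N$. If $\partial M=\varnothing$, there is an open neighbourhood $O\subset\RR^{N}$ of $M$ together with a smooth retraction $\rho\colon O\to M$. In general I would first enlarge $M$ to a boundaryless manifold $\widehat M\supset M$ by attaching an external collar along $\partial M$ (so that $M$ is closed in $\widehat M$ and one-sided near $\partial M$), embed $\widehat M$ as a closed submanifold of $\RR^{N}$, and let $\rho\colon O\to\widehat M$ be a tubular-neighbourhood retraction.

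Next I would extend the map $\partial K\to M\subset\RR^{N}$ to a smooth $\RR^{N}$-valued map $g$ on an open neighbourhood $V$ of $\partial K$ in $K$. This step is local and has a linear target, so it is routine: near each point of $\partial K$ the pair $(K,\partial K)$ is modelled on $\big([0,\infty)^{k}\times\RR^{n-k},\ \bigcup_{i\le k}\{x_{i}=0\}\big)$, a map on this corner locus that is smooth on each face and consistent on intersections extends to a $C^{\infty}$ map on the chart by a Whitney/Seeley-type extension theorem, and the local extensions are glued by a partition of unity on a neighbourhood of $\partial K$ in $K$; a convex combination of maps that all agree with $f$ along $\partial K$ still agrees with $f$ along $\partial K$, because the target is a vector space. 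Since $g$ agrees with $f$ on $\partial K$ and $f(\partial K)\subset M\subset O$, after shrinking $V$ we may assume $g(V)\subset O$, and then $\widehat f:=\rho\circ g\colon V\to\widehat M$ is smooth and restricts to $f$ on $\partial K$. When $\partial M=\varnothing$ this $\widehat f$ already takes values in $M$ and we are done.

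The hard part is the case $\partial M\neq\varnothing$: $\widehat f$ need not land in $M$, and there is no smooth retraction of a neighbourhood of $M$ onto $M$ inside $\widehat M$ (because $M$ is one-sided near $\partial M$), so one cannot simply push $\widehat f$ back into $M$. To get around this I would instead avoid leaving $M$ in the first place, by invoking the collar structure of $\partial K$ in $K$: a manifold with faces $K$ admits a collar, namely a compatible system of collars $c_{i}\colon F_{i}\times[0,1)\hookrightarrow K$ of its principal faces $F_{i}$, matched up along the corner strata. Granting this, one extends $f$ by declaring it constant in the collar directions — carried out recursively over the corner strata, so that the prescriptions coming from the various $F_{i}$ agree where their collars overlap — and the resulting smooth extension automatically takes values in $M$, since its values are just values of $f$. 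The genuine technical content is thus the construction of the compatible collars for manifolds with faces, which I would carry out by induction on $\dim K$: the extension over a single principal face $F_{i}$ reduces, via such a collar, to an extension problem for the manifold with faces $F_{i}\times[0,1]$ with data prescribed on $(F_{i}\times\{0\})\cup(\partial F_{i}\times[0,1])$ — an extension problem of essentially the same kind, handled by a parallel induction on dimension. (If $M$ has no boundary, no collars are needed and the first two paragraphs already give a complete proof.)
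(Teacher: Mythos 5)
Your plan has two genuine gaps, one in each half of the argument.

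In the first half you reduce to the problem of extending a function, defined smoothly on the corner locus $\partial[0,\infty)^n=\bigcup_i\{x_i=0\}$, to a smooth function on $[0,\infty)^n$, and you dismiss this as routine by invoking ``a Whitney/Seeley-type extension theorem.'' This local step is precisely the nontrivial content of the lemma, and neither cited theorem applies as stated: Whitney's theorem requires a compatible full jet to be prescribed on the closed set, whereas along each face you only control the tangential jet; Seeley's theorem extends a function that is already defined and smooth on a half-space across its bounding hyperplane, not a function given only on the boundary of a quadrant. The paper supplies exactly this missing ingredient by an explicit inclusion--exclusion formula,
$$\overline f(x)=\sum_{S\subsetneq\{1,\dots,n\}}(-1)^{\card(S)+n+1}f(x_S),$$
where $x_S$ zeroes out the coordinates outside $S$. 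Each summand is $f$ composed with a smooth projection onto a proper face, so $\overline f$ is smooth, and a pairing/cancellation argument shows $\overline f=f$ on $\partial[0,\infty)^n$. You would need to produce something equivalent before the rest of your first half (partition of unity, embedding, retraction) can get off the ground.

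In the second half you correctly observe that when $\partial M\neq\varnothing$ the tubular-neighbourhood retraction need not return to $M$, and you pivot to extending by being ``constant in collar directions.'' But the existence of a compatible system of collars of the faces of $K$ is not established; your proposed construction is circular, since it reduces the building of compatible collars to an extension problem of exactly the kind you are trying to solve, ``handled by a parallel induction'' that is never carried out. Compatible corner collars do exist, but proving this is of comparable depth to the lemma itself and cannot be left as an aside. The paper sidesteps all of this machinery: the partition of unity is taken on $K$ and local coordinates on $M$, so that one may assume $K=[0,\infty)^n$ and $M=\RR$, and then the one-line formula above does the entire job — no global embedding, no tubular neighbourhood, no compatible collars.
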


\begin{proof}  Using a partition of unity on $K$ and local coordinates on $M$, we easily reduce the lemma to the case where $K=\RR^n_+= [0,\infty)^n$ with $n\geq 0$ and $M=\RR$. We need to prove that every function $f: \partial \RR^n_+ \to \RR$ whose restrictions to all proper faces of $\RR^n_+$ are smooth extends to a smooth function on $\RR^n_+$. We   exhibit one such extension   explicitly. For a subset $S$ of the set $\{1,\dots,n\}$ and a  point $x=(x_1,\dots, x_n)\in \RR^n_+$ denote by $x_S$ the point of
  $\RR^n_+$ whose $i$-th coordinate is $x_i  $ if $i\in S$ and   zero otherwise.
If  $S\neq \{1,\dots,n\}$, i.e$.$ if $S$ is a proper subset of $ \{1,\dots,n\}$, then  $x_S\in \partial \RR^n_+$.  Set
\begin{equation}\label{f_bar}
\overline f(x)=\sum_{  S\subsetneq \{1,\dots,n\}  } (-1)^{\card (S)+n+1} f(x_S).
\end{equation}
Each function $x\mapsto f(x_S )$ is smooth because it is a composition of $f$ with the projection of $\RR^n_+$  onto its proper face.
Therefore    the function   $\overline f:\RR^n_+\to \RR$ is smooth.
  Moreover,   it satisfies $\overline f(x)=f(x)$ for all  $x\in \partial \RR^n_+$.
  Indeed, pick $i\in\{1,\dots,n\}$ such that  $x_i=0$   and observe that each term in   \eqref{f_bar}
corresponding to $S $ with $i\notin S$  cancels with the term corresponding to  $ {S\cup \{i\}} $ provided the latter set is proper.
This   leaves    only the term   $f(x_S)=f(x)$   determined by  $S=\{1,\dots,n\} \setminus \{i\}$.
\end{proof}

\begin{lemma}\label{smo2}
Let   $ \kappa:K\to M $ be a   continuous map from a     manifold with faces~$K$ to    a smooth manifold~$M$.
Then any homotopy of $\kappa\vert_{\partial K}: \partial K\to M$ to a smooth map  extends to a homotopy of $\kappa$ to a smooth map $K\to  M$.
\end{lemma}

\begin{proof}
Observe first that if $\kappa\vert_{\partial K}: \partial K\to M$ is smooth,
then there is a  homotopy of $\kappa$ rel $\partial K$ to a smooth map $K \to  M$.
Indeed,  Lemma~\ref{smo1} yields an extension of $\kappa\vert_{\partial K}: \partial K\to M$ to a smooth map $U\to M$
where $U$ is a collar   of $\partial K$  in $K$.
The  latter map obviously extends to a continuous map $\kappa': K\to M$   homotopic to $\kappa$ rel $\partial K$.
Since   $\kappa'\vert_U$ is smooth and $\overline {K\setminus U}$ is a compact subset of the smooth manifold $K\setminus \partial K$,
there is a homotopy of $\kappa'$ to a smooth map $K\to M$, and this homotopy may be chosen to be  constant in a neighborhood of  $\partial K$ in $U$.
The resulting   smooth map $K\to M$ is homotopic to $\kappa$ rel $\partial K$.

  To prove the lemma,  take an arbitrary (continuous) extension of the given homotopy of $\kappa\vert_{\partial K}$ to a homotopy of
  $\kappa $, and compose it with a homotopy   $\operatorname{rel} \partial K$
  of the resulting map   $  K\to M$ to a smooth map as in  the previous paragraph.  Next, using a collar of $\partial K$ in $K$,
  deform the composed homotopy of $\kappa$ into a homotopy   satisfying the conditions of the  lemma.
  \end{proof}

  As an exercise, the reader may deduce from Lemma~\ref{smo2}
  (by an inductive construction on the faces of $K$)
  that any  continuous map $K \to  M$ is homotopic to a smooth map.

    A polychain $ (K,\varphi,u,\kappa)$ in   $M$ is   \index{polychain!smooth} \emph{smooth} if the map $\kappa:K\to M$ is smooth.
    We explain now how to deform arbitrary  polychains in $M$   into   smooth polychains.
    For  the notion of  a  deformation  of a polychain   in $M$, see  Section~\ref{homotopy_singular}. We explain first how to extend deformations.
  Let $N\subset K$ consist of some faces of $K$.
    We say that a homotopy $\{(\kappa\vert_{{N}})^t:{{N}}\to M\}_{t\in I}$   of    $\kappa\vert_N$  is  \index{homotopy!compatible with the partition} \emph{compatible}
    with  the partition $\varphi$ if $$(\kappa\vert_{{N}})^t\vert_G \circ \varphi_{F,G}= (\kappa\vert_{{N}})^t\vert_F$$ for any $t\in I$
    and any faces of the  same type $F,G \subset {{N}}$.

 \begin{lemma}\label{sssppsmodefo1----}
Let $\calK=(K,\varphi,u,\kappa)$ be a polychain  in   $M$ and let ${{N}}$ be a union of   faces  of $K$.
Let $\{(\kappa\vert_{{N}})^t:{{N}}\to M\}_{t\in I}$ be a homotopy of  $\kappa\vert_{{N}}$ compatible  with~$\varphi$
such that     $(\kappa\vert_{{N}})^1:{{N}}\to M$ is smooth. Then there is a  deformation   $\{ \calK^t  = (K,\varphi,u,\kappa^t) \}_{ t\in I}$ of
$ \calK^0=\calK$ such that     $ \calK^1$ is smooth and for all   $t\in I$,  $$\kappa^t\vert_{{N}}= (\kappa\vert_{{N}})^t:{{N}}\to M.$$
\end{lemma}

\begin{proof} For any integer $r$, denote   (as in Section \ref{Manifolds with faces})   by  $K_r $ the union of all faces of $K$ of dimension $\leq r$.
Recursively in~$r=-1, 0, \ldots$, we   construct a homotopy $(\kappa\vert_{K_r})^t $ of   $ \kappa \vert_{K_r}$   to a smooth map   $(\kappa\vert_{K_r})^1:K_r\to M$.
For $r=-1$, there is nothing to do since $K_{-1}=  \varnothing  $.  The induction step goes as follows.
For each type of $r$-dimensional faces of $K$, select a representative face  $F$ so that if at least one face of the given type lies in ${{N}}$,
then   $F\subset {{N}}$. If $F\subset {{N}}$, then set $(\kappa\vert_{F})^t = (\kappa\vert_{{N}})^t\vert_F$ for all~$t$.
If $F\nsubseteq {{N}}$, then by Lemma~\ref{smo2}  there is  a  homotopy of $\kappa\vert_F$
 to a smooth map  extending the  homotopy of $\kappa$ on $\partial F \subset K_{r-1}$ obtained at the previous step.
 The homotopy  on the selected   $r$-dimensional faces  uniquely extends to a     homotopy of~$\kappa$ on $K_r$ compatible with~$\varphi$.
 For $r=\dim(K) $, we obtain the required deformation of~$\kappa$.     \end{proof}

\begin{lemma}\label{ppsmodefo1----}
For any
  polychain  $\calK=(K,\varphi,u,\kappa)$  in   $M$, there is a  deformation   $\{ \calK^t  = (K,\varphi,u,\kappa^t) \}_{ t\in I}$ of
$ \calK^0=\calK$ such that     $ \calK^1$ is smooth and  $\kappa^t\vert_F= \kappa\vert_F$ for all   $t\in I$  and  all   faces $F$ of $K$
on which    $\kappa   $  is smooth.
\end{lemma}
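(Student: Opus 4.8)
The plan is to deduce this from Lemma~\ref{sssppsmodefo1----} by taking for ${{N}}$ a suitable union of faces on which $\kappa$ is already smooth. The subtlety is that the set of faces on which $\kappa$ is smooth need not be closed under passing to subfaces in a way that makes it a union of faces in the sense required — but actually it \emph{is}: if $\kappa$ restricted to a face $F$ is smooth, then its restriction to any face of $F$ is smooth, since the restriction of a smooth map to a face is smooth (Section~\ref{Manifolds with faces}). So first I would let ${{N}}$ be the union of all faces $F$ of $K$ such that $\kappa\vert_F$ is smooth. By the preceding remark, ${{N}}$ is genuinely a union of faces of $K$, and moreover every face of $K$ contained in ${{N}}$ is one of these good faces.

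Next I would check the compatibility of the constant homotopy of $\kappa\vert_{{N}}$ with the partition $\varphi$. Here is the one point that requires care: the map $\kappa$ being compatible with $\varphi$ means $\kappa\circ\varphi_{F,G}=\kappa\vert_F$ for faces $F,G$ of the same type. If $F\subset{{N}}$, i.e.\ $\kappa\vert_F$ is smooth, and $G$ has the same type as $F$, then $\kappa\vert_G = \kappa\circ\varphi_{F,G}\circ(\varphi_{F,G})^{-1}$ shows that $\kappa\vert_G$ is a composition of the smooth map $\kappa\vert_F$ with the diffeomorphism $(\varphi_{G,F}:G\to F)$, hence smooth; so $G\subset{{N}}$ too. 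Thus ${{N}}$ is a union of $\varphi$-types of faces, the partition $\varphi$ restricts to ${{N}}$, and the constant homotopy $(\kappa\vert_{{N}})^t=\kappa\vert_{{N}}$ is trivially compatible with $\varphi$ and ends (at $t=1$) in the smooth map $\kappa\vert_{{N}}$.

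Having arranged this, I would simply invoke Lemma~\ref{sssppsmodefo1----} with this ${{N}}$ and this constant homotopy. It produces a deformation $\{\calK^t=(K,\varphi,u,\kappa^t)\}_{t\in I}$ with $\calK^0=\calK$, with $\calK^1$ smooth, and with $\kappa^t\vert_{{N}}=(\kappa\vert_{{N}})^t=\kappa\vert_{{N}}$ for all $t$. In particular, for any face $F$ of $K$ on which $\kappa$ is smooth we have $F\subset{{N}}$ and hence $\kappa^t\vert_F=\kappa\vert_F$ for all $t\in I$, which is exactly the assertion of the lemma. The main (and only genuine) obstacle is the verification that the good faces form a union of $\varphi$-types so that the hypotheses of Lemma~\ref{sssppsmodefo1----} are met; once that bookkeeping is done the statement is immediate.
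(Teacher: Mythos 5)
Your proposal matches the paper's own proof exactly: apply Lemma~\ref{sssppsmodefo1----} with ${{N}}$ the union of faces on which $\kappa$ is smooth and with the constant homotopy. The paper states this as a one-line deduction; you fill in the routine verifications that ${{N}}$ is a union of $\varphi$-types and that the constant homotopy meets the hypotheses, which is correct bookkeeping.
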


\begin{proof} This is a special case of   Lemma \ref{sssppsmodefo1----}   where ${{N}}$ is the union  of all   faces  of $K$
on which    $\kappa   $  is smooth and     $\{(\kappa\vert_{{N}})^t\}_{t\in I}$   is the constant homotopy.  \end{proof}

We can now reformulate the face homology of $M$   in terms of smooth polychains. Note that if  a polychain $\calK$ in $M$ is  smooth,
then so are the polychains $\red \calK$, $\partial \calK$, and $\partial^r \calK$. Disjoint unions of smooth polychains are smooth.
Applying the definitions of  Section~\ref{facehomology} to $X=M$ but considering only smooth polycycles and smooth polychains
     we obtain  \index{face homology!smooth}  \emph{smooth face homology} $\widetilde {H}^s_\ast(M)$.

\begin{theor}\label{++smoothdefo1----} The natural linear map  $\widetilde {H}^s_\ast(M)\to \widetilde {H}_\ast(M)$ is an isomorphism.
\end{theor}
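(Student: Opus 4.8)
The plan is to prove surjectivity and injectivity of the natural map separately, in each case reducing to the smoothing lemmas of this section. First, note that $\widetilde H^s_\ast(M)$ is a $\kk$-module: the proof of Lemma~\ref{additivityandrescaling} applies verbatim, since the cylinder, the reduction, the boundary, and finite disjoint unions of smooth polychains are again smooth, so the whole argument stays within the class of smooth polychains; hence the map $\widetilde H^s_\ast(M)\to\widetilde H_\ast(M)$ is a $\kk$-linear homomorphism. For surjectivity, start with an arbitrary $n$-polycycle $\calK$ in $M$: Lemma~\ref{ppsmodefo1----} produces a deformation $\{\calK^t\}_{t\in I}$ of $\calK^0=\calK$ with $\calK^1$ smooth, and Lemma~\ref{homotimplieshomol} shows that $\calK^1$ is a polycycle homologous to $\calK$, so $\lb\calK\rb=\lb\calK^1\rb$ lies in the image of $\widetilde H^s_n(M)$.

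For injectivity, since the map is a homomorphism of abelian groups it suffices to prove that a smooth $n$-polycycle $\calK$ with $\calK\simeq\varnothing$ is null-homologous through smooth polychains. Choose polychains $\calL_0,\calL_1$ in $M$ and a diffeomorphism $h$ realizing $\red(\calK)\sqcup\partial^r\calL_0\cong\partial^r\calL_1$. The idea is to smooth $\calL_1$ first and then to smooth $\calL_0$ in a way that matches $\calL_1$ along the faces identified by $h$, so that the diffeomorphism survives. Apply Lemma~\ref{ppsmodefo1----} to $\calL_1$: this produces a smooth $\calL_1^1$ by a deformation that is stationary on every face of $\calL_1$ where its characteristic map is already smooth, in particular on the principal faces corresponding under $h$ to the components of $\red(\calK)$, since these carry the smooth characteristic map of $\red(\calK)$. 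Consequently $\partial^r\calL_1^1\cong\red(\calK)\sqcup\calN$, where $\calN$ is the smooth polycycle obtained from $\partial^r\calL_0$ by the homotopy $G$ that the deformation of $\calL_1$ induces on the principal faces matched by $h$ with $\partial^r\calL_0$. Transporting $G$ back along $h$ and extending it over all of $\partial\calL_0$ by the identification maps of the boundary partition yields a homotopy $G'$ of the characteristic map of $\partial\calL_0$, compatible with the partition and ending at a smooth map. Feeding $G'$ into Lemma~\ref{sssppsmodefo1----}, applied to $\calL_0$ with $N$ the union of $\partial\calL_0$ and the faces on which the map of $\calL_0$ is already smooth, gives a smooth $\calL_0^1$ with $\partial^r\calL_0^1\cong\calN$. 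Therefore $\red(\calK)\sqcup\partial^r\calL_0^1\cong\partial^r\calL_1^1$ with $\calL_0^1,\calL_1^1$ smooth, so $\calK$ is null-homologous through smooth polychains.

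The step I expect to demand the most care is this last one. One must check that the deformation of $\calL_1$ restricts, on the relevant boundary faces, to a homotopy compatible with the partition; that its transport along $h$ together with the extension over $\partial\calL_0$ is again partition-compatible — this is automatic because $h$ is a diffeomorphism of polychains and the partition on a boundary is the one induced from the ambient partition; and one must carry out the routine bookkeeping of sorting the principal faces of $\calL_0$ and $\calL_1$ into those killed by reduction, those matched with $\red(\calK)$, and those matched with $\partial^r\calL_0$, while keeping track of where the maps are already smooth, so that Lemma~\ref{sssppsmodefo1----} may legitimately be invoked. The relative case (face homology of a pair) is entirely analogous.
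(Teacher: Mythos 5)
Your surjectivity argument matches the paper's exactly, and your reduction of injectivity to vanishing of the kernel (after observing that the smooth version of Lemma~\ref{additivityandrescaling} makes $\widetilde{H}^s_\ast(M)$ a $\kk$-module) is a reasonable simplification. Your route through the smoothing lemmas then genuinely differs from the paper's: the paper applies Lemma~\ref{ppsmodefo1----} to the $n$-dimensional polychain $\calK_1\sqcup\partial^r\calR_1$, transports the resulting deformation to $\calK_2\sqcup\partial^r\calR_2$ along the given diffeomorphism $f$, and only afterwards lifts the boundary deformations over both $(n+1)$-polychains $\calR_1,\calR_2$; you instead smooth the $(n+1)$-dimensional $\calL_1$ directly, noting that Lemma~\ref{ppsmodefo1----}'s deformation is stationary on the (already smooth) principal faces matched with $\red(\calK)$. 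Both arrangements use the same tools; yours smooths one side in one shot, at the cost of having to reconstruct a smoothing of $\calL_0$ from a given boundary homotopy.

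The real gap sits in the phrase ``extending it over all of $\partial\calL_0$ by the identification maps of the boundary partition.'' The transported homotopy lives on $\partial^r\calL_0=\red_0\red_+\partial\calL_0$, which contains one representative per \emph{nonzero-weight} type of principal faces of $\calL_0$. Identification maps do propagate a homotopy from such a representative to all other faces of the same type; but the types annihilated by $\red_0$ --- those whose total weight, as computed in the definition of $\red_+$, is zero --- have no representative in $\partial^r\calL_0$ at all, so nothing is carried to them and $G'$ is simply undefined there. This is not just bookkeeping: on those faces a smoothing homotopy must be \emph{produced}, not propagated. The paper's three-step extension fills exactly this hole: first it applies Lemma~\ref{sssppsmodefo1----} to $\red_+\partial\calL_0$ with $N$ the union of the nonzero-weight components, which (via Lemma~\ref{smo2} inside the recursion) manufactures a compatible homotopy on the zero-weight representatives; only then does it spread the result to $\partial\calL_0$ by identification maps and extend over $\calL_0$. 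A secondary, lesser issue: in your final application of Lemma~\ref{sssppsmodefo1----}, taking $N$ to contain, beyond $\partial\calL_0$, also ``the faces on which the map of $\calL_0$ is already smooth'' is both unnecessary and ill-posed --- the only faces this adds are top-dimensional components $C$ of $\calL_0$, and the constant homotopy on such a $C$ need not agree with $G'$ on $\partial C$. You should simply take $N=\partial\calL_0$.
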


\begin{proof} Lemmas~\ref{homotimplieshomol} and~\ref{ppsmodefo1----} imply that any polycycle  in $M$ is homologous to a smooth polycycle.
This proves the surjectivity of the map in the statement of the theorem.
To prove the injectivity it suffices to show that, for any  homologous  reduced smooth $n$-polychains $\calK_1$, $ \calK_2 $ in $M$  there are
smooth $(n+1)$-polychains $\calR'_1,\calR'_2$ in $M$ such that   $   \calK_1 \sqcup \partial^r \calR'_1   \cong  \calK_2 \sqcup \partial^r \calR'_2$.
By assumption,   there are  $(n+1)$-polychains $\calR_1,\calR_2$   in $M$
and a diffeomorphism    $f:   \calK_1 \sqcup \partial^r \calR_1    \to   \calK_2 \sqcup \partial^r \calR_2$.
  For $i=1,2$,   set $(P_i, \varphi_i, u_i, \kappa_i) = \calK_i \sqcup \partial^r \calR_i$
  and let $  K_i  \subset P_i$ be the union of the components  of $P_i$ underlying~$\calK_i$.
     Lemma~\ref{ppsmodefo1----} yields a homotopy $\{\kappa_1^t:P_1\to M\}_{t }$ of $\kappa_1^0=\kappa_1 $
     to a smooth map $\kappa^1_1 $  in the class of maps   compatible with the partition~$\varphi_1$,
       which is constant     on all faces  of $P_1$ on which   $\kappa_1   $  is smooth. In particular, this homotopy   is constant on    $K_1$.
       Consider the homotopy $\{\kappa_2^t=\kappa_1^t f^{-1}:P_2\to M \}_{t }$ of $\kappa_2^0=\kappa_2 $.
   This homotopy is compatible with the partition $\varphi_2$ and is constant on   $K_2$   (because $\kappa_1=\kappa_2f$ is smooth on $f^{-1}(  K_2 )$).
   Clearly,     $f:P_1\to P_2$   is a diffeomorphism of the smooth polychains $(P_1, \varphi_1, u_1, \kappa^1_1)$ and
$(P_2, \varphi_2, u_2, \kappa^1_2= \kappa_1^1 f^{-1})$.
For  $i=1,2$, the polychain  $(P_i, \varphi_i, u_i, \kappa^1_i)$ is obtained from $\calK_i \sqcup \partial^r \calR_i$ by a deformation
which is constant   on $\calK_i$   and transforms $\partial^r \calR_i$ into a smooth polychain.
 To finish the proof, we need only to show that this deformation of $\partial^r \calR_i$ extends to a deformation of $ \calR_i$ into a smooth polychain.
 This is done in 3 steps. First of all, applying Lemma~\ref{sssppsmodefo1----} to $\calK=\red_+ \partial \calR_i$
 and taking   for $N$  the union of all connected components of non-zero weight
 we obtain that our deformation of    $\partial^r \calR_i=\red_0\red_+ \partial \calR_i$   extends to a  deformation of $\red_+ \partial \calR_i$
 into a smooth polychain.    The latter deformation  induces   a deformation of $ \partial \calR_i$
 into a smooth polychain. One more application of Lemma~\ref{sssppsmodefo1----} allows us to  extend the latter deformation to a deformation of $  \calR_i$
 into a smooth polychain $\calR'_i$.  Then $   \calK_1 \sqcup \partial^r \calR'_1   \cong  \calK_2 \sqcup \partial^r \calR'_2$.     \end{proof}

\subsection{Polychains in path spaces}\label{Path homology}

Pick two points $\star , \star'$ in a smooth manifold~$M$ (possibly, $\star = \star'$   and $\partial M\neq \varnothing$ ).
 A    {\it path}  in $M$ from  $\star  $ to  $\star'  $ is a continuous map $I=[0,1]\to M$ carrying  $0 $ to~$\star$ and $1$ to $\star'$.
Let $\Omega=\Omega  (M,\star,\star')$ be the space of such paths   with   compact-open
     topology; we call  $\Omega$  \index{path space}{\it the path space of~$M$.}
     Note that a map $\sigma$ from a topological space $K$ to $\Omega  $
   is continuous if and only if the adjoint map   $\tilde \sigma: K\times I\to M$,
   carrying any pair $(k\in K, s\in I)$   to $\sigma (k)(s)\in M$ is continuous; see, for example,  \cite[Section 1.2.7]{FuR}.

   Given a subspace $X $ of $  \Omega $, we call a  map from a manifold with faces~$K$  to~$X$    
  \index{map!smooth} \emph{smooth} if the adjoint map $ K \times I  \to M$ is smooth  in the sense of Section~\ref{smoothmaps}.
   A  polychain   $\calK =(K, \varphi, u , \kappa)$ in   $X$   is   \index{polychain!smooth} {\it smooth} if  $\kappa:  K    \to    X  $   is smooth.
The definitions of  Section~\ref{facehomology}  restricted to smooth polycycles and smooth polychains in $X$,
     yield  the  \index{face homology!smooth} \emph{smooth face homology} $\widetilde {H}^s_\ast(X)$ of~$X$. In the next theorem,    $X=\Omega$.

\begin{theor}\label{wwdefo1----}
The natural linear map  $\widetilde {H}^s_\ast(\Omega)\to \widetilde {H}_\ast(\Omega)$ is an isomorphism.
\end{theor}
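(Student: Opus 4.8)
The plan is to deduce this from Theorem~\ref{++smoothdefo1----} by exploiting the exponential adjunction between maps $K\to\Omega$ and maps $K\times I\to M$. Recall from Section~\ref{Path homology} that a polychain $\calK=(K,\varphi,u,\kappa)$ in $\Omega$ is smooth precisely when the adjoint map $\tilde\kappa:K\times I\to M$ is smooth, where $K\times I$ is viewed as a manifold with faces and $M$ as a smooth manifold. Three further remarks are needed. First, $\tilde\kappa(k,0)=\star$ and $\tilde\kappa(k,1)=\star'$ for all $k\in K$, so $\tilde\kappa$ is automatically smooth on the faces $F\times\{0\}$ and $F\times\{1\}$ of $K\times I$, for every face $F$ of $K$. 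Second, a deformation $\{\kappa^t:K\to\Omega\}_{t\in I}$ of $\kappa$ corresponds exactly to a homotopy $\{\widetilde{\kappa^t}:K\times I\to M\}_{t\in I}$ of $\tilde\kappa$ that is constant on $K\times\{0\}$ and on $K\times\{1\}$, and such a homotopy is compatible with $\varphi$ if and only if the corresponding homotopy of $\tilde\kappa$ is compatible with the cylinder partition $\overline\varphi$ of Section~\ref{facehomology} on $K\times I$. Third, $\kappa$ is smooth on a face $F$ of $K$ if and only if $\tilde\kappa$ is smooth on the face $F\times I$ of $K\times I$.

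With these remarks, I would first establish the following path-space analogue of Lemma~\ref{sssppsmodefo1----}: given a polychain $\calK=(K,\varphi,u,\kappa)$ in $\Omega$, a union $N$ of faces of $K$, and a homotopy $\{(\kappa|_N)^t:N\to\Omega\}_{t\in I}$ compatible with $\varphi$ with $(\kappa|_N)^1$ smooth, there is a deformation $\{\calK^t=(K,\varphi,u,\kappa^t)\}_{t\in I}$ of $\calK^0=\calK$ with $\calK^1$ smooth and $\kappa^t|_N=(\kappa|_N)^t$ for all $t$. To prove it, pass to the adjoint polychain $(K\times I,\overline\varphi,\overline u,\tilde\kappa)$ in $M$, let $N'=(N\times I)\cup(K\times\{0\})\cup(K\times\{1\})$, which is a union of faces of $K\times I$, and transport the given homotopy on $N$, together with the constant homotopy on the two ends $K\times\{0\}$ and $K\times\{1\}$, to a homotopy of $\tilde\kappa|_{N'}$ that is compatible with $\overline\varphi$ and smooth at time $1$. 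Lemma~\ref{sssppsmodefo1----}, applied to $M$, $K\times I$, $\overline\varphi$ and $N'$, then produces a deformation of $\tilde\kappa$ to a smooth map through $\overline\varphi$-compatible maps which restrict to the prescribed homotopy on $N'$; since $N'\supset K\times\partial I$, this deformation is constant on $K\times\{0\}$ and $K\times\{1\}$, hence is the adjoint of the desired deformation of $\kappa$ in $\Omega$. In particular, taking $N$ to be the union of the faces of $K$ on which $\kappa$ is smooth and the constant homotopy on $N$ gives the $\Omega$-analogue of Lemma~\ref{ppsmodefo1----}.

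Granting these lemmas, the proof of Theorem~\ref{wwdefo1----} follows the proof of Theorem~\ref{++smoothdefo1----} word for word, with $\Omega$ in place of $M$: Lemma~\ref{homotimplieshomol} together with the $\Omega$-analogue of Lemma~\ref{ppsmodefo1----} shows every polycycle in $\Omega$ is homologous to a smooth one, giving surjectivity; and for injectivity, starting from homologous reduced smooth $n$-polychains $\calK_1,\calK_2$ in $\Omega$, $(n+1)$-polychains $\calR_1,\calR_2$, and a diffeomorphism $f:\calK_1\sqcup\partial^r\calR_1\to\calK_2\sqcup\partial^r\calR_2$, the same sequence of applications of the $\Omega$-analogues of Lemmas~\ref{sssppsmodefo1----} and~\ref{ppsmodefo1----} deforms $\calR_1,\calR_2$ into smooth polychains $\calR'_1,\calR'_2$ while keeping $\calK_1,\calK_2$ and $f$ fixed, so that $\calK_1\sqcup\partial^r\calR'_1\cong\calK_2\sqcup\partial^r\calR'_2$. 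I expect the only real work to be in the bookkeeping of the adjunction — checking that ``$\overline\varphi$-compatible'' matches ``$\varphi$-compatible'', that $N'$ is genuinely a union of faces of $K\times I$, and that constancy on $K\times\partial I$ makes the homotopies of $\tilde\kappa$ correspond to genuine deformations inside $\Omega$ — after which no geometric input beyond Lemmas~\ref{smo1}--\ref{sssppsmodefo1----} and Theorem~\ref{++smoothdefo1----} is needed.
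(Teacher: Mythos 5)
Your proof is correct and follows the same basic route as the paper: both use the exponential adjunction between maps $K\to\Omega$ and maps $K\times I\to M$ to reduce the $\Omega$-case to the already-established $M$-case, and then rerun the argument of Theorem~\ref{++smoothdefo1----} with $\Omega$ in place of $M$. The only difference is where you apply the adjunction. The paper re-derives the $\Omega$-analogue of Lemma~\ref{smo1} (smooth extension of a map on $\partial K$ to a collar) by passing to $\partial(K\times I)$, and then asserts that the proofs of Lemmas~\ref{smo2}--\ref{ppsmodefo1----} go through verbatim with this new extension lemma in place of Lemma~\ref{smo1}. You instead transfer the stronger statement, Lemma~\ref{sssppsmodefo1----}, in a single step by applying its $M$-version to the adjoint polychain $(K\times I,\overline\varphi,\overline u,\tilde\kappa)$ with the enlarged set $N'=(N\times I)\cup(K\times\{0\})\cup(K\times\{1\})$. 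The observation you correctly isolate — that including $K\times\partial I$ in $N'$ with the constant homotopy forces the resulting deformation of $\tilde\kappa$ to be constant on both ends, hence to be adjoint to a genuine deformation in $\Omega$, and that $\overline\varphi$-compatibility of $\tilde\kappa$ is equivalent to $\varphi$-compatibility of $\kappa$ — is exactly what makes the reduction work. Your version is marginally more economical since it transfers one statement rather than rerunning a chain of proofs, but the geometric content and the use of the adjunction are the same.
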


\begin{proof} We follow the  lines of Section~\ref{smoothmaps} with $M$ replaced by $\Omega$. First, we show that given a manifold with faces $K$, any   smooth map
   $ f:  \partial K \to  \Omega$    extends to a smooth map from a neighborhood of $\partial K$ in $K$ to $  \Omega$.
   Indeed,   the adjoint map $  \tilde f :\partial K \times I\to M$   extends to a smooth map $ \partial (K\times I)\to M$
   by sending  $K\times \{0\} $ to~$\star$ and $K \times \{1\}$ to $\star'$.
   By Lemma~\ref{smo1}, the latter map extends to a smooth map  $  \overline f :U\to M$ for  some neighborhood $U$ of $\partial (K\times I)$ in $ K\times I$.
   Clearly,  $U\supset   V \times I  $ for a neighborhood $V$ of $\partial K$ in $K$.
   The map   $\overline f\vert_{V \times I}$   is adjoint to a smooth map $V\to \Omega$ extending $f$.

     Lemmas~\ref{smo2}--\ref{ppsmodefo1----} remain true with $M$   replaced       by $\Omega$.
     The  proofs   above apply  with the only difference that Lemma~\ref{smo1}  should be replaced by the result of the previous paragraph.
     The proof of Theorem~\ref{++smoothdefo1----} also works with $M$ replaced by $\Omega$. This gives the desired result.   \end{proof}

 In the   case where   $\star,\star' \in \partial M$, the path space $ \Omega=\Omega(M, \star, \star') $ is homotopy equivalent to a  smaller space.  Let
  $ \Omega^\circ=\Omega^\circ(M, \star, \star')$ be the subspace of $ \Omega$       consisting of
all   paths $\alpha: I \to M$ from $\star$ to~$\star'$ such that $\alpha^{-1}(\partial M) = \partial I$.
 We  call $\Omega^\circ$  \index{path space!proper}{\it the proper path space} of $(M, \star, \star')$. 

\begin{lemma}\label{loopsp}   The inclusion map $\Omega^\circ \hookrightarrow \Omega$ is a homotopy equivalence.
\end{lemma}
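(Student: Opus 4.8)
The plan is to exhibit an explicit homotopy inverse to the inclusion $i\colon \Omega^\circ\hookrightarrow\Omega$ built from a collar of $\partial M$. First I would fix a collar, i.e. an open embedding $c\colon \partial M\times[0,1)\hookrightarrow M$ with $c(x,0)=x$; write $V=c(\partial M\times[0,1))$ and let $\pi\colon V\to\partial M$, $\rho\colon V\to[0,1)$ be the two coordinate projections. Using $c$ I would construct a "push--into--the--interior" map $P\colon M\times[0,\tfrac14]\to M$ with the properties: $P(-,0)=\id_M$; $P$ is continuous; and $P(-,\delta)$ maps all of $M$ into $\Int M$ for every $\delta>0$. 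Concretely, picking $\eta\colon[0,1)\to[0,1]$ with $\eta\equiv 1$ near $0$ and $\eta\equiv 0$ on $[\tfrac12,1)$, setting $\beta=\eta\circ\rho$ on $V$ and $\beta=0$ off $V$, one may take $P(y,\delta)=c\big(\pi(y),\rho(y)+\delta\beta(y)\big)$ for $y\in c(\partial M\times[0,\tfrac12))$ and $P(y,\delta)=y$ otherwise; the three properties are then checked directly (for $y\in\partial M$ one has $\beta(y)=1$, so $P(y,\delta)=c(\pi(y),\delta)\in\Int M$).

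Next I would define $F\colon\Omega\times I\to\Omega$ by $F(\alpha,\lambda)(s)=P\big(\alpha(s),\lambda\,w(s)\big)$, where $w\colon I\to[0,\tfrac14]$ is a fixed continuous weight with $w>0$ on $(0,1)$ and $w(0)=w(1)=0$ (e.g. $w(s)=s(1-s)$). The vanishing of $w$ at $0$ and $1$ gives $F(\alpha,\lambda)(0)=\star$ and $F(\alpha,\lambda)(1)=\star'$, so $F$ indeed takes values in $\Omega$; continuity of $F$ follows from continuity of $P$, continuity of the evaluation $\Omega\times I\to M$, and the exponential law for maps into path spaces recalled in Section~\ref{Path homology}. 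By construction $F(-,0)=\id_\Omega$; since $w(s)>0$ for $0<s<1$ and $P(-,\delta)$ maps into $\Int M$ for $\delta>0$, the map $r:=F(-,1)$ lands in $\Omega^\circ$; and for $\alpha\in\Omega^\circ$ one checks $F(\alpha,\lambda)\in\Omega^\circ$ for all $\lambda$ (interior points are sent to interior points, endpoints stay at $\star,\star'$). Hence $i\circ r=F(-,1)\simeq F(-,0)=\id_\Omega$ via $F$, and $r\circ i=F(-,1)|_{\Omega^\circ}\simeq\id_{\Omega^\circ}$ via $F|_{\Omega^\circ\times I}$, so $i$ is a homotopy equivalence.

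The step I expect to require the most care is the continuity of $P$ — equivalently, that its two cases glue continuously along the inner edge $\rho=\tfrac12$ of the collar. This is automatic when $\partial M$ is compact, but for a general (possibly non-compact) $\partial M$ one should first replace $c$ by a reparametrized collar for which $c(\partial M\times[0,\tfrac12])$ is closed in $M$: for an open shrink $V_1$ of $V$ with $\partial M\subseteq V_1\subseteq\overline{V_1}\subseteq V$, the function $\mu(x)=\sup\{t:\{x\}\times[0,t]\subseteq V_1\}$ is positive and lower semicontinuous (tube lemma), so one may rescale the collar parameter below a continuous positive function dominated by $\mu$; with this choice $\beta$ is continuous on all of $M$ and the verifications above go through verbatim. (Alternatively, one may first replace $M$ by the diffeomorphic manifold $\widehat M=M\cup_{\partial M}(\partial M\times[-1,0])$, in which $\partial\widehat M=\partial M\times\{-1\}$ has the genuine product collar $\partial M\times[-1,-\tfrac12)$ and $\star,\star'$ become interior points, avoiding the reparametrization; I would present whichever version reads more cleanly.)
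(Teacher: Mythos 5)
Your proof is correct and follows essentially the same approach as the paper's. The paper constructs a two-parameter family of embeddings $\{F_{s,t}\colon M\hookrightarrow M\}_{s,t\in I}$ with $F_{s,t}=\id_M$ for $s\in\{0,1\}$ or $t=0$ and $F_{s,t}(M)\subset\Int M$ otherwise, then sets $f_t(\alpha)(s)=F_{s,t}(\alpha(s))$; you achieve the same effect by composing a one-parameter "push inward" map $P(\cdot,\delta)$ with a weight $w(s)$ vanishing at the endpoints, so $F(\alpha,\lambda)(s)=P(\alpha(s),\lambda w(s))$ plays the role of the paper's $f_\lambda$. Both arguments then invoke the elementary fact that a homotopy of $\id_X$ with image in the subspace for all positive times yields a homotopy inverse to the inclusion. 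Your extra paragraph on non-compact $\partial M$ addresses a continuity subtlety that the paper leaves implicit in the phrase "smooth family of embeddings," but it does not change the underlying strategy.
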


\begin{proof}  We begin with an    observation in set-theoretic
topology. Consider a topological pair $Y \subset X$ and suppose that
there is a homotopy $\{f_t: X\to X\}_{t\in I }$ of the identity map
$f_0=\id_X$ such that $f_t(X)\subset Y$ for all $t>0$. Then the
inclusion $\iota :Y \hookrightarrow X$ is a homotopy equivalence and
its homotopy inverse $g :X\to Y$ is obtained from $f_1 $ by reducing
the image to $Y$. Indeed,     the family $\{f_t: X\to X\}_{t }$ is a
homotopy between $ f_0=\id_{X }$ and $ f_1=\iota g $. Since $f_t
\iota(Y)\subset Y$ for all  $t\in I$, we have the family  of maps $\{f_t \iota: Y\to Y\}_{t }$. This   is a homotopy between $\id_{Y}$ and $g\iota $.

Using a tubular neighborhood of $\partial M$ in $M$, we can easily construct a    (smooth)    family of
embeddings   $\{F_{s,t}:M\hookrightarrow M\}_{s,t\in I}$  such  that $F_{s,t} =\id_M$ if
$s\in \{0,1\}$ or $t=0$, and $F_{s,t} (M)\subset \Int M=M\setminus
\partial M$ for all other pairs   $(s,t)$.   Given $t\in I$ and a path  $\alpha: I \to M$ from $\star$ to $\star'$, we define a path $ \alpha_t:I\to
M$ by $ \alpha_t (s)= F_{s,t} (\alpha (s))$ for all $s\in I$.   This gives a family of paths $\{\alpha_t\}_{t\in I}$
such that $\alpha_0=\alpha$ and $\alpha_t\in \Omega^\circ$ for  all $t>0$.
The formula $f_t(\alpha)=\alpha_t$ defines a homotopy $\{f_t: \Omega \to \Omega \}_{t\in I}$ of   $f_0=\id_\Omega$ such that
$f_t(\Omega)\subset \Omega^\circ$ for all $t>0$. Now, the result of
the previous paragraph implies that the inclusion   $\Omega^\circ \hookrightarrow \Omega$ is a homotopy equivalence.
\end{proof}

Lemma~\ref{loopsp} implies that $\widetilde {H}_\ast(\Omega^\circ)   \simeq   \widetilde {H}_\ast(\Omega)$.
The following theorem  computes the face homology of $\Omega^\circ$  and    $\Omega$    in terms of smooth polychains in $\Omega^\circ$.

\begin{theor}\label{loopsp+++++}   The natural linear map  $\widetilde {H}^s_\ast(\Omega^\circ)\to \widetilde {H}_\ast(\Omega^\circ)$ is an isomorphism.
\end{theor}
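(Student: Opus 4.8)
The plan is to avoid redoing the smoothing analysis of Lemmas~\ref{smo1}--\ref{ppsmodefo1----} directly inside $\Omega^\circ$ --- which is delicate, since a careless smoothing of a polychain valued in $\Omega^\circ$ could destroy the defining condition $\alpha^{-1}(\partial M)=\partial I$ --- and instead to reduce to the already established isomorphism $\widetilde H^s_\ast(\Omega)\xrightarrow{\sim}\widetilde H_\ast(\Omega)$ of Theorem~\ref{wwdefo1----} by means of the retraction built in the proof of Lemma~\ref{loopsp}. Recall from that proof the smooth family of embeddings $\{F_{s,t}\colon M\hookrightarrow M\}_{s,t\in I}$ with $F_{s,t}=\id_M$ whenever $s\in\{0,1\}$ or $t=0$ and $F_{s,t}(M)\subset\Int M$ otherwise, and the induced homotopy $\{f_t\colon\Omega\to\Omega\}_{t\in I}$, $f_t(\alpha)(s)=F_{s,t}(\alpha(s))$, with $f_0=\id_\Omega$, $f_t(\Omega)\subset\Omega^\circ$ for $t>0$, and $f_t(\Omega^\circ)\subset\Omega^\circ$ for all $t$. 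Write $\iota\colon\Omega^\circ\hookrightarrow\Omega$ for the inclusion and $r=f_1\colon\Omega\to\Omega^\circ$ for the resulting retraction, so that $\iota\circ r=f_1$ as a self-map of $\Omega$ and $r\circ\iota=f_1|_{\Omega^\circ}$.

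First I would check that $\iota$, $r$ and the homotopies $\{f_t\}$, $\{f_t|_{\Omega^\circ}\}$ are compatible with smooth polychains. If $\kappa\colon K\to\Omega$ is a smooth map of a manifold with faces, i.e. the adjoint $\tilde\kappa\colon K\times I\to M$ is smooth, then the adjoint of $r\circ\kappa$ is $(k,s)\mapsto F_{s,1}(\tilde\kappa(k,s))$, which is smooth because $(x,s,t)\mapsto F_{s,t}(x)$ is jointly smooth; hence $r\circ\kappa\colon K\to\Omega^\circ$ is again smooth, while $\iota\circ\kappa$ is literally $\kappa$. Thus post-composition with $\iota$ and with $r$ carries smooth polychains to smooth polychains, commutes with reduction, boundary and disjoint union (these operations do not involve the reference map beyond restriction), and preserves the homology relation of Section~\ref{facehomology}; so $\iota$ and $r$ induce linear maps $\iota^s_\ast\colon\widetilde H^s_\ast(\Omega^\circ)\to\widetilde H^s_\ast(\Omega)$ and $r^s_\ast\colon\widetilde H^s_\ast(\Omega)\to\widetilde H^s_\ast(\Omega^\circ)$. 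Likewise, for a smooth polychain map $\kappa$ the adjoint of $(k,s,t)\mapsto f_t(\kappa(k))(s)=F_{s,t}(\tilde\kappa(k,s))$ is smooth in $(k,s,t)$, so $\{f_t\circ\kappa\}_{t\in I}$ is a deformation through smooth polychains in the sense of Section~\ref{homotopy_singular}; consequently Lemmas~\ref{homotimplieshomol} and~\ref{homotimplieshomol+} and their proofs apply verbatim in smooth face homology for homotopies of this type.

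I would then conclude as follows. Since $\iota\circ r=f_1$ is smoothly homotopic to $\id_\Omega$ through $\{f_t\}$, the smooth version of Lemma~\ref{homotimplieshomol+} gives $\iota^s_\ast\circ r^s_\ast=\id$ on $\widetilde H^s_\ast(\Omega)$; since $r\circ\iota=f_1|_{\Omega^\circ}$ is smoothly homotopic to $\id_{\Omega^\circ}$ through $\{f_t|_{\Omega^\circ}\}$, we get $r^s_\ast\circ\iota^s_\ast=\id$ on $\widetilde H^s_\ast(\Omega^\circ)$, so $\iota^s_\ast$ and $r^s_\ast$ are mutually inverse isomorphisms. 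The same argument at the level of ordinary face homology (Lemma~\ref{loopsp} together with Lemma~\ref{homotimplieshomol+}) shows that $\iota_\ast\colon\widetilde H_\ast(\Omega^\circ)\to\widetilde H_\ast(\Omega)$ and $r_\ast$ are mutually inverse isomorphisms. Let $c$ (resp. $c'$) denote the natural map $\widetilde H^s_\ast(\Omega^\circ)\to\widetilde H_\ast(\Omega^\circ)$ (resp. $\widetilde H^s_\ast(\Omega)\to\widetilde H_\ast(\Omega)$). Naturality of these comparison maps --- each merely views a smooth polycycle as a polycycle, and $\iota$ is an inclusion --- yields $c'\circ\iota^s_\ast=\iota_\ast\circ c$, hence $c=r_\ast\circ c'\circ\iota^s_\ast$. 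This exhibits $c$ as a composite of three isomorphisms ($r_\ast$, then $c'$ which is an isomorphism by Theorem~\ref{wwdefo1----}, then $\iota^s_\ast$), so $c$ is an isomorphism, which is the assertion.

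The main obstacle here is conceptual rather than technical: one must resist re-running the smoothing machinery for $\Omega^\circ$ and instead observe that the deformation retraction of $\Omega$ onto $\Omega^\circ$ of Lemma~\ref{loopsp} is implemented by a smooth family of self-embeddings of $M$, and therefore operates equally well on smooth polychains. The only real verification needed is that post-composition with $f_t$ and with $r$ preserves smoothness jointly in all parameters --- which is immediate from the joint smoothness of $(x,s,t)\mapsto F_{s,t}(x)$ --- and that it is compatible with the partition, weight, boundary, reduction and disjoint-union operations on polychains, which is clear since it only alters the reference map of a polychain.
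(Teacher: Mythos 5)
Your proposal is correct and follows essentially the same route as the paper: both proofs reduce Theorem~\ref{loopsp+++++} to Theorem~\ref{wwdefo1----} by means of the retraction $f_1$ from Lemma~\ref{loopsp} and the key observation that the homotopy $\{f_t\}$ is implemented by a jointly smooth family $F_{s,t}$ of embeddings of $M$, hence preserves smoothness of polychains. The paper verifies surjectivity and injectivity of the comparison map directly by transporting polycycles and polychains along $f_1$, while you repackage the same data as the factorization $c = r_\ast \circ c' \circ \iota^s_\ast$ through mutually inverse isomorphisms; this is a cleaner presentation of the logical structure but not a genuinely different argument.
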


\begin{proof}   Consider the homotopy $\{f_t \}_{t\in I}$ of   $f_0=\id_\Omega$  introduced in the proof of   Lemma  \ref{loopsp}
and set $f=f_1:\Omega\to \Omega^\circ\subset \Omega$.
For any manifold with faces $L$ and for any map $\lambda:L \to \Omega$,
the adjoint map $\widetilde{f_t\lambda}: L \times I \to M$ of $f_t\lambda$ is given by
$$
\widetilde{f_t \lambda}(l,s)= f_t(\lambda(l))(s) =\lambda(l)_t(s) = F_{s,t}(\lambda(l)(s)) = F_{s,t} \big(\widetilde{\lambda}(l,s)\big).
$$
Consequently,  if $\lambda$ is smooth, then    $f_t\lambda:L \to \Omega$ is smooth for all $t\in I$.
We conclude that  for any smooth polychain $\calL$ in $\Omega$, the polychain  $f_*( \calL)$    in $\Omega^\circ$ is    smooth.

 By the surjectivity of the map in Theorem~\ref{wwdefo1----},   any polycycle $\calK$ in $\Omega^\circ$ is homologous in $\Omega$
  to a smooth polycycle~$\calK'$ in $\Omega$. Applying $f$, we obtain that   $f_*(\calK)$   is homologous in $\Omega^\circ$ to the smooth polycycle  $f_*(\calK')$.
  The homotopy $\{f_t\}_{t }$ induces a deformation  of $\calK$ into   $f_*(\calK)$   in $\Omega^\circ$.
  Therefore $\calK$ is homologous to    $f_*(\calK)$   in $\Omega^\circ$.
  Thus, $\calK$ is homologous to   $f_*(\calK')$   in $\Omega^\circ$.
  This proves the surjectivity of the    natural   map  $\widetilde {H}^s_\ast(\Omega^\circ)\to \widetilde {H}_\ast(\Omega^\circ)$.
To  prove the injectivity, consider two reduced smooth $n$-polycycles   $\calK_1$, $ \calK_2 $ in~$\Omega^\circ$   that are homologous in $\Omega^\circ$.
Then they  are homologous in $\Omega $. By the injectivity   of the map   in Theorem~\ref{wwdefo1----}, there are
smooth  $(n+1)$-polychains $\calL_1,\calL_2$ in $\Omega $  such that   $   \calK_1 \sqcup \partial^r \calL_1   \cong  \calK_2 \sqcup \partial^r \calL_2$.
Applying $f$ we obtain     $$f_*(\calK_1) \sqcup \partial^r (f_*(\calL_1))   \cong  f_*(\calK_2) \sqcup \partial^r (f_*(\calL_2))$$   where
  $ f_*(\calK_1), f_*(\calL_1), f_*(\calK_2),  f_*(\calL_2)$   are smooth polychains in~$\Omega^\circ$.
So,   $$\lb f_*(\calK_1) \rb =  \lb f_*(\calK_2) \rb \in \widetilde H_n^s(\Omega^\circ).$$
The homotopy $\{f_t  \}_{t }$ induces a smooth deformation of $\calK_i$ into   $f_*(\calK_i)$
and therefore  $\lb  \calK_i \rb = \lb f_*(\calK_i) \rb  \in \widetilde H_n^s(\Omega^\circ)$    for $i=1,2$.
 Hence $\lb \calK_1 \rb =  \lb \calK_2\rb \in \widetilde H_n^s(\Omega^\circ)$.
This completes the proof of the injectivity  of the   natural   map  $\widetilde {H}^s_\ast(\Omega^\circ)\to \widetilde {H}_\ast(\Omega^\circ)$   and of the theorem.
\end{proof}

\chapter {Operations on polychains} \label{Operations on polychains}

  Throughout this chapter, $M$ is 
  an oriented  smooth $n$-dimensional manifold  with boundary,  where $n\geq 2$.
We fix  points  $\star_1,\star_2,\star_3,\star_4 \in \partial M$ and assume,  
unless explicitly stated to the contrary,  
  that $\{\star_1,\star_2\}\cap\{\star_3,\star_4\}=\varnothing$ (possibly,  $\star_1=\star_2$ and/or $\star_3=\star_4$). 
  For   $i,j\in \{1,2,3,4\}$,   let   $\Omega_{ij} =\Omega(M,\star_i,\star_j)$ be the path space 
  and   $ \Omega^\circ_{ij} =\Omega^\circ (M,\star_i,\star_j)\subset \Omega_{ij}$  be the proper path space of  $ (M,\star_i,\star_j)$.

\section{Transversality  in  path spaces}\label{sect-transs}

In this section, we study  transversality of   polychains in the proper path spaces $\Omega^\circ_{12}$ and $\Omega^\circ_{34}$.

  \subsection{Transversal  maps}\label{transs}
  
  The  \index{diagonal} {\it     diagonal} of~$M$
   $$
  \diag_M=  \{(x,x)\, \vert\,  x\in M\}   \subset M\times M
   $$   is a smooth manifold diffeomorphic to $M$. 
   We say that a smooth map~$g$ from  a manifold with faces $N$
    to $M \times M$ is  \index{map!weakly transversal} {\it   weakly   transversal  to $ \diag_M$} 
    if $g(N)$ does not meet $\partial (\diag_M) $ 
     and the restriction of $g$ to  $\Int(N)=N\setminus \partial N$ is transversal to $\Int (\diag_M)$ in the usual sense  of differential topology.
    (The interiors of   $N$ and $ \diag_M $  are smooth manifolds   so   this  condition   makes sense.)
    The map~$g$ is  \index{map!transversal to the diagonal}{\it transversal  to  $ \diag_M$}  
     if its restriction to any face of $N$ is weakly transversal to $ \diag_M$.

Fix   manifolds with faces $K$ and $L$.
Consider smooth maps $\kappa: K  \to    \Omega^\circ_{12}   $,  $\lambda:L \to   \Omega^\circ_{34}   $
and let $\tilde \kappa   : K   \times I  \to M$, $ \tilde \lambda:  L \times I \to M$ be the adjoint maps.
The product  map
$$
\tilde \kappa   \times \tilde \lambda: K   \times I\times L \times I \to M\times M
$$
carries a tuple $(k\in K,  s\in I, l\in L,  t \in I)$   to   the point  $(\kappa(k) (s), \lambda(l) (t))$. The  latter point can lie on $\diag_M$
   only when  $ s,t \in   \Int(I)=(0,1)$   and  never lies  in   $\partial (\diag_M)  $.
We say that  $\kappa$ and $ \lambda$ are \index{map!transversal} {\it transversal}  
 if the map $\tilde \kappa   \times \tilde \lambda$ is transversal to $ \diag_M $ in the  sense above. 
Note that the maps $\kappa$ and $ \lambda$ are  transversal  in our sense 
 if and only if they are transversal in  the sense of  \cite{MrOd},   see Proposition~7.2.2 therein.
(Our  notion of transversality is stronger than the  one   in \cite{Jo},  see Remark~6.3 therein.)
If~$\kappa  $ and~$\lambda  $  are transversal, then their restrictions to arbitrary faces of $K$, $L$ are transversal.
 Clearly, smooth  homotopies of~$\kappa$ and $\lambda$ that are sufficiently $C^1$-small preserve transversality.  

\begin{lemma}\label{defo1}
For any smooth maps $\kappa: K \to    \Omega^\circ_{12}   $ and
  $\lambda:L \to   \Omega^\circ_{34}    $,
there is  an  arbitrarily $C^\infty$-small smooth homotopy   $\{\kappa^t\}_{t\in I}$ of $\kappa^0=\kappa$ such that   $\kappa^1$ and $\lambda$ are  transversal.
\end{lemma}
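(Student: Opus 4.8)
The plan is to reduce Lemma~\ref{defo1} to the classical parametric transversality theorem (Thom--Abraham) applied to a suitable space of smooth perturbations of $\kappa$. The crucial point is that we are not allowed to touch the endpoints $\tilde\kappa(k,0)=\star_1$ and $\tilde\kappa(k,1)=\star_2$, nor to leave $\Omega^\circ_{12}$, so the perturbations must be supported away from $\partial M$ and from the ends of the interval. Concretely, fix a Riemannian metric on $M$, a collar $\partial M \times [0,1) \hookrightarrow M$, and a smooth cutoff function $\chi: M \to [0,1]$ which vanishes on a neighborhood of $\partial M$ and equals $1$ outside a slightly larger neighborhood; similarly fix a smooth cutoff $\rho: I \to [0,1]$ which vanishes near $\{0,1\}$ and equals $1$ on $[\varepsilon_0, 1-\varepsilon_0]$. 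First I would record the elementary fact, already essentially contained in Section~\ref{transs}, that the point $(\kappa(k)(s),\lambda(l)(t))$ can lie on $\diag_M$ only when $s,t\in(0,1)$ and that at such a point $\kappa(k)(s)=\lambda(l)(t)$ lies in $\Int M$; hence only the behaviour of $\tilde\kappa$ over $\Int(I)$ and over $\Int M$ is relevant to transversality with $\diag_M$.

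Next I would build the perturbation family. Using the exponential map of $M$, for a smooth vector field $v$ on $M$ supported in $\chi^{-1}(1)$ (so in particular away from $\partial M$) and a real parameter $a$ in a small ball $B\subset\RR^m$-worth of such vector fields $v_1,\dots,v_m$, define
$$
\kappa^{(a)}(k)(s) \;=\; \exp_{\kappa(k)(s)}\!\big(\rho(s)\,\textstyle\sum_{i} a_i\, v_i(\kappa(k)(s))\big).
$$
Because $\rho$ vanishes near $0,1$ and the $v_i$ vanish near $\partial M$, each $\kappa^{(a)}$ still sends $0\mapsto\star_1$, $1\mapsto\star_2$ and satisfies $\kappa^{(a)}(k)^{-1}(\partial M)=\partial I$, so $\kappa^{(a)}$ is a smooth map $K\to\Omega^\circ_{12}$; moreover $\kappa^{(0)}=\kappa$ and, for $a$ small, $t\mapsto\kappa^{(ta)}$ is an arbitrarily $C^\infty$-small smooth homotopy from $\kappa$ to $\kappa^{(a)}$. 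The key analytic input is that, choosing the $v_i$ to span $T_xM$ at every point of a compact neighborhood of the relevant part of $M$, the evaluation map $(k,s,a)\mapsto\kappa^{(a)}(k)(s)$ is a submersion at every $(k,s,a)$ with $s\in(0,1)$ and $\kappa(k)(s)\in\Int M$ — this is where $\chi^{-1}(1)$ and $\rho=1$ on $[\varepsilon_0,1-\varepsilon_0]$ are used, together with a preliminary remark that transversality is unaffected by shrinking attention to $s\in[\varepsilon_0,1-\varepsilon_0]$.

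Then I would apply parametric transversality to the map $G: (K\times I)\times L\times I \times B \to M\times M$, $G(k,s,l,t,a)=(\kappa^{(a)}(k)(s),\lambda(l)(t))$, restricted to each face $F\times G'$ of $K\times L$ in turn. On the locus where $G$ meets $\diag_M$ we have $s,t\in(0,1)$ and the image point in $\Int M$, so by the submersivity above $G$ is transversal to $\diag_M$ there (the first factor already submerses onto $M$). The Thom--Abraham theorem then yields a residual, hence dense, set of parameters $a\in B$ for which the map $(k,s,l,t)\mapsto G(k,s,l,t,a)$ restricted to $\Int(F\times G')$ is transversal to $\Int(\diag_M)$; intersecting over the finitely many faces of $K$ and $L$ gives a dense set of $a$ making $\tilde\kappa^{(a)}\times\tilde\lambda$ transversal to $\diag_M$ on every face, i.e.\ $\kappa^{(a)}$ transversal to $\lambda$ in the sense of Section~\ref{transs}. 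Choosing such an $a$ with $|a|$ small produces the desired $C^\infty$-small homotopy $\{\kappa^{(ta)}\}_{t\in I}$.

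The main obstacle I anticipate is purely bookkeeping rather than conceptual: one must verify carefully that the perturbation $\kappa^{(a)}$ stays inside $\Omega^\circ_{12}$ for all small $a$ (not merely near $\partial I$ but globally: the condition $\kappa^{(a)}(k)(s)\notin\partial M$ for $s\in(0,1)$ must be maintained, which follows since the perturbation is $C^0$-small and $\kappa(k)((0,1))$ is a compact subset of $\Int M$ for each $k$, uniformly in $k$ by compactness of $K$), and that the submersivity and parametric-transversality arguments are applied face-by-face to the manifold-with-faces $K\times L$ rather than to a plain manifold — using that each face of $K\times L$ is again a compact manifold with faces and that the finite set of its faces can be handled simultaneously by taking a finite intersection of residual sets. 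One should also double-check that it suffices to perturb $\kappa$ alone (leaving $\lambda$ fixed), which is clear since submersivity is already achieved through the $\kappa$-variable.
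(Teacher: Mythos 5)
Your proposal takes a genuinely different route from the paper's. The paper works simplex-by-simplex: it fixes a smooth triangulation $T$ of $K\times I$, and inductively on the skeleta of $T$ it perturbs $\tilde\kappa$ on each simplex by a local translation inside a Euclidean ball, selecting a small translation vector making the restriction $\lambda$-transversal (adapting Laudenbach's argument), then interpolating with a cutoff vanishing near the boundary of the simplex. Your proposal instead builds a single global finite-dimensional perturbation family $a\mapsto\kappa^{(a)}$ via vector fields and the exponential map and applies the Thom--Abraham parametric transversality theorem face by face of $K\times L$. This is a perfectly standard and arguably cleaner strategy, and most of your bookkeeping (preservation of $\Omega^\circ_{12}$, compactness, finite intersection of residual sets) is handled correctly.

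There is, however, one genuine gap. You assert that, once the $v_i$ span $T_xM$ on the relevant compact set, ``the evaluation map $(k,s,a)\mapsto\kappa^{(a)}(k)(s)$ is a submersion at every $(k,s,a)$ with $s\in(0,1)$ and $\kappa(k)(s)\in\Int M$.'' This is false as stated: your cutoffs $\rho$ and $\chi$ annihilate the perturbation near $\partial I$ and near $\partial M$, so the derivative in $a$ vanishes wherever $\rho(s)=0$ or $\chi=0$, and those regions meet $\{s\in(0,1),\ \kappa(k)(s)\in\Int M\}$. You allude to the fix (``transversality is unaffected by shrinking attention to $s\in[\varepsilon_0,1-\varepsilon_0]$''), but that is exactly the nontrivial point and you do not prove it. What is needed is the observation --- made explicit in the paper --- that since $\{\star_1,\star_2\}\cap\{\star_3,\star_4\}=\varnothing$ and the adjoint maps send $\partial I$ into these disjoint sets, $\tilde\kappa(K\times\partial I)$ is disjoint from the compact set $\tilde\lambda(L\times I)$, hence by compactness there exists $\delta>0$ with $\tilde\kappa(K\times([0,\delta]\cup[1-\delta,1]))$ disjoint from $\tilde\lambda(L\times I)$; similarly, the intersection locus of $\tilde\kappa$ with $\tilde\lambda$ lies in a compact subset of $\Int M$. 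Only after fixing such a $\delta$ and a compact neighborhood in $\Int M$, and choosing $\rho\equiv 1$ on $[\delta,1-\delta]$ and $\chi\equiv 1$ on that neighborhood, does the submersivity hold along $G^{-1}(\diag_M)$ for all small $a$ (which is what parametric transversality requires). With this correction your argument goes through; as written it has a hole precisely where the hypothesis $\{\star_1,\star_2\}\cap\{\star_3,\star_4\}=\varnothing$ must be invoked.
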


\begin{proof}    Proceeding by induction on $\dim(L)\geq -1$, we can assume that $\kappa$
is transversal to the restrictions of $\lambda$ to all proper faces of~$L$.   All subsequent homotopies of $\kappa$ are chosen to be
  small enough to preserve   this property.
 Fix a smooth triangulation $T$ of     $K\times I$.   A map   from   a simplex $e$ of $T$ to $M$ is \index{map!smooth} {\it smooth} if it   is smooth as a singular simplex in $M$. A smooth map   $f:e\to M$
      is  \index{map!transversal}  {\it $\lambda$-transversal} if the map  $f \times \tilde \lambda: e \times L \times I \to M\times M$ is   weakly  transversal to $ \diag_M$.
  We call a map $g: K\times I\to M$ \index{map!good} {\it good} if
  the adjoint map  from
      $K$ to the space of paths in $M$ takes values in
    $   \Omega^\circ_{12}  =\Omega^\circ (M,\star_1,\star_2)$. The map $g$ is {\it $T$-good} if it  is good
and  the image of any simplex of $T$ under $g$ lies in  a  closed ball in~$M$.

     Consider the map  $  \tilde \kappa :K\times I \to M$     adjoint to  $  \kappa$.   Clearly,  $
\tilde \kappa$ is good.  Since $\{\star_1, \star_2\}\cap \{\star_3, \star_4\} =  \varnothing  $,   the set
  $\tilde \kappa (K \times \partial I)$ is disjoint from  $\tilde \lambda(L\times I)$.
By continuity, there is a small $\delta>0$ such that the sets
 $  \tilde \kappa (K \times  [0,\delta])$ and $ \tilde \kappa (K \times  [1-\delta, 1])$ are disjoint from  $ \tilde \lambda (L\times I) $.
 Subdividing $T$, we can  assume that   $\tilde \kappa  $ is $T$-good and   $T_\delta=K \times ([0,\delta]\cup [1-\delta, 1])$
is a subcomplex of~$T$.   For any simplex $e$ of $T_\delta$, the   map
        $ \tilde \kappa \vert_e $  is $\lambda$-transversal
  because        $ (\tilde \kappa   \times \tilde \lambda)(e\times L\times I) \cap \diag_M=  \varnothing   $.

Set $p=\dim (K)$. We   shall   construct    $p+2$     homotopies
$ \tilde \kappa= \kappa_{0 }   \leadsto  \kappa_{1 }   \leadsto   \cdots    \leadsto       \kappa_{   p+2 } $  in the class of $T$-good maps $K\times I\to   M$
such   that  for all ${r}\geq 0$ and any simplex $e$ of $T $  of dimension $\leq {r}-1 $, the    map  $\kappa_{{r} }\vert_e $   is $\lambda$-transversal.
         Then, the restriction of   $\kappa_{p+2}$   to   any simplex   of $T $ is $\lambda$-transversal.
   For each face $E$ of $K$, the product $E\times I$ is a subcomplex of $T$. Therefore,   the restriction
of $   \kappa_{ p+2 }   \times \tilde \lambda $ to $ E  \times I \times
 L  \times I $ is  weakly   transversal to $ \diag_M $. By   the beginning of the proof, the same holds when $L$ is replaced with any of its proper
faces.   This shows that  the smooth map $K \to    \Omega^\circ_{12}  $ determined by $\kappa_{p+2}$      is transversal to~$\lambda$.

 The homotopies $\tilde \kappa= \kappa_{0 }   \leadsto  \kappa_{1 }   \leadsto   \cdots$  are constructed recursively.
   For ${r}=0$, the condition on $\kappa_{r}$ is void, and we can take     $     \kappa_{0 }=\tilde
  \kappa$.
      Assume that we    have   required homotopies
      $ \kappa_{0 }  \leadsto   \kappa_{1 } \leadsto   \cdots
    \leadsto    \kappa_{{r}} $ for some ${r}\geq 0$.
Consider an   ${r}$-dimensional simplex $e\in T \setminus  T_\delta  $.
Clearly, $e\subset  K  \times \Int(I)$.  For   $\varepsilon >0$, let $U_\varepsilon $ denote the
(closed) metric    $\varepsilon$-neighborhood  of $\partial e$ in $ K  \times \Int(I)$.
  The inductive assumption  implies   that
\begin{quote}
 $(*)$ for a  sufficiently small $\varepsilon>0$,
the restriction of  the map  $\kappa_{{r} } \times \tilde \lambda$ to
$U_\varepsilon \times     L   \times  I  $ is   weakly   transversal to   $ \diag_M $.
\end{quote}
 Since   $\kappa_{r}$ is       good and   $e\subset  K  \times \Int(I)$, we have
      $
      \kappa_{{r} }(e)\subset \Int (M)$. Since $\kappa_{r}$ is $T$-good,  $
      \kappa_{{r} }(e)\subset B$ for a  closed ball $B \subset M$.   We  can choose $B$ so that $
      \kappa_{{r} }(e)\subset \Int (B)$. We identify $B$ with the closed unit ball in   Euclidean space with center $0$.
  Pick a  small
      neighborhood $S\subset B$   of $0\in B$ so  that $\kappa_{{r}
      }(e)+s\subset \Int (B)$ for all   $s\in S$. Consider   the
  smooth maps $\{f_s:e \to B\subset M\}_{s\in S}$  where   $f_s$
      carries
      any $u\in e$ to $\kappa_{{r}}(u) +s$.   It is obvious that the family of   maps
      $$\{f_s \times \tilde \lambda: e \times    L   \times  {I} \to M\times M\}_{s\in S}$$ is
       weakly   transversal to   $ \diag_M $  in the sense that the adjoint map
      $$e \times  L \times  {I}\times S \to M\times M$$  is   weakly   transversal to   $ \diag_M $.
         By the   classical  transversality  theorem  (see   \cite[Section 2.3]{GP}),  

\begin{quote}
$(**)$ the  set $S_\lambda=\{ s \in S:  \hbox{the   map $f_s \times
\tilde \lambda  $ is   weakly   transversal to $ \diag_M $}\}$ is dense   (and open)   in $S$.
\end{quote}
(This argument is adapted from that of  Laudenbach   \cite[Proof of Lemma~2.6]{La}.)
 In our terminology, $f_s$ is $\lambda$-transversal for any $s\in S_\lambda$.
For  each   $s \in S_\lambda $, we   define  a map $g_s:e \to B\subset M$ by $g_s(u)=\kappa_{r}(u)+ h(u)s $ for all $u\in e$
where  $h:e\to I$ is a smooth function carrying $e\cap U_{\varepsilon/2}$ to $0$
    and
      $e\setminus U_{ \varepsilon}$ to $1$. Then  $g_s=\kappa_{r}$ on  $e\cap U_{\varepsilon/2}$ and $g_s=f_s$ on
      $e\setminus U_\varepsilon$.
 We deduce from $(*)$ and $(**)$ that,
for all   sufficiently small $s\in S_\lambda$,   the
      map $g_s   $ is
      $\lambda$-transversal. Pick such an $s$ and consider the obvious linear  homotopy
    $\kappa_{r}\vert_e \leadsto g_s$   constant on $e\cap
    U_{\varepsilon/2}$. Combining such homotopies corresponding to
    all ${r}$-dimensional simplices   $e\in T \setminus  T_\delta  $ and
    extending by the  constant homotopy on $T_\delta$ we obtain a
    homotopy of   $\kappa_{r}$   on the union of $T_\delta$ with the ${r}$-skeleton of $T$. The latter homotopy
    extends
     to a homotopy   $\kappa_{r}   \leadsto  \kappa_{{r}+1}$ in the class of good maps
    $K\times I\to M$. Taking the vectors  $s$ in this construction small enough, we can always choose
     the homotopy   $\kappa_{r}   \leadsto  \kappa_{{r}+1}$
    so that it proceeds
      in the class of
      $T$-good maps. Since this homotopy   is   constant on $T_\delta  $
      and on the $({r}-1)$-th skeleton of $T$,   the $\lambda$-transversality of $\kappa_{r}$ on the simplices of $T$ of dimension $<{r}$ acquired at the previous
      steps is preserved during the homotopy. By construction,   $\kappa_{{r}+1}  $ is
   $\lambda$-transversal on all ${r}$-dimensional simplices of $T$.
   \end{proof}

 \begin{lemma}\label{defo1+}
 The homotopy in Lemma~\ref{defo1} may be chosen to be constant on the union   of all  faces  $E$ of $K$
such that  $\kappa\vert_E  $  is transversal to $\lambda$.
\end{lemma}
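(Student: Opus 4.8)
The plan is to re-run the inductive construction of Lemma~\ref{defo1} while freezing the homotopy on the faces in question. Let $N\subset K$ be the union of all faces $E$ of $K$ with $\kappa|_E$ transversal to $\lambda$. Because ``transversal to the diagonal'' means ``weakly transversal on every face'', the condition that $\kappa|_E$ be transversal to $\lambda$ is inherited by every sub-face of $E$; hence $N$ is closed and $N\times I$ is a union of faces of $K\times I$ closed under passage to sub-faces. First I would choose the smooth triangulation $T$ of $K\times I$ used in Lemma~\ref{defo1} so that, besides $T_\delta$ being a subcomplex and $\tilde\kappa$ being $T$-good, the set $N\times I$ is a subcomplex of~$T$.

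Next I would perform the homotopies $\tilde\kappa=\kappa_0\leadsto\cdots\leadsto\kappa_{p+2}$ exactly as before, but declaring each of them to be constant on every simplex of $T$ contained in $N\times I$, in addition to the simplices in $T_\delta$ and in the skeleton already handled. Since each perturbation $\kappa_r|_e\leadsto g_s|_e$ performed on an $r$-simplex $e\not\subset N\times I$ is already constant near $\partial e$, and since $N\times I$ is a subcomplex meeting such an $e$ only inside $\partial e$, that perturbation is automatically constant on $e\cap(N\times I)$; therefore the composed homotopy of $\kappa$ is globally constant on $N\times I$, i.e.\ $\kappa^t|_E=\kappa|_E$ for every face $E$ with $\kappa|_E$ transversal to $\lambda$, which is the assertion of the lemma. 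To see that $\kappa_{p+2}$ is still transversal to $\lambda$, it suffices, as in Lemma~\ref{defo1}, to check weak transversality of $\tilde\kappa_{p+2}\times\tilde\lambda$ to $\diag_M$ on each face $F\times I\times G\times I$ of $K\times I\times L\times I$ (the faces involving $\{0\}$ or $\{1\}$ being weakly transversal for free, since $\tilde\kappa$ avoids $\{\star_3,\star_4\}$ and $\tilde\lambda$ avoids $\{\star_1,\star_2\}$). If $F\subset N$, then $\kappa_{p+2}=\kappa$ on $F\times I$ and $\kappa|_F$ is transversal to $\lambda$, so this holds by hypothesis. If $F\not\subset N$, then $(F\times I)\cap(N\times I)\subset\partial(F\times I)$, so every simplex $e\subset F\times I$ with $\Int e\subset\Int(F\times I)$ lies outside $N\times I$ and was therefore perturbed, making $\kappa_{p+2}|_e$ $\lambda$-transversal; as these open simplices cover $\Int(F\times I)$ and transversality of a map along a submanifold passes to any larger submanifold, $\tilde\kappa_{p+2}\times\tilde\lambda$ is weakly transversal to $\diag_M$ there. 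One also carries along, as in Lemma~\ref{defo1}, the reduction by induction on $\dim L$ and the transversality to $\lambda|_{\partial L}$ preserved by small homotopies, all of which is compatible with the freezing.

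The hard part will be checking that freezing the homotopy on $N\times I$ does not disturb the inductive transversality argument of Lemma~\ref{defo1} at the stages where $N\times I$ meets a simplex $e$ being perturbed: there, the facts about $\kappa_r$ near the frozen faces of $\partial e$ that enter condition~$(*)$ and the remaining inductive hypotheses must now be read off from the hypothesis ``$\kappa|_E$ transversal to $\lambda$''---which supplies weak transversality of $\tilde\kappa\times\tilde\lambda$ on all faces of $E\times I\times L\times I$---rather than from the stronger simplex-by-simplex transversality that the induction produces on the unfrozen simplices. Confirming that this weaker input is exactly what is needed near $N\times I$ at each stage, and that it meshes with the simplex-wise transversality elsewhere to yield $\kappa_{p+2}$ transversal to $\lambda$, is the only delicate point.
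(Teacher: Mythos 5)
Your route is genuinely different from the paper's, and it is essentially workable, but you leave the crucial verification open. The paper proves the lemma by induction on $\dim K$: one first reduces to the case $|\Sigma|\subset\partial K$ (where $|\Sigma|=\bigcup_{E\in\Sigma}E$), then builds homotopies $\kappa_{-1}\leadsto\kappa_0\leadsto\cdots\leadsto\kappa_{p-1}$, constant on $|\Sigma|$, so that $\kappa_r$ is transversal to $\lambda$ on every face of $K$ of dimension $\leq r$; at stage $r$ each $r$-dimensional face $E\notin\Sigma$ is handled by invoking the statement of the present lemma at the lower dimension $\dim E=r<p$ to perturb $\kappa_{r-1}|_E$ rel $\partial E$. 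The top-dimensional components of $K$ are then dispatched by a collar argument: one applies Lemma~\ref{defo1} as a black box on $V=\overline{K\setminus U}$ for a collar $U$ narrow enough that $\kappa'|_U$ is already transversal to $\lambda$, and extends the small homotopy constantly near $\partial K$. This structure never descends below the level of faces of $K$ for the frozen part, and treats Lemma~\ref{defo1} modularly.

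Your approach instead reopens the simplex-level construction inside the proof of Lemma~\ref{defo1}, triangulating so that $N\times I$ is a subcomplex and freezing every simplex contained in it. The genuine missing step is exactly what you call ``the only delicate point'': the inductive invariant of Lemma~\ref{defo1}---that $\kappa_r|_e$ is $\lambda$-transversal for \emph{every} simplex $e$ of dimension $\leq r-1$---cannot be maintained on frozen simplices, because for a simplex $e\subset E\times I$ with $E\in\Sigma$, the hypothesis only yields weak transversality of $\tilde\kappa\times\tilde\lambda$ on the \emph{faces} of $E\times I\times L\times I$ (i.e.\ surjectivity of the differential restricted to the tangent space of each face stratum), whereas $\lambda$-transversality of $\tilde\kappa|_e$ is the strictly stronger statement that the differential restricted to the lower-dimensional $T_x e$ is already surjective. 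You must therefore replace Lemma~\ref{defo1}'s invariant by a mixed one (simplex-wise $\lambda$-transversality on unfrozen simplices, face-wise transversality on $|\Sigma|$) and re-verify that it still supplies condition~$(*)$ at each stage and the final transversality of $\kappa^1$. It does: $(*)$ only asks for weak transversality of the \emph{full} map $\kappa_r\times\tilde\lambda$ near $\partial e$, and both halves of the mixed invariant imply surjectivity of the full differential at the points they control; likewise the final check on $F\times I\times L\times I$ uses only the full differential. But this re-derivation is precisely the work you postpone. The paper's induction on $\dim K$ plus the collar buy a cleaner, more modular proof that avoids this re-verification entirely, at the cost of a nested induction; your approach saves the collar argument but requires re-running and modifying the internals of Lemma~\ref{defo1}.
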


\begin{proof}
The proof proceeds by induction on $\dim(K)$. If $\dim(K)=0$, then we   take the constant homotopy    on  all connected  components   of $K$
on which  $\kappa $  is transversal to $\lambda$ and we take the homotopy provided by Lemma~\ref{defo1} on all other components of $K$.
 Let $p$ be a positive integer  such that the   lemma holds for all~$K$ of dimension $<p$. We prove the lemma for an arbitrary $p$-dimensional manifold with faces~$K$.
 As above, if $\kappa$   is   transversal to $\lambda$ on some connected components of~$K$,
then we  take the constant homotopy    on that components. Thus we can assume without loss of generality that   $\kappa$ may be  transversal to $\lambda$ only on    proper faces of~$K$.

 Let $\Sigma$ be the set of all faces $E$ of $K$ such that $\kappa\vert_E$   is
  transversal to $\lambda$. By  the definition of transversality, if $E\in \Sigma$, then all faces of $E$ also belong to $\Sigma$.
    Set $\vert \Sigma\vert =\cup_{E\in \Sigma} E$   and note that $\vert \Sigma \vert \subset \partial K$ by our  assumption.
    All homotopies of $\kappa:K\to    \Omega^\circ_{12}  $ in the following construction are arbitrarily  $C^\infty$-small smooth homotopies constant on $\vert \Sigma \vert$.
    We   recursively   construct    $p $     homotopies $  \kappa= \kappa_{-1 }   \leadsto  \kappa_{0 }   \leadsto   \cdots       \leadsto       \kappa_{  p-1 } $
    such   that     the restriction of $\kappa_{r}$ to any face   of $K $  of dimension $\leq      {r}  $
        is  transversal to $\lambda$ for all  ${r}  $.  Assume that we already have   homotopies  $\kappa=  \kappa_{-1 }   \leadsto  \kappa_{0 }   \leadsto   \cdots
       \leadsto       \kappa_{  {r}-1 } $ with the required properties where $0\leq {r} < p $.
  Consider an ${r}$-dimensional face $E$ of $K$ not belonging to $\Sigma$. By the assumptions on $\kappa_{{r}-1}$, the restriction of  $\kappa_{{r}-1} $ to any proper face of $E$ is transversal to $\lambda$. Since $\dim(E)={r}<p$,  the inductive assumption guarantees that  there is an arbitrarily    $C^\infty$-small, smooth, constant on $\partial E$ homotopy   of $\kappa_{{r}-1} \vert_E$   into  a map $E\to   \Omega^\circ_{12}  $ transversal to $\lambda$.   Combining  these homotopies over all   $E$ as above together with   the   constant homotopy on $\vert \Sigma \vert$ and extending to a small smooth homotopy of $\kappa_{{r}-1 }$ on the rest of $K$, we obtain a homotopy $\kappa_{{r}-1 }   \leadsto  \kappa_{{r} } $ with   the required properties.

  Next   pick a collar    $U\cong \partial K\times I \subset K$ of $\partial K\cong \partial K\times  \{0\}$ in $K$.
Set $V =\overline{K\setminus U}\subset K$.   Then    $V$ is a manifold  with faces and  $\partial V=U\cap V\cong \partial K $.
By the above,   $\kappa'=\kappa_{  p-1 }: K\to    \Omega^\circ_{12}  $ is a smooth map whose restriction to all proper faces of $K$ is  transversal to $\lambda$.
   Therefore, choosing the collar $U$   sufficiently narrow, we can ensure that the map   $\kappa' \vert_U:U\to    \Omega^\circ_{12}  $ is transversal to $\lambda$. By Lemma~\ref{defo1}, there is an arbitrarily     $C^\infty$-small homotopy $\{\kappa^t\vert_V \}_{t\in I}$ of $\kappa^0\vert_V=\kappa' \vert_V$ such that $\kappa^1\vert_V $ is transversal to $\lambda$. This homotopy   extends to a     small homotopy  $\{\kappa^t\}_{t\in I}$ of $\kappa^0=\kappa'$  constant on a neighborhood of $\partial K$ in $U$.
   If the homotopy  $\{\kappa^t\vert_V \}_{t\in I}$ is sufficiently small, then the extension   may be chosen so that $\kappa^t \vert_U $ is transversal to $\lambda$ for all $t\in I$. Then  $\kappa^1\vert_U$ is transversal to~$\lambda$, and so,   $\kappa^1:K\to    \Omega^\circ_{12}  $ is transversal to $\lambda$. The composite homotopy $$
  \kappa  =\kappa_{-1} \leadsto \cdots
       \leadsto       \kappa_{  p-1 } =\kappa'=\kappa^0\leadsto \kappa^1 $$ satisfies all the   conditions of the lemma.
\end{proof}

\subsection{Transversal  polychains}\label{transversality_polychains}

  We  call   smooth  polychains  $\calK =(K, \varphi, u , \kappa)$ in $   \Omega^\circ_{12}   $
and $\calL =(L, \psi, v , \lambda)$ in $   \Omega^\circ_{34}    $   \index{polychains!transversal} {\it transversal}
if the     maps $ \kappa:K \to    \Omega^\circ_{12}  $
and $ \lambda: L\to   \Omega^\circ_{34}   $ are    transversal. The following two lemmas show  that any smooth  polychain can be made transversal to a given smooth polychain by a small deformation.

\begin{lemma}\label{defo1++eee}
Let $\calK=(K,\varphi,u,\kappa)$    and $\calL=(L,\psi,v, \lambda)$ be smooth
  polychains  in   $   \Omega^\circ_{12}  $ and~$  \Omega^\circ_{34}   $, respectively. Let ${{N}}$ be a union of   faces  of $K$.  Let $$\{(\kappa\vert_{{N}})^t:{{N}}\to    \Omega^\circ_{12}  \}_{t\in I}$$ be a smooth   homotopy of  $\kappa\vert_{{N}}$ compatible  with~$\varphi$   such that     $(\kappa\vert_{{N}})^1:{{N}}\to    \Omega^\circ_{12}  $ is transversal to $\calL$. Then there is a smooth  deformation     $\{ \calK^t  = (K,\varphi,u,\kappa^t) \}_{ t\in I}$ of
$ \calK^0=\calK$ such that     $ \calK^1$ is  transversal to $\calL$ and   for all   $t\in I$,
$$\kappa^t\vert_{{N}}= (\kappa\vert_{{N}})^t:{{N}}\to    \Omega^\circ_{12}  .$$
\end{lemma}

\begin{proof}  We apply the same recursive method as in  the proof of   Lemma~\ref{sssppsmodefo1----}   with~$M$ replaced by $   \Omega^\circ_{12}  $.
The homotopy of $\kappa $ on a representative face~$F$ is obtained in two steps.
First, we take an arbitrary smooth homotopy of $\kappa\vert_F$    extending the   homotopy of $\kappa\vert_{\partial F } $ obtained at the previous step.
Then we compose   with an additional smooth homotopy rel $\partial F$ to a map $F\to    \Omega^\circ_{12}  $ transversal to $\lambda$.    The latter homotopy   is provided by Lemma~\ref{defo1+}.
 \end{proof}

\begin{lemma}\label{defo1++}
  Let $\calK=(K,\varphi,u,\kappa)$    and $\calL=(L,\psi,v, \lambda)$ be smooth
  polychains  in   $   \Omega^\circ_{12}  $ and~$  \Omega^\circ_{34}   $, respectively.
There exists an   arbitrarily $C^\infty$-small   smooth deformation   $\{ \calK^t  = (K,\varphi,u,\kappa^t) \}_{ t\in I}$ of
$ \calK$ such that  the polychain  $ \calK^1$ is transversal to $\calL$ and  $\kappa^t\vert_F= \kappa\vert_F$ for all   $t\in I$  and  all faces $F$ of $K$
on which $\kappa$  is transversal to $\lambda$.
\end{lemma}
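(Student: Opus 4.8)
The plan is to mimic the proof of Lemma~\ref{ppsmodefo1----}, deriving Lemma~\ref{defo1++} as the special case of Lemma~\ref{defo1++eee} in which $N$ is taken to be the union of all faces $F$ of $K$ on which $\kappa$ is transversal to $\lambda$, and $\{(\kappa\vert_N)^t\}_{t\in I}$ is the constant homotopy.

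First I would check that this choice of $N$ is admissible as input to Lemma~\ref{defo1++eee}. The set $N$ is indeed a union of faces of $K$: this is the content of the observation (used already in the proof of Lemma~\ref{defo1+}) that if $\kappa\vert_E$ is transversal to $\lambda$ for a face $E$, then so is $\kappa\vert_{E'}$ for every face $E'$ of $E$, since transversality of a smooth map to the diagonal is defined face-by-face. Next, the constant homotopy $(\kappa\vert_N)^t=\kappa\vert_N$ is a smooth homotopy of $\kappa\vert_N$, it is trivially compatible with the partition $\varphi$ (because $\kappa$ itself is compatible with $\varphi$, so $\kappa\vert_G\circ\varphi_{F,G}=\kappa\vert_F$ for faces $F,G$ of the same type, and this identity persists unchanged through the constant homotopy), and its terminal map $(\kappa\vert_N)^1=\kappa\vert_N$ is transversal to $\calL$ precisely because every face contained in $N$ is one on which $\kappa$ is transversal to $\lambda$, which by definition means $\kappa\vert_N$ is transversal to $\lambda$ in the sense of Section~\ref{transversality_polychains}.

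Then Lemma~\ref{defo1++eee} applies and yields a smooth deformation $\{\calK^t=(K,\varphi,u,\kappa^t)\}_{t\in I}$ of $\calK^0=\calK$ with $\calK^1$ transversal to $\calL$ and with $\kappa^t\vert_N=(\kappa\vert_N)^t=\kappa\vert_N$ for all $t\in I$; in particular $\kappa^t\vert_F=\kappa\vert_F$ for every face $F$ on which $\kappa$ is transversal to $\lambda$, which is the required conclusion. The only point that needs a word is the claim that the resulting deformation can be taken \emph{arbitrarily $C^\infty$-small}. For this I would note that in the proof of Lemma~\ref{defo1++eee} the homotopy on each representative face $F$ not contained in $N$ is built in two steps: an arbitrary smooth homotopy extending the (already small, inductively constructed) homotopy on $\partial F$, followed by a homotopy rel $\partial F$ to a transversal map supplied by Lemma~\ref{defo1+}. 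In our situation the first step can be taken to be the constant homotopy (there is nothing to extend beyond the constant homotopy on $\partial F$, by induction), and the second step is the homotopy of Lemma~\ref{defo1+}, which by that lemma and Lemma~\ref{defo1} may be chosen arbitrarily $C^\infty$-small and constant on $\partial F$. Assembling these small homotopies over the finitely many representative faces and extending by the constant homotopy elsewhere (compatibly with $\varphi$, as in Lemma~\ref{sssppsmodefo1----}) keeps the whole deformation $C^\infty$-small. The main obstacle, such as it is, is purely bookkeeping: making sure the ``arbitrarily small'' qualifier is genuinely propagated through the recursion on $r=\dim$ of the faces and through the collar-extension arguments inherited from Lemmas~\ref{defo1} and~\ref{defo1+}; there is no new geometric content beyond what those lemmas already provide.
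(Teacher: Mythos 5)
Your proposal is correct and matches the paper's own proof exactly: Lemma~\ref{defo1++} is derived as the special case of Lemma~\ref{defo1++eee} with $N$ the union of all faces on which $\kappa$ is transversal to $\lambda$ and the constant homotopy on $N$, with the $C^\infty$-smallness traced back to Lemmas~\ref{defo1} and~\ref{defo1+}. Your write-up simply spells out, in more detail than the paper, the admissibility checks and the propagation of smallness through the recursion.
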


\begin{proof}    This   is a special case  of Lemma~\ref{defo1++eee}
where~${{N}}$ is the union of all faces of~$K$ on which~$\kappa$ is transversal to $\lambda$ and   $\{(\kappa\vert_{{N}})^t \}_t$   is the constant homotopy.
That the  deformation $\{ \calK^t   \}_{ t }$  may be chosen   arbitrarily $C^\infty$-small  follows from  Lemmas~\ref{defo1} and~\ref{defo1+}.
\end{proof}

  We say that a pair of face homology classes $(a\in \widetilde{H}_\ast(   \Omega^\circ_{12}  ) , b \in \widetilde{H}_\ast (  \Omega^\circ_{34}   ))$  is \index{face homology!transversely represented} {\it transversely represented} by a pair     ($\calK , \calL$)
  if   $\calK$ is a smooth reduced  polycycle in $   \Omega^\circ_{12}  $ representing $a$, $\calL$ is a smooth reduced   polycycle in $  \Omega^\circ_{34}   $ representing $b$, and
  $\calK$ is transversal to  $\calL$.
The following lemma will play a key role in the sequel.

\begin{lemma}\label{keytheor}
Every pair  $(a\in \widetilde{H}_\ast(   \Omega^\circ_{12}  ) , b \in \widetilde{H}_\ast (  \Omega^\circ_{34}   ))$   can be transversely represented by a pair of polycycles.
Any  two pairs of    polycycles transversely representing    $(a  , b  )$ can be related by a
finite sequence  of transformations   $(\calK, \calL)\mapsto  (\check\calK, \check \calL) $  of the following   types:
\begin{itemize}
\item[(i)]    $\calL \cong  \check \calL$ and  $\check \calK \cong  \calK \sqcup \partial^r \calM$ or $ \calK \cong  \check \calK \sqcup \partial^r \calM$
where $\calM$ is a smooth  polychain in $   \Omega^\circ_{12}  $ transversal to $\calL$;
\item[(ii)]  $\calK \cong  \check \calK$ and   $\check \calL \cong  \calL \sqcup \partial^r \calN$ or $ \calL \cong  \check \calL \sqcup \partial^r \calN$
where $\calN$ is a smooth  polychain in $  \Omega^\circ_{34}   $ transversal to $\calK$.
\end{itemize}
\end{lemma}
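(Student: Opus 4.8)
The plan is to handle the two assertions in turn, the engine in both cases being the transversality deformation results of this section (Lemmas~\ref{defo1}, \ref{defo1+}, \ref{defo1++}, \ref{defo1++eee}) together with the comparison between smooth and ordinary face homology of path spaces from Section~\ref{smooth111}.

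\emph{Existence.} By Theorem~\ref{loopsp+++++} the class $a$ is represented by a smooth polycycle in $\Omega^\circ_{12}$; reducing it (which changes neither the smoothness nor the homology class) we get a smooth reduced polycycle $\calK_0$ representing $a$, and similarly a smooth reduced polycycle $\calL_0$ representing $b$ in $\Omega^\circ_{34}$. Lemma~\ref{defo1++} provides a smooth deformation $\{\calK^t\}_{t\in I}$ of $\calK^0=\calK_0$, leaving $(K,\varphi,u)$ unchanged, with $\calK^1$ transversal to $\calL_0$; then $\calK^1$ is again smooth and reduced, and by Lemma~\ref{homotimplieshomol} homologous to $\calK_0$, hence it still represents $a$. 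Thus $(\calK^1,\calL_0)$ transversely represents $(a,b)$.

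\emph{The key lemma.} The core of the second assertion is the following statement and its mirror image: if $\calK_1,\calK_2$ are smooth reduced polycycles in $\Omega^\circ_{12}$, both transversal to a fixed smooth polychain $\calL$ in $\Omega^\circ_{34}$, and $\lb\calK_1\rb=\lb\calK_2\rb$, then $(\calK_1,\calL)$ and $(\calK_2,\calL)$ are joined by finitely many moves of type~(i) (swapping the two path spaces gives the version with moves of type~(ii)). To prove it, invoke Theorem~\ref{loopsp+++++} and the argument proving Theorem~\ref{++smoothdefo1----} to obtain smooth polychains $\calP_1,\calP_2$ in $\Omega^\circ_{12}$ and a diffeomorphism of polychains $f\colon\calK_1\sqcup\partial^r\calP_1\cong\calK_2\sqcup\partial^r\calP_2$. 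Pushing $\calP_1,\calP_2$ through $f$, we may assume both sides are carried by the same oriented manifold with faces $K'$, with the same partition, weight and map $\mu$, but split as a disjoint union of subcomplexes in two ways, one splitting exhibiting $\calK_1$ and $\partial^r\calP_1$ and the other exhibiting $\calK_2$ and $\partial^r\calP_2$; since $f$ is a diffeomorphism of polychains, the partition of $K'$ respects both splittings, and $\mu$ restricted to the piece carrying $\calK_i$ is the map of $\calK_i$, hence transversal to $\calL$. Applying Lemma~\ref{defo1++eee} with $N$ the union of these two pieces and the constant homotopy on $N$, we deform $\mu$ rel $N$ to a map $\mu^{\ast}$ transversal to $\calL$ on all of $K'$. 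Extending the induced deformation of each $\partial^r\calP_i$ inward by the three-step argument at the end of the proof of Theorem~\ref{++smoothdefo1----}, with Lemma~\ref{sssppsmodefo1----} replaced by Lemma~\ref{defo1++eee}, yields smooth $\calL$-transversal polychains $\calP_i^{\sharp}$ in $\Omega^\circ_{12}$ with $\partial^r\calP_i^{\sharp}$ equal to $\partial^r\calP_i$ with its map replaced by the restriction of $\mu^{\ast}$. Then a move of type~(i) carries $(\calK_i,\calL)$ to $\bigl(\calK_i\sqcup\partial^r\calP_i^{\sharp},\calL\bigr)=\bigl((K',\varphi',u',\mu^{\ast}),\calL\bigr)$ for $i=1,2$, and this common pair links $(\calK_1,\calL)$ to $(\calK_2,\calL)$.

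\emph{Uniqueness up to moves, and the main obstacle.} Let $(\calK,\calL)$ and $(\calK',\calL')$ transversely represent $(a,b)$. By Lemma~\ref{defo1++} deform $\calK'$ through an arbitrarily $C^\infty$-small smooth deformation into a smooth reduced polycycle $\calK''$ transversal to $\calL$; taking it small enough it stays transversal to $\calL'$ throughout ($\calK'$ being transversal to $\calL'$ and transversality being $C^1$-open), so by the key lemma $(\calK',\calL')$ and $(\calK'',\calL')$ are joined by moves of type~(i). As $\calK''$ is now transversal to both $\calL$ and $\calL'$ and $\lb\calL\rb=\lb\calL'\rb=b$, the mirror form of the key lemma joins $(\calK'',\calL')$ to $(\calK'',\calL)$ by moves of type~(ii). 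Finally $(\calK,\calL)$ and $(\calK'',\calL)$ have the same second polycycle and $\lb\calK\rb=\lb\calK''\rb=a$, so the key lemma joins them by moves of type~(i); concatenating gives $(\calK,\calL)\leftrightarrow(\calK'',\calL)\leftrightarrow(\calK'',\calL')\leftrightarrow(\calK',\calL')$. The main difficulty is the bookkeeping in the key lemma: the diffeomorphism $f$ does not respect the two decompositions, so one must pass to the common model $K'$ and arrange a single transversality-making deformation that fixes both copies of the $\calK_i$ while simultaneously turning $\partial^r\calP_1$ and $\partial^r\calP_2$ into reduced boundaries of $\calL$-transversal polychains; checking that the prescribed boundary deformation extends inward — now in the transversal category rather than merely the smooth one — is precisely the point that requires the extension machinery of Section~\ref{smooth111}.
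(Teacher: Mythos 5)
Your proposal is correct and follows essentially the same route as the paper's proof. The existence paragraph, the final sandwiching of an arbitrary pair $(\calK',\calL')$ into $(\calK,\calL)$ by three sets of moves, and the smallness argument making the intermediate polycycle simultaneously transversal to both $\calL$ and $\calL'$ all appear in the paper verbatim (with roles of primed and unprimed objects swapped). The one place where you diverge cosmetically is in the ``key lemma'': the paper compresses this into a reference to the injectivity proof of Theorem~\ref{++smoothdefo1----} with a short dictionary of replacements (smooth $\leadsto$ transversal to $\calL$, Lemma~\ref{ppsmodefo1----} $\leadsto$ Lemma~\ref{defo1++}, Lemma~\ref{sssppsmodefo1----} $\leadsto$ Lemma~\ref{defo1++eee}), keeping two distinct manifolds $P_1$, $P_2$ linked by the diffeomorphism $f$ and transporting a single deformation from $P_1$ to $P_2$ via $f^{-1}$, then checking it is constant on $K_2$ because $\kappa_1 = \kappa_2 f$ is already transversal on $f^{-1}(K_2)$; you instead identify $P_1$ and $P_2$ through $f$ to a single model $K'$ and deform $\mu$ rel $K_1 \cup K_2$ directly, then extend inward. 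These are the same argument phrased differently, and yours is arguably slightly cleaner to state; nothing is gained or lost in generality.
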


\begin{proof}
The first claim   follows from   Lemma~\ref{defo1++} and the surjectivity in   Theorem~\ref{loopsp+++++}.
That we need only reduced polycycles  follows from the fact that the reduction of a (smooth) polycycle gives a homologous (smooth) polycycle.

We   prove the second claim of the lemma.
Consider   pairs of   reduced  polycycles  $(\calK_1, \calL)$ and $(\calK_2, \calL)$ transversely representing    $(a  ,b  )$.
Since $\calK_1$   is homologous to   $\calK_2 $, we have $   \calK_1 \sqcup \partial^r \calR_1   \cong  \calK_2 \sqcup \partial^r \calR_2$
for some   $(n+1)$-polychains $\calR_1,\calR_2$ in $   \Omega^\circ_{12}  $.
The injectivity in Theorem~\ref{loopsp+++++}   ensures that $\calR_1$, $\calR_2$ can   be chosen  to be  smooth.
Then  there are smooth polychains $\calR'_1,\calR'_2$ in $   \Omega^\circ_{12}  $  transversal to $\calL$
such that $\calK_1 \sqcup \partial^r \calR'_1   \cong  \calK_2 \sqcup \partial^r \calR'_2$.
These polychains are obtained from $\calR_1$, $\calR_2$ using
   the same method as in the proof of  Theorem~\ref{++smoothdefo1----}  with the following replacements: 
   $M  \leadsto     \Omega^\circ_{12}  $,   \lq\lq smooth"  $  \leadsto  $   \lq\lq  transversal to $\calL$", \lq\lq homotopy"  $ \leadsto $ \lq\lq smooth homotopy",
   Lemma~\ref{sssppsmodefo1----}  $ \leadsto $  Lemma~\ref{defo1++eee},     Lemma~\ref{ppsmodefo1----}   $ \leadsto $  Lemma~\ref{defo1++}.
  The  move $(\calK_1, \calL) \mapsto (\calK_2,\calL)$  expands as the  composition of the following   type (i) moves:
$$ (\calK_1, \calL) \mapsto (\calK_1 \sqcup \partial^r \calR'_1 , \calL) \mapsto (\calK_2 \sqcup \partial^r \calR'_2 , \calL) \mapsto  (\calK_2,\calL).$$
(The   middle move   is a type (i) move corresponding to $\calM=\varnothing$.)
 A similar argument shows that if     two pairs of polycycles $(\calK, \calL_1)$ and $(\calK,\calL_2)$ transversely represent    $(a  ,b  )$,
 then   the move $(\calK, \calL_1) \mapsto (\calK,\calL_2)$ is   a  composition  of type (ii) moves.

Consider now any   pairs of polycycles  $(\calK_1, \calL_1)$ and $(\calK_2,\calL_2)$ transversely representing    $(a  ,b  )$.  By Lemma \ref{defo1++},
there is an arbitrarily  $C^\infty$-small   smooth  deformation   of $ \calK_1$ into a polycycle $\calK$
  transversal to   $\calL_2$. We assume  the deformation to be so small   that   $ \calK$ is
  transversal to   $\calL_1$ as well. By Lemma~\ref{homotimplieshomol},  $ \calK\simeq \calK_1 $ represents~$a$.
By the previous paragraph, each of the moves $$(\calK_1, \calL_1) \mapsto (\calK,\calL_1) \mapsto (\calK,\calL_2) \mapsto (\calK_2,\calL_2) $$
expands as  a composition  of moves of types (i) and (ii).
\end{proof}

\section{Intersection of polychains} \label{operation_D}

  We define    \lq\lq intersection"    for  transversal polychains in $ \Omega^\circ_{12}$ and
   $ \Omega^\circ_{34}$.

\subsection{The intersection polychain}\label{def_D}

Let  $\calK=(K,\varphi,u,\kappa)$
be  a smooth polychain  of dimension $p$  in $ \Omega^\circ_{12}$ and
let   $\calL=(L,\psi,v,\lambda)$
be  a smooth polychain  of dimension   $q$ in   $ \Omega^\circ_{34}$.
Assume that   $\calK$ and $\calL$ are transversal in the sense of Section~\ref{transversality_polychains}.
We  derive from $\calK$ and $\calL$ an \lq\lq intersection polychain" in $ \Omega_{32} \times  \Omega_{14}$.

Let $\tilde \kappa: K \times I \to M$ and $\tilde \lambda: L \times I \to M$
be the adjoint maps of $\kappa $ and  $\lambda $ respectively. Set $N = K \times I \times L \times I$  
and consider the map $\tilde \kappa \times \tilde \lambda: N\to M\times M$.
  Since $K,L$ and $I$ are manifolds with faces of dimensions $p,q$ and $1$, respectively,~$N$ is a   manifold with faces of dimension $p+q+2$.  
  The transversality of~$\kappa$ and~$\lambda$ implies that the set 
$$
D = \big(\tilde \kappa \times \tilde \lambda\big)^{-1}(\diag_M)  \subset N
$$
 is empty  if  $p+q+2< n$ and  is a $(p+q+2-n)$-dimensional manifold with corners if  $p+q+2\geq n$.
 In the latter case each point   of~$D$ has a neighborhood~$V$ in~$N$ such that $V\cap D$ is homeomorphic to  $\RR^u \times [0,\infty)^{ v}$ for some integers  $u, v \geq 0$ with $u+v=p+q+2-n$    and $V$ is   homeomorphic to $ \RR^n \times (V\cap D)$.
 These claims follow from the general theorems about transversality and about submanifolds of manifolds with corners, see \cite[Propositions 3.1.14 and  7.2.7]{MrOd}.
  Consequently,   $P(D)\subset P(N) $ so that we can consider the   commutative diagram of inclusion maps
$$
\xymatrix{
\pi_0(P(D) \cap V )  \ar[rr]^-{i} \ar[d]_-{j' } && \pi_0(P(N) \cap  V) \ar[d]^-{j  } \\
\pi_0(P(D) ) \ar[rr]^-{i'}&& \pi_0(P(N) )\, .
}
$$
The  structure of   $ V $ described above implies that $i$ is a bijection  between $v$-element sets.
Since~$N$ is a manifold with faces,~$j$ is an injection.
  Therefore $j'$ is injective which implies that $D$ is a manifold with faces.
The faces  of $D$ are    the connected components of the intersections of $D$ with faces of $N$.

We now upgrade~$D$ to a polychain in $ \Omega_{32} \times  \Omega_{14}$.
First of all, we orient $D$ as follows. We  use the orientation of $\diag_M\approx M$
and the product orientation of $M\times M$ to orient the normal vector bundle of $\diag_M$ in $M\times M$ (see the Introduction for our orientation conventions). Next, we pull-back this orientation of the normal vector bundle along $\tilde \kappa \times \tilde \lambda$ to obtain an orientation of the normal vector bundle of~$D$ in~$N$. The latter orientation together with  the product orientation in $N=  K \times I \times L \times I$
induces an orientation of $D$.
We  also  equip   $D$
with the weight $w:\pi_0(D) \to \kk$ which,
for any connected components $X$ of $K$ and   $Y$ of $L$,
carries all connected components of $D$ contained in $X \times I \times Y \times I$ to $u(X) \, v(Y) \in \kk$.

Next, we define a continuous map
$\kappa \, \tilde \losange \, \lambda: D\times I \to M \times M$ by
\begin{equation} \label{bowtie}
(\kappa \, \tilde \losange \, \lambda )  ( x, s, y,  t, u)=
\begin{cases}
  (\lambda (y) (t\ast u), \kappa(x) (s\ast u)   )  & \text{if   $0\leq u \leq 1/2$}\\
(\kappa(x) (s \ast u), \lambda(y) (t \ast u)) & \text{if $1/2  \leq  u \leq 1$}
\end{cases}\notag
\end{equation}
for any $(x,s,y,t)\in D \subset K \times I  \times L \times I$ and $u \in I$, where   we  set
$$
\ell \ast u =
\left\{\begin{array}{ll}
2\ell u & \hbox{for    $\ell \in I, u\in[0,1/2]$}\\
1-2 (1-\ell   )(1-u ) &  \hbox{for   $\ell \in I, u\in [1/2,1]$.}
\end{array}\right.
$$
The key property of the operation $\ast$ is that for any $\ell, u \in I $, we have
$0\leq \ell \ast u \leq \ell  $
 if $  u\in [0,1/2]$ and $ \ell \leq \ell \ast u \leq 1$    if $u\in [1/2, 1]$.
For  a fixed  $(x, s, y,  t)\in D$, the point   $(\kappa \, \tilde \losange \, \lambda ) (x, s,y, t, u)\in M\times M$
moves   along the path $\big(\tilde \lambda(y,t\ast u), \tilde \kappa(x,s\ast u)\big)$
from $(\star_3, \star_1)$ to the diagonal point $\big(\tilde\kappa(x,s ),\tilde \lambda (y,t )\big)$ as $u$ increases from $0$ to $1/2$
and, next, it moves    from that diagonal point  to $(\star_2,\star_4)$
along the path $\big(\tilde \kappa(x,s\ast u), \tilde \lambda(y,t\ast u)\big)$ as $u$    increases from $1/2$ to~$1$: see Figure~\ref{diamond}.
  Thus   the  map $\kappa \, \tilde \losange \, \lambda$ is adjoint to a continuous map
$$
\kappa   \losange \lambda: D \longrightarrow   \Omega \big(M\times M,  (\star_3, \star_1), (\star_2, \star_4)\big)
=  \Omega_{32} \times \Omega_{14}
$$
whose  coordinate maps  are denoted by $\kappa   \triangleleft \lambda: D \to    \Omega_{32}$ and $\kappa   \triangleright \lambda: D \to  \Omega_{14}$.

\begin{figure}[h]
\begin{center}
\labellist \small \hair 2pt
\pinlabel {$0$} [t] at 35 3
\pinlabel {$1$} [t] at 466 3
\pinlabel {$1/2$} [t] at 253 3
\pinlabel {$u'$} [t] at 143 6
\pinlabel {$u''$} [t] at 407 3
\pinlabel {$u$} [l] at 542 8
\pinlabel {\Large $\triangleright$} at 32 116
\pinlabel {$\star_1$} [r] at 27 116
\pinlabel {\Large $\triangleright$} at 474 114
\pinlabel {\Large $\triangleright$} at 145 177
\pinlabel {\Large $\triangleright$} at 409 156
\pinlabel {$\star_4$} [l] at 479 113
\pinlabel {\Large $\losange$} at 253 236
\pinlabel {\Large $\triangleleft$} at 28 436
\pinlabel {$\star_3$} [r] at 23 436
\pinlabel {\Large $\triangleleft$} at 472 401
\pinlabel {\Large $\triangleleft$} at 142 369
\pinlabel {\Large $\triangleleft$} at 407 412
\pinlabel {$\star_2$} [l] at 478 401
\pinlabel { $\tilde\lambda(y,t\ast u'')$} [r] at 355 127
\pinlabel { $\tilde\lambda(y,t\ast u')$} [b] at 155 415
\pinlabel { $\tilde\kappa(x,s\ast u'')$} [r] at 340 420
\pinlabel { $\tilde\kappa(x,s\ast u')$} [t] at 194 140
\endlabellist
\includegraphics[scale=0.55]{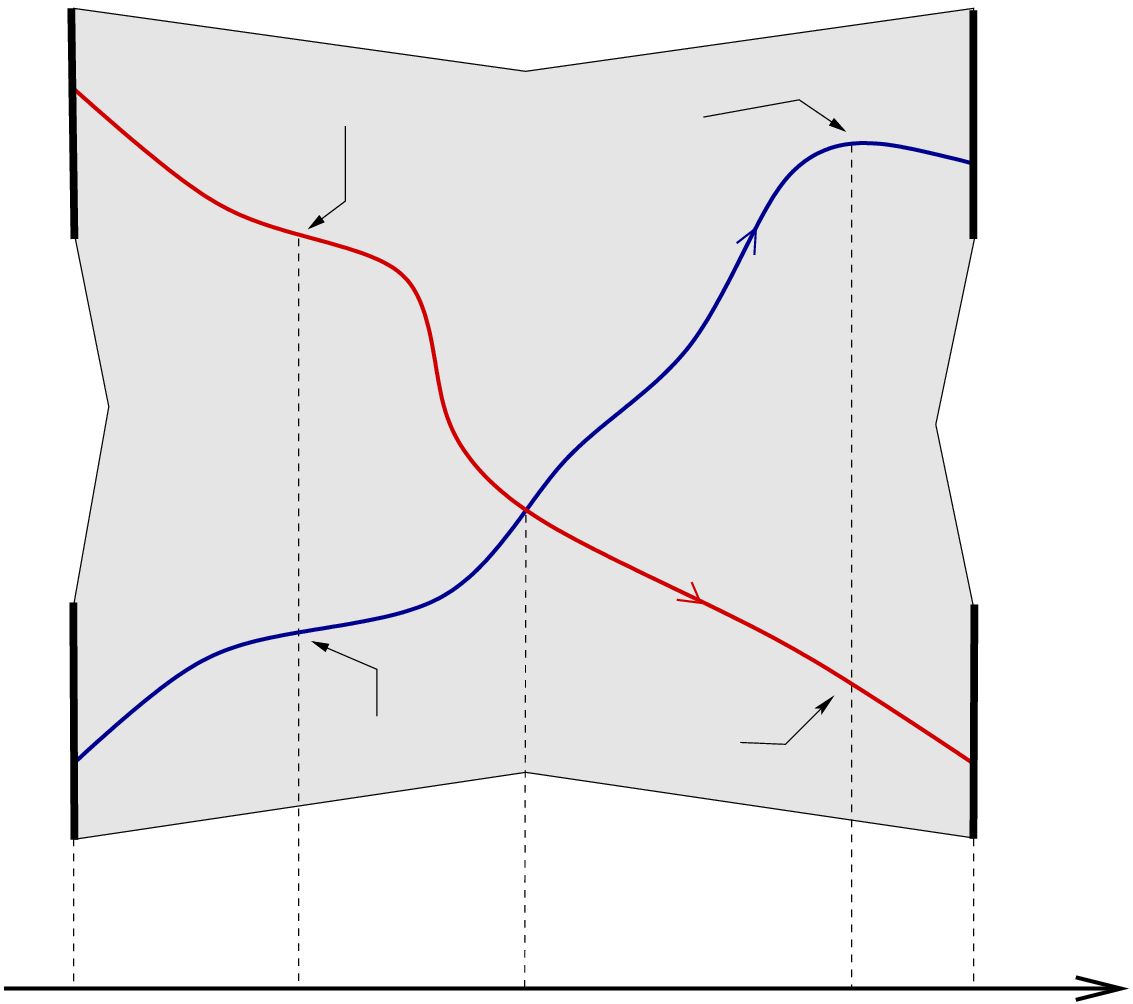}
\end{center}
\caption{  The pair
$(\triangleleft\,,\triangleright)=(\kappa  \, \tilde{\losange} \, \lambda)(x,s,y,t,u)\in M\times M$
  for a fixed $(x, s, y,  t)\in D$ and $u$ running from $0$ to $1$.}
\label{diamond}
\end{figure}

Finally, we define a partition  of $D$.    Here   we need the assumption that the images of $\kappa$ and $\lambda$
lie in  $ \Omega^\circ_{12}\subset \Omega_{12}$ and $ \Omega^\circ_{34}\subset \Omega_{34}$, respectively. This assumption implies that
$$D \subset
 K \times \Int(I) \times L \times \Int(I)\subset N.$$
Therefore each face $F$ of $D$ is contained in a unique smallest face
$$
N_F = A_F \times I \times B_F \times I
$$
of $N$, where $A_F$ is a face  of $K$ and   $B_F$ is a  face of  $L$.
Note that the codimension of~$F$ in~$D$ is equal to the codimension of $N_F$ in~$N$.
We declare two  faces~$F$ and~$F'$ of~$D$   to have the same type
if and only if $A=A_F$ has the same type as $A'=A_{F'}$ in $K$, $B=B_F$ has the same type as $B'=B_{F'}$ in $L$,
and the diffeomorphism
$$
\xymatrix{
N_F= A \times I \times B \times I
\ar[rrr]^-{\varphi_{A,A'} \times \id_I \times \psi_{B,B'} \times \id_I} &&&
 A' \times I \times B' \times I  =N_{F'}
}
$$
carries $F$ onto $F'$.
By restriction, we obtain a diffeomorphism $\theta_{F,F'}:F \to F'$
for any such $F$, $F'$. This defines a partition, $\theta$, of $D$.
Note that, for each face $F$ of~$D$, the faces of $D$ of the same type as $F$
are in one-to-one correspondence with the pairs $(A',B')$
where $A'$ is a face of $K$ of the same type as $A_F$
and $B'$ is a face of $L$ of the same type as $B_F$.

\begin{lemma}
The tuple $
\calD(\calK,\calL) = (D,\theta,w,\kappa\losange \lambda)
$
is a polychain in $\Omega_{32} \times \Omega_{14}$.
\end{lemma}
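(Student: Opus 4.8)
The statement asserts that $\calD(\calK,\calL) = (D,\theta,w,\kappa\losange\lambda)$ satisfies all four clauses in the definition of a polychain (Section~\ref{polychains}): namely that $D$ is an oriented manifold with faces, $\theta$ is a partition on $D$, $w$ is a weight, and $\kappa\losange\lambda$ is a continuous map to $\Omega_{32}\times\Omega_{14}$ compatible with $\theta$. Most of the work has already been done in the paragraphs leading up to the statement, so the proof will essentially be an organized verification that the pieces fit together.

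First I would recall why $D$ is an oriented manifold with faces. That $D$ is a manifold with faces is exactly what was established via the commutative square of $\pi_0$-inclusion maps and the citations to \cite[Propositions 3.1.14 and 7.2.7]{MrOd}: the local model $\RR^u\times[0,\infty)^v$ forces $i$ to be a bijection on $v$-element sets, $j$ is injective because $N$ is a manifold with faces, hence $j'$ is injective. The faces of $D$ are the connected components of $D\cap(\text{face of }N)$, and compactness of $D$ (it is closed in the compact manifold with faces $N$, being the preimage of the closed set $\diag_M$ under a continuous map, using $\partial(\diag_M)\cap\Ima(\tilde\kappa\times\tilde\lambda)=\varnothing$ from the ``weakly transversal'' hypothesis) gives that $D$ has only finitely many faces. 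The orientation of $D$ comes from the pull-back construction described just above the statement: orient the normal bundle of $\diag_M$ in $M\times M$ using the product orientation and the orientation of $\diag_M\approx M$, pull it back along $\tilde\kappa\times\tilde\lambda$ to the normal bundle of $D$ in $N$, and combine with the product orientation of $N=K\times I\times L\times I$ via the frame convention from the Introduction. This is well-defined precisely because $D$ has the correct normal structure, i.e.\ $\tilde\kappa\times\tilde\lambda$ restricted to $\Int(N)$ is transversal to $\Int(\diag_M)$.

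Next I would check that $\theta$ is a partition on $D$ in the sense of Section~\ref{Partitions}. The key point, already noted in the text, is that the hypothesis $\Ima(\kappa)\subset\Omega^\circ_{12}$ and $\Ima(\lambda)\subset\Omega^\circ_{34}$ forces $D\subset K\times\Int(I)\times L\times\Int(I)$, so that every face $F$ of $D$ lies in a unique smallest face $N_F=A_F\times I\times B_F\times I$ of $N$ with $A_F$ a face of $K$ and $B_F$ a face of $L$, and the codimensions agree. The equivalence ``$F\sim F'$ iff $A_F,A_{F'}$ have the same type in $K$ and $B_F,B_{F'}$ have the same type in $L$ and $\varphi_{A_F,A_{F'}}\times\id_I\times\psi_{B_F,B_{F'}}\times\id_I$ carries $F$ onto $F'$'' is visibly symmetric and reflexive; transitivity follows from the cocycle condition (a) for $\varphi$ and $\psi$. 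One must verify conditions (a) and (b) of the definition: (a) $\theta_{F,F}=\id_F$ is immediate, and the composition law $\theta_{G,H}\theta_{F,G}=\theta_{F,H}$ follows from the corresponding laws for $\varphi$ and $\psi$ by restriction; (b) a face $F'$ of $F$ corresponds to a face $A'$ of $A_F$ and $B'$ of $B_F$ (with $N_{F'}=A'\times I\times B'\times I$, using that faces of $D$ are cut out by faces of $N$), and then $\theta_{F,F'}$ restricts $\varphi_{A_F,\,\cdot}\times\id\times\psi_{B_F,\,\cdot}\times\id$, so it carries faces of $F$ to faces of $F'$ of the same type, compatibly, by the analogous properties of $\varphi$ and $\psi$. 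I would also remark that $w$ is a genuine weight: it is a well-defined function $\pi_0(D)\to\kk$ because every component of $D$ lies in some $X\times I\times Y\times I$ with $X\in\pi_0(K)$, $Y\in\pi_0(L)$, and it is set to $u(X)v(Y)$.

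Finally I would verify that $\kappa\losange\lambda:D\to\Omega_{32}\times\Omega_{14}$ is continuous and compatible with $\theta$. Continuity: the adjoint map $\kappa\,\tilde\losange\,\lambda:D\times I\to M\times M$ is continuous because $\ast:I\times I\to I$ is continuous and the two branches (for $u\le 1/2$ and $u\ge 1/2$) agree on $u=1/2$, where both give $(\kappa(x)(s),\lambda(y)(t))=(\tilde\kappa(x,s),\tilde\lambda(y,t))$, which is the diagonal point — here one uses the defining property $(x,s,y,t)\in D\iff\tilde\kappa(x,s)=\tilde\lambda(y,t)$; continuity of $\kappa\losange\lambda$ into the path space then follows from the exponential law for compact-open topology cited in Section~\ref{Path homology}. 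That the path $u\mapsto(\kappa\,\tilde\losange\,\lambda)(x,s,y,t,u)$ actually starts at $(\star_3,\star_1)$ and ends at $(\star_2,\star_4)$ uses $\ell\ast 0=0$, $\ell\ast 1=1$ together with $\kappa(x)(0)=\star_1$, $\kappa(x)(1)=\star_2$, $\lambda(y)(0)=\star_3$, $\lambda(y)(1)=\star_4$, so indeed it lands in $\Omega(M\times M,(\star_3,\star_1),(\star_2,\star_4))=\Omega_{32}\times\Omega_{14}$. Compatibility with $\theta$: for faces $F,F'$ of $D$ of the same type, $\theta_{F,F'}$ is the restriction of $\varphi_{A_F,A_{F'}}\times\id_I\times\psi_{B_F,B_{F'}}\times\id_I$; since $\kappa$ is compatible with $\varphi$ (so $\tilde\kappa\circ(\varphi_{A_F,A_{F'}}\times\id_I)=\tilde\kappa|_{A_F\times I}$) and $\lambda$ is compatible with $\psi$, the formula for $\kappa\,\tilde\losange\,\lambda$ depends on $(x,s,y,t)$ only through $\tilde\kappa$ and $\tilde\lambda$, hence $(\kappa\losange\lambda)\circ\theta_{F,F'}=(\kappa\losange\lambda)|_F$. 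This completes the verification. The only mildly delicate step is the manifold-with-faces claim for $D$, but that has already been reduced to the cited transversality results in \cite{MrOd}; everything else is a bookkeeping check that the partition and weight structures on $K$ and $L$ transport correctly to $D$.
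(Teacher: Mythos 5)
Your proof is correct and is essentially the argument the paper has in mind; the paper's published proof remarks that the manifold-with-faces structure, orientation, partition, and weight on $D$ are already established by the preceding discussion and verifies only the compatibility of $\kappa\losange\lambda$ with $\theta$, which you do in the same way. The extra bookkeeping you supply — the continuity of $\kappa\losange\lambda$ via the agreement of the two branches at $u=1/2$ on the diagonal, the endpoint computation showing the image lies in $\Omega_{32}\times\Omega_{14}$, and the verification of axioms (a) and (b) for $\theta$ — is accurate and harmless, just more explicit than what the paper records.
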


\begin{proof} We need only to show that the  map $\kappa\losange \lambda$ is compatible with the partition~$\theta$.
Consider two faces $F$ and $F'$  of the same type in $D$ and set $$A=A_F, \,\, B=B_F,  \,\,  A'=A_{F'}, \quad  {\rm {and}} \quad B'=B_{F'}.$$  For any $(x,s,y,t) \in F$ and $u\in [0,1/2]$, we have
\begin{eqnarray*}
(\kappa\losange \lambda)\left(\theta_{F,F'}(x,s,y,t)\right)(u)
&=& (\kappa\losange \lambda)\left(\varphi_{A,A'}(x),s,\psi_{B,B'}(y),t \right)(u)\\
&=&  \left(\lambda\big(\psi_{B,B'}(y) \big)(t\ast u), \kappa\big(\varphi_{A,A'}(x)\big) (s\ast u)  \right) \\
&=&  \left(\lambda\big(y \big)(t\ast u), \kappa\big(x\big) (s\ast u)  \right)  \\
&=& (\kappa\losange \lambda)\left(x,s,y,t\right)(u)
\end{eqnarray*}
where the third equality follows from the compatibility of  $\kappa$ with $\varphi$ and of $\lambda$ with~$\psi$.
A similar argument works for $u\in[1/2,1]$.
Thus,  $(\kappa\losange \lambda) \, \theta_{F, F'} = (\kappa\losange \lambda)\vert_{F}$.
\end{proof}

\subsection{Properties of $\calD$} \label{prop_D}

We study the behavior of  the polychain  $\calD(\calK,\calL)$    under
the operations on $ \calK$ and $\calL$   introduced in Sections~\ref{reducedpolychains--} and   \ref{reducedpolychains}.

\begin{lemma}\label{reduction_D}
Let  $\calK,\calK'$ be smooth $p$-polychains in $\Omega^\circ_{12}$
and  $\calL,\calL'$ be smooth $q$-polychains in $\Omega^\circ_{34}$ such  that $\calK,\calK'$ and $\calL,\calL'$ are pairwise transversal. Then
\begin{itemize}
\item[(i)]  $\calD(k\calK ,\calL)\cong \calD(\calK , k\calL)\cong k \calD(\calK ,\calL)$ for any $k\in \kk$;
\item[(ii)] $\red \calD(\red\calK,\red\calL)=\red \calD(\calK,\calL)$;
\item[(iii)]  $\partial \calD(\calK,\calL)   \cong   (-1)^n \calD(\partial \calK, \calL)\sqcup   (-1)^{n+p+1} \calD( \calK,\partial \calL)$;
  \item[(iv)] $\partial^r \calD(\calK,\calL) = (-1)^n \red \calD(\partial^r \calK, \red\calL) \sqcup   (-1)^{n+p+1} \red \calD( \red\calK,\partial^r \calL)$;
\item[(v)] $\calD(\calK \sqcup \calK',\calL) \cong \calD(\calK ,\calL) \sqcup \calD( \calK',\calL)$, 
 $\calD(\calK,\calL\sqcup \calL') \cong \calD(\calK ,\calL) \sqcup \calD( \calK,\calL')$.
\end{itemize}
\end{lemma}

\begin{proof} Claims (i) and (v) are obvious. Claim  (iv) easily follows from (ii) and~(iii). We prove (ii).
It is clear  that
\begin{eqnarray*}  \red_+ \calD(\red_+(-),\red_+(-))&=&\red_+\calD(-,-)
\end{eqnarray*}
and
\begin{eqnarray*}
  \red_0\calD(\red_0(-),\red_0(-)) &=& \red_0\calD(-,-)   .
\end{eqnarray*}
Using the identities $ \red  \red_0= \red  =\red \red_+$, we conclude that
\begin{eqnarray*}
   \red \calD(\red \calK,\red \calL) &=&  \red  \red_0  \calD(\red_0\red_+\calK,\red_0\red_+\calL)   \\
&=& \red \red_0\calD( \red_+\calK,\red_+\calL)\\
&=&   \red   \red_+ \calD(\red_+\calK,\red_+\calL) \\
&=&   \red   \red_+ \calD(\calK, \calL)  \ = \ \red \calD(\calK,\calL) .
\end{eqnarray*}

  We now prove (iii).  Let $\calK=(K,\varphi,u,\kappa)$, $\calL=(L,\psi,v,\lambda)$ and
$\calD(\calK,\calL)=(D,\theta,w,\kappa\losange \lambda)$   as  in Section \ref{def_D}.
  Consider  the boundary polychain
$$\partial \calD( \calK,\calL)= (D^\partial, \theta^\partial, w^\partial, (\kappa\losange \lambda)^\partial)$$ as defined in  Section~\ref{reducedpolychains},
as well as the polychains
$\calD(\partial \calK,\calL)= (\stl D,\stl \theta,\stl w,\kappa^\partial \losange \lambda)$ and $\calD( \calK,\partial\calL)=(D\str, \theta\str,w\str ,\kappa \losange \lambda^\partial).$
 We verify  that
\begin{equation}\label{D-partial}
D^\partial  \cong   (-1)^n\, \stl D \, \sqcup \, (-1)^{n+p+1} D\str   .
\end{equation}
Consider a principal face $F$ of $D$. Since the codimension of~$F$ in~$D$ is equal to the codimension of  $N_F=A_F \times I \times B_F \times I$  in $N=K \times I \times L \times I$, the
  face $N_F $ is  a principal face of $N$. Therefore either $A_F$ is  a connected component  of $K$  and  $B_F$ is a principal face of $L$,
 or, $A_F$ is a principal face of $K$ and $B_F$ is a connected component of $L$.
 We first analyze the former case. Set  $N\str= K \times I \times L^\partial \times I$.
Then $F\subset D \subset N$ corresponds to  a connected component $F\str$ of~$D\str \subset N\str$
 via the     map $\id_K \times \id_I \times \, \iota \times \id_I:N\str \to N$  where $\iota: L^\partial \to L$ is the natural map as in Section~\ref{reducedpolychains}.
 The  orientation of $F$ induced by that of   $\partial D \subset D$  may differ from the orientation of $F\str$ induced by $D\str$, and
we now compute this difference.
  Let  $\varepsilon^1$ be  the trivial $1$-dimensional  vector bundle
  equipped  with the canonical orientation and let $-\varepsilon^1$ be the same bundle with the opposite orientation. Given a cartesian product of topological spaces,
   let  $\pr_i$ denote the projection onto the $i$-th factor.
   Set $N\str_F =K \times I \times  B_F  \times I$, which is a submanifold with faces of $N^*$ of  codimension~$0$ containing $F\str$.
 We can also view $N\str_F$ as a submanifold of $N$ of codimension $1$, so that $F\str \subset N\str_F$ corresponds to $F \subset N$.
Using the orientation conventions of the Introduction and using the letter $T$ for the tangent vector bundle of a manifold,
we obtain the following  orientation-preserving   isomorphisms of oriented vector bundles:
\begin{eqnarray*}
TN\vert_{  N\str_F  } & = & \pr^*_1(TK)\vert_{  N\str_F  } \oplus \pr^*_2(TI)\vert_{  N\str_F  }
 \oplus \pr^*_3(TL)\vert_{  N\str_F  } \oplus \pr^*_4(TI)\vert_{  N\str_F  }  \\
 & \cong & \pr^*_1(TK) \oplus \pr^*_2(TI) \oplus \varepsilon^1 \oplus \pr^*_3(T   B_F  ) \oplus \pr^*_4(TI)\\
 &\cong & (-1)^{p+1}  \varepsilon^1\oplus
 \underbrace{\pr^*_1(TK) \oplus \pr^*_2(TI) \oplus \pr^*_3(T  B_F ) \oplus \pr^*_4(TI).}_{= T   N\str_F  }
\end{eqnarray*}
Therefore
\begin{eqnarray*}
TN\vert_{{F\str}} & \cong & (-1)^{p+1}  \varepsilon^1\oplus T   N\str_F   \vert_{  {F}\str  }\\
& \cong & (-1)^{p+1}  \varepsilon^1\oplus \bignu_{   N^*_F  } {  F\str  }  \oplus T {  F\str  }
  \ = \ (-1)^{p+1}  \varepsilon^1\oplus \bignu_{ N^* } {F\str}  \oplus T { F\str }
\end{eqnarray*}
where   the letter   $\bignu$ stands for the normal vector bundle of a submanifold in the ambient manifold.
On the other hand, restricting  the   orientation-preserving   isomorphism  of oriented vector bundles $TN\vert_D \cong \bignu_ND \oplus TD$ to $F$ we obtain  that
\begin{eqnarray*}
TN\vert_F \ \cong \  \bignu_ND \vert_F \oplus TD\vert_F
&\cong&  \bignu_ND \vert_F \oplus \varepsilon^1\oplus TF
 \cong (-1)^n \varepsilon^1 \oplus  \bignu_ND \vert_F \oplus  TF.
\end{eqnarray*}
Since the orientations of   $\bignu_{ N\str } {F\str} = \bignu_{ N\str_F } {F\str}$   and $\bignu_ND $
are both induced  by the orientation of the normal bundle of $\diag_M$ inx
 $M\times M$,
  the bundle isomorphism $  \bignu_{N\str_F} {F\str}   \to \bignu_ND \vert_F$ induced by the inclusion $  N^*_F   \subset N$   is   orientation-preserving.
Combining with the computations above, we deduce that    $TF \cong (-1)^{n+p+1}\, TF\str$.
The case where $A_F$ is a principal face of~$K$ and $B_F$ is a connected component of~$L$ is treated   similarly.
In this case,    $F$    corresponds to   a connected component~$\stl F $  of~$\stl D$,
and the orientation of $F $ induced from that of  $D$  differs     by $(-1)^n$ from the orientation of $\stl F$ induced by $\stl D$.
This gives the   diffeomorphism    \eqref{D-partial} of oriented  manifolds with faces,
which   is easily checked to be a diffeomorphism  of polychains as in (iii).
\end{proof}

\section{The operation $\widetilde \Upsilon$} \label{Upsilon-tilde}

  We  introduce an operation $\widetilde \Upsilon$ in the face homology of path  spaces.

\subsection{Definition and properties of  $\widetilde \Upsilon$}  \label{dfp_tildeUpsilon} 

  First, we show that the  intersection  operation   defined in Section \ref{operation_D}
induces an operation in face homology.

\begin{lemma}\label{Upsilon_tilde_def}
For any integers $p,q \geq 0$,
the  intersection $(\calK, \calL)\mapsto \calD(\calK, \calL)$ from  Section~\ref{def_D}  induces a bilinear map
$$
\widetilde H_p (\Omega_{12}) \times \widetilde H_q (\Omega_{34})
\to \widetilde H_{p+q+2-n} ( \Omega_{32} \times \Omega_{14}).
$$
\end{lemma}

\begin{proof}
Consider any face homology classes
${a}\in \widetilde H_p (\Omega_{12})$ and ${b}\in \widetilde H_q(\Omega_{34})$.
By  Lemmas \ref{loopsp}   and    \ref{keytheor}, the pair $({a},{b})$ can be transversely represented
by a smooth reduced  $p$-polycycle $\calK$  in $\Omega_{12}^\circ$
and a smooth reduced $q$-polycycle  $\calL$ in  $\Omega_{34}^\circ$.
It  follows from   Lemma \ref{reduction_D}.(iv)   that  the polychain $\calD(\calK,\calL)$ in $\Omega_{32} \times \Omega_{14}$  is a polycycle.
Consider another such pair  $(\check\calK,\check\calL)$  transversely representing $({a},{b})$.
We claim that the polycycles $\calD(\calK,\calL)$ and $\calD(\check\calK,\check \calL)$ are homologous.
By Lemma \ref{keytheor},   it suffices to prove this claim in  the following two cases:
\begin{itemize}
\item $\calL=\check\calL$ and there exists
a smooth $(p+1)$-polychain $\calM$ in $\Omega^\circ_{12}$ transversal to $\calL$
such that  $\calK  \cong \check\calK  \sqcup \partial^r\calM$
or $\check\calK  \cong \calK  \sqcup \partial^r\calM$;
\item   $\calK=\check\calK$ and there exists
a smooth $(q+1)$-polychain $\calN$ in $\Omega^\circ_{34}$ transversal to $\calK$
such that  $\calL  \cong \check\calL  \sqcup \partial^r\calN$
or $\check\calL  \cong \calL  \sqcup \partial^r\calN$.
\end{itemize}
Assume for concreteness that  $\calL=\check\calL$ and  $\calK  \cong \check\calK  \sqcup \partial^r\calM$ (the other cases can be treated    similarly).
Since $\calL$ is a reduced  polycycle,   Lemma \ref{reduction_D}.(iv)   implies that
$$
\partial^r  \calD\left(\calM,\calL\right)
= (-1)^n  \red  \calD\left(\partial^r \calM,\calL\right).
$$
This and    Lemma  \ref{reduction_D}.(v)   imply   that
$$
 \red  \calD\left(\calK,\calL\right) \cong  \red  \calD\left(\check\calK  \sqcup \partial^r\calM,\check\calL\right)
\cong  \red  \calD\left(\check\calK ,\check\calL\right) \sqcup  \partial^r \calD\left(  (-1)^n \calM,\check\calL\right) .
$$
 We conclude   that
$\calD\left(\calK,\calL\right)$ is homologous to $\calD\left(\check\calK,\check\calL\right)$.
Thus,  the   face homology class $\lb\calD\left(\calK,\calL\right) \rb\in \widetilde  H_{p+q+2-n} ( \Omega_{32} \times \Omega_{14})$ of $\calD\left(\calK,\calL\right)$
  depends only on ${a}\in \widetilde  H_p (\Omega_{12})$ and ${b} \in \widetilde  H_q (\Omega_{34})$. This defines the pairing in the statement of the lemma.
  The bilinearity of this  pairing  follows from    assertions (i) and (v) in Lemma  \ref{reduction_D}.
\end{proof}

The pairing produced by Lemma \ref{Upsilon_tilde_def} induces a linear map
$$
\widetilde H_p (\Omega_{12}) \otimes \widetilde H_q (\Omega_{34})
  \longrightarrow   \widetilde H_{p+q+2-n} ( \Omega_{32} \times \Omega_{14}).
$$ Taking the direct sum over all   $p,q\geq 0$,
we obtain a  linear map  of degree $2-n$
\begin{equation}\label{defoftildeUpsilon}
\widetilde \Upsilon  : \widetilde H_\ast (\Omega_{12}) \otimes \widetilde H_\ast (\Omega_{34}) \longrightarrow   \widetilde H_{\ast} ( \Omega_{32} \times \Omega_{14}).
\end{equation}
To stress the role of the   tuple of  base points $(\star_1,\star_2,\star_3,\star_4)$    we will also denote this map by $\widetilde \Upsilon_{12,34}$.
  Any permutation $(\star_i,\star_j,\star_k,\star_l)$ of  $(\star_1,\star_2,\star_3,\star_4)$    
  such that $\{\star_i,\star_j\}\cap\{\star_k,\star_l\}=\varnothing$ yields   a   map
$$
\widetilde \Upsilon_{ ij,kl}:  \widetilde H_\ast (\Omega_{ij}) \otimes \widetilde H_\ast (\Omega_{kl})
 \longrightarrow   \widetilde H_{\ast} ( \Omega_{kj} \times \Omega_{il}).
 $$
We now  establish the following symmetry  for $\widetilde \Upsilon$.

\begin{lemma}\label{antisymmetry_Upsilon_tilde} Let   $\perm:  \Omega_{32} \times \Omega_{14} \to \Omega_{14} \times \Omega_{32}$
be the map   permuting  the two factors of the cartesian product.
For any   ${a}\in \widetilde H_p (\Omega_{12})$ and ${b}\in \widetilde H_q (\Omega_{34})$ with $p,q\geq 0$,
$$
\perm_*  \widetilde \Upsilon_{12,34}({a} \otimes {b})=   (-1)^{(p+1)(q+1)+n }\,  \widetilde \Upsilon_{34,12} ({b} \otimes {a}).
$$
\end{lemma}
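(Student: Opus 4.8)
The plan is to trace through the definition of $\calD(\calK,\calL)$ in Section~\ref{def_D} and compare it directly with $\calD(\calL,\calK)$ (with the roles of the two polychains, and hence of the base-point tuples $(\star_1,\star_2,\star_3,\star_4)$ and $(\star_3,\star_4,\star_1,\star_2)$, interchanged). First I would fix smooth reduced polycycles $\calK=(K,\varphi,u,\kappa)$ in $\Omega^\circ_{12}$ and $\calL=(L,\psi,v,\lambda)$ in $\Omega^\circ_{34}$ transversely representing $a$ and $b$, as permitted by Lemma~\ref{keytheor}. Set $N=K\times I\times L\times I$ and $N'=L\times I\times K\times I$, and let $D\subset N$, $D'\subset N'$ be the respective preimages of $\diag_M$. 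The obvious shuffle diffeomorphism $N\to N'$ sending $(x,s,y,t)\mapsto (y,t,x,s)$ carries $D$ onto $D'$; the first task is to check that it is a diffeomorphism of the underlying \emph{partitioned} manifolds with faces, i.e.\ that it matches the partition $\theta$ of $D$ with the partition $\theta'$ of $D'$ (immediate from the symmetric form of the definition of $\theta$) and that it matches the weights $w$ and $w'$ (both send a component in $X\times I\times Y\times I$ to $u(X)v(Y)$, so this is automatic).

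Next I would compare the \emph{maps} to the loop spaces. The map $\kappa\losange\lambda: D\to \Omega_{32}\times\Omega_{14}$ and the map $\lambda\losange\kappa: D'\to \Omega_{14}\times\Omega_{32}$ are built from the same formula~\eqref{bowtie} but with the two path families swapped and the parameter $u$ effectively replaced by $1-u$ (so that the diagonal point is still approached at $u=1/2$ but from the other side). Concretely, under the shuffle identification $D\cong D'$, one has $\lambda\,\tilde\losange\,\kappa(y,t,x,s,u) = \perm\bigl(\kappa\,\tilde\losange\,\lambda(x,s,y,t,1-u)\bigr)$, where $\perm$ swaps the two $M$-factors; since reparametrising a loop by $u\mapsto 1-u$ does not change its homotopy class and the endpoints get swapped correctly, this shows $\lambda\losange\kappa = \perm\circ(\kappa\losange\lambda)$ up to a homotopy of maps $D\to \Omega_{14}\times\Omega_{32}$ (a deformation of polychains in the sense of Section~\ref{homotopy_singular}). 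By Lemma~\ref{homotimplieshomol} this suffices at the level of face homology.

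The remaining, and main, point is the orientation bookkeeping, which is where I expect the real work to be. One must compare the orientation that $D$ inherits inside $N=K\times I\times L\times I$ with the orientation that $D'$ inherits inside $N'=L\times I\times K\times I$, both being pull-backs of the fixed co-orientation of $\diag_M$ in $M\times M$. The shuffle $N\to N'$ moves the block $K\times I$ (of dimension $p+1$) past the block $L\times I$ (of dimension $q+1$), introducing a Koszul sign $(-1)^{(p+1)(q+1)}$ on the product orientations; the co-orientation of $\diag_M$ is untouched by this move. Separately, the reparametrisation $u\mapsto 1-u$ and the swap $\perm$ of the two $M$-factors combine: $\perm$ reverses the co-orientation of $\diag_M$ in $M\times M$ by $(-1)^{n}$ (comparing the product orientation of $M\times M$ with itself after transposition, for $\dim M = n$), while the reparametrisation of the bowtie interval contributes no sign to $D$ itself (the interval $I$ carrying $u$ is not part of $D$, only of $D\times I$ in the adjoint picture, and deformations ignore it). Assembling these factors gives the claimed sign $(-1)^{(p+1)(q+1)+n}$. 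I would present this as an explicit chain of orientation-preserving isomorphisms of oriented vector bundles over $D$, in the style of the computation~\eqref{D-partial} in the proof of Lemma~\ref{reduction_D}, then invoke Lemma~\ref{Upsilon_tilde_def} to conclude that the identity descends to $\widetilde\Upsilon$ on face homology classes, noting that the pair $(\calL,\calK)$ transversely represents $(b,a)$ precisely because transversality of $\kappa$ and $\lambda$ is symmetric.
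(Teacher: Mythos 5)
Your overall strategy matches the paper's exactly: you introduce the shuffle diffeomorphism $\mathsf{h}:(k,s,l,t)\mapsto(l,t,k,s)$ between $K\times I\times L\times I$ and $L\times I\times K\times I$, note that it restricts to a diffeomorphism $D\to D'$, track the sign $(-1)^{(p+1)(q+1)}$ from the degree of $\mathsf{h}$ and the sign $(-1)^n$ from the swap $\mathsf{q}$ of the two $M$-factors (which flips the co-orientation of $\diag_M$ for $n$ odd), and check compatibility with the partitions, weights, and maps.

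The one genuine error is in your claim
$$
(\lambda\,\tilde\losange\,\kappa)(y,t,x,s,u) = \perm\bigl((\kappa\,\tilde\losange\,\lambda)(x,s,y,t,1-u)\bigr).
$$
No reparametrization $u\mapsto 1-u$ is needed, and with it the formula is false. Unwinding the definition~\eqref{bowtie}: for $u\in[0,1/2]$, $(\lambda\,\tilde\losange\,\kappa)(y,t,x,s,u) = (\kappa(x)(s*u),\lambda(y)(t*u))$, while your right-hand side for the same $u$ equals $(\lambda(y)(t*(1-u)),\kappa(x)(s*(1-u)))$ --- different factor order and different time parameters. The correct observation, which the paper uses, is that the equality holds \emph{on the nose} without any reparametrization:
$$
(\lambda\,\tilde\losange\,\kappa)(\mathsf{h}(x,s,y,t),u) = \mathsf{q}\bigl((\kappa\,\tilde\losange\,\lambda)(x,s,y,t,u)\bigr),
$$
because the two branches of the bowtie formula already swap the order of the two coordinates as $u$ crosses $1/2$, and that built-in asymmetry is precisely what $\perm$ corrects for. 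Consequently $(\lambda\losange\kappa)\circ\mathsf{h}|_D = \perm\circ(\kappa\losange\lambda)$ as maps to $\Omega_{14}\times\Omega_{32}$, and no appeal to Lemma~\ref{homotimplieshomol} is needed: $\mathsf{h}|_D$ is a diffeomorphism of polychains in the strict sense. Moreover your fallback to a homotopy would not work as described, since the reparametrization $u\mapsto 1-u$ reverses paths and so does not even define a self-map of $\Omega_{14}\times\Omega_{32}$; one would have to argue differently. Your orientation count and final sign are nevertheless correct, and with the formula corrected the argument is exactly that of the paper.
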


 \begin{proof}
  We  assume that $({a},{b})$ is  transversely represented
by a smooth   reduced   $p$-polycycle $\calK=(K,\varphi,u,\kappa)$  in $\Omega_{12}^\circ$
and a smooth   reduced    $q$-polycycle  $\calL=(L,\psi,v,\lambda)$ in $\Omega_{34}^\circ$.
 Let $\calD(\calK,\calL)=(D,\theta,w,\kappa\losange \lambda)$ and $\calD(\calL,\calK)=(D',\theta',w',\lambda\losange \kappa)$.
Let $\mathsf{q}$ be the permutation map $M \times M\to M \times M, (m_1, m_2)\mapsto  (m_2,m_1)$.
This map  preserves $\diag_M$ pointwise  and preserves (respectively,\ inverts) the orientation of the normal bundle of $\diag_M$ in $M\times M$ if $n$ is even (respectively, odd).
Let  $$\mathsf{h} : K \times I \times L \times I \to  L \times I \times K \times I$$ be the permutation map defined by $(k,s,l,t)\mapsto (l,t,k,s)$.
Clearly,  $$\deg \mathsf{h} = (-1)^{(p+1)(q+1)} \quad {\rm {and}} \quad (\tilde \lambda \times \tilde \kappa)\mathsf{h}   =   \mathsf{q}  (\tilde \kappa \times \tilde \lambda).$$
Thus,  $\mathsf{h}$ restricts to a diffeomorphism $\mathsf{h}\vert_D:D \to D'$  of  degree $ (-1)^{(p+1)(q+1)+n}$.
This diffeomorphism carries   the weight $w$ to $w'$ and the partition $\theta$ to   $\theta'$.
Also,  $(\lambda \losange \kappa)  \circ    \mathsf{h}\vert_D  = \perm   \circ   (\kappa \losange \lambda)$.
Thus, $\mathsf{h}\vert_D$ is a diffeomorphism of  the polychains  $\perm_*\calD(\calK,\calL)$ and  $(-1)^{(p+1)(q+1)+n}\calD(\calL,\calK)$. We conclude that
\begin{eqnarray*}
\perm_*\widetilde \Upsilon_{12,34}({a} \otimes {b}) \  = \ \lb \perm_*\calD(\calK,\calL) \rb &= & (-1)^{(p+1)(q+1)+n} \lb  \calD(\calL,\calK) \rb \\
&=& (-1)^{(p+1)(q+1)+n }   \widetilde \Upsilon_{34,12} ({b} \otimes {a}).
\end{eqnarray*}

\up
\end{proof}

\subsection{Computation of $\widetilde \Upsilon$}\label{computationoftildeupsilon}

To evaluate $\widetilde \Upsilon  $ on a pair of face homology classes in $ \Omega_{12},  \Omega_{34}  $,
we   represent these classes by smooth  reduced   transversal polycycles in
$\Omega_{12}^\circ, \Omega_{34}^\circ$ and    take the face homology class of the   intersection polycycle.
We   now explain    how to compute $\widetilde \Upsilon  $ from  more general   polycycles in $ \Omega_{12},  \Omega_{34}  $.

We say that   polycycles (possibly non-smooth and non-reduced)
$\calK=(K,\varphi, u,\kappa)$ in $\Omega_{12} $ and $\calL=(L,\psi,v,\lambda)$ in $ \Omega_{34} $  are   \index{polycycle!admissible} \emph{admissible}
if there exist  open sets $U\subset K\times \Int (I)$ and $V\subset L\times \Int (I)$ such that
\begin{itemize}
\item[(i)] the maps $\tilde \kappa \vert_U:U\to M$ and $\tilde \lambda \vert_V:V\to M$ are smooth and their images do not meet $\partial M$;
\item[(ii)]  $(\tilde \kappa\times \tilde \lambda )^{-1} (\diag_M) \subset U\times V$;
\item[(iii)] for any face $E$ of $K$ and any face $F$ of $L$, the restriction of $\tilde \kappa\times \tilde \lambda$ to
$$\big (  \Int (E  \times I) \cap U \big ) \times \big (  \Int (F \times  I)  \cap V \big )$$
 is transversal to   $\Int(\diag_M)$ in the usual sense of differential topology.
\end{itemize}
If $\calK$ and $\calL$ are admissible, then   we can define the intersection polycycle $\calD(\calK, \calL)$ in $\Omega_{32}\times \Omega_{14}$
repeating word for word the definitions of Section \ref{operation_D}.
The   polycycle  $\calD(\calK, \calL)$ depends only on   $\calK , \calL$   and does not depend on the choice   of $U, V$.

\begin{lemma}\label{newcomputation} Let $\calK $   and $\calL$ be admissible polycycles in $\Omega_{12} $ and $ \Omega_{34} $ representing, respectively,
$ a\in  \widetilde H_\ast (\Omega_{12})$ and $ b\in  \widetilde H_\ast (\Omega_{34}) $. Then  $\widetilde \Upsilon (a,b)= \lb \calD(\calK, \calL) \rb$.
\end{lemma}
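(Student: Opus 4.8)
The plan is to reduce the statement about general admissible polycycles to the already-established definition of $\widetilde\Upsilon$ on smooth reduced transversal polycycles. So, starting from admissible polycycles $\calK=(K,\varphi,u,\kappa)$ in $\Omega_{12}$ and $\calL=(L,\psi,v,\lambda)$ in $\Omega_{34}$ with associated open sets $U\subset K\times\Int(I)$, $V\subset L\times\Int(I)$ as in (i)--(iii), I would first invoke Lemma~\ref{loopsp}: the homotopy $\{f_t\}$ deforming $\Omega_{12}$ into $\Omega^\circ_{12}$ (and similarly for $\Omega_{34}$) pushes $\calK$ and $\calL$ to smooth polycycles $f_*(\calK)$ in $\Omega^\circ_{12}$, $f_*(\calL)$ in $\Omega^\circ_{34}$ representing the same classes $a$ and $b$. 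Then by Theorem~\ref{loopsp+++++} and Lemma~\ref{keytheor} one further deforms them to smooth reduced transversal polycycles $\calK'$, $\calL'$, whose intersection polycycle computes $\widetilde\Upsilon(a,b)$ by the very definition \eqref{defoftildeUpsilon}. The content of the lemma is therefore the assertion that the intersection polycycle $\calD(\calK,\calL)$ formed directly from the admissible (non-smooth, non-reduced) data is \emph{homologous} to $\calD(\calK',\calL')$.

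The key step is to produce an explicit homology between $\calD(\calK,\calL)$ and $\calD(\calK',\calL')$ by tracking the deformation through the $\calD$-construction. I would carry this out in two stages, mirroring the proof structure of Lemma~\ref{Upsilon_tilde_def}: first deform $\calK$ to $\calK'$ keeping $\calL$ fixed, then deform $\calL$ to $\calL'$ keeping $\calK'$ fixed. For the first stage, the point is that the deformation $\{\calK^t\}$ from $\calK^0=\calK$ to $\calK^1=\calK'$ can be chosen (by Lemmas~\ref{defo1}, \ref{defo1+}, \ref{defo1++eee}, \ref{defo1++}, applied relative to the faces where transversality already holds) so that $\calK^t$ together with $\calL$ remains ``admissible-transversal'' to the diagonal at every $t$; the homotopy restricted to the relevant open sets is $C^\infty$-small and preserves transversality. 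This yields a one-parameter family of intersection polycycles $\calD(\calK^t,\calL)$, and running the cylinder construction on the parameter $t$ --- exactly as in the proof of Lemma~\ref{homotimplieshomol} --- gives an $(n{+}1)$-chain (in the appropriate degree) whose reduced boundary is $\red\calD(\calK,\calL)\sqcup\red(-\calD(\calK',\calL))$, hence $\calD(\calK,\calL)\simeq\calD(\calK',\calL)$. The second stage is symmetric, giving $\calD(\calK',\calL)\simeq\calD(\calK',\calL')$, and combining the two yields the result.

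The main obstacle I anticipate is the parametrized transversality and admissibility bookkeeping in the first stage: the admissibility hypotheses (i)--(iii) only assert smoothness and transversality on the open sets $U,V$, not globally, so when deforming $\kappa$ one must keep the inverse image of the diagonal inside shrinking open neighborhoods while maintaining transversality face-by-face. Concretely, one must argue that since $(\tilde\kappa\times\tilde\lambda)^{-1}(\diag_M)$ is compact and contained in the open set $U\times V$, a sufficiently $C^\infty$-small deformation still has its diagonal preimage inside $U\times V$, so that the Transversality Theorem applied on $U$ (as in the proof of Lemma~\ref{defo1}) does the job; the non-smoothness of $\kappa$ away from $U$ is irrelevant because the intersection polychain is supported near the diagonal preimage. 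Once this localization is set up, the homotopy-to-homology argument via the cylinder construction is essentially a repetition of arguments already in the paper, so I expect no further serious difficulty. A minor point to handle with care is that $\calK,\calL$ need not be reduced, but since $\lb\calD(\calK,\calL)\rb=\lb\red\calD(\calK,\calL)\rb$ and, by Lemma~\ref{reduction_D}(ii), $\red\calD(\red\calK,\red\calL)=\red\calD(\calK,\calL)$, one may replace $\calK,\calL$ by their reductions at the outset without changing either side of the claimed identity.
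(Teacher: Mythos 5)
Your proposal follows the same overall strategy as the paper — deform the admissible data to the smooth reduced transversal situation and argue that the intersection polycycle changes only by a homology, exploiting compactness of the diagonal preimage and its containment in $U\times V$ — but there is a genuine gap where you invoke the cylinder construction of Lemma~\ref{homotimplieshomol}. That lemma applies only to a \emph{deformation} of a polychain in the technical sense of Section~\ref{homotopy_singular}: the underlying manifold with faces $K$, partition, and weight stay fixed, and only the map $\kappa$ varies. If during your deformation $\{\calK^t\}$ the adjoint map $\tilde\kappa^t$ actually moves near $(\tilde\kappa\times\tilde\lambda)^{-1}(\diag_M)$, then the underlying manifold $D^t = (\tilde\kappa^t\times\tilde\lambda)^{-1}(\diag_M)$ changes with $t$, and $\{\calD(\calK^t,\calL)\}_t$ is not a deformation of $\calD(\calK,\calL)$ in the sense needed for that lemma; keeping $D^t$ merely \emph{inside} $U\times V$ (which is what your $C^\infty$-smallness argument delivers) is not enough, since the homology between the endpoints would then require a genuine cobordism argument over the parameter $t$, not the cylinder construction. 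The paper sidesteps this entirely by choosing deformations that are \emph{constant} on the relevant region: a small smoothing deformation constant on neighborhoods of the compact sets $A\subset U$, $B\subset V$ containing the diagonal preimage, followed by the interior-pushing homotopy $f_{ij}$ from the proof of Lemma~\ref{loopsp} chosen to be constant on $M\setminus W$ where $W$ is a small collar of $\partial M$ missed by $\tilde\kappa(U)\cup\tilde\lambda(V)$. With this choice, $\tilde\kappa^t\vert_U=\tilde\kappa\vert_U$ and $\tilde\lambda^t\vert_V=\tilde\lambda\vert_V$ throughout, hence $D^t$ is literally unchanged, the pair stays admissible with the same $U,V$, and Lemma~\ref{homotimplieshomol} applies on the nose.

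Two smaller points. First, the claim that the retraction $f$ of Lemma~\ref{loopsp} ``pushes $\calK$ and $\calL$ to smooth polycycles'' is inaccurate: $f_*(\calK)$ is smooth only if $\calK$ already is, which an admissible polycycle need not be; the paper therefore smooths $\kappa$ and $\lambda$ \emph{first} and only then applies the push into $\Omega^\circ$. Second, your two-stage plan (deform $\calK$ to $\calK'$ while $\calL$ stays fixed, then deform $\calL$) produces an intermediate pair $(\calK',\calL)$ mixing a smooth polycycle in $\Omega^\circ_{12}$ with an admissible polycycle in $\Omega_{34}$; you would still have to verify that this hybrid pair is admissible, which is an extra bookkeeping burden the paper avoids by deforming $\kappa$ and $\lambda$ simultaneously at each step.
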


 \begin{proof} Let $\calK=(K,\varphi, u,\kappa)$  and $\calL=(L,\psi,v,\lambda)$.
 The  set $$ (\tilde \kappa\times \tilde \lambda )^{-1} (\diag_M) \subset K\times I\times L\times I$$  is closed and, hence, compact.
 By (ii), there are compact sets $A\subset U$ and $B\subset V$ such that $(\tilde \kappa\times \tilde \lambda )^{-1} (\diag_M) \subset A\times B$.
 Pick a small deformation  of~$\kappa$ and~$\lambda$  into smooth maps (in the class of maps compatible with the partitions).
 The deformation  may be chosen to be  constant on  some open neighborhoods $U'\subset U$, $V' \subset V$ of $A,B$, respectively,
 and  to be so small that the condition  (ii)    with $U \times V$ replaced by $U' \times V'$ is met during the deformation.
 The condition   (iii)  with $U,V$ replaced by $U', V'$ is automatically met during the deformation.
  By Lemma \ref{homotimplieshomol},   the face homology class $\lb \calD(\calK, \calL) \rb$ is preserved under such a deformation.
 Thus, without loss of generality we can assume from the very beginning that the maps $\kappa$ and $\lambda$ are smooth.

 Pick a small neighborhood $W$ of $\partial M$ in $M$ such that   $\tilde \kappa(U)\cup  \tilde \lambda (V) \subset M\setminus W$.
 The  proof  of Lemma \ref{loopsp} and Theorem \ref{loopsp+++++}  provides,  for any $i,j\in \{1,2,3,4\}$,
  a  homotopy  of the identity map $\id:\Omega_{ij}  \to \Omega_{ij}$ into a map  $f_{ij}: \Omega_{ij}  \to \Omega_{ij}^\circ \subset \Omega_{ij} $
such that    smooth polycycles in $\Omega_{ij}$ remain smooth throughout the homotopy.  The homotopy  acts on a path in $M$ from $\star_i$ to $\star_j$
by   pushing the interior points of the path inside~$M$ along a 1-parameter family of embeddings $M\hookrightarrow M$.
We can assume that   these embeddings are constant on $M \setminus W$ and so, the homotopy  fixes all points of the paths  lying in $M\setminus W$.
For $i=1, j=2$ and $i=3, j=4$, these homotopies induce  a smooth deformation of polycycles
$$\{\calK^t\}_{t\in I} = \{(K,\varphi, u, \kappa^t)\}_{t\in I}, \quad \{\calL^t\}_{t\in I} = \{(L,\psi,v, \lambda^t)\}_{t\in I}$$
 where $\kappa^0=\kappa$, $\lambda^0=\lambda$,   $\kappa^1=f_{12} \kappa$, $\lambda^1=  f_{34}\lambda$.
  Our assumptions ensure that  $\tilde \kappa^t\vert_U= \tilde \kappa\vert_U$ and $\tilde \lambda^t\vert_V = \tilde \lambda\vert_V$ for all  $t\in I$.
Thus  the set $(\tilde \kappa^t\times \tilde \lambda^t )^{-1} (\diag_M)$ does not depend on~$t$,
 and $\calK^t,\calL^t$ are admissible for all $t\in I$.
  Then  the polycycle  $\calD( \calK^1, \calL^1)$ is obtained from the polycycle  $\calD( \calK^0, \calL^0)=\calD( \calK, \calL)$ by deformation.
   Hence,   by Lemma \ref{homotimplieshomol},
    $\lb \calD( \calK^1, \calL^1) \rb=\lb \calD( \calK, \calL) \rb$. The polycycles $\red ( \calK^1) $ in $\Omega_{12}^\circ$ and   $  \red (\calL^1) $ in $\Omega_{34}^\circ$
   transversely represent the pair $(a,b)$.    We conclude that
\begin{eqnarray*}
\widetilde \Upsilon (a,b) &= & \lb \calD( \red \calK^1, \red \calL^1)  \rb \\  & =&   \lb \red \calD( \red \calK^1, \red \calL^1)  \rb \\
& = &    \lb \red \calD(  \calK^1,  \calL^1) \rb  \ = \  \lb \calD( \calK^1, \calL^1)  \rb \ = \ \lb \calD( \calK, \calL) \rb
\end{eqnarray*}
where the third equality is given by Lemma \ref{reduction_D}.(ii).
\end{proof}

\subsection{The Leibniz  rule} \label{Leibniz}

We formulate  for $\widetilde \Upsilon$   a Leibniz-type rule in the second variable.
(Since $\widetilde \Upsilon$ is symmetric   in the sense of Lemma \ref{antisymmetry_Upsilon_tilde},  a  Leibniz-type rule in the first variable easily follows.)
Pick  a fifth  base  point $\star_5\in \partial M $. For   any  $i,j,k \in \{1, \ldots , 5\}$,
the concatenation of paths $  \conc  : \Omega_{ij} \times \Omega_{jk}\to \Omega_{ik}$ induces a bilinear \index{concatenation pairing} \emph{concatenation pairing}
\begin{equation}\label{concatenation}
\widetilde H_*(\Omega_{ij})\times \widetilde H_*(\Omega_{jk}) \longrightarrow \widetilde H_*(\Omega_{ik}), \
(a , b) \longmapsto ab=\conc_*(a \times b),
\end{equation}
Similarly, for any $i,j,k,l,m \in \{1, \ldots , 5\}$,
the map  $\conc: \Omega_{ij} \times \Omega_{jk}\to \Omega_{ik}$  induces  bilinear pairings
$$
\widetilde H_{\ast} ( \Omega_{lm}  \times   \Omega_{ij}) \times \widetilde H_\ast(\Omega_{jk})
  \to   \widetilde H_{\ast} ( \Omega_{lm} \times    \Omega_{ik}),  \ (x , a)   \mapsto   xa=(\id \times \conc)_*(x \times a),
$$
$$
\widetilde H_\ast(\Omega_{ij}) \times \widetilde H_{\ast} ( \Omega_{jk} \times \Omega_{lm})
  \to    \widetilde H_{\ast} ( \Omega_{ik}\times \Omega_{lm}), \ (a , x)   \mapsto   ax=(\conc \times \id)_*(a \times x).
$$

\begin{lemma}\label{Leibniz_Upsilon_tilde}
If $\star_5\in \partial M \setminus \{\star_1,\star_2\}$,
then for any   $a\in \widetilde H_p (\Omega_{12})$,  $b\in \widetilde H_q (\Omega_{34})$, and $c \in  \widetilde H_{i} (\Omega_{45})$ with  $p,q,{i} \geq 0$,
$$
\widetilde\Upsilon_{12,35}(a \otimes bc)=  (-1)^{{i} } \widetilde\Upsilon_{12,34}(a \otimes b ) \, c+ (-1)^{(p+n+1)q} \,b \, \widetilde\Upsilon_{12, 45} (a \otimes c).
$$
\end{lemma}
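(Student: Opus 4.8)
The plan is to represent all three classes by polycycles, concatenate $b$ and $c$, and then compute $\widetilde\Upsilon_{12,35}(a\otimes bc)$ by means of the admissible--polycycle formula of Lemma~\ref{newcomputation}. First I would pick a smooth reduced $p$-polycycle $\calK=(K,\varphi,u,\kappa)$ in $\Omega^\circ_{12}$ representing $a$, a smooth reduced $q$-polycycle $\calL=(L,\psi,v,\lambda)$ in $\Omega^\circ_{34}$ representing $b$, and a smooth reduced $i$-polycycle $\calC=(C,\chi,w,\gamma)$ in $\Omega^\circ_{45}$ representing $c$, arranged so that $\calK$ is transversal both to $\calL$ and to $\calC$. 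This is achieved by two applications of Lemma~\ref{defo1++}, the second deformation being chosen $C^\infty$-small enough to preserve transversality to $\calL$; such deformations keep $\calK$ smooth, reduced and inside $\Omega^\circ_{12}$ and do not change $\lb\calK\rb=a$ (Lemma~\ref{homotimplieshomol}). The concatenation $\calN:=\conc_*(\calL\times\calC)$ is a polycycle in $\Omega_{35}$ with underlying manifold $L\times C$, partition $\psi\times\chi$ and path map $\mu$; by the definitions of the concatenation pairing and of the cross product in face homology it represents $bc$.

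Although $\calN$ need not lie in $\Omega^\circ_{35}$ --- its paths pass through $\star_4\in\partial M$ at their midpoints --- the pair $(\calK,\calN)$ is admissible in the sense of Section~\ref{computationoftildeupsilon}. Indeed, since $\star_1,\star_2\notin\{\star_3,\star_4,\star_5\}$ (using the hypothesis $\star_5\notin\{\star_1,\star_2\}$), a compactness argument shows that $(\tilde\kappa\times\tilde\mu)^{-1}(\diag_M)$ avoids neighbourhoods of the ends of all the interval factors, which provides open sets $U,V$ for conditions (i) and (ii); condition (iii) is exactly the transversality of $\calK$ to $\calL$ and to $\calC$. Hence Lemma~\ref{newcomputation} gives $\widetilde\Upsilon_{12,35}(a\otimes bc)=\lb\calD(\calK,\calN)\rb$, with $\calD(\calK,\calN)$ built just as in Section~\ref{def_D}.

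Next I would dissect the intersection manifold $D=(\tilde\kappa\times\tilde\mu)^{-1}(\diag_M)\subset K\times I\times(L\times C)\times I$. The point is that the sublocus of $D$ on which the concatenated time parameter equals $1/2$ --- where $\tilde\mu=\star_4$ --- is empty, because $\star_4\notin\{\star_1,\star_2\}$ is not in the image of $\tilde\kappa$. Thus $D=D_<\sqcup D_>$, where on $D_<$ the concatenated parameter is $<1/2$ (the intersection point lies on the $\lambda$-part of the concatenated path) and on $D_>$ it is $>1/2$ (on the $\gamma$-part). Reparametrising the concatenated interval on each piece gives diffeomorphisms $D_<\cong D(\calK,\calL)\times C$ and $D_>\cong L\times D(\calK,\calC)$ compatible with the induced partitions and weights, where $D(\calK,\calL)$ and $D(\calK,\calC)$ are the intersection loci of Section~\ref{def_D} for $(\calK,\calL)$ and $(\calK,\calC)$ (these are polycycles by Lemma~\ref{reduction_D}.(iv), since $\calL,\calC$ are reduced polycycles). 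Using the defining formula for $\losange$ and the key property of the operation $\ast$, one checks that on $D_<$ the map $\kappa\losange\mu$ agrees, up to a path-reparametrising deformation (harmless by Lemma~\ref{homotimplieshomol}), with the pair whose first coordinate is $\kappa\triangleleft\lambda$ and whose second coordinate is the concatenation of $\kappa\triangleright\lambda$ with $\gamma$; hence the part of $\calD(\calK,\calN)$ over $D_<$ is $(\id\times\conc)_*\!\left(\calD(\calK,\calL)\times\calC\right)$ up to an orientation/weight sign. Likewise, on $D_>$ the map $\kappa\losange\mu$ agrees with the pair whose first coordinate is the concatenation of $\lambda$ with $\kappa\triangleleft\gamma$ and whose second coordinate is $\kappa\triangleright\gamma$; hence the part over $D_>$ is $(\conc\times\id)_*\!\left(\calL\times\calD(\calK,\calC)\right)$ up to a sign.

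It remains to determine the two signs. By construction the orientation of $D$ is induced by the product orientation of $N=K\times I\times(L\times C)\times I$ together with the pulled-back orientation of the normal bundle of $\diag_M$ in $M\times M$, a bundle of rank $n$. On $D_<\cong D(\calK,\calL)\times C$, commuting the $C$-factor past the last, one-dimensional, interval factor contributes $(-1)^{i}$. On $D_>\cong L\times D(\calK,\calC)$, moving the $L$-factor into first position requires commuting it both through the $(p+1)$-dimensional block $K\times I$ and through the rank-$n$ normal bundle of $\diag_M$ which enters the orientation recipe, contributing $(-1)^{q(p+1)+qn}=(-1)^{(p+n+1)q}$. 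Combining these with additivity of $\lb-\rb$ under disjoint union, the identities $\lb\calD(\calK,\calL)\rb=\widetilde\Upsilon_{12,34}(a\otimes b)$ and $\lb\calD(\calK,\calC)\rb=\widetilde\Upsilon_{12,45}(a\otimes c)$, and the compatibility of the cross product of polychains with the cross product in face homology, yields
$$
\widetilde\Upsilon_{12,35}(a\otimes bc)=(-1)^{i}\,\widetilde\Upsilon_{12,34}(a\otimes b)\,c+(-1)^{(p+n+1)q}\,b\,\widetilde\Upsilon_{12,45}(a\otimes c),
$$
which is the asserted identity. The hard part is precisely this last sign bookkeeping: the factor $(-1)^{nq}$ produced by commuting the $L$-factor past the normal bundle of the diagonal (whose rank is the codimension $n$ of $\diag_M$ in $M\times M$) is easy to overlook, and without it one gets only $(-1)^{(p+1)q}$. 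A secondary delicate point is the map-level identification on $D_<$ and $D_>$, which depends on the specific design of the operation $\ast$ in Section~\ref{def_D} and on a reparametrising deformation.
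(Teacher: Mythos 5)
Your proposal is correct and follows essentially the same route as the paper's proof: form the concatenation polycycle $\calN=\conc_*(\calL\times\calC)$, invoke admissibility and Lemma~\ref{newcomputation}, split $D$ into the $t<1/2$ and $t>1/2$ pieces via the coordinate change $t\mapsto t/2$ (resp.\ $t\mapsto (t+1)/2$), and identify the signs via coordinate permutations and the normal-bundle orientation convention. In particular, you correctly isolated the subtle factor $(-1)^{nq}$ coming from moving the $L$-factor past the rank-$n$ normal bundle of $\diag_M$, which is the same point the paper flags with its general fact about $\bignu_{Y\times X}(Y\times S)\to\bignu_X S$; the only thing the paper does that you compress is the explicit verification that $P'\sqcup\,'\!P$ matches partitions and weights and that the coordinate maps agree up to a partition-compatible homotopy (the paper's formula~\eqref{homotopy_F}).
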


\begin{proof}
Let $\calK=(K,\varphi,u,\kappa),\calL=(L,\psi,v,\lambda),\calR=(R,\chi,z,\rho)$
be  smooth polycycles in $\Omega^\circ_{12},\Omega^\circ_{34}, \Omega^\circ_{45}$ representing $a,b,c$ respectively.
 Applying Lemma \ref{defo1++} twice (and choosing homotopy there sufficiently small), we can  assume that $\calK$ is transversal to both $\calL$ and   $\calR$.
 Then   $bc$   is represented by the following   polycycle in $\Omega_{35}$:
$$
\calN= \conc_*(\calL \times \calR) = (L\times R,\psi \times \chi, v \times z,\eta)
$$
where $\eta =  \conc  (\lambda \times \rho)$ and  the adjoint map $\widetilde \eta: L\times R \times I \to M$   is computed by
$$
\widetilde\eta(l,r,t) = \left\{\begin{array}{ll} \widetilde\lambda(l,2t) & \hbox{for } l\in L, r\in R,  t \in [0,1/2],\\ \widetilde\rho(r,2t-1) & \hbox{for } l\in L, r\in R, t \in [1/2,1]. \end{array}\right.
$$
The polycycles $\calK$ and $\calN$ are admissible in the sense of Section~\ref{computationoftildeupsilon}: we can take $U=K\times \Int (I)$ and $V=N\times (\Int (I) \setminus \{1/2\})$.
  It follows from Lemma \ref{newcomputation} that     $\widetilde\Upsilon_{12,35}(a \otimes bc) =\lb \calD(\calK,\calN)\rb $. Thus, to  prove the lemma,  it is enough to show  that
\begin{eqnarray}
  \label{Leibniz_polycyles}   \calD(\calK,\calN) & \simeq  &  (-1)^{{i} }\,  (\id \times \conc)_*(\calD(\calK,\calL) \times \calR) \\
  \notag &&  \sqcup \  (-1)^{(p+n+1)q}\,  (\conc\times \id)_*(\calL \times \calD(\calK,\calR)).
\end{eqnarray}
To this end, we compare  $\calD(\calK,\calN)=(D, \theta, w,\kappa \losange \eta)$ with
$$
\calD(\calK,\calL) =\left(D',\theta',w',\kappa \losange \lambda\right) \quad \hbox{and} \quad
\calD(\calK,\calR) =   ('\!D,\, '\!\theta,\, '\!w,\kappa \losange \rho).
$$
Consider the  embedding
$$
P':(K \times I \times L \times I) \times R   \hookrightarrow   K \times I \times (L \times R) \times I
$$
defined by $P'(k,s,l,t,r)= (k,s,l,r,t/2)$ and  the  embedding
$$
'\!P: L \times (K \times I \times R \times I)  \hookrightarrow  K \times I \times (L \times R) \times I
$$
defined by $'\!P(l,k,s,r,t)= (k,s,l,r,(t+1)/2)$.
Note   that $P'$ has degree $(-1)^{{i} }$ while~$'\!P$ has degree $(-1)^{(p+1)q}$. Consider   also   the cartesian projections
$$\pr': (K \times I \times L \times I )\times R \longrightarrow  K \times I \times L \times I , $$
$$'\!\pr: L \times (K \times I \times R \times I) \longrightarrow  K \times I \times R \times I.$$
Clearly,  $(\widetilde \kappa \times \widetilde \eta)P' = (\widetilde \kappa \times \widetilde \lambda) \pr'$.
Therefore,   the map   $P'$ restricts to a diffeomorphism $D'\times R \to P'(D'\times R) \subset D$ of degree $(-1)^{{i} }$.
Similarly, since $(\widetilde \kappa \times \widetilde \eta)\, '\!P = (\widetilde \kappa \times \widetilde \rho)\, '\!\pr$,
the map $'\!P$ restricts to a diffeomorphism $L \times\, '\!D  \to\, '\!P(L \times\, '\!D) \subset D$ of degree $(-1)^{(p+1)q+nq}$.
 Here we   use the following general fact  involving  our orientation conventions stated in the Introduction:
if $X$, $Y$ are oriented manifolds and  $S$ is  an oriented submanifold of $X$,
then the  bundle map $\bignu_{X\times Y} (S \times Y) \to \bignu_XS$ induced by the  cartesian  projection $X \times Y \to X$
is an orientation-preserving isomorphism on each fiber,
 while  the bundle map $\bignu_{Y\times X} (Y \times S)  \to \bignu_XS$ induced by the    cartesian   projection $Y \times X \to X$
 is orientation-preserving if and only if the product $(\dim X -\dim S) \cdot \dim(Y)$    is even.

It is clear from the definition of $\calN$ and the computations of degrees above that
$$
P' \sqcup\, '\!P: (-1)^{i}  (D' \times R) \sqcup (-1)^{(p+n+1)q} (L \times\, '\!D) \longrightarrow D
$$
is    an orientation-preserving diffeomorphism.   We claim that   it     transports the polychain structures  of
$(\id \times \conc)_*\left(\calD(\calK,\calL) \times \calR\right)$ and $(\conc \times \id)_*\left(\calL \times \calD(\calK,\calR)\right)$
 into the polychain structure of $ \calD(\calK,\calN)$ up to deformation of the latter. This   will imply   \eqref{Leibniz_polycyles} and the lemma.

To prove our claim, we  need to   verify  that $P' \sqcup\, '\!P$ preserves   the face partitions and  the weights  and
 commutes with the  maps to the path spaces up to deformation. We start with  the face partitions.
Let   $F',G'$   be faces   of $D'$ of the same type   and let $H,J$ be    faces of $R$ of the same type.
Then $ F'\times H $ and $  G'\times J $  are faces of~$D'\times R$ of the same type.
We claim that the faces     $F= P'(F'\times H)$ and $G= P'(G'\times J)$    of~$D$ have the same type. By
  Section \ref{def_D}, $\theta'_{F',G'}:F'\to G'$  is the restriction of the diffeomorphism
$$
\varphi_{A_{F'},A_{G'}} \times \id \times \psi_{B_{F'},B_{G'}} \times \id :
N_{F'}=A_{F'}\times I \times B_{F'} \times I \longrightarrow A_{G'}\times I \times B_{G'} \times I =N_{G'}
$$
to $F'$ where $N_{F'}$ (respectively, $N_{G'}$) is   the smallest face of $K \times I \times L \times I$ containing~$F'$ (respectively, $G'$).
 The   smallest faces $N_F$ and $N_G$ of $K \times I \times (L \times R) \times I$ containing~$F$ and  $G$ respectively  are
$$
N_F = A_{F'} \times I \times (B_{F'} \times H) \times I \quad \hbox{and} \quad
N_G = A_{G'} \times I \times (B_{G'} \times J) \times I.
$$
Clearly, the diagram
$$
\xymatrix{
N_{F'} \times H  \ar[d]_{P'} \ar[rrrrr]^{\left(\varphi_{A_{F'},A_{G'}} \times \id \times \psi_{B_{F'},B_{G'}} \times \id\right) \times \chi_{H,J} } &&& &&N_{G'} \times J   \ar[d]^{P'}  \\
N_F  \ar[rrrrr]^{\varphi_{A_{F'},A_{G'}} \times \id \times (\psi_{B_{F'},B_{G'}}\times \chi_{H,J}) \times \id} &&&&& N_G
}
$$
commutes, so that the bottom diffeomorphism in that diagram  carries $F$ onto $G$.
We deduce that  $F$ and $G$  have the same type in $D$ and the identification map $\theta_{F,G}:F\to G$
(which, by definition, is the restriction of the bottom diffeomorphism to $F$) satisfies
$$
\theta_{F,G} \circ P'\vert_{F'\times H} = P'\vert_{G' \times J} \circ (\theta'_{F',G'} \times \chi_{H,J}).
$$
This proves that   $P'$ carries the partition $\theta' \times \chi$ on $D'\times R$ to the partition $\theta$ restricted to $P'(D' \times R)\subset D$.
 A similar argument shows   that   $'\!P$ carries  the partition $\psi \times\, '\!\theta $ on $L \times\, '\!D$ to the partition $\theta$  restricted to $'\!P(L \times\, '\!D ) \subset D$.
It remains only to observe that a face  of $D$ lying in $P'(D'\times R)$ cannot have the same type as a face of $D$ lying in $'\!P(L\times\, '\!D)$.
To see this, we use the fact that every face $F$ of $D$ determines a smallest face $N_F = A_F \times I \times (B_F \times C_F) \times I$
of $K\times I \times (L \times R) \times I$ such that $F\subset N_F$ and $A_F,B_F,C_F$ are faces of $K,L,R$ respectively.
If $F,G$ are faces of $D$ of the same type, then   $A_F,B_F,C_F$  must have the same type as $A_G,B_G,C_G$ respectively, and the diffeomorphism
$$
\varphi_{A_F,A_G} \times \id \times (\psi_{B_F,B_G} \times \chi_{C_F,C_G}) \times \id:N_F \longrightarrow N_G
$$
carries $F$ onto $G$. Since this diffeomorphism preserves   the last coordinate and
$$
P'(D'\times R) \subset K \times I \times (L\times R) \times [0,1/2], \quad
'\!P(L\times\, '\!D) \subset K \times I \times (L\times R) \times [1/2,1] \
$$
we deduce that  $F$ and $G$ are  both contained either in $P'(D'\times R)$  or in $'\!P(L\times\, '\!D)$.

We  next show that  the diffeomorphism  $P' \sqcup\, '\!P$ preserves the weights.
Let $W' $ be a connected component of $D'$ and let $Z$ be a connected component of $R$.
The weight of the connected component $W'  \times Z$ of $D' \times R$ is
$$
(w' \times z)(W'  \times Z)= w'(W' )z(Z) = u(U) v(V) z(Z)
$$
where $U$ and $V$ are connected components of $K$ and $L$, respectively, such that $W'  \subset U \times I \times V \times I$.
Clearly,  $$P'(W'  \times Z)\subset U \times I \times (V \times Z) \times I$$  so that
$$
w\big(P'(W'  \times Z)\big) = u(U) \cdot  (v \times z)(V \times Z) =   u(U) v(V) z(Z) =  (w' \times z)(W'  \times Z).
$$
This proves that  $P'$ carries  the weight $w'\times z$ on $D'\times R$ to the weight $w$ restricted to $P'(D'  \times R  ) $.
  A similar argument shows   that   $'\!P$ carries    the weight $v \times\, '\!w$  on   $L \times\,  '\!D$ to the weight $w$ restricted to $'\!P(   L \times  '\!D) $.

 We now show that $P' \sqcup\, '\!P$    commutes with the  maps to $\Omega_{32}\times \Omega_{15}$   up to deformation.
 The maps in question are $\kappa \losange \eta:D\to \Omega_{32}\times \Omega_{15}$ and $f \sqcup  g$ where
 \begin{equation}\label{mapf} f=(\id \times \conc) ((\kappa \losange \lambda) \times \rho): D'\times R\to \Omega_{32}\times \Omega_{15},\end{equation}
 \begin{equation}\label{mapg} g= (\conc \times \id)  (\lambda \times (\kappa \losange \rho)) :L\times\, '\!D\to \Omega_{32}\times \Omega_{15}.\end{equation}
We first compute    $(\kappa \losange \eta )    P' $.
Pick any  $(k,s,l,t) \in D'$ and $r\in  R$. For  ${x}\in[0,1/2]$,
\begin{eqnarray*}
(\kappa \losange \eta)(P'(k,s,l,t,r))({x}) &=& (\kappa\ \widetilde \losange\ \eta)(k,s,l,r,t/2,{x}) \\
&=& \big(\widetilde\eta(l,r,(t/2)*{x}),\widetilde \kappa(k,s*{x})  \big)\\
&=& \big(\widetilde\eta(l,r,t{x}),\widetilde \kappa(k,2s{x})  \big)\\
&=& \big(\widetilde\lambda (l,2t{x}),\widetilde \kappa(k,2s{x})  \big)\\
&=& \big(\widetilde\lambda (l,t*{x}),\widetilde \kappa(k,2s{x})  \big).
\end{eqnarray*}
Similarly, for  ${x}\in[1/2,1]$,
\begin{eqnarray*}
(\kappa \losange \eta)(P'(k,s,l,t,r))({x}) &=& (\kappa\ \widetilde \losange\ \eta)(k,s,l,r,t/2,{x}) \\
&=& \big(\widetilde \kappa(k,s*{x}), \widetilde \eta(l,r,(t/2)*{x})   \big)\\
&=&  \big(\widetilde \kappa(k,s*{x}), \widetilde \eta(l,r,1-  (2-t)(1-{x})  )   \big)  .
\end{eqnarray*}
We  separate two cases depending on whether or not $1-  (2-t)(1-{x})  \leq   1/2$ or, equivalently, on whether or not
${x}\leq (3-2t)/(4-2t)$. For ${x}\in[1/2,(3-2t)/(4-2t)]$, we obtain
\begin{eqnarray*}
(\kappa \losange \eta)(P'(k,s,l,t,r))({x})
&=& \big(\widetilde \kappa(k,s*{x}), \widetilde \lambda(l,2-2(2-t)(1-{x}) )   \big);
\end{eqnarray*}
for ${x}\in[(3-2t)/(4-2t),1]$, we obtain
\begin{eqnarray*}
(\kappa \losange \eta)(P'(k,s,l,t,r))({x})
&=& \big(\widetilde \kappa(k,s*{x}), \widetilde \rho(r,1-2(2-t)(1-{x}) )   \big).
\end{eqnarray*}
These computations show that the   first   coordinate map $D' \times R \to \Omega_{32}$  of $(\kappa \losange \eta)P'$
is equal to     $(\kappa \triangleleft \lambda) \circ \pr'\vert_{D'\times R}$,
which is also the  first coordinate    of the map $f$ given by \eqref{mapf}.
 The second   coordinate maps $D' \times R \to \Omega_{15}$  of $(\kappa \losange \eta)P'$    and  $f$ may  differ.
Nonetheless, they   are homotopic   in the following way.  For any $ s,t,{y} \in I $,  consider the numbers
$$
0\, < \     \frac{1}{4}\, \leq\,    a_{y}=\frac{1+{y}}{4} \, \leq \, \frac{1}{2}\, \leq\,  b_{t,{y}}=\frac{1 }{2} +{y} \frac{1-t}{4-2t} \, \leq \,  \frac{3-2t}{4-2t}  \  < \, 1
$$
and let
 $$ \alpha_{s,{y}}: [0,a_{y}] \longrightarrow [0,s], \quad \beta_{t,{y}}: [a_{y}, b_{t,{y}}]  \longrightarrow [t,1], \quad \gamma_{t,{y}}:[b_{t,{y}},1]  \longrightarrow [0,1]$$
 be the    affine    maps carrying the left/right endpoints of segments to the left/right endpoints respectively.
We define a  continuous map  $ {e}: D' \times R \times I \times I \to M$  by
\begin{equation}\label{homotopy_F}
{e}(k,s,l,t,r,{x},{y}) = \left\{ \begin{array}{ll}  \widetilde\kappa(k,\alpha_{s,{y}}({x})) & \hbox{if } {x}\in [0,a_{y}],\\
\widetilde \lambda(l,\beta_{t,{y}}({x})) & \hbox{if } {x}\in [a_{y},b_{t,{y}}],\\
\widetilde \rho(r,\gamma_{t,{y}}({x})) & \hbox{if } {x} \in [b_{t,{y}},1]. \end{array}\right.
\end{equation}
Observing that  $a_{0} = 1/4,   b_{t,0}= 1/2$ and $ a_1= 1/2,   b_{t,1}=(3-2t)/(4-2t)$,
we  conclude that   ${e}$   determines a homotopy    between the second  coordinate  maps of  $f$  and   $(\kappa \losange \eta)P'$ in the class of maps $D' \times R \to \Omega_{15}$.
It   remains to check that this homotopy   is compatible with the partition $\theta'\times \chi$ on  $D' \times R$.
Any   faces $F,G$ of $D' \times R$ of the same type expand  as $F=F'\times H$ and $G=G'\times J$ where
$F',G'$ are faces of $D'$ of the same type and   $H, J$ are faces of $R$ of the same type. Let
$$
N_{F'}=A_{F'} \times I \times B_{F'} \times I \quad \hbox{and} \quad N_{G'}=A_{G'} \times I \times B_{G'} \times I
$$
be the smallest faces of $K \times I \times L \times I$ containing $F'$ and $G'$ respectively.
The identifying map $ (\theta'\times \chi)_{F,G}:F \to G $   is the restriction of the diffeomorphism
$$
\left(\varphi_{A_{F'},A_{G'}}\times \id \times \psi_{B_{F'},B_{G'}}\times \id\right) \times \chi_{H,J}: N_{F'} \times H \longrightarrow N_{G'} \times J.
$$
   Since the maps $\kappa,\lambda, \rho$ are compatible with the partitions $\varphi,\psi,\chi$ respectively,
we deduce from \eqref{homotopy_F} that  for any $(k,s,l,t )\in F'$, $r\in H$, and $x,y\in I$,
\begin{eqnarray*}
&&e\big((\theta'\times \chi)_{F , G }(k,s,l,t,r), x,y\big)\\
&=&e\big(\varphi_{A_{F'},A_{G'}}(k),s,\psi_{B_{F'},B_{G'}}(l),t,\chi_{H,J}(r), x,y\big) \ = \ e(k,s,l,t,r,{x},{y}).
\end{eqnarray*}
Hence for each $y\in I$, the map
$$D'\times R \longrightarrow \Omega_{15}, \quad (k,s,l,t,r) \longmapsto \big(x\mapsto e(k,s,l,t,r,x,y)\big)$$ is compatible
with the partition $\theta' \times \chi$.   We    conclude that  the homotopy of $f$   to   $(\kappa \losange \eta)P'$ determined by $e$
is compatible with the partition $\theta'\times \chi$.
One similarly   constructs a deformation of the map \eqref{mapg} into $(\kappa \losange \eta)\, '\!P $ compatible with the partition.
\end{proof}

\subsection{Change of base points}\label{change}

Consider one more  tuple  $(\star'_1, \star'_2, \star'_3, \star'_4)$ of  points of $\partial M$
such that  $\{\star'_1, \star'_2\} \cap \{\star'_3, \star'_4\}=  \varnothing  $
and set $\Omega'_{ij}=\Omega (M, \star'_i, \star'_j)$.  Section~\ref{dfp_tildeUpsilon}   yields a linear map
$$
\widetilde \Upsilon': \widetilde H_\ast (\Omega'_{12}) \otimes
\widetilde H_\ast (\Omega'_{34}) \longrightarrow   \widetilde H_{\ast} ( \Omega'_{32} \times   \Omega'_{14}).
$$
We compare  $\widetilde \Upsilon'$  to  the map
  $\widetilde \Upsilon: \widetilde H_\ast (\Omega_{12}) \otimes \widetilde H_\ast (\Omega_{34}) \to \widetilde H_{\ast} ( \Omega_{32} \times   \Omega_{14})$
assuming that $\star_i$ and $\star_i'$ belong
to the same connected component of $\partial M$ for all $i\in \{1,2,3,4\}$.

Choose  a   path  $\varsigma_i:I\to  \partial M$ from $\star_i$ to   $\star'_i $ for each $i$.
The formula $\gamma\mapsto \varsigma_i^{-1} \gamma \varsigma_j$ defines a
continuous map   $(\varsigma_i, \varsigma_j)_\#$ from $\Omega_{ij}$ to $\Omega_{i'j'}$.
Homotopic paths yield      homotopic maps,  and   constant paths  yield    maps  homotopic to the identity.
Therefore  $(\varsigma_i, \varsigma_j)_\#$ is a homotopy equivalence with homotopy  inverse $ (\varsigma^{-1}_i, \varsigma^{-1}_j)_\#$.
The homotopy equivalence $(\varsigma_i, \varsigma_j)_\#$ induces an isomorphism in the    face homology which we denote   by the same symbol:
\begin{equation}\label{nu_isomorphism}
(\varsigma_i, \varsigma_j)_\#: \widetilde H_*(\Omega_{ij}) \stackrel{\simeq}{\longrightarrow} \widetilde H_*(\Omega_{ij}').
\end{equation}
Similarly, the  isomorphism  $ \widetilde H_{\ast} ( \Omega_{ij} \times \Omega_{kl})\to \widetilde H_{\ast} ( \Omega'_{ij} \times   \Omega'_{kl})$ induced by the homotopy equivalence
 $(\varsigma_i,\varsigma_j)_\# \times (\varsigma_k, \varsigma_l )_\#$ is also denoted by   $(\varsigma_i,\varsigma_j)_\# \times (\varsigma_k, \varsigma_l )_\#$.

\begin{lemma}\label{naturbracket_tilde}
If $n\geq 3$, then the  following diagram   commutes:
\begin{equation}\label{1234_1234'}
\xymatrix@R=0.8cm @C=3cm {
\widetilde H_\ast (\Omega_{12}) \otimes \widetilde H_\ast (\Omega_{34})
 \ar[r]^-{ \widetilde \Upsilon }  \ar[d]_-{(\varsigma_1,\varsigma_2)_\# \otimes (\varsigma_3, \varsigma_4 )_\#}^-\simeq  & \widetilde H_{\ast} ( \Omega_{32} \times \Omega_{14})
 \ar[d]^-{(\varsigma_3,\varsigma_2)_\# \times (\varsigma_1, \varsigma_4 )_\#}_-\simeq  \\
\widetilde H_\ast (\Omega'_{12}) \otimes \widetilde H_\ast (\Omega'_{34}) \ar[r]^-{\widetilde \Upsilon' } & \widetilde H_{\ast} ( \Omega'_{32} \times   \Omega'_{14}).
}
\end{equation}
 \end{lemma}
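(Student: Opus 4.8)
The plan is to reduce the commutativity of diagram~\eqref{1234_1234'} to a concrete geometric comparison of intersection polycycles, performed at the level of smooth polychains in the path spaces $\Omega^\circ_{ij}$ and $\Omega'^\circ_{ij}$. First I would recall that the vertical maps are induced by the homotopy equivalences $(\varsigma_i,\varsigma_j)_\#:\Omega_{ij}\to\Omega'_{ij}$, $\gamma\mapsto\varsigma_i^{-1}\gamma\varsigma_j$, and that by Lemma~\ref{homotimplieshomol+} these act on face homology by pushforward along the corresponding continuous maps. So to check commutativity it suffices, starting from a pair of transversal smooth reduced polycycles $\calK=(K,\varphi,u,\kappa)$ in $\Omega^\circ_{12}$ and $\calL=(L,\psi,v,\lambda)$ representing classes $a,b$, to compare the two polycycles in $\Omega'_{32}\times\Omega'_{14}$ obtained by: (I) forming $\calD(\calK,\calL)$ and then pushing forward along $(\varsigma_3,\varsigma_2)_\#\times(\varsigma_1,\varsigma_4)_\#$; versus (II) pushing $\calK,\calL$ forward to polycycles $\calK',\calL'$ in $\Omega'_{12},\Omega'_{34}$ and then forming $\widetilde\Upsilon'(\langle\calK'\rangle\otimes\langle\calL'\rangle)$.

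The subtlety in route (II) is that $(\varsigma_1,\varsigma_2)_\#\kappa$ and $(\varsigma_3,\varsigma_4)_\#\lambda$ need not be smooth nor transversal nor valued in $\Omega'^\circ$; this is exactly where the admissibility notion of Section~\ref{computationoftildeupsilon} and Lemma~\ref{newcomputation} enter. My strategy is to choose, for each $i$, the path $\varsigma_i$ to lie in a small collar neighborhood of $\partial M$, reparametrize so that prepending $\varsigma_i^{-1}$ (resp.\ appending $\varsigma_j$) only modifies $\tilde\kappa$ near $K\times\partial I$, and observe that since $n\ge 3$ the intersection locus $D=(\tilde\kappa\times\tilde\lambda)^{-1}(\diag_M)$ is supported away from $K\times\partial I\times L\times\partial I$ (indeed away from the collar, using that the $\varsigma_i$ have images disjoint from the images of $\kappa,\lambda$ on the relevant interior strata, exactly as in the proof of Lemma~\ref{defo1}). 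Hence the polycycle $\calK'=(\varsigma_1,\varsigma_2)_\#{}_*\calK$ is admissible with respect to $\calL'$ with open sets $U,V$ equal to the original interior-transversality regions of $\kappa,\lambda$, and Lemma~\ref{newcomputation} yields $\widetilde\Upsilon'(\langle\calK'\rangle\otimes\langle\calL'\rangle)=\langle\calD(\calK',\calL')\rangle$. The assumption $n\ge 3$ is used precisely to guarantee that the collars can be taken thick enough that $D$ misses them: when $n=2$ the intersection can reach the ends of the paths and the argument breaks (which is consistent with the theorem's hypothesis).

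The heart of the matter is then a diffeomorphism of polychains
$$
\bigl((\varsigma_3,\varsigma_2)_\#\times(\varsigma_1,\varsigma_4)_\#\bigr)_*\calD(\calK,\calL)\;\simeq\;\calD(\calK',\calL'),
$$
valid up to deformation. The underlying manifolds with corners, weights, and partitions are literally the same, since $D$, $\theta$, $w$ are built from $\tilde\kappa\times\tilde\lambda$ over the interior region where $\kappa=\kappa'$ and $\lambda=\lambda'$; only the maps to the path spaces differ. I would write down explicitly the map $D\times I\to M\times M$ defining $\kappa'\losange\lambda'$ and compare it with $((\varsigma_3,\varsigma_2)_\#\times(\varsigma_1,\varsigma_4)_\#)\circ(\kappa\losange\lambda)$: both describe, for fixed $(x,s,y,t)\in D$, a loop in $M\times M$ from $(\star'_3,\star'_1)$ through the diagonal point $(\tilde\kappa(x,s),\tilde\lambda(y,t))$ to $(\star'_2,\star'_4)$, but the former prepends $\varsigma_i^{-1}$ and appends $\varsigma_j$ to the four path-segments while the latter does the same operations on the two concatenated loops; a straightforward reparametrization homotopy rel endpoints (chosen compatibly with the partition $\theta$, exactly as in the compatibility verifications in the proof of Lemma~\ref{Leibniz_Upsilon_tilde}) identifies the two. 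Passing to face homology classes and using $\langle\calD(\calK',\calL')\rangle=\widetilde\Upsilon'(\langle\calK'\rangle\otimes\langle\calL'\rangle)$ gives the claim.

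\textbf{Main obstacle.} I expect the real work to be the bookkeeping in the two preceding paragraphs rather than any conceptual difficulty: precisely arranging the collar/reparametrization so that (a) $\calK'$ is genuinely admissible against $\calL'$ and (b) the reparametrization homotopy between $\kappa'\losange\lambda'$ and the pushforward of $\kappa\losange\lambda$ respects the face partition $\theta$ of $D$ for \emph{all} faces simultaneously. The partition-compatibility of the homotopy is the one place where signs and explicit formulas cannot be avoided, but it is entirely parallel to the verification already carried out in the proof of Lemma~\ref{Leibniz_Upsilon_tilde}, so no new ideas are needed; one just invokes that the $\varsigma_i$'s are fixed (independent of the point of $D$) and the concatenation-with-$\varsigma_i$ operation commutes with restriction to faces.
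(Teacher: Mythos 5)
Your overall plan is sound and uses the right tools (admissibility, Lemma~\ref{newcomputation}, a reparametrization homotopy compatible with the face partition), but it misses the key structural reduction that the paper uses and it misidentifies where the hypothesis $n\geq 3$ enters. Both gaps matter.

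\textbf{Missing reduction.} The paper first observes that the isomorphism $(\varsigma_i,\varsigma_j)_\#$ depends only on the homotopy classes of $\varsigma_i,\varsigma_j$ and is multiplicative under concatenation; hence it suffices to prove the lemma when three of the four paths $\varsigma_1,\ldots,\varsigma_4$ are constant, i.e.\ to move one endpoint at a time. This step is not cosmetic: with all four paths moving, you would need to verify that prepending and appending $\varsigma_i^{\pm1}$ on both $\kappa$ and $\lambda$ preserves transversality and admissibility simultaneously and in the right \lq\lq interior\rq\rq\ region, and that the resulting $D'$ matches $D$ up to a single reparametrization. With only $\varsigma_4$ nonconstant, the paper takes $\calK$ unchanged, replaces $\calL$ by $\calL'=(1,\varsigma)_\#\calL$, verifies admissibility concretely with $U=K\times\Int(I)$, $V=L\times(0,1/2)$, and is done. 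Your argument treats all four paths at once; it is not wrong in principle, but the \lq\lq bookkeeping\rq\rq\ you flag as the main obstacle is precisely what the paper's reduction eliminates, and without it your claim that the manifolds $D$ are \lq\lq literally the same\rq\rq\ is not quite right either — one needs an explicit orientation-, weight-, and partition-preserving diffeomorphism $D\to D'$, because concatenating with $\varsigma$ changes the time parametrization inside $L\times I$.

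\textbf{Role of $n\geq 3$.} You explain $n\geq 3$ as ensuring that \lq\lq the collars can be taken thick enough that $D$ misses them.\rq\rq\ This is not correct, and the fact that $D$ avoids $K\times\partial I\times L\times\partial I$ has nothing to do with $n$ — it follows solely from $\kappa$ and $\lambda$ mapping into $\Omega^\circ$, as the paper explains already in Section~\ref{def_D}. The actual use of $n\geq 3$ in the lemma is a dimension count on $\partial M$: after the reduction to a single moving path $\varsigma=\varsigma_4$, one needs to deform $\varsigma$ inside $\partial M$ so that $\varsigma(I)\subset\partial M\setminus\{\star_1,\star_2\}$; this is possible for $n\geq 3$ because $\partial M$ then has dimension $\geq 2$, so a path can avoid finitely many points by a small homotopy. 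When $n=2$ the boundary is $1$-dimensional and such avoidance is generally impossible — this is why the lemma is stated only for $n\geq 3$, and why the $n=2$ case requires the separate normalization discussed in Section~\ref{dimension_2}.
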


\begin{proof} Since the isomorphism  $(\varsigma_i, \varsigma_j )_\#$  depends only on the homotopy classes of the paths $\varsigma_i$, $\varsigma_j$,
and since  composition of the paths leads to composition of the corresponding isomorphisms,
it is enough to consider the case where three of the paths $\varsigma_i$'s are constant.
Assume for concreteness that $\star_1=\star_1'$, $\star_2=\star_2'$, $\star_3=\star_3'$, and $\varsigma_1, \varsigma_2, \varsigma_3$ are constant paths.
The  assumption   $n\geq 3$ implies that deforming if necessary  the path  $\varsigma=\varsigma_4$,
we can ensure that $\varsigma(I) \subset \partial M \setminus \{\star_1, \star_2 \}$.

Let ${a}\in \widetilde H_p (\Omega_{12})$ and  ${b}\in \widetilde H_q(\Omega_{34})$.
Consider  smooth polycycles  $\calK=(K,\varphi, u,\kappa)$  in $\Omega_{12}^\circ$
and   $\calL=(L,\psi, v,\lambda)$ in $\Omega_{34}^\circ$ transversely representing   the pair  $({a}, {b})$.
The class $(1,\varsigma)_\#({b})\in \widetilde H (\Omega_{34}')$ is represented by the polycycle $\calL'=(1,\varsigma)_\#\calL$   in
$\Omega_{34}'$   (but   not in $\Omega_{34}'^\circ$). The polycycles $\calK$ and $\calL'$ are admissible in the sense of Section~\ref{computationoftildeupsilon}:
we can take $U=K\times \Int (I)$ and $V=L\times (0,1/2)$.
Set  $\calD (\calK, \calL)=(D,\theta,w,\kappa \losange \lambda)$  and $\calD (\calK, \calL')=(D',\theta',w',\kappa \losange \lambda')$ where $\lambda'=(1,\varsigma)_\sharp \lambda$.
It is easy to   construct a  diffeomorphism $f:D \to D'$  preserving  the orientation, the weight, and the face partition,
and  such that $(\kappa \losange \lambda') \circ f$ is homotopic to $\left(\id \times (1,\varsigma)_\#\right)\circ (\kappa \losange \lambda)$
in the class of  maps  $D \to \Omega_{32} \times \Omega'_{14}$   compatible with $\theta$.
 Lemma \ref{newcomputation} implies that
\begin{eqnarray*}
\widetilde \Upsilon' \big( {a}\otimes (1,\varsigma)_\# ({b})\big) &=& \lb \calD  (\calK, \calL') \rb\\ &=& \lb \big(\id \times (1,\varsigma)_\#\big) \calD(\calK, \calL) \rb\\
&=&   \left(\id \times (1,\varsigma)_\#\right) \lb \calD(\calK, \calL) \rb \ = \   \left(\id \times (1,\varsigma)_\#\right) \widetilde \Upsilon({a},{b}).
\end{eqnarray*}
This proves the  commutativity of the diagram \eqref{1234_1234'}.
 \end{proof}

\subsection{Extension of $\widetilde \Upsilon$}\label{checkUpsilon---}

  Assuming that $n\geq 3$,
we extend the definition of $\widetilde \Upsilon$ to all  4-tuples of points $\star_1$, $\star_2$, $ \star_3$, $ \star_4\in \partial M$.
Deforming  these points in $\partial M$,   we can obtain    points $ \star'_1, \star'_2, \star'_3, \star'_4 \in \partial M$
such that $\{\star'_1, \star'_2\} \cap \{\star'_3, \star'_4\} =   \varnothing  $. For   $i=1,   \dots   , 4$, pick a
    path  $\varsigma_i:I\to  \partial M$ from $\star_i$ to   $\star'_i $.   Section \ref{dfp_tildeUpsilon}   yields a linear map
$$
\widetilde \Upsilon': \widetilde H_\ast (\Omega'_{12}) \otimes
\widetilde H_\ast (\Omega'_{34}) \longrightarrow   \widetilde H_{\ast} ( \Omega'_{32} \times   \Omega'_{14})
$$
where $\Omega'_{ij}=\Omega (M, \star'_i, \star'_j)$  for all $i,j$.  Then   we define
$$
\widetilde \Upsilon=\widetilde \Upsilon_{12,34} : \widetilde H_\ast (\Omega_{12}) \otimes
\widetilde H_\ast (\Omega_{34}) \longrightarrow   \widetilde H_{\ast} ( \Omega_{32} \times   \Omega_{14})
$$
to be the  unique linear map such that the diagram \eqref{1234_1234'} commutes.
Lemma~\ref{naturbracket_tilde} implies that this map   depends neither on the choice of the paths  $\varsigma_1,\varsigma_2,\varsigma_3,\varsigma_4$
nor on the choice of the points $\star'_1, \star'_2, \star'_3, \star'_4$. If     $\{\star_1, \star_2\} \cap \{\star_3, \star_4\} =   \varnothing  $,
then we   can take $\star'_i=\star_i$ and the constant path  $\varsigma_i $ for all $i$, and recover the same map  $\widetilde \Upsilon$   as before.

The properties of  $\widetilde \Upsilon$ established under the assumption $\{\star_1, \star_2\} \cap \{\star_3, \star_4\} =   \varnothing  $
remain true for arbitrary base points in $\partial M$.
  This easily follows from the definitions and the fact that  the concatenation pairing \eqref{concatenation}   is preserved
  under  the change-of-base-points isomorphism \eqref{nu_isomorphism}.

\subsection{Renormalization}\label{checkUpsilon}

  We will use   a renormalized version
\begin{equation}\label{defoftildeUpsilon+}
\check \Upsilon=  \check  \Upsilon_{12,34}  : \widetilde H_\ast (\Omega_{12}) \otimes \widetilde H_\ast (\Omega_{34})
\longrightarrow   \widetilde H_{\ast} ( \Omega_{32} \times \Omega_{14})
\end{equation}
 of $\widetilde{\Upsilon}$ defined by   $\check \Upsilon (a\otimes b)=(-1)^{\vert b\vert + n\vert a \vert} \widetilde{\Upsilon} (a\otimes b)$
for any homogeneous $a\in \widetilde H_\ast (\Omega_{12})$ and $b\in \widetilde H_\ast (\Omega_{34})$.
 The properties  of~$\widetilde \Upsilon$ can be rephrased   for~$\check\Upsilon$. 
In particular,   Lemma~\ref{antisymmetry_Upsilon_tilde} yields the identity
\begin{equation}\label{checksymm}
\perm_*  \check  \Upsilon_{12,34}({a} \otimes {b})=  - (-1)^{\vert a\vert_n \vert b\vert_n }  \check  \Upsilon_{34,12} ({b} \otimes {a})
\end{equation}
where  $\vert\! -\! \vert_n=\vert\! -\! \vert +n$ is  the   $n$-degree.   Also,
for  any  $\star_5\in \partial M$   (distinct from $\star_1$ and $\star_2$ if $n=2$)
and any homogeneous   $a\in \widetilde H_\ast (\Omega_{12})$,  $b\in \widetilde H_\ast (\Omega_{34})$,  $c \in  \widetilde H_{\ast} (\Omega_{45})$,
  Lemma~\ref{Leibniz_Upsilon_tilde} yields   the  Leibniz rule
\begin{equation}\label{LeibnizforwidetileUpsilon}
\check \Upsilon_{12,35}(a \otimes bc)=    \check \Upsilon_{12,34}(a \otimes b ) \, c+ (-1)^{\vert a\vert_n \vert b\vert} \,b \, \check \Upsilon_{12, 45} (a \otimes c).
\end{equation}
  Finally,   the  diagram  \eqref{1234_1234'} remains commutative with   $\widetilde \Upsilon$ replaced by $\check \Upsilon$.

\section{The operation $ \Upsilon$} \label{Upsilon}

We derive from  $\check \Upsilon$ an operation $\Upsilon$ in singular homology.  
In this section we drop the assumption  $\{\star_1,\star_2\}\cap\{\star_3,\star_4\} =  \varnothing  $   when  $n\geq 3$.

\subsection{Definition and    properties of  $\Upsilon$}\label{Upsilon1111}

Consider the  linear map
\begin{equation}\label{functionz}
 \Upsilon=\Upsilon_{12,34} : H_\ast (\Omega_{12}) \otimes H_\ast (\Omega_{34})
\longrightarrow   H_{\ast} ( \Omega_{32} \times \Omega_{14})
\end{equation}
defined by the   commutative diagram
$$
\xymatrix{
\widetilde H_\ast (\Omega_{12}) \otimes \widetilde H_\ast (\Omega_{34}) \ar[rr]^-{ \check\Upsilon } &&
\widetilde H_{\ast} ( \Omega_{32} \times \Omega_{14}) \ar[d]^-{[-]}\\
H_\ast (\Omega_{12}) \otimes H_\ast (\Omega_{34}) \ar[u]^-{\lb-\rb \times \lb-\rb}
\ar@{-->}[rr]^-{ \Upsilon } && H_{\ast} ( \Omega_{32} \times \Omega_{14}).
}
$$

Formula  \eqref{checksymm} and the naturality of the transformation  $[ - ]:   \widetilde H_\ast \to  H_\ast $
imply the   following   antisymmetry  of    $\Upsilon$: for any  homogeneous $a\in H_\ast (\Omega_{12})$, $b\in H_\ast (\Omega_{34})$,
\begin{equation}\label{Upsilon1}
\perm_* \Upsilon_{12,34}(a  \otimes b)= - (-1)^{\vert a\vert_n \vert b\vert_n } \Upsilon_{34,12} (b\otimes a)
\end{equation}
where $\perm_\ast: H_\ast(\Omega_{32} \times \Omega_{14} )\to  H_\ast(\Omega_{14} \times \Omega_{32} )$
is the linear map induced by the permutation map $\perm:\Omega_{32} \times \Omega_{14} \to \Omega_{14} \times \Omega_{32}$.

If $n\geq 3$, then the   diagram \eqref{1234_1234'}  with $\widetilde \Upsilon$ replaced by $\check \Upsilon$
and   the naturality of the transformations $\lb -\rb$ and $[ -]$ imply that   the following diagram commutes:
\begin{equation}\label{1234_1234'_H}
\xymatrix@R=0.8cm @C=3cm {
H_\ast (\Omega_{12}) \otimes  H_\ast (\Omega_{34})
 \ar[r]^-{  \Upsilon }  \ar[d]_-{(\varsigma_1,\varsigma_2)_\# \otimes (\varsigma_3, \varsigma_4 )_\#}^-\simeq  & H_{\ast} ( \Omega_{32} \times \Omega_{14})
 \ar[d]^-{(\varsigma_3,\varsigma_2)_\# \times (\varsigma_1, \varsigma_4 )_\#}_-\simeq  \\
 H_\ast (\Omega'_{12}) \otimes H_\ast (\Omega'_{34}) \ar[r]^-{ \Upsilon' } &  H_{\ast} ( \Omega'_{32} \times   \Omega'_{14}).
}
\end{equation}
Here, for every $i\in\{1,2,3,4\}$, $\star'_i$ is a  point of $\partial M$ connected to $\star_i$ by a path $\varsigma_i:I \to \partial M$,
$\Upsilon'$ is the map  \eqref{functionz} determined by the  base points $ \star'_1,\star'_2,\star'_3,\star'_4 $,
and, for all  $i,j\in\{1,2,3,4\}$, $(\varsigma_i, \varsigma_j)_\#$ stands for  the  homotopy equivalence
$$\Omega_{ij} \to \Omega'_{ij} = \Omega(M,\star'_i,\star'_j), \gamma\mapsto \varsigma_i^{-1} \gamma \varsigma_j$$ and for the induced isomorphism in  singular   homology.

 The  following    crucial lemma will be  proved in Section~\ref{subs123}.

\begin{lemma}\label{diag-imp}
The following  diagram   commutes:
\begin{equation}\label{Upsilon_Upsilon}
\xymatrix{
\widetilde H_\ast (\Omega_{12}) \times  \widetilde H_\ast (\Omega_{34})  \ar[d]_-{[-] \times [-]}
\ar[rr]^-{\check   \Upsilon} && \widetilde H_\ast ( \Omega_{32} \times \Omega_{14})  \ar[d]^-{[-]}\\
 H_\ast (\Omega_{12}) \times H_\ast (\Omega_{34}) \ar[rr]^-{\Upsilon} &&
H_\ast ( \Omega_{32} \times \Omega_{14}).
}
\end{equation}
\end{lemma}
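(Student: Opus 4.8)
\textbf{Proof plan for Lemma~\ref{diag-imp}.}
The plan is to unwind both composites in the diagram~\eqref{Upsilon_Upsilon} down to the level of explicit polycycles and polychains, and to match them using the computational tools already in hand, namely Lemma~\ref{newcomputation} (which computes $\widetilde\Upsilon$, hence $\check\Upsilon$, from admissible polycycles) and Lemma~\ref{twocrossproducts} together with the naturality of $[-]$ (Lemma~\ref{from_sc_to_pcdebut}). First I would fix homogeneous classes $a\in\widetilde H_\ast(\Omega_{12})$ and $b\in\widetilde H_\ast(\Omega_{34})$ and, using Lemma~\ref{loopsp} and Lemma~\ref{keytheor}, transversely represent the pair $([-](a)\,\hbox{lifted back},\, [-](b)\,\hbox{lifted back})$ by smooth reduced transversal polycycles. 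More precisely, I want smooth reduced transversal polycycles $\calK$ in $\Omega_{12}^\circ$ and $\calL$ in $\Omega_{34}^\circ$ representing $a$ and $b$; then $[-]\circ\widetilde\Upsilon(a\otimes b)$ is, up to the renormalization sign $(-1)^{|b|+n|a|}$, the singular homology class $[\calD(\calK,\calL)]$, while the clockwise composite asks for $\Upsilon([\calK]\otimes[\calL])$, which by the very definition of $\Upsilon$ (the diagram defining~\eqref{functionz}) equals $[-]\big(\check\Upsilon(\lb[\calK]\rb\otimes\lb[\calL]\rb)\big)$. So the whole point is to show
$$
[-]\Big(\check\Upsilon\big(\lb[\calK]\rb\otimes\lb[\calL]\rb\big)\Big)=[-]\big(\check\Upsilon(a\otimes b)\big),
$$
and since $\check\Upsilon$ differs from $\widetilde\Upsilon$ only by a scalar this reduces to the corresponding statement for $\widetilde\Upsilon$.

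The key step is then to understand the polycycle $\calP$ representing $\lb[\calK]\rb$, i.e.\ $\calP(\sigma)$ where $\sigma=\kappa_\ast(\sigma(T,u))$ is the fundamental singular cycle of $\calK$ associated to a locally ordered smooth triangulation $T$ of $K$ fitting $\varphi$ (as in Section~\ref{fundcla}), and similarly a polycycle $\calQ$ for $\lb[\calL]\rb$. I would argue that $\calK$ and $\calP(\sigma)$, while not diffeomorphic, become \emph{admissible} in the sense of Section~\ref{computationoftildeupsilon} after a harmless deformation: $\calP(\sigma)$ is a disjoint union of simplices with face identifications, its underlying map to $\Omega_{12}$ factors through $\kappa$ and the triangulation, and one can push the relevant interiors off $\partial M$ exactly as in the proof of Lemma~\ref{newcomputation}. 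The technical content is that the intersection polychain $\calD(\calP(\sigma),\calQ(\tau))$ can be computed simplex-by-simplex, and on each pair of simplices $(i,j)$ the piece of $D$ lying over $i\times j$ is diffeomorphic, as an oriented weighted polychain, to the corresponding piece of $\calD(\calK,\calL)$ over $\kappa(i)\times\lambda(j)$ — this is a local statement about transversal intersections that is insensitive to triangulation. Summing over simplices and invoking Lemma~\ref{newcomputation} (applied to the admissible pair $(\calP(\sigma),\calQ(\tau))$, whose classes in face homology are $\lb[\calK]\rb$ and $\lb[\calL]\rb$) gives $\widetilde\Upsilon(\lb[\calK]\rb\otimes\lb[\calL]\rb)=\lb\calD(\calP(\sigma),\calQ(\tau))\rb=\lb\calD(\calK,\calL)\rb=\widetilde\Upsilon(a\otimes b)$ in $\widetilde H_\ast(\Omega_{32}\times\Omega_{14})$. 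Applying $[-]$ and the renormalization sign finishes the diagram.

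Alternatively — and this is probably the cleaner route — I would avoid re-triangulating the intersection and instead prove the statement at the chain level: choose $\calK,\calL$ smooth reduced transversal, write $[\calD(\calK,\calL)]$ as the homology class of the explicit singular cycle $(\kappa\losange\lambda)_\ast(\sigma(S,w))$ for a locally ordered triangulation $S$ of $D$, and separately compute the class of $EZ$-type product cycles built from $\sigma(T,u)$ and $\sigma(W,v)$ to identify $\Upsilon([\calK]\otimes[\calL])$; the combinatorics here is exactly the shuffle combinatorics of Lemma~\ref{EZ} and Lemma~\ref{twocrossproducts}, now carried along the map $\tilde\kappa\times\tilde\lambda$ restricted to the transversal preimage of the diagonal. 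The main obstacle, in either approach, is the bookkeeping of orientations and signs: one must check that the product orientation on $D=(\tilde\kappa\times\tilde\lambda)^{-1}(\diag_M)$ used to define $\calD$ matches the orientation coming from the triangulated/simplicial description, and that the renormalization convention $\check\Upsilon(a\otimes b)=(-1)^{|b|+n|a|}\widetilde\Upsilon(a\otimes b)$ is precisely what makes the $[-]$-image of $\check\Upsilon$ agree with the sign-conventions implicit in the singular cross product and the Eilenberg--Zilber map. I expect everything else to be a routine, if lengthy, diagram chase through the naturality of $[-]$ and $\lb-\rb$ (Lemmas~\ref{from_sc_to_pcdebut}, \ref{mapP}) and through Lemma~\ref{twocrossproducts}; the orientation/sign verification is where the real care is needed.
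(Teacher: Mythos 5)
Your setup is broadly the same as the paper's: both arguments pass to explicit smooth transversal polycycles $\calK,\calL$, form the polycycle $\calK'$ associated to the fundamental singular cycle $\kappa_*(\sigma(T,u))$ (your $\calP(\sigma)$), and then try to match the intersection $\calD(\calK',\calL)$ against $\calD(\calK,\calL)$. However, your central claim contains a genuine gap: you assert that $\lb\calD(\calP(\sigma),\calQ(\tau))\rb=\lb\calD(\calK,\calL)\rb$ holds \emph{in face homology} $\widetilde H_*(\Omega_{32}\times\Omega_{14})$ (and then apply $[-]$ to both sides). That equality is strictly stronger than the lemma, is not proved by a ``simplex-by-simplex matching'', and is almost certainly false in general. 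Cutting $D$ into pieces along the triangulation walls changes the underlying manifold-with-faces and its partition in a way that is not a priori related to $\calD(\calK,\calL)$ by disjoint unions with reduced boundaries; the equality one actually needs, and the one the paper proves, is $[\calD(\calK',\calL)]=[\calD(\calK,\calL)]$ \emph{in singular homology}, and this is exactly why the lemma's diagram has $[-]$ on the right-hand vertical arrow rather than being a statement internal to face homology.

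The missing work is the heart of the paper's proof: one studies the cutting map $\overline\zeta|_{D'}:D'=\overline\zeta^{-1}(D)\to D$ (where $\overline\zeta=\zeta\times\id\times\id\times\id$), verifies — using the condition (*) that faces of $K$ intersect simplices of $T$ in faces, and a careful analysis of when $\overline\zeta(d_1)\sim_\theta\overline\zeta(d_2)$ implies $d_1\sim_{\theta'}d_2$ — that the partitions are compatible, so that a continuous map $g:D_\theta\to D'_{\theta'}$ of quotient spaces exists; and then applies Lemma~\ref{partit} together with a diagram chase through the relative-homology groups of $(D,S)$, $(D_\theta,S_\theta)$, $(D',\partial D')$ to deduce $g_*([D_\theta,w])=[D'_{\theta'},w']$ and hence the singular equality. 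Your proposal treats this as a local diffeomorphism of pieces plus bookkeeping of orientations, but the real content is global and sits at the level of fundamental classes of the quotient polyhedra. You also triangulate both $\calK$ and $\calL$ simultaneously; the paper varies only one argument at a time and uses the antisymmetry of $\widetilde\Upsilon$ (Lemma~\ref{antisymmetry_Upsilon_tilde}) for the other, which keeps the transversality constraints simpler (one must deform $\calL$ to be transversal to both $\calK$ and $\calK'$, and Lemma~\ref{newcomputation} is not actually needed). Your ``alternative'' chain-level route hints at the right level of generality but again defers the key issue — identifying the two singular fundamental cycles after quotienting by the face partitions — to unexamined ``bookkeeping''.
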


\subsection{The Leibniz rule for   $\Upsilon$}

As   in Section \ref{Leibniz}  in the case of face homology,
the concatenation of paths  induces three kinds of bilinear pairings  in singular homology:
\begin{equation}\label{concic}
H_*(\Omega_{ij})\times H_*(\Omega_{jk}) \longrightarrow  H_*(\Omega_{ik}), \ (a , b) \longmapsto ab=\conc_*(a \times b),
\end{equation}
$$
 H_{\ast} ( \Omega_{lm}  \times   \Omega_{ij}) \times  H_\ast(\Omega_{jk})
  \to  H_{\ast} ( \Omega_{lm} \times    \Omega_{ik}),  \ (x , a)   \mapsto   xa=(\id \times \conc)_*(x \times a),
$$
$$
H_\ast(\Omega_{ij}) \times  H_{\ast} ( \Omega_{jk} \times \Omega_{lm})
  \to    H_{\ast} ( \Omega_{ik}\times \Omega_{lm}), \ (a , x)   \mapsto   ax=(\conc \times \id)_*(a \times x).
$$

\begin{lemma}\label{Leibniz_Upsilon}
For  any  $\star_5\in \partial M$   (distinct from $\star_1$ and $\star_2$ if $n=2$)   and any homogeneous
$a\in  H_\ast (\Omega_{12})$,  $b\in  H_\ast (\Omega_{34})$,   $c \in   H_{\ast} (\Omega_{45})$,
\begin{equation}  \Upsilon_{12,35}(a \otimes bc)=     \Upsilon_{12,34}(a \otimes b ) \, c+ (-1)^{\vert a\vert_n \vert b\vert} \,b \,   \Upsilon_{12, 45} (a \otimes c).
\end{equation}
\end{lemma}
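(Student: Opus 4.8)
The plan is to deduce Lemma~\ref{Leibniz_Upsilon} from the Leibniz rule for $\check\Upsilon$ established in \eqref{LeibnizforwidetileUpsilon}, using the two natural transformations $[-]$ and $\lb-\rb$ relating face homology to singular homology, together with Lemma~\ref{diag-imp} and Theorem~\ref{mainonpolychains}. The key point is that the singular-homology pairings \eqref{concic} are compatible with the face-homology pairings \eqref{concatenation} via these transformations, so that all the structure transports along $[-]$ and $\lb-\rb$.

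First I would record the compatibility of $[-]$ with the concatenation pairings. Since $[-]:\widetilde H_\ast\to H_\ast$ is a natural transformation (Lemma~\ref{from_sc_to_pcdebut}) and the pairings in \eqref{concatenation} and \eqref{concic} are both defined by pushing forward the cross product along the concatenation map $\conc:\Omega_{ij}\times\Omega_{jk}\to\Omega_{ik}$ (respectively $\id\times\conc$ and $\conc\times\id$), Lemma~\ref{twocrossproducts} gives $[ab]=[a][b]$, $[xa]=[x][a]$, and $[ax]=[a][x]$ for the appropriate classes, i.e. $[-]$ intertwines the face-homology pairings with the singular ones. Next I would start from a pair of classes $a\in H_\ast(\Omega_{12})$, $b\in H_\ast(\Omega_{34})$, $c\in H_\ast(\Omega_{45})$, lift them to $\lb a\rb,\lb b\rb,\lb c\rb$ in face homology, and apply \eqref{LeibnizforwidetileUpsilon}:
$$
\check\Upsilon_{12,35}\big(\lb a\rb\otimes \lb b\rb\lb c\rb\big)= \check\Upsilon_{12,34}(\lb a\rb\otimes \lb b\rb)\,\lb c\rb + (-1)^{\vert a\vert_n\vert b\vert}\,\lb b\rb\,\check\Upsilon_{12,45}(\lb a\rb\otimes \lb c\rb).
$$
Here I use that $\lb b\rb\lb c\rb=\lb bc\rb$, which follows from the naturality of $\lb-\rb$ (Lemma~\ref{mapP}) and the fact that $\lb-\rb$ is compatible with cross products — this last statement I would either cite from the cross-product discussion or verify directly, since $\lb-\rb$ is a section of $[-]$ and $H_\ast(X\times Y)$ splits off $\widetilde H_\ast(X\times Y)$.

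Then I would apply $[-]$ to both sides. By Lemma~\ref{diag-imp}, $[-]\circ\check\Upsilon = \Upsilon\circ([-]\times[-])$ on the level of the lifted classes, and by Theorem~\ref{mainonpolychains}, $[-]\circ\lb-\rb=\id$, so $[\lb a\rb]=a$, $[\lb b\rb]=b$, $[\lb c\rb]=c$. Using the compatibility of $[-]$ with the three concatenation pairings recorded above, the left-hand side becomes $\Upsilon_{12,35}(a\otimes bc)$ and the right-hand side becomes $\Upsilon_{12,34}(a\otimes b)\,c + (-1)^{\vert a\vert_n\vert b\vert}\,b\,\Upsilon_{12,45}(a\otimes c)$, which is exactly the claimed identity. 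I expect the main obstacle to be the bookkeeping around the cross-product compatibility of $\lb-\rb$ and the verification that the three ``mixed'' pairings ($xa$ and $ax$) are also intertwined by $[-]$ and $\lb-\rb$ — this requires unwinding Lemma~\ref{twocrossproducts} and the definitions in Sections~\ref{Leibniz} and this section, but it is routine once the diagram in Lemma~\ref{twocrossproducts} is in hand. The signs should match automatically since $\check\Upsilon$ was renormalized precisely so that \eqref{LeibnizforwidetileUpsilon} holds with these signs, and $[-]$, $\lb-\rb$ are degree-preserving.
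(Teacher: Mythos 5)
Your overall strategy matches the paper's: apply the face-homology Leibniz rule \eqref{LeibnizforwidetileUpsilon} to lifted classes and transport the identity down to singular homology via Lemma~\ref{diag-imp}, the compatibility of $[-]$ with concatenation, and the splitting $[-]\circ\lb-\rb=\id$ from Theorem~\ref{mainonpolychains}. The one genuine gap is your claim $\lb b\rb\lb c\rb = \lb bc\rb$, which would require the section $\lb-\rb: H_\ast\to\widetilde H_\ast$ to be compatible with concatenation (equivalently, with cross products). This is nowhere established in the paper, and your proposed justification --- that $\lb-\rb$ is a section and $H_\ast$ splits off $\widetilde H_\ast$ --- is insufficient: a section of a split surjection of modules has no reason to respect any multiplicative structure. (The authors do not even know whether $[-]$ is injective; see Remark~\ref{rerererer}.3. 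They carefully avoid ever asserting multiplicativity of $\lb-\rb$.)

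Happily the claim is unnecessary. What the argument actually needs is only $[\lb b\rb\lb c\rb]=bc$, which follows from the multiplicativity of $[-]$ alone:
$$[xy]=[\conc_*(x\times y)]=\conc_*[x\times y]=\conc_*([x]\times[y])=[x][y]$$
for $x,y$ in face homology, using naturality of $[-]$ (Lemma~\ref{from_sc_to_pcdebut}) and its cross-product compatibility (Lemma~\ref{twocrossproducts}); hence $[\lb b\rb\lb c\rb]=[\lb b\rb][\lb c\rb]=bc$. Then apply $[-]$ to both sides of \eqref{LeibnizforwidetileUpsilon} evaluated on $\lb a\rb, \lb b\rb, \lb c\rb$: the diagram of Lemma~\ref{diag-imp} turns the left side into $\Upsilon_{12,35}\big([\lb a\rb]\otimes[\lb b\rb\lb c\rb]\big)=\Upsilon_{12,35}(a\otimes bc)$, and the same manipulations on the right side yield $\Upsilon_{12,34}(a\otimes b)\,c+(-1)^{\vert a\vert_n\vert b\vert}\,b\,\Upsilon_{12,45}(a\otimes c)$. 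Dropping the $\lb bc\rb=\lb b\rb\lb c\rb$ claim in favour of this reformulation makes your argument correct, and in fact identical to the paper's.
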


 \begin{proof}
 For any $x\in \widetilde H_*(\Omega_{ij})$, $ y\in \widetilde H_*(\Omega_{jk})$ with $i,j,k\in \{1,   \dots   ,5\}$, we have
\begin{equation}\label{multimu}
[xy] = [\conc_*(x \times y)]= \conc_*[x\times y] = \conc_*([x]\times [y])=  [x] [y]
\end{equation}
where we use the naturality of $[-]$ and  Lemma \ref{twocrossproducts}.
 In particular,
$bc = [\lb b \rb ] [\lb c \rb ] = [\lb b \rb \lb c \rb ]$.
We deduce that
\begin{eqnarray*}
 \Upsilon_{12,35}(a \otimes bc)
&=& \Upsilon_{12,35}\big([\lb a \rb ] \otimes \left[  \lb b\rb  \lb c\rb \right]\big)\\
&=&  \left[\check   \Upsilon_{12,35}\big(\lb a \rb, \lb b\rb \lb c \rb\big)\right]\\
&=&     \left[\check   \Upsilon_{12,35}\big(\lb a \rb, \lb b\rb\big) \lb c \rb
+ (-1)^{\vert a\vert_n \vert b\vert}   \lb b\rb  \check   \Upsilon_{12,35}\big(\lb a \rb,  \lb c \rb\big) \right] \\
&=&    \left[\check   \Upsilon_{12,35}\big(\lb a \rb, \lb b\rb\big) \right]   c
+ (-1)^{\vert a\vert_n \vert b\vert} b    \left[  \check   \Upsilon_{12,35}\big(\lb a \rb,  \lb c \rb\big) \right] \\
&=&      \Upsilon_{12,35}(a ,  b) c + (-1)^{\vert a\vert_n \vert b\vert} b      \Upsilon_{12,35}(a ,  c)
\end{eqnarray*}
where the second, third, fourth and fifth formulas follow respectively from     \eqref{Upsilon_Upsilon},   \eqref{LeibnizforwidetileUpsilon}, \eqref{multimu}, and the definition of $\Upsilon$.
  \end{proof}

  \subsection{Proof of Lemma \ref{diag-imp}}\label{subs123}

We claim that, for  all  $a\in \widetilde H_\ast (\Omega_{12})$ and  $b\in\widetilde H_\ast  (\Omega_{34})$,
\begin{equation}\label{claim_Upsilon}
\big[\widetilde\Upsilon\big(\lb[a]\rb,b\big)\big] = \big[\widetilde\Upsilon(a,b)\big]
= \big[\widetilde\Upsilon\big(a,\lb[b]\rb\big)\big]  .
\end{equation}
This would imply similar equalities with $\widetilde\Upsilon$ replaced by $\check \Upsilon$. Therefore
$$
\Upsilon([a],[b]) = \left[\check \Upsilon\big(\lb [a]\rb,\lb [b] \rb\big)\right]
= \left[\check \Upsilon\big(a,\lb [b] \rb\big)\right] = \left[\check \Upsilon(a,b)\right]   ,
$$
which   proves the commutativity of the diagram \eqref{Upsilon_Upsilon}.

We  prove the first equality in  \eqref{claim_Upsilon};   
 the   second   equality follows by the   symmetry  of~$\widetilde\Upsilon$   (Lemma \ref{antisymmetry_Upsilon_tilde}).
By Section~\ref{checkUpsilon---},  we can assume that $\{\star_1,\star_2\} \cap \{\star_3,\star_4\} {= \varnothing}$.
We need to prove that, for any smooth  polycycle $\calK=(K,\varphi,u,\kappa)$ in $\Omega_{12}^\circ$
and any  smooth  polycycle $\calL=(L,\psi,v,\lambda)$ in $\Omega_{34}^\circ$ transversal to~$\calK$,
\begin{equation}\label{left_side}
 \big[\widetilde\Upsilon\big(\lb [\calK] \rb,\lb \calL\rb \big)\big]=[\calD(\calK,\calL)] .
\end{equation}

Set $p=\dim(K)$. Pick a locally ordered smooth triangulation $T$ of $K$ which fits~$\varphi$.
 The construction of  such a triangulation  in  Section~\ref{fundcla} (using  Lemma~\ref{partition})
shows that  we can further assume that (*)
$F\cap \tau$ is a face of $\tau$ for any face~$F$ of~$K$ and  any simplex $\tau$ of $T$ 
 (cf$.$ the last paragraph  in the proof of Lemma~\ref{partition}).
  Consider the fundamental   $p$-chain
$$ \sigma= \sigma(T,u)= \sum_\Delta \, \varepsilon_\Delta \, u(K^\Delta) \,  \sigma_\Delta \in C_p(K)  $$
 determined by $T$ as in  Section~\ref{fundcla} (here $\Delta$  runs over all $p$-simplices  of $T$).
Then $\kappa_*(\sigma)$ is a smooth singular  $p$-cycle  in $\Omega_{12}^\circ  $  representing the singular homology class  $[\calK] \in H_p(\Omega_{12}^\circ)$.
Next consider the smooth $p$-polycycle $\calK' =(K',\varphi',u',\kappa')$ in $\Omega_{12}^\circ$ associated
 with the expansion  $\kappa_*(\sigma)=\sum_\Delta \, \varepsilon_\Delta \, u(K^\Delta) \,  \kappa \sigma_\Delta$ as in  Section~\ref{firsttransf}.
 By construction, $K'$ is a disjoint union of copies of the standard $p$-simplex $\Delta^p$  indexed by $p$-simplices $\Delta$ of $T$
and $\kappa'=\kappa   \zeta$
where $\zeta=\coprod_\Delta \sigma_\Delta: K'\to K$.  By the definition of the transformation $\lb -\rb:   H_\ast \to \widetilde H_\ast $,
$$
\lb [\calK] \rb = \lb [\kappa_*(\sigma)]\rb =    \lb \calK' \rb \in  \widetilde H_{p}(\Omega_{12}),
$$
so that \eqref{left_side} is equivalent to
\begin{equation}\label{with_K'}
 \big[\widetilde\Upsilon\big(\lb \calK' \rb,\lb \calL\rb \big)\big]=[\calD(\calK,\calL)] .
\end{equation}

Lemma \ref{defo1++} yields a deformation of $ \calL$ into a polycycle $\calL^1$   transversal to $\calK'$. Such a
polycycle   $\calL^1$ is  also transversal to $\calK$. By Lemma \ref{homotimplieshomol}, $\lb \calL\rb=\lb \calL^1\rb$
  so that $ \big[\widetilde\Upsilon\big(\lb \calK' \rb,\lb \calL\rb \big)\big]=\big[\widetilde\Upsilon\big(\lb \calK' \rb,\lb \calL^1\rb \big)\big]$.
By Lemma \ref{Upsilon_tilde_def}, the polycycles  $\calD(\calK,\calL)$  and    $\calD\left(\calK,\calL^1\right)$ are homologous
  so that    $[\calD(\calK,\calL)]=\left[\calD\left(\calK,\calL^1\right)\right]$.
Thus, in order to prove  \eqref{with_K'},  we may assume  without loss of generality that $\calL$ is transversal to $\calK'$. We need to prove  that
\begin{equation}\label{D(K',L)}
\left[\calD(\calK',\calL)\right]=\left[\calD(\calK,\calL)\right].
\end{equation}

Set $\calD(\calK,\calL )=(D,\theta,w,\kappa\losange\lambda )$, $\calD(\calK',\calL )=(D',\theta',w',\kappa'\losange\lambda )$ and
$$ \overline \zeta=
\zeta\times \id_I \times \id_L \times \id_I:  K' \times I \times L \times I \longrightarrow   K \times I \times L \times I.
$$
It follows from the definition that $D'= \overline \zeta^{-1}(D)$.
Thus~$D'$ is obtained    by cutting~$D$ into pieces, each \lq\lq piece"
being a connected component of  $D \cap (\Delta \times I \times L\times I)$ where~$\Delta$ is a $p$-simplex of $T$.
The map $\overline \zeta\vert_{D'} : D' \to D$ is the obvious gluing map. It is surjective and its restriction to every connected component of~$D'$ is injective.

Consider the equivalence relations $\sim_{\theta}$ on $D$ and $\sim_{\theta'}$  on $D'$ defined by the partitions  (see  Section~\ref{Partitions}). 
We claim that if  some   points   $d_1,d_2 \in  D'$ satisfy   $\overline \zeta (d_1)\sim_{\theta} \overline \zeta(d_2)$, then   $d_1 \sim_{\theta'} d_2$.
 We now check this claim. Assume that  for $i=1,2$,  $$d_i=(k_i',s,l_i,t) \in D'\subset K'\times I \times L \times I,  \quad {\rm{and\,\,  set}} \quad k_i=\zeta(k'_i)\in K.$$
By assumption,  there exist faces $F_1,F_2$ of $D$ of the same type
such that $\overline \zeta (d_i)=(k_i,s,l_i,t)\in F_i $ for $i=1,2$ and  $\theta_{F_1,F_2}:F_1 \to F_2$ carries $\overline \zeta (d_1)$ to $\overline \zeta (d_2)$.
For $i=1,2$, let $A_i$ and $B_i$ be  faces of $K$ and $L$ respectively such that $  A_i \times I \times B_i \times I$
is the smallest face of $K \times I \times L \times I$ containing $F_i$.
Then $k_i\in A_i$, $l_i\in B_i$, $A_1$ has the same type as $A_2$, $B_1$ has the same type as $B_2$, and
$\varphi_{A_1,A_2}(k_1)=k_2$, $ \psi_{B_1,B_2}(l_1)=l_2$.
 To proceed,   let $\Delta_i\approx \Delta^p$ be the connected component of $k'_i$ in $K'$,
and let $\sigma_i=\sigma_{\Delta_i}:\Delta^p \to K$ be the corresponding singular simplex
 (which is a simplicial isomorphism onto a $p$-simplex of the triangulation $T$).   Then $k_i\in A_i\cap \sigma_i(\Delta_i)$.
By the assumption (*)  above,   $A_i\cap \sigma_i(\Delta_i)$ is a face of the  $p$-simplex  $    \sigma_i(\Delta_i)  $.
Since  $T$ fits $\varphi$, the sets
$$
\tau_1=\sigma_1(\Delta_1) \cap\varphi_{A_2,A_1}\big( A_2\cap \sigma_2(\Delta_2)\big),
\qquad
\tau_2=\sigma_2(\Delta_2) \cap\varphi_{A_1,A_2}\big( A_1\cap \sigma_1(\Delta_1)\big)
$$
are faces of the $p$-simplices $\sigma_1(\Delta_1), \sigma_2(\Delta_2)$   containing $k_1,k_2$ respectively. The map
 $\varphi_{A_1,A_2}:A_1\to A_2$ restricts to a simplicial isomorphism $\varphi_{12} : \tau_1\to \tau_2$ preserving the   order of the vertices and carrying $k_1$ to $k_2$.
Set   $r=\dim(\tau_1)=\dim(\tau_2)$.  Then $\tau'_i = \sigma_i^{-1} (\tau_i )$ is an  $r$-dimensional face  of the  $p$-simplex $\Delta_i$   containing $k'_i$ for $i=1,2$.
The map $\varphi'_{12}=\sigma_2^{-1} \varphi_{12}   \sigma_1\vert_{\tau'_1}   :\tau'_1\to \tau'_2$
is an    order-preserving simplicial isomorphism  and $\varphi'_{12} (k'_1)=k'_2$.
Since $\Delta_1,\Delta_2$ are  copies of the standard $p$-simplex $\Delta^p$,   their faces
$\tau'_1,\tau'_2$ correspond to   certain   $(r+1)$-element subsets $S_1,S_2$ of the set $\{0,\dots,p\}$.
Since     $\kappa\varphi_{A_1,A_2} =\kappa\vert_{A_1}$, we have
$$
 (\kappa \sigma_{1})\circ  e_{S_1}  = (\kappa \sigma_{2})\circ e_{S_2} : \Delta^{r}\longrightarrow \Omega_{12}^\circ.
$$
By the definition of   $\calK'$,   the latter equality   implies that the faces $\tau'_1$, $\tau'_2$ of $K'$
have the same type and  $\varphi'_{\tau'_1,\tau'_2}=\varphi'_{12}:\tau'_1\to \tau'_2$.  Consider now the map
 $$\Theta=   \big(\varphi'_{\tau'_1,\tau'_2} \times \id \times \psi_{B_1,B_2} \times \id\big):
 \tau'_1 \times I \times B_1 \times I\to \tau'_2 \times I \times B_2 \times I.$$
 We have
$$
\Theta (d_1) = (\varphi'_{12}(k'_1),s,\psi_{B_1,B_2}(l_1),t) =d_2.
$$
Let $G_i$ be the connected component of $d_i $ in  $D' \cap (\tau'_i \times I \times B_i \times I)$.  The equality $\Theta (d_1)  =d_2$ implies that $\Theta (G_1)=G_2$.
Thus, $G_1$ and $G_2$   are faces of $D'$ of the same type,
  and the identification map $\theta'_{G_1,G_2}=\Theta\vert_{G_1}$ carries $d_1$ to $d_2$. This proves   that $d_1 \sim_{\theta'} d_2$ as claimed.

Consider the  canonical projections $\pi:D\to D_\theta$ and $\pi':D'\to D'_{\theta'}$.
The previous claim implies  that there exists a unique map  $g$ such that the  diagram
$$
\xymatrix{
D_\theta \ar[d]_-g & \ar[l]_-\pi D \\
D'_{\theta'}  & D' \ar[u]_{  \overline \zeta\vert_{D'}   }  \ar[l]_-{\pi'} 
}
$$
 commutes. The map $g$ is continuous because $\pi'$ is continuous and $\overline \zeta,\pi$ are quotient maps.
Set $S= \overline \zeta (\partial D')=D\cap T^{p-1}$ where $T^{p-1}$ is the $(p-1)$-skeleton of $T$. Clearly, $\partial D\subset S$.  Consider   the commutative diagram
$$
\xymatrix{
&& H_*\left(D_\theta,(\partial D)_\theta\right)\ar[d] &  \ar[l]_-{\pi_*} H_*(D,\partial D) \ar[d]
&\!\!\!\!\!\!\!\!\!\!\!\!\!\!\!\! \!\!\!\!\! \!\!\!\!\!\ni [D,w] \\
[D_\theta,w]\in \!\!\!\!\!\!\!\!\!\!\!\!\!\!\!\!\!\!\!& H_*\left(D_\theta \right)  \ar[r] \ar[d]_-{g_*} \ar[ru]& H_*\left(D_\theta,S_\theta \right)   \ar[d]_-{g_*}
&  \ar[l]_-{\pi_*} H_*(D,S) &\!\!\!\!\! \!\!\!\!\! \!\!\!\!\!\!\!\!  \ni  (\overline \zeta\vert_{  D'  })_* ([D',w']) \\
\big[D'_{\theta'},w'\big]\in \!\!\!\!\!\!\!\!\!\!\!\!\!\!& H_*\left(D'_{\theta'}\right)  \ar[r]  & H_*\left(D'_{\theta'},(\partial D')_{\theta'}\right)
& \ar[l]_-{\pi'_*} H_*(D',\partial D') \ar[u]_-{(\overline \zeta\vert_{  D'  })_*  }
&\!\!\!\!\!\!\!\!\!\!\!\!\!\! \!\!\!\!\! \!\!\!\!\!\ni [D',w']
}
$$
where the  unlabelled arrows  are the inclusion maps.
The definition of $\calK'$ implies that the weight $u':\pi_0(K')\to \kk$ of $\calK'$ is the composition of   $   \zeta_\#:\pi_0(K')\to \pi_0(K)$
with the weight $u:\pi_0(  K  )\to \kk$ of $\calK$. Using   the definition of the   operation $\calD$,
we deduce that  the weight $w':\pi_0(D')\to \kk$   is the composition of
$ (\overline \zeta\vert_{  D'  })_\#:\pi_0(D')\to \pi_0(D)$ with the weight $w:\pi_0(D)\to \kk$.
This  fact and the definition of $[D',w'], [D,w]$ imply   that $(\overline \zeta\vert_{  D'  })_* \left([D',w']\right)$
is the image of $[D,w]$ in $H_*(D,S)$. By Lemma~\ref{partit},   the image of  $[D_\theta,w]$ in $H_*\left(D_\theta,(\partial D)_\theta\right)$ is equal to
 $\pi_*([D,w])$. Using this fact, the uniqueness in  Lemma~\ref{partit},   and a simple diagram chasing  we obtain that
\begin{equation}\label{g,D,D'}
g_*([D_\theta,w]) =[D'_{\theta'},w'] \ \in H_\ast(D'_{\theta'}).
\end{equation}
Next, we verify that 
\begin{equation}\label{kappp} (\kappa' \losange \lambda )_{\theta'} g = (\kappa \losange \lambda )_{\theta}:D_\theta\to \Omega_{32}\times \Omega_{14}. 
\end{equation}
   Given  $d=(k,s,l,t) \in D$, we have $g\pi(d)= \pi'(k',s,l,t)$ for any  $k'\in \zeta^{-1}(k) \subset K'$. Then
$$
(\kappa' \losange \lambda )_{\theta'} g \big(\pi(d)\big) = (\kappa' \losange \lambda )(k',s,l,t) = (\kappa \losange \lambda )(k,s,l,t) = (\kappa \losange \lambda )_\theta\big(\pi(d)\big)
$$
where we use the equality $\kappa'(k')=\kappa(k)$.
Since $\pi:D\to D_\theta$ is onto, we conclude that \eqref{kappp} holds. 
This  and \eqref{g,D,D'}     imply  \eqref{D(K',L)}:
\begin{eqnarray*}
\left[\calD(\calK,\calL)\right] &=&  \left((\kappa \losange \lambda )_{\theta}\right)_* \left([D_{\theta},w]\right)\\
&=& \left((\kappa' \losange \lambda )_{\theta'}\right)_*  g_*([D_\theta,w])\\
& =&  \left((\kappa' \losange \lambda )_{\theta'}\right)_* \left([D'_{\theta'},w']\right)
\ = \ \left[\calD(\calK',\calL)\right].
\end{eqnarray*}

\chapter {The intersection bibracket} \label{Intersection bibrackets}

  Throughout this chapter, $M$ is 
  an oriented  smooth $n$-dimensional manifold  with non-void boundary,  where $n\geq 2$.

\section{Construction of the intersection bibracket} \label{bilh}

We introduce the path homology category  of~$M$ and define the intersection bibracket in this category.
 
\subsection{The path homology category} \label{phc}

Let $\calC=\calC(M)$ be the graded category
whose   set of objects is   $\partial M$ and whose graded modules of  morphisms are defined by
$$
\Hom_{\calC} ( \star ,  \star' )=H_\ast\big(\Omega(M, \star , \star' )\big)
$$
for any $ \star , \star'  \in \partial M$.
Composition in  $\calC$ is the pairing \eqref{concic}   defined via concatenation of paths.
For $\star  \in \partial M$, the identity morphism of $  \star$ in $\calC$ is the element of $H_0(\Omega(M,\star,\star))$ represented by the  constant path in $ \star $.
 We call $\calC$ the \index{path homology category}  \emph{path homology category} of $M$.
By Section~\ref{extenscat},  this category  determines a graded algebra
\begin{equation}\label{algebraA}
A= A(\calC)=\bigoplus_{ \star , \star'  \in \partial M}    H_{\ast} \big( \Omega(M, \star,\star' )  \big) .
\end{equation}

The subcategory   ${\calC}^0$   of $\calC$  formed by all objects and  morphisms of degree $0$  can be  formulated in terms of paths in $M$:
for any $ \star , \star'  \in \partial M$,  the   module  of morphisms $\Hom_{  {\calC}^0   }(\star,\star')$  is freely generated by
the set of homotopy classes of paths from $ \star $ to $ \star' $ in $M$.
  Thus   the category   ${\calC}^0$   is the linearization of  the fundamental groupoid  $\pi_1(M,\partial M)$ of $M$ based at $\partial M$,
and the algebra $A(  {\calC}^0   )$   is the corresponding groupoid algebra. Clearly, $A(  {\calC}^0   )$ embeds in $A$ as a subalgebra.

\subsection{The intersection bibracket} \label{interbi}

  Assume  that $n=\dim(M) \geq 3$ and
\begin{equation} \label{cross_iso}
  \begin{array}{c}
\hbox{\it the cross product in the homology $H_*(\Omega_\star)$ } \\
\hbox{\it of the loop space $\Omega_\star=\Omega(M,\star,\star)$ based at $\star \in \partial M$ } \\
\hbox{\it induces an isomorphism } H_*(\Omega_\star) \otimes H_*(\Omega_\star) \simeq  H_*(\Omega_\star \times \Omega_\star).
\end{array}
\end{equation}
 By the K\" unneth theorem,
  the  condition \eqref{cross_iso} holds  if $\kk$ is a principal ideal domain and   $H_*(\Omega_\star)=H_*(\Omega_\star; \kk)$ is a flat $\kk$-module
(this occurs, for instance, when $\kk$ is a field);
it   also holds   for any $\kk$ if   $H_*(\Omega_\star;\ZZ)$ is a free abelian group.
Then  the cross product induces an isomorphism
$\varpi_{32,14}: H_{\ast} ( \Omega_{32}  ) \otimes H_{\ast} (   \Omega_{14}) \to H_{\ast} ( \Omega_{32} \times \Omega_{14})$
for any choice of base points $ \star_1, \star_2, \star_3, \star_4 \in \partial M$.
Composing the inverse isomorphism with the  map   $\Upsilon_{12,34}$ defined in
  Section~\ref{Upsilon1111},  we obtain a  linear map
$$ (\varpi_{32,14})^{-1} \Upsilon_{12,34}:
H_{\ast} ( \Omega_{12}  ) \otimes H_{\ast} (   \Omega_{34}) \to H_{\ast} ( \Omega_{32}  ) \otimes H_{\ast} (   \Omega_{14}).
$$
The direct sum of these maps over all 4-tuples of points in $ \partial M$ is a linear map
$$
\double{-}{-}: A\otimes A \longrightarrow A \otimes A
$$
called the  \index{intersection bibracket} \emph{intersection bibracket}   of $M$.    We can now state our main  result.

\begin{theor}\label{geometricbibracket}
 Under the assumptions above,  $(\calC, \double{-}{-})$  is a double Gerstenhaber category of degree $d=2-n$.
\end{theor}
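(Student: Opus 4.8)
The plan is to verify the three conditions defining a double Gerstenhaber category of degree $d=2-n$ for the pair $(\calC,\double{-}{-})$: that $\double{-}{-}$ is a $d$-graded bibracket in the category $\calC$ (i.e.\ has degree $d$, satisfies both $d$-graded Leibniz rules, and kills the identity morphisms $e_\star$), that it is $d$-antisymmetric, and that it induces the zero tribracket. Throughout I would work with the operation $\Upsilon$ of Section~\ref{Upsilon} and its relation to $\double{-}{-}$ via the K\"unneth isomorphisms $\varpi_{32,14}$; the point is that $\Upsilon$ already carries all the geometry, and $\double{-}{-}$ is just $\Upsilon$ rewritten through K\"unneth.

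First I would record the degree statement: $\Upsilon$ has degree $2-n$ by its construction from $\check\Upsilon$, which in turn has degree $2-n$ because $\widetilde\Upsilon$ raises degree by $2-n$ (the intersection polychain $\calD(\calK,\calL)$ has dimension $\dim K+\dim L+2-n$) and the renormalization sign in $\check\Upsilon$ does not change the degree. Since $\varpi$ is degree-preserving, $\double{-}{-}$ has degree $2-n$, which is exactly the required $\double{A^p}{A^q}\subset\bigoplus_{i+j=p+q+2-n}A^i\otimes A^j$ in each block. Next, the two Leibniz rules \eqref{bibi}, \eqref{bibi+} for $\double{-}{-}$: these translate, via $\varpi$ and the definition of the inner/outer bimodule structures on $A^{\otimes 2}$, into the Leibniz rule for $\Upsilon$ proved in Lemma~\ref{Leibniz_Upsilon} (the second-variable rule) together with its first-variable counterpart obtained from the antisymmetry \eqref{Upsilon1}; here I would need to check that the path-concatenation pairings \eqref{concic} and the pairings $xa$, $ax$ used in Lemma~\ref{Leibniz_Upsilon} correspond under K\"unneth to multiplication in $A$ and to the outer, resp.\ inner, bimodule actions on $A\otimes A$ — this is a sign-bookkeeping exercise using Lemma~\ref{twocrossproducts}. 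The vanishing $\double{A}{e_\star}=\double{e_\star}{A}=0$ follows from the Leibniz rules: since $e_\star e_\star=e_\star$, applying \eqref{bibi+} with $a=b=e_\star$ forces $\double{e_\star}{c}$ to be annihilated by an idempotent in a way that, combined with $d$-antisymmetry, yields $\double{e_\star}{-}=0$; alternatively one sees it directly because a constant loop meets any transversal polycycle in an empty or lower-dimensional set after the dimension count (as in the proof of Lemma~\ref{importantGerspp}). The $d$-antisymmetry of $\double{-}{-}$ is \eqref{checksymm}/\eqref{Upsilon1} rewritten through K\"unneth: Lemma~\ref{antisymmetry_Upsilon_tilde} supplies the geometric input (the permutation diffeomorphism $\mathsf h$ of the intersection manifold $D$ has degree $(-1)^{(p+1)(q+1)+n}$), and one checks that the composite $\perm_*$ together with the two K\"unneth isomorphisms produces precisely the flip sign $-(-1)^{|a|_d|b|_d}$ of \eqref{flip}, using that $d=2-n$ so $|a|_d\equiv|a|+n \pmod 2$.

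The main obstacle — and the step that will require genuine work rather than bookkeeping — is the Jacobi-type identity, i.e.\ that the induced tribracket \eqref{tribracket} vanishes. This is the geometric heart of the theorem and is the part the paper explicitly defers to Section~\ref{Jacobi}. The strategy there will be to represent three face-homology classes $a,b,c$ (on loop spaces based at three distinct auxiliary points on $\partial M$, joined by embedded arcs) by mutually transversal smooth polycycles, form the iterated intersection polychains governing the two terms $\triple abc$ and $\triple acb$, and exhibit an explicit diffeomorphism between the relevant manifolds with corners — a triple-intersection space $(\tilde\kappa\times\tilde\lambda\times\tilde\mu)^{-1}(\text{diagonal data})$ — that identifies the six summands appearing in Lemma~\ref{qP--} (after passing to representation algebras, or equivalently directly at the level of the induced tribracket on $A\otimes A\otimes A$). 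One must check that all boundary contributions cancel in pairs and that the orientation signs match the Koszul signs in \eqref{tribracket}; the cancellation of boundary terms uses Lemma~\ref{reduction_D}.(iii)–(iv) for the behaviour of $\calD$ under $\partial$, and the homological invariance (Lemma~\ref{homotimplieshomol}) to push everything to transversal position. Once the zero-tribracket identity is in hand, Lemma~\ref{bracketcategories} (third bullet) immediately upgrades the conclusion to the statement that $(\calC_N^+,\bracket{-}{-})$ is a unital Gerstenhaber algebra of degree $2-n$ for every $N\geq 1$, but that is a formal consequence and not part of what must be proved here.
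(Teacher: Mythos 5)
Your proposal follows the paper's decomposition exactly: degree from the dimension count on $\calD(\calK,\calL)$, $d$-antisymmetry from Lemma~\ref{antisymmetry_Upsilon_tilde}/\eqref{Upsilon1} plus the Koszul sign in the K\"unneth flip, the first $d$-graded Leibniz rule \eqref{bibi} from Lemma~\ref{Leibniz_Upsilon}, the second rule \eqref{bibi+} from $d$-antisymmetry via Lemma~\ref{bibrtranspose}, and the tribracket vanishing deferred to Section~\ref{Jacobi}; your sketch of that deferred step (pairwise cancellation in a triple-intersection manifold with corners, via orientation-reversing diffeomorphisms between the six constituent polychains) is accurate in spirit.

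The one step where your reasoning goes wrong is the vanishing $\double{e_\star}{A}=\double{A}{e_\star}=0$. The formal Leibniz argument you offer as primary does not close in the category setting. From $e_\star e_\star=e_\star$ and \eqref{bibi+}, putting $z=\double{e_\star}{c}$, one gets only $z = e_\star * z + z * e_\star$, i.e.\ $z' \otimes e_\star z'' + z' e_\star \otimes z'' = z$. Decomposing $z$ over $\Hom$-blocks, this forces $z$ to be supported on blocks in which \emph{exactly one} of the inner source/target objects equals $\star$, but it does not force $z=0$; unlike the unital-algebra situation exploited in Lemma~\ref{importantGerspp} (where $1_A$ is a genuine two-sided unit so that $e_\star * z = z = z * e_\star$), here $e_\star$ acts as identity only on the $\star$-blocks. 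Adding $d$-antisymmetry yields the mirror constraint on $\double{c}{e_\star}=\pm\Perm_{21}(z)$, which carries the same information and produces no contradiction. So "follows from the Leibniz rules" is false as stated, and the appeal to Lemma~\ref{importantGerspp} is misleading since that lemma's proof is precisely the formal argument that does \emph{not} transfer.

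The correct reason — which you offer as an alternative but misdescribe — is geometric, and it is not a dimension count. A polycycle representing $e_\star$ can be taken to be a single point mapping to the constant loop at $\star$; its adjoint map has image $\{\star\}\subset\partial M$. Any smooth polycycle in $\Omega^\circ(M,\star_3,\star_4)$ (with $\star_3,\star_4\neq\star$, after the usual perturbation of base points) has adjoint image meeting $\partial M$ only in $\{\star_3,\star_4\}$, hence disjoint from $\{\star\}$. So the intersection locus $D$ is literally empty, regardless of dimensions, and $\double{e_\star}{c}=0$. This is what the paper means by "obvious from the definitions." Once you replace your Leibniz argument by this disjointness observation, the proposal matches the paper's proof.
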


\begin{proof}  That the bibracket $\double{-}{-}$ has degree $d$ follows from the fact
that the intersection polychain of a $p$-polycycle and a $q$-polycycle has dimension $p+q+2-n=p+q+d$ for any $p,q$.
  The $d$-antisymmetry of $\double{-}{-} $ follows from the formula \eqref{Upsilon1}
and the following well-known  fact:   for   any  topological spaces $X$ and $Y$
such that   the cross product induces an isomorphism $H_*(X) \otimes H_*(Y) \to H_*(X \times Y)$,
the isomorphism $H_\ast(X) \otimes H_\ast(Y) \to H_\ast(Y) \otimes H_\ast(X)$
induced by the interchange of factors $X\times Y\to Y\times X$ and the   cross product isomorphisms,
carries $a\otimes b $ to $(-1)^{\vert a\vert \vert b\vert} b\otimes a$ for any homogeneous $a\in H_\ast(X)$, $b\in H_\ast(Y)$; see, for example,  \cite[Section 4(b)]{FHT}.
The  bibracket  $\double{-}{-} $  satisfies the first  
 Leibniz rule~\eqref{bibi}   as easily follows from  Lemma~\ref{Leibniz_Upsilon}  using the associativity and the naturality of the cross product in singular homology.
By   Lemma~\ref{bibrtranspose} and the $d$-antisymmetry, the bibracket $\double{-}{-}$ also satisfies the second  
Leibniz rule~\eqref{bibi+}. Therefore $\double{-}{-}$ is a $d$-graded bibracket in $A$.

It  is obvious from the definitions that $\double{-}{-}$ annihilates the identity morphisms of all objects.
It remains only to prove  that the associated  tribracket  is equal to  zero;
we postpone the proof to Section \ref{Jacobi}.
\end{proof}

 Since $d=2-n<0$, the restriction of the   intersection    bibracket in  $\calC$ to   ${\calC}^0$   is equal to zero.
Moreover,   the  morphisms in   ${\calC}^0$   represented by paths in $\partial M$ annihilate the bibracket in $\calC$ both on the right and on the left.

Theorem~\ref{geometricbibracket} and Lemma~\ref{bracketcategories}  imply that for every  integer $N\geq 1$,
the associated representation algebra $\calC_N^+$  is   a   unital    Gerstenhaber algebra of degree $ 2-n$.

\subsection{The {Pontryagin} algebra}\label{The {Pontryagin} algebra}

  We now  fix   a base point $\star\in \partial M$.
 By the \index{Pontryagin algebra} \emph{Pontryagin algebra of $M$}, we mean the unital graded algebra
 $$
 A_\star =\End_{{{\calC}(M)}} (\star) =H_\ast (\Omega_\star )
 \quad \   \hbox{where} \ \Omega_\star=\Omega (M, \star, \star).
 $$
Multiplication in $A_\star$ is the \index{Pontryagin product} \emph{Pontryagin  product} given by $ab=\conc_\ast(a\times b)$.

   Pontryagin  algebras have been extensively studied since Serre's thesis \cite{Se}.
They can be explicitly computed using the Adams--Hilton model \cite{AH} or the techniques of rational homotopy theory (at least, in the simply connected case).
We only mention  the   relation with  the homotopy groups, and refer to \cite{FHT} for a  detailed exposition.
Consider the boundary  homomorphism
\begin{equation*}\label{partial}
\partial_i: \pi_i(M)=\pi_i(M, \star)\longrightarrow \pi_{i-1}(\Omega_\star)=\pi_{i-1}(\Omega_\star, e_\star)
\end{equation*}
for the path space fibration of $M$   where   $i\geq 1$ and $e_\star\in \Omega_\star$ is the constant path at~$\star$.
Since the total space of  that fibration   is contractible, $\partial_i$ is an isomorphism for all~$i$.
Composing $\partial_i$ with the Hurewicz homomorphism    $ \pi_{i-1}(\Omega_\star)\to H_{i-1}(\Omega_\star)$,
we obtain an additive map  $  \overline \partial_i: \pi_i(M )\to A_\star^{i-1}$ called the \index{connecting homomorphism} \emph{connecting homomorphism}.
For $i=1$,  this homomorphism extends to a ring isomorphism   $\kk[ \pi_1(M, \star  )]   \simeq  A_\star^0 $.
  For $i=2$, this homomorphism induces an isomorphism from  $\kk \otimes_\ZZ  \pi_2(M )$ onto   $H_1(\Omega_\star^{\operatorname{null}}) \subset A_\star^1$
where $\Omega_\star^{\operatorname{null}}$ is the connected component of $\Omega_\star$ formed by all null-homotopic loops.

 The   group $\pi_1(M,\star)$   acts on $A_\star$  by   graded algebra automorphisms:
the  action    of any $g \in \pi_1(M,\star)$  is the automorphism
$a\mapsto
 a^g = gag^{-1}
$ of $A_\star$
where $g$ is viewed as an invertible element of $A_\star^0$.
The inclusion $\partial M \subset M$ allows us to consider the induced action of $\pi_1(\partial M,\star)$ on $A_\star$.

 \begin{theor}\label{geometricbibracket+++++}
If  $n=\dim(M)\geq 3$   and the condition \eqref{cross_iso} is satisfied,
then the restriction of the intersection  bibracket $\double{-}{-}$ in the category ${\calC}(M)$ to $A_\star$
is a $\pi_1(\partial M,\star)$-equivariant Gerstenhaber  bibra\-cket of degree $ 2-n$.
\end{theor}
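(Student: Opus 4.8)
The plan is to derive Theorem~\ref{geometricbibracket+++++} from Theorem~\ref{geometricbibracket} by restricting the double Gerstenhaber structure on $\calC(M)$ to the full subcategory determined by the single object $\star$, and then to check the extra claim of $\pi_1(\partial M,\star)$-equivariance separately. For the first part I would invoke the general discussion at the end of Section~\ref{extenscatmain}: any object $X$ of a graded category $\calC$ spans a full subcategory $\calC_X$ whose associated algebra is $\End_\calC(X)$, and the first claim of Lemma~\ref{bracketcategories} shows that a $d$-graded bibracket on $A(\calC)$ restricts to a $d$-graded bibracket on $A(\calC_X)=\End_\calC(X)$; moreover a Gerstenhaber bibracket restricts to a Gerstenhaber bibracket. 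Applying this with $X=\star$ and $\calC=\calC(M)$, and using Theorem~\ref{geometricbibracket} which says $(\calC(M),\double{-}{-})$ is a double Gerstenhaber category of degree $d=2-n$, gives immediately that the restriction of $\double{-}{-}$ to $A_\star=\End_{\calC(M)}(\star)=H_\ast(\Omega_\star)$ is a Gerstenhaber bibracket of degree $2-n$. One should note that the restriction is literally the map $(\varpi_{\star\star,\star\star})^{-1}\Upsilon_{\star\star,\star\star}$ built from a single loop space, so it is the same map as the one named in the statement.

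The remaining point is $\pi_1(\partial M,\star)$-equivariance. Here I would use the change-of-base-points machinery of Sections~\ref{change} and~\ref{Upsilon1111}. For $g\in\pi_1(\partial M,\star)$, choose a loop $\varsigma\colon I\to\partial M$ based at $\star$ representing $g$; the induced homotopy equivalence $(\varsigma^{-1},\varsigma)_\#\colon\Omega_\star\to\Omega_\star$, $\gamma\mapsto\varsigma^{-1}\gamma\varsigma$, induces on $H_\ast(\Omega_\star)=A_\star$ precisely the automorphism $a\mapsto a^g=gag^{-1}$, since in $A_\star^0$ the element $g$ is invertible and concatenation with $\varsigma$ realizes multiplication by $g$ (and the map depends only on the homotopy class of $\varsigma$ in $\partial M$, hence only on $g\in\pi_1(\partial M,\star)$). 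Taking $\varsigma_1=\varsigma_2=\varsigma_3=\varsigma_4=\varsigma$ in the commutative diagram \eqref{1234_1234'_H}, with all four base points equal to $\star$, yields
$$
\bigl((\varsigma,\varsigma)_\#\times(\varsigma,\varsigma)_\#\bigr)\circ\Upsilon_{\star\star,\star\star}=\Upsilon_{\star\star,\star\star}\circ\bigl((\varsigma,\varsigma)_\#\otimes(\varsigma,\varsigma)_\#\bigr).
$$
Since the cross product and hence the isomorphism $\varpi_{\star\star,\star\star}$ is natural with respect to the maps induced by $(\varsigma,\varsigma)_\#$ on each factor, the same intertwining relation holds for $(\varpi_{\star\star,\star\star})^{-1}\Upsilon_{\star\star,\star\star}=\double{-}{-}\big|_{A_\star}$. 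Translating $(\varsigma,\varsigma)_\#$ into the automorphism $a\mapsto a^g$ gives $\double{a^g}{b^g}=\double{a}{b}^{g}$, i.e.\ $\double{-}{-}$ is $\pi_1(\partial M,\star)$-equivariant, where on $A_\star\otimes A_\star$ the group acts diagonally by $x'\otimes x''\mapsto (x')^g\otimes(x'')^g$.

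The step I expect to require the most care is the identification of the change-of-base-point isomorphism $(\varsigma,\varsigma)_\#$ with the inner automorphism $a\mapsto gag^{-1}$ of the Pontryagin algebra, together with checking that the naturality of the K\"unneth isomorphism \eqref{cross_iso} is exactly what is needed to pass the intertwining relation from $\Upsilon$ down to $\double{-}{-}$; one must be sure the diagram \eqref{1234_1234'_H} is legitimately applicable with all base points coinciding and with $\varsigma$ a loop rather than a path between distinct points, which is fine because the construction in Section~\ref{change} never used that the endpoints were distinct. Everything else is a direct citation of Lemma~\ref{bracketcategories}, Theorem~\ref{geometricbibracket}, and the discussion of $\calC_X$ at the end of Section~\ref{extenscatmain}. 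I would also remark, as the paper does just before the statement, that because $d=2-n<0$ the degree-zero part $\kk[\pi_1(M,\star)]\subset A_\star$ is annihilated by $\double{-}{-}$, which is consistent with — and a special case of — the equivariance, but is not needed for the proof.
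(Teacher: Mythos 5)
Your proof follows exactly the paper's two-step route: restrict the double Gerstenhaber structure to the full subcategory $\calC_\star$ via Theorem~\ref{geometricbibracket} and the remarks at the end of Section~\ref{extenscatmain}, then deduce $\pi_1(\partial M,\star)$-equivariance from the commutative diagram~\eqref{1234_1234'_H} together with naturality of the cross product. One small slip to correct: $(\varsigma,\varsigma)_\#(\gamma)=\varsigma^{-1}\gamma\varsigma$ realizes $a\mapsto g^{-1}ag=a^{g^{-1}}$, not $a\mapsto gag^{-1}=a^g$ (the paper substitutes $\varsigma^{-1}$ for all four $\varsigma_i$ to get the latter), but since $g$ ranges over a group this relabelling is harmless and the equivariance conclusion is unaffected.
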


\begin{proof}
 Clearly,   $A_\star=A(\calC_\star)$ where $\calC_\star$ is the full subcategory of $\calC$ determined by the object $\star$.
 Therefore our claim  is a consequence of Theorem~\ref{geometricbibracket} and  the results stated   at the end of   Section~\ref{doublecat}. 
We only need to check the equivariance. Let $a,b\in A_\star$ and   let  $\varsigma$ be a loop in $\partial M$ based at $\star$ representing   $g  \in \pi_1(\partial M,\star)$.
We deduce from  the commutativity of the diagram  \eqref{1234_1234'_H}   that
$$
\Upsilon(a^g \otimes b^g) =  \Upsilon\left((\varsigma^{-1},\varsigma^{-1})_\sharp(a), (\varsigma^{-1},\varsigma^{-1})_\sharp(b)\right)
=\left( (\varsigma^{-1},\varsigma^{-1})_\sharp \times( \varsigma^{-1},\varsigma^{-1})_\sharp\right) \Upsilon(a,b).
$$
Using the naturally of the cross product, we conclude that
$$
\double{a^g}{b^g}= \big(\double{a}{b}'\big)^g \otimes \big(\double{a}{b}''\big)^g.
$$

\up
\end{proof}

  By Theorem~\ref{geometricbibracket+++++} and  Lemma~\ref{importantGerspp},
  the   intersection   bibracket in $A_\star$ induces  a   natural   structure of a Gerstenhaber algebra of degree  $2-n $
  in the commutative unital graded algebra $(A_\star)_N^+$ for all $N\geq 1$.
We call   $(A_\star)_N^+$ the \index{representation algebra} \emph{$N$-th   representation algebra of~$M$}.
The action of $\pi_1(\partial M,\star)$ on  $A_\star$ induces an action of $\pi_1(\partial M,\star)$ on   $(A_\star)_N^+$    by   graded algebra automorphisms,
  and the Gerstenhaber bracket  in  $(A_\star)_N^+$ is  $\pi_1(\partial M,\star)$-equivariant.
  The isomorphism classes of the double Gerstenhaber algebra   $A_\star$   and
the Gerstenhaber algebras $\{(A_\star)_N^+\}_N$ depend only on the connected component of $\star$ in $\partial M$.

\subsection{The induced Lie bracket}\label{The induced Lie bracket}
We  keep   notation of Section \ref{The {Pontryagin} algebra}   and  let $\check A_\star$ be the quotient of $A_\star =H_\ast (\Omega_\star )$
by the submodule $[A_\star,A_\star]$ spanned by the vectors $   ab - (-1)^{\vert a \vert \vert b \vert} ba$
where  $ a,b $ run over all homogeneous elements of $ A_\star$. Under the assumptions of Theorem \ref{geometricbibracket+++++},
the  intersection bibracket $\double{-}{-}$ in $A_\star$ composed with the multiplication  of $A_\star$
induces a $(2-n)$-graded Lie bracket $\langle-,-\rangle$ in $\check A_\star$,        see Section~\ref{stdb}.  

  The Lie  bracket $\langle-,-\rangle$   can be  computed using  the map  
 $  \conc_* :H_*( \Omega_\star \times \Omega_\star) \to H_*(\Omega_\star)$ induced by the   concatenation of loops.
Namely, if $h:  A_\star \to \check A_\star$ is the natural projection, then   for   any homogeneous $a, b\in A_\star$,
\begin{eqnarray*}
\langle h(a) , h(b) \rangle &=  &h ( \double{a}{b}' \double{a}{b}'') \\
&=&  h    \conc_*  \Upsilon(a\otimes b)
\ = \ (-1)^{\vert b\vert  + n \vert a\vert   } h      \conc_* \big(\big[  \widetilde\Upsilon(\lb a \rb \otimes \lb b \rb)\big]\big).
\end{eqnarray*}
The resulting expression    may be used as the definition  of $\langle-,-\rangle$  avoiding the use of   $\double{-}{-}$.
This gives a $(2-n)$-graded Lie bracket  in $
\check A_\star$  over an arbitrary    commutative   ring  $\kk$.
  The Jacobi identity for $\langle-,-\rangle$ may be deduced from Lemma~\ref{Jacobi_Upsilon} below.
  Presumably, the   Lie    bracket   $\langle - , - \rangle$  is  related to the operation
  discussed in \cite[Remark 3.2.3]{KK1} using   Chas--Sullivan's   techniques.

\subsection{The   simply connected case} \label{ss_case}

Suppose   that the  manifold $M$ is simply connected and  the ground ring $\kk$ is a field of characteristic zero.
The    classical  Milnor--Moore   theorem  (see  \cite[Theorem 21.5]{FHT})   asserts   that,
 the   {Pontryagin} algebra   $A_\star =H_\ast (\Omega_\star )$  is fully  determined by
$\pi_\ast(M)=  \oplus_{p\geq 0}  \,\, \pi_p (M)$ and the   Whitehead bracket   $[-,-]_{\operatorname{Wh}}$   in $\pi_\ast(M)$.
More precisely,   consider  the  graded module
$$
 L_\star   =\bigoplus_{p\geq 0} \,\, \kk \otimes_\ZZ \pi_p (\Omega_\star)
$$
(obtained from $\pi_\ast(M)$ by   tensorizing with   $\kk$ and shifting the degree by $1$),
  and equip $L_\star$  with the    bracket   defined by
$$
[k \otimes \alpha  , l \otimes \beta  ]=  kl \otimes  (-1)^{p +1} \partial_{p+q+1}
\left ( \left[\partial_{p+1}^{-1} (\alpha), \partial_{q+1}^{-1} (\beta)\right]_{  \operatorname{Wh}   } \right ) \  \in \kk \otimes \pi_{p+q} (\Omega_\star)
$$
for any $k,l\in \kk$, $\alpha\in \pi_p (\Omega_\star)$, $\beta\in \pi_q (\Omega_\star)$. Then $  L_\star  $  is a $0$-graded Lie  algebra   and
the Hurewicz homomorphism   $    L_\star \to A_\star$ extends to an isomorphism  of the universal enveloping algebra $U(L  ) $ onto $ A_\star$.
Moreover,  under this isomorphism, the standard comultiplication in $U(L  )$ carrying any
$\alpha\in  L_\star    $ to $\alpha\otimes 1+1\otimes \alpha$
corresponds to  the  comultiplication in $A_\star$ induced by the diagonal map $\Omega_\star\to \Omega_\star\times \Omega_\star$.
Note that, by the   Poincar\'e--Birkhoff--Witt   theorem for graded Lie algebras \cite[Theorem 21.1]{FHT},
the natural linear map $L_\star\to U(L_\star)$ is   injective so that   $L_\star$ can be treated  as a submodule of $U(L_\star)   \simeq A_\star $.

Recall  from Section   \ref{AMT--1}   that the   $0$-graded   Lie algebra $  L_\star $ gives rise to representation algebras $\{(  L_\star )_N\}_{N\geq 1}$.
The   Milnor--Moore   isomorphism  $U(  L_\star )   \simeq    A_\star$  induces an isomorphism   $(  L_\star )_N    \simeq   (A_\star)^+_N$ for all $N\geq 1$.
In this way, the    algebras $\{(  L_\star )_N\}_{N\geq 1}$ acquire  a structure of Gerstenhaber algebras   of degree $ 2-n$.

\subsection{The 2-dimensional case} \label{dimension_2}
  
 The case  $n=2$ (so far ruled out in this  section by the assumptions of Section \ref{interbi}) has been extensively studied by several authors 
 and  gave the original impetus to this work. We  briefly discuss   this case.

A connected  oriented surface $M$  with $\partial M\neq   \varnothing  $ is an Eilenberg--MacLane space $\hbox{K}(\pi,1)$ where  $\pi $ is the fundamental group of $ M$.
For any points $\star_1,\star_2 \in \partial M$, the space $\Omega(M,\star_1,\star_2)$ is homotopy equivalent  to the underlying discrete set of   $\pi$.
Therefore, in the notation of Section \ref{phc}, we have     $\calC=  {\calC}^0  $
and  $A=A (  {\calC}^0  )$   is the groupoid algebra of $\pi_1(M,\partial M)$.

For any   points $\star_1,\star_2,\star_3,\star_4 \in \partial M$ such that $\{\star_1,\star_2\} \cap \{\star_3,\star_4\}=\varnothing$,
Section~\ref{Upsilon1111} yields  a linear map
$$
\Upsilon_{12,34}:H_0(\Omega_{12}) \otimes H_0(\Omega_{34}) \longrightarrow H_0(\Omega_{32} \times \Omega_{14})=H_0 (\Omega_{32}) \otimes H_0(\Omega_{14})
$$
 (the latter equality holds for all   $\kk$). This construction   extends to arbitrary 4-tuples of points in $\partial M$
by slightly pushing these points in the positive direction along $\partial M$  and  proceeding as in Section~\ref{checkUpsilon---}.
After an appropriate normalization, this yields a $0$-antisymmetric 0-graded bibracket  of degree 0   in the groupoid algebra $A$.
This bibracket is  quasi-Poisson in an appropriate sense, cf$.$ \cite{AKsM,VdB,MT}.
For    $\star \in \partial M$, the restriction of   this   bibracket to the   group    algebra
$A_\star=  \kk[\pi_1(M,\star)]$    is the   double bracket $\double{-}{-}^s$  studied in \cite[Section 7]{MT}.
 It is  closely related to the homotopy intersection form in $\kk[\pi_1(M,\star)]$ introduced in \cite{Tu1}; see also \cite{KK2} for a similar operation.
 The associated  Lie bracket $\langle-,-\rangle$ in   $\check A_\star$    was first introduced by   Goldman   \cite{Go2}.

 Lemma \ref{bracketcategories} implies that for every   integer $N\geq 1$,
 the   above   bibracket in $A$ induces a  
 bracket in the associated representation algebra  $\calC_N^+$. 
  This bracket is quasi-Poisson (and not   Poisson), cf$.$ \cite{MT}.
 Note that  $\calC^+_N$ is the coordinate algebra of the affine scheme (over $\kk$) that associates to any unital commutative algebra $B$
the set of groupoid homomorphisms $\pi_1(M,\partial M) \to \GL_{N}(B)$. Indeed, through   linear extension of groupoid homomorphisms,
 the latter set may be identified with the set of linear functors from $ \calC$ to the category $\Mat_N(B)$   considered  in  Section~\ref{extenscat}. We conclude by applying \eqref{universality}.

\section{The Jacobi  identity} \label{Jacobi}

We  conclude the proof of Theorem \ref{geometricbibracket}
by proving that the tribracket associated with the intersection bibracket   is equal to zero. 
We resume notation  of    Chapter~\ref{Operations on polychains}, 
 i.e.,  fix  points  $\star_1,\star_2,\star_3,\star_4 \in \partial M$ 
such  that $\{\star_1,\star_2\}\cap\{\star_3,\star_4\}=\varnothing$ and, for  any  $i,j\in \{1,2,3,4\}$,  
   let   $\Omega_{ij} =\Omega(M,\star_i,\star_j)$ be the path space and   $ \Omega^\circ_{ij} =\Omega^\circ (M,\star_i,\star_j)$  be the proper path space of  $ (M,\star_i,\star_j)$. 
  We start by developing a parametrized version of the theory of polychains in path spaces.

\subsection{Parametrized versions of $\widetilde \Upsilon$, $\check \Upsilon$, and $\Upsilon$}\label{parar}

Let $Z$ be an arbitrary   topological space.
Given a  polychain $\calL=(L,\psi,v,\lambda)$  in $\Omega^\circ_{34} \times {Z}$, we let~$\lambda' $ and~$\lambda'' $
be the compositions  of $\lambda:L\to \Omega^\circ_{34} \times {Z}$ with the projections to $\Omega^\circ_{34} $ and $ {Z}$, respectively.
We call the polychain $\calL$ \index{polychain!smooth} \emph{smooth} if  the map $ \lambda':L \to \Omega^\circ_{34}$ is smooth in the sense of Section~\ref{Path homology}.
Applying the definitions of  Section~\ref{facehomology}  but considering only smooth   polychains in $\Omega^\circ_{34} \times {Z}$,
 we obtain \index{face homology!smooth} \emph{smooth face homology} $\widetilde {H}^s_\ast(\Omega^\circ_{34} \times {Z})$.
 The proof of Theorem \ref{loopsp+++++}    easily adapts to this setting and yields that the natural linear map
$$
\widetilde {H}^s_\ast(\Omega^\circ_{34} \times {Z})\longrightarrow \widetilde {H}_\ast(\Omega^\circ_{34} \times {Z})\simeq  \widetilde {H}_\ast(\Omega_{34} \times {Z})
$$
is an isomorphism. This computes the face homology of    $\Omega_{34} \times {Z}$  in terms of smooth polychains in $\Omega^\circ_{34} \times {Z}$.

  We say that  smooth  polychains  $\calK =(K, \varphi, u , \kappa)$ in $\Omega^\circ_{12}$
and $\calL =(L, \psi, v , \lambda)$ in $ \Omega^\circ_{34} \times {Z} $  are  \index{polychains!transversal}  {\it transversal}
if the     maps $ \kappa:K \to \Omega^\circ_{12}$
and $ \lambda': L\to \Omega^\circ_{34}  $ are    transversal in the sense of Section~\ref{transs}.
A pair  $(a ,b)\in \widetilde H_p (\Omega_{12}) \times  \widetilde H_q (\Omega_{34} \times {Z})$   with $p,q\geq 0$
is \index{polychains!transversely represented} \emph{transversely represented} by a pair $(\calK,\calL)$ if $\calK$ is a smooth reduced $p$-polycycle in $\Omega_{12}^\circ$
and $\mathscr{L}$ is a smooth reduced $q$-polycycle in $\Omega_{34}^\circ \times {Z}$   transversal to $\calK$.  Adapting the proof of Lemma \ref{keytheor},
we obtain that any pair $(a,b)$ as above can   be transversely represented by a pair of polycycles, and, furthermore,
any  two  such pairs of polycycles  can be related by a
finite sequence  of transformations   $(\calK, \calL)\mapsto  (\check\calK, \check \calL) $  of the following   types:
\begin{itemize}
\item[(i)]    $\calL \cong  \check \calL$ and  $\check \calK \cong  \calK \sqcup \partial^r \calM$ or $ \calK \cong  \check \calK \sqcup \partial^r \calM$
where $\calM$ is a smooth $(p+1)$-polychain in $\Omega_{12}^\circ$ transversal to $\calL$;
\item[(ii)]  $\calK \cong  \check \calK$ and   $\check \calL \cong  \calL \sqcup \partial^r \calN$ or $ \calL \cong  \check \calL \sqcup \partial^r \calN$
where $\calN$ is a smooth $(q+1)$-polychain in $\Omega_{34}^\circ \times Z$
  transversal to $\calK$.
\end{itemize}

We next adapt the construction of the intersection polychain.
Consider   smooth transversal polychains $\calK=(K,\varphi,u,\kappa)$     in $\Omega^\circ_{12}$ and $\calL=(L,\psi,v,\lambda)$   in $\Omega^\circ_{34} \times {Z}$.
Since the  polychain $\calL'=(L,\psi,v,\lambda')$ in $\Omega^\circ_{34}$  is  smooth and transversal to $\calK$, Section \ref{def_D} yields an intersection polychain
$
\calD(\calK, \calL') =(D , \theta, w, \kappa\losange \lambda')
$
in $\Omega_{32} \times \Omega_{14}$. We lift $\calD(\calK, \calL')$ to   a polychain in $\Omega_{32} \times \Omega_{14}\times {Z}$ as follows.

\begin{lemma}
Let $\pr: K \times I \times L \times I \to L$ be the cartesian projection.
 The tuple $\calD^{Z}(\calK,\calL)=(D,\theta, w, \delta)$ with $\delta=(\kappa\losange \lambda', \lambda''\circ \pr\vert_D)$
is a polychain   in $\Omega_{32} \times \Omega_{14}\times {Z}$.
\end{lemma}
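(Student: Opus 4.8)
The plan is to exploit the fact that every piece of data entering $\calD^Z(\calK,\calL)$, except the target space and the structure map, is literally the data of the polychain $\calD(\calK,\calL')=(D,\theta,w,\kappa\losange\lambda')$ in $\Omega_{32}\times\Omega_{14}$, where $\calL'=(L,\psi,v,\lambda')$ and the lemma of Section~\ref{def_D} applies because $\calL'$ is smooth and transversal to $\calK$. In particular $D$ is already known to be an oriented $(p+q+2-n)$-manifold with faces, $\theta$ a partition on $D$, and $w\colon\pi_0(D)\to\kk$ a weight compatible with $\theta$ in the sense needed for a polychain. Hence the only thing left to verify is that $\delta=(\kappa\losange\lambda',\lambda''\circ\pr\vert_D)\colon D\to\Omega_{32}\times\Omega_{14}\times Z$ is a continuous map compatible with $\theta$.

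Continuity I would dispose of immediately: $\kappa\losange\lambda'$ is continuous by Section~\ref{def_D}, the cartesian projection $\pr\colon N=K\times I\times L\times I\to L$ is continuous, and $\lambda''$, being a coordinate of the continuous map $\lambda\colon L\to\Omega^\circ_{34}\times Z$, is continuous; so $\lambda''\circ\pr\vert_D$ and therefore $\delta$ are continuous.

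The substantive step is the identity $\delta\circ\theta_{F,F'}=\delta\vert_F$ for faces $F,F'$ of $D$ of the same type, which I would check one coordinate at a time. For the first coordinate it is exactly the compatibility of $\kappa\losange\lambda'$ with $\theta$ established in Section~\ref{def_D}. For the second, I would invoke the explicit form of the identification maps: writing $A=A_F$, $B=B_F$, $A'=A_{F'}$, $B'=B_{F'}$, so that $N_F=A\times I\times B\times I$ and $N_{F'}=A'\times I\times B'\times I$ are the smallest faces of $N$ containing $F$ and $F'$, the map $\theta_{F,F'}$ is the restriction to $F$ of $\varphi_{A,A'}\times\id_I\times\psi_{B,B'}\times\id_I$. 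This diagonal shape gives $\pr\circ\theta_{F,F'}=\psi_{B,B'}\circ\pr\vert_F$ on $F$, i.e.\ the identification maps of $D$ intertwine $\pr$ with the identification maps $\psi_{B,B'}$ of the partition $\psi$. Since $\calL$ is a polychain in $\Omega^\circ_{34}\times Z$, the map $\lambda$, hence its coordinate $\lambda''$, is compatible with $\psi$, so $\lambda''\circ\psi_{B,B'}=\lambda''\vert_B$; combining, $\lambda''\circ\pr\vert_D\circ\theta_{F,F'}=\lambda''\circ\psi_{B,B'}\circ\pr\vert_F=\lambda''\circ\pr\vert_F$, which is the required compatibility of the second coordinate. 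Therefore $\delta$ is compatible with $\theta$ and $\calD^Z(\calK,\calL)$ is a polychain in $\Omega_{32}\times\Omega_{14}\times Z$.

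I do not anticipate a genuine obstacle: the whole content is the observation that the identification maps $\theta_{F,F'}$ are ``diagonal'' with respect to the decomposition $N=K\times I\times L\times I$, which is precisely what lets the extra coordinate $\lambda''\circ\pr$ be transported correctly while the rest of the construction of $\calD(\calK,\calL')$ is imported verbatim. The only point requiring a little care is to keep track of which faces of $K$ and $L$ control a given face of $D$, so that the compatibility of $\lambda$ with $\psi$ can be applied to the correct pair $(B,B')$.
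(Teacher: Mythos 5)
Your proposal is correct and follows essentially the same route as the paper: reduce to checking that the extra coordinate $\lambda''\circ\pr\vert_D$ is compatible with $\theta$, then use the diagonal form of the identification maps $\theta_{F,F'}=(\varphi_{A,A'}\times\id_I\times\psi_{B,B'}\times\id_I)\vert_F$ together with the compatibility of $\lambda''$ with $\psi$. The only (harmless) extra content is your explicit disposal of continuity and of the first coordinate, which the paper treats as self-evident.
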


\begin{proof}
We need only to check that the  map $\lambda''\circ \pr\vert_D:D\to Z$ is compatible with the partition $\theta$.
Let $F,G$ be two faces of $D$ of the same type and let
$$
N_{F} = A_{F} \times I \times B_{F} \times I,  \quad N_{G} = A_{G} \times I \times B_{G} \times I
$$
be the smallest faces of $K \times I \times L \times I$   containing $F,G$,  respectively.
Since $\lambda''$ is compatible with the partition $\psi$ of $L$, we have for any $(k,s,l,t)\in F$
\begin{eqnarray*}
\lambda'' \pr\big(\theta_{F,G}(k,s,l,t)\big) &=& \lambda'' \pr\big(\varphi_{A_{F},A_{G}}(k),s,\psi_{B_F,B_G}(l),t\big)\\
&=& \lambda'' \big(\psi_{B_F,B_G}(l)\big) \\
&=& \lambda''(l) \ = \  \lambda'' \pr(k,s,l,t).
\end{eqnarray*}

\up
\end{proof}

The next claim  is a parametrized version of Lemma \ref{Upsilon_tilde_def} and is proved similarly.

\begin{lemma}\label{Upsilon^x_tilde_def}
For any integers $p,q \geq 0$,
the  intersection $(\calK, \calL)\mapsto \calD^{Z} (\calK, \calL)$   induces a bilinear map
$
\widetilde H_p (\Omega_{12}) \times \widetilde H_q (\Omega_{34} \times {Z})
\to \widetilde H_{p+q+2-n} ( \Omega_{32} \times \Omega_{14} \times {Z}).
$
\end{lemma}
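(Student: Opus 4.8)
The plan is to follow the proof of Lemma~\ref{Upsilon_tilde_def} essentially verbatim, replacing each ingredient by its parametrized counterpart which has just been set up in Section~\ref{parar}. Concretely, given $a\in\widetilde H_p(\Omega_{12})$ and $b\in\widetilde H_q(\Omega_{34}\times Z)$, first I would use the parametrized analogue of Lemma~\ref{keytheor} (stated in Section~\ref{parar}) to choose a smooth reduced $p$-polycycle $\calK$ in $\Omega_{12}^\circ$ representing $a$ and a smooth reduced $q$-polycycle $\calL$ in $\Omega_{34}^\circ\times Z$, transversal to $\calK$, representing $b$. Then $\calL'$ is a smooth reduced $q$-polycycle in $\Omega_{34}^\circ$ transversal to $\calK$, so $\calD(\calK,\calL')$ is defined, and by Lemma~\ref{reduction_D}.(iv) applied to the pair $(\calK,\calL')$ one sees that $\calD(\calK,\calL')$ is a polycycle; since $\delta=(\kappa\losange\lambda',\lambda''\circ\pr|_D)$ has the same underlying manifold with faces, partition and weight as $\calD(\calK,\calL')$, the polychain $\calD^Z(\calK,\calL)$ is also a polycycle, landing in $\widetilde H_{p+q+2-n}(\Omega_{32}\times\Omega_{14}\times Z)$.

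Next I would show that $\lb\calD^Z(\calK,\calL)\rb$ depends only on $a$ and $b$. By the parametrized move lemma from Section~\ref{parar}, it suffices to treat the two types of elementary moves. For a type (i) move, say $\calL=\check\calL$ and $\calK\cong\check\calK\sqcup\partial^r\calM$ with $\calM$ a smooth $(p+1)$-polychain in $\Omega_{12}^\circ$ transversal to $\calL$: applying Lemma~\ref{reduction_D}.(iv) and~(v) to the pairs $(\calK,\calL')$, $(\check\calK,\calL')$, $(\calM,\calL')$ exactly as in the proof of Lemma~\ref{Upsilon_tilde_def} shows that the underlying polychains $\calD(\calK,\calL')$ and $\calD(\check\calK,\check\calL')$ differ by a reduced boundary $\partial^r\calD((-1)^n\calM,\calL')$; one then observes that the $Z$-coordinate map $\lambda''\circ\pr|_{-}$ is compatible with the polychain operations used (disjoint union, reduction, boundary, and the embeddings $P',\,'\!P$ encountered in computing these boundaries), so the same identities hold for the $\calD^Z$-versions, giving $\calD^Z(\calK,\calL)\simeq\calD^Z(\check\calK,\check\calL)$. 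The type (ii) moves are handled by the symmetric argument, using that a deformation of $\calL$ inside $\Omega_{34}^\circ\times Z$ induces a deformation of $\calD^Z(\calK,\calL)$ and invoking Lemma~\ref{homotimplieshomol}; here one must check that a smooth homotopy of $\lambda$ keeps $\lambda'$ smooth and transversal to $\kappa$ while moving $\lambda''$ continuously, which is immediate from the definition of smoothness for polychains in $\Omega_{34}^\circ\times Z$. This yields a well-defined bilinear map, bilinearity following from Lemma~\ref{reduction_D}.(i) and~(v) together with the evident additivity of the $Z$-coordinate under disjoint unions.

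The step I expect to be the only mild obstacle is the bookkeeping that the $Z$-factor ``rides along'' correctly through the boundary and disjoint-union computations of Lemma~\ref{reduction_D}: one needs to know that in the diffeomorphism \eqref{D-partial} and in the analogous identities, the $Z$-coordinate map $\lambda''\circ\pr|_D$ restricts and transports exactly as expected, which is true simply because $\pr$ factors through the projection to $L$ and all the partition/orientation manipulations in that proof take place in the $K\times I\times L\times I$ directions only. Since this verification is entirely parallel to the compatibility check already carried out in the lemma defining $\calD^Z(\calK,\calL)$ (where it is shown that $\lambda''\circ\pr|_D$ is compatible with $\theta$), I would state the proof as ``the argument of Lemma~\ref{Upsilon_tilde_def} applies verbatim, with Lemma~\ref{keytheor} and Lemma~\ref{reduction_D} replaced by their parametrized counterparts from Section~\ref{parar}, and with the $Z$-coordinate transported by $\pr$ throughout,'' and only spell out the one new point about $\lambda''\circ\pr$ if a referee would want it.
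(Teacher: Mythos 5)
Your proposal is correct and is exactly the approach the paper takes: the paper's entire ``proof'' of this lemma is the single sentence that it ``is a parametrized version of Lemma~\ref{Upsilon_tilde_def} and is proved similarly,'' and your expansion faithfully unpacks what ``similarly'' means, including the one genuinely new check (that the $Z$-coordinate map $\lambda''\circ\pr$ transports correctly through the reduction and boundary operations of Lemma~\ref{reduction_D}, which follows because $\pr$ factors through the $L$-projection and those operations act only in the $K\times I\times L\times I$ directions). One small inaccuracy: the embeddings $P'$ and $'\!P$ you mention belong to the Leibniz computation of Lemma~\ref{Leibniz_Upsilon_tilde}, not to the boundary identity of Lemma~\ref{reduction_D}, and the type~(ii) moves need only the symmetric boundary argument rather than the deformation/Lemma~\ref{homotimplieshomol} route you sketch — but neither affects the validity of the overall argument.
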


The direct sum   over all integers $p,q\geq 0$ of the pairings produced by Lemma~\ref{Upsilon^x_tilde_def} is a linear map of degree $2-n$
$$
\widetilde{\Upsilon}_{12,34{Z}}: \widetilde H_*(\Omega_{12})  \otimes \widetilde H_*(\Omega_{34} \times {Z})
\longrightarrow    \widetilde H_*(\Omega_{32} \times \Omega_{14} \times {Z}).
$$
As  in Section \ref{checkUpsilon}, a normalized version  of this map
$$
\check\Upsilon_{12,34{Z}}: \widetilde H_*(\Omega_{12})  \otimes \widetilde H_*(\Omega_{34} \times {Z})
\longrightarrow    \widetilde H_*(\Omega_{32} \times \Omega_{14} \times {Z})
$$
is defined by    $\check \Upsilon_{12,34{Z}} (a\otimes b)=(-1)^{\vert b\vert + n\vert a \vert}\, \widetilde{\Upsilon}_{12,34{Z}} (a\otimes b)$
for any homogeneous   $a\in \widetilde H_\ast (\Omega_{12})$ and $b\in \widetilde H_\ast (\Omega_{34} \times {Z})$.
We  also   define an operation $\Upsilon_{12,34{Z}}$ in singular homology by the  commutative diagram
\begin{equation} \label{Upsilon_Upsilon_bis-}
\xymatrix{
\widetilde H_\ast (\Omega_{12}) \otimes \widetilde H_\ast (\Omega_{34 } \times {Z}) \ar[rr]^-{ \check\Upsilon_{12,34{Z}} } &&
\widetilde H_{\ast} ( \Omega_{32} \times \Omega_{14} \times {Z}) \ar[d]^-{[-]}\\
H_\ast (\Omega_{12}) \otimes H_\ast (\Omega_{34} \times {Z}) \ar[u]^-{\lb-\rb \times \lb-\rb}
\ar@{-->}[rr]^-{ \Upsilon_{12,34{Z}} } && H_{\ast} ( \Omega_{32} \times \Omega_{14} \times {Z}).
}
\end{equation}
The proof of Lemma \ref{diag-imp} extends to this  setting  and    gives the  commutative  diagram
\begin{equation} \label{Upsilon_Upsilon_bis}
\xymatrix{
\widetilde H_\ast (\Omega_{12}) \otimes \widetilde H_\ast (\Omega_{34 } \times {Z}) \ar[rr]^-{ \check\Upsilon_{12,34{Z}} }  \ar[d]_-{[-] \times [-]} &&
\widetilde H_{\ast} ( \Omega_{32} \times \Omega_{14} \times {Z}) \ar[d]^-{[-]}\\
H_\ast (\Omega_{12}) \otimes H_\ast (\Omega_{34} \times {Z}) \ar[rr]^-{ \Upsilon_{12,34{Z}} } && H_{\ast} ( \Omega_{32} \times \Omega_{14} \times {Z}).
}
\end{equation}

The following  two lemmas   will help us to compute   $\widetilde {\Upsilon}_{12,34{Z}}$ and ${\Upsilon}_{12,34{Z}}$.

\begin{lemma}\label{tilde_Upsilon_cross}
For any $a\in \widetilde{H}_*(\Omega_{12})$,   any  $b\in \widetilde{H}_*(\Omega_{34})$ and any homogeneous   $c\in \widetilde{H}_*({Z})$, we have
$
\ \widetilde{\Upsilon}_{12,34{Z}}\big(a\otimes (b\times c)\big) = (-1)^{\vert c \vert}\, \widetilde{\Upsilon}_{12,34}(a\otimes b) \times c.
$
\end{lemma}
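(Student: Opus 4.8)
The identity to establish is a compatibility between the parametrized intersection operation $\widetilde{\Upsilon}_{12,34Z}$ and the unparametrized one $\widetilde{\Upsilon}_{12,34}$, after crossing with a polycycle in the parameter space $Z$. The plan is to reduce both sides to an explicit comparison of intersection polychains, just as in the proofs of Lemmas~\ref{antisymmetry_Upsilon_tilde} and~\ref{Leibniz_Upsilon_tilde}. First I would fix smooth reduced polycycles representing the three classes: a smooth reduced $p$-polycycle $\calK=(K,\varphi,u,\kappa)$ in $\Omega_{12}^\circ$ representing $a$, a smooth reduced $q$-polycycle $\calL=(L,\psi,v,\lambda)$ in $\Omega_{34}^\circ$ representing $b$ (transversal to $\calK$, using Lemma~\ref{defo1++}), and an arbitrary reduced $r$-polycycle $\calN=(N,\chi,z,\nu)$ in $Z$ representing the homogeneous class $c$ with $r=\vert c\vert$. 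Then $b\times c$ is represented by the cross product polycycle $\calL\times\calN=(L\times N,\psi\times\chi,v\times z,\lambda\times\nu)$ in $\Omega_{34}^\circ\times Z$, which is smooth in the sense of Section~\ref{parar} since its first coordinate is just $\lambda\circ\pr_L$, hence transversal to $\calK$ because $\calL$ is.

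Next I would unwind the definition of $\calD^Z\!\big(\calK,\calL\times\calN\big)$. Writing $\calL'=\calL\times\calN$ with $(\calL')'=\lambda\circ\pr_L$ and $(\calL')''=\nu\circ\pr_N$, the domain of $\calD^Z(\calK,\calL')$ is
$$
D^Z=\big(\tilde\kappa\times\widetilde{(\lambda\circ\pr_L)}\big)^{-1}(\diag_M)\subset K\times I\times (L\times N)\times I,
$$
and since the map factors through $K\times I\times L\times I$, one gets a canonical diffeomorphism $D^Z\cong D\times N$ where $D$ is the domain of $\calD(\calK,\calL)$. The key calculational points are then: (i) the product orientation on $D\times N$ agrees, up to the sign $(-1)^{\vert c\vert}$ coming from commuting the factor $N$ past the $I$-coordinate and the normal bundle of $\diag_M$, with the orientation $D^Z$ inherits from the construction in Section~\ref{operation_D}; this is the same bookkeeping as in the degree computations of Lemma~\ref{Leibniz_Upsilon_tilde} using the orientation conventions of the Introduction; (ii) under $D^Z\cong D\times N$ the weight $w^Z$ matches $w\times z$ and the face partition $\theta^Z$ matches $\theta\times\chi$, because $N_F$ for a face $F$ of $D^Z$ is $A_F\times I\times(B_F\times C_F)\times I$ and the identification maps split accordingly; (iii) the map $\delta=(\kappa\losange(\lambda\circ\pr_L),\ (\nu\circ\pr_N)\circ\pr_{D^Z})$ becomes $(\kappa\losange\lambda)\times\nu$ under the identification, since the $Z$-coordinate only depends on the $N$-factor. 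Putting (i)--(iii) together gives a diffeomorphism of polychains
$$
\calD^Z(\calK,\calL\times\calN)\ \cong\ (-1)^{\vert c\vert}\,\big(\calD(\calK,\calL)\times\calN\big),
$$
hence $\widetilde{\Upsilon}_{12,34Z}\big(a\otimes(b\times c)\big)=(-1)^{\vert c\vert}\,\lb\calD(\calK,\calL)\rb\times c=(-1)^{\vert c\vert}\,\widetilde{\Upsilon}_{12,34}(a\otimes b)\times c$, which is exactly the claim.

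The main obstacle will be the sign (i): carefully tracking how the parameter factor $N$ sits relative to the two $I$-coordinates, the factor $L$, and the pulled-back normal orientation of $\diag_M$ that defines the orientation of $D$. The cleanest route is the one used in Lemma~\ref{Leibniz_Upsilon_tilde}: write down the orientation-preserving isomorphisms of tangent/normal bundles over $D\times N$, using that the $Z$-coordinate plays no role in the transversality condition and so the normal bundle of $D^Z$ in $K\times I\times(L\times N)\times I$ is the pullback of $\bignu_N D$ along the projection that forgets $N$; commuting $TN$ of dimension $\vert c\vert$ past $\bignu_N D$ contributes exactly $(-1)^{\vert c\vert\cdot\dim\bignu_N D}=(-1)^{n\vert c\vert}$, but the normalization built into $\widetilde{\Upsilon}$ versus $\check\Upsilon$ is irrelevant here since the lemma is stated for $\widetilde{\Upsilon}$, and one should double-check that the net sign after accounting for the placement of $N$ among the product factors of $N=K\times I\times(L\times N)\times I$ reduces to $(-1)^{\vert c\vert}$ as stated. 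Everything else (partition, weight, the path-space map) is a routine unwinding of the definitions of Section~\ref{parar} and Section~\ref{operation_D}.
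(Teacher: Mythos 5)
Your overall strategy coincides with the paper's: represent $a,b$ by transversal smooth reduced polycycles $\calK,\calL$ and $c$ by a polycycle $\calN$; observe that $\calL\times\calN$ is smooth in the sense of Section~\ref{parar} and transversal to $\calK$; identify $D^Z$ with $D\times N$ via the obvious coordinate shuffle; check that the face partitions, weights, and maps to $\Omega_{32}^\circ\times\Omega_{14}^\circ\times Z$ correspond; and conclude the identity of face homology classes. The structural parts (ii) and (iii) of your argument are correct.

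The sign computation, which is the one nontrivial step, is however muddled. You propose a contribution $(-1)^{n\vert c\vert}$ from ``commuting $TN$ past $\bignu_N D$,'' then hedge that the net sign ``should be double-checked,'' so the proof as written does not establish the sign. The paper's computation is simpler and more direct: the ambient coordinate permutation
$(K\times I\times L\times I)\times N\to K\times I\times(L\times N)\times I$, $(k,s,l,t,n)\mapsto(k,s,l,n,t)$, has degree $(-1)^{\dim N}=(-1)^{\vert c\vert}$ because only the one-dimensional last $I$-factor and the $N$-factor are exchanged. Under this permutation the composite maps to $M\times M$ used to define the two intersections agree, so the pulled-back normal orientations of $D\times N$ and of $D^Z$ correspond; consequently the restricted diffeomorphism $D\times N\to D^Z$ inherits exactly the degree $(-1)^{\vert c\vert}$ of the ambient map, with no additional normal-bundle contribution to account for. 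Replacing your hedged paragraph with this observation completes the proof.
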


\begin{proof}
It suffices to consider the case where $a$ and $b$ are homogeneous.
Let $\calK=(K,\varphi,u,\kappa)$ and $\calL=(L,\psi,v,\lambda)$ be  smooth   polycycles in $\Omega_{12}^\circ$ and $\Omega_{34}^\circ$
  representing $a$ and $b$ respectively, and such  that $\calK$ is transversal to $\calL$.
Let $\calN=(N,\chi,z,\eta)$ be a   polycycle in ${Z}$ representing $c$.
Then
$$ \widetilde{\Upsilon}_{12,34}(a\otimes b) \times c = \lb \calD(\calK,\calL) \times \calN\rb.$$
By the definition of   $\widetilde{\Upsilon}_{12,34{Z}}$,
$$
\widetilde{\Upsilon}_{12,34{Z}}\big(a\otimes (b\times c)\big) = \lb \calD^{{Z}}(\calK,\calL \times \calN) \rb.
$$
Therefore, it is enough to show that
\begin{equation}\label{D{Z}}
\calD(\calK,\calL) \times \calN  =  (-1)^{\vert c \vert}\,  \calD^{{Z}}(\calK,\calL \times \calN).
\end{equation}
We set $\calD(\calK,\calL)=(D,\theta,w,\kappa \losange \lambda)$  so that
$$
\calD(\calK,\calL)\times \calN = \big(D\times N,\theta\times \chi,w\times z,(\kappa \losange \lambda)\times \eta\big).
$$
We  also set
$$
   \calD^{{Z}}(\calK,\calL \times \calN) = \big(D^{Z},\theta^{Z},w^{Z}, \delta^Z\big)
\quad \hbox{with} \ \delta^Z=\big(\kappa \losange (\lambda \pr_L),  \eta \pr_N\vert_{D^{Z}}\big)
$$
where $\pr_L:L \times N \to L$ and $\pr_N: K \times I \times (L \times N) \times I \to N $  are  the cartesian projections.
The map $$(K \times I \times L \times I) \times N \longrightarrow K \times I \times (L \times N) \times I, \quad (k,s,l,t,n) \longmapsto (k,s,l,n,t)$$
restricts to a diffeomorphism $f: D \times N \to D^{Z}$  of degree $(-1)^{\dim(N)}=(-1)^{\vert c\vert}$.
For any  point  $(k,s,l,t,n)$ in $D \times N$, we have
\begin{eqnarray*}
 \delta^Z f(k,s,l,t,n) \ = \ \delta^Z (k,s,l,n,t)
&=&  \big( (\kappa \losange \lambda)(k,s,l,t),\eta(n)\big)\\
&=& \big((\kappa \losange \lambda) \times \eta\big)(k,s,l,t,n).
\end{eqnarray*}
Furthermore, the diffeomorphism $f$ carries the partition $\theta\times \chi$ to $\theta^{Z}$ and the weight $w \times z$ to the weight $w^{Z}$.
Hence, $f$ is  a diffeomorphism of polychains  \eqref{D{Z}}.
\end{proof}

\begin{lemma}\label{Upsilon_cross}
For any $a\in {H}_*(\Omega_{12})$, $b\in {H}_*(\Omega_{34})$ and $c\in {H}_*({Z})$, we have
$$
{\Upsilon}_{12,34{Z}}\big(a\otimes (b\times c)\big) = {\Upsilon}_{12,34}\big(a\otimes b\big) \times c
$$
\end{lemma}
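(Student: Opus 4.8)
The plan is to deduce Lemma~\ref{Upsilon_cross} from its face-homology counterpart Lemma~\ref{tilde_Upsilon_cross} together with the commutativity of the diagrams \eqref{Upsilon_Upsilon_bis-} and \eqref{Upsilon_Upsilon_bis}, in exactly the same spirit as the derivation of Lemma~\ref{Leibniz_Upsilon} from Lemma~\ref{Leibniz_Upsilon_tilde}. The only genuine new ingredient is bookkeeping about how the transformations $[-]$ and $\lb-\rb$ interact with the cross product, which has already been recorded in Lemma~\ref{twocrossproducts} and in Theorem~\ref{mainonpolychains}.

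First I would reduce to homogeneous $a,b,c$ by linearity. Writing $a=[\lb a\rb]$, $b=[\lb b\rb]$, $c=[\lb c\rb]$ (using $[-]\circ\lb-\rb=\id$ from Theorem~\ref{mainonpolychains}), I would first observe that $b\times c=[\lb b\rb]\times[\lb c\rb]=[\lb b\rb\times\lb c\rb]$ by the naturality of $[-]$ and Lemma~\ref{twocrossproducts}. Then, using the definition of $\Upsilon_{12,34Z}$ via diagram \eqref{Upsilon_Upsilon_bis-} (equivalently the commutativity of \eqref{Upsilon_Upsilon_bis}), I compute
$$
\Upsilon_{12,34Z}\big(a\otimes(b\times c)\big)
=\Upsilon_{12,34Z}\big([\lb a\rb]\otimes[\lb b\rb\times\lb c\rb]\big)
=\big[\check\Upsilon_{12,34Z}\big(\lb a\rb\otimes(\lb b\rb\times\lb c\rb)\big)\big].
$$
Next I apply Lemma~\ref{tilde_Upsilon_cross} in its renormalized form: since $\check\Upsilon(x\otimes y)=(-1)^{|y|+n|x|}\widetilde\Upsilon(x\otimes y)$ and $\check\Upsilon_{12,34Z}(x\otimes y)=(-1)^{|y|+n|x|}\widetilde\Upsilon_{12,34Z}(x\otimes y)$, the sign $(-1)^{|c|}$ appearing in Lemma~\ref{tilde_Upsilon_cross} is exactly compensated by the difference between the degree of $b\times c$ and the degree of $b$ in the two normalizations, so that $\check\Upsilon_{12,34Z}\big(\lb a\rb\otimes(\lb b\rb\times\lb c\rb)\big)=\check\Upsilon_{12,34}\big(\lb a\rb\otimes\lb b\rb\big)\times\lb c\rb$. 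I should double-check this sign arithmetic carefully, as it is the one spot where an error could creep in: $|b\times c|=|b|+|c|$, so $(-1)^{|b\times c|+n|a|}=(-1)^{|c|}(-1)^{|b|+n|a|}$, matching the $(-1)^{|c|}$ in Lemma~\ref{tilde_Upsilon_cross} against $\check\Upsilon_{12,34}(\lb a\rb\otimes\lb b\rb)=(-1)^{|b|+n|a|}\widetilde\Upsilon_{12,34}(\lb a\rb\otimes\lb b\rb)$.

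Finally I would push $[-]$ through the cross product once more: $\big[\check\Upsilon_{12,34}(\lb a\rb\otimes\lb b\rb)\times\lb c\rb\big]=\big[\check\Upsilon_{12,34}(\lb a\rb\otimes\lb b\rb)\big]\times[\lb c\rb]$ by naturality of $[-]$ and Lemma~\ref{twocrossproducts}, and then recognize $\big[\check\Upsilon_{12,34}(\lb a\rb\otimes\lb b\rb)\big]=\Upsilon_{12,34}(a\otimes b)$ from diagram \eqref{Upsilon_Upsilon} (Lemma~\ref{diag-imp}) and $[\lb c\rb]=c$, yielding $\Upsilon_{12,34Z}\big(a\otimes(b\times c)\big)=\Upsilon_{12,34}(a\otimes b)\times c$. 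I do not anticipate any serious obstacle here; the proof is a formal chain of diagram identities, and the main thing to watch is the Koszul-type sign in the renormalization step, which I expect to cancel out cleanly precisely because the renormalization of $\check\Upsilon_{12,34Z}$ uses the full degree $|b\times c|$ of the second argument. If one wanted a self-contained argument bypassing the transformations, one could instead directly exhibit the diffeomorphism of polychains $\calD^Z(\calK,\calL\times\calN)\cong(-1)^{|c|}\calD(\calK,\calL)\times\calN$ as in the proof of Lemma~\ref{tilde_Upsilon_cross} and represent singular homology classes by the fundamental chains of locally ordered triangulations as in Section~\ref{fundcla}, but the diagram-chasing route is shorter.
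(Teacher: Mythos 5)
Your proposal is correct and follows exactly the same route as the paper's proof: reduce to homogeneous classes, rewrite $b\times c$ as $[\lb b\rb\times\lb c\rb]$ via Lemma~\ref{twocrossproducts}, pass to $\check\Upsilon_{12,34Z}$ on face homology through diagram~\eqref{Upsilon_Upsilon_bis}, apply Lemma~\ref{tilde_Upsilon_cross} with the sign cancellation, and push $[-]$ back through the cross product. The sign bookkeeping you carried out matches the paper's factorization $(-1)^{\vert b\vert+\vert c\vert+n\vert a\vert}=(-1)^{\vert c\vert}(-1)^{\vert b\vert+n\vert a\vert}$ verbatim.
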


\begin{proof}
It  suffices to consider     homogeneous $a,b,c$.
By Lemma~\ref{twocrossproducts}, we have $b\times c = [\lb b \rb] \times [\lb c \rb]= [ \lb b \rb \times \lb c\rb]$.
We deduce that
\begin{eqnarray*}
{\Upsilon}_{12,34{Z}}\big(a\otimes (b\times c)\big)  &=& {\Upsilon}_{12,34{Z}}\big([\lb a \rb ]\otimes  [ \lb b \rb \times \lb c\rb]\big) \\
&=& \left[\check {\Upsilon}_{12,34{Z}}\big(\lb a\rb \otimes (\lb b \rb \times \lb  c \rb ) \big) \right] \\
&=&  (-1)^{\vert b \vert + \vert c \vert +n \vert a \vert} \big[\widetilde{\Upsilon}_{12,34{Z}}\big(\lb a\rb \otimes (\lb b \rb \times \lb  c \rb ) \big) \big]\\
&=&  (-1)^{\vert b \vert  +n \vert a \vert} \big[\widetilde{\Upsilon}_{12,34{}}(\lb a\rb \otimes \lb b \rb )  \times \lb  c \rb \big] \\
&=&  (-1)^{\vert b \vert  +n \vert a \vert} \big[\widetilde{\Upsilon}_{12,34{}} (\lb a\rb \otimes \lb b \rb ) \big] \times \left[ \lb  c \rb \right] \\
& = & {\Upsilon}_{12,34{}} \big(a\otimes b\big) \times c
\end{eqnarray*}
where the second, fourth and fifth equalities follow from \eqref{Upsilon_Upsilon_bis}, Lemma \ref{tilde_Upsilon_cross} and Lemma \ref{twocrossproducts} respectively.
\end{proof}

Given  two topological spaces $Y$ and $Z$, a  straightforward generalization of the constructions above 
and of Lemma \ref{Upsilon^x_tilde_def} yields a bilinear map
$$
\widetilde \Upsilon_{{Y}12,34Z}: \widetilde H_*(Y \times \Omega_{12}) \otimes \widetilde H_*( \Omega_{34} \times Z)
\longrightarrow  \widetilde H_*(Y \times \Omega_{32} \times \Omega_{14} \times Z).
$$
 A  normalized version $\check \Upsilon_{{Y}12,34Z}$ of this map is defined by
$$\check \Upsilon_{{Y}12,34Z}(a \otimes b) = (-1)^{\vert b\vert+n \vert a \vert}\, \widetilde \Upsilon_{{Y}12,34Z}(a\otimes b)$$
for any homogeneous $a \in  \widetilde H_*(Y \times \Omega_{12})$ and $b \in  \widetilde H_*( \Omega_{34} \times Z)$.
The corresponding map in singular homology is defined by the  commutative diagram
\begin{equation} \label{Upsilon_Upsilon_ter-}
\xymatrix{
\widetilde H_\ast (Y \times \Omega_{12}) \otimes \widetilde H_\ast (\Omega_{34 } \times {Z}) \ar[rr]^-{ \check\Upsilon_{{Y}12,34{Z}} } &&
\widetilde H_{\ast} (Y \times  \Omega_{32} \times \Omega_{14} \times {Z}) \ar[d]^-{[-]}\\
H_\ast (Y \times \Omega_{12}) \otimes H_\ast (\Omega_{34} \times {Z}) \ar[u]^-{\lb-\rb \times \lb-\rb}
\ar@{-->}[rr]^-{ \Upsilon_{{Y}12,34{Z}} } && H_{\ast} ( Y \times \Omega_{32} \times \Omega_{14} \times {Z}).
}
\end{equation}
Then, again, we  have the  commutative  diagram
\begin{equation} \label{Upsilon_Upsilon_ter}
\xymatrix{
\widetilde H_\ast (Y \times \Omega_{12}) \otimes \widetilde H_\ast (\Omega_{34 } \times {Z}) \ar[rr]^-{ \check\Upsilon_{{Y}12,34{Z}} }  \ar[d]_-{[-] \times [-]} &&
\widetilde H_{\ast} ( Y \times \Omega_{32} \times \Omega_{14} \times {Z}) \ar[d]^-{[-]}\\
H_\ast (Y \times \Omega_{12}) \otimes H_\ast (\Omega_{34} \times {Z}) \ar[rr]^-{ \Upsilon_{{Y}12,34{Z}} } && H_{\ast} (Y \times \Omega_{32} \times \Omega_{14} \times {Z}).
}
\end{equation}
Finally,   Lemma \ref{Upsilon_cross} generalizes to the   identity
\begin{equation} \label{double_parameters}
{\Upsilon}_{{Y}12,34{Z}}\big((c \times a)\otimes (b\times d)\big) =   c \times {\Upsilon}_{12,34}\big(a\otimes b\big) \times d
\end{equation}
for any $a\in {H}_*(\Omega_{12})$, $b\in {H}_*(\Omega_{34})$ and   $c\in H_*(Y)$, $d\in {H}_*({Z})$.

\subsection{Half-smooth polychains}\label{parar+++}

We   compute the   intersection   operations of Section~\ref{parar}  via  so-called   ``half-smooth''   polychains.
  Let $Z$ be  a   topological space.
A  $q$-polychain $\calL=(L,\psi,v,(\lambda',\lambda''): L\to \Omega_{34}^\circ  \times Z)$ is \index{polychain!half-smooth} \emph{half-smooth}
if the restrictions of  the map $\tilde {\lambda}': L \times I \to M$ (adjoint to ${\lambda}' $) to the manifolds with faces $L \times [0,1/2]$ and $L\times [1/2,1]$  are smooth.
Furthermore, $\calL$  is   \index{polychain!half-transversal} \emph{half-transversal} to   a smooth  $p$-polychain $\calK=(K,\varphi,u, \kappa)$  in $\Omega^\circ_{12}$
 if for any face $E$ of $K$, any face~$F$ of~$L$, and any of the three sets $J= [0,1/2], [1/2,1], \{1/2\}$
the map   $$\tilde \kappa \times \tilde {\lambda}'  : E \times I \times F \times J\longrightarrow M\times M$$   is   weakly   transversal to $\diag_M $ in the sense of Section~\ref{transs}.
Then   the set
$$
D(J)= \big\{(k,s,l,t) \in K \times I \times L \times J: \tilde\kappa(k,s)=\tilde\lambda(l,t)\big\}
$$
inherits from  $K \times I \times L \times J$ a   structure of a  manifold with faces, and we have
\begin{equation}\label{321f}
D (J)  \subset K\times \Int (I) \times L\times (J\cap \Int (I)).
\end{equation}
Set 
$$
D^-=D([0,1/2]), \quad D^+=D([1/2, 1]) , \quad D^{\nicefrac{1}{2}}=D(\{1/2\}).
$$
It is clear that
$D^{\nicefrac{1}{2}}=D^- \cap D^+=\partial D^- \cap \partial D^+ $ and
$$
\dim D^- = \dim D^+=p+q+2-n, \quad \dim D^{\nicefrac{1}{2}}  =p+q+1-n.
$$

Since  $\calL$ may be non-smooth, we cannot consider the intersection polychain $\calD^{Z}(\calK,\calL)$.
  (A priori,  the  set $ D^- \cup  D^+$  does not have a  structure of a manifold  with faces.)
Instead, we turn the disjoint union $ D^-\sqcup D^+$ into a polychain
which will serve  as a  substitute  for  $\calD^{Z}(\calK,\calL)$.
The inclusion \eqref{321f} allows us to use the same construction as in Section \ref{def_D} in order to upgrade  $D^-$, $D^+$,
and $D^{\nicefrac{1}{2}}$ to polychains in  $\Omega_{32}^\circ \times \Omega_{14}^\circ \times {Z}$ denoted, respectively,
  $\calD^{-}=\calD^{-}(\calK,\calL)$, $\calD^{+}=\calD^{+}(\calK,\calL) $,  and $\calD^{\nicefrac{1}{2}}=\calD^{\nicefrac{1}{2}}(\calK,\calL) $. As can be checked from our conventions, the oriented manifold $D^{\nicefrac{1}{2}}$ has the orientation inherited from $(-1)^{p+q+1+n} \partial D^-$
or, equivalently, the orientation inherited from $(-1)^{p+q+n} \partial D^+$.
The inclusions ${D^{\nicefrac{1}{2}} \subset D^\pm}$ are compatible with the polychain structures (except for  the orientations):
they  map faces of $D^{\nicefrac{1}{2}} $ diffeomorphically onto faces of $D^\pm$, map faces  of the same type onto faces of the same type,
commute with the   identification diffeomorphisms of the faces, commute with the maps to   $\Omega_{32}^\circ \times \Omega_{14}^\circ \times {Z}$,
and  the induced maps in $\pi_0$ commute with the weights.
Also,  a face of  $D^\pm$ having the same type as the image of a face $F$ of $D^{\nicefrac{1}{2}} $
must be the image of a face   of  $D^{\nicefrac{1}{2}} $ of the same type as $F$.
These facts allow us to form  a $(p+q+2-n)$-polychain $  \calD^{h}  =  \calD^{h}  (\calK, \calL)$
in  $\Omega_{32}^\circ \times \Omega_{14}^\circ \times {Z}$ by taking the disjoint union $\calD^{-}  \sqcup \calD^{+} $
and declaring that the images of any face of $D^{\nicefrac{1}{2}}$ in $ D^-$ and  $  D^+$ have the same type
and the identification diffeomorphism  between them is the identity map.
We shall  sometimes write     
$$
\displaystyle{\calD^{-}  \mathop{\cup}_{{\nicefrac{1}{2}}} \calD^{+}}
$$ 
for this  polychain $  \calD^{h} $.

\begin{lemma}\label{half-half}
Let $\calK$ be a smooth $p$-polycycle in  $\Omega^\circ_{12}$ and
let $\calL$ be a half-smooth $q$-polycycle in  $\Omega^\circ_{34} \times {Z}$  half-transversal to $\calK$. Then $   \calD^{h} (\calK,\calL) $ is a polycycle
 in  $\Omega^\circ_{32} \times \Omega^\circ_{14} \times {Z}$ and
 $$\big[\widetilde{\Upsilon}_{12,34Z}(\lb \calK\rb, \lb\calL\rb )  \big] =
 [  \calD^{h}   (\calK,\calL) ] \in  H_{p+q+2-n}(\Omega_{32} \times \Omega_{14} \times {Z}).$$
\end{lemma}

\begin{proof}
Lemma \ref{reduction_D} directly extends to smooth polychains $\calK$, $\calK'$ in   $\Omega^\circ_{12}$
and half-smooth polychains   $\calL,\calL'$ in $\Omega^\circ_{34} \times {Z}$    half-transversal to $\calK$, $\calK'$;
 one should only replace $\calD $  by   $  \calD^{h}  $. This implies the first claim of Lemma~\ref{half-half}.

There is an arbitrarily small  deformation $\left\{\calL^t=\big(L,\psi,v,((\lambda')^t,\lambda'')\big)\right\}_{t\in I}$ of $\calL^0=\calL$ 
into a smooth polycycle $\calL^1$.  
We can assume that the restrictions of the maps $ (\widetilde \lambda')^t  :L\times [0,1] \to M$ to   $L\times [0,1/2] $
and $L\times [1/2,1] $ are smooth maps smoothly depending on $t\in I$.
As   in   the proof of  Lemma \ref{homotimplieshomol},
we derive from the deformation  $\left\{\calL^t \right\}_{t\in I}$   a $(q+1)$-polychain $\calR$ in  $\Omega_{34}^\circ \times {Z}$
such that $\partial^r \calR = \red(\calL^1) \sqcup \red(-\calL)$. The   assumptions on the deformation imply that   $\calR$ is half-smooth.
Taking the  deformation small enough, we can ensure  that $\calR$ is  half-transversal to~$\calK$.
By  the assumption $\partial^r \calK=  \varnothing  $ and the generalized version of Lemma \ref{reduction_D},   \begin{eqnarray*}
 (-1)^{n+p+1} \partial^r   \calD^{h} (\calK,\calR)  &=& \red   \calD^{h} \big(\red \calK,\red(\calL^1) \sqcup \red(-\calL)\big)\\
&=&  \red   \calD^{h} (\red \calK,\red \calL^1) \sqcup  \left(- \red   \calD^{h} (\red \calK, \red \calL)\right)\\
&=& \red   \calD^{h} ( \calK,\calL^1) \sqcup  \left(- \red   \calD^{h} ( \calK,\calL)\right).
\end{eqnarray*}
Therefore
$$
\lb   \calD^{h} (\calK,\calL) \rb= \lb   \calD^{h} (\calK,\calL^1) \rb \in \widetilde{H}_{p+q+2-n}(\Omega_{32} \times \Omega_{14} \times {Z}).
$$
Projecting to singular homology, we obtain  the equality
$$ \left[  \calD^{h}  (\calK,\calL)  \right]=
\left[  \calD^{h} (\calK,\calL^1)\right] \in {H}_{p+q+2-n}(\Omega_{32} \times \Omega_{14} \times {Z}).
$$
Since  the polycycle $\calL^1$ is smooth, the manifold with faces   underlying $  \calD^{h} (\calK,\calL^1)$ is obtained
by cutting out the manifold with faces   underlying $\calD^{Z}(\calK,\calL^1)$ along a smooth compact oriented proper submanifold of codimension 1.
This easily implies the equality
 $ \left[  \calD^{h} (\calK,\calL^1)\right]= \left[\calD^{Z}(\calK,\calL^1)\right] $. Thus,
\begin{eqnarray*}
\big[\widetilde{\Upsilon}_{12,34Z}(\lb \calK\rb, \lb\calL\rb ) \big] &=&   \big[\widetilde{\Upsilon}_{12,34Z}(\lb \calK\rb, \lb\calL^1\rb ) \big] \\
&=& \left[\calD^{Z}(\calK,\calL^1)\right] \ = \ \left[  \calD^{h} (\calK,\calL)\right].
\end{eqnarray*}

\up
\end{proof}

\subsection{A Jacobi-type identity for $\Upsilon$}

 As in Section \ref{checkUpsilon---},
the operations $\widetilde{\Upsilon}_{12,34{Z}}$, $\check{\Upsilon}_{12,34{Z}}$, $ {\Upsilon}_{12,34{Z}}$ generalize
to all     tuples      $\star_1, \star_2, \star_3,\star_4 \in \partial M$.
We  pick two extra points $\star_5, \star_6 \in \partial M$. For ${Z}=\Omega_{56}$,
the maps  $\widetilde{\Upsilon}_{12,34{Z}}$, $\check{\Upsilon}_{12,34{Z}}$, $ {\Upsilon}_{12,34{Z}}$  will be denoted respectively
by $\widetilde{\Upsilon}_{12,3456}$, $\check{\Upsilon}_{12,3456}$, $ {\Upsilon}_{12,3456}$.
Given a permutation $(i,j,k,l,m,n)$ of $(1,2,3,4,5,6)$, we can accordingly  renumber   the  points $\star_1,\dots,\star_6$
and consider  the corresponding maps   $\widetilde{\Upsilon}_{ij,klmn}$, $\check{\Upsilon}_{ij,klmn}$, $ {\Upsilon}_{ij,klmn}$.
 We now   establish  a   Jacobi-type identity for    ${\Upsilon}_{ij,klmn}$.

\begin{lemma}\label{Jacobi_Upsilon} \label{Jacobi_Upsilon_tilde}
Consider  the   permutation maps
\begin{eqnarray*}
\perm_{231}&:& \Omega_{36} \times \Omega_{52} \times \Omega_{14} \longrightarrow  \Omega_{52} \times \Omega_{14} \times \Omega_{36},
\quad (x,y,z) \longmapsto (y,z,x),\\
\perm_{312}&:& \Omega_{14} \times \Omega_{36} \times \Omega_{52}
\longrightarrow \Omega_{52} \times \Omega_{14} \times \Omega_{36}, \quad (x,y,z) \longmapsto (z,x,y).
\end{eqnarray*}
For any  $a\in H_p (\Omega_{12})$,  $b\in  H_q (\Omega_{34})$ and $c \in  H_r (\Omega_{56})$ with  $p,q,r\geq 0$,
we have   the following equality in $H_{p+q+r+4-2n} (\Omega_{52} \times \Omega_{14} \times \Omega_{36})$:
\begin{eqnarray*}
 {\Upsilon}_{12,5436} \left(a \otimes {\Upsilon}_{34,56}(b \otimes c)\right) &&  \\
+ (-1)^{(p+n) (q+r)}  (\perm_{312})_* {\Upsilon}_{34,1652} \left(b \otimes {\Upsilon}_{56,12}(c \otimes a)\right) &&\\
+ (-1)^{(p+q)(r+n)} (\perm_{231})_* {\Upsilon}_{56,3214} \left(c \otimes  {\Upsilon}_{12,34}(a \otimes b)\right)  &= &0.
\end{eqnarray*}
\end{lemma}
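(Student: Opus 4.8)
The plan is to prove the identity at the level of face homology first, via an explicit geometric model for the iterated intersection, and then descend to singular homology using the commuting square \eqref{Upsilon_Upsilon_bis} (and its two-parameter variant \eqref{Upsilon_Upsilon_ter}) together with the symmetry property \eqref{checksymm}. Concretely, I would fix smooth reduced polycycles $\calK$ in $\Omega_{12}^\circ$, $\calL$ in $\Omega_{34}^\circ$, $\calR$ in $\Omega_{56}^\circ$ representing $a$, $b$, $c$, chosen pairwise transversal (possible by repeated application of Lemma~\ref{defo1++}, taking the homotopies small enough to preserve each transversality already achieved). The three terms in the statement should all arise as face homology classes of polychains built from $D(\calK,\calL)$, $D(\calL,\calR)$, $D(\calR,\calK)$ and a common ``triple intersection'' manifold with faces
$$
T = \{(k,s,k',s',l,t,l',t',r,v,r',v') : \tilde\kappa(k,s)=\tilde\lambda(l,t),\ \tilde\lambda(l',t')=\tilde\rho(r,v),\ \tilde\rho(r',v')=\tilde\kappa(k',s')\},
$$
or rather a suitable cut-down version of it adapted to the half-smooth situation of Section~\ref{parar+++}.

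First I would make each of the three composite operations into something computable by an admissible or half-smooth polychain. For $\Upsilon_{34,56}(b\otimes c)$ I would take $\calD(\calL,\calR)$, a polycycle in $\Omega_{54}\times\Omega_{36}$; then $\Upsilon_{12,5436}(a\otimes-)$ applied to it is the two-parameter operation $\Upsilon_{12,54\,\Omega_{36}}$ (in the notation of Section~\ref{parar}, with $Z=\Omega_{36}$), which by Lemma~\ref{newcomputation}'s parametrized analogue and Lemma~\ref{half-half} is computed by an honest polycycle $\calD^{h}(\calK,\calD(\calL,\calR))$ — note that $\calD(\calL,\calR)$ is only half-smooth in its $\Omega_{54}$-coordinate, which is exactly why the half-smooth machinery of Section~\ref{parar+++} was set up. The underlying manifold with faces of this polycycle is a cut-down version of $T$ (two of the twelve coordinates being identified), and cyclic relabelling $a\mapsto b\mapsto c\mapsto a$, $(1,2,3,4,5,6)\mapsto(3,4,5,6,1,2)$ produces the analogues of $T$ for the other two terms. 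The key geometric claim, which I would verify by tracking the $\diamond$-construction \eqref{bowtie} through the two levels of intersection, is that these three manifolds with faces are diffeomorphic (after the appropriate reorientation giving the signs $(-1)^{(p+n)(q+r)}$ and $(-1)^{(p+q)(r+n)}$), their common boundary pieces along the ``$1/2$-sections'' $D^{\nicefrac12}$ matching up, so that the sum of the three $\calD^{h}$-polychains is the reduced boundary of a single $(p+q+r+5-2n)$-polychain built from $T$ by letting the innermost gluing parameter run over a full interval. That polychain provides the null-homology.

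The main obstacle will be the orientation bookkeeping. Each term involves: the product orientation on a twelve-fold product of manifolds with faces; two successive pullbacks of the normal orientation of $\diag_M\subset M\times M$ (contributing the $n$-dependent signs already visible in the normalization of $\check\Upsilon$ and in Lemma~\ref{reduction_D}(iii)); the permutation maps $\perm_{231}$, $\perm_{312}$ on the target; the degree-$(-1)^{(p+1)(q+1)}$ type swaps from Lemma~\ref{antisymmetry_Upsilon_tilde}; and the cut-down identifications that pass from $\calD(-,-)$ to the half-smooth $\calD^{h}$. I would isolate this by first proving the \emph{unsigned} statement — that the three manifolds with faces are abstractly diffeomorphic and jointly bound — and then computing the sign of each diffeomorphism separately against a fixed reference orientation on $T$, exactly as in the proof of Lemma~\ref{reduction_D}(iii) where the isomorphisms $TN|_D\cong\bignu_ND\oplus TD$ are manipulated fibrewise. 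The cross-product signs $(-1)^{(p+n)(q+r)}$ and $(-1)^{(p+q)(r+n)}$ should then emerge as the discrepancies between the three reference orientations under the cyclic relabelling, together with the parity of moving a $(\dim\,\text{face})$-dimensional block past another in the twelve-fold product (the general orientation fact quoted after the proof of Lemma~\ref{Leibniz_Upsilon_tilde}).

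Finally, having established the face-homology identity for $\widetilde\Upsilon$ (equivalently $\check\Upsilon$), I would descend: apply $[-]$ to all three terms, use the commuting squares \eqref{Upsilon_Upsilon}, \eqref{Upsilon_Upsilon_bis}, \eqref{Upsilon_Upsilon_ter} to replace $[\check\Upsilon(\lb a\rb,-)]$ by $\Upsilon(a,-)$ throughout, use the naturality of $[-]$ and of the permutation-induced maps $(\perm_{231})_*$, $(\perm_{312})_*$, and invoke Lemma~\ref{diag-imp} (and its parametrized extension) to move the brackets past the outer operation. I expect this descent to be routine once the face-homology version is in hand. As a consistency check I would verify that, after composing with the cross-product isomorphisms $\varpi$ and the Pontryagin product (i.e.\ passing to the single-basepoint situation of Theorem~\ref{geometricbibracket}), this Jacobi identity for $\Upsilon$ reproduces precisely the vanishing of the induced tribracket \eqref{tribracket}: the three summands of Lemma~\ref{Jacobi_Upsilon}, under the identification $\double{-}{-}=\varpi^{-1}\Upsilon$, become the three terms $\Perm_{312}^i(\double{-}{-}\otimes\id)(\id\otimes\double{-}{-})\Perm_{312,d}^{-i}$ for $i=0,1,2$, with $d=2-n$, and the signs $(-1)^{(p+n)(q+r)}$, $(-1)^{(p+q)(r+n)}$ match the $d$-graded permutation signs $\Perm_{312,d}$ by the substitution $|a|_d=|a|+2-n\equiv|a|+n\pmod 2$.
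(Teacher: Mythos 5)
Your proposal correctly identifies the broad machinery the paper uses — the half-smooth polychain framework of Section~\ref{parar+++}, the descent between face and singular homology via the transformations $[-]$ and $\lb-\rb$, and the need for careful orientation bookkeeping in the spirit of Lemma~\ref{reduction_D}(iii) — and your closing consistency check against the vanishing of the tribracket \eqref{tribracket} is exactly what the paper does afterwards in the final subsection of Section~\ref{Jacobi}. However, the key geometric mechanism you propose is not the one the paper uses, and as stated it has a gap.

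The paper does not prove the identity by exhibiting a single higher-dimensional polychain whose reduced boundary is the sum of the three terms. Instead, after reducing at the outset to the singular-homology identity \eqref{half} involving $[\widetilde\Upsilon(\cdots)]$'s, it decomposes each of the three half-smooth polychains as $\calD^{h}_{abc} = \calD^-_{abc} \cup_{\nicefrac12} \calD^+_{abc}$ (and cyclically), and then constructs explicit diffeomorphisms $F_{abc}:D^-_{abc}\to D^+_{bca}$, $F_{bca}:D^-_{bca}\to D^+_{cab}$, $F_{cab}:D^-_{cab}\to D^+_{abc}$, coming from the reparametrization $i'=2it$, $t'=1-(1-i)/(2-4it)$ of the innermost gluing parameter (the formulas in \eqref{i'_t'}, which realize $i'=i*t$ and $i=i'*t'$). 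It computes the degrees of these diffeomorphisms and checks that, together with the normalization prefactors $(-1)^{q+p(n+r)}$, etc., the six pieces cancel pairwise. The final step is then purely chain-level: one chooses locally ordered triangulations of the $D^-$'s invariant under the composite $F_{cab}F_{bca}F_{abc}$ on $D^{\nicefrac12}$, pushes them to the $D^+$'s via the $F$'s, and observes that every top-dimensional singular simplex from $T^-$ cancels with the corresponding one from $T^+$. The resulting chain is identically zero; nothing is a boundary.

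Your "jointly bound a $(p+q+r+5-2n)$-polychain" picture is therefore a different mechanism and does not match what happens. Moreover, the 12-coordinate set $T$ you write down (parametrizing three independent pairwise intersections) has dimension $2(p+q+r)+6-3n$, one dimension too large for generic $p,q,r$, and the paper never forms any such set; it only ever forms sets of type $D^\pm(\calK, \calD(\calL,\calR))$, each living inside a single $K\times I\times L\times I\times N\times I\times I$ and cut out by two diagonal conditions. To repair your outline you would need to replace the "bounding" step by the pairwise cancellation via the three diffeomorphisms, compute their degrees, and handle the matching triangulations on the $D^{\nicefrac12}$ locus. The rest of your plan (transversal polycycle choice, half-smooth machinery, descent) then goes through essentially as the paper does it, modulo the fact that the paper descends to the $[\widetilde\Upsilon]$-level at the beginning rather than the end.
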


\begin{proof}
Set $\varepsilon=(-1)^{ n(q+1)+pr}$. The definition of $\Upsilon_{34,56}$ and \eqref{Upsilon_Upsilon_bis}  imply that
\begin{eqnarray*}
 && {\Upsilon}_{12,5436} \left(a \otimes {\Upsilon}_{34,56}(b \otimes c)\right)  \\
 &=&  {\Upsilon}_{12,5436} \left([\lb a \rb] \otimes \left[\check {\Upsilon}_{34,56}(\lb b \rb \otimes \lb c \rb )\right]\right) \\
 &=&  \left[ \check{\Upsilon}_{12,5436} \left(\lb a \rb \otimes \check {\Upsilon}_{34,56}(\lb b \rb \otimes \lb c \rb ) \right)  \right] \\
 &=&  (-1)^{(r+nq)+(  q+r+n  +np)} \big[ \widetilde{\Upsilon}_{12,5436} \big(\lb a \rb \otimes \widetilde{\Upsilon}_{34,56}(\lb b \rb \otimes \lb c \rb ) \big)  \big]\\
 &=&  \varepsilon  (-1)^{q+p(n+r) } \big[ \widetilde{\Upsilon}_{12,5436} \big(\lb a \rb \otimes \widetilde{\Upsilon}_{34,56}(\lb b \rb \otimes \lb c \rb ) \big)  \big].
\end{eqnarray*}
Using the naturality of the transformation $[-]$, we also  obtain  that
\begin{eqnarray*}
&&(-1)^{(p+n) (q+r)}  (\perm_{312})_* {\Upsilon}_{34,1652} \left(b \otimes {\Upsilon}_{56,12}(c \otimes a)\right)\\
&=& (-1)^{(p+n) (q+r)}   \left[(\perm_{312})_* \check{\Upsilon}_{34,1652} \left(\lb b \rb \otimes \check{\Upsilon}_{56,12}(\lb c \rb \otimes \lb a\rb )\right) \right] \\
&=& (-1)^{(p+n) (q+r)+(p+nr)+(r+p+n +nq) }\big[(\perm_{312})_* \widetilde{\Upsilon}_{34,1652} \big(\lb b \rb \otimes \widetilde{\Upsilon}_{56,12}(\lb c \rb \otimes \lb a\rb )\big)\big]\\
&=&  \varepsilon  (-1)^{r+q(n+p) }\big[(\perm_{312})_* \widetilde{\Upsilon}_{34,1652} \big(\lb b \rb \otimes \widetilde{\Upsilon}_{56,12}(\lb c \rb \otimes \lb a\rb )\big)\big]
\end{eqnarray*}
and
\begin{eqnarray*}
&&  (-1)^{(p+q)(r+n)}  (\perm_{231})_* {\Upsilon}_{56,3214} \left(c \otimes  {\Upsilon}_{12,34}(a \otimes b)\right)\\
& =&   (-1)^{(p+q)(r+n)}  \left[(\perm_{231})_* \check {\Upsilon}_{56,3214} \big(\lb c \rb\otimes  \check {\Upsilon}_{12,34}(\lb a \rb \otimes \lb b \rb)\big) \right] \\
& =&  (-1)^{(p+q)(r+n) + (q+np)+( p+q+n +nr)} \big[(\perm_{231})_* \widetilde{\Upsilon}_{56,3214} \big(\lb c \rb\otimes \widetilde{\Upsilon}_{12,34}(\lb a \rb \otimes \lb b \rb)\big) \big] \\
& =&    \varepsilon  (-1)^{p+r(n+q) } \big[(\perm_{231})_*  \widetilde{\Upsilon}_{56,3214} \big(\lb c \rb\otimes  \widetilde {\Upsilon}_{12,34}(\lb a \rb \otimes \lb b \rb)\big) \big].
\end{eqnarray*}
Thus, it is enough to prove the following identity in $H_{*}\left( \Omega_{52} \times \Omega_{14} \times \Omega_{36}\right)$,
where $a\in \widetilde H_p (\Omega_{12})$,  $b\in \widetilde H_q (\Omega_{34})$ and $c \in  \widetilde H_r (\Omega_{56})$ are now  any face homology classes:
\begin{eqnarray}
\label{half}  (-1)^{q+p(n+r)}\, \big[ \widetilde{\Upsilon}_{12,5436} \big( a \otimes \widetilde{\Upsilon}_{34,56}(b \otimes c)\big)\big]&&\\
\notag + (-1)^{r+q(n+p)}\, \big[(\perm_{312})_* \widetilde{\Upsilon}_{34,1652} \big(b \otimes  \widetilde{\Upsilon}_{56,12}(c \otimes a)\big)  \big]&&\\
\notag + (-1)^{p+r(n+q)}\,  \big[ (\perm_{231})_* \widetilde{\Upsilon}_{56,3214} \big(c \otimes  \widetilde{\Upsilon}_{12,34}(a \otimes b)\big) \big]
&=& 0  .
\end{eqnarray}

Slightly moving the points $\star_1,\dots,\star_6$ in $\partial M$, we can   assume that they  are pairwise distinct.
Let $\calK=(K,\varphi,u,\kappa)$ be a smooth $p$-polycycle  in $\Omega^\circ_{12}$ representing~$a$,
let $\calL=(L,\psi,v,\lambda)$ be a smooth  $q$-polycycle in $\Omega^\circ_{34}$ representing~$b$, and
let $\calN=(N,\chi,z,\eta)$ be a smooth $r$-polycycle in $\Omega^\circ_{56}$ representing~$c$.
 We will assume that $\calK,\calL,\calN$ are pairwise transversal  in the sense of Section \ref{transs}.
 This assumption   and other  transversality conditions   imposed below in the course of the proof
are always achieved by a  small deformation  of  $\calK, \calL, \calN$.

Let  $\calD_{bc}=\calD(\calL,\calN)$ be  the intersection polycycle  as  defined in Section \ref{def_D}.
  Recall that   its underlying manifold with faces, $D_{bc}$,   consists of all tuples
$(l,h,n,i)\in   L\times I \times N\times I$  such that $ \tilde \lambda(l,h) =\tilde\eta(n,i)$.
Let $(cb,bc) $ stand for the underlying continuous map  $\lambda \losange \eta: D_{bc} \to \Omega^\circ_{54} \times \Omega^\circ_{36}$ of $\calD_{bc}$.
The map     $cb=\lambda \triangleleft \eta:D_{bc} \to \Omega_{54}^\circ$   carries a point $(l,h,n,i)$  to the path $I \to M$
which runs from $\star_5$ to $ \tilde\eta(n,i)$   along $\tilde\eta(n,-)$   in the first half-time
and then runs from $\tilde \lambda(l,h)$ to $\star_4$ along $\tilde \lambda(l,-)$   in the second half-time.
(Here and below,  the time parameter   of paths   always  increases along subintervals of $I$ with  constant speed.)
The map $bc=\lambda \triangleright \eta:D_{bc} \to  \Omega_{36}^\circ$    carries
$(l,h,n,i)$  to the path $ I   \to M$ which runs from $\star_3$ to $ \tilde \lambda(l,h)$  along $\tilde \lambda(l,-)$   in the first half-time
and then runs  from $\tilde \eta(n,i)$  to  $\star_6$  along $\tilde \eta(n,-)$   in  the second half-time.
  Thus   the  paths  $cb(l,h,n,i)$ and $bc(l,h,n,i)$ are obtained from  the paths  $ \tilde\eta(n,-)$   and   $\tilde \lambda(l,-)$
by switching direction at the  intersection   point $ \tilde \lambda(l,h) =\tilde\eta(n,i)$,  see Figure~\ref{Dbc}.

 \begin{figure}[htb]
\labellist
\small\hair 2pt
 \pinlabel {$\star_5$} [r] at 2 215
 \pinlabel {$\star_3$} [r] at 2 121
 \pinlabel {$\star_1$} [r] at 2 24
 \pinlabel {$\star_6$} [l] at 195 21
 \pinlabel {$\star_4$} [l] at 195 121
 \pinlabel {$\star_2$} [l] at 195 216
 \pinlabel {$c$} [b] at 57 180
 \pinlabel {$b$} [b] at 46 101
 \pinlabel {$b$} [b] at 149 118
 \pinlabel {$c$} [b] at 143 36
\endlabellist
\centering
\includegraphics[scale=0.6]{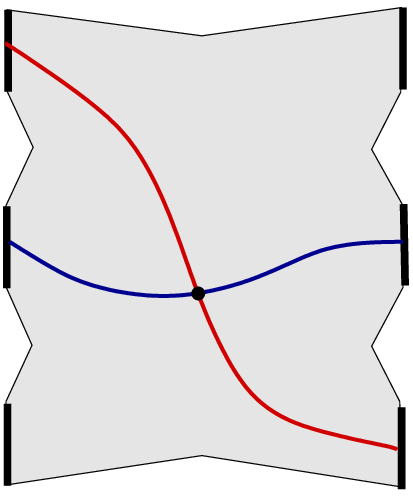}
\caption{The polycycle $\calD_{bc}$ in $\Omega_{54}^\circ \times \Omega_{36}^\circ$.}
\label{Dbc}
\end{figure}

We set $I^\circ =\Int (I)=(0,1)$,  $Z= \Omega^\circ_{36}$ and view   $\calD_{bc}$ as a polycycle in $\Omega^\circ_{54} \times Z$.
It    is half-smooth in the sense of Section \ref{parar+++}. Slightly deforming  the map $\tilde \kappa:K\times I \to M$ adjoint to $\kappa$,
we can assume  $\calD_{bc}$ to be half-transversal to $\calK$  in the sense of Section~\ref{parar+++}.
  In the sequel, we  consider  the associated $(p+q+r+4-2n)$-polychains $\calD_{abc}^{-} =   \calD^{-}  (\calK,\calD_{bc} )$
and $\calD_{abc}^{+} =   \calD^{+}  (\calK,\calD_{bc} )$  in   $\Omega^\circ_{52} \times \Omega^\circ_{14} \times Z $.

  On the one hand,   the  manifold with faces  $D_{abc}^{-}$ underlying   the polychain   $\calD_{abc}^{-}$ consists of all tuples
\begin{equation}\label{firstpoint}
(k,s,l,h,n,i,t) \in \, K \times I^\circ \times  L \times I^\circ \times N \times I^\circ   \times (0,1/2]
\end{equation}
such that $\tilde \lambda(l,h) = \tilde \eta(n,i)$ and  $\tilde \kappa(k,s) = \tilde\eta(n,i*t)$.  The map
\begin{equation}\label{firstmap}
(ca,a(cb),bc): D_{abc}^{-}  \longrightarrow  \Omega^\circ_{52} \times \Omega^\circ_{14} \times Z
=\Omega^\circ_{52} \times \Omega^\circ_{14} \times \Omega^\circ_{36}
\end{equation}
underlying $\calD_{abc}^{-}$  is schematically shown in Figure \ref{Dabc} where one   switches direction at the dotted intersections.
The first coordinate $ca:D_{abc}^{-} \to \Omega^\circ_{52}$  sends any point \eqref{firstpoint} to the path $ I    \to M$
which goes from $\star_5$ to $\tilde\eta(n,i\ast t)$ along $\tilde\eta(n,-)$ in half-time
and, next, goes from $\tilde \kappa(k,s)$ to $\star_2$  along $\tilde \kappa(k,-)$ in half-time.
The map $a(cb):D_{abc}^{-}\to \Omega^\circ_{14}$
carries  a  point \eqref{firstpoint} to the path $I \to M$ which goes from $\star_1$ to $\tilde\kappa(k,s)$ along $\tilde\kappa(k,-)$
in half-time, next, goes from $\tilde\eta(n,i\ast t)$ to $\tilde\eta(n,i)$ along $\tilde\eta(n,-)$ in time $\big[\frac{1}{2},1-\frac{1}{4(1-t)}\big]$
and, finally, goes from $\tilde \lambda(l,h)$ to $\star_4$  along $\tilde \lambda(l,-)$ in time $\big[1-\frac{1}{4(1-t)},1\big]$.
The map  $bc:  D_{abc}^{-} \to  \Omega^\circ_{36}$  sends a  point \eqref{firstpoint} to the path $  I    \to M$
which goes from $\star_3$ to $\tilde\lambda(l,h)=\tilde \eta (n,i)$ along $\tilde\lambda(l,-)$ in half-time
and, next, goes from $\tilde \eta (n,i)$ to $\star_6$  along $\tilde \eta(n,-)$ in half-time.

 On the other hand,  the   manifold with faces  $D_{abc}^{+}$ underlying  the polychain  $\calD_{abc}^{+}$ consists of all tuples
\begin{equation}\label{secondpoint}
(k,s,l,h,n,i,t) \in \, K \times I^\circ \times  L \times I^\circ \times N \times I^\circ   \times [1/2,1)
\end{equation}
such that $\tilde \lambda(l,h) = \tilde \eta(n,i)$ and $\tilde \kappa(k,s) =  \tilde\lambda(l,h*t)$. The map
\begin{equation}\label{firstmap+}
((cb)a, ab,bc): D_{abc}^{+}  \longrightarrow  \Omega^\circ_{52} \times \Omega^\circ_{14} \times \Omega^\circ_{36}
\end{equation}
is computed   similarly to \eqref{firstmap}    and is schematically shown in  Figure \ref{Dabc}.
We only note that the map $(cb)a : D_{abc}^{+} \to \Omega^\circ_{52} $
carries  a  point \eqref{secondpoint}  to the path $I \to M$ which goes from $\star_5$ to $\tilde\eta(n,i)$ along $\tilde\eta(n,-)$
in  time   $\left[0, \frac{1}{4t}\right]$,  next, goes from $\tilde\lambda(l,h)$ to $\tilde\lambda(l,h\ast t)$
along $\tilde\lambda(l,-)$ in time   $\left[ \frac{1}{4t}, \frac{1}{2}\right]$
and, finally, goes from $\tilde \kappa(k,s)$ to $\star_2$  along $\tilde \kappa(k,-)$ in the remaining half-time.

Consider also the polychain $\calD^{\nicefrac{1}{2}}_{abc}=    \calD^{\nicefrac{1}{2}}   \big(\calK,\calD_{bc}\big)$ in  $\Omega^\circ_{52} \times \Omega^\circ_{14} \times Z $.
Its  underlying   $(p+q+r+3-2n)$-manifold with faces $D^{\nicefrac{1}{2}}_{abc}=D_{abc}^{-} \cap D_{abc}^{+} $
consists of the  tuples $(k,s,l,h,n,i,1/2)$ such that $\tilde \kappa (k,s)=\tilde \lambda(l,h) = \tilde \eta(n,i)$.
The underlying map
$$
(ca, ab, cb):D^{\nicefrac{1}{2}}_{abc}  \longrightarrow  \Omega^\circ_{52} \times \Omega^\circ_{14} \times \Omega^\circ_{36}
$$
is the restriction of  the maps \eqref{firstmap} and  \eqref{firstmap+}, see  Figure \ref{Dabc}.

\begin{figure}[h]
\begin{center}
\labellist \small
\pinlabel {$\star_5$} [r] at 5 213
\pinlabel {$\star_5$} [r] at 292 218
\pinlabel {$\star_5$} [r] at 586 221
\pinlabel {$\star_3$} [r] at 5 120
\pinlabel {$\star_3$} [r] at 292 123
\pinlabel {$\star_3$} [r] at 586 122
\pinlabel {$\star_1$} [r] at 5 22
\pinlabel {$\star_1$} [r] at 292 25
\pinlabel {$\star_1$} [r] at 586 24
\pinlabel {$\star_2$} [l] at 193 215
\pinlabel {$\star_4$} [l] at 193 119
\pinlabel {$\star_6$} [l] at 192 19
\pinlabel {$\star_2$} [l] at 480 215
\pinlabel {$\star_4$} [l] at 480 119
\pinlabel {$\star_6$} [l] at 480 19
\pinlabel {$\star_2$} [l] at 773 215
\pinlabel {$\star_4$} [l] at 773 119
\pinlabel {$\star_6$} [l] at 773 21
\pinlabel {$c$} [b] at 29 199
\pinlabel {$a$} [b] at 128 193
\pinlabel {$a$} [b] at 25 42
\pinlabel {$c$} [bl] at 85 125
\pinlabel {$b$} [b] at 140 110
\pinlabel {$c$} [b] at 142 38
\pinlabel {$b$} [b] at 20 112
\pinlabel {$c$} [b] at 344 180
\pinlabel {$c$} [b] at 435 36
\pinlabel {$b$} [b] at 323 105
\pinlabel {$b$} [b] at 438 115
\pinlabel {$a$} [b] at 327 50
\pinlabel {$a$} [b] at 424 202
\pinlabel {$c$} [b] at 633 185
\pinlabel {$c$} [b] at 730 36
\pinlabel {$b$} [b] at 629 100
\pinlabel {$b$} [b] at 699 106
\pinlabel {$b$} [b] at 749 122
\pinlabel {$a$} [b] at 627 35
\pinlabel {$a$} [b] at 731 190
\endlabellist
\includegraphics[scale=0.44]{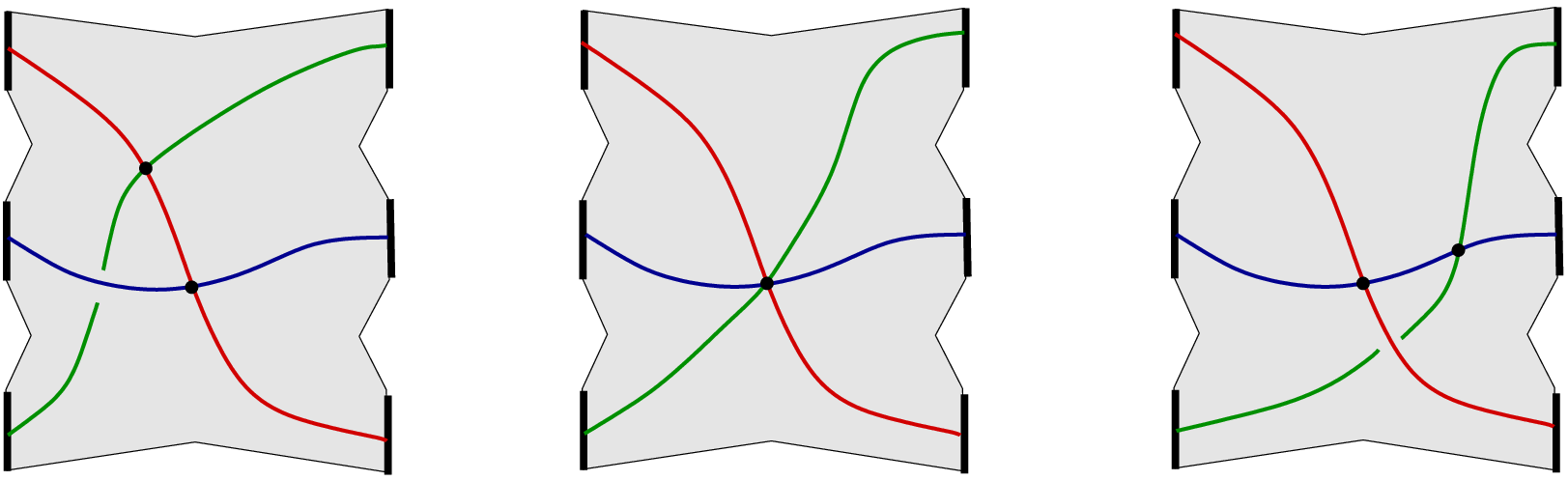}
\end{center}
\caption{The polychains $\calD_{abc}^{-}$, $\calD^{\nicefrac{1}{2}}_{abc}$ and $\calD_{abc}^{+}$.}
\label{Dabc}
\end{figure}

  Cyclically permuting $a,b,c$, we similarly obtain polychains
$\calD_{bca}^{-}$, $\calD^{\nicefrac{1}{2}}_{bca}, \calD_{bca}^{+}$ and $\calD_{cab}^{-}$, $\calD^{\nicefrac{1}{2}}_{cab}, \calD_{cab}^{+}$.
Lemma \ref{half-half} allows us to rewrite \eqref{half} as the identity  
\begin{eqnarray}
\notag  (-1)^{q+p(n+r)}\, \big[  \calD_{abc}^{-} \mathop{\cup}_{\nicefrac{1}{2}}   \calD_{abc}^{+} \big]
  + (-1)^{r+q(n+p)}\, (\perm_{312})_*  \big[  \calD_{bca}^{-}\mathop{\cup}_{\nicefrac{1}{2}}   \calD_{bca}^{+} \big]&&\\
\label{abc...}  + (-1)^{p+r(n+q)}\, (\perm_{231})_*   \big[  \calD_{cab}^{-} \mathop{\cup}_{\nicefrac{1}{2}}  \calD_{cab}^{+} \big]  & =  & 0
\end{eqnarray}
 in $H_*(\Omega_{52} \times \Omega_{14} \times \Omega_{36})$.
 The idea of the proof is to show that the six polychains on the left-hand side of \eqref{abc...} cancel each other pairwise.

  We first explain how to relate the  polychains $\calD^-_{abc}$ and $\calD_{bca}^{+}$.
  Observe that   the   manifold with faces  $D_{bca}^{+}$ underlying $\calD_{bca}^{+}$ consists of all tuples
\begin{equation}\label{secondpoint+}
(l,h, n,  i' , k,s,  t' ) \in \, L \times I^\circ \times  N \times I^\circ \times K \times I^\circ   \times   [1/2,1)
\end{equation}
such that   $\tilde \eta(n, i' )  =  \tilde \kappa(k,s)$   and $\tilde \lambda(l,h) =  \tilde\eta(n, i'*t' )$.
We define a smooth map
$$
F: K \times I^\circ \times  L \times I^\circ \times N \times I^\circ   \times (0,1/2] \to  L \times I^\circ \times  N \times I^\circ \times K \times I^\circ   \times [1/2,1)
$$
by the formula $$F(k,s,l,h,n,i,t)=(l,h, n, i'(i,t), k,s, t'(i,t))$$
where  $i':I^\circ \times (0,1/2] \to I^\circ$ and $t':I^\circ \times (0,1/2] \to [1/2,1)$ are given by
\begin{equation}\label{i'_t'}
i'(i,t)= 2it \quad \hbox{and}  \quad t'  (i,t) =1-\frac{1-i}{2-4it}.
\end{equation}
Observe that the functions $i',t'$  satisfy   the  equations $i'=i *t$ and $i=i'*t'$.
It easily follows  that the transformation $(i',t'): I^\circ \times (0,1/2] \to I^\circ \times [1/2,1)$ is a diffeomorphism,
so that $F$ is a diffeomorphism carrying $D_{abc}^{-}$ onto $D_{bca}^{+}$.
The resulting diffeomorphism
$$
F_{abc}: D_{abc}^{-}  \longrightarrow  D_{bca}^{+}
$$
is compatible with the partitions  and the weights of  the polychains  $\calD_{abc}^{-} $, $\calD_{bca}^{+}$. Moreover,
\begin{equation}\label{mapF}
({ca,a(cb),bc}) =   \perm_{312}   ((ac)b,bc,ca ) F_{abc}: D_{abc}^{-}  \longrightarrow  \Omega_{52}^\circ  \times \Omega_{14}^\circ  \times \Omega_{36}^\circ
\end{equation}
up to homotopy of the second coordinate map compatible with the partitions.
The map $F_{abc}$ carries  $ D^{\nicefrac{1}{2}}_{abc}\subset D_{abc}^{-} $   diffeomorphically onto $ D^{\nicefrac{1}{2}}_{bca}\subset D_{bca}^{+} $
via the permutation $(k,s,l,h,n,i,1/2)\mapsto (l,h, n, i , k,s, 1/2)$ and  \eqref{mapF} holds on  $ D^{\nicefrac{1}{2}}_{abc}$
as an equality of maps (no homotopy needed).
One easily constructs a homotopy of the map $a(cb) : D^-_{abc} \to \Omega^\circ_{14}$ into $((ac)b)\circ F_{abc}$ constant on $D^{\nicefrac{1}{2}}_{abc}$.
Since the left-hand side of \eqref{abc...} is preserved under such a homotopy of  $a(cb)$, we can assume that \eqref{mapF} is an equality of maps.

We prove now that \begin{equation} \label{deg_F_abc}
\deg F_{abc} = (-1)^{1+ pn+qn+ (p+1)(q+r)}.
\end{equation}
The  diffeomorphism $ F_{abc}$ carries  the open subset
$  R^-  =D_{abc}^{-} \setminus D^{\nicefrac{1}{2}}_{abc}$ of $D_{abc}^{-}$ onto the open subset $  R^+  =D_{bca}^{+}  \setminus D^{\nicefrac{1}{2}}_{bca}$ of $D_{bca}^{+} $,
and $\deg  F_{abc}$ is equal  to the degree   of the restricted diffeomorphism $  R^-   \to    R^+  $. Clearly,
$$  R^-  = D^-_{abc}\cap X^-_{abc} \quad {\rm{where}} \quad X^-_{abc}=K \times I^\circ \times  L \times I^\circ \times N \times I^\circ   \times (0,1/2)  $$
and
$$  R^+   = D^+_{bca} \cap X^+_{bca}  \quad {\rm{where}} \quad
X^+_{bca} = L \times I^\circ \times  N \times I^\circ \times K \times I^\circ   \times (1/2,1).
$$
Consider  the maps
$$
X^-_{abc}    \stackrel{G^-}{\longrightarrow}M^4, \,\,
(k,s,l,h,n,i,t)  \longmapsto   (\tilde\kappa(k,s),\tilde\eta(n,i*t),\tilde\lambda(l,h),\tilde\eta(n,i) )
$$
and
$$
 X^+_{bca}   \stackrel{G^+}{\longrightarrow}   M^4, \,\,
(l,h,n,i',k,s,t')  \longmapsto     (\tilde\kappa(k,s),\tilde\eta(n,i'),\tilde\lambda(l,h),\tilde\eta(n,i'*t')  ).
$$
Since $\calN$ is transversal to both $\calK$ and $\calL$,
the map $G^-$ is transversal to $\diag_M \times \diag_M$ in the following sense:
for any faces $A,B,C$ of $K,L,N$ respectively, the restriction of $G^-$ to the interior of $A \times I^\circ \times  B  \times I^\circ \times C \times I^\circ   \times (0,1/2)$
is transversal to the interior of $\diag_M \times \diag_M$ (in the usual sense of differential topology).
Similarly,  the map $G^+$ is transversal to $\diag_M \times \diag_M$.
Observe that  $  G^-=   G^+ F\vert_{{X^-_{abc}}}$    and that  the inverse images of $\diag_M \times \diag_M$
under the maps   $G^-,G^+$ are, respectively, the sets $  R^-  ,   R^+  $.
We identify
\begin{equation}\label{normal_bundles}
\bignu_{{M^4}}(\diag_M \times \diag_M) = \pr_{12}^*\bignu_{{M^2}}(\diag_M) \oplus  \pr_{34}^*\bignu_{{M^2}}(\diag_M)
\end{equation}
where   $\pr_{ij}: M^4 \to M^2$ is the cartesian projection defined by $\pr_{ij}(m_1,m_2,m_3,m_4)=(m_i,m_j)$.
As above,    $\bignu_{{M^2}}(\diag_M)$ carries  the orientation induced by that of $\diag_M \approx M$ using our orientation convention,
and we give to  \eqref{normal_bundles} the product orientation.
 Pulling back the latter  orientation along $G^-$, we obtain an orientation on the normal bundle of $  R^-  $   in $X^-_{abc}$;
this oriented normal vector bundle   is   denoted by   $\bignu^-$. The   normal bundle of $  R^+  $ in $X^+_{bca}$
is oriented similarly and denoted by~$\bignu^+$.
 Let $T^-$ be  the tangent  bundle of  $  R^-  $  with the orientation induced by that of   $\bignu^-$. Similarly,
let $T^+$ be  the tangent  bundle of $  R^+  $  with the orientation induced by  that of   $\bignu^+$.
Clearly, the diffeomorphism $(i',t'): I^\circ \times (0,1/2] \to I^\circ \times [1/2,1)$ defined by \eqref{i'_t'} is orientation-reversing. Hence
  $\deg F = (-1)^{1+(p+1)(q+r)}$, and since $F$ carries $  R^-   $ onto $  R^+  $ and induces an orientation-preserving map $\bignu^-\to \bignu^+$, we have
\begin{equation}\label{T+-}
F_{abc}^*(T^+) = (-1)^{1+(p+1)(q+r)}\,  T^-.
\end{equation}
Next,  consider the following isomorphisms of oriented vector bundles over $  R^-  $,
where $T$ stands for the tangent  bundle,  $\bignu$ stands for the normal bundle,  and $\pr$ denotes the appropriate cartesian projection:
\begin{eqnarray*}
&&\!\!\!  T( K \times I \times L \times I \times N \times I \times I)\vert_{{  R^-  }} \\
&\cong &\!\!\! \pr^* T( K \times I)\vert_{{  R^-  }}  \oplus \pr^* T( L \times I \times N \times I)\vert_{{  R^-  }}  \oplus \pr^* T  (I)\vert_{{  R^-  }}\\
&\cong &\!\!\!   \pr^* T( K \times I)\vert_{{  R^-  }}  \oplus \pr^*\!\big( \bignu_{L \times I \times N \times I} {(D_{bc})}
\oplus  T{(D_{bc})} \big)\big\vert_{{  R^-  }}  \oplus \pr^* T  (I)\vert_{{  R^-  }}  \\
&\cong &\!\!\!  (-1)^{n(p+1)}\, \pr^*\! \bignu_{L \times I \times N \times I} ({D_{bc}} )\vert_{{  R^-  }} \oplus  \pr^*  T( K \times I \times {D_{bc}} \times I)\vert_{{  R^-  }}  \\
&\cong & \!\!\!  (-1)^{n(p+1)}\,  \pr^*\! \bignu_{L \times I \times N \times I} ({D_{bc}} )\vert_{{  R^-  }}
 \oplus  \bignu_{K \times I \times {D_{bc}} \times I} {(  R^-  )} \oplus T{(  R^-  )}\\
&\cong & \!\!\!  (-1)^{np}\,   \underbrace{\bignu_{K \times I \times {D_{bc}} \times I} {(  R^-  )}
\oplus \pr^*\! \bignu_{L \times I \times N \times I} ({D_{bc}} )\vert_{{  R^-  }}}_{\bignu^-}\oplus\, T{(  R^-  )}  .
\end{eqnarray*}
It follows that $T^- = (-1)^{np}\, T(  R^-  )$. Similarly,
\begin{eqnarray*}
&&\!\!\!  T( L \times I \times N \times I \times K \times I \times I)\vert_{{  R^+  }} \\
&\cong &\!\!\! \pr^* T( L \times I)\vert_{{  R^+  }}  \oplus \pr^* T( N \times I \times K \times I)\vert_{{  R^+  }}  \oplus \pr^* T  (I)\vert_{{  R^+  }}\\
&\cong &\!\!\!   \pr^* T( L \times I)\vert_{{  R^+  }}  \oplus \pr^*\! \big( \bignu_{N \times I \times K \times I} {(D_{ca})}
\oplus  T{(D_{ca})} \big)\big\vert_{{  R^+  }}  \oplus \pr^* T  (I)\vert_{{  R^+  }}  \\
&\cong &\!\!\!  (-1)^{n(q+1)}\, \pr^*\! \bignu_{N \times I \times K \times I} ({D_{ca}} )\vert_{{  R^+  }}
\oplus  \pr^*  T( L \times I \times {D_{ca}} \times I)\vert_{{  R^+  }}  \\
&\cong & \!\!\!  (-1)^{n(q+1)}\,  \underbrace{\pr^*\! \bignu_{N \times I \times K \times I} ({D_{ca}} )\vert_{{  R^+  }}
 \oplus  \bignu_{L \times I \times {D_{ca}} \times I} {(  R^+  )}}_{(-1)^n \bignu^+ } \oplus\, T{(  R^+  )}  .
\end{eqnarray*}
Here the sign $(-1)^n$  accompanying $\bignu^+$ is  the degree of the  permutation map    ${ M^2\to M^2}, (m_1,m_2)\mapsto  (m_2,m_1)$.
It follows that $T^+ = (-1)^{nq}\,T(  R^+ )$. Formula   \eqref{T+-}  and the  computations of $T^+, T^-$  imply \eqref{deg_F_abc}.

Cyclically permuting $a,b,c$, we   obtain diffeomorphisms $F_{bca}:D^-_{bca} \to D^+_{cab}$ and $F_{cab}:D^-_{cab} \to D^+_{abc}$ such that
\begin{equation} \label{deg_F_bca}
\deg F_{bca} = (-1)^{1+ qn+rn+ (q+1)(r+p)}
\end{equation}
and
\begin{equation} \label{deg_F_cab}
\deg F_{cab} = (-1)^{1+ rn+pn+ (r+1)(p+q)}.
\end{equation}

To conclude the proof, we set
$$
D^{\pm}= D^{\pm}_{abc} \sqcup  D^{\pm}_{bca} \sqcup  D^{\pm}_{cab}
\quad {\rm {and}} \quad  D^{\nicefrac{1}{2}}=D^{+} \cap D^{-}=D^{\nicefrac{1}{2}}_{abc} \sqcup  D^{\nicefrac{1}{2}}_{bca} \sqcup  D^{\nicefrac{1}{2}}_{cab} .
$$
Clearly,  $F_{cab}F_{bca}F_{abc}=\id$  on $D^{\nicefrac{1}{2}}_{abc}$.
Therefore any triangulation  of  $D^{\nicefrac{1}{2}}_{abc}$  extends uniquely to  a triangulation, $ T^{\nicefrac{1}{2}} $,  of
 $ D^{\nicefrac{1}{2}}$ invariant under  $F_{abc} \sqcup F_{bca}  \sqcup F_{cab}$.
  (All triangulations in this argument are supposed to be locally ordered and to  fit the given partitions, cf$.$  Sections~\ref{Partitions} and~\ref{fundcla}.)
Subdividing, if necessary,  $ T^{\nicefrac{1}{2}} $ we can   assume that it  extends to a triangulation, $T^-$, of $ D^{-} $.
Transferring $T^-$ along the diffeomorphism $F_{abc} \sqcup F_{bca} \sqcup F_{cab}:  D^{-}  \to  D^{+}$
we obtain a triangulation, $T^+$, of $D^+$ also extending $ T^{\nicefrac{1}{2}} $.
We use   the triangulations $T^-$ and $T^+$   to represent the left-hand side of \eqref{abc...} by a $(p+q+r+4-2n)$-dimensional  singular chain.
  According to  \eqref{deg_F_abc}, \eqref{deg_F_bca} and \eqref{deg_F_cab},
every  singular simplex  contributed by a   top-dimensional   simplex of  $T^-$ cancels with the corresponding singular simplex in~$T^+$.
Therefore the singular chain in question is equal to zero and so is the left-hand side of \eqref{abc...}.
\end{proof}

\subsection{Proof of Theorem \ref{geometricbibracket} (the end)}

  Let $\triple{-}{-}{-}\in \End(A^{\otimes 3})$ be the tribracket induced by the intersection bibracket $\double{-}{-}$ in $A=A(\calC)$.
Pick any points $\star_1,\dots, \star_6\in \partial M$ and any homology classes  $a\in H_p (\Omega_{12})$,
$b\in  H_q (\Omega_{34})$ and $c \in  H_r (\Omega_{56})$.
We need to show that the tensor
\begin{eqnarray}
\label{expanded_tribracket} \triple{a}{b}{c} &=& \double{a}{\double{b}{c}'}\otimes \double{b}{c}'' \\
\notag &&+  (-1)^{(p+n) (q+r)} \Perm_{312} \left (\double{b}{\double{c}{a}'} \otimes \double{c}{a}''\right) \\
\notag && + (-1)^{(p+q)(r+n)} \Perm_{231} \left (\double{c}{\double{a}{b}'} \otimes \double{a}{b}''\right)
\end{eqnarray}
vanishes, where    $\Perm_{312}, \Perm_{231}\in \End(  A^{\otimes 3})$
are the graded permutations defined in Section \ref{conventions}.
For any $i,j,k,l,u,v\in \{1,\dots, 6\}$, let
$$
\varpi_{ij,kl}: H_*(\Omega_{ij}) \otimes H_*(\Omega_{kl})  \longrightarrow H_*(\Omega_{ij} \times \Omega_{kl}),
$$
$$
\varpi_{ij,kl,uv}: H_*(\Omega_{ij}) \otimes H_*(\Omega_{kl}) \otimes H_*(\Omega_{uv}) \longrightarrow H_*(\Omega_{ij} \times \Omega_{kl} \times \Omega_{uv})
$$
be  the linear maps induced by the cross product.
By   definition of the   intersection bibracket     and Lemma \ref{Upsilon_cross},
\begin{eqnarray*}
\varpi_{52,14,36}\left( \double{a}{\double{b}{c}'}\otimes \double{b}{c}'' \right)
&=&   \varpi_{52,14} \left ( \double{a}{\double{b}{c}'} \right ) \times \double{b}{c}''  \\
&=&   \Upsilon_{12,54}\left(a\otimes{\double{b}{c}'} \right) \times \double{b}{c}''  \\
&=&  \Upsilon_{12,5436}\big(a\otimes  \left({\double{b}{c}'} \times \double{b}{c}''\right) \big) \\
&=&  \Upsilon_{12,5436}\left(a\otimes \varpi_{54,36}\left({\double{b}{c} }  \right) \right)  \\
&=&  \Upsilon_{12,5436}\left(a\otimes \Upsilon_{34,56}(b\otimes c) \right).
\end{eqnarray*}
Cyclically permuting $a,b,c$, we also obtain
\begin{eqnarray*}
&& \varpi_{52,14,36} \Perm_{312} \left (\double{b}{\double{c}{a}'} \otimes \double{c}{a}''\right) \\
&=& (\perm_{312})_*  \varpi_{14,36,52}  \left (\double{b}{\double{c}{a}'} \otimes \double{c}{a}''\right) \\
&=& (\perm_{312})_*  \Upsilon_{34,1652}\left(b \otimes \Upsilon_{56,12}(c\otimes a) \right)
\end{eqnarray*}
and
\begin{eqnarray*}
&& \varpi_{52,14,36} \Perm_{231} \left (\double{c}{\double{a}{b}'} \otimes \double{a}{b}''\right) \\
&=& (\perm_{231})_*  \varpi_{36,52,14}  \left (\double{c}{\double{a}{b}'} \otimes \double{a}{b}''\right) \\
&=& (\perm_{231})_*  \Upsilon_{56,3214}\left(c \otimes \Upsilon_{12,34}(a\otimes b) \right).
\end{eqnarray*}
Combining the last three identities, formula  \eqref{expanded_tribracket} and Lemma \ref{Jacobi_Upsilon},
we obtain that   $\varpi_{52,14,36} \left ( \triple{a}{b}{c} \right )=0$.  We conclude that $\triple{a}{b}{c}=0$.

\section{Computations and examples}  \label{sec-examples}

We   compute $\Upsilon$ for spherical homology classes   of complementary dimensions and for $0$-dimensional homology classes.
We  use these results to determine   the intersection bibracket in two examples.

\subsection{Intersection of spheres} \label{Reidemeister_pairing}

Assume that  $n=\dim(M) \geq 4$.
We    compute the operation  $\Upsilon$   on the   loop   homology classes  arising from spheres of complementary dimensions.
Let us fix a base point $s_k$ in the $k$-sphere $S^k$ for every $k\geq 1$.  For $x\in \partial M$, we let $ \pi_k(M,x) = \big[(S^k,s_k),(M,x)\big]$  be the $k$-th homotopy group of $M$ at~$x$.
For $x,y \in \partial M$,  we set $\pi_{1}(M,x,y) =  \pi_0 (\Omega(M,x,y))$.

Consider   base points $\star,\star'$ in $\partial M$ and   integers $p,q\geq 2$ such that $p+q=n=\dim(M)$. Let $\Upsilon^\pi_{\star,\star'}$ be the  following  composition:
$$
{\small \xymatrix {
\pi_p(M,\star)  \times    \pi_q(M,\star') \ar[d]_-{\overline \partial_p  \times   \overline \partial_q } \ar@/^0.5cm/@{-->}[drr]^-{\Upsilon^\pi_{\star,\star'}} &&\\
H_{p-1}(\Omega_\star) \otimes H_{q-1}(\Omega_{\star'})  \ar[r]^-\Upsilon  & H_0(\Omega_{\star'\!\star} \times \Omega_{\star\star'})  \simeq  \hspace{-1cm} & \ \kk[\pi_1(M,\star',\star)] \otimes  \kk[\pi_1(M,\star,\star')].
}}
$$
Here  $\Omega_{\star}=\Omega(M,\star,\star)$, $\Omega_{\star\star'}=\Omega(M,\star,\star')$,
$\overline \partial_*:\pi_*(M,\star)  \to     H_{*-1}(\Omega_\star)$ is the connecting homomorphism of Section~\ref{The {Pontryagin} algebra},
and similar notation  applies with $\star$ and $\star'$ exchanged.
The following lemma computes $\Upsilon^\pi_{\star,\star'}$ when $\star \neq \star'$.

\begin{lemma}\label{intersecting_spheres}
  Assume   $\star\neq \star'$.   Let $\alpha: (S^p,s_p) \to (M,\star)$ and $\beta:(S^q,s_q)\to (M,\star')$ be    continuous maps
such that $\alpha^{-1}(\partial M)=\{s_p \}$, $\beta^{-1}(\partial M)=\{s_q \}$
and $\alpha\vert_{S^p\setminus \{s_p\}}$, $\beta\vert_{S^q \setminus \{s_q\}}$ are transversal smooth maps.
Then
\begin{equation}\label{spheres}
\Upsilon^\pi_{\star,\star'}([\alpha],[\beta]) =
(-1)^{n(p+1)+1} \sum_{(x,y)} \varepsilon(x,y) \, [\beta_{ y} \alpha_{x  }^{-1}] \otimes  [\alpha_{  x} \beta_{y  }^{-1} ].
\end{equation}
Here:    the sum runs over all $(x,y)\in S^p \times S^q$ such that   $\alpha(x)=\beta(y)$;
 $\varepsilon(x,y)$ is the sign of the product orientation in $\alpha_*(T_x S^p) \oplus \beta_*(T_y S^q)=T_{\alpha (x)} M$ with respect to the orientation of $M$;
$\alpha_{  x}$  is the composition of  $\alpha$ with  a  path from  $s_p$ to $x$ in $S^p$
and $\beta_{ y}$  is the composition of $\beta$ with a  path from $s_q$ to $y$  in $S^q$.
\end{lemma}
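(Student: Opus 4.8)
The strategy is to choose good geometric representatives for $\overline\partial_p([\alpha])$ and $\overline\partial_q([\beta])$ and to unwind the definition of $\Upsilon$ through $\widetilde\Upsilon$ and the intersection polychain $\calD$. First I would recall that, under the path-space fibration $\Omega_\star \to P_\star M \to M$, the connecting homomorphism $\overline\partial_p$ sends the class of $\alpha:(S^p,s_p)\to(M,\star)$ to the Hurewicz image of a loop-space class which can be described explicitly: a map $a: S^{p-1}\to \Omega_\star$ whose adjoint $\tilde a: S^{p-1}\times I\to M$ factors through a chosen nullhomotopy of $\alpha\vert_{\partial D^p}$, i.e. through $\alpha: D^p\simeq S^{p-1}\times I/(S^{p-1}\times\{0\})\to M$ after collapsing. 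Concretely, identifying $S^p$ with $D^p/\partial D^p$ and $D^p$ with the mapping cylinder of $S^{p-1}\to \{s_p\}$, the loop $a(z)$ for $z\in S^{p-1}$ traverses the radial path in $D^p$ from $s_p$ to the boundary point $z$ and back; its adjoint $\tilde a$ is, up to a reparametrization of $I$, just $\alpha$ precomposed with the obvious surjection $S^{p-1}\times I\to D^p\to S^p$. The key point is that $\tilde a\vert_{S^{p-1}\times(0,1)}$ is a smooth map which is essentially a two-to-one covering of $\alpha\vert_{S^p\setminus\{s_p\}}$ (the two preimages of a generic point come from the "out" and "back" halves), and the transversality of $\alpha\vert_{S^p\setminus\{s_p\}}$ with $\beta\vert_{S^q\setminus\{s_q\}}$ translates into transversality of the corresponding smooth polycycles $\calK$ (on $S^{p-1}$) and $\calL$ (on $S^{q-1}$) in the sense of Section~\ref{transversality_polychains}. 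Since $\star\neq\star'$, we are in the range where $\widetilde\Upsilon$ is defined directly without the renormalization of Section~\ref{checkUpsilon---}, so $\Upsilon([\alpha\text{-class}]\otimes[\beta\text{-class}])$ is computed by the face homology class of $\calD(\calK,\calL)$ via Lemma~\ref{newcomputation} (after passing to $\Omega^\circ$ by Lemma~\ref{loopsp}), and then by $[-]$.

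\textbf{Identification of the intersection set.} Next I would compute $D=(\tilde\kappa\times\tilde\lambda)^{-1}(\diag_M)\subset K\times I\times L\times I$ explicitly. Writing out $\tilde\kappa$ and $\tilde\lambda$ as the (reparametrized) surjections onto $\alpha$ and $\beta$, a point of $D$ is a tuple recording: a point of $S^{p-1}$, a time $s\in I$, a point of $S^{q-1}$, a time $t\in I$, such that $\alpha$ evaluated at the corresponding point of $D^p$ equals $\beta$ evaluated at the corresponding point of $D^q$. Because $p+q=n$ and both maps are transversal, $D$ is $0$-dimensional, i.e. a finite set of points; I claim these are in natural bijection with pairs $(x,y)\in S^p\times S^q$ with $\alpha(x)=\beta(y)$, $x\neq s_p$, $y\neq s_q$ (the points where one of $\alpha,\beta$ hits $\partial M$ are excluded because the polycycles live in $\Omega^\circ$). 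The subtlety here is the "folding": the map $D^p\to S^p$ (collapsing $\partial D^p$, where $D^p$ is a cylinder on $S^{p-1}$) is a branched double cover away from the cone point, so a priori each $(x,y)$ could contribute several points of $D$; I would check that after accounting for the orientations of the normal bundle of $\diag_M$ pulled back to $D$, exactly one point survives with a well-defined sign, or that the several contributions combine to give the single sign $\varepsilon(x,y)$. This sign bookkeeping — comparing the product orientation on $K\times I\times L\times I$, the pulled-back orientation of $\bignu_{M\times M}(\diag_M)$, the orientation conventions of the Introduction, and the factor $(-1)^{n(p+1)+1}$ appearing in the statement — is where I expect the main work to lie.

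\textbf{Reading off the weights and the maps to path space.} For each surviving point of $D$ corresponding to $(x,y)$, the weight is $1$ (since $\calK,\calL$ arise from single spheres with weight $1$), and the map $\kappa\losange\lambda: D\to \Omega_{\star'\star}\times\Omega_{\star\star'}$ sends this point to a pair of loops which, by the explicit formula \eqref{bowtie} for $\losange$, are: first the concatenation $\varsigma$-free version running $\star'\to\beta(y)=\alpha(x)\to\star$, and its counterpart $\star\to\alpha(x)=\beta(y)\to\star'$. Tracing through which half of each adjoint map is used, and recalling that the adjoint of $\kappa$ near the relevant point is the path in $M$ from $\star$ out to $\alpha(x)$ along $\alpha_x$ and the adjoint of $\lambda$ is the path from $\star'$ out to $\beta(y)$ along $\beta_y$, one sees that the first loop is homotopic to $\beta_y\alpha_x^{-1}$ and the second to $\alpha_x\beta_y^{-1}$, so the class in $H_0(\Omega_{\star'\star}\times\Omega_{\star\star'})=\kk[\pi_1(M,\star',\star)]\otimes\kk[\pi_1(M,\star,\star')]$ is $[\beta_y\alpha_x^{-1}]\otimes[\alpha_x\beta_y^{-1}]$. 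Finally, since $\calD(\calK,\calL)$ is a $0$-dimensional polycycle, $[-]$ applied to its face homology class is simply the signed count $\sum_{(x,y)}\varepsilon(x,y)\,[\beta_y\alpha_x^{-1}]\otimes[\alpha_x\beta_y^{-1}]$, and multiplying by the global sign $(-1)^{n(p+1)+1}$ from the orientation comparison gives \eqref{spheres}. I would also remark that independence of the choices of connecting paths from $s_p$ to $x$ and from $s_q$ to $y$ follows because changing a path multiplies $\alpha_x$ (resp. $\beta_y$) on the right by an element of $\pi_1(M,\star)$ (resp. $\pi_1(M,\star')$) that cancels in the combination $\beta_y\alpha_x^{-1}\otimes\alpha_x\beta_y^{-1}$ up to the simultaneous conjugation that is invisible in $H_0$.
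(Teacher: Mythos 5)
Your overall architecture — represent $\overline\partial_p([\alpha])$ and $\overline\partial_q([\beta])$ by smooth polycycles, pass to the intersection polycycle $\calD(\calK,\calL)$ via Lemma~\ref{newcomputation}, and read off a signed count of intersection points in $H_0$ — is exactly the paper's strategy. The paths $\beta_y\alpha_x^{-1}$ and $\alpha_x\beta_y^{-1}$ you identify from the formula for $\losange$ are also correct. However, there is a genuine error in the geometric model you set up at the outset, and it is not the kind of issue that can be resolved by ``orientation bookkeeping.''

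You describe the map $a: S^{p-1}\to\Omega_\star$ representing $\overline\partial_p([\alpha])$ by saying that $a(z)$ ``traverses the radial path in $D^p$ from $s_p$ to the boundary point $z$ and back,'' and you explicitly claim the adjoint $\tilde a\vert_{S^{p-1}\times(0,1)}$ is ``essentially a two-to-one covering'' of $\alpha\vert_{S^p\setminus\{s_p\}}$. This is not the connecting homomorphism. An out-and-back loop family is nullhomotopic: the adjoint $\tilde a$, viewed as a map from the (unreduced) suspension $\Sigma S^{p-1}=S^p$ to $M$, has degree zero onto $\alpha(S^p)$ because the ``out'' and ``back'' sheets contribute with opposite orientations, so your $\calK$ would represent $0\in\widetilde H_{p-1}(\Omega_\star)$. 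The correct model has the adjoint $\tilde a$ generically \emph{one}-to-one onto $\alpha(S^p)$; each loop $a(z)$ should trace a full meridian, not go out and back. Concretely, the paper takes $\calK = (I^{p-1},\theta_{p-1},1,\omega_\alpha)$ where $\omega_\alpha$ is the adjoint of $\alpha h_p: I^p\to M$ with $h_p: I^p\to S^p$ a degree-one map collapsing $\partial I^p$ to $s_p$, and the partition $\theta_{p-1}$ identifies opposite faces. With this degree-one model, the points of the $0$-dimensional intersection polycycle $\calD(\calK,\calL)$ are in honest bijection with pairs $(x,y)$ with $\alpha(x)=\beta(y)$, each with weight one — the ``several contributions combine'' scenario you hedged with does not arise, and cannot be made to arise in a model where the polycycle represents the wrong class.

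A secondary point: the sign $(-1)^{n(p+1)+1}$ does not come entirely from ``the orientation comparison.'' It is the product of the $\check\Upsilon$ normalization factor $(-1)^{(q-1)+n(p-1)}$ (this \emph{is} needed even when $\star\neq\star'$; what Section~\ref{checkUpsilon---} adds is the extension to overlapping endpoints, which is a separate matter) and the factor $\tilde\varepsilon(x,y)=(-1)^q\varepsilon(x,y)$ comparing the intersection sign built from $\bignu_{M\times M}(\diag_M)$ with the sign $\varepsilon(x,y)$ in the statement. Lumping these together obscures where the exponent comes from and makes it hard to verify.
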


\begin{proof}
For $k\geq 1$, let $h_k:I^k \to S^k$ be  a continuous map such that $h_k ( \partial I^k)=\{s_k \}$,
  $h_k\vert_{\Int(I^k)}$ is smooth   and the quotient map $\bar h_k: I^k/\partial I^k\to S^k$ is a degree~1 homeomorphism.
Then $\alpha h_p: I^p= I^{p-1} \times I \to M$ is   adjoint to a   continuous   map ${\omega_\alpha}: I^{p-1} \to  \Omega_\star $
which carries $\partial I^{p-1}$ to the constant path $e_\star$.
Let  ${\bar \omega_\alpha}:I^{p-1}/\partial I^{p-1} \to \Omega_\star$ be the quotient map. Then
$$
\overline \partial_{p}\left([\alpha]\right)
 = (\bar \omega_\alpha)_*\big(\big[I^{p-1}/\partial I^{p-1}\big]\big) = \left[\calK \right]
$$
where $\calK=\left(I^{p-1},\theta_{p-1},1, {\omega_\alpha}\right)$   is the  polycycle in $\Omega_\star$  with weight $1$ and with partition $\theta_{p-1}$
defined as   the product of $ p-1 $ copies of the partition of $I$  identifying $\{0\}$ to $\{1\}$.
Similarly,  $  \overline \partial_{q}\left([ \beta  ]\right) =\left[\calL\right]$
for  $\calL=\left(I^{q-1},\theta_{q-1},1, {\omega_{\beta}}\right)$.

 The polycycles $\calK$ and $\calL$ are admissible in the sense of Section \ref{computationoftildeupsilon}
where  $\star_1=\star_2=\star$, $\star_3=\star_4=\star'$, $U= \Int(I^{p-1}) \times \Int(I)$ and $V= \Int(I^{q-1}) \times \Int(I)$.
 We can therefore consider the intersection polychain $\calD(\calK,\calL)$
and  by Lemma \ref{newcomputation},  it represents $\widetilde\Upsilon(\lb \calK \rb \otimes \lb \calL \rb)$.
Then, using  Lemma \ref{diag-imp}, we get
\begin{eqnarray*}
 \Upsilon^\pi_{\star,\star'}([\alpha],[  \beta  ]) \ = \ \Upsilon([\calK] ,[\calL] ) &=&  (-1)^{(q-1)+n(p-1)} \big[\widetilde \Upsilon\left(\, \lb \calK \rb \otimes \lb \calL \rb\, \right) \big]\\
 &=&  (-1)^{q+n(p+1)+1} \big[ \calD( \calK, \calL) \big] .
\end{eqnarray*}
The intersection polycycle $\calD( \calK,   \calL  )$ is 0-dimensional,
and its points bijectively correspond to the pairs $(x,y)\in S^p \times S^q$ such that   $\alpha(x)=\beta(y)$.   Such a    pair $(x,y)$  contributes
$$
\tilde\varepsilon(x,y)\,    \left(\beta_{ y} \bar \alpha_{x  } ,  \alpha_{  x} \bar\beta_{y }\right)    \in \Omega_{\star'\!\star} \times \Omega_{\star \star'}
$$
to $\calD( \calK,  \calL)$ where  $\alpha_x,\beta_y$ are paths
as in the statement of the lemma,  $\bar\alpha_{  x}$  is the composition of  $\alpha$ with  a  path from  $x$ to $s_p$ in $S^p$,
and  $\bar\beta_{ y}$  is the composition of $\beta$ with a  path from $y$ to $s_q$  in $S^q$.
Here $\tilde\varepsilon(x,y)$ is the sign of the linear isomorphism
\begin{equation*}\label{alpha_beta}
\xymatrix{
 T_{(x,y)} \left(S^p \times S^q\right) \ar[r]^-{(\alpha  \times \beta)_*} & T_{(z,z)} \left(M \times M \right)\ar[r]&
  \frac{T_{(z,z)} (M \times M)}{T_{(z,z)} \diag_M} = \nu_{M \times M}(\diag_M)_{(z,z)},
}
\end{equation*}
where $z=\alpha(x)=\beta(y)$, $T_{(x,y)} (S^p \times S^q)= T_x S^p \oplus T_yS^q $ has the product orientation
and $\nu_{M \times M}(\diag_M)$ has the orientation induced  from  that   of $\diag_M \approx M$.
The linear map $T_{(z,z)}(M \times M) = T_zM\oplus T_zM \to T_zM$ defined by $(u,v)\mapsto u-v$
induces an orientation-preserving isomorphism  $\nu_{M \times M}(\diag_M)_{(z,z)} \to T_zM$.
Composing   with the linear isomorphism above,   we obtain
the   map   $\alpha_* \oplus (-\beta_*): T_x S^p \oplus T_yS^q \to T_{z}M$ whose degree is $(-1)^q \varepsilon(x,y)$.
Therefore $\tilde\varepsilon(x,y)=(-1)^q \varepsilon(x,y)$.  Thus,
\begin{eqnarray*}
\Upsilon^\pi_{\star,\star'}([\alpha],[ \beta ])
&=&(-1)^{n(p+1)+1} \sum_{(x,y)} \varepsilon(x,y)\big[  \left(  \beta_{ y} \bar \alpha_{x  } ,  \alpha_{  x} \bar\beta_{y } \right)   \big]\\
&=&(-1)^{n(p+1)+1} \sum_{(x,y)} \varepsilon(x,y) \big[  \beta_{ y}  \bar \alpha_{x  }   \big] \otimes  \big[ \alpha_{  x}  \bar \beta_{y }    \big].
\end{eqnarray*}
Since   $p \geq 2$, the path   $\alpha_x$ is well defined up to homotopy rel $\partial I$ and  $\bar \alpha_{x  }$ is homotopic to $\alpha_x^{-1}$.
Similar claims hold for   $\beta$  since $q\geq 2$.     This yields \eqref{spheres}.
 \end{proof}

  If we consider a single point  $\star$ in the boundary of $ M$,   then we can similarly compute  the linear map
$$
\Upsilon^\pi = \Upsilon^\pi_{\star,\star}: \pi_p(M, \star)   \times   \pi_q(M, \star) \longrightarrow  \kk[\pi_1(M,\star) ] \otimes \kk[\pi_1(M,\star) ].
$$
Fix a path $\varsigma:I\to  \partial M $ from   $ \star$ to  a different  point $\star'\in \partial M$,
and consider   maps  $\alpha: (S^p,s_p) \to (M,\star)$ and $\beta:(S^q,s_q)\to (M,\star')$  satisfying the conditions of  Lemma~\ref{intersecting_spheres}.
Transporting  $\beta $ along  $\varsigma^{-1}$, we obtain a map $\varsigma^{-1}\beta: (S^q,s_q) \to (M,\star)$. Applying Lemmas~\ref{Leibniz_Upsilon} and~\ref{intersecting_spheres}, we obtain that
\begin{eqnarray}
\notag  \Upsilon^\pi([\alpha],[ \varsigma^{-1} \beta ]) &=& \Upsilon\!\left( \overline \partial_p [\alpha] ,  \varsigma ( \overline \partial_q [\beta]) \varsigma^{-1} \right) \\
\notag &=& \varsigma\,  \Upsilon\!\left(  \overline  \partial_p  [\alpha] ,   \overline \partial_q   [\beta]\right)  \varsigma^{-1} \\
\notag &=& \varsigma\,   \Upsilon^\pi_{\star,\star'}([\alpha],[  \beta ])  \varsigma^{-1}  \\
\label{Upsilon_pi} &=& (-1)^{n(p+1)+1} \sum_{(x,y)} \varepsilon(x,y) \, [\varsigma \beta_{ y} \alpha_{x  }^{-1}] \otimes  [\alpha_{  x} \beta_{y  }^{-1}\varsigma^{-1} ].
\end{eqnarray}
This  computation  implies that  the map   $\Upsilon^\pi$ is   determined by the  pairing
$$ ( \aug \otimes \id)   \Upsilon^\pi: \pi_p(M,\star)   \times   \pi_q(M,\star) \longrightarrow  \kk[\pi_1(M,\star) ]  ,$$
where $\aug:   \kk[\pi_1(M,\star) ]   \to \kk$ is the addition of coefficients. Note that
\begin{equation} \label{Reidemeister pairing}
  ( \aug \otimes \id)  \Upsilon^\pi ([\alpha],[ \varsigma^{-1} \beta ]) =
(-1)^{n(p+1)+1} \sum_{(x,y)} \varepsilon(x,y) \,   [\alpha_{  x} \beta_{y  }^{-1}\varsigma^{-1} ].
\end{equation}
 The   pairing on the right-hand side  is the well known      \lq\lq geometric intersection" of spherical cycles or the  \lq\lq Reidemeister pairing",
 see   \cite{Ke}  or    \cite[Section 5]{Wa}.

\subsection{Intersection of arcs with spheres}

 Assume that  $n= \dim(M) \geq 3$. 
We fix three points $\star_1, \star_2,\star_3 \in \partial M$ and consider the map $ \Upsilon^\pi_{12,3}$ defined by the following composition:
$$
{\small \xymatrix {
\pi_1(M,\star_1,\star_2)  \times  \pi_{n-1}(M,\star_3) \ar[d]_-{ \overline \partial_1 \times \overline \partial_{n-1} } \ar@/^0.5cm/@{-->}[drr]^-{\Upsilon^\pi_{12,3}} &&\\
H_{0}(\Omega_{12})  \otimes H_{n-2}(\Omega_{33}) \ar[r]^-\Upsilon  & H_0(\Omega_{32} \times \Omega_{13})  \simeq  \hspace{-1cm} & \ \kk\big[\pi_1(M,\star_3,\star_2)\big] \otimes  \kk\big[\pi_1(M,\star_1,\star_3)\big].
}}
$$
As in the previous sections,   $\Omega_{ij} = \Omega(M,\star_i,\star_j)$ for any $i,j \in\{1,2,3\}$.
 Lemma \ref{intersecting_spheres} easily adapts to this setting and  yields the following   computation of~$\Upsilon^\pi_{12,3}$.

\begin{lemma}\label{intersecting_arc_and_spheres}
 Let $\alpha\in \Omega^\circ_{12}$ and let  $\beta:(S^{n-1},s_{n-1})\to (M,\star_3)$ be   a continuous map such that  $\beta^{-1}(\partial M)=\{s_{n-1} \}$.
Assume that $\star_1 \neq \star_3$, $\star_2 \neq \star_3$ and  that $\alpha\vert_{(0,1)}$, $\beta\vert_{S^{n-1} \setminus \{s_{n-1}\}}$ are transversal smooth maps.
Then
\begin{equation*}\label{arc_and_sphere}
\Upsilon^\pi_{12,3}([\alpha],[\beta]) =
- \sum_{(x,y)} \varepsilon(x,y) \, [\beta_{ y} \alpha_{x 1}] \otimes  [\alpha_{ 0x} \beta_{y  }^{-1} ].
\end{equation*}
Here:   the sum runs over all $(x,y)\in [0,1] \times S^{n-1}$ such that   $\alpha(x)=\beta(y)$;
 $\varepsilon(x,y)$ is the sign of the product orientation in $\alpha_*(T_x [0,1]) \oplus \beta_*(T_y S^{n-1})=T_{\alpha (x)} M$ with respect to the orientation of $M$;
$\alpha_{0x}$  (respectively $\alpha_{x1}$) is the path running along $\alpha$ from  $\star_1$ to $\alpha(x)$ (respectively from $\alpha(x)$ to $\star_2$) in the positive direction
and $\beta_{ y}$  is the composition of $\beta$ with a  path from $s_{n-1}$ to $y$  in $S^{n-1}$.
\end{lemma}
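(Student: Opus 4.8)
The plan is to imitate directly the proof of Lemma~\ref{intersecting_spheres}, making the minor adaptations needed to accommodate an arc $\alpha$ (a $1$-simplex rather than a sphere) in place of one of the two spheres. First I would recall that $\overline\partial_1[\alpha]=[\alpha]\in H_0(\Omega_{12})$ is simply the homology class of the point $\alpha\in\Omega_{12}^\circ$, represented by the $0$-polycycle $\calK=(\{\mathrm{pt}\},\theta_0,1,\alpha)$ with $\theta_0$ the trivial partition. For the sphere I would, as in the proof of Lemma~\ref{intersecting_spheres}, choose a continuous map $h_{n-1}:I^{n-1}\to S^{n-1}$ carrying $\partial I^{n-1}$ to $s_{n-1}$, smooth on the interior and inducing a degree-one homeomorphism $I^{n-1}/\partial I^{n-1}\to S^{n-1}$, so that $\beta h_{n-1}:I^{n-1}=I^{n-2}\times I\to M$ is adjoint to a map $\omega_\beta:I^{n-2}\to\Omega_{33}$ sending $\partial I^{n-2}$ to the constant path $e_{\star_3}$. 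This gives $\overline\partial_{n-1}[\beta]=[\calL]$ with $\calL=(I^{n-2},\theta_{n-2},1,\omega_\beta)$, exactly as before but with degree $q=n-1$.

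Next I would verify that the polycycles $\calK$ in $\Omega_{12}$ and $\calL$ in $\Omega_{33}$ are admissible in the sense of Section~\ref{computationoftildeupsilon}: here one takes $\star_1,\star_2$ as the two base points of the first factor, $\star_3=\star_4=\star_3$ for the second, $U$ the interior of the $1$-simplex times $\Int(I)$, and $V=\Int(I^{n-2})\times\Int(I)$; admissibility is guaranteed by the transversality hypothesis on $\alpha|_{(0,1)}$ and $\beta|_{S^{n-1}\setminus\{s_{n-1}\}}$. By Lemma~\ref{newcomputation}, the intersection polychain $\calD(\calK,\calL)$ in $\Omega_{32}\times\Omega_{13}$ represents $\widetilde\Upsilon(\lb\calK\rb\otimes\lb\calL\rb)$, and by Lemma~\ref{diag-imp} together with the definition of $\Upsilon$ on singular classes, one gets
$$
\Upsilon^\pi_{12,3}([\alpha],[\beta]) \;=\; \Upsilon([\calK],[\calL]) \;=\; (-1)^{\vert\calL\vert+n\vert\calK\vert}\,\big[\calD(\calK,\calL)\big]
\;=\; (-1)^{n-2}\,\big[\calD(\calK,\calL)\big] \;=\; \big[\calD(\calK,\calL)\big],
$$
since $\vert\calK\vert=0$ and $\vert\calL\vert=n-2$. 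Then I would identify $\calD(\calK,\calL)$: it is a $0$-dimensional polycycle whose points correspond bijectively to pairs $(x,y)\in[0,1]\times S^{n-1}$ with $\alpha(x)=\beta(y)$, each contributing a point of $\Omega_{32}\times\Omega_{13}$ of the form $\big(\beta_y\,\overline\alpha_{x1}^{\,-1}\cdot(\hbox{rearranged}),\,\alpha_{0x}\,\overline\beta_y\big)$ — more precisely, following the bowtie formula \eqref{bowtie} and the conventions of Section~\ref{def_D}, the path $\kappa\triangleleft\lambda$ is $\beta_y$ followed by the portion of $\alpha$ from $\alpha(x)$ to $\star_2$, i.e.\ $\beta_y\alpha_{x1}$, and $\kappa\triangleright\lambda$ is the portion of $\alpha$ from $\star_1$ to $\alpha(x)$ followed by $\beta_y$ run backwards, i.e.\ $\alpha_{0x}\beta_y^{-1}$. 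The main work, and the place where one must be careful, is the sign computation: I would compute the local contribution $\tilde\varepsilon(x,y)$ as the sign of the composite isomorphism $T_{(x,y)}([0,1]\times S^{n-1})\to T_{(z,z)}(M\times M)\to\nu_{M\times M}(\diag_M)_{(z,z)}\cong T_zM$ where $z=\alpha(x)=\beta(y)$, which as in the proof of Lemma~\ref{intersecting_spheres} equals $(-1)^{n-1}\varepsilon(x,y)$ because passing from the normal-bundle convention to the ``difference'' map $(u,v)\mapsto u-v$ introduces a $(-1)^{\dim S^{n-1}}=(-1)^{n-1}$. Combining $(-1)^{n-2}$ from the normalization with $(-1)^{n-1}$ from this local sign yields the overall coefficient $(-1)^{n-2}(-1)^{n-1}=-1$, matching the stated formula.

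The remaining point is to replace $\overline\alpha_{x1}$, $\overline\beta_y$ by the homotopy classes appearing in the statement: since $n-1\geq2$ the loop-part $\beta_y$ is well defined up to homotopy rel endpoints and $\overline\beta_y$ is homotopic to $\beta_y^{-1}$, exactly as in Lemma~\ref{intersecting_spheres}; for the arc $\alpha$ there is no such ambiguity since $\alpha_{0x}$ and $\alpha_{x1}$ are honest subpaths of $\alpha$, so one simply reads off $[\beta_y\alpha_{x1}]\otimes[\alpha_{0x}\beta_y^{-1}]$. I expect the only genuine obstacle to be bookkeeping of the orientation signs under the identification $\nu_{M\times M}(\diag_M)\cong M$ and under the product orientation of $N=\{\mathrm{pt}\}\times I\times I^{n-2}\times I$ that enters the definition of the orientation of $D$; everything else is a direct transcription of the sphere-sphere case with the dimension of one source manifold lowered from $p$ to $0$ and the other kept at $q=n-1$.
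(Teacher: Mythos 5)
Your proposal carries out exactly the adaptation of Lemma~\ref{intersecting_spheres} that the paper indicates (the paper gives no further details, merely asserting that the earlier proof ``easily adapts''), and the structure of your argument—representing $\overline\partial_1[\alpha]$ by a $0$-polycycle, representing $\overline\partial_{n-1}[\beta]$ by the polycycle built from $h_{n-1}$, checking admissibility with $\star_3=\star_4$, applying Lemmas~\ref{newcomputation} and~\ref{diag-imp}, reading off the paths from the bowtie formula, and computing the local sign $\tilde\varepsilon(x,y)=(-1)^{n-1}\varepsilon(x,y)$—is exactly right. Note however one slip in your displayed chain of equalities: the final step $(-1)^{n-2}\,[\calD(\calK,\calL)]=[\calD(\calK,\calL)]$ is incorrect when $n$ is odd, since $(-1)^{n-2}=(-1)^n$. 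Fortunately this is only a transcription error: your next sentence correctly keeps the factor $(-1)^{n-2}$ from the normalization and combines it with the local sign $(-1)^{n-1}$ to obtain $(-1)^{n-2}(-1)^{n-1}=-1$, which matches the statement. So the substance of the proof is correct; just delete the spurious last equality in the display.
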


Lemma \ref{intersecting_arc_and_spheres} can be adapted to the cases where $\star_1=\star_3$ and/or $\star_2=\star_3$.
Besides,  we can similarly define an operation
$$
\Upsilon_{1,23}^\pi: \pi_{n-2}(M,\star_1) \times \pi_1(M,\star_2,\star_3) \longrightarrow  \kk\big[\pi_1(M,\star_2,\star_1)\big] \otimes  \kk\big[\pi_1(M,\star_1,\star_3)\big]
$$
and compute it as in Lemma \ref{intersecting_arc_and_spheres}.

\subsection{A simply connected example} \label{sc_example}

Fix     $2g$ integers $p_1,q_1,\dots,p_g,q_g\geq 2$   such that  $p_1+q_1= \cdots =p_g +q_g =n$. Consider the closed smooth $n$-manifold
\begin{equation} \label{M_hat}
{W} = \left(S^{p_1} \times S^{q_1}\right) \sharp \cdots  \sharp \left(S^{p_g} \times S^{q_g}\right)
\end{equation}
with the product orientation on each summand, and assume that   $M=  {W} \setminus \Int({D})$ where~${D}$ is a closed $n$-ball smoothly embedded in ${W}$.
Fix   a point $\star \in \partial M$ and consider the {Pontryagin} algebra $A_\star=H_*(\Omega_\star)$ where $\Omega_{\star}=\Omega(M,\star,\star)$.
We  now  compute   the   intersection bibracket   in $A_\star$.

For  an appropriate choice of ${D}$,  of the base points $\{s_k\in S^k\}_{k}$,   and of the balls along which the connected sums are performed in \eqref{M_hat}, the  sets
$$
X_i=S^{p_i} \times \{s_{q_i}\} \subset S^{p_i} \times S^{q_i}  \quad {\text{and}} \quad Y_i=\{s_{p_i}\} \times S^{q_i}\subset S^{p_i} \times S^{q_i}
$$
are  embedded spheres in $M$, for all $i=1,\dots,g$.
 Since $M$ is simply connected, these spheres define  certain   elements  $x^{\pi}_i  \in\pi_{p_i}(M,\star)$ and $y^{\pi}_i \in \pi_{q_i}(M,\star)$, respectively.
Consider the  corresponding  elements of the {Pontryagin} algebra
$$
x_i =  \overline \partial_{p_i} (x^{\pi}_i ) \in A^{p_i-1}_\star, \quad y_i= \overline \partial_{q_i} (y^{\pi}_i ) \in A^{q_i-1}_\star.
$$
Since $M$ deformation retracts to a wedge of $2g$ spheres isotopic to $X_1,Y_1,\dots,X_g,Y_g$,
it follows from \cite[III.1.B]{BS} (or, alternatively,  from  \cite[Corollary 2.2]{AH})
that  $x_1,y_1,$ $\dots, $ $x_g,y_g$ freely generate the   unital  graded algebra  $A_\star$.

  In particular, if    $\kk=\ZZ$, then  $A_\star$ is a free abelian group. Therefore the condition  \eqref{cross_iso} is satisfied for any ground ring $\kk$.
Hence  the intersection bibracket $\double{-}{-}$ in $A_\star$ is defined for any $\kk$, and is  fully determined by its values on the generators.
 These values can be computed  from the   formula \eqref{Upsilon_pi}: for any $i,j=1, \ldots , g$,
\begin{equation} \label{xy_yx}
\double{x_i}{y_j}=   \delta_{ij} (-1)^{q_i(p_i+1)+1}\, 1 \otimes 1, \quad \double{y_j}{x_i}= \delta_{ij} (-1)^{p_i+1}\, 1 \otimes 1,
\end{equation}
\begin{equation} \label{xx_yy}
\double{x_i}{x_j}=0, \qquad \quad  \double{y_i}{y_j}=0.
\end{equation}
 Here we use    the assumption that the spheres $X_i,Y_j $ have codimension $\geq 2$ in~$M$ and so can be made disjoint from the interiors of  arcs connecting them to $\star$.
As a consequence, we observe that the  bibracket  $\double{-}{-}$ is a graded version of the bibracket associated by Van den Bergh \cite{VdB}
with  the (double of the) quiver   $Q_g$ having  a single vertex and~$g$ edges.

The graded module $\check A_\star= A_\star /[A_\star,A_\star]$ is freely generated by  words in the letters $x_1,y_1,\dots,x_g,y_g$,
subject to the \index{cyclic relations}  \emph{cyclic relations} $w_1 w_2 =(-1)^{\vert w_1\vert\, \vert w_2 \vert} w_2 w_1$ for any words $w_1,w_2$
where   $\vert w_i\vert$  is the sum of the degrees of the letters appearing in~$w_i$.
The  $(2-n)$-graded Lie  bracket $\langle -,-\rangle$ in $\check A_\star$ induced by $\double{-}{-}$ is a graded version of the  necklace  Lie bracket    associated to $Q_g$, see \cite{BLb,G}.

For  any  integer $N\geq 1$, the Gerstenhaber bracket $\{-,-\}$  in $(A_\star)^+_N$  induced by $\double{-}{-}$
can be computed from  \eqref{xy_yx}, \eqref{xx_yy}.
In particular,   $(A_\star)^+_1= \Com(A_\star)$ is
the unital commutative graded algebra  with free generators $x_1,y_1,\dots,x_g,y_g$ in degrees $\vert x_i \vert =p_i-1,\vert y_i \vert =q_i -1$,
and for any $i,j=1, \ldots, g$,
$$
\{x_i,y_j\}=  (-1)^{q_i(p_i+1)+1} \delta_{ij}, \ \{y_j,x_i\}=  (-1)^{p_i+1}\delta_{ij},  \ \{x_i,x_j\}= 0, \ \{y_i,y_j\}=0.
$$
The bracket $\{-,-\}$  is a graded version of the  standard  Poisson bracket in the symmetric algebra
of a free module of rank $2g$ equipped with a symplectic form.

\subsection{A non-simply connected example} \label{nsc_example}

  Let $n \geq 3$.   We  compute   the   intersection bibracket   in the {Pontryagin} algebra of the exterior of a ball in ${W}=S^1 \times S^{n-1}$.
We endow ${W} $ with the product orientation and set $$X=S^1 \times \{s_{n-1}\}\subset {W} \quad {\text {and}} \quad Y=\{s_1\} \times S^{n-1}\subset {W}$$
where $s_1\in S^1$ and $s_{n-1}\in S^{n-1}$   are the base points.
As above, assume  that   $M={W} \setminus \Int({D})$  where~${D}$ is a closed $n$-ball smoothly embedded in ${W}\setminus (X\cup Y)$.
Pick a point $\star \in \partial M =\partial {D}$ and connect it to the point 
$  s=   (s_1,s_{n-1})\in \Int(M)$ by  a path  $\gamma:I\to M$ such that $\gamma^{-1}(X \cup Y)= \{1\}$.
Up to homotopy   relative to the  endpoints, there are two such paths;
  we take the path $\gamma$ such that   a positive tangent vector of~$\gamma$
followed by a   positively   oriented basis of $T_{s_{n-1}}S^{n-1}$ yields a   positively   oriented  basis of   $T_sM$, see Figure \ref{S1Sq}.
Transporting~$X$ and~$Y$ along $\gamma$,
we obtain certain homotopy classes    $x^\pi\in  \pi_1(M,\star)$ and   $y^{\pi} \in   \pi_{n-1}(M,\star)$.
Consider the corresponding   elements
$$
x =  \overline \partial_{1}   (x^{\pi})  \in A_\star^0 \quad {\text {and}} \quad y=  \overline \partial_{n-1}  (y^{\pi}) \in A^{n-2}_\star
$$
of  the algebra $A_\star=H_*(\Omega_\star)$. Note that $x$ is   invertible in $A_\star^0\simeq \kk[\pi_1(M, \star)]$.
We claim that the   unital  graded algebra $A_\star$ is generated by $x^{\pm 1}$   and $y$  subject to the only
  relation $x x^{- 1}=1$.   Indeed,
$$
A_\star = \bigoplus_{i\in \ZZ} x^i H_*(\Omega_\star^{\operatorname{null}})
$$
where $\Omega_\star^{\operatorname{null}}$ is the connected component of $\Omega_\star$ consisting of null-homotopic loops.
The space $\Omega_\star^{\operatorname{null}}$ can be identified  with the loop space of the universal cover of~$M$.
This cover has the homotopy type of a wedge of countably many copies of $S^{n-1}$ since~$M$ deformation retracts to $X\cup Y \cong S^{1} \vee S^{n-1}$.
Therefore, the   unital   graded algebra $H_*(\Omega_\star^{\operatorname{null}})$
is freely generated by the elements  $\{x^i y   x^{-i}\}_{i\in \ZZ}$,  and the   claim  above easily  follows.

\begin{figure}[h]
\begin{center}
\labellist \small
 \pinlabel {$\star$} [ ] at 149 113
 \pinlabel {$\star'$} [ ] at 147 22
 \pinlabel {\small $\zeta$} [ ] at 152 70
 \pinlabel {$\gamma$} [t] at 263 114
 \pinlabel {$Y$} [ ] at 370 217
  \pinlabel {$s$} [ ] at 370 157
  \pinlabel {$\circlearrowright$} [ ] at 370 100
 \pinlabel {$X$} [t] at 665 143
 \pinlabel {$\partial D$} [tr] at 92 29
\endlabellist
\includegraphics[scale=0.35]{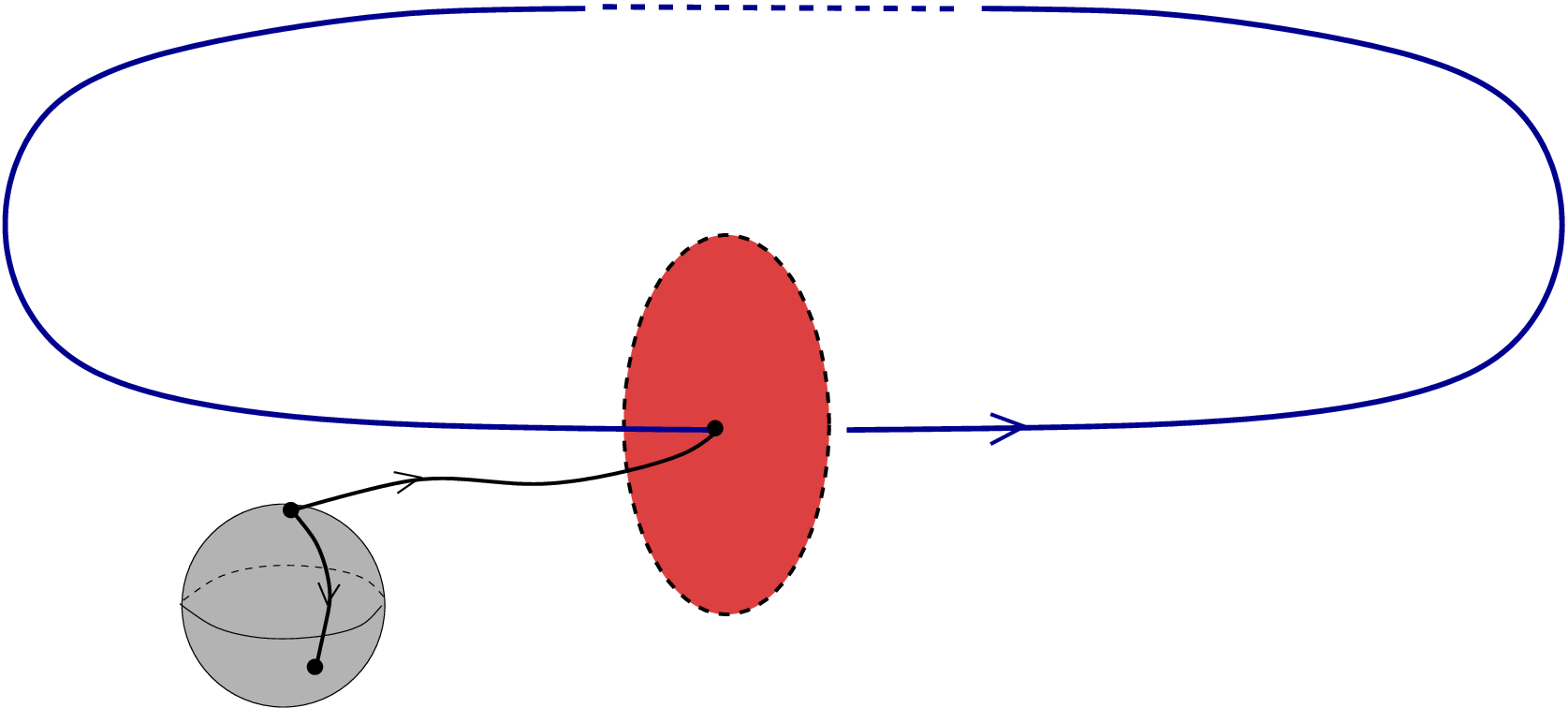}
\end{center}
\caption{The manifold $M=(S^1\times S^{n-1})\setminus \Int(D)$.}
\label{S1Sq}
\end{figure}

In particular, if $\kk=\ZZ$, then $A_\star$ is a free abelian group. Therefore the intersection bibracket $\double{-}{-}$  in $A_\star$  is defined for any ground ring $\kk$.
To determine $\double{-}{-}$, it suffices to compute its values on the generators $x,y$.
For degree reasons,
\begin{equation}\label{xx}
\double{x}{x}=0.
\end{equation}
Let $\varsigma$ be an arc in $\partial M$ connecting $\star$ to another point $\star'$.
  By Lemma \ref{intersecting_arc_and_spheres},  we obtain
$$
\double{x}{\varsigma^{-1} y\varsigma} = - \varsigma^{-1}x \otimes \varsigma, \quad
\double{\varsigma^{-1} y \varsigma}{x} = \varsigma \otimes \varsigma^{-1}x .
$$
 This implies the equalities
\begin{equation}\label{xy,yx}
\double{x}{y}= - x \otimes 1, \quad \double{y}{x} = 1 \otimes x.
\end{equation}
Observe next that $x^{-1}yx$ and  $\varsigma^{-1}y\varsigma$ are images  under the connecting homomorphism  of   certain elements of $\pi_{n-1}(M,\star)$ and $\pi_{n-1}(M,\star')$
that can be represented by disjoint embedded spheres. It follows that $\double{x^{-1}yx}{\varsigma^{-1}y\varsigma}=0$ which implies that $\double{x^{-1}yx}{y}=0$.
Using the Leibniz rules and \eqref{xy,yx}, we deduce that
\begin{equation}\label{yy}
\double{y}{y}= 1 \otimes y - y \otimes 1.
\end{equation}

Using \eqref{xx}--\eqref{yy}, one can   also   compute the  graded   Lie bracket $\langle-,-\rangle$ in~$\check A_\star$
and the Gersthenhaber bracket $\{-,-\}$ in   $(A^+_\star)_N$   for any integer $N\geq 1$.

\chapter{Properties of the intersection  bibracket}\label{More on brackets   and bibrackets}

  In this chapter, $M$  is  a smooth oriented   connected manifold  of dimension $n\geq 2$ 
   such that $\partial M  \neq \varnothing$ and the condition   \eqref{cross_iso} is satisfied.

\section{The  scalar intersection form} \label{scalar_intersection_form}

We  derive from the intersection bibracket   of $M$   a scalar intersection form    and compute it  in terms of
usual  homology intersections. We begin with algebraic preliminaries.

 \subsection{The scalar form  induced by  a bibracket}\label{scalar_form}

 Consider an arbitrary   graded category~$\mathscr{C}$ and   the associated graded algebra $A=A(\mathscr{C})$, see  Section~\ref{extenscat}.
  Given an augmentation  $\varepsilon :A \to \kk$ and a $d$-graded bibracket $\double{-}{-}$ in~$\mathscr{C}$ with $d\in \ZZ$,
we define the  \index{scalar form} \emph{induced scalar form}
  $\bullet: A \times A \to \kk$ by   $a\bullet b=  (\varepsilon \otimes \varepsilon)(\double{a}{b })$ for any $a,b \in A$.
 Observe  that
$$
a \bullet (bc) = (a \bullet b)\, \varepsilon(c) +  \varepsilon(b)\, (a \bullet c),
$$
for any $a\in A$, $b\in \Hom_{\mathscr{C}}(X,Y)$, $c\in \Hom_{\mathscr{C}}(Y,Z)$  with   $X,Y,Z \in \Ob(\mathscr{C})$;
similarly,
$$
(ab) \bullet c = \varepsilon(a) \, (b \bullet c) +    (a \bullet c) \, \varepsilon(b)
$$
for any $c\in A$, $a\in \Hom_{\mathscr{C}}(X,Y)$, $b\in \Hom_{\mathscr{C}}(Y,Z)$  with  $X,Y,Z \in \Ob(\mathscr{C})$.
 Furthermore,
if the bibracket $\double{-}{-}$ is $d$-antisymmetric,
then $a \bullet b = -(-1)^{\vert a\vert_d\, \vert b \vert_d }\, b \bullet a$  for   any   homogeneous $a,b \in A$.

\subsection{ The scalar form induced by the intersection bibracket}

The path homology category
  $\mathscr{C}=\mathscr{C}(M)$  of the manifold $M$  
  has a canonical augmentation $\varepsilon: A (\mathscr{C}) \to \kk$ obtained as the direct sum over all $\star, \star'\in \partial M$ of the  compositions
$$H_*\big(\Omega(M,\star,\star')\big)   \longrightarrow    H_0\big(\Omega(M,\star,\star')\big)   \longrightarrow    \kk,$$
where the left arrow is the obvious projection and the right arrow   carries  the homology classes of all points to $1$.
 By the previous  subsection,  this augmentation together with the intersection bibracket induce  a bilinear form \index{intersection bibracket!scalar form of} $\bullet: A (\mathscr{C}) \times A(\mathscr{C}) \to \kk$.

 We compute $\bullet$  in terms of standard  homological intersections in~$M$.
  For simplicity, we assume in the rest of this section that $n \geq 3$, though the case $n=2$ may be considered similarly.
 For any points   $\star_1, \star_2, \star_3, \star_4 \in \partial M$,  we define a   linear map
\begin{equation} \label{ccdott}
  H_\ast (M,\{\star_1, \star_2\}) \otimes H_\ast (M,\{\star_3, \star_4\})\stackrel{\mediumdot}{ \longrightarrow} \kk .
\end{equation}
  It suffices to define the restriction   of $\mediumdot$   to $H_k \otimes H_l$ for any   $k,l \geq 0$. If $k+l \neq~n$, then this restriction is equal to zero.
Suppose  now  that $k+l=n$.
When   $\{\star_1, \star_2\} \cap \{\star_3, \star_4\}=\varnothing $,   the form   $\mediumdot$ is the standard homological intersection, see, for example,  \cite{Br}.
When   $\{\star_1, \star_2\} \cap \{\star_3, \star_4\} \neq \varnothing $,  we separate two cases. If $k\geq 2$,  then
 $H_k(M,\{\star_1, \star_2\})$   is canonically isomorphic to   $H_k(M)$ and the pairing $\mediumdot$ is induced by the homological intersection $ H_k(M ) \otimes H_l (M,\{\star_3, \star_4\}) \to \kk$.
The case $l\geq 2$ is treated similarly using that
 $H_l(M,\{\star_3, \star_4\})$   is canonically isomorphic to   $H_l(M)$. Note that the assumption $k+l=n \geq 3$ guarantees that $k \geq 2$ or $l \geq 2$.
 If both these inequalities hold true, then the two definitions above give the same pairing.

The next  lemma yields a   version  of the  homological suspension homomorphism   due to Serre      \cite[\S IV.5]{Se}.

\begin{lemma}\label{Serre}
Let  $\Omega=\Omega (M, \star, \star')$ with  $\star,\star' \in \partial M$. There is a   unique    homomorphism  $\Sigma: H_*(\Omega) \to H_{*+1}(M, \{\star, \star'\})$ such that for every polycycle $\calK=(K,\varphi,u,\kappa)$ in $\Omega  $, we have
\begin{equation} \label{Sigma}
\Sigma([\calK])= \big[(K \times I, \varphi \times    \tau, u \times 1, \tilde\kappa )\big]
\end{equation}
where $\tau$ is   the trivial partition on $I=[0,1]$.
\end{lemma}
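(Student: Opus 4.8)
The plan is to construct the map $\Sigma$ on the level of polycycles via formula \eqref{Sigma}, check that it is well-defined on face homology, and then descend to singular homology using the split surjection $[-]:\widetilde H_\ast \to H_\ast$ and the section $\lb -\rb$ from Section~\ref{firsttransf}. First I would verify that the tuple $(K\times I,\varphi\times\tau, u\times 1,\tilde\kappa)$ appearing on the right-hand side of \eqref{Sigma} is indeed a polychain in $M$ relative to $\{\star,\star'\}$: the map $\tilde\kappa\colon K\times I\to M$ is continuous (being adjoint to $\kappa\colon K\to\Omega$) and compatible with $\varphi\times\tau$ since $\kappa$ is compatible with $\varphi$ and $\tau$ is trivial; moreover $\tilde\kappa(K\times\{0\})=\{\star\}$ and $\tilde\kappa(K\times\{1\})=\{\star'\}$, so the relevant faces map into the subspace $\{\star,\star'\}$. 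Then I would compute its reduced boundary: using Lemma~\ref{properties_cross}.(ii) (applied with the obvious manifold-with-faces $I$ whose principal faces are the two endpoints), one gets
$$
\partial^r\big(\calK\times^r \overline{I}\big)\cong \big(\partial^r\calK\times^r \overline I\big)\sqcup(-1)^{\dim K}\big(\calK\times^r\partial^r\overline I\big),
$$
where $\overline I$ denotes $I$ equipped with the trivial partition and weight $1$. If $\calK$ is a polycycle, $\partial^r\calK=\varnothing$, so the boundary reduces to $\calK\times\{1\}\sqcup(-\calK\times\{0\})$, and under $\tilde\kappa$ the two copies map respectively into $\star'$ and $\star$. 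Hence the boundary lands in the image of a polychain in $\{\star,\star'\}$, which is exactly the condition for the right-hand side of \eqref{Sigma} to define a class in $\widetilde H_{\ast+1}(M,\{\star,\star'\})$ (see Remark~\ref{singsing}.2).

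Next I would check that this assignment respects the equivalence relation defining $\widetilde H_\ast(\Omega)$. Concretely: (a) diffeomorphic polycycles in $\Omega$ give diffeomorphic polychains in $M$, since a diffeomorphism $f\colon\calK\to\calK'$ induces $f\times\id_I$; (b) $\red(\calK)$ and $\calK$ go to polychains with the same reduced class, because the cross product commutes with reduction up to diffeomorphism by \eqref{rc=cr}; (c) if $\calK_1\simeq\calK_2$ via polychains $\calL_1,\calL_2$ with $\red\calK_1\sqcup\partial^r\calL_1\cong\red\calK_2\sqcup\partial^r\calL_2$, then crossing with $\overline I$ and invoking Lemma~\ref{properties_cross} shows $\calK_i\times\overline I$ differ by a reduced boundary (of $\calL_i\times\overline I$) together with a term supported on $\{\star,\star'\}$, hence they are equal in $\widetilde H_{\ast+1}(M,\{\star,\star'\})$. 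Linearity (additivity under $\sqcup$ and compatibility with scalars $k\in\kk$) follows from Lemma~\ref{properties_cross}.(i) and the identity $(k\calK)\times\overline I=k(\calK\times\overline I)$. This produces a well-defined linear map $\widetilde\Sigma\colon\widetilde H_\ast(\Omega)\to\widetilde H_{\ast+1}(M,\{\star,\star'\})$ satisfying \eqref{Sigma} on all polycycles.

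Finally, to obtain $\Sigma$ on singular homology, I would set $\Sigma=[-]\circ\widetilde\Sigma\circ\lb-\rb$, where $\lb-\rb\colon H_\ast(\Omega)\to\widetilde H_\ast(\Omega)$ and $[-]\colon\widetilde H_{\ast+1}(M,\{\star,\star'\})\to H_{\ast+1}(M,\{\star,\star'\})$ are the natural transformations from Sections~\ref{firsttransf} and~\ref{fundcla} (in their relative versions, cf.\ Remark~\ref{rerererer}.4). For uniqueness: any polycycle $\calK$ in $\Omega$ is homologous to a polycycle whose class $[\calK]\in H_\ast(\Omega)$ satisfies $\lb[\calK]\rb=\lb\calK\rb$ modulo $\ker[-]$, but in fact by Theorem~\ref{mainonpolychains} the transformation $[-]$ splits $\lb-\rb$, and since the face homology classes $\lb\calK\rb$ for $\calK$ ranging over polycycles generate $\widetilde H_\ast(\Omega)$, a homomorphism $\Sigma\colon H_\ast(\Omega)\to H_{\ast+1}(M,\{\star,\star'\})$ satisfying \eqref{Sigma} is determined on every $[\calK]$, hence unique. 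The only subtlety — and the step I expect to require the most care — is (c) above: tracking through the fact that crossing a homology $\calL_i$ between $\calK_1$ and $\calK_2$ with $\overline I$ produces, besides the expected reduced boundary, exactly the term $(-1)^{\dim\calL_i}\calL_i\times\partial^r\overline I$ which maps into $\{\star,\star'\}$ under $\tilde\lambda_i$, so that it is killed in the \emph{relative} face homology $\widetilde H_\ast(M,\{\star,\star'\})$. Checking that the orientations and signs match the conventions of Lemma~\ref{properties_cross}.(ii) and the relative equivalence relation of Remark~\ref{singsing}.2 is routine but must be done explicitly.
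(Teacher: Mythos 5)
Your construction diverges from the paper's: the paper builds $\Sigma$ directly in singular homology as $\ev_\ast\Delta$ (in effect, $\ev_\ast(-\times[I,\partial I])$ for the evaluation map, factored through an auxiliary pair $(\Omega,\Omega^\partial)$) and then verifies \eqref{Sigma} as a one-line consequence of Lemma~\ref{twocrossproducts}. You instead build a map $\widetilde\Sigma$ on face homology, check well-definedness there, and set $\Sigma=[-]\circ\widetilde\Sigma\circ\lb-\rb$. That last step has a real gap.

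With your definition, $\Sigma([\calK])=\big[\widetilde\Sigma\big(\lb [\calK]\rb\big)\big]$, whereas \eqref{Sigma} asks for $\Sigma([\calK])=\big[\widetilde\Sigma\big(\lb\calK\rb\big)\big]=\big[(K\times I,\varphi\times\tau,u\times 1,\tilde\kappa)\big]$. The two face-homology classes $\lb[\calK]\rb$ and $\lb\calK\rb$ need \emph{not} coincide in $\widetilde H_\ast(\Omega)$: Theorem~\ref{mainonpolychains} only gives $[-]\circ\lb-\rb=\id$, i.e.\ these classes agree after applying $[-]$, and Remark~\ref{rerererer}.3 explicitly warns that $[-]$ is likely not injective. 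You acknowledge this ``modulo $\ker[-]$'' in your uniqueness paragraph but never resolve the tension in the existence argument, so as written \eqref{Sigma} has not been verified for the $\Sigma$ you constructed. The fix is exactly the relative form of Lemma~\ref{twocrossproducts}: it shows $\big[\widetilde\Sigma(\lb\calK\rb)\big]=[\calK\times\overline I]=[\calK]\times[I,\partial I]$, so the right-hand side of \eqref{Sigma} depends only on $[\calK]$, yielding existence directly. But once you invoke that, the face-homology detour is superfluous — one may simply define $\Sigma$ in singular homology as the cross product with $[I,\partial I]$ followed by evaluation, which is precisely the paper's route. Note also that your well-definedness checks (a)--(c) quietly rely on relative face homology, which the paper only sketches in Remark~\ref{singsing}.2; this adds unnecessary overhead, since the needed statement (dependence of $[\calK\times\overline I]$ on $[\calK]$ alone) lives entirely in singular homology.
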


\begin{proof} The uniqueness of~$\Sigma$ is a direct consequence of Theorem~\ref{mainonpolychains}.
 To prove the existence,  define a  continuous  map   $\ev: \Omega \to M  $
   by $\ev(\alpha)= \alpha(1/2)$ and   set $\Omega^\partial=\ev^{-1}(\{\star,\star'\})$.  The formula
 $$
d(\alpha,s)(t) = \left\{\begin{array}{ll}
\star & \hbox{if } t\in \left[0,1/2-s/{2}\right], \\ \alpha(s + 2t-1 ) & \hbox{if } t \in\left[1/2-s/{2},1-s/{2} \right], \\ \star' &  \hbox{if } t\in\left[1-s/{2} ,1\right]
\end{array}\right.
$$
defines  a  continuous    map $  d:(\Omega \times  I, \Omega \times \partial I)\to \left(\Omega ,\Omega^\partial \right)$.
Let  $$\Delta: H_*(\Omega ) \longrightarrow  H_{*+1}\left(\Omega ,\Omega^\partial \right)$$ be the  linear map
sending any $x \in H_*(\Omega )$ to $  d_*(x \times [I,\partial I])  $   (the definition of~$\Delta$ is inspired by \cite[\S 5]{CS} and \cite[Remark 3.2.3]{KK1}).
Finally, we  set $\Sigma = \ev_* \Delta$. To check \eqref{Sigma}, observe that
  the fundamental class $[I,\partial I]\in H_1(I,\partial I)$ is represented by the $1$-dimensional polycycle   $\mathscr{I}=(I,   \tau  ,1,\id:I\to I)$  relative to $\partial I$.
Lemma \ref{twocrossproducts} implies that  for any  polycycle $\calK=(K,\varphi,u,\kappa)$ in $\Omega$,
\begin{eqnarray*}
\Sigma([\calK]) &=&  \ev_*   d_*   ([\calK] \times [\mathscr{I}])  \\
&=&  (\ev d)_* \left[  \calK \times \mathscr{I } \right] = \big[(K \times I, \varphi \times    \tau, u \times 1, \tilde\kappa )\big].
\end{eqnarray*}

\up
\end{proof}

 We  can  now state the main result of this section.

\begin{theor}\label{bulletvscdot} For any   $\star_1,\star_2 , \star_3,\star_4\in \partial M$,  the following  diagram commutes:
$$
\xymatrix{
H_*\big(\Omega(M, \star_1, \star_2)\big) \otimes H_*\big(\Omega(M, \star_3, \star_4)\big)  \ar[r]^-{\bullet} \ar[d]_-{- (-1)^{n\vert-\vert}   \Sigma\,   \otimes\,    \Sigma  }  & \kk \,  . \\
H_*\big(M, \{\star_1,\star_2\}\big) \otimes H_*\big(M,  \{\star_3,\star_4\}\big)  \ar@/_1cm/[ru]^-{ \mediumdot} &
}
$$
 \end{theor}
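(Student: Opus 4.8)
The plan is to unwind both sides of the claimed diagram in terms of polychains and reduce the identity $-(-1)^{n|a|}\,\Sigma(a)\mediumdot \Sigma(b) = a\bullet b$ to a concrete count of intersection points, which will turn out to match the definition of the homological intersection $\mediumdot$ in $M$. First I would reduce to the case $|a|+|b| = n-2$ (equivalently, the only case where $\double{a}{b}$ lies in degree $0$ and $\Sigma(a),\Sigma(b)$ have complementary dimensions $|a|+1,|b|+1$ with sum $n$); in all other degrees both sides vanish trivially, the left side because $\double{-}{-}$ has degree $2-n$ and $\varepsilon$ kills everything of nonzero degree, the right side because $\mediumdot$ is defined to be zero off complementary dimensions. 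As in Section~\ref{checkUpsilon---}, using the change-of-basepoint commutativity \eqref{1234_1234'_H} and the fact that $\Sigma$ and $\mediumdot$ are both compatible with the change-of-basepoint isomorphisms, I may also assume $\{\star_1,\star_2\}\cap\{\star_3,\star_4\}=\varnothing$.

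Next I would pick smooth reduced transversal polycycles $\calK=(K,\varphi,u,\kappa)$ in $\Omega^\circ_{12}$ and $\calL=(L,\psi,v,\lambda)$ in $\Omega^\circ_{34}$ representing (the images under $\lb-\rb$ of) $a$ and $b$, with $\dim K = |a|$, $\dim L = |b|$. By Lemma~\ref{diag-imp} and the definition of $\double{-}{-}$, $(\varepsilon\otimes\varepsilon)(\double{a}{b})$ equals $(-1)^{|b|+n|a|}$ times $(\varepsilon\otimes\varepsilon)$ applied to the singular-homology class $[\calD(\calK,\calL)]$, which since $\calD(\calK,\calL)$ is a $0$-dimensional polycycle in $\Omega_{32}\times\Omega_{14}$ is simply the total weight: the signed count $\sum_{(k,s,l,t)\in D}\varepsilon(k,s,l,t)\,u(K^{\,\cdot})v(L^{\,\cdot})$ over the finite set $D=(\tilde\kappa\times\tilde\lambda)^{-1}(\diag_M)$, with signs coming from the orientation convention on $D$ fixed in Section~\ref{def_D}. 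On the other hand, $\Sigma(a)$ and $\Sigma(b)$ are represented via \eqref{Sigma} by the singular cycles underlying $[(K\times I,\varphi\times\tau,u\times1,\tilde\kappa)]$ and $[(L\times I,\psi\times\tau,v\times1,\tilde\lambda)]$ in $(M,\{\star_1,\star_2\})$ and $(M,\{\star_3,\star_4\})$; after a small transversal perturbation of the maps $\tilde\kappa$ and $\tilde\lambda$ into $M$ (which changes nothing, by Lemma~\ref{homotimplieshomol} on one side and homotopy invariance of $\mediumdot$ on the other), the homological intersection $\Sigma(a)\mediumdot\Sigma(b)$ is, by the standard geometric description of $\mediumdot$ for transversal cycles, exactly the same signed count of points of $(\tilde\kappa\times\tilde\lambda)^{-1}(\diag_M)\subset (K\times I)\times(L\times I)$ weighted by $u\times v$. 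Matching the two signed counts is then a bookkeeping comparison of orientation conventions: the sign $\varepsilon(k,s,l,t)$ in $\calD$ is defined by pulling back the normal orientation of $\diag_M$ in $M\times M$ along $\tilde\kappa\times\tilde\lambda$ and combining with the product orientation of $N=K\times I\times L\times I$, which is precisely the recipe for the local intersection sign of the cycles $\tilde\kappa$ and $\tilde\lambda$ at that point, up to a global sign depending only on $n$, $|a|$, $|b|$.

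The main obstacle, and the part that needs genuine care rather than formal manipulation, is tracking all the signs: the $(-1)^{|b|+n|a|}$ from the renormalization $\check\Upsilon$ versus $\widetilde\Upsilon$, the $(-1)^{n|a|}$ and the overall minus in the statement, the sign $(-1)^q$ (with $q=|b|$) relating the normal orientation of $\diag_M$ to the intersection sign that appeared already in the proof of Lemma~\ref{intersecting_spheres}, and the orientation convention ``outward vector first / normal frame first'' from the Introduction that governs $\partial$, the product orientation, and the orientation of $D$. I would organize this by first checking the identity on a single transversal intersection point — i.e.\ locally, with $K,L$ single points in the interior and $\kappa,\lambda$ short embedded arcs — where everything reduces to a comparison of two ordered bases of $T_xM$ and the claimed global sign $-(-1)^{n|a|}$ can be read off directly; then the general case follows by additivity over intersection points and bilinearity. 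A secondary, milder point is to justify that replacing the path maps $\tilde\kappa\colon K\times I\to M$ by generic smooth maps transversal in $M$ is legitimate for computing $\mediumdot$ — this is standard, since $\Sigma(a)$ only depends on the homology class and any two transversal representatives give the same intersection number — and that the relative cycles in question are indeed cycles relative to $\{\star_i,\star_j\}$, which is immediate from $\tilde\kappa(K\times\partial I)\subset\{\star_1,\star_2\}$ and $\partial^r\calK=\varnothing$.
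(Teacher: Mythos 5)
Your proposal is correct and follows essentially the same route as the paper's proof. The core idea in both is identical: reduce to the complementary-degree case $|a|+|b|=n-2$ and disjoint basepoints, represent $a$ and $b$ by transversal smooth polycycles $\calK,\calL$, observe that $a\bullet b$ is (up to a normalization sign) the total signed weight of the $0$-dimensional intersection polycycle $\calD(\calK,\calL)$, observe that $\Sigma(a)\mediumdot\Sigma(b)$ is the geometric intersection number of the relative cycles $\tilde\kappa,\tilde\lambda$ — which is the same set of points — and then carry out the orientation bookkeeping. The paper packages the geometric-intersection-equals-signed-weight step as a separate statement (Lemma~\ref{bullet}, which computes the $H_*(M)$-valued intersection pairing $\odot$ of polycycles with a precise sign $(-1)^{q(p+n)}$) and the comparison with $\Upsilon$ as another (Lemma~\ref{Upsilon_epsilon}), then composes with the cross-product isomorphism; you collapse these into one direct computation in the scalar case, which is a reasonable alternative organization.

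One small imprecision to flag: the local model you suggest for the sign check — $K,L$ single points and $\kappa,\lambda$ short embedded arcs — has $\dim K+\dim L=0$, so the intersection $D$ would have dimension $2-n<0$ and be empty for $n\geq 3$; there is nothing to compute. You want $\dim K+\dim L=n-2$, e.g.\ $K=\RR^p$, $L=\RR^q$ with small embedded germs of smooth maps to $\Omega^\circ$ whose adjoints are embeddings of $\RR^{p+1},\RR^{q+1}$ meeting transversally at one interior point. With that correction the sign $-(-1)^{n|a|}$ can indeed be read off locally; the paper instead deduces it from the explicit sign $(-1)^{q(p+n)}$ in Lemma~\ref{bullet} and the arithmetic observation that $(q+1)(p+n)$ is even whenever $p+q=n-2$, so both sign arguments land in the same place.
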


The proof   of  Theorem \ref{bulletvscdot}  proceeds in three steps.
First we consider arbitrary  disjoint subsets $\partial_- M$, $ \partial_+ M$ of $\partial M$ and    the standard  homology intersection form
$$
 H_*(M,\partial_- M) \otimes H_*(M,\partial_+ M) \longrightarrow H_*(M).
$$
We denote this form by $\odot$ and compute it  in terms of  polycycles. Secondly, we   relate
  $\odot$ to the operation $\Upsilon$. Finally, we deduce Theorem \ref{bulletvscdot}.

\begin{lemma}\label{bullet} Let $\partial_- M$, $ \partial_+ M$ be disjoint subsets of $\partial M$.
Let  $\calK=(K,\varphi,u,\kappa)$ be a smooth $p$-polycycle in $M$ relative to $\partial_- M$,
  let $\calL=(L,\psi,v,\lambda)$ be  a smooth   $q$-polycycle  in $M$ relative to $\partial_+ M$
such that the map  $\kappa \times \lambda:K \times L \to M \times M$ is transversal to $\diag_M$   in the sense of  Section~\ref{transs}.
Let  $D=(\kappa \times \lambda)^{-1}(\diag_M)$ and let  $\pr_K:K \times L \to K$ be the cartesian projection.
  Then~$D$ is a manifold with faces and, for some  orientation, partition $\theta$, and weight~$w$ on~$D$,   the polychain $\calD=  (D,\theta,w, \kappa \pr_K \vert_D)$ is a polycycle   such that
\begin{equation}\label{homological_intersection}
[\calK] \odot [\calL] = (-1)^{q(p+n)} [\calD] \in H_{p+q-n}(M).
\end{equation}
\end{lemma}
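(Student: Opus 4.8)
\textbf{Proof plan for Lemma~\ref{bullet}.}

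The plan is to model the classical transverse intersection of relative cycles using face homology, reducing the statement to the elementary fact that, under transversality to $\diag_M$, the preimage $D=(\kappa\times\lambda)^{-1}(\diag_M)$ is a submanifold with faces of $K\times L$ of the expected dimension $p+q-n$, together with a sign bookkeeping for orientations. First I would verify that $D$ is a manifold with faces: this is exactly the argument already used in Section~\ref{def_D} (using \cite[Propositions 3.1.14 and 7.2.7]{MrOd}), and the faces of $D$ are the connected components of intersections of $D$ with faces of $K\times L$, i.e. components of $(\kappa|_E\times\lambda|_F)^{-1}(\diag_M)$ for faces $E$ of $K$ and $F$ of $L$. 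Here one uses that $\calK$ is relative to $\partial_-M$ and $\calL$ is relative to $\partial_+ M$ with $\partial_-M\cap\partial_+M=\varnothing$, so that $D$ stays away from the ``relative'' parts of the boundaries and the reduced boundary $\partial^r\calD$ is built only from the genuine boundary faces of $\calK$ and $\calL$; since both are polycycles, a computation as in Lemma~\ref{reduction_D}.(iii)--(iv) shows $\partial^r\calD\cong\varnothing$, so $\calD$ is a polycycle. The orientation on $D$ is defined exactly as in Section~\ref{def_D}: orient the normal bundle of $\diag_M\approx M$ in $M\times M$ by the product-orientation-first convention, pull it back along $\kappa\times\lambda$ to orient the normal bundle of $D$ in $K\times L$, and combine with the product orientation of $K\times L$. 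The partition $\theta$ and weight $w$ are induced in the obvious way: two faces of $D$ have the same type iff the corresponding faces $E,F$ of $K,L$ do and the product of the identification diffeomorphisms carries one onto the other (as in Section~\ref{def_D}), and $w$ assigns $u(X)v(Y)$ to a component of $D$ lying in $X\times Y$. Compatibility of $\kappa\,\pr_K|_D$ with $\theta$ is immediate since on $D$ one has $\kappa\,\pr_K=\lambda\,\pr_L$, and $\kappa,\lambda$ are compatible with $\varphi,\psi$.

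Next I would identify $[\calD]$ with the standard homological intersection up to sign. Using the transformation $[-]\colon\widetilde H_\ast\to H_\ast$ of Section~\ref{fundcla}, choose locally ordered smooth triangulations of $K$ and $L$ fitting $\varphi$ and $\psi$; pushing them forward along $\kappa,\lambda$ gives singular relative cycles representing $[\calK]$ and $[\calL]$. The class $[\calK]\odot[\calL]\in H_{p+q-n}(M)$ is, by definition, obtained by perturbing these to transverse position and taking the intersection; but transversality of $\kappa\times\lambda$ to $\diag_M$ already provides a geometric transverse intersection, namely $D$ with the map $\kappa\,\pr_K|_D$, and the standard description of the intersection product of oriented pseudomanifolds (see \cite{Br}) gives precisely $[\calD]$ up to the orientation sign. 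The sign $(-1)^{q(p+n)}$ arises from comparing the orientation convention used in the definition of $\odot$ (where one typically takes $\bignu_{M\times M}(\diag_M)$ with $\diag_M$ pulled back to each factor in a fixed order) with the convention of Section~\ref{def_D} (product orientation of $K\times L$ first), and it coincides with the sign already appearing in the analysis of $\calD(\calK,\calL)$ in Section~\ref{def_D} and Lemma~\ref{reduction_D}; the exponent $q(p+n)$ is the degree of the permutation moving a normal $n$-frame past the tangent frame of the first factor, decomposed as $qp$ (moving past $TK$) plus $qn$ (the transposition of the two copies of $M$). I would verify this sign by working in a single local chart where $\kappa,\lambda$ are affine, reducing to a linear-algebra computation with oriented vector bundles of the type already carried out in the proof of Lemma~\ref{reduction_D} and in Section~\ref{Jacobi}.

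The main obstacle I anticipate is precisely this orientation/sign verification: the definition of $\odot$ via perturbed singular chains and the definition of $\calD$ via preimage orientations use superficially different conventions, and matching them requires a careful chart-level comparison of the product orientation of $K\times L$, the pulled-back normal orientation of $\diag_M$, and the standard intersection-product orientation, keeping track of where the factor-swap $(m_1,m_2)\mapsto(m_2,m_1)$ (contributing $(-1)^n$) enters. I would handle this by reducing to the universal local model $\kappa=\mathrm{pr}_1\colon\RR^k\times\RR^{n-k}\to\RR^n$, $\lambda=\mathrm{pr}_2$ with $k=p$, $n-k$ playing the role of $q$'s complement, checking the sign there once and invoking naturality. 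A secondary, routine point is to check that $\calD$ does not depend on the auxiliary choices (triangulations, small perturbations) up to homology, which follows from Lemma~\ref{homotimplieshomol} and the relative version of the deformation arguments of Sections~\ref{smooth111}--\ref{sect-transs} exactly as in Lemma~\ref{Upsilon_tilde_def}. Once the sign is pinned down, formula~\eqref{homological_intersection} follows by comparing representatives.
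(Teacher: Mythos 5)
Your proposal is correct and follows essentially the same route as the paper: both identify the only nontrivial content as the orientation sign, and both reduce it to a linear/local comparison of oriented bundles. The paper carries out the sign computation in the model case where $K$ and $L$ are embedded transversal submanifolds of $M$ with trivial partitions and unit weights (explicitly ``leaving the general case to the reader''), working directly with a chain of oriented vector-bundle isomorphisms for $T(M^2)\vert_D$; your local-chart linear model is the same computation phrased slightly differently, and your decomposition of the exponent $q(p+n)$ into $qp$ (moving the normal frame past $TK$) plus $qn$ (the factor swap on $M\times M$) matches the factorization $(-1)^{p(q+n)}\cdot(-1)^{(p+q)n}$ that the paper's argument produces.
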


\begin{proof}
The transversality assumption ensures that $D$ inherits from $K\times L$ a structure of   a manifold  with corners, see \cite{MrOd}.
The same argument  as  at the beginning of Section \ref{def_D} shows that $D$ is a manifold with faces. We orient~$D$
  so that the  induced  orientation of its normal bundle  in $ K\times L $
is the pull-back of  the orientation of the normal bundle of $\diag_M \approx M $ in $ {M \times M}  $ via   $(\kappa \times \lambda)\vert_D$.
The partition $\theta$ of $D$ is defined as follows:
the faces of $D$ are the connected components of the intersections $(F\times  G) \cap D$ where $F$ and $G$ range over faces of $K$ and $L$ respectively;
two such faces $C\subset (F\times  G)\cap D$ and $C'\subset(F'\times  G')\cap D$ are of the same type if $F, F'$ are of the same type, $G, G'$ are of the same type,
and $(\varphi_{F,F'} \times \psi_{G,G'})(C)=C'$. Then   $\theta_{C,C'}= (\varphi_{F,F'} \times \psi_{G,G'})\vert_{C}$.
The weight $w$ of $D$ carries a connected component $Z$ of $D$ to $u(X)v(Y)$ where $X,Y$ are   connected components  of $K,L$ respectively, such that $Z\subset X\times Y$.
Then $\calD=  (D,\theta,w, \kappa \pr_K \vert_D)$ is a polycycle satisfying \eqref{homological_intersection}.

We leave the general case of this claim to the reader and prove it only under the following assumptions:
$K$ and $L$ are transversal compact oriented   smooth   submanifolds of $M$  such that
  $\partial K= \partial M \cap K \subset \partial_-M$ and $\partial L=\partial M \cap L  \subset \partial_+M$;
the partitions $\varphi$  of $K$ and   $\psi$ of $L$ are trivial;
the weights $u:\pi_0(K)\to \kk$ and $v:\pi_0(L)\to \kk$ send all connected components  to $1\in \kk$;
the maps $\kappa:K \to M$ and $\lambda:L \to M$ are the inclusions.   Under these assumptions, we have
$$
[\calK] \odot  [\calL] = [K] \odot  [L]= [K \cap L]
$$
where $[K]\in H_*(M,\partial_-M)$, $[L]\in H_*(M,\partial_+M)$ and $[K \cap L]\in H_*(M)$ are  the fundamental classes,
and   $K\cap L$ is oriented so that
\begin{equation}\label{nu_K_L}
\nu_{M}(K\cap L)=\nu_{M}(K)\vert_{K\cap L} \oplus \nu_{M}(L)\vert_{K\cap L}
\end{equation}
(this agrees with the   orientation   rule in \cite[p.\ 375]{Br}).
Since  $D= (K\times L) \cap  \diag_M$ corresponds to  $K\cap L \subset M$
under the standard identification $\diag_M\approx M$, we need only  to compare the orientation of $D$ with that of $K \cap L$.
Note the following orientation-preserving  isomorphisms of oriented vector bundles:
\begin{eqnarray*}
T(M^2)\vert_{K \times L} &=& \pr_K^*\big(T(M)\vert_K\big) \oplus\pr_L^*\big(T(M)\vert_L\big) \\
&\cong &\pr_K^* \nu_M(K) \oplus\pr_K^* T(K)   \oplus\pr_L^*  \nu_M(L)   \oplus\pr_L^*  T(L)   \\
&\cong & (-1)^{p(q+n)} \pr_K^* \nu_M(K)  \oplus\pr_L^* \nu_M(L) \oplus\pr_K^* T(K)   \oplus\pr_L^*  T(L)   \\
&\cong & (-1)^{p(q+n)}  \pr_K^* \nu_M(K)  \oplus\pr_L^* \nu_M(L) \oplus T(K \times L)
\end{eqnarray*}
where  $\pr_K:K\times L \to K$ and $\pr_L:K\times L \to L$ are the cartesian projections. Restricting to $D \subset K\times L $, we obtain
\begin{eqnarray*}
T(M^2)\vert_{D} &\cong & (-1)^{p(q+n)}  \left(\pr_K^* \nu_M(K) \right)\vert_D  \oplus \left(\pr_L^* \nu_M(L)\right) \vert_D    \oplus T(K \times L)\vert_D \\
&\cong & (-1)^{p(q+n)}   p^*(\nu_M(K) \vert_{K\cap L})  \oplus  p^*(\nu_M(L) \vert_{K\cap L})    \oplus T(K \times L)\vert_D \\
  &\cong & (-1)^{p(q+n)}   p^*  \nu_{ M}(K\cap L)   \oplus \nu_{K\times L}(D) \oplus T(D)
\end{eqnarray*}
where $p$ is   the identification diffeomorphism $D \to K\cap L$.
On the other hand,
\begin{eqnarray*}
T(M^2)\vert_D &\cong & \nu_{M\times M}(\diag_M)\vert_D \oplus T(\diag_M)\vert_D\\
 &\cong & \nu_{M\times M}(\diag_M)\vert_D \oplus p^*\left( T(M)\vert_{K \cap L}\right) \\
&\cong & \nu_{M\times M}(\diag_M)\vert_D \oplus p^*   \nu_M(K \cap L)   \ \oplus p^* T(K\cap L)\\
&\cong& (-1)^{(p+q)n}  p^*   \nu_M(K \cap L)   \oplus \nu_{M\times M}(\diag_M)\vert_D \oplus  p^* T(K\cap L) .
\end{eqnarray*}
Since $\nu_{K\times L}(D) =  \nu_{M\times M}(\diag_M)\vert_D$ as oriented vector bundles, we deduce that
$$
T(D) =  (-1)^{p(q+n)} \cdot  (-1)^{(p+q)n}\,  p^* T(K\cap L)  = (-1)^{q(p+n)}  p^* T(K\cap L)
$$
and \eqref{homological_intersection} follows.
\end{proof}

\begin{lemma} \label{Upsilon_epsilon}
Let,  under the  assumptions of Lemma~\ref{bullet},  $\star_1,\star_2 \in \partial_- M$ and $\star_3,\star_4\in \partial_+ M$.
Let  $    \varepsilon_1  $ be the composition of the  augmentation   $  \varepsilon: H_*(\Omega_{32} \times \Omega_{14})  \to \kk$  with
  the  linear map $ \kk \to H_*(M)$ sending $1\in \kk$ to $[\star_1]\in H_0(M)$.   Then the following  diagram commutes:
$$
\xymatrix{
H_*(\Omega_{12}) \otimes H_*(\Omega_{34})  \ar[r]^-{\Upsilon_{12,34}} \ar[d]_-{- (-1)^{n\vert-\vert}   \Sigma\,   \otimes\,    \Sigma  }
& H_*(\Omega_{32} \times \Omega_{14}) \ar[d]^-{    \varepsilon_1   } \\
H_*(M,\partial_- M) \otimes H_*(M,\partial_+ M) \ar[r]^-{ \odot} & H_*(M) .
}
$$
\end{lemma}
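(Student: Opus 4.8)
\textbf{Proof plan for Lemma~\ref{Upsilon_epsilon}.}

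The plan is to unwind both composites on the same pair of transversal polycycles and compare the resulting $0$-th homology classes. Take a smooth reduced $p$-polycycle $\calK=(K,\varphi,u,\kappa)$ in $\Omega_{12}^\circ$ and a smooth reduced $q$-polycycle $\calL=(L,\psi,v,\lambda)$ in $\Omega_{34}^\circ$ which are transversal, so that $\widetilde\Upsilon_{12,34}(\lb\calK\rb\otimes\lb\calL\rb)$ is computed by the intersection polycycle $\calD(\calK,\calL)=(D,\theta,w,\kappa\losange\lambda)$ in $\Omega_{32}\times\Omega_{14}$ of Section~\ref{operation_D}, where $D\subset K\times I\times L\times I$ is the preimage of $\diag_M$ under $\tilde\kappa\times\tilde\lambda$. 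First I would observe that $\Sigma(\lb\calK\rb)$ and $\Sigma(\lb\calL\rb)$ are, by Lemma~\ref{Serre}, represented by the smooth polycycles $(K\times I,\varphi\times\tau,u\times1,\tilde\kappa)$ in $M$ relative to $\{\star_1,\star_2\}\subset\partial_-M$ and $(L\times I,\psi\times\tau,v\times1,\tilde\lambda)$ in $M$ relative to $\{\star_3,\star_4\}\subset\partial_+M$; the maps $\tilde\kappa,\tilde\lambda$ are transversal and their product is transversal to $\diag_M$ precisely because $\kappa$ and $\lambda$ are transversal polychains. Hence Lemma~\ref{bullet} applies to $\Sigma(\lb\calK\rb)$, $\Sigma(\lb\calL\rb)$: the manifold with faces $(\tilde\kappa\times\tilde\lambda)^{-1}(\diag_M)$ is exactly our $D$, and $\Sigma(\lb\calK\rb)\odot\Sigma(\lb\calL\rb)=(-1)^{(q+1)(p+1+n)}[\calD']$, where $\calD'=(D,\theta',w',\tilde\kappa\circ\pr_{K\times I}\vert_D)$ with $\pr_{K\times I}:(K\times I)\times(L\times I)\to K\times I$ the cartesian projection.

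The heart of the argument is then to compare $\varepsilon_1\bigl(\lb\calD(\calK,\calL)\rb\bigr)$ with $[\calD']\in H_0(M)$ (note $p+q=n$, so both live in $H_0(M)$). Applying $\varepsilon_1$ to the class of a $0$-polycycle in $\Omega_{32}\times\Omega_{14}$ amounts to: passing to the singular homology class via $[-]$, applying the augmentation (which replaces each loop by $[\star_1]$ with the appropriate weighted count of connected components), hence $\varepsilon_1\bigl(\lb\calD(\calK,\calL)\rb\bigr)$ is the class $\bigl(\sum_Z w(Z)\bigr)\,[\star_1]$ where $Z$ runs over components of $D$. On the other side, $[\calD']$ is the pushforward of $[D,w]$ under $\tilde\kappa\circ\pr_{K\times I}\vert_D:D\to M$, which, since $M$ is connected, is also $\bigl(\sum_Z w(Z)\bigr)\,[\star_1]$ up to the discrepancy between the weight $w$ on $\calD(\calK,\calL)$ and the weight $w'$ produced by Lemma~\ref{bullet}. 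Here I would check that both weights are, on a component of $D$ lying in $X\times I\times Y\times I$, equal to $u(X)v(Y)$ — this is immediate from the definition of $w$ in Section~\ref{def_D} and of $w'$ in Lemma~\ref{bullet} — so $w=w'$ and the two classes agree. Consequently $\varepsilon_1\circ\Upsilon_{12,34}$ evaluated on $\lb\calK\rb\otimes\lb\calL\rb$ equals $(-1)^{\vert b\vert+n\vert a\vert}[\calD]$ with $a=[\calK]$, $b=[\calL]$ (the renormalization sign passing from $\widetilde\Upsilon$ to $\Upsilon$), while $\Sigma(a)\odot\Sigma(b)=(-1)^{(q+1)(p+1+n)}[\calD]$; and I would verify that $-(-1)^{n\vert b\vert}\cdot(-1)^{\vert b\vert+n\vert a\vert}=(-1)^{(q+1)(p+1+n)}$ modulo $p+q=n$, which is the only sign bookkeeping required.

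To make the ``general case'' of Lemma~\ref{bullet} honest I would not rely on the submanifold reduction sketched there but instead note that both $\Sigma(\lb\calK\rb)$, $\Sigma(\lb\calL\rb)$ are genuine polycycles in $M$ and that Lemma~\ref{bullet} as stated already covers polycycles; so the only thing to import is the orientation/partition/weight recipe, which is exactly what I invoked above. The main obstacle I anticipate is purely the sign bookkeeping: the orientation of $D$ used in Section~\ref{def_D} (pull-back of the normal orientation of $\diag_M$, then product orientation on $K\times I\times L\times I$) must be matched against the orientation of $D$ used in Lemma~\ref{bullet} (pull-back of the normal orientation of $\diag_M$, then product orientation on $(K\times I)\times(L\times I)$) — these differ by the transposition of the two middle $I$-factors, and I would track this through carefully, along with the $(-1)^{q(p+n)}$ of Lemma~\ref{bullet} (with $p,q$ there equal to our $p+1$, $q+1$) and the $(-1)^{\vert b\vert+n\vert a\vert}$ renormalization, to land on the claimed sign $-(-1)^{n\vert-\vert}$ in front of $\Sigma\otimes\Sigma$. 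Everything else — compatibility of $\varepsilon_1$ with $[-]$, connectedness of $M$, equality of weights — is routine.
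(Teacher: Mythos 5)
Your proposal follows the same route as the paper's proof: use Lemma~\ref{Serre} to identify $\Sigma(\lb\calK\rb)$, $\Sigma(\lb\calL\rb)$ as cylinder polycycles, observe that the transversality of $\calK,\calL$ makes $\tilde\kappa\times\tilde\lambda$ transversal to $\diag_M$, apply Lemma~\ref{bullet} to realize $\Sigma(a)\odot\Sigma(b)$ by a polycycle whose underlying manifold is the same $D$ that underlies $\calD(\calK,\calL)$, and match this against $\varepsilon_1\Upsilon_{12,34}(a\otimes b)$. That is exactly what the paper does, and the comparison of weights you sketch is correct.

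A few slips worth flagging, all recoverable but all real. First, the degree constraint for the $0$-dimensional case is $p+q=n-2$, not $p+q=n$; this affects the parity check at the end (the paper's sign exponent $(q+1)(p+n)$ is even precisely when $p+q=n-2$). Second, the sign $-(-1)^{n\vert-\vert}$ in the vertical map is $-(-1)^{n\vert a\vert}=-(-1)^{np}$ (it goes with the first tensor factor, as in the paper's computation), not $-(-1)^{n\vert b\vert}$; your proposed identity $-(-1)^{nq}(-1)^{q+np}=(-1)^{(q+1)(p+1+n)}$ does not hold under $p+q=n-2$, so you would find the discrepancy when carrying out the check you promise. Third, the orientation worry about $K\times I\times L\times I$ versus $(K\times I)\times(L\times I)$ is spurious; these carry the same product orientation and no permutation of factors is involved, so there is no sign to track there. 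Finally, a more uniform way to evaluate $\varepsilon_1\Upsilon_{12,34}(a\otimes b)$ (the one the paper uses) is to observe that $\varepsilon_1=(\ev\pr_{32})_*$ since the evaluation map $\ev:\Omega_{32}\to M$ is homotopic to the constant map at $\star_3$; this gives the class $(-1)^{q+np}[(D,\theta,w,\tilde\kappa\circ\pr\vert_D)]$ in all degrees at once, avoiding the need to first reduce to the $0$-dimensional case and then dispose of the other degrees separately. Your weighted-point-count description is fine when $\dim D=0$, but you should note explicitly that both composites vanish for degree reasons when $p+q\neq n-2$.
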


\begin{proof} Let  $\pr_{32}: \Omega_{32} \times \Omega_{14} \to \Omega_{32}$  be  the cartesian projection.
Clearly, the map $\ev: \Omega_{32} \to M$ is homotopic to the constant map $\alpha \mapsto \star_3$
so that, in homology,  $(\ev \pr_{32})_* =     \varepsilon_1    $.
 Pick  now  any  $a \in H_p(\Omega_{12})$ and $b\in H_q(\Omega_{34})$ with $p,q\geq 0$.
Let $\calK=(K,\varphi,u,\kappa)$ be a smooth   reduced   $p$-polycycle in   $\Omega_{12}^\circ$   and let $\calL=(L,\psi,v,\lambda)$
be a smooth   reduced   $q$-polycycle in   $\Omega_{34}^\circ$     transversely representing the pair of face homology classes  $(\lb a\rb,\lb b\rb )$.
Set $\calD(\calK,\calL)=(D,\theta,w,\kappa\losange \lambda)$. Then
\begin{eqnarray*}
    \varepsilon_1     \Upsilon_{12,34}(a \otimes b) &=&   (-1)^{q+np} (\ev \pr_{32})_*(\left[\calD(\calK,\calL)\right]) \\
&=&  (-1)^{q+np} \ev_*\big[(D,\theta,w,\kappa\triangleleft \lambda)\big] \\
&=& (-1)^{q+np} \big[(D,\theta,w, \tilde \kappa \circ \pr\vert_D)\big]
\end{eqnarray*}
where $\pr:K \times I \times L \times I \to K \times I$ is the cartesian projection.
We  deduce from Lemmas~\ref{Serre} and   \ref{bullet} that
\begin{eqnarray*}
&& -(-1)^{np} \, \Sigma(a) \odot \Sigma(b) \\
&=& -(-1)^{np} \big[(K \times I, \varphi \times    \tau, u \times 1, \tilde\kappa ) \big ]  \odot \big[(L \times I, \psi \times   \tau, v \times 1, \tilde\lambda ) \big ]    \\
 &=& (-1)^{1+np+(q+1)(p+1+n)} \big[(D,\theta,w, \tilde \kappa \circ \pr\vert_D)\big] \\
&=&  (-1)^{1+q +(q+1)(p+1+n)}   \varepsilon_1      \Upsilon_{12,34}(a \otimes b) \ = \   (-1)^{ (q+1)(p+n)}   \varepsilon_1     \Upsilon_{12,34}(a \otimes b).
\end{eqnarray*}
Since $(q+1)(p+n)$ is even if $p+q=n-2$ and   $ \varepsilon_1     \Upsilon_{12,34}(a \otimes b)=0$ otherwise, we obtain the claim of the lemma.
\end{proof}

We can now complete the proof of Theorem \ref{bulletvscdot}. Set $\partial_-M=\{\star_1, \star_2\} $ and $\partial_+M=\{\star_3, \star_4\} $.
Suppose first that  $\partial_-M \cap \partial_+M=\varnothing$.
The desired claim is obtained by combining  the diagram in Lemma \ref{Upsilon_epsilon} with the obvious diagram
$$
\xymatrix{
H_*(\Omega_{32} \times \Omega_{14})  \ar[r]_-{  \simeq  }^-{ \varpi_{32,14}  } \ar[d]_-{ \varepsilon_1  }
& H_*(\Omega_{32})  \otimes  H_*(\Omega_{14}) \ar[d]^-{   \varepsilon \otimes \varepsilon  } \\
H_*(M) \ar[r]^-{} & \kk
}
$$
  where $ \varpi_{32,14}$ denotes the inverse of the cross product isomorphism as before, and  
 the bottom horizontal arrow is the standard augmentation.
To handle the case  $\{\star_1, \star_2\} \cap \{\star_3, \star_4\} \neq \varnothing$,
consider   a smooth isotopy $\{\phi^t:M\to M\}_{t\in I}$ of $ \phi^0 =\id_M$    which is constant outside of a small neighborhood of the points $\star_1,  \star_2$
  and such that   the point  $\star_i' =  \phi^1(\star_i) $   lies in   $ \partial M \setminus \{\star_3, \star_4\}$ for $i=1,2$.
The  diffeomorphism $  \phi^1:   (M, \star_1, \star_2) \to  (M, \star'_1, \star'_2)$   induces   horizontal isomorphisms in the  commutative diagram
$$
\xymatrix{
H_*\big(\Omega(M, \star_1, \star_2)\big)  \big)  \ar[r]^-{ \simeq   } \ar[d]_-{  - (-1)^{n\vert-\vert}   \Sigma         }
& H_*\big(\Omega(M, \star'_1, \star'_2)\big)     \ar[d]_-{  - (-1)^{n\vert-\vert}   \Sigma          } \\
H_*\big(M, \{\star_1,\star_2\}\big)   \ar[r]^-{ \simeq   }    & H_*\big(M, \{\star'_1,\star'_2\}\big)    \, .
}
$$
Note that  the upper horizontal arrow   coincides with the isomorphism $(\varsigma_1, \varsigma_2)_\#  $ defined in  Section~\ref{Upsilon1111}   where
  $\varsigma_i:I\to  \partial M$ is the path   $t\mapsto \phi^t(\star_i)$.
Tensoring this diagram by  the obvious commutative diagram
$$
\xymatrix{
H_*\big(\Omega(M, \star_3, \star_4)\big)  \big)  \ar[r]^-{ \id   } \ar[d]_-{   \Sigma   }
& H_*\big(\Omega(M, \star_3, \star_4)\big)     \ar[d]_-{ \Sigma       } \\
H_*\big(M, \{\star_3,\star_4\}\big)   \ar[r]^-{ \id   }    & H_*\big(M, \{\star_3,\star_4\}\big)
}
$$  we obtain a commutative diagram
$$
\xymatrix{
H_*\big(\Omega(M, \star_1, \star_2)\big) \otimes H_*\big(\Omega(M, \star_3, \star_4)\big)  \ar[r]^-{ \simeq   } \ar[d]_-{- (-1)^{n\vert-\vert}   \Sigma\,   \otimes\,    \Sigma  }
& H_*\big(\Omega(M, \star'_1, \star'_2)\big) \otimes H_*\big(\Omega(M, \star_3, \star_4)\big)  \ar[d]_-{- (-1)^{n\vert-\vert}   \Sigma\,   \otimes\,    \Sigma  } \\
H_*\big(M, \{\star_1,\star_2\}\big) \otimes H_*\big(M,  \{\star_3,\star_4\}\big)  \ar[r]^-{ \simeq   }    & H_*\big(M, \{\star'_1,\star'_2\}\big) \otimes H_*\big(M,  \{\star_3,\star_4\}\big) \, .
}
$$
 By the first part of the proof, we have the diagram in Theorem \ref{bulletvscdot} for the  points $\star'_1, \star'_2, \star_3, \star_4$.
  Combining it  with    the   diagram  above
 we obtain the required diagram. Indeed,   according to \eqref{1234_1234'_H},   the upper line  represents the  scalar form
 $\bullet: H_*\big(\Omega(M, \star_1, \star_2)\big) \otimes H_*\big(\Omega(M, \star_3, \star_4)\big) \to \kk$.
 In the bottom line we obviously get $ \mediumdot   : H_*\big(M, \{\star_1,\star_2\}\big) \otimes H_*\big(M,  \{\star_3,\star_4\}\big)\to \kk$.

\section{The   reducibility} \label{reduction}

 The path homology category $\mathscr{C}=\mathscr{C}(M)$ has a natural  structure of a graded Hopf category,
which generalizes the usual Hopf algebra structure on the Pontryagin algebra.  
  The comultiplication   $\Delta$  in~$\mathscr{C}$ is   the direct sum over all $\star, \star'\in \partial M$ of the linear maps
$$H_*\big(\Omega(M,\star,\star')\big)  \longrightarrow
H_*\big(\Omega(M,\star,\star')\big) \otimes  H_*\big(\Omega(M,\star,\star')\big)$$
  induced by the diagonal maps  $\Omega(M,\star,\star') \to \Omega(M,\star,\star') \times \Omega(M,\star,\star')$.  
  (Note that we  use here the  condition~\eqref{cross_iso}.) 
  The counit $\varepsilon$ in $\mathscr{C}$  is the  augmentation  defined   in Section \ref{scalar_form}.
For     $ \star , \star'  \in \partial M$, the inversion of paths  induces a homeomorphism  $\Omega(M,\star,\star')  \to    \Omega(M,\star',\star)$
which in its turn induces a graded linear isomorphism $H_\ast(\Omega(M,\star,\star'))  \to    H_\ast(\Omega(M,\star',\star))$;
the direct sum of these isomorphisms over all $ \star , \star'  \in \partial M$  defines an antipode  $s$  in  $\mathscr{C}$.
  It is well-known that  the path homology category $\mathscr{C} $ with this data is a  cocommutative Hopf category.

\begin{lemma}\label{redred} The intersection bibracket in  $\mathscr{C}=\mathscr{C}(M)$ is reducible.
\end{lemma}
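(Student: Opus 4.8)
$\textbf{Proof plan.}$ The plan is to verify the reducibility condition $\Lambda(A\otimes A) \subset \Delta(A)$ for the intersection bibracket by a direct geometric computation of $\Lambda$ on homology classes represented by polycycles. Recall that for $a \in \Hom_{\mathscr C}(\star_1,\star_2)$ and $b \in \Hom_{\mathscr C}(\star_3,\star_4)$, the formula \eqref{Lambda} for $\Lambda(a,b)$ involves the comultiplication $\Delta$ (induced by the diagonal $M \to M\times M$), the antipode $s$ (induced by path inversion), the Pontryagin product, and the intersection bibracket, which in turn comes from the operation $\Upsilon$ of Section~\ref{Upsilon1111}. First I would unwind all of these into a single geometric operation on path spaces. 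Using the fact that $\double{a}{b}$ is obtained from $\Upsilon_{12,34}(a\otimes b) \in H_*(\Omega_{32}\times\Omega_{14})$ via the Künneth isomorphism $\varpi_{32,14}^{-1}$, and using the naturality of the cross product together with Lemma~\ref{twocrossproducts}, I would rewrite the right-hand side of \eqref{Lambda} as the image under a single continuous map of the intersection polycycle $\calD(\calK,\calL)$ attached to transversal smooth polycycles $\calK$ representing $a$ and $\calL$ representing $b$.

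The key geometric observation is the following description of the concatenations appearing in $\Lambda$. Given a point $(x,s,y,t)\in D$ (so $\tilde\kappa(x,s) = \tilde\lambda(y,t)$), the two paths $\kappa \triangleleft \lambda$ and $\kappa \triangleright\lambda$ produced in Section~\ref{def_D} run, respectively, from $\star_3$ to $\star_2$ and from $\star_1$ to $\star_4$, each being obtained by splicing an initial segment of $\lambda(y)$ (or $\kappa(x)$) with a terminal segment of $\kappa(x)$ (or $\lambda(y)$). When one forms $a^{(1)} s(\double{a^{(2)}}{b^{(1)}}')$ and $\double{a^{(2)}}{b^{(1)}}'' s(b^{(2)})$ — that is, concatenates with the inverse of the "other half" of $\kappa(x)$ on the left and with the inverse of the "other half" of $\lambda(y)$ on the right — the spliced portions cancel up to homotopy (inverse paths cancel, reparametrizations are homotopies rel endpoints, exactly as in the proofs of Lemmas~\ref{Leibniz_Upsilon_tilde} and~\ref{naturbracket_tilde}). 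The upshot is that both coordinate paths become the $\emph{same}$ loop based at $\star_1$, namely (a reparametrization of) the loop $\kappa(x)$ followed by a null-homotopic detour along $\lambda(y)$ and back — i.e.\ the image of $(x,s,y,t)$ under a map $D \to \Omega_\star$ composed with the diagonal $\Omega_\star \to \Omega_\star\times\Omega_\star$. Passing to homology and using that the diagonal induces $\Delta$, this exhibits $\Lambda(a,b)$ as lying in the image of $\Delta$, which is precisely the reducibility statement.

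Concretely, the steps in order would be: (1) reduce to transversal smooth polycycle representatives $\calK,\calL$ and express $\double{a}{b}$ through $\calD(\calK,\calL)$ via Künneth; (2) write out $a^{(1)} s(\double{a^{(2)}}{b^{(1)}}')$ and $\double{a^{(2)}}{b^{(1)}}'' s(b^{(2)})$ at the level of polycycles, tracking comultiplication as precomposition with the diagonal of $D$ and the antipode/Pontryagin product as the appropriate path concatenations; (3) construct explicit homotopies of maps $D\times\cdots \to \Omega_{\star_1}$, compatible with the partition $\theta$, showing that both coordinate maps of the resulting polycycle in $\Omega_{\star_1}\times\Omega_{\star_1}$ factor through the diagonal $\Omega_{\star_1}\to\Omega_{\star_1}\times\Omega_{\star_1}$; (4) conclude by Lemma~\ref{homotimplieshomol} (deformation invariance of face homology) and the identification of $\Delta$ with the diagonal-induced map that $\lb\Lambda(a,b)\rb$ lies in $\Delta\big(H_*(\Omega_{\star_1})\big)$.

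$\textbf{Main obstacle.}$ The hard part will be step (3): carefully setting up the cancellation homotopies between the spliced-and-reparametrized paths and bookkeeping their compatibility with the face partition $\theta$ on $D$. This is the same kind of delicate but routine reparametrization argument carried out in the proof of Lemma~\ref{Leibniz_Upsilon_tilde}, where one builds affine reparametrization families of the type $\alpha_{s,y},\beta_{t,y},\gamma_{t,y}$ and checks partition-compatibility face by face; here one needs the analogous families adapted to the extra concatenations with $s(a^{(2)}$-pieces$)$ and $s(b^{(2)})$, and one must verify that at each stage the homotopy respects the identification maps $\theta_{F,F'}$ (which holds because $\kappa$ and $\lambda$ are compatible with $\varphi$ and $\psi$). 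Orientation and sign bookkeeping is comparatively benign since the final statement is about membership in a subspace rather than an equality of classes, so the homotopy-invariance argument suffices and no degree computation is needed.
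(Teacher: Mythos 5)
Your proposal takes essentially the same route as the paper: express $\Lambda(a,b)$ geometrically in terms of the intersection polycycle $\calD(\calK,\calL)$ (the paper does this via the parametrized operation $\Upsilon_{Y12,34Z}$ and identity \eqref{double_parameters}, and then concatenations with path inversions), and show by partition-compatible homotopies that the two coordinate maps $D\to\Omega_{13}$ collapse to a common map $\nu$, so the polycycle factors through the diagonal $\Omega_{13}\to\Omega_{13}\times\Omega_{13}$. Two small points to tighten up when executing: (1) the common path $\nu$ is not ``$\kappa(x)$ followed by a null-homotopic detour along $\lambda(y)$ and back'' — after the back-and-forth cancellations, both $\mu_1$ and $\mu_2$ reduce to the path running from $\star_1$ along the \emph{initial} segment of $\kappa(x)$ to the intersection point and then back to $\star_3$ along the reversed \emph{initial} segment of $\lambda(y)$; this is a path from $\star_1$ to $\star_3$ (a loop only when $\star_1=\star_3$), so the target is $\Delta(H_*(\Omega_{13}))$, not $\Delta(H_*(\Omega_{\star_1}))$. (2) Before passing to $\calD(\calK,\calL)$ you need $\{\star_1,\star_2\}\cap\{\star_3,\star_4\}=\varnothing$; the paper handles this by first invoking the Leibniz-type identities of Lemma~\ref{F} to factor out boundary arcs (elements of $A^0$) and thereby reduce to the disjoint case — this reduction step belongs in your plan. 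With those corrections the plan is sound and matches the paper's argument.
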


\begin{proof}
Let $A=A(\mathscr{C})$ be the graded algebra associated with $\mathscr{C}$   and let $\Lambda= \Lambda(\double{-}{-})$
be the map \eqref{Lambda} associated with the intersection bibracket  $\double{-}{-}$ in~$\mathscr{C}$.
We must show that $ \Lambda(a,b) \in \Delta(A)$ for any     $a,b \in A$.
Since  $\Lambda$  is bilinear, it suffices to  consider the case where  $a\in H_p (\Omega_{12} )$ and
  $b\in H_q\big(\Omega_{34}\big)$ for some  $p,q \geq 0$. Here  $\Omega_{ij}= \Omega(M,\star_i,\star_j)$,
  and   $\star_1,\star_2  , \star_3,\star_4 $ are four points in $ \partial M$.
  Observe that  a  path in $\partial M$ starting from $\star_2$ represents a certain element   $v\in A^0$ and,  by Lemma~\ref{F},
  $$
  \Lambda(av,b) = \Lambda(a,b) \, \varepsilon(v)  + \Delta( a) \Lambda(v, c)   = \Lambda(a,b) .
  $$
  Similarly,   a  path in $\partial M$ ending at $\star_1$ represents a certain   $u\in A^0$ and
  $$
  \Lambda(ua, b) = \Lambda(u, b) \, \varepsilon(a) + \Delta(u) \Lambda(a,b)  =  \Delta(u) \Lambda(a,b).
  $$
  Thus it suffices to consider the case where  $\{\star_1,\star_2\} \cap \{\star_3,\star_4\}=\varnothing$.

  Pick transversal smooth polycycles  $\calK=(K,\varphi,u,\kappa)$ in $ \Omega^\circ_{12}$ and
    $\calL=(L,\psi,v,\lambda)$ in   $ \Omega^\circ_{34}$ representing respectively   $\langle a \rangle \in \widetilde H_p(\Omega_{12}) $
 and $\langle b \rangle  \in   \widetilde  H_q  (\Omega_{34})$. We form the intersection polycycle  $\calD = (D,\theta,w,\kappa\losange \lambda)$
  in $\Omega_{32} \times \Omega_{14}$ as in Section~\ref{def_D}.   By definition, $\double{a}{b}\in H_{\ast}  (\Omega_{32})  \otimes  H_{\ast}(\Omega_{14} )$   corresponds to
  the homology class $(-1)^{q+np} [\calD ]\in H_{p+q+2-n} \big(\Omega_{32} \times \Omega_{14} \big)$
   under the  isomorphism $$\varpi_{32,14}:H_{\ast}  (\Omega_{32})  \otimes  H_{\ast}(\Omega_{14} ) \longrightarrow
  H_{\ast} \big(\Omega_{32} \times \Omega_{14} \big) $$  induced by the cross product in homology.   Consider the tensor
 $$
 T =    a^{(1)} \otimes \double{a^{(2)}}{b^{(1)}} \otimes b^{(2)}   \\
   \in H_\ast \big(\Omega_{12}\big) \otimes H_\ast \big(\Omega_{32}\big)
   \otimes H_\ast \big(\Omega_{14}\big)  \otimes H_\ast \big(\Omega_{34}\big).
  $$
 Applying \eqref{double_parameters} with $\star_1$, $\star_3$ exchanged and with $Y= \Omega_{12}$, $Z=\Omega_{34}$, we obtain
   \begin{eqnarray*}
  \varpi_{12,32,14,34}(T)
    &=&  a^{(1)} \times \Upsilon_{12,34}\big(a^{(2)} \otimes b^{(1)}\big) \times b^{(2)}   \\
    &=& \Upsilon_{Y12,34Z}\big(\diag_*(a),\diag_*(b) \big) \  \in    H_\ast \big(\Omega_{12}  \times  \Omega_{32} \times  \Omega_{14} \times  \Omega_{34}\big)
   \end{eqnarray*}
   where   $\varpi_{12,32,14,34}$ is the isomorphism induced by the cross product in homology
   and $\diag_*: H_*(\Omega_{ij}) \to H_*(\Omega_{ij} \times \Omega_{ij})$ is induced by the diagonal map $M \to M \times M$.
    The homology class $ \Upsilon_{Y12,34Z}\big(\diag_*(a),\diag_*(b) \big)$  is represented by the  polycycle
   $$ (-1)^{q+np}\, \big(D,\theta,w, \kappa' \times (\kappa\losange \lambda) \times \lambda':D \to \Omega_{12} \times
   \Omega_{32}\times \Omega_{14}  \times  \Omega_{34} \big)$$
   where $\kappa':D \to \Omega_{12} $ is obtained by projecting $D\subset K \times I \times L \times I$  onto   $K $ and applying~$  \kappa $,
   whereas $\lambda':D \to \Omega_{34} $ is obtained by projecting   onto    $L $ and applying~$  \lambda $.
 Consider now the homeomorphisms $\{   J_{i}   :\Omega_{3i}\to \Omega_{i3} \}_{i=2,4}$   induced by the inversion of paths,
the concatenation maps $\{ \conc_i: \Omega_{1i}\times \Omega_{i3}   \to \Omega_{13} \}_{i=2,4}$, and the map
$$
\mu= (\conc_2 \times \conc_4) (\id_{\Omega_{12}} \times  J_{2}   \times \id_{\Omega_{14}} \times  J_{4}   )
(\kappa' \times (\kappa\losange \lambda) \times \lambda') :D \longrightarrow \Omega_{13} \times \Omega_{13}.
$$
It follows that  the image of    the homology class
$$
\Lambda(a,b)=   a^{(1)}  s\Big(\double{a^{(2)}}{b^{(1)}}'  \Big) \otimes  \double{a^{(2)}}{b^{(1)}}''  s(b^{(2)})
$$
 under the cross product isomorphism $\varpi_{13,13}: H_*(\Omega_{13}) \otimes H_*(\Omega_{13}) \to H_*(\Omega_{13} \times \Omega_{13})$
is represented by the  polycycle $(-1)^{q+np}\, (D,\theta,w, \mu )$.
To analyze this polycycle, let $\mu_{1}, \mu_{2}:D  \to  \Omega_{13}$ be  the first   and the second coordinates    of~$\mu$.
For any point $(k,s,l,t)\in D$, the path $\mu_{1}(k,s, l,t)$ is obtained  by concatenation of the following three paths:
(i) the  path $  \kappa(k )$ from $\star_1$ to    $\star_2 $;
(ii)  the initial segment of the path   $  (\kappa(k ))^{-1}$ from $\star_2$ to     the point  $   \kappa(k)(s )= \lambda (l)(t )$;
 (iii) the terminal segment of the path  $ ( \lambda(l))^{-1} $ from the latter point to~$\star_3$.
 This concatenated path goes along a terminal segment of  the  path $  \kappa(k )$  twice in opposite directions.
 Therefore the path  $\mu_1(k,s, l,t)$ is homotopic to a path $\nu (k,s, l,t)$  obtained  by concatenation of just two paths:
  the initial segment of the path  $  \kappa(k )$ from $\star_1$ to        the point  $   \kappa(k)(s )= \lambda (l)(t )$
  and  the terminal segment of the path  $ ( \lambda(l))^{-1} $ from the latter point to $\star_3$.
  The homotopy in question may be defined by an explicit formula which applies to all points $(k,s,l,t)\in D$.
  Therefore, it determines a homotopy of the polycycle  $ (D,\theta,w, \mu_1 )$ into the polycycle $ (D,\theta,w, \nu )$.
  A similar argument applies to  the path $\mu_{2}(k,s, l,t)$ and yields a homotopy of the polycycle  $ (D,\theta,w, \mu_2 )$ into   $ (D,\theta,w, \nu )$.
  Applying these two homotopies coordinatewise we obtain a homotopy of the polycycles  $ (D,\theta,w, \mu )$    and    $ (D,\theta,w,  (\nu,\nu)  )$.
  It is obvious that the homology class represented by the latter polycycle belongs to the image of the map $\diag_*: H_*(\Omega_{13}) \to H_*(\Omega_{13} \times \Omega_{13})$.
 We conclude that   $ \Lambda(a,b) \in \Delta(A)$.
 \end{proof}

Lemma~\ref{redred} and the $(2-n)$-antisymmetry of  the intersection bibracket  of~$M$ implies that it shares all the properties   established in Lemma~\ref{Fcoc}.
Note that the associated pairing~$\lambda$       generalizes  the  Reidemeister pairing  \eqref{Reidemeister pairing}.

 The results of this section are analogues of the known properties of the intersection bibracket in dimension two, see  \cite{MT}.
In dimension two, the role of~$\lambda$ is played by the homotopy intersection form introduced in \cite{Tu1}.

\section{The string bracket} \label{string}

 In this section, we   relate   the intersection bibracket  of~$M$  to
   the Chas--Sullivan string bracket in  loop homology.  
    By a   \emph{loop} in $M$ we mean  a continuous map $S^1\to M$ where $S^1=\RR/\ZZ$.
Let $\Loop=\Loop(M)$ be the space of loops in $M$ with compact-open topology.
The  \index{loop homology} \emph{loop homology} of $M$ is $\LoopH=H_\ast (\Loop)$.
The \index{string homology} \emph{string homology}, $\StringH$,  of $M$ is the  $S^1$-equivariant homology of $\Loop$
where   $S^1$ acts on $ \Loop$  by    $(s \gamma) (t) = \gamma(s+t)$ for any $s, t\in S^1$ and   $\gamma \in \Loop$.
Thus,  $\StringH = H_*\left(  {{\mathscr E}   \times_{S^1} \Loop}\right)$
where ${\mathscr E}$ is the total space of the universal $S^1$-principal fiber bundle   and ${{\mathscr E} \times_{S^1} \Loop}$
is the quotient of ${\mathscr E}  \times \Loop$ by the diagonal action of $S^1$.
Since $\mathscr E$ is contractible, the projection ${\mathscr E}  \times \Loop \to  {{\mathscr E} \times_{S^1} \Loop}$
induces a linear map $\operatorname{E}: \LoopH \to \StringH$.

Chas and Sullivan  \cite{CS} defined    a  degree $2-n$  Lie bracket  in $\StringH$  called the \index{string bracket} \emph{string bracket}.
For $n=2$, this  is the Goldman bracket   discussed in  Section \ref{dimension_2}.   We assume that  $n \geq 3$ and
  relate the string bracket to the Lie bracket $\langle-,-\rangle$ in $
\check A_\star =A_\star/[A_\star,A_\star] $ defined in Section~\ref{The induced Lie bracket}.

\begin{lemma}\label{maprR}
Let      $\star \in \partial M$, $\Omega_\star = \Omega(M,\star,\star)$,   and let   $r: \Omega_\star \hookrightarrow \Loop$ be the  inclusion map.
The   induced homology homomorphism   $r_\ast:A_\star =H_*(\Omega_\star) \to \LoopH=H_\ast (\Loop)$    annihilates $[A_\star,A_\star] $
and  induces a linear map  $\operatorname{R}: \check A_\star\to \LoopH$.
The composition  $  (-1)^n  \operatorname{E} \operatorname{R}:\check A_\star\to \StringH$ is a graded Lie algebra homomorphism.
\end{lemma}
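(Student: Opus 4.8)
The plan is to prove the statement in two parts. First I would verify that $r_\ast\colon A_\star \to \LoopH$ kills $[A_\star,A_\star]$. The Pontryagin product on $A_\star$ is induced by concatenation of based loops, while the Pontryagin product followed by the map to $\LoopH$ becomes the composition of concatenation with $r_\ast$. The key geometric fact is that for loops $\alpha,\beta$ based at $\star$, the concatenated loops $\alpha\beta$ and $\beta\alpha$ become freely homotopic in $\Loop$ by rotating the basepoint; more precisely there is a canonical homotopy in $\Loop$ between $r(\alpha\beta)$ and $r(\beta\alpha)$. I would make this precise at the level of polychains (or singular chains): given representing cycles for homogeneous $a,b\in A_\star$, the cycles $r_\ast(ab)$ and $(-1)^{|a||b|}r_\ast(ba)$ are homologous in $\Loop$ via an explicitly constructed chain homotopy, exactly as in the classical proof that the Pontryagin product becomes commutative after mapping to free loop space. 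Hence $r_\ast$ factors through $\check A_\star$, giving the linear map $\operatorname{R}$. (Degree considerations: $r_\ast$ and hence $\operatorname{R}$ are degree-preserving, so $\operatorname{E}\operatorname{R}$ has degree zero, whereas the string bracket and $\langle-,-\rangle$ both have degree $2-n$; this is consistent with requiring a Lie algebra homomorphism.)

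The second and harder part is to show $(-1)^n\operatorname{E}\operatorname{R}$ intertwines the bracket $\langle-,-\rangle$ on $\check A_\star$ with the Chas--Sullivan string bracket on $\StringH$. The approach I would take is to unwind both brackets to the level of chains/polychains and compare the underlying geometric ``intersect and reconnect'' operations. Recall from Section~\ref{The induced Lie bracket} that $\langle h(a),h(b)\rangle = (-1)^{|b|+n|a|}\, h\,\conc_\ast([\widetilde\Upsilon(\lb a\rb\otimes\lb b\rb)])$, where $\widetilde\Upsilon$ is computed by the intersection polychain $\calD(\calK,\calL)$ of transversal smooth polycycles in $\Omega_\star^\circ$; geometrically, at each intersection point of the underlying loops one forms the two reconnected loops $(\kappa\triangleleft\lambda,\kappa\triangleright\lambda)$ and then concatenates them. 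On the string side, the Chas--Sullivan bracket is built from the loop product/coproduct together with the $S^1$-action: one takes the marked-point transversal intersection of two free loops, reconnects, and then averages over the rotation. The plan is to produce, for a pair $(a,b)$, a single parametrized family of polychains in $\Loop$ (indexed by the rotation parameter, i.e.\ living over $\mathscr E\times_{S^1}\Loop$) that simultaneously represents $\operatorname{E}\operatorname{R}\langle a,b\rangle$ and the string bracket of $\operatorname{E}\operatorname{R}(a),\operatorname{E}\operatorname{R}(b)$. The essential point is that the based-loop reconnection $\kappa\,\losange\,\lambda$ of Section~\ref{def_D}, after including into $\Loop$ and concatenating, becomes the free-loop reconnection used by Chas--Sullivan once one forgets the basepoint and remembers only the $S^1$-orbit; the ``arc $\varsigma$'' that appears in the based picture disappears after passing to $\StringH$. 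Signs must be tracked carefully: the factor $(-1)^n$ in the statement, the normalization factor $(-1)^{\vert b\vert + n\vert a\vert}$ built into $\check\Upsilon$, and the sign conventions in \cite{CS} all have to be reconciled, and this bookkeeping is where most of the work lies.

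The main obstacle I expect is the precise identification of the two reconnection operations together with their $S^1$-dependence, i.e.\ showing that the based construction $\calD(\calK,\calL)$ followed by $\operatorname{E}\operatorname{R}$ produces exactly the chain that Chas--Sullivan use, and not merely something homologous up to a correction term. This requires: (i) choosing representatives of $\operatorname{E}(\operatorname{R}(a))$ and $\operatorname{E}(\operatorname{R}(b))$ that are transversal in the marked-point sense used in string topology; (ii) identifying the moduli space of intersections in the free-loop picture with the moduli space $D$ appearing in $\calD(\calK,\calL)$, consistently with the $S^1$-equivariance; and (iii) checking that the homotopy that deforms the based reconnection into the free reconnection is well defined over the whole moduli space and compatible with partitions, so that it descends to face homology and then to $\StringH$. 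Once this identification is in place, the Jacobi identity on the string side is known (Chas--Sullivan) and the Jacobi identity on the $\check A_\star$ side follows from Lemma~\ref{Jacobi_Upsilon} as indicated in Section~\ref{The induced Lie bracket}, so no further work is needed to see that $(-1)^n\operatorname{E}\operatorname{R}$ is a morphism of graded Lie algebras. A cleaner alternative, which I would pursue if the direct chain-level comparison becomes unwieldy, is to first establish the analogous statement for $\widetilde\Upsilon$ at the level of face homology of $\Loop$ (a parametrized intersection operation), then apply the transformations $[-]$ and $\operatorname{E}$ and invoke the already-proved commutativity results of Sections~\ref{Upsilon-tilde}--\ref{Upsilon} to transport the identity to singular (and $S^1$-equivariant) homology.
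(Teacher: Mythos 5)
Your proposal is essentially the same as the paper's and would succeed. The first part is identical: the paper also observes that $r\,\conc\,\perm=\big(\tfrac{1}{2}\cdot\big)r\,\conc$, where $\tfrac{1}{2}\in S^1$ acts by rotation, hence $r\,\conc$ and $r\,\conc\,\perm$ are homotopic as maps to $\Loop$ and induce the same map on homology; your plan to build an explicit chain homotopy just repackages the same fact.

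For the second part, the one structural simplification the paper uses, which you leave implicit, is to invoke the defining formula $[x,y]=(-1)^{\vert x\vert +n}\operatorname{E}\big(\operatorname{M}(x)\bullet_{\operatorname{CS}}\operatorname{M}(y)\big)$ for the string bracket in order to replace the comparison in $\StringH$ by a commutative-diagram statement entirely in $\LoopH$: one compares $\operatorname{R}\langle h(a),h(b)\rangle$ with $\operatorname{M}\operatorname{E}\operatorname{R}h(a)\bullet_{\operatorname{CS}}\operatorname{M}\operatorname{E}\operatorname{R}h(b)$, never needing to construct equivariant chains directly. The BV-operator (lift) map $\operatorname{M}\operatorname{E}$ is then represented on polycycles by $(K,\varphi,u,r\kappa)\mapsto(-1)^p[(K\times S^1,\bar\varphi,\bar u,\bar\kappa)]$ with $\bar\kappa(k,s)=s\big(r\kappa(k)\big)$; the two intersection moduli spaces $D$ and $\bar D$ are then identified by the obvious map $K\times I\times L\times I\to K\times S^1\times L\times S^1$, and the homotopy between $r'\,\conc\,(\kappa\losange\lambda)$ and the free-loop reconnection $\bar\kappa\infty\bar\lambda$ is obtained by acting with $\tfrac{1}{4}\in S^1$ — exactly your claim that the arc $\varsigma$ disappears after passing to free loops. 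Your alternative route (prove the comparison in face homology of $\Loop$ first, then push forward via $[-]$ and $\operatorname{E}$) is also how the paper actually carries out the computation, since all the representatives it uses are polycycles. The one genuine bookkeeping burden you correctly flag — reconciling the signs $(-1)^n$, the normalization $(-1)^{\vert b\vert+n\vert a\vert}$ from $\check\Upsilon$, the $(-1)^p$ in the BV operator, and the $(-1)^{np}$ from the orientation convention in the loop product — is exactly where the paper spends most of its ink, so your plan is realistic.
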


\begin{proof} Let   $ \conc   :\Omega_\star \times \Omega_\star \to \Omega_\star$ be the concatenation of loops.
For $a\in A_\star^p$, $b\in A^q_\star$,
$$
ab-(-1)^{pq}ba =   \conc_*   (a \times b) - (-1)^{pq}   \conc_*   (b \times a) =   \conc_*   (a \times b) -   \conc_*     \perm_* (a \times b)
$$
where $\perm: \Omega_\star \times \Omega_\star \to \Omega_\star \times \Omega_\star$ is the transposition.
  Therefore,   to show that $r_\ast (ab-(-1)^{pq}ba)=0$, it suffices  to prove  that $r_\ast   \conc_*   = r_\ast   \conc_*    \perm_* $.
Clearly,  $r   \conc     \perm=(\frac{1}{2} \cdot\!  ) r  \conc   $
where $\big(\frac{1}{2} \cdot\! \big): \Loop \to \Loop$  stands for the action of $1/2 \in \RR/\ZZ=S^1$.
Since $\big(\frac{1}{2} \cdot\! \big)$ is homotopic to the identity,     $r  \conc      \perm $ is homotopic to  $ r    \conc   $.
  We deduce that   $r_\ast \left([A_\star,A_\star]\right)=0$.

Recall  the definition of the  string bracket $[ -,- ]$ in $\StringH$.  Let   $\operatorname{M}: \StringH \to \LoopH$ be the degree 1 lift  map
in the Gysin sequence of the   $S^1$-bundle    $ \mathscr E \times \Loop\to \mathscr E \times_{S^1} \Loop$.
Chas and Sullivan define a   linear map  $\bullet_{ \operatorname{CS}  }: \LoopH \otimes \LoopH \to \LoopH $  of degree $-n$
called the \index{loop product} \emph{loop product} (and   denoted by $\bullet$ in \cite{CS}).
For a detailed exposition, the reader is referred   for instance   to  Cieliebak \cite{Ci}.
For  a  homogoneous  $x\in \StringH$ and any $y \in \StringH$,
\begin{equation}\label{def_string_bracket}
[x,y]= (-1)^{\vert x \vert+n} \operatorname{E} \big(\operatorname{M}(x) \bullet_{ \operatorname{CS}  } \operatorname{M}(y)\big) .
\end{equation}
This formula implies that to  prove the second claim of the lemma, it is enough to show the commutativity of the   diagram
\begin{equation} \label{MER}
\xymatrix{
\check A_\star \otimes \check A_\star
\ar[d]_-{(-1)^{\vert-\vert}\operatorname{M} \!   \operatorname{E}\! \operatorname{R}  \otimes  \operatorname{M}  \!  \operatorname{E}\! \operatorname{R}}
 \ar[r]^-{\langle -,-\rangle} & \check A_\star  \ar[d]^-{\operatorname{R}}\\
\LoopH \otimes \LoopH \ar[r]^-{ \bullet_{ \operatorname{CS}  }} & \LoopH .
}
\end{equation}

Let $a\in H_p(\Omega_\star)$ and $b\in H_q(\Omega_\star)$ with $p,q \geq 0$. Let $h:A_\star \to \check A_\star$ be the canonical projection.
To compute $\langle h(a) , h(b) \rangle $, we pick  a   path~$\varsigma$ in $\partial M$ connecting~$\star$ to another   point~$\star'$. By definition,
\begin{eqnarray*}
\langle h(a) , h(b) \rangle &=& (-1)^{q  + n p  } h    \conc_*    \big(\big[  \widetilde\Upsilon(\lb a \rb \otimes \lb b \rb)\big]\big) \\
&=& (-1)^{q  + n p  } h    \conc_*    \big(\big[   \left((\varsigma^{-1},e_\star)_\sharp \times (e_\star,\varsigma^{-1})_\sharp\right)
\widetilde\Upsilon(\lb a \rb \otimes (\varsigma,\varsigma)_\sharp \lb b \rb)\big]\big).
\end{eqnarray*}
Let $\calK=(K,\varphi,u,\kappa)$ be a   reduced  smooth polycycle in $\Omega_\star^\circ=\Omega^\circ(M,\star,\star)$
and let $\calL=(L,\psi,v,\lambda)$ be a   reduced   smooth polycycle in $\Omega_{\star'}^\circ= \Omega^\circ(M,\star',\star')$
such that $(\calK,\calL)$ transversely represents the pair $(\lb a \rb , (\varsigma,\varsigma)_\sharp \lb b \rb)$.
Consider the intersection polychain $\calD(\calK,\calL)=(D,\theta,w,\kappa \losange \lambda)$.  Then
\begin{eqnarray}
\label{ER}&&  \quad \operatorname{R}\langle h(a) , h(b) \rangle  \\
\notag &=& (-1)^{q  + n p  } \operatorname{R}h    \conc_*    \big[ \left(  (\varsigma^{-1},e_\star)_\sharp \times (e_\star,\varsigma^{-1})_\sharp\right)  \calD(\calK,\calL)\big] \\
\notag &=& (-1)^{q  + n p  }    \big[ r_*    \conc_*    \left(  (\varsigma^{-1},e_\star)_\sharp \times (e_\star,\varsigma^{-1})_\sharp \right) \calD(\calK,\calL)\big] \\
\notag &=& (-1)^{q  + n p  }    \big[\big(D,\theta,w, r \conc    \left(  (\varsigma^{-1},e_\star)_\sharp \times (e_\star,\varsigma^{-1})_\sharp \right)(\kappa \losange \lambda)\big) \big] \\
\notag &=& (-1)^{q  + n p  }    \big[\big(D,\theta,w, r  (\varsigma^{-1}, \varsigma^{-1})_\sharp    \conc      (\kappa \losange \lambda)\big) \big] \\
\notag &=& (-1)^{q  + n p  }    \big[\big(D,\theta,w, r'   \conc     (\kappa \losange \lambda)\big) \big]
\end{eqnarray}
where,     in the last two lines,  $ \conc$  is the concatenation of paths
$$ \Omega(M,\star',\star) \times \Omega(M,\star,\star') \longrightarrow \Omega(M,\star',\star')=\Omega_{\star'}$$
and  $r':\Omega_{\star'} \hookrightarrow \Loop$ is the inclusion.
On the other hand,
$$
\operatorname{MER} h(a) =  \operatorname{ME}r_*(a) =  \operatorname{ME} \left[ r_*\! \lb a \rb \right]
= \operatorname{ME} \left[(K,\varphi,u,r\kappa)\right].
$$
 Using the computation  of the map $\operatorname{ME}: \LoopH \to \LoopH$   in \cite{CS,Ci} (where this map is denoted by $\Delta$),  we obtain
\begin{equation}\label{MERMER}
\operatorname{MER} h(a) = (-1)^p \left[(K  \times S^1, \bar \varphi, \bar u, \bar \kappa) \right]
\end{equation}
where we use the following notation:
$\bar \varphi$ is the partition on $K  \times S^1$ induced by $\varphi$
(by identifying $F \times S^1$ to  $G\times S^1$ via $\varphi_{F,G}\times \id_{S^1}$ for any faces $F,G$ of the same type in~$K$);
$\bar u$ is the weight on $K  \times S^1$ induced by   $u$  via the   equality  $\pi_0(K  \times S^1)=\pi_0(K)$;
the   map  $\bar \kappa: K  \times S^1 \to \Loop$ is defined using the action of $S^1$ on $\Loop$ by $(k,s)\mapsto s\big(r\kappa(k)\big)$ for   $k\in K$ and $s\in S^1$.
The sign $(-1)^p$  in \eqref{MERMER} is caused by a permutation of the two factors of $K  \times S^1$ with respect to \cite{CS,Ci}.
Similarly,
$$
\operatorname{MER} h(b) =   \operatorname{ME} \left[ r_*\! \lb b \rb \right] =  \operatorname{ME} \left[ r'_*\!   (\varsigma,\varsigma)_\sharp  \lb b \rb \right]
= (-1)^q \left[(L \times  S^1, \bar \psi, \bar v, \bar \lambda) \right]
$$
where the map  $\bar \lambda:L \times S^1  \to \Loop$ is defined by $(l,s)\mapsto s\big(r'\lambda(l)\big)$ for   $l\in L$ and $s\in S^1$.

The loop product $\bullet_{\operatorname{CS}} $ can be computed in terms  of face homology. This gives
\begin{eqnarray}
\label{(MER,MER)} && \operatorname{MER} h(a) \bullet_{\operatorname{CS}} \operatorname{MER} h(b)  \\
\notag &=& (-1)^{p+q}  \left[(K  \times S^1, \bar \varphi, \bar u, \bar \kappa) \right] \bullet_{\operatorname{CS}} \left[(L \times  S^1, \bar \psi, \bar v, \bar \lambda) \right]\\
\notag &=& (-1)^{ p+q  +np } \left[(\bar D, \bar \theta, \bar w, \bar \kappa \infty \bar \lambda) \right] .
\end{eqnarray}
Here $\bar D$ is the inverse image of $\diag_M$   under the map
$$K \times S^1 \times L \times S^1\to M \times M, \quad  (k,s,l,t)   \longmapsto   \big(r\kappa(k)(s),r'\lambda(l)(t)\big).$$
Note that $\bar D$ has a structure of a   manifold with faces inherited from $K \times S^1 \times L \times S^1$.
The orientation, the partition $\bar \theta$ and the weight $\bar w$ of $\bar D$ are as  in the definition of  the intersection   operation $\calD$    in Section \ref{def_D}.
The map $\bar\kappa \infty \bar\lambda: \bar D \to \Loop$ sends a point $(k,s,l,t) $
to the loop that first goes along the loop $\bar \kappa(k,s)$ and then along   the loop $\bar \lambda(l,t)$.
  Note the sign $(-1)^{np}$ in   \eqref{(MER,MER)}, which   arises from the difference between our orientation conventions  and those  in \cite{Ci}.

 The map $K \times I \times L \times I \to K \times S^1 \times L \times S^1$ determined    by the canonical projection  $I\to S^1$
 induces an   orientation-preserving   diffeomorphism  $D \cong    \bar D$ which carries the partition $\theta$ into $\bar \theta$ and the weight $w$ into $\bar w$.
Using the action of $1/4 \in S^1$ on $\Loop$, one   easily constructs a homotopy between the maps $r'  \conc    (\kappa \losange \lambda) $
and   $\bar\kappa \infty \bar\lambda$ from $ D  \cong   \bar D$ to $ \Loop$.
It  follows that
$$
 \big[\big(D,\theta,w, r'   \conc      (\kappa \losange \lambda)\big) \big]
 =  \left[(\bar D, \bar \theta, \bar w, \bar \kappa \infty \bar \lambda) \right] \in \LoopH,
$$
and  we deduce \eqref{MER} from  \eqref{ER} and \eqref{(MER,MER)}.
\end{proof}

\section{Moment maps and Hamiltonian reduction} \label{kill_boundary}

We show that a spherical boundary component of  the manifold~$M$   determines a moment map for the intersection bibracket.
This allows us to define an $H_0$-Poisson structure on the Pontryagin algebras of certain manifolds without boundary.

\subsection{The moment map} \label{moment}

  Assume that $n=\dim(M) \geq 3$ and that~$S$ is a component   of $\partial M$     homeomorphic  to the sphere $S^{n-1}$.
Fix a    point $\star \in S$ and set   $A_\star = H_*(\Omega_\star)$  where $\Omega_\star=\Omega(M,\star,\star)$.
  The orientation-preserving homeomorphisms  $ S^{n-1}   \cong   S $ represent an element $\mu^\pi=\mu^\pi_S  $ of $ \pi_{n-1}(M,\star)$.
 Recall the   connecting homomorphism $\overline \partial_{n-1}:\pi_{n-1}(M,\star)  \to   A_\star^{n-2}=  H_{n-2}(\Omega_\star)$    of Section~\ref{The {Pontryagin} algebra} and set
$$
\mu=\mu_S  = \overline \partial_{n-1}(\mu^\pi ) \in  A_\star^{n-2}\, .
$$

\begin{lemma} \label{spherical_boundary}
 The element   $\mu $ is a moment map for    the intersection bibracket $\double{-}{-}$ in $A_\star$
  in the sense of Section~\ref{moment_maps}. 
\end{lemma}

\begin{proof}
 Consider the path homology category $\calC=\calC(M)$ of $M$ and the intersection bibracket $\double{-}{-}$
in   the associated   graded algebra $A=A(\calC) $.
 Pick a  smooth closed $n$-ball $D\subset \Int(M)$  and consider the smooth  manifold  $  {P}=   M \setminus \Int(D)$.
 As above, we can consider the path homology category of ${P}$ and the intersection bibracket $\double{-}{-}_{P}$ in the associated   graded algebra.
  Consider the  restriction of $\double{-}{-}_P$  to the  algebra
$$
B =\bigoplus_{ \star_1 , \star_2  \in S}    H_{\ast} \big( \Omega({P}, \star_1,\star_2 )  \big) .
$$
The inclusion ${P}\hookrightarrow M$ induces a graded algebra homomorphism $\iota: B \to A$. The definition of the intersection bibracket
implies that   the following diagram commutes:
\begin{equation}\label{AB}
\xymatrix{
B \otimes B \ar[rr]^{\double{-}{-}_{P} } \ar[d]_-{\iota \otimes \iota} && B\otimes B \ar[d]^-{\iota \otimes \iota} \\
A \otimes A \ar[rr]^{\double{-}{-} } && A \otimes A.
}
\end{equation}

\begin{figure}[h]
\begin{center}
\labellist \small
 \pinlabel {$\star'$} [ ] at 60 22
 \pinlabel {$\alpha$} [r] at 75 65
 \pinlabel {$\star$} [ ] at 58 117
 \pinlabel {$\star''$} [ ] at 519 129
 \pinlabel {$\beta$} [b] at 256 119
 \pinlabel {$\partial D$} [tl] at 552 16
 \pinlabel {$S $} [tr] at 7 13
\endlabellist
\includegraphics[scale=0.4]{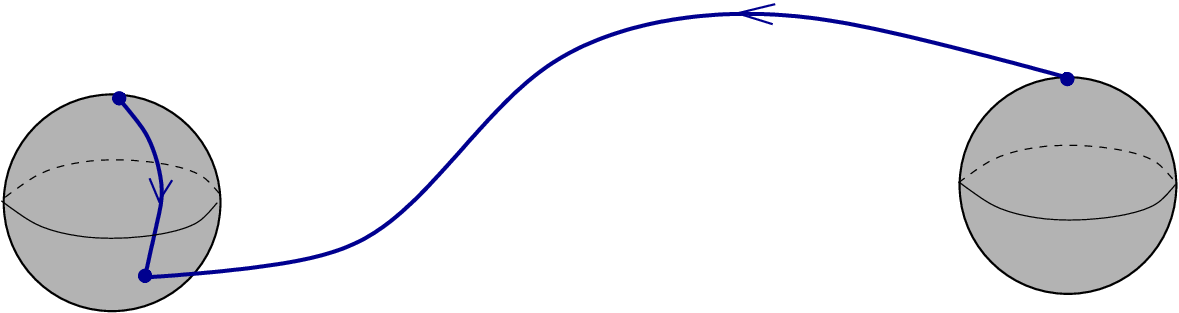}
\end{center}
\caption{The manifold $P=M\setminus \Int(D)$.}
\label{moment_map}
\end{figure}

  We must prove that $\double{\mu}{a}=a\otimes 1- 1 \otimes a$ for any $a\in A_\star \subset A$.
  To  this end, fix a path $\alpha$ in $ S$ leading from $\star$ to a distinct point $\star'\in S$: see Figure \ref{moment_map}.
  The path $\alpha$ represents an element in $H_0(\Omega, \star, \star')\subset A^0$ denoted also by~$\alpha$.
  This element is invertible, and its inverse $\alpha^{-1}\in A^0$ is represented by the inverse path. Set  $$a'= \alpha^{-1} a \alpha  \in H_*\big(\Omega(M,\star',\star')\big).$$
  (In the  notation of Section~\ref{change}, $a'=(\alpha, \alpha)_\#( a )$.)
Clearly, $\double{\mu}{a'}= \alpha^{-1} \double{\mu}{a} \alpha$.   Hence it suffices to prove that
\begin{equation} \label{with_a'}
\double{\mu}{a'} = a'\alpha^{-1} \otimes \alpha - \alpha^{-1} \otimes \alpha a'.
\end{equation}
  The homology class $a'$ can be represented by   a polycycle $\mathcal K$  in $\Omega^\circ(M,\star',\star')$.
  Choosing the ball~$D$   close  enough to $\partial M$, we can ensure that it does not meet  the image of $\mathcal K$.
Then $a'=\iota(b)$ for the homology class   $b\in H_*(\Omega({P},\star',\star'))$ represented by~$\mathcal K$.   Similarly,   $\mu=\iota(\tau)$ for some  $\tau \in H_{n-2}(\Omega({P},\star,\star))$.
We deduce from \eqref{AB} that
$$
\double{\mu}{a'} = (\iota\otimes \iota) \left (\double{\tau}{b}_{P} \right ).
$$
To proceed, we   pick an embedded path in~${P}$ leading from a point $\star'' \in \partial D$
  to $\star' $ and meeting $\partial {P}$ only in the endpoints. This path defines an invertible element $\beta \in   H_0(\Omega({P},\star'',\star'))$.
  Set $${c}=\beta b \beta^{-1} \in H_*\big(\Omega({P},\star'',\star'')\big).$$
Note that $\double{\tau}{{c}}_{P}=0$ since ${c}$ can be represented by a polycycle whose image does not meet $S$.
Therefore
\begin{eqnarray*}
\double{\mu}{a'} &=&   (\iota\otimes \iota) \left (\double{\tau}{b}_{P} \right )= (\iota\otimes \iota)   \left( \double{\tau}{\beta^{-1} {c} \beta}_{P} \right ) \\
&=&  (\iota\otimes \iota) \left(\double{\tau}{\beta^{-1}}_{P} {c}\beta + \beta^{-1} {c} \double{\tau}{\beta}_{P} \right) \\
&=& (\iota\otimes \iota) \left(-\beta^{-1}\double{\tau}{\beta}_{P} \beta^{-1} {c}\beta + \beta^{-1} {c} \double{\tau}{\beta}_{P} \right).
\end{eqnarray*}
 By Lemma \ref{intersecting_arc_and_spheres},   we obtain $\double{\beta}{\tau}_{P}=-  \alpha \otimes \beta \alpha^{-1}$.
Therefore $\double{\tau}{\beta}_{P} = \beta  \alpha^{-1} \otimes \alpha$. We conclude that
$$
  \double{\mu}{a'}   =  (\iota\otimes \iota) \left(-\alpha^{-1} \otimes \alpha b + b \alpha^{-1}\otimes \alpha \right),
$$
which proves \eqref{with_a'}.
\end{proof}

 We deduce from  Lemma \ref{spherical_boundary} that the following three conditions are equivalent: 
 \begin{itemize}
\item[(i)] $A_\star=\kk$;
\item[(ii)]    $\mu=0  $;
\item[(iii)]  the intersection bibracket in $ A_\star$ is zero.
\end{itemize}

\subsection{Intersections in manifolds without boundary}

Let ${W}$ be a smooth   connected     oriented manifold of dimension $n\geq 3$  without  boundary.
Our construction of a bibracket in the {Pontryagin} algebra of a manifold requires the base point to lie in the boundary,   so that it  does not   apply to~${W}$.
However, under   certain assumptions on~${W}$ we can use the Hamiltonian reduction  of  Section~\ref{sectionActions-} 
to define   an   $H_0$-Poisson structure on the Pontryagin algebra of ${W}$.
To this end, pick a base point $\star \in  {W}$ and  a smooth closed $n$-ball $D\subset {W}$ with $\star \in  \partial D$.
Consider the smooth manifold ${M} ={W} \setminus \Int(D)$ with $\partial M=\partial D=S^{n-1}$;
  as everywhere  in this chapter, we  assume that the condition \eqref{cross_iso} is satisfied.  
Let
$$A_\star=  H_*\big(\Omega({M},\star,\star)\big)\quad {\text {and}} \quad B_\star=  H_*\big(\Omega({W},\star,\star)\big)$$
be the  Pontryagin algebras of~${M}$ and~${W}$, respectively.
 The   inclusion ${M}\hookrightarrow {W}$ induces a graded algebra homomorphism $ p: A_\star \to B_\star$,
 Clearly, $p(\mu)=0$ where $\mu= \mu_{\partial {M}}\in A^{n-2}_\star$. Therefore,   $\Ker p \supset A_\star  \mu  A_\star $.

\begin{theor} \label{no_boundary}
  Assume that   the   homomorphism  $p:  A_\star \to B_\star$   is onto  and  $\Ker p= A_\star  \mu   A_\star$.
Then the intersection bibracket $\double{-}{-}$ in $A_\star$ induces an $H_0$-Poisson structure of degree $2-n$ on $B_\star$. This structure does not depend on the choice of the ball~$D$.
\end{theor}

\begin{proof}
The first claim   follows from    Lemma \ref{quotient}. The independence of the choice of the ball is a consequence of the naturality of the intersection  bibracket   under diffeomorphisms,
and the fact that for any   balls $D_1, D_2 \subset {W}$ with $\star \in  \partial D_1 \cap \partial D_2$  there is a   diffeomorphism $f:{W}\to {W}$ such that $f(D_1)=D_2$, $f(\star)= \star$,
and~$f$ is isotopic to $\id_{W}$ in the class of diffeomorphisms ${W}\to {W}$ fixing $\star$.  Such an~$f$ acts   on $B_\star$   as the identity, and the result follows.
\end{proof}

  Recall from Theorem \ref{H0_to_G} that 
an $H_0$-Poisson structure  on $B_\star$ induces   Gerstenhaber brackets  on the trace  algebras of $B_\star$.   Hence Theorem \ref{no_boundary}  allows us to associate    Gerstenhaber algebras with~$W$.

Some   manifolds  do not satisfy the assumptions of Theorem \ref{no_boundary}, for example, ${W}=S^n$ (in this  case $ A_\star=\kk$ and $B_\star=\kk[x]$ where the generator~$x$ has degree $n-1$, cf$.$ \cite{BS}).
Nonetheless, according to \cite{HL} and \cite{FT}, these assumptions are satisfied if
${W}$ is a closed simply connected manifold whose cohomology algebra $H^*({W})=H^*({W}; \kk)$ is not   generated by a single element  and
  $\kk$ is a field  whose characteristic is equal to zero or is  sufficiently   large.

\subsection{Example}

We consider the example of Section~\ref{sc_example} and keep the same notation.
Thus
$
{W}= \left(S^{p_1} \times S^{q_1}\right) \sharp \cdots  \sharp \left(S^{p_g} \times S^{q_g}\right)
$
and $M={W} \setminus \hbox{(an open ball)}$.
The element $ \mu=\mu_{\partial M} \in A_\star=H_\ast(\Omega(M, \star, \star))$ can be computed as follows.
As a topological manifold, $M$ is the boundary-connected sum of the manifolds $  M_j    = \left(S^{p_j} \times S^{q_j}\right) \setminus \hbox{(an open ball)}$ where $j=1,\dots,g$.
Hence
$$
\mu^\pi=   \mu^\pi_{\partial M}   =   \sum_{ j=1  }^{g}     {\rm {in}} _j \big(  \mu^\pi_{\partial M_j}  \big)   \in \pi_{n-1}(M)
$$
where ${\rm {in}} _j:\pi_{n-1}(M_j) \to \pi_{n-1}(M)$ is the inclusion homomorphism (we can ignore the base point because $M_j$ and $M$ are simply-connected).
By  the definition of the Whitehead bracket $[-,-]_{ \operatorname{Wh}  }$ in $\pi_\ast(M)$, we have  ${\rm {in}} _j(\mu^\pi_{\partial M_j}) = [x_j^\pi,y_j^\pi]_{  \operatorname{Wh}  }$
where $x_j^\pi \in \pi_{p_j}(M)$ and $y^\pi_j\in \pi_{q_j}(M)$ are represented by the two  factors of $M_j$. Thus,
$$
\mu^\pi  = [x_1^\pi,y_1^\pi]_{  \operatorname{Wh}  } + \cdots + [x_g^\pi,y_g^\pi]_{ \operatorname{Wh}  } \in \pi_{n-1}(M).
$$
Recall that the bracket $[-,-]$  in~$A_\star$ induced by the Pontryagin  multiplication is related to the Whitehead bracket in $\pi_\ast(M)$ by the formula
$$ \left[\,  x_i, y_i  \right] =
\left[\, \overline \partial_{p_i}(x_i^\pi), \overline \partial_{q_i}(y_i^\pi) \right] = (-1)^{p_i}\, \overline \partial_{p_i+q_i-1} \big( [x_i^\pi,y_i^\pi]_{ \operatorname{Wh}  } \big) \in A_\star.
$$
Therefore
$$
\mu = (-1)^{p_1} [x_1,y_1] + \cdots + (-1)^{p_g} [x_g,y_g] \in A_\star.
$$
A direct computation on the generators $x_1,y_1,\dots,x_g,y_g$ of $A_\star$  using \eqref{xy_yx}--\eqref{xx_yy}
confirms that $\mu$ is a moment map of the  intersection bibracket $\double{-}{-}$ of $M$,    as claimed by Lemma \ref{spherical_boundary}.

Consider in more detail the case $g=1$ and set $$p=p_1, \quad  q=q_1, \quad   x=x_1 \in A_\star^{p-1}, \quad  y=y_1\in A_\star^{q-1}.$$
The loop space of $W=S^p\times S^q$ based at  $\star $ is the product  of the loop spaces of $S^p$ and $S^q$.
By   the K\"unneth theorem,  the Pontryagin algebra  $B_\star = H_*(\Omega(W,\star,\star))$ is (as a graded algebra) the tensor product of the Pontryagin algebras of $S^p$ and $S^q$.
Since  the graded algebra  $A_\star$ is freely generated by $x,y$, the quotient $A_\star/ A_\star \mu A_\star$ is the  commutative  graded algebra freely generated by $x,y$. It is clear that
  the assumptions of Theorem~\ref{no_boundary} are satisfied here for any ground ring $\kk$.
  Theorem~\ref{no_boundary} yields an  $H_0$-Poisson structure  $\langle-,-\rangle$ of degree  $2-n$ on the  (commutative)  graded algebra $B_\star$.
  The bracket $\langle-,-\rangle$ in $\check B_\star = B_\star$ is then  a Gerstenhaber bracket  of degree  $2-n$.
It coincides with the Gerstenhaber bracket $\bracket{-}{-}$ in $\Com(A_\star)$   computed in  Section~\ref{sc_example}.

\subsection{Example}

We   consider the example of   Section~\ref{nsc_example} and keep the same notation.
Thus, ${W}=S^1 \times S^{n-1}$ and $M = {W} \setminus \hbox{(an open ball)}$.
The element $ \mu=\mu_{\partial M} \in A_\star=H_\ast(\Omega(M, \star, \star))$ can be computed as follows. Consider the  cylinder  $I  \times S^{n-1}$ with the product orientation and pick a closed $n$-ball ${D}$ in its interior.
Then   $$ [\partial {D}]  = [\{1\} \times S^{n-1}] - [\{0\} \times S^{n-1}]  \in \pi_{n-1}\big(( I \times S^{n-1}) \setminus \Int({D})\big).$$
  (Here $D$  carries  the orientation induced by $M$ and $\partial D$  carries   the orientation inherited from $D$.)
It follows that $\mu^\pi(M) =  (x^\pi)^{-1} \cdot y^\pi -y^{\pi}$ where the dot denotes the action of $\pi_1(M,\star)$ on $\pi_{n-1}(M,\star)$.  We  deduce that
$$
\mu = x^{-1}y x - y \in A_\star.
$$
A direct computation  using \eqref{xx}--\eqref{yy} confirms  that~$\mu$ is a moment map of  the intersection bibracket $\double{-}{-}$ of $M$, as  claimed by Lemma \ref{spherical_boundary}.

By the K\" unneth theorem,  the Pontryagin algebra $B_\star$ of~$W$ is   (as a graded algebra)    the tensor product
of the {Pontryagin} algebras of~$S^1 $ and $ S^{n-1} $.
Thus,   $B_\star$ is   the commutative graded algebra    freely generated by $x^{\pm 1}\in B_\star^0$   and $y \in  B_\star^{n-2}$.
As a consequence, the assumptions of Theorem \ref{no_boundary} are satisfied  (for any ground ring~$\kk$)
so that the intersection bibracket  $\double{-}{-}$ of $M$ induces  an $H_0$-Poisson structure  $\langle -,-\rangle$ of degree $2-n$ on  $B_\star$.
  Since $B_\star$ is commutative, this structure is  a Gerstenhaber bracket  of degree $2-n$.
According to  \eqref{xx}--\eqref{yy}, it is given by
$$
\langle x,x\rangle=0, \quad \langle x,y \rangle =-x, \quad \langle y,y \rangle =0.
$$

\subsection{Remark}
The  results of this section are high-dimensional analogues of the well-known properties of surfaces. The
 Pontryagin algebra of a closed connected  oriented surface  is the group algebra   $B=\kk[   \pi]$ where $\pi $ is the fundamental group of the surface.
 Then $\check B=B/[B,B]= \kk[ \check \pi]$ is  the module freely generated by  the set  $\check \pi$   of conjugacy classes    in~$\pi$.
 The   Goldman Lie bracket in $\check B$ is the canonical $H_0$-Poisson structure on $B$,  see \cite[Section 9]{MT}.

\printindex

\end{document}